\def\GlobalMargin{1.0in}                    
\def\PrintingOffset{0.5in}                  
\def\MainTextSpacing{\doublespacing}        
\def\FigurePath{figures}                    
\def\BibFileName{thesis.bib}                
\def\FontPackage{lmodern}                   
\def\HyperlinkColor{blue}                   
\def\TitleFont{\Large\bfseries\singlespacing\MakeUppercase}  
\def\NoSectionLevel{3}                      
\def\ChapterFont{\Large\bfseries\singlespacing} 
\def\SectionFont{\large\bfseries\singlespacing} 
\def\SubsectionFont{\normalsize\bfseries\singlespacing}  
\def\SubsubsectionFont{\normalsize\itshape\singlespacing}
\def\ParagraphSpacing{\baselineskip}        
\def\ParagraphIndent{0 pt}                  
\def\CaptionFontSize{small}                 
\def\CaptionLabelFontType{bf}               
\def\CaptionSeparator{colon}                
\def\CaptionSpacing{1.0}                    
\def\FigureToCaption{0 pt}                  
\def\HeaderHeight{30 pt}                    
\def\HeaderSpace{12 pt}                     
\def\FootnoteSpacing{\baselineskip}         
\def\BibTextSpacing{\singlespacing}         
\def\BibItemSpacing{\baselineskip}          
\def\ChapQuoteFontSize{\small}              
\def\ChapQuoteLocation{flushright}          
\def\ChapQuoteTextShape{\itshape}           
\def\ChapQuoteAuthorTextShape{\scshape}     
\def\MaxQuoteWidth{0.65\textwidth}          
\def\NoTOCLevel{2}                          
\def\TOCIndent{0 pt}                        
\def\TOCTextSpacing{\singlespacing}         
\def\ChapTOCSpacing{\baselineskip}          
\def\SecTOCSpacing{0.5\baselineskip}        
\def\SubsecTOCSpacing{0.3\baselineskip}     
\def\SubsubsecTOCSpacing{0.3\baselineskip}  
\def\LOTItemSpacing{\baselineskip}          
\def\TitleTopSpacing{-\HeaderHeight-\HeaderSpace}   
\def\BeforeTOCTitleSpacing{-42 pt}          
\def\AfterTOCTitleSpacing{34 pt}            
\def\AfterLOTTitleSpacing{34 pt}            
\def\NumChapterTopMargin{-64 pt}            
\def\UnNumChapterTopMargin{-85 pt}          
\def\ChapLabelToTitle{-27 pt}               
\def\ChapTitleToText{24 pt}                 
\def\SpaceBeforeQuote{-20 pt}               
\newcommand{\R}{\mathbb{R}}
\newcommand{\E}{\mathbb{E}}
\newcommand{\cD}{{\cal D}}
\newcommand{\cO}{{\cal O}}
\newcommand{\la}{{\langle}}
\newcommand{\ra}{{\rangle}}
\newcommand{\B}{\mathbf{B}}
\newcommand{\A}{\mathbf{A}}
\newcommand{\C}{\mathbf{C}}
\newcommand{\M}{\mathbf{M}}
\definecolor{PineGreen}{RGB}{0,110,51}
\definecolor{BrickRed}{RGB}{143,20,2}
\newcommand{\cmark}{{\color{PineGreen}\ding{51}}}%
\newcommand{\xmark}{{\color{BrickRed}\ding{55}}}%
\newcommand{\algname}[1]{{\color{BrickRed}\sf  #1}\xspace}
\newcommand{\Prob}[1]{\mathbb{P} \left[ #1\right]}
\newcommand{\hx}{\hat{x}}
\newcommand{\g}{\gamma}
\newcommand{\vep}{\varepsilon}
\newcommand{\om}{\omega}
\newcommand{\eps}{{\varepsilon}}
\newcommand{\eqdef}{:=}
\newcommand{\argmin}{\mathop{\rm argmin}}
\newcommand{\prox}{\mathbf{prox}}
\newcommand{\EE}{\operatorname{\mathbb{E}}}
\newcommand{\Exp}[1]{\mathbb{E} \left[ #1\right]}
\newcommand{\Expk}[1]{\mathbb{E}_k \left[ #1\right]}
\newcommand{\norm}[1]{\left\|#1\right\|}
\newcommand{\sqnorm}[1]{\left\| #1 \right\|^2}
\newcommand{\inprod}[2]{\left\langle #1,#2 \right\rangle}
\newcommand{\colorword}[2]{{\color{#2}#1}}
\newcommand{\reals}{\mathbb{R}}
\newcommand{\autopar}[1]{{\left(#1\right)}}
\newcommand{\automedpar}[1]{{\left[#1\right]}}
\newcommand{\autobigpar}[1]{{\left\{#1\right\}}}
\newcommand{\autonorm}[1]{{\left\|#1\right\|}}
\newcommand{\autoprod}[1]{{\left\langle#1\right\rangle}}
\definecolor{bgcolor}{rgb}{0.8,1,1}
\definecolor{bgcolor2}{rgb}{0.8,1,0.8}
\definecolor{niceblue}{rgb}{0.0,0.19,0.56}
      \OR\ifentrytype{incollection}\OR\ifentrytype{inproceedings}%
      \OR\ifentrytype{inreference}}
\renewcommand{\cftchappresnum}{\chaptername\space}
\newcommand*{\noaddvspace}{\renewcommand*{\addvspace}[1]{}}
\titleformat{\chapter}[display]{\ChapterFont}
    {\chaptertitlename\ \thechapter}{\ChapLabelToTitle}{\ChapterFont}
\titlespacing*{\chapter}{0pt}{\NumChapterTopMargin}{\ChapTitleToText} 
\titleformat*{\section}{\SectionFont}
\titleformat*{\subsection}{\SubsectionFont}
\titleformat*{\subsubsection}{\SubsubsectionFont}
\renewcommand{\arraystretch}{\GlobalTableSpacing}   
\newcommand{\ThesisTitle}[1]{           
    \vspace*{\TitleTopSpacing}
    {\TitleFont {#1} \par}
}
\newcommand{\ThesisAuthor}[1]{          
    by \\ #1 \\
}      
\newcommand{\ThesisStatement}[2]{       
    A {#1} submitted to The Johns Hopkins University in conformity \\
    with the requirements for the degree of {#2}
}
\newcommand{\Location}{Baltimore, Maryland}     
\newcommand{\ThesisDate}[2]{#1 #2}              
\newcommand{\ThesisCopyright}[2]{               
\begin{textblock*}{\textwidth}(\GlobalMargin +\PrintingOffset,9in)
    \copyright\ {#1} {#2} \\ All rights reserved
\end{textblock*}
\null
}
\newcommand\chap[1]{%
    \chapter*{#1}%
    \markboth{#1}{}
    \addcontentsline{toc}{chapter}{#1}}
\newcommand{\mytableofcontents}{
    \clearpage
    \renewcommand{\contentsname}{Table of Contents}
    \tableofcontents
    \clearpage
}
\newcommand{\mylistoffigures}{
    \clearpage \phantomsection
    \addcontentsline{toc}{chapter}{List of Figures}
    \listoffigures
    \clearpage
}
\newcommand{\mylistoftables}{
    \clearpage \phantomsection
    \addcontentsline{toc}{chapter}{List of Tables}
    \listoftables
    \clearpage
}
\renewcommand{\epigraphflush}{\ChapQuoteLocation}       
\renewcommand{\epigraphsize}{\ChapQuoteFontSize}        
\renewcommand{\textflush}{\ChapQuoteLocation}
\renewcommand{\sourceflush}{\ChapQuoteLocation}
\newcommand{\epitextfont}{\ChapQuoteTextShape}          
\newcommand{\episourcefont}{\ChapQuoteAuthorTextShape}  
\newsavebox{\epi@textbox}
\newsavebox{\epi@sourcebox}
\newlength\epi@finalwidth
\renewcommand{\epigraph}[2]{%
    \vspace{\beforeepigraphskip}
    {\epigraphsize\begin{\epigraphflush}
    \epi@finalwidth=\z@
    \sbox\epi@textbox{%
        \varwidth{\epigraphwidth}
        \begin{\textflush}\epitextfont#1\end{\textflush}
        \endvarwidth
   }%
    \epi@finalwidth=\wd\epi@textbox
    \sbox\epi@sourcebox{%
        \varwidth{\epigraphwidth}
        \begin{\sourceflush}\episourcefont#2\end{\sourceflush}%
        \endvarwidth
   }%
    \ifdim\wd\epi@sourcebox>\epi@finalwidth 
        \epi@finalwidth=\wd\epi@sourcebox
    \fi
   \leavevmode\vbox{
        \hb@xt@\epi@finalwidth{\hfil\box\epi@textbox}
        \vskip 1ex         
        \hrule height \epigraphrule
        \vskip 1ex         
        \hb@xt@\epi@finalwidth{\hfil\box\epi@sourcebox}
   }%
   \end{\epigraphflush}
   \vspace{\afterepigraphskip}}}
\numberwithin{equation}{chapter}        
\theoremstyle{definition}
\newtheorem{definition}{Definition}
\newtheorem{assumption}{Assumption}
\newtheorem{theorem}{Theorem}
\newtheorem{corollary}{Corollary}
\newtheorem{lemma}{Lemma}
\newtheorem{proposition}{Proposition}
\begin{document}


\begin{titlepage}

\begin{center} \singlespacing \thispagestyle{empty}

\ThesisTitle{Next-Generation Iterative Algorithms for Large-scale Min-max Optimization: Design and Analysis}

\vspace{1in}                    

\ThesisAuthor{Sayantan Choudhury}         

\vspace{1.5in}                  

\ThesisStatement{dissertation}{Doctor of Philosophy}  
\vspace{0.5in} \\               

\Location \\                    
\ThesisDate{October}{2025}        

\ThesisCopyright{2025}{Sayantan Choudhury}

\end{center}

\end{titlepage}


\MainTextSpacing                            
\pagenumbering{roman}                       
\setcounter{page}{2}                        
\pagestyle{fancy}
\renewcommand{\chaptermark}[1]{\markboth{#1}{#1}}
\fancyhead[R]{}               
\fancyhead[L]{\nouppercase \leftmark}       

\chap{Abstract} 

This thesis investigates the design of algorithms for solving min-max optimization problems, which form the mathematical foundation of many modern applications in machine learning, game theory, and optimization. This work offers new theoretical insights and practical algorithms that address the limitations of existing methods in various problem settings.

In chapter \ref{chap:chap-1}, we introduce the min-max optimization problem, highlight its relevance in machine learning applications, and present its reformulation as a root-finding problem. We also review existing algorithms for solving these problems and establish the necessary preliminary concepts required to understand the rest of the thesis.

Chapter \ref{chap:chap-2} focuses on the single-call stochastic extragradient method called \algname{SPEG}. Our analysis introduces the Expected Residual condition, leading to improved convergence guarantees for structured non-monotone root-finding problems under broad sampling schemes. Moreover, our work addresses questions related to convergence under practical conditions, step size selection, and sampling strategies.

In chapter \ref{chap:chap-3}, we propose adaptive min-max optimization algorithms based on Polyak-type update step size rules. We develop both deterministic and stochastic extragradient methods, such as \algname{PolyakSEG} and \algname{DecPolyakSEG}, that eliminate the need for manual step size tuning and ensure convergence in monotone settings. Furthermore, we provide experimental results to support our theoretical claims.

Chapter \ref{chap:chap-4} relaxes the standard Lipschitz assumption by employing the $\alpha$-symmetric $(L_0, L_1)$-Lipschitz condition, enabling a more refined characterization of problem structure. We develop novel step size strategies for the extragradient method, achieving sublinear or linear convergence rates depending on whether the problem is monotone or strongly monotone, and establish local convergence for weak Minty problems.

Finally, in chapter \ref{chap:chap-5}, we extend our study to distributed min-max optimization, proposing a unified framework for communication-efficient algorithms. Our approach recovers several state-of-the-art methods as special cases. The new algorithms achieve improved communication complexity and demonstrate superior empirical performance in distributed tasks.

Overall, this thesis contributes both novel theoretical frameworks and practical algorithms for solving min-max problems.



\begin{singlespace}

    \section*{Primary reader and thesis advisor}
    
    Dr. Nicolas Loizou \\
    Assistant Professor\\
    Department of Applied Mathematics and Statistics\\
    Johns Hopkins University, Baltimore MD

    \section*{Secondary readers}
    
    Dr. Enrique Mallada\\
    Associate Professor\\
    Department of Electrical and Computer Engineering \\
    Johns Hopkins University, Baltimore, MD


\end{singlespace}

\chapter*{~}
\addcontentsline{toc}{chapter}{Dedication}

\begin{center}                  
\vspace*{3in}                   
    \begin{onehalfspacing}      
    \textit{
    To my brother, Sandipan.     
    }
\end{onehalfspacing}
\end{center}
\chap{Acknowledgement}


As this dissertation marks the culmination of my PhD journey, I would like to take this opportunity to thank the individuals without whom this work would not have been possible.

First and foremost, I owe my deepest thanks to my advisor, \textbf{Nicolas Loizou}. His willingness to brainstorm ideas, his thoughtful feedback, and his steady encouragement shaped every stage of this thesis. I benefited immensely from his scientific rigor and his constant confidence in my abilities. I am also grateful to my thesis committee, comprising Prof. \textbf{Amitabh Basu}, Prof. \textbf{Enrique Mallada}, and Prof. \textbf{Yanxun Xu}, for their valuable suggestions and for their time.

I am thankful to \textbf{Eduard Gorbunov} for teaching me various optimization tricks and how to craft a clear and compelling research paper. I also thank my collaborators, \textbf{TaeHo Yoon}, \textbf{Siqi Zhang}, \textbf{Nazarii Tupitsa}, \textbf{Martin Takáč}, \textbf{Sebastian U Stich}, and \textbf{Samuel Horváth}, for their stimulating discussions and decisive contributions to our joint work. In addition, I wish to acknowledge \textbf{Debjit Ash}, \textbf{Rajat Subhra Hazra}, \textbf{Soumendu Sundar Mukherjee}, \textbf{Ben Grimmer}, \textbf{Donniell Fishkind}, and \textbf{Soledad Villar}, whose teaching deepened my appreciation for the beauty of mathematics.

My years in Baltimore were made memorable by a remarkable group of friends. \textbf{Debangan, Sayan, Arunima, Prosenjit, Neha, Anik, Anagh, Eva, Tamasa,} and \textbf{Ayan}: thank you for the board-game nights, weekend outings, and countless moments that turned Baltimore into a home away from home. Even after long days of work, online gaming sessions with my undergraduate friends \textbf{Anirban, Rohan, Sayan, Alokesh,} and \textbf{Sayantan} brought laughter and balance to my routine.

A special thank you goes to \textbf{Subhangi} for standing beside me despite the thousands of miles between us. Your unwavering support and motivation carried me through the toughest stretches of this journey.

Finally, I would not be who I am without the love of my family. I will be forever grateful to my parents, \textbf{Sarmistha} and \textbf{Ajoy Choudhury}, and my elder brother, \textbf{Sandipan}. I owe all my good qualities to them. 
My mother has constantly reminded me that being a good human being comes before everything else. I strive every day to live up to that lesson. My father has taught me how to stay calm in challenging circumstances. I admire how he evaluates difficult situations and makes the correct decisions. My brother has always been my greatest cheerleader. He made me fall in love with maths from my childhood, and he celebrates each step of my career with boundless pride.

To all of you, my sincere thanks. This dissertation is as much yours as it is mine.        

\TOCTextSpacing                             
\microtypesetup{protrusion=false}           
\hypersetup{linkcolor=black}                
\mytableofcontents                          
\mylistoftables                             
\mylistoffigures                            
%
%
\hypersetup{linkcolor=blue}                 
\microtypesetup{protrusion=true}            



\MainTextSpacing                        
\pagenumbering{arabic}                  
\fancyhead[L]{\chaptername\ \thechapter. \nouppercase \leftmark}

\chapter{Introduction} \label{chap:chap-1}



We begin this section by introducing standard minimization problems, followed by min-max optimization problems and their growing importance in modern machine learning applications. We then present a reformulation of the min-max problem that serves as the foundation for the remainder of this thesis. Subsequently, we discuss various classes of min-max problems and the algorithms commonly employed to solve them. We also provide the necessary theoretical background to understand the analysis of these optimization methods. Finally, we outline the thesis structure and provide a brief overview of the chapters that follow.

\section{Minimization Problems}

Most of the modern machine learning applications rely on the empirical risk minimization, where one seeks a parameter $x \in \R^d$ that minimizes a scalar objective function $f: \R^d \to \R$, i.e.
\begin{eqnarray}\label{eq:min}
    \min_{x \in \R^d} f(x).
\end{eqnarray}
Here, the function $f$ aggregates the model's loss on the observed data. For example, the binary classification problem on dataset $\{a_i, b_i \}_{i = 1}^n$, where $a_i \in \R^d$, $b_i \in \{ +1, -1\}$ can be solved through logistic regression~\cite{cox2018analysis}, which boils down to solving \eqref{eq:min} with
\begin{eqnarray*}
    f(x) & = & \frac{1}{n} \sum_{i = 1}^n \log \left( 1 + \exp{\left( - b_i \la a_i, x \ra \right)}\right).
\end{eqnarray*}
The theoretical analysis of \eqref{eq:min} for any arbitrary function $f$ often depends on the assumption that $\nabla f$ is Lipschitz~\cite{nesterov2018lectures}, i.e.
\begin{eqnarray}\label{eq:min_lipschitz}
    \left\| \nabla f(x) - \nabla f(y) \right\| & \leq & L \|x - y\|
\end{eqnarray}
for some $L >0$ (such $f$ is also called smooth). Under the assumption of \eqref{eq:min_lipschitz}, the first-order algorithm, Gradient Descent (\algname{GD}), given by,
\begin{eqnarray*}
    x_{k+1} & = & x_k - \gamma_k \nabla f(x_k)
\end{eqnarray*}
solves \eqref{eq:min} with $\gamma_k = \frac{1}{L} \quad \forall k$. Apart from \eqref{eq:min_lipschitz}, the analysis of \algname{GD} also needs more structure of the $f$, like convexity~\cite{nesterov2003introductory}:
\begin{eqnarray}\label{eq:convex}
    \la \nabla f(x) - \nabla f(y), x - y \ra & \geq & 0 \qquad \forall x, y \in \R^d.
\end{eqnarray}
or $\mu$-strongly convexity:
\begin{eqnarray*}
    \la \nabla f(x) - \nabla f(y), x - y \ra & \geq & \mu \|x - y\|^2 \qquad \forall x, y \in \R^d.
\end{eqnarray*}
to prove convergence guarantees. The problem \eqref{eq:min} has been heavily studied over the years, and researchers have extended the \algname{GD} algorithm to propose its stochastic~\cite{gower2019sgd}, adaptive~\cite{ward2020adagrad}, or distributed~\cite{khaled2020tighter} counterparts. 

However, many modern machine learning applications can be framed as games or, in a simplified form, as a min-max optimization problem. In such a setting, several parameterized models/ players compete to minimize their respective objective functions. Some landmark advances in machine learning that fall under min-max optimization include Generative Adversarial Networks (GANs) \cite{goodfellow2014generative, arjovsky2017wasserstein}, adversarial training of neural networks \cite{madry2018towards, wang2021adversarial}, reinforcement learning \cite{brown2020combining, sokota2022unified}, and distributionally robust learning \cite{namkoong2016stochastic,yu2022fast}. Although the literature on minimization problems has been extensively studied, numerous challenges remain in solving min-max optimization problems. In this thesis, we address several of those challenges.

\section{Min-Max Optimization Problems}

The mathematical formulation of the min-max optimization~\cite{facchinei2007games, loizou2021stochastic, gorbunov2022stochastic, beznosikov2022smooth} problem (also called a two-player zero-sum game) is:
\begin{equation}
    \label{eq: MinMax}
    \min_{w_1 \in \mathcal{X}_1} \max_{w_2 \in \mathcal{X}_2} \mathcal{L} \left( w_1,w_2 \right) \, ,
\end{equation}
where the function $\mathcal{L} : \mathcal{X}_1 \times \mathcal{X}_2 \rightarrow \R$ is assumed to be smooth (i.e. $\nabla_{w_2} \mathcal{L} (w_1, \cdot), \nabla_{w_1} \mathcal{L} (\cdot, w_2)$ are Lipschitz continuous for any $w_1 \in \mathcal{X}_1, w_2 \in \mathcal{X}_2$, respectively). Below, we provide a few instances of machine learning applications that can be reformulated as a min-max optimization problem.
\paragraph{Constrained Minimization Problems.} Consider the constrained minimization problem of the form 
\begin{eqnarray}\label{eq:constrained_min}
    \min_{w_1 \in \R^{d_1}} f \left( w_1 \right) \notag \\
    \text{such that } \mathbf{A} w_1 = b
\end{eqnarray}
for some matrix $\mathbf{A} \in \R^{d_2 \times d_1}$ and vector $b \in \R^{d_2}$. Then the associated Lagrangian of this problem is given by 
\begin{equation}\label{eq:lagrangian}
    \mathcal{L} (w_1, w_2) = f \left( w_1 \right) + \left\langle w_2, \mathbf{A} w_1 - b \right\rangle
\end{equation}
and solving the constrained minimization problem \eqref{eq:constrained_min} is equivalent to solving for  $\min_{w_1} \max_{w_2} \mathcal{L} (w_1, w_2)$ in \eqref{eq:lagrangian}~\cite{boyd2004convex}. This kind of constrained problem finds its application in compressed sensing~\cite{candes2006robust}, distributed optimization \cite{yang2019survey}. 

In distributed optimization, the data is distributed across a network of $N$ nodes in a graph $\mathcal{G} = \left\{ \mathcal{V}, \mathcal{E} \right\}$, where $|\mathcal{V}| = N$. Such an optimization problem can be written in the form of \eqref{eq:constrained_min}, i.e.
\begin{eqnarray}\label{eq:distributed_min}
    \min_{w_1 \in \R^N} f(w_1) \eqdef \sum_{i = 1}^N f_i \left( w_1 [i] \right) \quad \text{such that} \quad \mathbf{A} w_1 = 0.
\end{eqnarray}
Here $f_i: \R \to \R$ are functions only known by the node $i \in [N]$, $\mathbf{A} \in \R^{|\mathcal{E}| \times N} $ is the incidence matrix for graph $\mathcal{G}$ and $w_1 = \left( w_1[1], \cdots, w_1[N] \right)$ is the concatenation of all copies of the decision variables~\cite{razaviyayn2020nonconvex}. Therefore, the distributed optimization problem \eqref{eq:distributed_min} can be formulated as a min-max optimization problem similar to \eqref{eq:lagrangian}.

\paragraph{Generative Adversarial Network Training}

The goal of Generative Adversarial Network (GAN)~\cite{goodfellow2014generative} is to generate samples from a distribution $\mathbf{q}_{w_1}$ that best matches the distribution $\mathbf{p}$ of the data. To achieve this, GAN utilizes two neural networks called a generator and a discriminator. The generator produces a sample that the discriminator must identify as either fake or real. Therefore, the objective of training is to build a generator that produces realistic samples to fool the discriminator.

In the seminal paper of GAN~\cite{goodfellow2014generative}, the objective function is formulated as a min-max optimization problem where the cost function of the discriminator $\mathbf{D}_{w_2}$ is given by 
\begin{eqnarray*}
    \min_{w_1} \max_{w_2} \mathcal{L}  \left( w_1, w_2 \right) \eqdef - \mathbf{E}_{\mathbf{a} \sim \mathbf{p}} \left[ \log \mathbf{D}_{w_2} (\mathbf{a}) \right] - \mathbf{E}_{\mathbf{a}' \sim \mathbf{q}_{w_1}} \left[ \log \left( 1 - \mathbf{D}_{w_2}(\mathbf{a}') \right) \right].
\end{eqnarray*}
This highlights another important application of min-max optimization in modern machine learning.

\paragraph{Distributionally Robust Optimization}
Many machine learning models encounter different data-generating distributions during training versus deployment \cite{wiles2021fine}, such as those in driverless cars \cite{janai2020computer} and medical imaging \cite{erickson2017machine}. For example, \cite{koh2021wilds, perone2019unsupervised} find that a model trained on one set of hospitals may not generalise to the imaging conditions of another; \cite{alcorn2019strike} find that a model for driverless cars may not generalise to new lighting conditions or object poses; and \cite{buolamwini2018gender} find that a model may perform worse on subsets of the distribution,
such as different ethnicities, if the training set has an imbalanced distribution. This motivates the researchers to reformulate the contemporary empirical risk minimization problem $\min_{w_1} \frac{1}{n} \sum_{i = 1}^n \ell_i \left( w_1\right)$ as
\begin{equation*}
    \min_{w_1 \R^d} \max_{w_2 \in \mathcal{Q}} \frac{1}{n} \sum_{i = 1}^n w_2[i] \ell_i \left( w_1 \right)
\end{equation*}
where $\mathcal{Q} = \left\{ w_2 \in \R^n \mid \sum_{i = 1}^n w_2[i] = 1 \right\}$ and $\ell_i : \R^d \to \R$ are loss functions on each data point $i \in [n]$.

\section{Variational Inequality Problem Formulation}

In this thesis, we consider a more abstract formulation of the problem \eqref{eq: MinMax} and focus on solving the following variational inequality problem (VIP) \cite{hsieh2019convergence, stampacchia1964formes, zhao2024learning, antonakopoulos2020adaptive, antonakopoulos2021sifting, diakonikolas2021efficient, cai2022tight}: 
\begin{eqnarray}\label{eq:constrained_vip}
    \text{find } x_* \in \mathcal{X} \text{ such that } \left\langle F(x_*), x - x_* \right\rangle \geq 0 \quad \forall x \in \mathcal{X}
\end{eqnarray}
where $F: \mathbb{R}^d \to \mathbb{R}^d $ is an operator. This problem captures the constrained min-max problem of \eqref{eq: MinMax} as a special case~\cite{hsieh2019convergence} with $x \eqdef \left( w_1, w_2 \right)$, $\mathcal{X} \eqdef \mathcal{X}_1 \times \mathcal{X}_2$ and operator 
\begin{equation}\label{eq:vip_operator}
    F(x) := \begin{bmatrix}
                \nabla_{w_1} \mathcal{L} \left( w_1,w_2 \right) \\
                -\nabla_{w_2} \mathcal{L} \left( w_1,w_2 \right)
            \end{bmatrix}.
\end{equation}
In particular, throughout this manuscript, we concentrate on the unconstrained version of the problem \eqref{eq: MinMax}, i.e., 
\begin{equation}\label{eq:unconstrained_MinMax}
    \min_{w_1 \in \R^{d_1}} \max_{w_2 \in \R^{d_2}} \mathcal{L} \left( w_1,w_2 \right).
\end{equation}
In that case, $\mathcal{X} = \R^{d = d_1 + d_2}$ and \eqref{eq:constrained_vip} boils down to the unconstrained variational inequality problem~\cite{gorbunov2022extragradient, hsieh2020explore, gorbunov2022last, gorbunov2022stochastic} or also known as the root-finding problem~\cite{luo2022extragradient, tran2025randomized, tran2025vfog, tran2024variance}, expressed as follows,
\begin{equation}\label{eq: Variational Inequality Definition}
    \text{find } x_* \in \mathbb{R}^d \text{ such that } F(x_*) = 0.
\end{equation}
Therefore, solving~\eqref{eq: Variational Inequality Definition} amounts to finding a stationary point (where the gradients are zero) $x_* = \left( w_{1*}; w_{2*} \right)$ for~\eqref{eq: MinMax} with $\mathcal{X}_1 = \R^{d_1}$, $\mathcal{X}_2 = \R^{d_2}$ and $F$ defined in \eqref{eq:vip_operator}. Under a convex-concavity assumption for $\mathcal{L} $ (i.e. $\mathcal{L}  \left(\cdot, w_2 \right)$ is convex $\forall w_2 \in \R^d$ and $\mathcal{L} \left( w_1, \cdot \right)$ is concave $\forall w_1 \in \R^d$), the stationary point $\left( w_{1*}, w_{2*} \right)$ is a global solution for the min-max problem, and is known as a Nash Equilibrium, i.e.
\begin{equation*}
    \mathcal{L}  \left( w_{1*}, w_2 \right) \leq \mathcal{L}  \left( w_{1*}, w_{2*} \right) \leq \mathcal{L} \left(w_1, w_{2*} \right) \qquad \forall w_1 \in \R^{d_1}, w_2 \in \R^{d_2}.
\end{equation*}
Apart from min-max optimization problems, \eqref{eq: Variational Inequality Definition} also finds its applications in multiplayer games~\cite{balduzzi2018mechanics} and Reinforcement Learning~\cite{bertsekas2008neuro}. Therefore, our analysis of algorithms for solving \eqref{eq: Variational Inequality Definition} also encompasses these applications, in addition to min-max problems.
\paragraph{Multiplayer Game}
In a multiplayer game, $N$ players compete with each other to reach an equilibrium, i.e., to get a set of actions where no player can unilaterally deviate from their action to achieve a better payoff, given the best actions chosen by all other players \cite{yoon2025multiplayer}. Let $x^i \in \R^{d_i}$ denote the action of player $i \in [N]$ and let $x = \left( w_1,\dots,w_N \right) \in \R^{D} = \R^{d_1 + \cdots + d_n}$ be the joint action vector of all players. 
Let $\mathcal{L}_i (w_1,\dots, w_N) \colon \R^{d_1+\dots+d_N} \to \R$ denotes the objective function of the player $i \in [n]$ (which player $i$ prefers to minimize in $w_i$) and let $w_{-i} = (w_1, \cdots, w_{i-1}, w_{i+1}, \cdots, w_N) \in \R^{D - d_i}$ be the vector containing all players' actions except that of player $i$. With this notation, the goal in a $N$ player game is to find a joint action $x_* = \left(w_{1*}, \dots, w_{N*} \right) \in \R^D$, formally expressed as:
\begin{align}\label{eq:equilibrium}
    \begin{split}
        \underset{x_* = (w_{1*},\dots, w_{N*}) \in \R^D}{\text{find}} \quad \mathcal{L}_i(w_{i*}; w_{-i*}) \le \mathcal{L}_i (w_i; w_{-i*}), \quad
        \forall x^i \in \R^{d_i}, \quad \forall i \in [N],
    \end{split}
\end{align}
where $\mathcal{L}_i (w_i ; w_{-i}) = \mathcal{L}_i (w_1,\dots, w_N)$. As discussed in~\cite{yoon2025multiplayer}, when $\mathcal{L}_i(\cdot, w_{-i})$ is convex for all $w_{-i}$ and $i \in [N]$, the equilibrium point $x_* = \left(w_{1*}, \dots, w_{N*}\right)$ defined in~\eqref{eq:equilibrium} corresponds to a stationary point of the operator
\begin{equation*}
    F(x) = \left( 
        \nabla \mathcal{L}_1(w_1; w_{-1}), \,
        \dots, \,
        \nabla \mathcal{L}_N(w_N; w_{-N})
    \right),
\end{equation*}
that is, $F(x_*) = 0$.

\paragraph{Reinforcement Learning} 
Reinforcement Learning (RL) is concerned with sequential decision making in unknown, stochastic, and dynamically evolving environments~\cite{bertsekas2008neuro, sutton1998reinforcement}.
The interaction between an agent and its environment can be modeled as a Markov decision process (MDP) $\mathcal{M}\;=\;\bigl(\mathcal{S},\mathcal{A}, P, R,\gamma\bigr)$, where $\mathcal{S}$ and $\mathcal{A}$ denote the (finite) state and action spaces, $P \left(s' \mid s, a \right)$ is the transition kernel giving the probability of moving from state $s \in\mathcal{S}$ to state $s' \in\mathcal{S}$ after taking action $a\in\mathcal{A}$, $R:\mathcal{S}\times\mathcal{A} \to \mathbb{R}$ is a bounded reward function and $\gamma\in [0,1)$ is the discount factor that down–weights future rewards. The main objective of a Reinforcement Learning algorithm is to find an optimal policy, and a key step in many of these algorithms is to compute the value function of a given policy $\pi$, i.e.,
\begin{eqnarray}\label{eq:RL}
    V^{\pi} & = & R^{\pi} + \gamma P^{\pi} V^{\pi}
\end{eqnarray}
where $R^{\pi}$ and $P^{\pi}$ are the reward function and transition kernel of the Markov chain induced by the policy $\pi$~\cite{liu2016proximal}. Then problem \eqref{eq:RL} is a special case of \eqref{eq: Variational Inequality Definition} with operator $F(x) = \left(I - \gamma P^{\pi} \right)x - R^{\pi}$~\cite{sutton1998reinforcement}. 

\section{Structure of Operator $F$}

For designing and analyzing algorithms for solving \eqref{eq: Variational Inequality Definition}, it is pivotal to make assumptions on the structure of operator $F$. In this thesis, we are primarily interested in three classes of operators: monotone, strongly monotone, and weak Minty.

\begin{assumption}[Monotone Operators]
    The operator $F$ is monotone if it satisfies
    \begin{eqnarray}\label{eq:monotone}
        \la F(x) - F(y), x - y \ra & \geq & 0 \qquad \forall x, y \in \R^d.
    \end{eqnarray}     
\end{assumption}
This assumption~\eqref{eq:monotone} is a generalization of convexity for functions $f$ \eqref{eq:convex}, as given a convexity of $f$ is equivalent to the assumption $\la \nabla f(x) - \nabla f(y), x - y \ra \geq 0$ for all $x, y \in \R^d$~\cite{nesterov2003introductory}. Therefore, the analysis of monotone operators captures the analysis of convex functions as a special case. 

The $\mu$-strong convexity assumption on a function $f$ is equivalent to $\la \nabla f(x) - \nabla f(y), x - y \ra \geq \mu \| x - y\|^2$ \cite{nesterov2003introductory}. A simple generalization of this assumption for the operators is given as follows.
\begin{assumption}[Strongly Monotone Operators]
    The operator $F$ is strongly monotone if it satisfies
    \begin{eqnarray}\label{eq:strong_monotone}
        \la F(x) - F(y), x - y \ra & \geq & \mu \| x - y\|^2 \qquad \forall x, y \in \R^d.
    \end{eqnarray}     
\end{assumption}
for $\mu > 0$. We refer to such operators as $\mu$-strongly monotone. Similar to monotone operators, this assumption captures the strongly convex-strongly concave min-max optimization problems as a special case. Here is a lemma to establish the equivalence of the strong monotonicity of $F$ and the strong convexity, strong concavity of $g$.

\begin{lemma}\label{lemma:equiv_monotone_convex}
    Functions $\mathcal{L} \left(\cdot, w_2 \right), \mathcal{L} \left( w_1, \cdot \right)$ are strongly convex and strongly concave, respectively, for any $w_1, w_2$ if and only if the operator $F$ from \eqref{eq:vip_operator} is strongly monotone.
\end{lemma}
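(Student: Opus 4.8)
The plan is to prove the two implications separately, in each case translating between a (strong) convexity/concavity statement on $\mathcal{L}$ and a (strong) monotonicity statement on the corresponding partial gradient operator, via the first-order characterization recalled in the text (namely, a differentiable $f$ is $\mu$-strongly convex iff $\la \nabla f(x)-\nabla f(y),x-y\ra \geq \mu\|x-y\|^2$). Writing $x=(a_1,a_2)$ and $y=(b_1,b_2)$, the definition \eqref{eq:vip_operator} gives
\[
    \la F(x)-F(y),\,x-y\ra = \la \nabla_{w_1}\mathcal{L}(a_1,a_2)-\nabla_{w_1}\mathcal{L}(b_1,b_2),\,a_1-b_1\ra - \la \nabla_{w_2}\mathcal{L}(a_1,a_2)-\nabla_{w_2}\mathcal{L}(b_1,b_2),\,a_2-b_2\ra,
\]
which is the identity both arguments will be built around.

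For the direction ``$F$ strongly monotone $\Rightarrow$ $\mathcal{L}$ strongly convex/concave'' (the easy one), I would restrict \eqref{eq:strong_monotone} to pairs of points differing in a single block. Choosing $y=(b_1,a_2)$ annihilates the second inner product in the identity above and leaves $\la \nabla_{w_1}\mathcal{L}(a_1,a_2)-\nabla_{w_1}\mathcal{L}(b_1,a_2),\,a_1-b_1\ra \geq \mu\|a_1-b_1\|^2$, which is exactly $\mu$-strong monotonicity of $w_1\mapsto\nabla_{w_1}\mathcal{L}(\cdot,a_2)$, i.e. $\mu$-strong convexity of $\mathcal{L}(\cdot,a_2)$; since $a_2$ is arbitrary, this holds for every fixed second variable. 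Symmetrically, choosing $y=(a_1,b_2)$ yields $\la \nabla_{w_2}\mathcal{L}(a_1,a_2)-\nabla_{w_2}\mathcal{L}(a_1,b_2),\,a_2-b_2\ra \leq -\mu\|a_2-b_2\|^2$, i.e. $\mu$-strong concavity of $\mathcal{L}(a_1,\cdot)$.

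For the converse (the hard direction), the obstacle is that the hypotheses only control $\nabla_{w_1}\mathcal{L}$ with $w_2$ held fixed (and vice versa), whereas the monotonicity inner product moves both blocks at once, producing cross terms that must be shown to cancel. Assuming $\mathcal{L}\in C^2$, I would pass to the Jacobian
\[
    J_F = \begin{bmatrix} \nabla_{w_1 w_1}\mathcal{L} & \nabla_{w_1 w_2}\mathcal{L} \\ -\nabla_{w_2 w_1}\mathcal{L} & -\nabla_{w_2 w_2}\mathcal{L} \end{bmatrix},
\]
and observe that by symmetry of second derivatives (Schwarz), $\nabla_{w_1 w_2}\mathcal{L}=(\nabla_{w_2 w_1}\mathcal{L})^\top$, so the off-diagonal blocks of the symmetric part $\tfrac12(J_F+J_F^\top)$ vanish exactly. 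This leaves $\tfrac12(J_F+J_F^\top)=\mathrm{diag}(\nabla_{w_1 w_1}\mathcal{L},\,-\nabla_{w_2 w_2}\mathcal{L})\succeq \mu I$, where the bound comes from strong convexity ($\nabla_{w_1 w_1}\mathcal{L}\succeq \mu I$) and strong concavity ($\nabla_{w_2 w_2}\mathcal{L}\preceq -\mu I$). Strong monotonicity then follows from the fundamental theorem of calculus along the segment $[y,x]$:
\[
    \la F(x)-F(y),\,x-y\ra = \int_0^1 \la J_F(y+t(x-y))(x-y),\,x-y\ra\,dt \geq \mu\|x-y\|^2.
\]

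The crux is precisely the cancellation of the mixed-derivative blocks; this is the only genuinely nontrivial step, and it is where the $C^2$ regularity (and hence Schwarz's theorem) enters. It also explains why a naive first-order telescoping through the intermediate point $(b_1,a_2)$ leaves uncancelled cross terms. If one wishes to avoid the $C^2$ assumption and work only with the Lipschitz-gradient smoothness assumed in the text, I would instead mollify $\mathcal{L}$ by convolution with a smooth product kernel $\rho_\varepsilon$, which preserves $\mu$-strong convexity in $w_1$ and $\mu$-strong concavity in $w_2$, apply the Jacobian argument to each $C^\infty$ approximation $\mathcal{L}_\varepsilon$, and pass to the limit $\varepsilon\to 0$, using that $\nabla\mathcal{L}_\varepsilon\to\nabla\mathcal{L}$ locally uniformly so that the strong-monotonicity inequality survives the limit.
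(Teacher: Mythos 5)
Your proposal is correct, and for the forward implication it coincides with the paper's own proof: the paper also restricts the strong-monotonicity inequality to pairs of points that differ in a single block (choosing $y=(w_1',w_2)$ to isolate the $\nabla_{w_1}$ term, and symmetrically for $\nabla_{w_2}$), which is exactly your choice $y=(b_1,a_2)$ and $y=(a_1,b_2)$. Where you diverge is the converse: the paper does not prove it at all, deferring to Lemma~1 of the cited reference of Grimmer et al., whereas you supply a self-contained argument via the symmetric part of the Jacobian, the Schwarz cancellation of the off-diagonal blocks, and an integration along the segment, plus a mollification step to cover the merely $C^{1,1}$ case. That argument is sound, but it is worth knowing that the converse admits a purely first-order proof that needs no second derivatives and no regularization: write the strong-convexity gradient inequality for $\mathcal{L}(\cdot,a_2)$ between $a_1$ and $b_1$, for $\mathcal{L}(\cdot,b_2)$ between $b_1$ and $a_1$, and the strong-concavity inequalities for $\mathcal{L}(a_1,\cdot)$ and $\mathcal{L}(b_1,\cdot)$ between $a_2$ and $b_2$; adding the first two and subtracting the last two makes every function value cancel and leaves precisely
\begin{equation*}
    \la F(x)-F(y),\,x-y\ra \;\geq\; \mu\left(\|a_1-b_1\|^2+\|a_2-b_2\|^2\right),
\end{equation*}
i.e.\ the cross terms you worried about are absorbed by the four-point telescoping rather than by Schwarz's theorem. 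Your route buys a transparent spectral picture of why the coupling blocks are harmless; the four-inequality route buys minimal regularity and brevity. Either way, your write-up proves strictly more than the paper does on its own page.
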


\begin{proof}
    Suppose the operator $F$ from \eqref{eq:vip_operator} is strongly monotone, i.e., we have
    \begin{eqnarray}\label{eq:equiv_monotone_convex_eq1}
        \left\langle F(x) - F(y), x - y \right\rangle & \geq & \mu \|x - y\|^2
    \end{eqnarray}
    for any $x, y$. Let us choose $x = (w_1, w_2)$ and $y = (w_1', w_2)$. Then the left-hand side of the above inequality gives us
    \begin{eqnarray}\label{eq:equiv_monotone_convex_eq2}
        \left\langle F(x) - F(y), x - y \right\rangle & = & \left\langle \nabla_{w_1} \mathcal{L} (w_1, w_2) - \nabla_{w_1'} \mathcal{L} (w_1', w_2), w_1 - w_1' \right\rangle
    \end{eqnarray}
    while the right-hand side boils down to 
    \begin{eqnarray}\label{eq:equiv_monotone_convex_eq3}
        \mu \|x - y\|^2 & = & \mu \|w_1 - w_1'\|^2.
    \end{eqnarray}
    Therefore, combining the above equations \eqref{eq:equiv_monotone_convex_eq1}, \eqref{eq:equiv_monotone_convex_eq2} and \eqref{eq:equiv_monotone_convex_eq3} we get
    \begin{eqnarray*}
        \left\langle \nabla_{w_1} \mathcal{L} (w_1, w_2) - \nabla_{w_1'} \mathcal{L} (w_1', w_2), w_1 - w_1' \right\rangle & \geq & \mu \|w_1 - w_1'\|^2
    \end{eqnarray*}
    for any $w_1, w_1', w_2$. Therefore, $\mathcal{L}  \left(\cdot, w_2 \right)$ is strongly convex for any $w_2$. Following similar arguments as above, we can show $-\mathcal{L}  \left( w_1, \cdot \right)$ is strongly convex or $\mathcal{L}  \left( w_1, \cdot \right)$ is strongly concave for any $w_1$. Hence, we have shown that whenever the operator $F$ is strongly monotone, the function $\mathcal{L}  \left(\cdot, w_2 \right)$ is strongly convex and $\mathcal{L}  \left(w_1, \cdot \right)$ is strongly concave. For the other implication, refer to Lemma 1 in \cite{grimmer2023landscape}. 

\end{proof}

A similar proof technique (by replacing $\mu = 0$ in previous proof) shows the equivalence of monotonicity and convexity-concavity as well. Some work also considers the $\mu$-quasi strongly monotone ~\cite{loizou2021stochastic} given by the following assumption. 
\begin{assumption}[Quasi Strongly Monotone Operators] The operator $F$ is quasi strongly monotone if it satisfies
    \begin{eqnarray}\label{eq: Strong Monotonicity}
        \left\langle F(x), x - x_* \right\rangle & \geq & \mu \| x - x_*\|^2 \qquad \forall x \in \R^d.
    \end{eqnarray}     
\end{assumption}
This assumption relaxes the \eqref{eq:strong_monotone} as any $\mu$-strongly monotone operator $F$ satisfy \eqref{eq: Strong Monotonicity} with $y = x_*$ (as $F(x_*) = 0$). However, \eqref{eq: Strong Monotonicity} does not guarantee that the operator $F$ is necessarily monotone and captures some non-monotone problems as well. 

Beyond monotone problems, we are also interested in weak Minty operators that satisfy the following assumption.
\begin{assumption}[Weak Minty Operators] The operator $F$ is weak minty if it satisfies
    \begin{eqnarray}\label{eq: weak MVI}
        \la F(x), x - x_* \ra & \geq & - \rho \| F(x)\|^2 \qquad \forall x \in \R^d
    \end{eqnarray}     
\end{assumption}
for some $\rho \geq 0$. \cite{diakonikolas2021efficient} introduced this class of problems, which are not necessarily monotone. For $\rho = 0$, the problem reduces to the Minty root-finding problems~\cite{mertikopoulos2018optimistic}. A variation of this assumption holds with $\rho = 0$~\cite{lan2023policy} for policy optimization in reinforcement learning. \cite{pethick2023solving} discusses more applications of \eqref{eq: weak MVI} for $\rho > 0$.

\section{Methods for Solving Min-Max Optimization Problems}

Over the years, a wide range of algorithms have been proposed to solve \eqref{eq:unconstrained_MinMax}. Among the earliest and most fundamental approaches is the Gradient Descent Ascent (\algname{GDA}) method, which simultaneously performs gradient descent for the minimization player and gradient ascent for the maximization player. Despite its simplicity, \algname{GDA} suffers from critical limitations. To address these shortcomings, researchers have proposed more sophisticated algorithms that incorporate extrapolation steps, such as the Extragradient (\algname{EG}) method~\cite{korpelevich1976extragradient} and the Past Extragradient (\algname{PEG}) method~\cite{popov1980modification}. These modifications enable the algorithm to stabilize its iterates. In this section, we provide a detailed discussion of these algorithms, including their update rules and convergence properties.

\subsection{Deterministic Methods}
\paragraph{Gradient Descent Ascent Method.}
A natural extension of the classical Gradient Descent (\algname{GD}) algorithm for unconstrained minimization problems $ \min_{x \in \R^{d}} f(x)$ is obtained by replacing the objective’s gradient with the operator $F$.
Starting from the \algname{GD} update $x_{k+1}=x_k-\gamma_k \nabla f(x_k),$ the \algname{GDA} iteration for solving root-finding problems is given by
\begin{eqnarray}
    \label{eq:GDA}
    x_{k+1} & = & x_k - \gamma_k F(x_k),
\end{eqnarray}
where $\gamma_k > 0$ is the step size. Choosing the step size is critical, as a $\gamma_k$ that is too small slows progress, whereas a $\gamma_k$ that is too large causes divergence. When the operator $F$ is $\mu$-strongly monotone, \algname{GDA} with a constant step size achieves a linear convergence rate, given by $\mathcal{O} \left( \frac{L^2}{\mu^2} \log \frac{1}{\vep} \right)$. However, \algname{GDA} often fails to solve monotone root-finding problems \cite{noor2003new}. 

The failure of \algname{GDA} under monotonicity has inspired the development of extrapolation–based methods such as the classic \algname{EG} algorithm and its variant \algname{PEG}.

\paragraph{Extragradient Method.} 
\cite{korpelevich1976extragradient} augments the \algname{GDA} step with an additional look-ahead or extrapolation step, yielding the update rule
\begin{eqnarray}\label{eq:EG}
    \hx_k & = & x_k - \gamma_k F(x_k) \notag \\
    x_{k+1} & = & x_k - \omega_k F(\hx_k)
\end{eqnarray}
where $\gamma_k>0$ and $\omega_k>0$ are the extrapolation and update step sizes, respectively. The intermediate point $\hx_k$ serves as a proxy for the unavailable quantity $F(x_{k+1})$ and improves the stability of the algorithm.

When $F$ is $\mu$-strongly monotone, \cite{mokhtari2020unified} show that \algname{EG} enjoys a faster convergence rate compared to the \algname{GDA} with $\mathcal{O} \left( \frac{L}{\mu} \log \frac{1}{\vep} \right)$. As the iteration complexity scales with the condition number $\frac{L}{\mu}$, rather than $\frac{L^2}{\mu^2}$ as in \algname{GDA}, \algname{EG} is remarkably faster on ill-conditioned $\left( \frac{L}{\mu} >> 1 \right)$, strongly monotone
problems. Even for monotone problem, \algname{EG} guarantees convergence with sublinear $\mathcal{O} \left(\frac{1}{\vep} \right)$ rate~\cite{solodov1999hybrid, gorbunov2022extragradient}, a guarantee unattainable by \algname{GDA}.

However, in contrast to \algname{GDA}, \algname{EG} requires two oracle calls in each iteration. This motivates researchers to design single-call variants of the \algname{EG}, which require only one oracle call per iteration.

\paragraph{Past Extragradient Method.}

\cite{popov1980modification} modified the \algname{EG} algorithm by eliminating one oracle call per iteration. They reused the most recent look-ahead operator evaluation for their update rule which is given by
\begin{eqnarray}
    \hx_k & = & x_k - \gamma_k F(\hx_{k-1}) \notag \\
    x_{k+1} & = & x_k - \omega_k F(\hx_k) \label{eq:PEG}.
\end{eqnarray}
Unlike the classical \algname{EG} algorithm~\eqref{eq:EG}, the extrapolation step in~\eqref{eq:PEG} employs the stored value $F(\hx_{k-1})$ (computed at $k-1$ th iteration) rather than $F(x_k)$.  Consequently, the iteration $k$ requires a single fresh oracle evaluation, namely $F(\hx_k)$. For constant step sizes $\gamma_k = \gamma$ and $\omega_k = \alpha$, \algname{PEG} reduces to the Optimistic Gradient Descent Ascent (\algname{OGDA}) algorithm in the unconstrained setting, with the recursion
\begin{eqnarray*}
    \hx_{k+1} = \hx_k - (\alpha + \gamma) F(\hx_k) + \gamma F(\hx_{k-1}).
\end{eqnarray*}
Instead of \algname{PEG}, many works analyze \algname{OGDA} for solving min-max problems~\cite{mokhtari2019convergence}. Under $\mu$-strong monotonicity assumption on $F$, \algname{PEG} attains the same linear rate as
\algname{EG}, namely $\mathcal{O} \left( \frac{L}{\mu} \log \frac{1}{\vep}\right)$ iterations to achieve an $\varepsilon$-accurate solution \cite{mokhtari2020unified}. Because it needs only one oracle call per iteration, \algname{PEG} reduces the per-iteration cost by roughly a factor of two relative to \algname{EG}. Moreover, it can solve monotone problems at a sublinear rate $\mathcal{O}\left(\frac{1}{\varepsilon}\right)$ \cite{gorbunov2022last}, similar to \algname{EG}. 

Apart from these algorithms, there are various other algorithms used for solving min-max optimization problems like Extra Anchored Gradient \cite{yoon2021accelerated}, Halpern iteration \cite{diakonikolas2020halpern, halpern1967fixed}, Mirror-Prox \cite{nemirovski2004prox}, Proximal Point method \cite{rockafellar1976monotone}, Forward-Backward-Forward method \cite{tseng2000modified}.

\subsection{Stochastic Methods}
In modern machine–learning applications, data sets are often so large that evaluating the full operator $F$ at every iteration is prohibitively expensive. Therefore, practitioners use stochastic algorithms that use a stochastic estimate of the operator $F$. For instance, many machine learning applications have a finite-sum structure, i.e., the operators $F$ in \eqref{eq: Variational Inequality Definition} can be written as
\begin{eqnarray}\label{eq:finite_sum}
    F(x) = \frac{1}{n} \sum_{i = 1}^n F_i (x),
\end{eqnarray}
where $n$ is the total number of data and each $F_i$ is an operator based on data point $i \in [n]$. In such a setting, a stochastic estimate of $F$ is given by $F_{\mathcal{S}} (x) \eqdef \frac{1}{B} \sum_{i \in \mathcal{S}} F_i (x)$ where $\mathcal{S} \subseteq [n]$ is a random subset of size $B$. We utilize a stochastic estimate of operators to replace $F$ in the update rules of \algname{GDA}, \algname{EG}, and \algname{PEG}, thereby obtaining their stochastic versions. Thus, the stochastic Gradient Descent Ascent (\algname{SGDA}) \cite{loizou2021stochastic} update is
\begin{equation*}
    x_{k+1} = x_k - \gamma_k F_{\mathcal{S}_k}(x_k)
\end{equation*}
where $\mathcal{S}_k$ is a independent subset drawn on the $k$th iteration. Similarly the update rule of the same-sample stochastic \algname{EG}, which we call \algname{SSEG} \cite{gorbunov2022stochastic, mishchenko2020revisiting}, is given by
\begin{eqnarray*}
    \hx_k & = & x_k - \gamma_k F_{\mathcal{S}_k} (x_k) \\
    x_{k+1} & = & x_k - \omega_k F_{\mathcal{S}_k}(\hx_k).
\end{eqnarray*}
Note that this algorithm is called the same-sample as the same subset $\mathcal{S}_k$ is used both in the extrapolation and update step. However, \cite{gorbunov2022stochastic, juditsky2011solving} also investigates the use of independent samples for extrapolation and update step, which is known as independent sample stochastic \algname{EG} or in short \algname{ISEG}. In case of \algname{PEG}, the stochastic version has the following update rule
\begin{eqnarray*}
    \hx_k & = & x_k - \gamma_k F_{\mathcal{S}_{k-1}} (\hx_{k-1}) \\
    x_{k+1} & = & x_k - \omega_k F_{\mathcal{S}_k} (\hx_k).
\end{eqnarray*}
Unlike their deterministic counterparts, stochastic methods require additional conditions on the mini-batch estimator to guarantee convergence.  Common assumptions include bounded operator \cite{abernethy2021last,nemirovski2009robust}, bounded variance \cite{lin2020gradient,tran2020hybrid,juditsky2011solving}, and growth conditions \cite{lin2020finite}.  These assumptions and how they relate to one another are reviewed in detail in Chapter~\ref{chap:chap-2}.

Moreover, implementing the stochastic algorithms with step sizes proposed for deterministic counterparts may lead to divergence. Even though the deterministic algorithm converges with a constant step size (i.e., step sizes are the same for every iteration), the stochastic algorithms only converge to a neighbourhood of the solution. 
To achieve exact convergence, stochastic algorithms typically require a diminishing step size strategy.

\section{Convergence Analysis Overview for Min-Max Optimization} 

So far, we have presented the problem we are interested in for the thesis and the various algorithms used to solve it. In the rest of the thesis, we will focus on the convergence rates of the algorithms. The convergence rate of an algorithm is defined as the number of iterations or oracle calls (say, number of gradient computations) required to find the $\vep$-accurate solution.

We now present these notions with an example for better understanding. Here, we present the existing convergence analysis of \algname{EG} and discuss its convergence rate. This will help us understand the technical details in the chapters to come.

 \textbf{Sublinear Convergence:} Consider the \algname{EG} method for solving the monotone root-finding problem. Following \cite{gorbunov2022extragradient}, we can derive the following result.
\begin{theorem}
    Suppose $F$ is $L$-Lipschitz and monotone. Then \algname{EG} with step size $\omega_k = \gamma_k = \frac{1}{\sqrt{2}L}$ satisfy 
    \begin{eqnarray*}
        \min_{0 \leq k \leq K} \| F(x_k)\|^2 & \leq & \frac{4 L^2 \|x_0 - x_*\|^2}{K+1}.
    \end{eqnarray*}
\end{theorem}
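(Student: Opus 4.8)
The plan is to derive a one-step estimate of the form $\sqnorm{x_{k+1} - x_*} \leq \sqnorm{x_k - x_*} - c\,\sqnorm{F(x_k)}$ for an explicit positive constant $c$ depending only on $L$, and then to telescope it. Once such a contraction-with-residual bound is available, summing over $k = 0,\dots,K$ collapses the left-hand side, the nonnegativity of $\sqnorm{x_{K+1}-x_*}$ absorbs the boundary term, and bounding the minimum over $k$ by the average delivers the stated $\mathcal{O}\!\left(1/(K+1)\right)$ rate with the explicit constant $4L^2$.

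First I would expand the squared distance after the update $x_{k+1} = x_k - \omega F(\hx_k)$, obtaining
\[
\sqnorm{x_{k+1} - x_*} = \sqnorm{x_k - x_*} - 2\omega\inprod{F(\hx_k)}{x_k - x_*} + \omega^2\sqnorm{F(\hx_k)}.
\]
Next I would split the inner product as $\inprod{F(\hx_k)}{x_k - x_*} = \inprod{F(\hx_k)}{\hx_k - x_*} + \inprod{F(\hx_k)}{x_k - \hx_k}$. Monotonicity of $F$ together with $F(x_*) = 0$ (valid since $x_*$ is a root of the problem) shows the first term is nonnegative and may be dropped, while the extrapolation step yields $x_k - \hx_k = \gamma F(x_k)$, turning the second term into $\gamma\inprod{F(\hx_k)}{F(x_k)}$.

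The crux is combining this surviving cross term with the look-ahead term $\omega^2\sqnorm{F(\hx_k)}$. Setting $\gamma = \omega$ and applying the elementary identity $\sqnorm{a} - 2\inprod{a}{b} = \sqnorm{a-b} - \sqnorm{b}$ with $a = F(\hx_k)$ and $b = F(x_k)$ rewrites these terms as $\gamma^2\!\left(\sqnorm{F(\hx_k) - F(x_k)} - \sqnorm{F(x_k)}\right)$. Here $L$-Lipschitzness enters decisively: $\sqnorm{F(\hx_k) - F(x_k)} \leq L^2\sqnorm{\hx_k - x_k} = L^2\gamma^2\sqnorm{F(x_k)}$, so the bracket is bounded above by $-(1 - L^2\gamma^2)\sqnorm{F(x_k)}$. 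This is the main obstacle of the argument: everything hinges on the residual coefficient $\gamma^2(1 - L^2\gamma^2)$ being strictly positive, which forces $\gamma < 1/L$ and explains the prescribed choice $\gamma = \omega = \tfrac{1}{\sqrt{2}\,L}$, since it gives $L^2\gamma^2 = \tfrac12$ and hence coefficient $\gamma^2(1 - L^2\gamma^2) = \tfrac{1}{4L^2}$.

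Substituting, I obtain the per-step descent $\sqnorm{x_{k+1}-x_*} \leq \sqnorm{x_k - x_*} - \tfrac{1}{4L^2}\sqnorm{F(x_k)}$. Summing from $k=0$ to $K$ and using $\sqnorm{x_{K+1}-x_*}\geq 0$ gives $\sum_{k=0}^{K}\sqnorm{F(x_k)} \leq 4L^2\sqnorm{x_0 - x_*}$, and since the minimum over the indices is at most their average, I conclude $\min_{0\leq k\leq K}\sqnorm{F(x_k)} \leq \tfrac{4L^2\sqnorm{x_0 - x_*}}{K+1}$, as claimed. The only genuinely nontrivial step is the algebraic recombination in the third paragraph; the rest is expansion, telescoping, and the min-versus-average bound.
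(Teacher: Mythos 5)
Your proposal is correct and follows essentially the same route as the paper's proof: expand the squared distance, drop the monotonicity term, convert the cross term via the extrapolation step, recombine $\omega^2\|F(\hat x_k)\|^2 - 2\omega\gamma\langle F(\hat x_k),F(x_k)\rangle$ into $\gamma^2(\|F(\hat x_k)-F(x_k)\|^2 - \|F(x_k)\|^2)$, apply Lipschitzness, and telescope. The only cosmetic difference is that you specialize $\gamma=\omega$ before the polarization identity while the paper keeps the two step sizes separate until the end, which changes nothing in the result or the constants.
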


\begin{proof}
    Expanding $\| x_{k+1} - x_* \|^2$ using the update rule of \algname{EG} we have
    \begin{eqnarray*}
        \| x_{k+1} - x_* \|^2 & = & \| x_k - \omega_k F(\hx_k) - x_* \|^2 \\
        & = & \| x_k - x_*\|^2 - 2 \omega_k \left\langle F(\hx_k), x_k - x_* \right\rangle + \omega_k^2 \| F(\hx_k)\|^2 \\
        & = & \| x_k - x_*\|^2 - 2 \omega_k \left\langle F(\hx_k), x_k - \hx_k + \hx_k - x_* \right\rangle + \omega_k^2 \| F(\hx_k)\|^2 \\
        & = & \| x_k - x_*\|^2 - 2 \omega_k \left\langle F(\hx_k), x_k - \hx_k \right\rangle - 2 \omega_k \left\langle F(\hx_k), \hx_k - x_* \right\rangle \\
        && + \omega_k^2 \| F(\hx_k)\|^2 \\
        & = & \| x_k - x_*\|^2 - 2 \omega_k \left\langle F(\hx_k), x_k - \hx_k \right\rangle \\
        &&- 2 \omega_k \left\langle F(\hx_k) - F(x_*), \hx_k - x_* \right\rangle  + \omega_k^2 \| F(\hx_k)\|^2 \\
        & \overset{\eqref{eq:monotone}}{\leq} & \| x_k - x_*\|^2 - 2 \omega_k \left\langle F(\hx_k), x_k - \hx_k \right\rangle + \omega_k^2 \| F(\hx_k)\|^2 \\
        & = & \| x_k - x_*\|^2 - 2 \omega_k \gamma_k \left\langle F(\hx_k), F(x_k) \right\rangle + \omega_k^2 \| F(\hx_k)\|^2 
    \end{eqnarray*}
    where the last line follows from the extrapolation step of \algname{EG}. Note that we can expand $\la F(\hx_k), F(x_k) \ra = \frac{1}{2} \left(\| F(\hx_k)\|^2 + \| F(x_k)\|^2 - \| F(x_k) - F(\hx_k)\|^2 \right)$. Therefore we obtain 
    \begin{eqnarray*}
        \| x_{k+1} - x_* \|^2 & \leq & \| x_k - x_*\|^2 - \omega_k \gamma_k \| F(\hx_k)\|^2 - \omega_k \gamma_k \| F(x_k)\|^2  \\
        && + \omega_k \gamma_k \| F(x_k) - F(\hx_k)\|^2 + \omega_k^2 \| F(\hx_k)\|^2 \\
        & = & \| x_k - x_*\|^2 - \omega_k (\gamma_k - \omega_k) \| F(\hx_k)\|^2 - \omega_k \gamma_k \| F(x_k)\|^2  \\
        && + \omega_k \gamma_k \| F(x_k) - F(\hx_k)\|^2 \\
        & \leq & \| x_k - x_*\|^2 - \omega_k (\gamma_k - \omega_k) \| F(\hx_k)\|^2 - \omega_k \gamma_k \| F(x_k)\|^2  \\
        && + \omega_k \gamma_k L^2 \| x_k - \hx_k\|^2 \\
        & = & \| x_k - x_*\|^2 - \omega_k (\gamma_k - \omega_k) \| F(\hx_k)\|^2 - \omega_k \gamma_k \left(1 - \gamma_k^2 L^2 \right) \| F(x_k)\|^2 
    \end{eqnarray*}
    where the last inequality follows from the $L$ Lipschitz property of $F$ and the last line follows from the update rule of \algname{EG}. Now we use $\omega_k = \gamma_k = \frac{1}{\sqrt{2} L}$ to get
    \begin{eqnarray*}
        \|x_{k+1} - x_*\|^2 & \leq & \| x_k - x_*\|^2 - \frac{1}{4L^2} \| F(x_k) \|^2.
    \end{eqnarray*}
    Then, rearranging the terms, we have
    \begin{eqnarray*}
        \frac{1}{4L^2} \| F(x_k) \|^2 & \leq & \| x_k - x_*\|^2 - \| x_{k+1} - x_*\|^2.
    \end{eqnarray*}
    Now we sum up this inequality for $k = 0, 1, \cdots K$ to get
    \begin{eqnarray*}
        \frac{1}{4L^2} \sum_{k = 0}^K \| F(x_k) \|^2 & \leq & \| x_0 - x_*\|^2 - \| x_{K+1} - x_*\|^2.
    \end{eqnarray*}
    As $\| x_{K+1} - x_*\|^2 \geq 0$, we have 
    \begin{eqnarray*}
        \frac{1}{4L^2} \sum_{k = 0}^K \| F(x_k) \|^2 & \leq & \| x_0 - x_*\|^2.
    \end{eqnarray*}
    Therefore, dividing both sides by $K+1$ to get
    \begin{eqnarray*}
        \frac{1}{K+1} \sum_{k = 0}^K \| F(x_k)\|^2 & \leq & \frac{4 L^2 \| x_0 - x_*\|^2}{K+1}.
    \end{eqnarray*}
    Now note that, $\min_{0 \leq k \leq K} \| F(x_k)\|^2 \leq  \frac{1}{K+1} \sum_{k = 0}^K \| F(x_k)\|^2$. Hence we obtain
    \begin{eqnarray*}
        \min_{0 \leq k \leq K} \| F(x_k)\|^2 & \leq & \frac{4 L^2 \| x_0 - x_*\|^2}{K+1}.
    \end{eqnarray*}
    This completes the proof.
\end{proof}

Therefore, in order to find a point $x_K$ such that $\| F(x_K)\|^2 \leq \vep$ we need to run \algname{EG} for atleast $K \geq \frac{4L^2 \|x_0 - x_*\|^2}{\vep}$ iterations and require atleast $\frac{8L^2 \|x_0 - x_*\|^2}{\vep}$ oracle calls (as each iteration of \algname{EG} requires two operator evaluation). This rate of convergence $\mathcal{O} \left( \frac{1}{\vep} \right)$ is also known as a sublinear rate.

\textbf{Linear Convergence:} Similarly, \cite{mokhtari2020unified, gidel2018variational} proves that \algname{EG} with step size $\gamma_k = \omega_k = \frac{1}{4L}$ satisfies
\begin{eqnarray}\label{eq:strong_monotone_exp_decay}
    \|x_{K} - x_*\|^2 \leq \left( 1- \frac{\mu}{4L} \right)^{K+1} \| x_0 - x_*\|^2
\end{eqnarray}
for solving the $\mu$-strongly monotone root-finding problems. Therefore, to ensure $\|x_K - x_*\|^2 \leq \vep$ we require at least $K \geq \frac{4L}{\mu} \log \frac{\|x_0 - x_*\|^2}{\vep}$ iteratios or $\frac{8L}{\mu} \log \frac{\|x_0 - x_*\|^2}{\vep}$ oracle calls using the following lemma.
\begin{lemma}
    Suppose $\left\{ A_k \right\}$ is a non-negative sequence satisfying $A_k \leq \varrho^k A_0$ for all $k$ and some $\varrho \in (0,1)$. Then, for any $\varepsilon > 0$, we have $A_K \leq \epsilon$ when $K \geq \frac{1}{1 - \varrho} \log \left( \frac{A_0}{\epsilon} \right)$.    
\end{lemma}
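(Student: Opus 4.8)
The plan is to reduce everything to the scalar inequality $\varrho^K A_0 \le \epsilon$ and then replace the exact (but awkward) iteration threshold by the cleaner one stated in the lemma. Since the hypothesis already supplies $A_K \le \varrho^K A_0$, it suffices to prove that the stated lower bound on $K$ forces $\varrho^K A_0 \le \epsilon$; the conclusion $A_K \le \epsilon$ then follows by transitivity. I would first dispose of the trivial case $A_0 \le \epsilon$: here $A_K \le \varrho^K A_0 \le A_0 \le \epsilon$ for every $K \ge 0$, while the stated threshold $\tfrac{1}{1-\varrho}\log(A_0/\epsilon)$ is nonpositive, so there is nothing to verify. Thus I may assume $A_0 > \epsilon$, which guarantees $\log(A_0/\epsilon) > 0$.

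Next I would pin down the sharp threshold. Demanding $\varrho^K A_0 \le \epsilon$ is equivalent to $\varrho^K \le \epsilon/A_0$, and taking logarithms---noting $\log \varrho < 0$ because $\varrho \in (0,1)$, so the inequality reverses upon division---this rearranges exactly to $K \ge \tfrac{\log(A_0/\epsilon)}{\log(1/\varrho)}$. So the genuinely optimal number of iterations is $\tfrac{\log(A_0/\epsilon)}{\log(1/\varrho)}$, whereas the lemma advertises the more interpretable quantity $\tfrac{1}{1-\varrho}\log(A_0/\epsilon)$. The task therefore reduces to showing that the advertised quantity dominates the optimal one.

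The key step---the only place needing an honest inequality rather than bookkeeping---is to show $\tfrac{1}{1-\varrho} \ge \tfrac{1}{\log(1/\varrho)}$, which is precisely the bound $\log(1/\varrho) \ge 1-\varrho$. This follows from the standard estimate $\log x \le x-1$, valid for all $x>0$, applied at $x=\varrho$: it yields $\log\varrho \le \varrho-1$, i.e. $\log(1/\varrho) \ge 1-\varrho$. Combining, any $K \ge \tfrac{1}{1-\varrho}\log(A_0/\epsilon)$ automatically satisfies $K \ge \tfrac{\log(A_0/\epsilon)}{\log(1/\varrho)}$, hence $\varrho^K A_0 \le \epsilon$ and therefore $A_K \le \epsilon$, as claimed. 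I do not anticipate any real obstacle: the entire content is the logarithmic inequality $\log(1/\varrho) \ge 1-\varrho$, which trades the tight but unwieldy rate $1/\log(1/\varrho)$ for the cleaner $1/(1-\varrho)$.
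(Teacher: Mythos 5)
Your proposal is correct and follows essentially the same route as the paper: both arguments reduce to the single inequality $\tfrac{1}{1-\varrho}\log(1/\varrho)\ge 1$ (equivalently $\log(1/\varrho)\ge 1-\varrho$), which the paper asserts without proof and you derive from $\log x\le x-1$. Your explicit treatment of the degenerate case $A_0\le\epsilon$ is a minor extra care the paper omits, but it does not change the substance of the argument.
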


\begin{proof}
    Note that for any $\varrho \in (0, 1)$ we have $\frac{1}{1 - \varrho} \log \left( \frac{1}{\varrho} \right) >  1$. Thus, for $K \geq \frac{1}{1 - \varrho} \log \left( \frac{A_0}{\epsilon} \right)$ we obtain
    \begin{eqnarray*}
        \log \left( \frac{A_0}{A_K} \right) & \geq & \log \left( \frac{1}{\varrho^K} \right) \\
         & = & K \log \left( \frac{1}{\varrho} \right) \\
         & \geq &  \frac{1}{1 - \varrho} \log \left( \frac{1}{\varrho} \right) \log \left( \frac{A_0}{\epsilon} \right) \\
         & \geq & \log \left( \frac{A_0}{\epsilon} \right).
    \end{eqnarray*}
    Then applying exponentials to the above inequality completes the proof.
\end{proof}

This convergence rate, expressed as $\mathcal{O} \left( \frac{L}{\mu} \log \frac{1}{\vep} \right)$, is referred to as a linear convergence rate, as $\|x_K - x_0\|^2$ in \eqref{eq:strong_monotone_exp_decay} decreases exponentially with the number of iterations. In contrast, a rate of the form $\mathcal{O} \left( \frac{1}{\vep} \right)$ is called sublinear, indicating a slower, polynomial decay. These notions of sublinear and linear convergence will be used throughout the subsequent chapters to characterize the efficiency of the algorithms under consideration.

\section{Structure of the Thesis}

In this section, we provide a brief overview of the four main chapters of the thesis, followed by a roadmap to help readers understand the organization of the thesis chapters.

\subsection{Short Summary of the Chapters}

Here we summarize each chapter of the thesis. The detailed proofs of any claims made here are left to the chapters.

\paragraph{Chapter 2: Single-Call Stochastic Extragradient Methods.} Single-call stochastic extragradient methods, like stochastic past extragradient
(\algname{SPEG}) have gained considerable interest in recent years and are among the most efficient algorithms for solving large-scale min-max optimization and root-finding problems that appear in various machine learning tasks. However, despite their undoubted popularity, current convergence analyses of \algname{SPEG} and \algname{SOG} require strong assumptions like bounded
variance or growth conditions. In addition, several important questions regarding the convergence properties of these methods are still open, including mini-batching, efficient step-size selection, and convergence guarantees under different sampling strategies. In this chapter, we address these questions and provide convergence guarantees for two large classes of structured non-monotone root-finding problems: (i) quasi-strongly monotone problems (a generalization of strongly monotone problems) and (ii) weak Minty root-finding problems (a generalization of monotone and Minty root-finding problems). We introduce the expected residual condition, explain its benefits, and show how it allows us to obtain a strictly weaker bound than previously used growth conditions, expected co-coercivity, or bounded variance assumptions. Finally, our convergence analysis holds under the arbitrary sampling paradigm, which includes importance sampling and various mini-batching strategies as special cases.

\paragraph{Chapter 3: Polyak-type (Stochastic) Extragradient.} The \algname{EG} method and its stochastic counterpart \algname{SEG} are foundational algorithms for solving min-max optimization and root-finding problems. A significant practical challenge in deploying these methods is the careful tuning of the two main parameters of the algorithms: the extrapolation and the update step sizes. In this chapter, inspired by the successful use of Polyak step size in solving deterministic minimization problems, as well as its modern stochastic variants used primarily in machine learning applications, we propose a novel class of Polyak-type extragradient algorithms. As a first step, in the deterministic setting (\algname{EG}), we propose a Polyak-type choice for the update step size, while we use a line search strategy for selecting the extrapolation step size.  Extending this framework to stochastic environments, we present and analyze the first stochastic Polyak-type extragradient methods, namely \algname{PolyakSEG} and \algname{DecPolyakSEG}. Our methods, which incorporate adaptive update and line search extrapolation, stand out by eliminating the need for manual parameter tuning while still guaranteeing exact convergence. We provide rigorous convergence guarantees for both monotone and strongly monotone scenarios. We supplement our analysis with experiments that validate the theory and demonstrate the effectiveness of the new algorithms.

\paragraph{Chapter 4: Extragradient Method for $(L_0, L_1)$-Lipschitz Root-finding Problems.} Despite its longstanding presence and significant attention within the optimization community for the extragradient method, most works focusing on understanding its convergence guarantees assume the strong $L$-Lipschitz condition. In this chapter, building on the proposed assumptions by \cite{zhang2019gradient} for minimization and \cite{pmlr-v235-vankov24a} for VIPs, we focus on the more relaxed $\alpha$-symmetric $(L_0, L_1)$-Lipschitz condition. This condition generalizes the standard Lipschitz assumption by allowing the Lipschitz constant to scale with the operator norm, providing a more refined characterization of problem structures in modern machine learning. Under the $\alpha$-symmetric $(L_0, L_1)$-Lipschitz condition, we propose a novel step size strategy for \algname{EG} and establish sublinear convergence rates for monotone operators and linear convergence rates for strongly monotone operators. Additionally, we prove local convergence guarantees for weak Minty root-finding problems. We supplement our analysis with experiments that validate our theory and demonstrate the effectiveness and robustness of the proposed step sizes for \algname{EG}.

\paragraph{Chapter 5: Communication-Efficient Gradient Descent Ascent Methods for Distributed Min-Max Optimization.} Distributed and federated learning algorithms and techniques are associated primarily
with minimization problems. However, with the increase of minimax optimization and variational inequality problems in machine learning, the necessity of
designing efficient distributed/federated learning approaches for these problems is
becoming more apparent. In this paper, we provide a unified convergence analysis of communication-efficient local training methods for distributed variational
inequality problems (VIPs). Our approach is based on a general key assumption on
the stochastic estimates that allows us to propose and analyze several novel local
training algorithms under a single framework for solving a class of structured nonmonotone VIPs. We present the first local gradient descent-accent algorithms with
provably improved communication complexity for solving distributed variational
inequalities on heterogeneous data. The general algorithmic framework recovers
state-of-the-art algorithms and their sharp convergence guarantees when the setting is specialized to minimization or minimax optimization problems. Finally,
we demonstrate the strong performance of the proposed algorithms compared to
state-of-the-art methods when solving federated minimax optimization problems.

\begin{figure}
    \centering
    \includegraphics[width=1\linewidth]{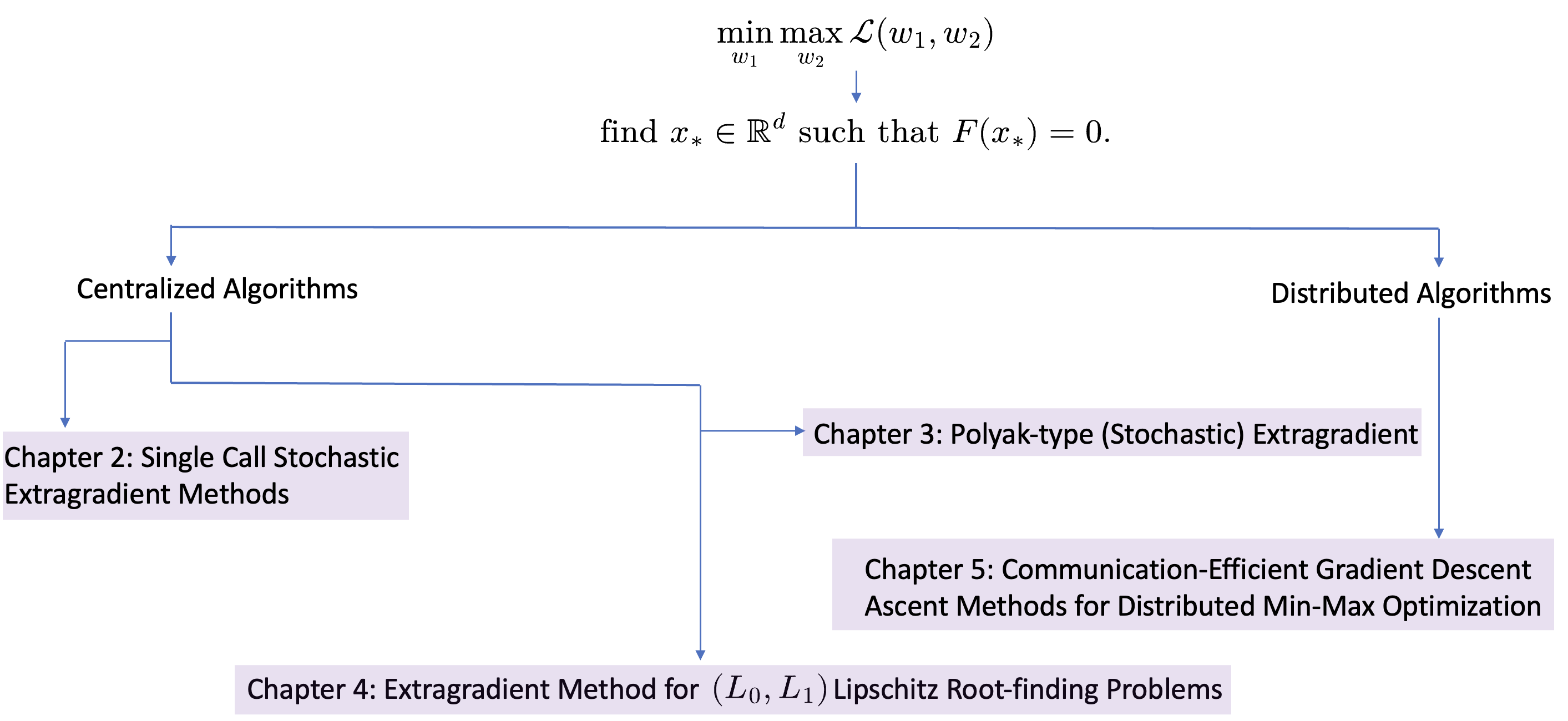}
    \caption{Roadmap of the thesis. This figure illustrates the organization of the thesis chapters and their relationship to the problem under study.}
    \label{fig:roadmap}
\end{figure}

\subsection{Roadmap and Connection between Chapters} For chapters \ref{chap:chap-2}, \ref{chap:chap-3}, \ref{chap:chap-4}, our focus is on the unconstrained min-max optimization problem \eqref{eq:unconstrained_MinMax}, which can be reformulated as the root-finding problem of~\eqref{eq: Variational Inequality Definition}. In chapter \ref{chap:chap-5}, we deal with the variational inequality problem of \eqref{eq:constrained_vip}.

\paragraph{Algorithms.} Chapters \ref{chap:chap-2}, \ref{chap:chap-3}, and \ref{chap:chap-4} consider the centralized setting, while Chapter \ref{chap:chap-5} addresses the distributed min-max optimization problem. In Chapter \ref{chap:chap-2}, we analyze the stochastic single-call algorithm \algname{SPEG}. Chapter \ref{chap:chap-3} focuses on the \algname{PolyakEG} algorithm and introduces two novel stochastic adaptive algorithms: \algname{PolyakSEG} and \algname{DecPolyakSEG}. In Chapter \ref{chap:chap-4}, we study the \algname{EG} algorithm and propose new step-size strategies that ensure convergence under relaxed conditions.

The algorithms presented in chapters \ref{chap:chap-2}, \ref{chap:chap-3}, \ref{chap:chap-4} are centralized, assuming a single node (or computational unit) with access to all the data. In contrast, the distributed setting involves data that is partitioned across multiple nodes, each with limited local information and computational resources (we provide more details on this setting in chapter \ref{chap:chap-5}). In chapter \ref{chap:chap-5}, we shift focus to this distributed paradigm and introduce the \algname{ProxSkip-VIP-FL} and \algname{ProxSkip-L-SVRGDA-FL} algorithms, which demonstrate improved communication efficiency compared to existing methods. Note that, in the centralized setting, we employ the Extragradient type method, whereas in the distributed setting, we utilize the Gradient Descent Ascent type method with local steps. 

\paragraph{Classes of Problems.} In this thesis, we investigate different classes of problems to establish convergence guarantees across various chapters. Strongly monotone problems are studied in Chapters \ref{chap:chap-3}, \ref{chap:chap-4}, and \ref{chap:chap-5}, while monotone problems are considered in Chapters \ref{chap:chap-3} and \ref{chap:chap-4}. Chapter \ref{chap:chap-2} focuses on structured non-monotone problems, including quasi-strongly monotone and weak Minty root-finding problems. In all chapters, we attain sublinear convergence rates for monotone and weak Minty problems, whereas linear convergence rates are achieved for strongly monotone and quasi-strongly monotone problems.

\section{Research Contributions and Related Publications}

The content of the thesis is based on the following publications and preprints:

\textbf{Chapter 2}

$\bullet$ Sayantan Choudhury, Eduard Gorbunov, and Nicolas Loizou. "Single-Call Stochastic Extragradient Methods for Structured Non-monotone Variational Inequalities: Improved Analysis under Weaker Conditions", Neural Information Processing Systems (NeurIPS), 2023. \cite{choudhury2023single}

\textbf{Chapter 3}

$\bullet$ Sayantan Choudhury, TaeHo Yoon, and Nicolas Loizou. "Polyak Meets (Stochastic) Extragradient: Adaptive Update with Line Search Extrapolation", Preprint. \cite{choudhury2025PolyakEG}

\textbf{Chapter 4} 

$\bullet$ Sayantan Choudhury and Nicolas Loizou. "Extragradient Method for $(L_0, L_1)$-Lipschitz Root-finding Problems", Neural Information Processing Systems (NeurIPS), 2025. \cite{choudhury2025extragradient}

\textbf{Chapter 5}

$\bullet$ Siqi Zhang, Sayantan Choudhury, and Nicolas Loizou. "Communication-Efficient Gradient Descent Ascent Methods for Distributed Variational Inequalities: Unified Analysis and Local Updates", International Conference on Learning Representations (ICLR), 2024. \cite{zhang2023communication}

\textbf{Additional Research Contributions:} Apart from the above set of works, I also co-authored the following works during my course of study, which were not used in the formation of this thesis:

$\bullet$ Sayantan Choudhury, Nazarii Tupitsa, Nicolas Loizou, Samuel Horváth, Martin Takac, and Eduard Gorbunov. "Remove that Square Root: A New Efficient Scale-Invariant Version of AdaGrad" Neural Information Processing Systems (NeurIPS), 2024. \cite{choudhury2024remove}

$\bullet$ Eduard Gorbunov, Nazarii Tupitsa, Sayantan Choudhury, Alen Aliev, Peter Richtárik, Samuel Horváth, and Martin Takáč, "Methods for Convex $(L_0, L_1)$-Smooth Optimization: Clipping, Acceleration, and Adaptivity" International Conference on Learning Representations (ICLR), 2025. \cite{gorbunov2024methods}

$\bullet$ TaeHo Yoon, Sayantan Choudhury, and Nicolas Loizou, "Multiplayer Federated Learning:
Reaching Equilibrium with Less Communication", Neural Information Processing Systems (NeurIPS), 2025. \cite{yoon2025multiplayer}

\chapter{Single Call Stochastic Extragradient Methods} \label{chap:chap-2}




\section{Introduction}\label{Introduction}

In this chapter, we consider \eqref{eq: Variational Inequality Definition} where operator $F$ has finite-sum structure 
\begin{equation}\label{eq:chap2_finitesum}
    F(x) \eqdef \frac{1}{n} \sum_{i = 1}^n F_i(x)
\end{equation}
where each $F_i: \mathbb{R}^d \to \mathbb{R}^d $ is a Lipschitz continuous operator.

In modern machine learning applications, game-theoretical formulations that are special cases of problem~\eqref{eq: Variational Inequality Definition} are rarely monotone. That is, the min-max optimization problem~\eqref{eq: MinMax} does not satisfy the popular and well-studied convex-concave setting. For this reason, the ML community started focusing on non-monotone problems with extra structural properties.\footnote{The computation of approximate first-order locally optimal solutions for general non-monotone problems (without extra structure) is intractable. See \cite{daskalakis2021complexity} and \cite{diakonikolas2021efficient} for more details.} In this chapter, we focus on such settings (structured non-monotone operators) for which we are able to provide tight convergence guarantees and avoid the standard issues (like cycling and divergence of the methods) appearing in the more general non-monotone regime. In particular, we focus on understanding and efficiently analyze the performance of single-call extragradient methods for solving (i) $\mu$-quasi-strongly monotone VIPs~\cite{loizou2021stochastic,beznosikov2022stochastic} and (ii) weak Minty variational inequalities~\cite{diakonikolas2021efficient, lee2021fast}.

\subsection{Main Contribution}

\begin{itemize}
    \item \textbf{Expected Residual.} We propose the expected residual \eqref{eq: ER Condition} condition for stochastic variational inequality problems \eqref{eq: Variational Inequality Definition}. We explain the
    benefits of \ref{eq: ER Condition} and show how it can be used to derive an upper bound on $\E \|g(x)\|^2$ (see Lemma~\ref{Lemma: variance bound}) that it is strictly weaker than the bounded variance assumption and “growth conditions” previously used for the analysis of stochastic algorithms for solving \eqref{eq: Variational Inequality Definition}. We prove that \ref{eq: ER Condition} holds for a large class of operators, i.e.,  whenever  $F_i$ of \eqref{eq: Variational Inequality Definition} are Lipschitz continuous.

\begin{table*}[htbp]
    \centering
    \caption{
    \small
    Summary of known and new convergence results for versions of \algname{SEG} and \algname{SPEG} with constant step-sizes applied to solve quasi-strongly monotone variational inequalities and variational inequalities with operators satisfying Weak Minty condition. Columns: ``Setup'' = quasi-strongly monotone or Weak MVI; ``No UBV?'' = is the result derived without bounded variance assumption?; ``Single-call'' = does the method require one oracle call per iteration?; ``Convergence rate'' = rate of convergence neglecting numerical factors. Notation: $K$ = number of iterations; $L_{\max} = \max_{i\in [n]} L_i$, where $L_i$ is a Lipschitz constant of $F_i$; $\overline{\mu} = \frac{1}{n}\sum_{i=1}^n\mu_i$, where $\mu_i$ is quasi-strong monotonicity constant of $F_i$ (see details in \cite{gorbunov2022stochastic}); $\sigma_{\text{US}*}^2 = \frac{1}{n}\sum_{i=1}^n \|F_i(x_*)\|^2$; $\overline{L} = \frac{1}{n}\sum_{i=1}^n L_i$; $\sigma_{\text{IS}*}^2 = \frac{1}{n}\sum_{i=1}^n \frac{\overline{L}}{L_i}\|F_i(x_*)\|^2$; $L$ = Lipschitz constant of $F$; $\mu$ = quasi-strong monotonicity constant of $F$; $\delta, \sigma_*^2$ = parameters from \eqref{eq: variance bound}; $\rho$ = parameter from Weak Minty condition; $\tau$ = batchsize.}
    \label{tab:comparison_of_rates_21}
    \begin{threeparttable}
    \resizebox{\columnwidth}{!}{%
        \begin{tabular}{|c|c|c c c|}
        \hline
        Setup & Method & No UBV? & Single-call? & Convergence rate 
        \\
        \hline\hline
        \multirow{7}{2cm}{\centering Quasi-strong mon.} & \begin{tabular}{c}
            \algname{S-SEG-US}\\
            \cite{gorbunov2022stochastic}
        \end{tabular} & \cmark\tnote{{\color{blue}(1)}} & \xmark & $\frac{L_{\max}}{\overline{\mu}}\exp\left(- \frac{\overline{\mu}}{L_{\max}}K\right) + \frac{\sigma_{\text{US}*}^2}{\overline{\mu}^2 K}$\\
        & \begin{tabular}{c}
            \algname{S-SEG-IS}\\
            \cite{gorbunov2022stochastic}
        \end{tabular} & \cmark\tnote{{\color{blue}(1)}} & \xmark & $\frac{\overline{L}}{\overline{\mu}}\exp\left(- \frac{\overline{\mu}}{\overline{L}}K\right) + \frac{\sigma_{\text{IS}*}^2}{\overline{\mu}^2 K}$\\
        & \begin{tabular}{c}
            \algname{SPEG}\\
            \cite{hsieh2019convergence}
        \end{tabular} & \xmark\tnote{{\color{blue}(2)}} & \cmark & $\frac{L}{\mu}\exp\left(- \frac{\mu}{L}K\right) + \frac{\sigma_*^2}{\mu^2 K}$\tnote{{\color{blue}(3)}}\\
        &\cellcolor{bgcolor2}\begin{tabular}{c}
            \algname{SPEG}\\
            (This chapter)
        \end{tabular} & \cellcolor{bgcolor2}\cmark & \cellcolor{bgcolor2}\cmark & \cellcolor{bgcolor2} $\max\left\{\frac{L}{\mu}, \frac{\delta}{\mu^2}\right\}\exp\left(- \min\left\{\frac{\mu}{L}, \frac{\mu^2}{\delta}\right\}K\right) + \frac{\sigma_*^2}{\mu^2 K}$ \\

        \hline\hline
        \multirow{6}{2cm}{\centering Weak MVI\tnote{{\color{blue}(4)}}} & \begin{tabular}{c}
            \algname{SEG+}\\
            \cite{diakonikolas2021efficient}
        \end{tabular} & \xmark\tnote{{\color{blue}(2)}} & \xmark & {\large $\frac{L^2\|x_0 - x_*\|^2}{K(1 - 8\sqrt{2}L\rho)} + \frac{\sigma_*^2}{\tau (1 - 8\sqrt{2}L\rho)}$} \tnote{{\color{blue}(5)}}\\
        & \begin{tabular}{c}
            \algname{OGDA+}\\
            \cite{bohm2022solving}
        \end{tabular} & \xmark\tnote{{\color{blue}(2)}} & \cmark & {\large $\frac{\|x_0 - x_*\|^2}{Kac(a - \rho)} + \frac{\sigma_*^2}{\tau L^2 ac(a - \rho)}$} \tnote{{\color{blue}(6)}}\\
        & \cellcolor{bgcolor2}\begin{tabular}{c}
            \algname{SPEG}\\
            (This chapter)
        \end{tabular} & \cellcolor{bgcolor2}\cmark & \cellcolor{bgcolor2}\cmark &\cellcolor{bgcolor2} {\large $\frac{\left(1 + \frac{48\omega\gamma\delta}{\tau(1-L\gamma)^2}\right)^K\|x_0 - x_*\|^2}{K \omega\gamma(1-L(\gamma+4\omega))} + \frac{\left(1 + \frac{1-L\gamma}{K}\left(1 + \frac{48\omega\gamma\delta}{\tau(1-L\gamma)^2}\right)^K\right)\sigma_*^2}{\tau(1-L\gamma)(1-L(\gamma+4\omega))}$} \tnote{{\color{blue}(7)}} \; \\
        \hline
    \end{tabular}%
    }
    \begin{tablenotes}
        {\small
        \item [{\color{blue}(1)}] Quasi-strong monotonicity of all $F_i$ is assumed.
        \item [{\color{blue}(2)}] It is assumed that \eqref{eq: variance bound} holds with $\delta = 0$.
        \item [{\color{blue}(3)}] \cite{hsieh2019convergence} do not derive this result but it can be obtained from their proof using standard \\
        choice of step-sizes.
        \item [{\color{blue}(4)}] All mentioned results in this case require large batchsizes $\tau = \cO(K)$ to get $\cO(\nicefrac{1}{K})$ rate.
        \item [{\color{blue}(5)}] The result is derived for $\rho < \nicefrac{1}{8\sqrt{2}L}$.
        \item [{\color{blue}(6)}] The result is derived for $\rho < \nicefrac{3}{8L}$. Here $a$ and $c$ are assumed to satisfy \\
        $aL \leq \frac{7 - \sqrt{1 + 48c^2}}{8(1+c)}$, $c > 0$ and $a > \rho$.
        \item [{\color{blue}(7)}] The result is derived for $\rho < \nicefrac{1}{2L}$. Here we assume that $\max\{2\rho, \nicefrac{1}{(2L)}\} < \gamma < \nicefrac{1}{L}$ and \\
        $0 < \omega < \min\{\gamma - 2\rho, \nicefrac{(4-\gamma L)}{4L}\}$.
        }
        \vspace{-4mm}
    \end{tablenotes}
    \end{threeparttable}
\end{table*}

    \item \textbf{Novel Convergence Guarantees.} We prove the first convergence guarantees for \algname{SPEG}\eqref{SPEG_UpdateRule} in the quasi-strongly monotone~\eqref{eq: Strong Monotonicity} and weak MVI~\eqref{eq: weak MVI} cases \emph{without using the bounded variance assumption}. We achieve that by using the proposed \eqref{eq: ER Condition} condition. In particular, for the class of quasi-strongly monotone VIPs, we show a linear
    convergence rate to a neighborhood of $x_*$ when constant step sizes are used. We also provide theoretically
    motivated step size switching rules that guarantee exact convergence of \algname{SPEG} to $x_*$. In the weak MVI case, we prove the convergence of \algname{SPEG} for $\rho < \frac{1}{2L}$, improving the existing restrictions on $\rho$. We compare our results with the existing literature in Table~\ref{tab:comparison_of_rates_21}.
    
    \item \textbf{Arbitrary Sampling.} Via a stochastic reformulation of the variational inequality problem~\eqref{eq: Variational Inequality Definition} we explain how our convergence guarantees of \algname{SPEG} hold under the arbitrary sampling paradigm. This allows us to cover a wide range of samplings for \algname{SPEG} that were never considered in the literature before, including mini-batching, uniform sampling, and importance sampling as special cases. In this sense, our analysis of \algname{SPEG} is unified for different sampling strategies. Finally, to highlight the tightness of our analysis, we show that the best-known convergence guarantees of deterministic \algname{PEG} for strongly monotone and weak MVI can be obtained as special cases of our main theorems.
\end{itemize}


\section{Stochastic Reformulation of VIPs \& Single-Call Extragradient Methods}

In this chapter, we provide a theoretical analysis of single-call stochastic extragradient methods that allows us to obtain convergence guarantees of any minibatch and reasonable sampling selection.  We achieve that by using the recently proposed ``stochastic reformulation”
of the variational inequality problem \eqref{eq: Variational Inequality Definition} from \cite{loizou2021stochastic}. That is, to allow for any form of minibatching, we use the \emph{arbitrary sampling} notation 
\begin{equation}
\label{gEstimator}
g(x) = F_v(x) \eqdef \frac{1}{n} \sum _{i=1}^n v_i F_i(x),
\end{equation}
where $v \in \R^n_+$ is a random \emph{sampling vector} drawn from a user-defined distribution $\cD$ such that $\E_{\cD}[v_i]  = 1, \,\mbox{for }i=1,\ldots, n$. In this setting, the original problem \eqref{eq: Variational Inequality Definition} can be equivalently written as,
\begin{equation}\label{Reformulation}
    \text{Find } x_* \in \R^d: \E_\cD \left[F_v(x_*)\eqdef\frac{1}{n}\sum_{i=1}^n v_i F_i(x_*) \right]=0,
\end{equation}
where the equivalence trivially holds since $\E_{\cD}[F_v(x)] =\frac{1}{n} \sum _{i=1}^n \E_{\cD}[v_i] F_i(x) = F(x).$

In this chapter, we consider \emph{Stochastic Past Extragradient Method} (\algname{SPEG}) applied to~\eqref{Reformulation}:
\begin{equation}\label{SPEG_UpdateRule}
    \begin{split}
    \hat{x}_k & = x_k - \gamma_k F_{v_{k - 1}}(\hat{x}_{k - 1}) \\ 
    x_{k + 1} & = x_k - \om_k F_{v_k}(\hat{x}_k) 
    \end{split}
    \end{equation} 
where $\hat x_{-1} = x_{0}$ and $v^k \sim \cD$ is sampled i.i.d at each iteration and $\gamma_k >0$ and $\om_k >0$ are the extrapolation step size and update step size respectively. We note that in our convergence analysis, we allow selecting \emph{any} distribution $\cD$ that satisfies $\E_{\cD}[v_i]  = 1$ $\forall i$. This means that for a different selection of $\cD$, \eqref{SPEG_UpdateRule} yields different interpretations of \algname{SPEG} for solving the original problem \eqref{eq: Variational Inequality Definition}.

One example of distribution $\cD$ is $\tau$--minibatch sampling, which is defined as follows.
\begin{definition}[$\tau$-Minibatch sampling]\label{def:minibatch}
Let $\tau \in [n]$. We say that $v \in \R^n$ is a $\tau$--minibatch sampling if
for every subset $S \in [n]$ with $|S| =\tau$, we have that $\Prob{v=\frac{n}{\tau}\sum_{i \in S} e_i} \eqdef  \frac{1}{\binom{n}{\tau}} = \frac{\tau!(n-\tau)!}{n!}.$
\end{definition}

By using a double-counting argument, one can show that if $v$ is a $\tau$--minibatch sampling, it is also a valid sampling vector ($\E_{\cD}[v_i]  = 1$)~\cite{gower2019sgd}. We highlight that our analysis holds for every form of minibatching and for several choices of sampling vectors $v$. Later in Section~\ref{section: Arbitrary Sampling}, we provide more details related to non-uniform sampling. In addition, by Definition~\ref{def:minibatch}, it is clear that if $\tau=n$, then $v_i=1$ for all $i \in [n]$. Later in Section~\ref{Theory}, we prove how our analysis captures the deterministic Past Extragradient Method as a special case. 

In \cite{loizou2021stochastic}, an analysis of stochastic gradient descent-ascent ($x_{k+1} = x_k - \om_k F_{v_k}(x_k)$) under the arbitrary sampling paradigm was proposed for solving star-co-coercive VIPs. Later \cite{gorbunov2022stochastic}, extended this approach and provided general convergence guarantees for the stochastic extragradient method (\algname{SEG}) (a stochastic variant of the popular extragradient method \cite{korpelevich1976extragradient, juditsky2011solving}) for solving quasi-strongly monotone and monotone VIPs. Despite its popularity, \algname{SEG} requires two oracle calls per iteration, which makes it prohibitively expensive in many large-scale applications and not easily applicable to the online learning problems \cite{golowich2020tight}. This motivates us to explore in detail the convergence guarantees of single-call variants of extragradient methods (extragradient methods that require only a single oracle call per iteration). 

\paragraph{On Single-Call Extragradient Methods.} The seminal work of \cite{popov1980modification} is the first paper that proposes the deterministic Past Extagradient method. In the stochastic setting, \cite{hsieh2019convergence} provides an analysis of several stochastic single-call extragradient methods for solving strongly monotone VIPs. In \cite{hsieh2019convergence}, it was also shown that in the unconstrained setting, the update rules of Past Extragradient and Optimistic Gradient are exactly equivalent (see also Proposition \ref{proposition: equivalence of SPEG and SOG} in appendix). Through this connection, and via our stochastic reformulation~\eqref{Reformulation} our theoretical results hold also for the \emph{Stochastic Optimistic Gradient Method} (\algname{SOG}): $ x_{k+1} = x_k - \om_k F_{v_k}(x_k) - \gamma_k (F_{v_k}(x_k) - F_{v_{k-1}}(x_{k-1}))$.

\cite{bohm2022solving} provides the convergence guarantees of \algname{SOG} for weak MVI. To the best of our knowledge, our work is the first that provides convergence guarantees for \algname{SOG} under the arbitrary sampling paradigm (captures sampling beyond uniform sampling) and also without using the bounded variance assumption.


\section{Expected Residual}

In our theoretical results, we rely on the Expected Residual (ER) condition. In this chapter, we define ER and explain how it is connected with similar conditions used in optimization literature. We further provide sufficient conditions for ER to hold and prove how it can be used to obtain a strictly weaker upper bound of $\E \|g(x)\|^2$ than previously used growth conditions, expected co-coercivity, or bounded variance assumptions.
\begin{assumption}\label{as:expected_residual}
We say the Expected Residual (ER) condition holds if there is a parameter $\delta>0$ such that for an unbiased estimator $g(x)$ of the operator $F$, we have
\begin{align*}
\label{eq: ER Condition}
    \E \left[ \| (g(x) - g(x_*)) - (F(x) - F(x_*))\|^2 \right] \leq \frac{\delta}{2} \|x - x_*\|^2. \tag{ER}
\end{align*}
\end{assumption}

The \ref{eq: ER Condition} condition bounds how far the stochastic estimator $g(x)= F_v(x)$ \eqref{gEstimator} used in \algname{SPEG} is from the true operator $F(x)$. \ref{eq: ER Condition} depends on both the properties of the operator $F(x)$ and of the selection of sampling (via $g(x)$). 
Conditions similar to \ref{eq: ER Condition} appeared before in optimization literature but they have never been used in operator theory and the analysis of \algname{SPEG}. In particular, \cite{gower2021sgd} used a similar condition for analyzing \algname{SGD} in stochastic optimization problems but with the right-hand side of \ref{eq: ER Condition} to be the function suboptimality $f(x)-f(x_*)$ (such concept is not available in VIPs). In \cite{szlendak2021permutation} and \cite{gorbunov2022variance}, similar conditions appear under the name ``Hessian variance'' assumption for distributed minimization problems. In the context of distributed VIPs, a similar but stronger condition to \ref{eq: ER Condition} is used by \cite{beznosikov2023compression}. 

\textbf{Bound on Operator Noise.}
A common approach for proving the convergence of stochastic algorithms for solving the VIPs is assuming uniform boundedness of the stochastic operator or uniform boundedness of the variance. However, as we explain below, these assumptions either do not hold or are true only for a restrictive set of problems. In this chapter, we do not assume such bounds. Instead, we use the following direct consequence of \ref{eq: ER Condition}.

\begin{lemma}\label{Lemma: variance bound}
Let $\sigma_*^2 \eqdef \E\|g(x_*)\|^2 <\infty$ (operator noise at the optimum is finite). If \ref{eq: ER Condition} holds, then
\begin{equation}\label{eq: variance bound}
    \begin{split}
        \E \|g(x)\|^2 \leq \delta \|x - x_*\|^2 + \|F(x)\|^2 + 2\sigma_*^2.
    \end{split}
\end{equation}
\end{lemma}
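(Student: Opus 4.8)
The plan is to control $\E\|g(x)\|^2$ by first separating its deterministic (mean) part from its fluctuation, and then recognizing that the fluctuation, once re-centred at the solution $x_*$, is precisely what \eqref{eq: ER Condition} bounds. Because $g$ is an unbiased estimator of $F$, we have $\E[g(x)] = F(x)$, so the bias--variance identity $\E\|Z\|^2 = \|\E Z\|^2 + \E\|Z - \E Z\|^2$ applied to $Z = g(x)$ gives the exact split
\[
    \E\|g(x)\|^2 = \|F(x)\|^2 + \E\|g(x) - F(x)\|^2 .
\]
This already produces the term $\|F(x)\|^2$ with the correct unit coefficient and reduces the problem to bounding the variance $\E\|g(x) - F(x)\|^2$.

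For the variance term I would re-centre at $x_*$ by writing
\[
    g(x) - F(x) = \bigl[(g(x) - g(x_*)) - (F(x) - F(x_*))\bigr] + \bigl(g(x_*) - F(x_*)\bigr),
\]
where the first bracket is exactly the residual appearing in \eqref{eq: ER Condition}. Applying $\|a+b\|^2 \le 2\|a\|^2 + 2\|b\|^2$ and taking expectations, the first piece is bounded by $2 \cdot \frac{\delta}{2}\|x - x_*\|^2 = \delta\|x - x_*\|^2$ via \eqref{eq: ER Condition}, while the second piece is the centred noise at the optimum, so $\E\|g(x_*) - F(x_*)\|^2 \le \E\|g(x_*)\|^2 = \sigma_*^2$ (with equality since $F(x_*) = 0$ in the root-finding setting), contributing $2\sigma_*^2$. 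Combining with the identity above yields $\E\|g(x)\|^2 \le \|F(x)\|^2 + \delta\|x - x_*\|^2 + 2\sigma_*^2$, which is the claim.

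The only subtlety, and the place where the argument must be carried out in the right order, is the constant bookkeeping. A cruder route that expands $g(x) = g(x_*) + (g(x) - g(x_*))$ and applies $\|a+b\|^2 \le 2\|a\|^2 + 2\|b\|^2$ twice loses a factor of two on each term, producing the weaker bound $4\|F(x)\|^2 + 2\delta\|x - x_*\|^2 + 2\sigma_*^2$. The tight constants in \eqref{eq: variance bound} come precisely from peeling off the mean $F(x)$ first through the (cost-free, exact) bias--variance identity and only afterwards splitting the remaining centred fluctuation. Beyond unbiasedness, \eqref{eq: ER Condition}, and the finiteness of $\sigma_*^2$, no structural property of $F$ such as monotonicity or Lipschitzness is needed for this step.
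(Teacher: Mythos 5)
Your proof is correct and follows essentially the same route as the paper: both arguments combine the bias--variance identity $\E\|g(x)\|^2 = \|F(x)\|^2 + \E\|g(x)-F(x)\|^2$ with Young's inequality applied to the decomposition $g(x)-F(x) = \bigl[(g(x)-g(x_*)) - (F(x)-F(x_*))\bigr] + g(x_*)$, invoking \eqref{eq: ER Condition} on the first piece (the paper merely performs these two steps in the opposite order). Your closing remark about where the tight constants come from is accurate and consistent with the paper's bookkeeping.
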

\paragraph{Sufficient Conditions for \ref{eq: ER Condition}.}
Let us now provide sufficient conditions which guarantee that the \ref{eq: ER Condition} condition holds and give a closed-form expression for the expected residual parameter $\delta$ and $\sigma_*^2 = \E\|g(x_*)\|^2$ for the case of $\tau$-minibatch sampling (Def.~\ref{def:minibatch}).
\begin{proposition}\label{Prop_SufficientCondition}
Let $F_i$ of problem~\eqref{eq: Variational Inequality Definition} be $L_i$-Lipschitz operators, then \ref{eq: ER Condition} holds.  If, in addition, 
vector $v \in \R^n$ is a $\tau$--minibatch sampling (Def.~\ref{def:minibatch}) then: $\delta = \frac{2}{n \tau} \frac{n- \tau}{n -1} \sum_{i = 1}^n L_i^2, \text{and } \sigma_*^2 = \frac{1}{n \tau} \frac{n- \tau}{n -1} \sum_{i = 1}^n \|F_i(x_*)\|^2.$
\end{proposition}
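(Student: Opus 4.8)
The plan is to handle the two assertions with a single algebraic identity. Throughout, write $D_i \eqdef F_i(x) - F_i(x_*)$ and let $C_{ij} \eqdef \E[(v_i - 1)(v_j - 1)]$ denote the covariance entries of the sampling vector (recall $\E[v_i] = 1$). The first step is to notice that the quantity inside the \ref{eq: ER Condition} expectation is a centered sum: since $g(x) - g(x_*) = \frac{1}{n}\sum_{i=1}^n v_i D_i$ and $F(x) - F(x_*) = \frac{1}{n}\sum_{i=1}^n D_i$, subtraction gives
\[
    (g(x) - g(x_*)) - (F(x) - F(x_*)) = \frac{1}{n}\sum_{i=1}^n (v_i - 1) D_i .
\]
Expanding the squared norm and taking expectation then yields the bilinear form
\[
    \E\left[\left\|(g(x) - g(x_*)) - (F(x) - F(x_*))\right\|^2\right] = \frac{1}{n^2}\sum_{i,j=1}^n C_{ij}\left\langle D_i, D_j\right\rangle = \frac{1}{n^2}\tr(CG),
\]
where $C = (C_{ij})$ and $G = (\langle D_i, D_j\rangle)$ is the Gram matrix of the $D_i$.

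For the first (general) claim I would bound this directly. Both $C$ (a covariance matrix) and $G$ (a Gram matrix) are symmetric positive semidefinite, so $\tr(CG) \le \lambda_{\max}(C)\,\tr(G) = \lambda_{\max}(C)\sum_i \|D_i\|^2$. Applying the Lipschitz bounds $\|D_i\| \le L_i\|x - x_*\|$ gives an upper bound of $\frac{\lambda_{\max}(C)}{n^2}\left(\sum_i L_i^2\right)\|x - x_*\|^2$, establishing \ref{eq: ER Condition} with $\delta = \frac{2\lambda_{\max}(C)}{n^2}\sum_i L_i^2 > 0$ (finite whenever $v$ has finite second moments). Thus ER holds for any reasonable sampling once the $F_i$ are Lipschitz.

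For the exact constants under $\tau$-minibatch sampling, the real work is computing $C$. Using that $v_i = \frac{n}{\tau}$ when $i \in S$ and $0$ otherwise, for a uniform size-$\tau$ subset $S$, a direct count gives $\E[v_i^2] = \frac{n}{\tau}$ and, for $i \neq j$, $\E[v_i v_j] = \frac{n(\tau-1)}{\tau(n-1)}$, hence $C_{ii} = \frac{n-\tau}{\tau}$ and $C_{ij} = -\frac{n-\tau}{\tau(n-1)}$ for $i \neq j$. Substituting into the bilinear form and using $\sum_{i\neq j}\langle D_i, D_j\rangle = \|\sum_i D_i\|^2 - \sum_i\|D_i\|^2$ together with $\sum_i D_i = n(F(x) - F(x_*))$, the diagonal and off-diagonal pieces combine to
\[
    \frac{n-\tau}{n^2\tau(n-1)}\left(n\sum_i \|D_i\|^2 - \Big\|\sum_i D_i\Big\|^2\right).
\]
Dropping the nonnegative second term and applying $\|D_i\| \le L_i\|x - x_*\|$ produces exactly $\frac{\delta}{2}\|x - x_*\|^2$ with the claimed $\delta = \frac{2}{n\tau}\frac{n-\tau}{n-1}\sum_i L_i^2$.

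Finally, $\sigma_*^2 = \E\|g(x_*)\|^2$ follows from the same identity evaluated at the optimum. Writing $G_i \eqdef F_i(x_*)$ and using $\E[v_i v_j] = C_{ij} + 1$, one gets $\E\|g(x_*)\|^2 = \frac{1}{n^2}\sum_{ij}C_{ij}\langle G_i, G_j\rangle + \frac{1}{n^2}\|\sum_i G_i\|^2$, where the last term vanishes because $\sum_i G_i = nF(x_*) = 0$. Reusing the minibatch covariances and now invoking $\|\sum_i G_i\|^2 = 0$ collapses the expression to $\frac{n-\tau}{n\tau(n-1)}\sum_i\|F_i(x_*)\|^2$, the asserted value. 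I expect the main obstacle to be the bookkeeping in computing the off-diagonal covariances and correctly recombining the diagonal/off-diagonal contributions; once the identity reducing everything to $\frac{1}{n^2}\tr(CG)$ is in place, the rest is routine substitution of the explicit second moments of the minibatch sampling.
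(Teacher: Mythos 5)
Your proof is correct and follows essentially the same route as the paper's: both reduce the \ref{eq: ER Condition} quantity to the expected quadratic form $\frac{1}{n^2}\E\big[(v-\mathbf{1})^{\top} G\, (v-\mathbf{1})\big]=\frac{1}{n^2}\tr\!\left(\mathbf{Var}[v]\, G\right)$ with $G$ the Gram matrix of the differences $F_i(x)-F_i(x_*)$, and then substitute the second moments of the $\tau$--minibatch sampling (the paper bounds $\tr(CG)\le\lambda_{\max}(C)\tr(G)$ and quotes $\lambda_{\max}(\mathbf{Var}[v])=\tfrac{n(n-\tau)}{\tau(n-1)}$, whereas you compute the covariance entries explicitly and drop the nonnegative term $\|\sum_i D_i\|^2$ — the two yield the identical constant $\delta$). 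Your derivation of $\sigma_*^2$ likewise matches the paper's expansion via $P_{ij}/(p_i p_j)$, both hinging on $\sum_i F_i(x_*) = n F(x_*) = 0$.
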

Similar results to Prop.~\ref{Prop_SufficientCondition} but under different sufficient conditions have been obtained for $\tau$--minibatch sampling under expected smoothness and a variant of expected residual for solving minimization problems in \cite{gower2019sgd} and \cite{gower2021sgd}, respectively. In \cite{loizou2021stochastic}, a similar proposition was derived but for the much more restrictive class of co-coercive operators.

\paragraph{Connection to Other Assumptions.}  In the proofs of our convergence results, we use the bound \eqref{eq: variance bound}, which, as we explained above, is a direct consequence of \ref{eq: ER Condition}. In this paragraph, we place this bound in a hierarchy of common assumptions used for the analysis of stochastic algorithms for solving VIPs. 
In the literature on stochastic algorithms for solving the VIPs and min-max optimization problems, previous works assume either bounded operator ($\E\|g(x)\|^2 \leq c$) \cite{abernethy2021last, nemirovski2009robust}, bounded variance ($\E \|g(x) - F(x)\|^2 \leq c$) \cite{lin2020gradient, tran2020hybrid, juditsky2011solving} (in Appendix \ref{sec:BoundedVarianceCounterExample} we provide a simple example where bounded variance assumption does not hold) or growth condition ($\E \|g(x)\|^2 \leq c_1 \|F(x)\|^2 + c_2$) \cite{lin2020finite}.  In all of these conditions, the parameters $c$, $c_1$, and $c_2$ are usually constants that do not have a closed-form expression. The closer works to our results are \cite{loizou2021stochastic,beznosikov2022stochastic} which assumes existence of $l_{F}>0$ such that the expected co-coercivity condition ($\E \|g(x) - g(x_*)\|^2 \leq l_{F} \la F(x), x- x_*\ra$) holds. Their convergence guarantees provide an efficient analysis for several variants of \algname{SGDA} for solving co-coercive VIPs. In the proposition below, we prove how these conditions are related to the bound \eqref{eq: variance bound} obtained using \ref{eq: ER Condition}.

\begin{proposition}\label{Proposition connecting assumptions} Suppose $F$ is a $L$-Lipschitz operator. Then we have the following hierarchy of assumptions:
\begin{center}
\begin{tikzcd}[column sep=.4em, row sep = .4em]
    \boxed{\text{Bounded Operator}} \arrow[r] & \boxed{\text{Bounded Variance}} \arrow[r] & \boxed{\text{Growth Condition}} \arrow[r] & \boxed{\eqref{eq: variance bound}}\\
    & \boxed{\text{$F_i$ are $L_i$-Lipschitz}} \arrow[r] & \boxed{\eqref{eq: ER Condition}}  \arrow[ur] & \\
    & \boxed{\text{Expected Cocoercivity}}\arrow[ur] & & &
\end{tikzcd}
\end{center}
\end{proposition}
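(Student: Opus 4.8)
The plan is to establish each of the six arrows in the diagram separately by elementary manipulations, exploiting two facts that hold throughout. First, $g$ is an unbiased estimator, so $\E[g(x)] = F(x)$ and the bias--variance identity $\E\|g(x)\|^2 = \E\|g(x) - F(x)\|^2 + \|F(x)\|^2$ is available (the cross term vanishes). Second, $F$ is $L$-Lipschitz with $F(x_*) = 0$, which converts operator norms into iterate distances via $\|F(x)\| = \|F(x) - F(x_*)\| \le L\|x - x_*\|$. These two ingredients drive essentially every step.

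First I would dispatch the top row. For Bounded Operator $\Rightarrow$ Bounded Variance, the bias--variance identity gives $\E\|g(x) - F(x)\|^2 = \E\|g(x)\|^2 - \|F(x)\|^2 \le \E\|g(x)\|^2 \le c$. For Bounded Variance $\Rightarrow$ Growth Condition, the same identity read the other way yields $\E\|g(x)\|^2 = \E\|g(x) - F(x)\|^2 + \|F(x)\|^2 \le \|F(x)\|^2 + c$, i.e., the growth condition with $c_1 = 1$, $c_2 = c$. For Growth Condition $\Rightarrow$ \eqref{eq: variance bound}, I split $c_1\|F(x)\|^2 = \|F(x)\|^2 + (c_1 - 1)\|F(x)\|^2$ and bound the excess by $(c_1 - 1)L^2\|x - x_*\|^2$ using Lipschitzness when $c_1 > 1$ (the case $c_1 \le 1$ is immediate), producing a bound of exactly the form $\delta\|x - x_*\|^2 + \|F(x)\|^2 + \text{const}$.

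Next I would handle the two arrows into \eqref{eq: ER Condition}. The arrow from ``$F_i$ are $L_i$-Lipschitz'' is precisely the first claim of Proposition~\ref{Prop_SufficientCondition}, which I would cite. For Expected Cocoercivity $\Rightarrow$ \eqref{eq: ER Condition}, I observe that since $\E[g(x) - g(x_*)] = F(x) - F(x_*)$, the left-hand side of \eqref{eq: ER Condition} is exactly the centered variance $\E\|g(x) - g(x_*)\|^2 - \|F(x) - F(x_*)\|^2 \le \E\|g(x) - g(x_*)\|^2$; applying expected cocoercivity and then Cauchy--Schwarz with Lipschitzness gives $\le l_{F} \langle F(x), x - x_*\rangle \le l_{F} L\|x - x_*\|^2$, which is \eqref{eq: ER Condition} with $\delta = 2 l_{F} L$. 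Finally, the arrow \eqref{eq: ER Condition} $\Rightarrow$ \eqref{eq: variance bound} is Lemma~\ref{Lemma: variance bound}, obtained by writing $g(x) - F(x) = [(g(x) - g(x_*)) - (F(x) - F(x_*))] + g(x_*)$ using $F(x_*) = 0$, applying $\|a + b\|^2 \le 2\|a\|^2 + 2\|b\|^2$, and invoking \eqref{eq: ER Condition} together with $\sigma_*^2 = \E\|g(x_*)\|^2$.

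The routine parts are the bias--variance bookkeeping. The main obstacle is keeping the constants honest in the Growth $\Rightarrow$ \eqref{eq: variance bound} step: the additive constant produced is the growth parameter $c_2$ rather than $2\sigma_*^2$, so the implication should be read as yielding the structural form of \eqref{eq: variance bound} (noting $\sigma_*^2 \le c_2$ from evaluating the growth condition at $x_*$). The genuinely substantive arrow is Expected Cocoercivity $\Rightarrow$ \eqref{eq: ER Condition}: the key move is recognizing the ER left-hand side as a centered variance so that the cocoercivity bound applies directly, after which Lipschitzness collapses the inner product to $\|x - x_*\|^2$.
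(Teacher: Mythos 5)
Your proof is correct and follows essentially the same route as the paper's: every arrow is established from the bias--variance identity for the unbiased estimator together with the Lipschitz bound $\|F(x)\| \le L\|x - x_*\|$, with the arrow from ``$F_i$ are $L_i$-Lipschitz'' delegated to Proposition~\ref{Prop_SufficientCondition}, the arrow into \eqref{eq: variance bound} from \eqref{eq: ER Condition} delegated to Lemma~\ref{Lemma: variance bound}, and the Expected Cocoercivity arrow handled by recognizing the ER left-hand side as a centered variance. The only cosmetic differences are that the paper bounds $\langle F(x), x - x_*\rangle$ via Young's inequality rather than Cauchy--Schwarz (yielding the same constant $\delta = 2 l_F L$), and in the Growth $\Rightarrow$ \eqref{eq: variance bound} step it converts all of $\alpha\|F(x)\|^2$ into $\alpha L^2\|x - x_*\|^2$ and adds back $\|F(x)\|^2 \ge 0$ for free, rather than converting only the excess $(\alpha - 1)\|F(x)\|^2$ as you do.
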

Let us also mention that \cite{hsieh2020explore} provided convergence guarantee of double-oracle stochastic extragradient (\algname{SEG}) method under the variance control condition $\E \|g(x) - F(x)\|^2 \leq (a \|x - x_*\| + b)^2$ where $a, b \geq 0$. In their work, they focus on solving VIPs satisfying the error-bound condition, and they did not provide closed-form expressions of parameters $a$ and $b$. Although the analysis of \cite{hsieh2020explore} can be conducted with $a > 0$, the authors only provide rates for the case $a = 0$. The main difference between their results (for \algname{SEG}) and our results (for \algname{SPEG}) is that our bound \eqref{eq: variance bound} is not really an assumption, but it holds for free when $F_i$ are $L_i$-Lipschitz. In addition, the values of parameters $\delta$ and $\sigma_*^2$ in \eqref{eq: variance bound} could have different values based on the sampling used in the update rule of \algname{SPEG}.


\section{Convergence Analysis}\label{Theory}
In this section, we present and discuss the main convergence results of this chapter. In the first part, we focus on the ones derived for $\mu$-quasi strongly monotone problems \eqref{eq: Strong Monotonicity} (both for constant and decreasing step sizes), and in the second part on the Weak Minty VIP \eqref{eq: weak MVI}. 
\subsection{Quasi-Strongly Monotone Problems}
\paragraph{Constant step size:}We start with the case of $\mu$-quasi strongly monotone problems and consider the convergence of \algname{SPEG} with constant step size.

\begin{theorem}\label{Theorem: constant stepsize theorem}
Let $F$ be $L$-Lipschitz, $\mu$-quasi strongly monotone, and let \ref{eq: ER Condition} hold.  Choose  step sizes $\gamma_k = \omega_k = \omega$ such that 
\begin{equation}\label{eq:constant_stepsize}
0 < \om \leq \min \left\{ \frac{\mu}{18 \delta}, \frac{1}{4L}\right\}    
\end{equation}
for all $k$. Then the iterates produced by \algname{SPEG}, given by \eqref{SPEG_UpdateRule} satisfy
\begin{equation}
    R_{k}^2 \leq \left(1 - \frac{\omega\mu}{2}\right)^{k} R_0^2 + \frac{24\om \sigma_*^2}{\mu}, \label{eq:SPEG_const_steps_neighborhood}
\end{equation}
where $R_{k}^2 \coloneqq \E \left[\|x_{k} - x_*\|^2 + \|x_{k} - \hat{x}_{k-1}\|^2 \right]$. Hence, given any $\varepsilon > 0$, and choosing $\om = \min \left\{\frac{\mu}{18 \delta}, \frac{1}{4L}, \frac{\varepsilon \mu}{48 \sigma_*^2} \right\}$,
\algname{SPEG} achieves $\E \|x_K - x_*\|^2 \leq \varepsilon$ after $$K \geq \max \bigg\{\frac{8L}{\mu}, \frac{36 \delta}{\mu^2}, \frac{96 \sigma_*^2}{\varepsilon \mu^2} \bigg\} \log \bigg( \frac{2 R_0^2}{\varepsilon} \bigg)$$
iterations.
\end{theorem}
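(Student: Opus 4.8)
The plan is to run a Lyapunov (potential function) argument on $R_k^2 = \E[\|x_k - x_*\|^2 + \|x_k - \hat{x}_{k-1}\|^2]$, establishing a one-step contraction of the form $R_{k+1}^2 \leq (1 - \frac{\omega\mu}{2})R_k^2 + C\omega^2\sigma_*^2$ and then unrolling. I would introduce the filtration $\mathcal{F}_k = \sigma(v_0,\dots,v_{k-1})$, under which $x_k$ and $\hat{x}_k$ are measurable while $v_k$ is fresh, so that $\E[F_{v_k}(\hat{x}_k)\mid\mathcal{F}_k] = F(\hat{x}_k)$. Setting $\gamma = \omega$, the two key structural identities are $x_k - \hat{x}_k = \omega F_{v_{k-1}}(\hat{x}_{k-1})$ (from the extrapolation step) and $x_{k+1} - \hat{x}_k = \omega\bigl(F_{v_{k-1}}(\hat{x}_{k-1}) - F_{v_k}(\hat{x}_k)\bigr)$, the latter giving the next memory term $\|x_{k+1}-\hat{x}_k\|^2$ directly and explaining why the memory term is built into the potential.

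First I would expand $\|x_{k+1}-x_*\|^2$ via the update rule and split the inner product as $\langle F_{v_k}(\hat{x}_k), x_k - x_*\rangle = \langle F_{v_k}(\hat{x}_k), \hat{x}_k - x_*\rangle + \langle F_{v_k}(\hat{x}_k), x_k - \hat{x}_k\rangle$. Taking $\E_k$, the first piece yields $\langle F(\hat{x}_k), \hat{x}_k - x_*\rangle \geq \mu\|\hat{x}_k - x_*\|^2$ by quasi-strong monotonicity, while the second, using $x_k - \hat{x}_k = \omega F_{v_{k-1}}(\hat{x}_{k-1})$, becomes a cross-iteration term. I would then expand $\|x_{k+1}-\hat{x}_k\|^2 = \omega^2\|F_{v_k}(\hat{x}_k) - F_{v_{k-1}}(\hat{x}_{k-1})\|^2$ and add the two expansions, so that the cross-terms $\langle F_{v_k}(\hat{x}_k), F_{v_{k-1}}(\hat{x}_{k-1})\rangle$ partially cancel. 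Crucially, the contraction sits on $\hat{x}_k$, so I would transfer it back to $x_k$ via $\|\hat{x}_k - x_*\|^2 \geq \frac{1}{2}\|x_k - x_*\|^2 - \|x_k - \hat{x}_k\|^2$, where the error $\|x_k - \hat{x}_k\|^2 = \omega^2\|F_{v_{k-1}}(\hat{x}_{k-1})\|^2$ is higher order in $\omega$ and feeds back into the memory machinery.

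Next I would invoke the variance bound \eqref{eq: variance bound}, the consequence of \ref{eq: ER Condition} proved in Lemma~\ref{Lemma: variance bound}, to control $\E_k\|F_{v_k}(\hat{x}_k)\|^2 \leq \delta\|\hat{x}_k - x_*\|^2 + \|F(\hat{x}_k)\|^2 + 2\sigma_*^2$, and use the tower property to handle the stored evaluation $F_{v_{k-1}}(\hat{x}_{k-1})$ similarly; $L$-Lipschitzness then converts the $\|F(\hat{x}_k)\|^2$ contributions into multiples of $\|\hat{x}_k - x_*\|^2$ since $F(x_*)=0$. Applying Young's inequality to the surviving cross-terms and collecting all coefficients, I would verify that $\omega \leq \min\{\mu/(18\delta), 1/(4L)\}$ makes every adverse coefficient small enough that the $-\omega\mu\|\hat{x}_k-x_*\|^2$ budget dominates, yielding $R_{k+1}^2 \leq (1-\frac{\omega\mu}{2})R_k^2 + 12\omega^2\sigma_*^2$. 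Unrolling and bounding $\sum_j (1-\frac{\omega\mu}{2})^j \leq \frac{2}{\omega\mu}$ gives the steady-state radius $\frac{24\omega\sigma_*^2}{\mu}$ in \eqref{eq:SPEG_const_steps_neighborhood}. For the complexity statement, I would substitute $\omega = \min\{\mu/(18\delta), 1/(4L), \varepsilon\mu/(48\sigma_*^2)\}$: the last option forces the neighborhood term below $\varepsilon/2$, and the linear-convergence lemma driving the contracting term below $\varepsilon/2$ delivers the stated iteration count.

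The hard part will be the bookkeeping in the middle: correctly tracking the coupling between $\|x_k-x_*\|^2$, the memory term $\|x_k-\hat{x}_{k-1}\|^2$, and the one-step-delayed stochastic evaluation $F_{v_{k-1}}(\hat{x}_{k-1})$, so that the cross-iteration pieces telescope into the potential rather than accumulating. Tuning the Young's-inequality constants so they align with the precise thresholds $18\delta$ and $4L$ — and hence produce exactly the factor $\frac{\omega\mu}{2}$ and the constant $24$ — is where the analysis is most delicate.
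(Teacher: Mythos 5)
Your proposal is correct and follows essentially the same route as the paper: the same potential $R_k^2$, the same decomposition of $\|x_{k+1}-x_*\|^2$ via the split inner product and the identity $x_{k+1}-\hat{x}_k=\omega\bigl(F_{v_{k-1}}(\hat{x}_{k-1})-F_{v_k}(\hat{x}_k)\bigr)$, the same use of quasi-strong monotonicity at $\hat{x}_k$ transferred back to $x_k$, the ER-based bound on $\E\|F_{v_k}(\hat{x}_k)-F_{v_{k-1}}(\hat{x}_{k-1})\|^2$, and the same target recursion $R_{k+1}^2\le(1-\tfrac{\omega\mu}{2})R_k^2+12\omega^2\sigma_*^2$ unrolled to the stated neighborhood and complexity. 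The only cosmetic difference is that the paper cancels the $\|F(\hat{x}_k)\|^2$ terms exactly through the variance-decomposition identity rather than bounding them by $L^2\|\hat{x}_k-x_*\|^2$, but this does not change the argument.
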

To the best of our knowledge, the above theorem is the first result on the convergence of \algname{SPEG} that does not rely on the bounded variance assumption. Theorem~\ref{Theorem: constant stepsize theorem} recovers the same rate of convergence with the Independent-Samples \algname{SEG} (\algname{I-SEG}) under assumption \eqref{eq: variance bound} \cite{gorbunov2022stochastic}, although \cite{gorbunov2022stochastic} simply assume \eqref{eq: variance bound}, while we show that it follows from Assumption~\ref{as:expected_residual} holding whenever all summands $F_i$ are Lipschitz. However, in the case when all $F_i$ are $\mu$-quasi strongly monotone and $L_i$-Lipschitz (on average), one can use Same-Sample \algname{SEG} (\algname{S-SEG}). The existing results for \algname{S-SEG} have better exponentially decaying term \cite{mishchenko2020revisiting, gorbunov2022stochastic} then Theorem~\ref{Theorem: constant stepsize theorem}, e.g., in the case when $L_i = L $ for all $i\in [n]$, we have $\delta = \cO(L^2)$ meaning that the exponentially decaying term in \eqref{eq:SPEG_const_steps_neighborhood} is $\cO(R_0^2\exp(-\nicefrac{\mu^2k}{L^2})))$, while \algname{S-SEG} has much better exponentially decaying term $\cO(R_0^2\exp(-\nicefrac{\mu k}{L})))$. 

Such a discrepancy can be partially explained by the following fact: \algname{S-SEG} can be seen as one step of deterministic Extragradient for stochastic operator $F_{v_k}$ allowing to use one-iteration analysis of Extragradient without controlling the variance. In contrast, there is no version of \algname{SPEG} that uses the same sample for extrapolation and update steps. This forces to use different samples for these steps and this is a key reason why \algname{SPEG} cannot be seen as one iteration of deterministic Past-Extragradient for some operator. Due to this, we need to rely on some bound on the variance to handle the stochasticity in the updates; see also \cite[Appendix F.1]{gorbunov2022stochastic}. Therefore, in our analysis, we use Assumption~\ref{as:expected_residual}, implying \eqref{eq: variance bound}. Nevertheless, it is still an open question whether it is possible to improve the rate of \algname{SPEG} in the case of $\mu$-quasi strongly monotone and Lipschitz operators $F_i$.

To highlight the generality of Theorem~\ref{Theorem: constant stepsize theorem}, we note that for the
deterministic \algname{PEG}, $\delta = 0$ and $\sigma_*^2 = 0$ (by selecting $\tau=n$ in the definition~\ref{def:minibatch} of minibatch sampling). In this case, Theorem~\ref{Theorem: constant stepsize theorem} recovers the well-known result (up to $\nicefrac{1}{2}$ factor in the rate) for deterministic \algname{PEG} proposed in \cite{gidel2018variational} as shown in the following corollary.
\begin{corollary}
\label{dawna}
    Let the assumptions of Theorem~\ref{Theorem: constant stepsize theorem} hold and a deterministic version of \algname{SPEG} is considered, i.e., $\delta = 0$, $\sigma_*^2 = 0$. Then, Theorem~\ref{Theorem: constant stepsize theorem} implies that for all $k\geq 0$ the iterates produced by \algname{SPEG} with step sizes $\gamma_k = \omega_k = \omega$ such that $ 0 < \om \leq \frac{1}{4L} $ satisfy $R_{k}^2 \leq \left(1 - \frac{\omega\mu}{2}\right)^{k} R_0^2$,
where $R_{k}^2 \coloneqq \|x_{k} - x_*\|^2 + \|x_{k} - \hat{x}_{k-1}\|^2$.
\end{corollary}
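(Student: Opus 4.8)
The plan is to obtain this corollary as an immediate specialization of Theorem~\ref{Theorem: constant stepsize theorem}, so essentially no new analysis is needed; the real work is in correctly reading off the parameter limits $\delta \to 0$ and $\sigma_*^2 \to 0$. First I would justify the hypotheses $\delta = 0$, $\sigma_*^2 = 0$ by noting that the deterministic \algname{PEG} update corresponds to the choice $\tau = n$ in the $\tau$-minibatch sampling of Definition~\ref{def:minibatch}, for which $v_i = 1$ for all $i$ and hence $F_{v_k} = F$ at every iteration. Substituting $\tau = n$ into the closed-form expressions of Proposition~\ref{Prop_SufficientCondition} gives the factor $\frac{n - \tau}{n - 1} = 0$, so that indeed $\delta = 0$ and $\sigma_*^2 = 0$.

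Next I would handle the step-size constraint. In Theorem~\ref{Theorem: constant stepsize theorem} the admissible range is $0 < \om \leq \min\left\{\frac{\mu}{18\delta}, \frac{1}{4L}\right\}$. With $\delta = 0$ the first term is to be read as $+\infty$, so the minimum collapses to $\frac{1}{4L}$ and the constraint becomes exactly $0 < \om \leq \frac{1}{4L}$, matching the corollary's hypothesis. This is the single point requiring a careful (if elementary) limiting interpretation, and I would state it explicitly rather than silently divide by zero.

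Finally I would substitute into the conclusion \eqref{eq:SPEG_const_steps_neighborhood}. Setting $\sigma_*^2 = 0$ eliminates the neighborhood term $\frac{24\om\sigma_*^2}{\mu}$, leaving
\begin{equation*}
    R_k^2 \leq \left(1 - \frac{\om\mu}{2}\right)^k R_0^2.
\end{equation*}
Because the deterministic iteration involves no randomness, every $v_k$ is constant and the expectation in the theorem's definition of $R_k^2 = \E\left[\|x_k - x_*\|^2 + \|x_k - \hat{x}_{k-1}\|^2\right]$ acts on a deterministic quantity; I would simply drop it to recover $R_k^2 = \|x_k - x_*\|^2 + \|x_k - \hat{x}_{k-1}\|^2$ as stated, valid for all $k \geq 0$.

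The main ``obstacle,'' such as it is, is purely expository: making the $\delta \to 0$ limit of the step-size bound precise and confirming that the expectation is vacuous in the deterministic regime. No inequality has to be re-derived, since the corollary is strictly weaker than the theorem it specializes; the value of the statement lies in observing that it recovers (up to the $\nicefrac{1}{2}$ factor) the known linear rate for deterministic \algname{PEG}, thereby certifying the tightness of the general analysis.
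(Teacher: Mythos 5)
Your proposal is correct and follows exactly the paper's intended route: the corollary is obtained by specializing Theorem~\ref{Theorem: constant stepsize theorem} with $\delta=0$, $\sigma_*^2=0$ (justified, as the paper also notes, by taking $\tau=n$ in Proposition~\ref{Prop_SufficientCondition}), reading the step-size bound $\min\{\mu/(18\delta),\,1/(4L)\}$ as $1/(4L)$, dropping the vanished neighborhood term, and discarding the now-vacuous expectation. Nothing further is needed.
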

\paragraph{Decreasing step size:}In this section, we consider two different decreasing step sizes policies for \algname{SPEG} applied to solve quasi-strongly monotone problems.
\begin{theorem}\label{SPEG switching rule}
Let $F$ be $L$-Lipschitz, $\mu$-quasi strongly monotone, and Assumption~\ref{as:expected_residual} hold. Let
\begin{equation}
    \gamma_k = \om_k \coloneqq 
\begin{cases}
\Bar{\om}, &\text{if } k \leq k^*, \\
\frac{2k+1}{(k+1)^2}\frac{2}{\mu}, &\text{if } k > k^*,
\end{cases}\label{eq:stepsize_switching_1}
\end{equation}
where $\Bar{\om} \coloneqq \min \left \{\nicefrac{1}{(4L)},\nicefrac{\mu}{(18 \delta)} \right\}$ and $k^* = \lceil \nicefrac{4}{(\mu \Bar{\om})} \rceil$. Then for all $K \geq k^*$ the iterates produced by \algname{SPEG} with step sizes \eqref{eq:stepsize_switching_1} satisfy 
\begin{equation}
    R_{K}^2  \leq \left(\frac{k^*}{K}\right)^2 \frac{R_0^2}{\exp(2)} + \frac{192 \sigma_*^2}{\mu^2 K}, \label{eq:SPEG_convergence_decr_steps_1}
\end{equation}
where $R_{K}^2 \coloneqq \E \left[\|x_{K} - x_*\|^2 + \|x_{K} - \hat{x}_{K-1}\|^2 \right]$.
\end{theorem}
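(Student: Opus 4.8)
The plan is to split the horizon at the switch time $k^*$ and analyze the two phases separately, reusing the single-step contraction that powers Theorem~\ref{Theorem: constant stepsize theorem}. Concretely, the engine of that theorem is a one-step recursion of the form
\[
R_{k+1}^2 \le \left(1 - \frac{\om_k \mu}{2}\right) R_k^2 + 12\,\om_k^2 \sigma_*^2,
\]
valid for every step size obeying the cap $\om_k \le \min\{\nicefrac{1}{(4L)}, \nicefrac{\mu}{(18\delta)}\}$ (the constant $12$ is read off from the noise floor $\nicefrac{24\om\sigma_*^2}{\mu}$ in \eqref{eq:SPEG_const_steps_neighborhood}, since unrolling a geometric series turns a per-step constant $C$ into a floor $\nicefrac{2C\om\sigma_*^2}{\mu}$). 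The entire argument consists of feeding the prescribed switching schedule \eqref{eq:stepsize_switching_1} into this recursion.

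For the constant phase $k \le k^*$, I would apply Theorem~\ref{Theorem: constant stepsize theorem} verbatim with $\om = \bar\om$ to obtain
\[
R_{k^*}^2 \le \left(1 - \frac{\bar\om\mu}{2}\right)^{k^*} R_0^2 + \frac{24\bar\om\sigma_*^2}{\mu} \le \frac{R_0^2}{\exp(2)} + \frac{24\bar\om\sigma_*^2}{\mu},
\]
where the $\exp(-2)$ factor is exactly what the choice $k^* = \lceil \nicefrac{4}{(\mu\bar\om)}\rceil$ buys: it forces $\tfrac{\bar\om\mu}{2}k^* \ge 2$, so $(1 - \tfrac{\bar\om\mu}{2})^{k^*} \le \exp(-\tfrac{\bar\om\mu}{2}k^*) \le \exp(-2)$. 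Before proceeding I must check that the schedule stays admissible at and after the switch: using $2k+1 \le 2(k+1)$ one gets $\om_{k^*} \le \nicefrac{4}{(\mu(k^*+1))} \le \bar\om$ precisely because $k^*+1 \ge \nicefrac{4}{(\mu\bar\om)}$, and since $\tfrac{2k+1}{(k+1)^2}$ is decreasing in $k$ the cap continues to hold for all $k > k^*$, keeping the one-step recursion legal throughout.

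The decreasing phase is where the schedule is engineered for a telescoping argument: \eqref{eq:stepsize_switching_1} is chosen so that $\tfrac{\om_k\mu}{2} = \tfrac{2k+1}{(k+1)^2}$, hence $1 - \tfrac{\om_k\mu}{2} = \tfrac{k^2}{(k+1)^2}$. Multiplying the one-step recursion by $(k+1)^2$ and setting $S_k \coloneqq k^2 R_k^2$ collapses the contraction into
\[
S_{k+1} \le S_k + 12(k+1)^2\om_k^2\sigma_*^2 \le S_k + \frac{192\sigma_*^2}{\mu^2},
\]
where the last step uses $(k+1)^2\om_k^2 = \tfrac{4(2k+1)^2}{\mu^2(k+1)^2} \le \tfrac{16}{\mu^2}$. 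Summing from $k^*$ to $K-1$ yields $S_K \le S_{k^*} + \tfrac{192\sigma_*^2}{\mu^2}(K-k^*)$.

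The final bookkeeping step is to fold the constant-phase noise floor into this telescoped sum, and this is where the numerical constants must be shepherded carefully. The key observation is that $k^* = \lceil \nicefrac{4}{(\mu\bar\om)}\rceil$ together with $\mu\bar\om \le \nicefrac14$ (which follows from $\bar\om \le \nicefrac{1}{(4L)}$ and $\mu \le L$) gives $\mu\bar\om k^* \le 8$, so $\tfrac{24\bar\om\sigma_*^2}{\mu} \le \tfrac{192\sigma_*^2}{\mu^2 k^*}$; in other words the residual noise at $k^*$ behaves like exactly one extra telescoping increment. Substituting this into $S_{k^*} = (k^*)^2 R_{k^*}^2$ and then into $S_K$, the stray $k^*$-terms cancel and one is left with $S_K \le (k^*)^2 \tfrac{R_0^2}{\exp(2)} + \tfrac{192\sigma_*^2}{\mu^2}K$; dividing by $K^2$ delivers \eqref{eq:SPEG_convergence_decr_steps_1}. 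I expect the main obstacle to be precisely this constant alignment across the two phases — verifying $\om_{k^*}\le\bar\om$ and $\mu\bar\om k^* \le 8$ — rather than any isolated inequality, since every numerical factor ($\exp(2)$, $192$, and the final cancellation) is dictated by the exact definition of $k^*$.
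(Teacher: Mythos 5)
Your proposal is correct and follows essentially the same route as the paper: the same one-step recursion $R_{k+1}^2 \le (1-\tfrac{\om_k\mu}{2})R_k^2 + 12\om_k^2\sigma_*^2$, the same two-phase split at $k^*$, and the same telescoping after multiplying by $(k+1)^2$, with identical constants. The only (cosmetic) difference is that you absorb the constant-phase noise floor via $\mu\bar\om k^*\le 8$ before telescoping, whereas the paper carries both noise terms to the end and observes that $k^* = \lceil 4/(\mu\bar\om)\rceil$ minimizes their sum; both yield \eqref{eq:SPEG_convergence_decr_steps_1}.
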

\algname{SPEG} with step size policy\footnote{Similar step size policy is used for \algname{SGD} \cite{gower2019sgd} and \algname{SGDA} \cite{loizou2021stochastic}.} \eqref{eq:stepsize_switching_1} has two stages of convergence: during first $k^*$ iterations it uses constant step size to reach some neighborhood of the solution and then the method switches to the decreasing $\cO(\nicefrac{1}{k})$ step size allowing to reduce the size of the neighborhood.

For the case of strongly monotone problems (a special case of our quasi-strongly monotone setting) \cite{hsieh2019convergence} also analyze \algname{SPEG} with decreasing $\cO(\nicefrac{1}{k})$ step size\footnote{We point out the proof by \cite{hsieh2019convergence} can be generalized to the case of constant step size, though the authors do not consider this step size schedule explicitly.} under bounded variance assumption, i.e., when \eqref{eq: variance bound} holds with $\delta = 0$ and some $\sigma_*^2 \geq 0$, which is equivalent to the uniformly bounded variance assumption. In particular, Theorem 5 \cite{hsieh2019convergence} states
$\E\left[\|x_{K} - x_*\|^2\right] \leq \frac{C\sigma_*^2}{\mu^2 K} + o\left(\frac{1}{K}\right)$ where $C$ is some numerical constant. If the problem is strongly monotone, the result of \cite{hsieh2019convergence} is closely related to what is obtained in Theorem~\ref{SPEG switching rule}: the main difference in the upper-bound is that we provide an explicit form of $o\left(\nicefrac{1}{K}\right)$ term. Moreover, in contrast to the result from \cite{hsieh2019convergence}, Theorem~\ref{SPEG switching rule} holds even when $\delta > 0$ in \eqref{eq: variance bound}, which covers a larger class of problems. 

Following \cite{stich2019unified, gorbunov2022stochastic, beznosikov2022stochastic}, we also consider another decreasing step size policy.
\begin{theorem} \label{Theorem: Total number of iteration knowledge}
Let $F$ be $L$-Lipschitz, $\mu$-quasi strongly monotone, and Assumption~\ref{as:expected_residual} hold. Let $\Bar{\om} \coloneqq \min \left\{ \nicefrac{1}{(4L)}, \nicefrac{\mu}{(18\delta)} \right\}$. If for $K \geq 0$ step sizes $\{\gamma_k\}_{k \geq 0}$, $\{\om_k\}_{k \geq 0}$ satisfy $\gamma_k = \om_k$ and
\begin{equation}\label{eq:stepsize_switching_2}
    \om_k \coloneqq 
\begin{cases}
\Bar{\om}, &\text{if $K \leq \frac{2}{\mu \Bar{\om}}$}, \\
\Bar{\om}, &\text{if $K > \frac{2}{\mu \Bar{\om}}$ and $k \leq k_0$,}\\
\frac{2}{\frac{2}{\Bar{\om}} + \frac{\mu}{2}(k - k_0)}, &\text{if $K > \frac{2}{\mu \Bar{\om}}$ and $k > k_0$}
\end{cases}
\end{equation}
where $k_0 = \lceil \nicefrac{K}{2} \rceil$, then the iterates produced by \algname{SPEG} with the step sizes defined above satisfy
\begin{equation}\label{rate for Total number of iteration knowledge}
\begin{split}
R_K^2\leq \frac{64R_0^2}{\Bar{\om} \mu} \exp \left\{-\min \left\{ \frac{\mu}{16L}, \frac{\mu^2}{72\delta} \right\}K \right\} + \frac{ 1728\sigma_*^2}{\mu^2 K},
\end{split}    
\end{equation}
where $R_{K}^2 \coloneqq \E \left[\|x_{K} - x_*\|^2 + \|x_{K} - \hat{x}_{K-1}\|^2 \right]$.
\end{theorem}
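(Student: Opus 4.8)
The plan is to reduce everything to the one-step contraction that already powers Theorem~\ref{Theorem: constant stepsize theorem} and then handle the switching schedule \eqref{eq:stepsize_switching_2} by a weighted telescoping argument in the spirit of \cite{stich2019unified, gorbunov2022stochastic, beznosikov2022stochastic}. First I would extract from the proof of Theorem~\ref{Theorem: constant stepsize theorem} the per-iteration inequality
\[
R_{k+1}^2 \le \Bigl(1 - \tfrac{\mu\omega_k}{2}\Bigr)R_k^2 + 12\,\omega_k^2\sigma_*^2,
\]
valid for every step size $\omega_k=\gamma_k$ with $0<\omega_k\le\bar\omega=\min\{\nicefrac{1}{(4L)},\nicefrac{\mu}{(18\delta)}\}$; this is exactly the recursion whose fixed point produces the neighborhood $\nicefrac{24\bar\omega\sigma_*^2}{\mu}$ in \eqref{eq:SPEG_const_steps_neighborhood}. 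Since \eqref{eq:stepsize_switching_2} is non-increasing and equals $\bar\omega$ at $k=k_0$, each $\omega_k\le\bar\omega$, so the recursion is available at every iteration. The proof then splits along the two regimes of \eqref{eq:stepsize_switching_2}. In the short-horizon case $K\le\nicefrac{2}{(\mu\bar\omega)}$ the step size is constant, so Theorem~\ref{Theorem: constant stepsize theorem} gives $R_K^2\le(1-\tfrac{\bar\omega\mu}{2})^K R_0^2+\nicefrac{24\bar\omega\sigma_*^2}{\mu}$ directly; here $\bar\omega\le\nicefrac{2}{(\mu K)}$ turns the neighborhood into $\cO(\nicefrac{\sigma_*^2}{(\mu^2 K)})$, while $\nicefrac{64}{(\bar\omega\mu)}\ge 256$ absorbs the exponential prefactor, so the claim follows with room to spare.

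The substantive case is $K>\nicefrac{2}{(\mu\bar\omega)}$. In phase one ($k\le k_0=\lceil\nicefrac{K}{2}\rceil$) the step size is the constant $\bar\omega$, and iterating the recursion gives $R_{k_0}^2\le(1-\tfrac{\bar\omega\mu}{2})^{k_0}R_0^2+\nicefrac{24\bar\omega\sigma_*^2}{\mu}$. The exponent in the statement arises here: $(1-\tfrac{\bar\omega\mu}{2})^{k_0}\le\exp(-\tfrac{\bar\omega\mu}{2}\lceil\nicefrac{K}{2}\rceil)\le\exp(-\tfrac{\bar\omega\mu}{4}K)$ and $\tfrac{\bar\omega\mu}{4}=\min\{\tfrac{\mu}{16L},\tfrac{\mu^2}{72\delta}\}$. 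For phase two ($k>k_0$) I would set $s_k=\tfrac{2}{\bar\omega}+\tfrac{\mu}{2}(k-k_0)$ so that $\omega_k=\nicefrac{2}{s_k}$ and $s_{k+1}=s_k+\tfrac{\mu}{2}$, and introduce weights $w_k=s_k^2$. The key algebraic identity is
\[
w_{k+1}\Bigl(1-\tfrac{\mu\omega_k}{2}\Bigr)=\Bigl(s_k+\tfrac{\mu}{2}\Bigr)^2\frac{s_k-\mu}{s_k}=s_k^2-\tfrac{3\mu^2}{4}-\tfrac{\mu^3}{4s_k}\le w_k,
\]
which lets me multiply the recursion by $w_{k+1}$ and telescope from $k_0$ to $K-1$, yielding $w_K R_K^2\le w_{k_0}R_{k_0}^2+12\sigma_*^2\sum_{k=k_0}^{K-1}w_{k+1}\omega_k^2$.

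The remaining work is to bound the two pieces on the right. Because $\tfrac{\mu}{2}\bar\omega\le\tfrac18$ (using $\mu\le L$ and $\bar\omega\le\nicefrac{1}{(4L)}$), one checks $w_{k+1}\omega_k^2=4(1+\tfrac{\mu}{2s_k})^2\le 5$, so the accumulated sum is $\cO(K-k_0)=\cO(K)$, while $w_K=s_K^2\gtrsim(\nicefrac{\mu K}{4})^2$. Dividing through and substituting the phase-one bound for $R_{k_0}^2$, the $(1-\tfrac{\bar\omega\mu}{2})^{k_0}R_0^2$ contribution survives (using $\nicefrac{w_{k_0}}{w_K}\le 1$ and the prefactor $\nicefrac{64}{(\bar\omega\mu)}$) as the exponential term, and both variance contributions — the propagated $\nicefrac{24\bar\omega\sigma_*^2}{\mu}$ and the freshly accumulated sum — collapse to $\cO(\nicefrac{\sigma_*^2}{(\mu^2 K)})$, where pushing the propagated term through requires the case hypothesis $\bar\omega>\nicefrac{2}{(\mu K)}$ to trade $\nicefrac{1}{\bar\omega}$ against $\mu K$.

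The main obstacle is the constant bookkeeping in this last step rather than any conceptual difficulty: the qualitative $\exp(-cK)+\cO(\nicefrac{1}{K})$ shape is forced by the weighted-telescoping structure, but verifying the weight inequality and the uniform bound $w_{k+1}\omega_k^2\le 5$, and then assembling the four separate contributions so that the prefactors land at exactly $64$ and $1728$, requires careful tracking of how $\tfrac{\mu}{2}\bar\omega\le\tfrac18$ and $\bar\omega>\nicefrac{2}{(\mu K)}$ propagate through the division by $w_K$. I expect the bulk of the effort — and the most error-prone part — to lie in reconciling these constants across the two phases.
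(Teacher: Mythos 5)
Your proposal is correct, and its first half is exactly the paper's argument: the paper's entire proof consists of extracting the one-step recursion $R_{k+1}^2 \le (1-\tfrac{\mu\omega_k}{2})R_k^2 + 12\omega_k^2\sigma_*^2$ from the proof of Theorem~\ref{Theorem: constant stepsize theorem} and then invoking Lemma~\ref{Lemma: Stich lemma} (the imported, unproved ``Stich lemma'') with $a=\tfrac{\mu}{2}$, $h=\tfrac{1}{\bar\omega}$, $c=12\sigma_*^2$, which is precisely what produces the schedule \eqref{eq:stepsize_switching_2} (note $\kappa=\tfrac{2h}{a}=\tfrac{4}{\mu\bar\omega}$ gives $\gamma_k=\tfrac{2}{2/\bar\omega+\frac{\mu}{2}(k-k_0)}$) and the constants $\tfrac{32h}{a}=\tfrac{64}{\bar\omega\mu}$ and $\tfrac{36c}{a^2}=\tfrac{1728\sigma_*^2}{\mu^2}$. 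Where you differ is that you open the black box: instead of citing the lemma, you reprove the step-size-to-rate conversion via the weight sequence $w_k=s_k^2$ with $s_k=\tfrac{2}{\bar\omega}+\tfrac{\mu}{2}(k-k_0)$, and your key identity $w_{k+1}\bigl(1-\tfrac{\mu\omega_k}{2}\bigr)=s_k^2-\tfrac{3\mu^2}{4}-\tfrac{\mu^3}{4s_k}\le w_k$ checks out, as does the uniform bound $w_{k+1}\omega_k^2=4\bigl(1+\tfrac{\mu}{2s_k}\bigr)^2\le 5$. This buys self-containedness and makes visible where the exponent $\tfrac{\bar\omega\mu}{4}=\min\{\tfrac{\mu}{16L},\tfrac{\mu^2}{72\delta}\}$ and the $\mathcal{O}(\nicefrac{1}{K})$ term each come from, at the cost of the constant bookkeeping you correctly identify as the delicate part; the paper's route gets the exact constants $64$ and $1728$ for free from the cited lemma. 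One point to be explicit about if you write this up: the one-step recursion was derived in Theorem~\ref{Theorem: constant stepsize theorem} under $\gamma_k=\omega_k$ \emph{constant}, and carrying it over verbatim to a varying (non-increasing) schedule is asserted rather than re-derived both in your proposal and in the paper, so you are on equal footing there.
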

In contrast to \eqref{eq:SPEG_convergence_decr_steps_1}, the rate from \eqref{rate for Total number of iteration knowledge} has much better (exponentially decaying) $o\left(\nicefrac{1}{K}\right)$ term. When $\sigma_*^2$ is large and one needs to achieve very good accuracy of the solution, this difference is negligible, since the dominating $\cO(\nicefrac{1}{K})$ term is the same for both bounds (up to numerical factors). However, when $\sigma_*^2$ is small enough, e.g., the model is close to over-parameterized, or it is sufficient to achieve low accuracy of the solution, the dominating term in \eqref{rate for Total number of iteration knowledge} is typically much smaller than the one from \eqref{eq:SPEG_convergence_decr_steps_1}. Finally, it is worth mentioning, that the improvement of $o\left(\nicefrac{1}{K}\right)$ is not achieved for free: unlike the policy from \eqref{eq:stepsize_switching_1}, step size rule \eqref{eq:stepsize_switching_2} relies on the knowledge of the total number of steps $K$, which can be inconvenient for the practical use in some cases.
\subsection{Weak Minty Variational Inequality Problems}
In this subsection we will discuss convergence of Stochastic Past Extragradient method for weak Minty Variational Inequality problem.
To solve the weak Minty variational inequality problem we use different step sizes for \algname{SPEG} iterates (\ref{SPEG_UpdateRule}).  
\begin{theorem}\label{cor:weak_MVI_convergence}
    Let $F$ be $L$-Lipschitz and satisfy Weak Minty condition with parameter $\rho < \nicefrac{1}{(2L)}$. Let Assumption~\ref{as:expected_residual} hold. Assume that $\gamma_k = \gamma$, $\omega_k = \omega$ such that $\max\left\{2\rho, \frac{1}{2L}\right\} < \gamma < \frac{1}{L}$ and $0 < \omega < \min\left\{\gamma - 2\rho, \frac{1}{4L} - \frac{\gamma}{4}\right\}.$
    Then, for all $K \geq 2$ the iterates produced by mini-batched \algname{SPEG} with batch-size 
    \begin{eqnarray}
        \tau \geq \max\Bigg\{1, \frac{32\delta}{(1-L\gamma)L^3\omega}, \frac{48\omega\gamma \delta(K-1)}{(1 - L\gamma)^2},\frac{2\omega\gamma\sigma_*^2(K-1)}{(1-L\gamma)\|x_0 - x_*\|^2}\Bigg\} \label{eq:SPEG_weak_MVI_batchsize}
    \end{eqnarray}
    satisfy $\min\limits_{0\leq k \leq K-1}\E\left[\|F(\hat x_k)\|^2\right] \leq \frac{C\|x_0 - x_*\|^2}{K-1},$
where $C = \frac{48}{\omega\gamma (1 - L(\gamma + 4\omega))}$.
\end{theorem}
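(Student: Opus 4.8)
The plan is to derive a one-step recursion for $\Exp{\norm{x_{k+1} - x_*}^2}$ from which a strictly negative multiple of $\norm{F(\hx_k)}^2$ can be extracted, so that telescoping over $k$ yields the $\cO(1/K)$ bound on $\min_k \Exp{\norm{F(\hx_k)}^2}$. Write $g_k \eqdef F_{v_k}(\hx_k)$ and let $\E_k$ denote expectation conditioned on all randomness up to the point where $\hx_k$ is fixed, so that $\E_k[g_k] = F(\hx_k)$ and the minibatch version of \eqref{eq: variance bound} reads $\E_k\norm{g_k}^2 \le \tfrac{\delta}{\tau}\norm{\hx_k - x_*}^2 + \norm{F(\hx_k)}^2 + \tfrac{2\sigma_*^2}{\tau}$. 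I would first expand the update,
\[
\norm{x_{k+1}-x_*}^2 = \norm{x_k - x_*}^2 - 2\omega\la g_k, x_k - x_*\ra + \omega^2\norm{g_k}^2,
\]
and substitute $x_k - x_* = (\hx_k - x_*) + \gamma F_{v_{k-1}}(\hx_{k-1})$ from the extrapolation step. Taking $\E_k$ turns the leading inner product into $\la F(\hx_k), \hx_k - x_*\ra$, which the Weak Minty condition \eqref{eq: weak MVI} bounds below by $-\rho\norm{F(\hx_k)}^2$, contributing the \emph{positive} term $2\omega\rho\norm{F(\hx_k)}^2$ that must ultimately be dominated.

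The decisive step is the cross term $-2\omega\gamma\la F(\hx_k), F_{v_{k-1}}(\hx_{k-1})\ra$, the signature of the past scheme. Applying the identity $-2\la a,b\ra = \norm{a-b}^2 - \norm{a}^2 - \norm{b}^2$ with $a = F(\hx_k)$, $b = F_{v_{k-1}}(\hx_{k-1})$ manufactures the sought-after term $-\omega\gamma\norm{F(\hx_k)}^2$, a negative multiple of the past evaluation $\norm{F_{v_{k-1}}(\hx_{k-1})}^2$ (to be carried by the Lyapunov function from the previous iteration), and a residual $\omega\gamma\norm{F(\hx_k) - F_{v_{k-1}}(\hx_{k-1})}^2$, which I would split via the triangle inequality into a Lipschitz part $2L^2\norm{\hx_k - \hx_{k-1}}^2$ and a stochastic part $2\norm{F(\hx_{k-1}) - F_{v_{k-1}}(\hx_{k-1})}^2$ controlled again by \eqref{eq: variance bound}. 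Collecting every contribution to $\norm{F(\hx_k)}^2$ produces a coefficient of the form $2\omega\rho + \omega^2 - \omega\gamma + (\text{Lipschitz terms})$; the assumptions $\omega < \gamma - 2\rho$ and $1 - L(\gamma + 4\omega) > 0$ (equivalently $\omega < \tfrac14(\tfrac1L - \gamma)$) are exactly what make this coefficient equal to $-\omega\gamma(1 - L(\gamma + 4\omega))$ up to the numerical factor that fixes $C = \tfrac{48}{\omega\gamma(1 - L(\gamma+4\omega))}$.

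I expect the principal difficulty to be the joint bookkeeping of the stochastic past-gradient terms and the variance contributions proportional to $\tfrac{\delta}{\tau}\norm{\hx_k - x_*}^2$. The displacement $\hx_k - \hx_{k-1}$ unfolds into a combination of two consecutive stochastic evaluations $F_{v_{k-1}}(\hx_{k-1})$ and $F_{v_{k-2}}(\hx_{k-2})$, so a correctly weighted Lyapunov function $\Phi_k$ must be chosen to let these telescope; meanwhile the $\tfrac{\delta}{\tau}\norm{\hx_k - x_*}^2$ terms cannot be made negative and, after using $\norm{\hx_k - x_*}^2 \le 2\norm{x_k - x_*}^2 + 2\gamma^2\norm{F_{v_{k-1}}(\hx_{k-1})}^2$, must be absorbed into $\Phi_k$ at the cost of a multiplicative factor $1 + \beta$ with $\beta = \tfrac{48\omega\gamma\delta}{\tau(1-L\gamma)^2}$. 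Keeping every constant explicit through this step is the delicate part. Once the recursion $\E[\Phi_{k+1}] \le (1+\beta)\,\E[\Phi_k] - \omega\gamma(1 - L(\gamma+4\omega))\,\E\norm{F(\hx_k)}^2 + \tfrac{2\omega\gamma\sigma_*^2}{\tau}$ is established, I would iterate it, drop $\E[\Phi_K] \ge 0$, and lower-bound the accumulated gradient coefficients by $K-1$. The batch-size choices in \eqref{eq:SPEG_weak_MVI_batchsize} are calibrated precisely so that $\beta(K-1) \le 1$, whence $(1+\beta)^{K} \le e$, and so that the accumulated noise $\tfrac{2\omega\gamma\sigma_*^2(K-1)}{\tau}$ stays below $(1-L\gamma)\norm{x_0 - x_*}^2$; substituting these, dividing by $K-1$, and using $\min_k \le$ average collapses the right-hand side to $\tfrac{C\norm{x_0 - x_*}^2}{K-1}$, as claimed.
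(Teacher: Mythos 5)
Your proposal follows essentially the same route as the paper's proof: the same expansion of $\norm{x_{k+1}-x_*}^2$ with the substitution $x_k - x_* = (\hx_k - x_*) + \gamma F_{v_{k-1}}(\hx_{k-1})$, the Weak Minty bound on the leading inner product, the polarization identity on the cross term to extract $-\omega\gamma\norm{F(\hx_k)}^2$ and carry $-\omega\gamma\norm{F_{v_{k-1}}(\hx_{k-1})}^2$ forward, the Lipschitz/variance split of the residual, and a geometrically weighted summation (the paper uses weights $w_{k-1}=(1+3C_3\omega\gamma\delta)w_k$, which is exactly your $(1+\beta)$-Lyapunov recursion) with the batch size calibrated so that $(1+\beta)^{K-1}\le e$ and the noise term is dominated by $(1-L\gamma)\norm{x_0-x_*}^2$. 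The plan is correct and matches the paper's two-stage structure (a general bound under the variance condition, then specialization to the mini-batched estimator with parameters $\delta/\tau$ and $\sigma_*^2/\tau$).
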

The above result establishes $\cO(\nicefrac{1}{K})$ convergence with $\cO(K)$ batchsizes for \algname{SPEG} applied to problems satisfying Weak Minty condition.\footnote{See also Appendix~\ref{AppendixE5} for a discussion related to the oracle complexity of Theorem \ref{cor:weak_MVI_convergence}.} The closest result is obtained by \cite{bohm2022solving}, for the same method under bounded variance assumption, i.e., when $\delta = 0$. In particular, the result of \cite{bohm2022solving} also gives $\cO(\nicefrac{1}{K})$ rate and requires $\cO(K)$ batchsizes at each step. We extend this result to the case of non-zeroth $\delta$ and we also improve the assumption on $\rho$: \cite{bohm2022solving} assumes that $\rho < \nicefrac{3}{8L}$, while Theorem~\ref{cor:weak_MVI_convergence} holds for $\rho < \nicefrac{1}{2L}$. The bound on $\rho$ cannot be improved even in the deterministic case \cite{gorbunov2022convergence}. Moreover, it is worth mentioning that the proof of Theorem~\ref{cor:weak_MVI_convergence} noticeably differs from the one obtained by \cite{bohm2022solving}.

In the case of a deterministic oracle, we recover the best-known result for Optimistic Gradient in the Weak Minty setup \cite{bohm2022solving, gorbunov2022convergence}.

\begin{corollary}
    Let the assumptions of Theorem~\ref{cor:weak_MVI_convergence} hold and deterministic version of \algname{SPEG} is considered, i.e., $\delta = 0$, $\sigma_*^2 = 0$. Then, Theorem~\ref{cor:weak_MVI_convergence} implies that for all $k\geq 0$ the iterates produced by \algname{SPEG} with step sizes $\max\left\{2\rho, \frac{1}{2L}\right\} < \gamma < \frac{1}{L}$ and $0 < \omega < \min\left\{\gamma - 2\rho, \frac{1}{4L} - \frac{\gamma}{4}\right\}$
    satisfy $\min\limits_{0\leq k \leq K-1}\|F(\hat x_k)\|^2 \leq \frac{C\|x_0 - x_*\|^2}{K-1},$
    where $C = \frac{48}{\omega\gamma (1 - L(\gamma + 4\omega))}$.
\end{corollary}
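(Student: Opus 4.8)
The plan is to obtain this corollary as a direct specialization of Theorem~\ref{cor:weak_MVI_convergence}, requiring essentially no new computation. The deterministic \algname{SPEG} is recovered precisely by choosing the full-batch sampling $\tau = n$ in Definition~\ref{def:minibatch}, so that $v_i = 1$ for all $i \in [n]$; as already noted in the text preceding Corollary~\ref{dawna}, and as follows from Proposition~\ref{Prop_SufficientCondition} by setting $\tau = n$ (which makes the factor $\nicefrac{(n-\tau)}{(n-1)}$ vanish), this choice forces $\delta = 0$ and $\sigma_*^2 = 0$. Hence it suffices to track how the hypotheses and conclusion of Theorem~\ref{cor:weak_MVI_convergence} simplify under $\delta = 0$ and $\sigma_*^2 = 0$.

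First I would inspect the batch-size requirement \eqref{eq:SPEG_weak_MVI_batchsize}. Each of its second, third, and fourth arguments carries an explicit factor of $\delta$ or $\sigma_*^2$, so all three vanish when $\delta = 0$ and $\sigma_*^2 = 0$, leaving $\tau \geq \max\{1, 0, 0, 0\} = 1$. This constraint is trivially met, so the theorem applies with no effective restriction on the (now irrelevant) batch size. Next I would observe that the step-size conditions $\max\{2\rho, \nicefrac{1}{(2L)}\} < \gamma < \nicefrac{1}{L}$ and $0 < \omega < \min\{\gamma - 2\rho, \nicefrac{1}{(4L)} - \nicefrac{\gamma}{4}\}$ do not reference $\delta$ or $\sigma_*^2$ at all, so they are inherited verbatim, as are the Weak Minty restriction $\rho < \nicefrac{1}{(2L)}$ and the $L$-Lipschitz assumption.

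Finally I would invoke the conclusion of Theorem~\ref{cor:weak_MVI_convergence}, namely $\min_{0 \leq k \leq K-1} \E[\|F(\hat x_k)\|^2] \leq \nicefrac{C\|x_0 - x_*\|^2}{(K-1)}$ with $C = \nicefrac{48}{(\omega\gamma(1 - L(\gamma + 4\omega)))}$. Since the oracle is deterministic, every sampling vector equals the all-ones vector and the iterates $\hat x_k$ are deterministic, so the expectation is vacuous and $\E[\|F(\hat x_k)\|^2] = \|F(\hat x_k)\|^2$. Dropping the expectation yields exactly the stated bound, with the identical constant $C$.

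There is no genuine obstacle here: the only points requiring care are confirming that every $\delta$- and $\sigma_*^2$-dependent term in \eqref{eq:SPEG_weak_MVI_batchsize} indeed degenerates so that $\tau = 1$ suffices, and justifying that the expectation can be dropped in the deterministic regime. Both are immediate once the full-batch identification $\delta = \sigma_*^2 = 0$ is in place, so the corollary follows purely by substitution.
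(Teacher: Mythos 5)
Your proposal is correct and follows exactly the route the paper intends: the corollary is stated as an immediate specialization of Theorem~\ref{cor:weak_MVI_convergence}, obtained by setting $\delta = \sigma_*^2 = 0$ (the full-batch/deterministic case, consistent with the remark before Corollary~\ref{dawna}), under which the batch-size condition \eqref{eq:SPEG_weak_MVI_batchsize} degenerates to $\tau \geq 1$ and the expectation becomes vacuous. Nothing further is needed.
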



\section{Beyond Uniform Sampling}\label{section: Arbitrary Sampling}
In this section, we illustrate the generality of our analysis by focusing on the non-uniform sampling. In particular, we focus on \emph{single-element sampling} in which only the singleton sets $\{i\}$ for $i=\{1,\ldots, n\}$ have a non-zero probability of being sampled; that is,
$\Prob{|S|=1} = 1$. We have $\Prob{v = e_i/p_i} = p_i$. \cite{gower2019sgd} proved that if $v$ is a single-element sampling, it is also a valid sampling vector ($\E_{\cD}[v_i]  = 1$). With the following proposition, we provide closed-form expressions for the \ref{eq: ER Condition} parameter $\delta$ and $\sigma_*^2 = \E\|g(x_*)\|^2$ for the case of (non-uniform) single-element sampling.

\begin{proposition}\label{Prop_SingleElement}
Let $F_i$ of problem~\eqref{eq: Variational Inequality Definition} be $L_i$-Lipschitz operators.  If, vector $v \in \R^n$ is a single element sampling then $\delta = \frac{2}{n^2} \sum_{i = 1}^n \frac{L_i^2}{p_i}$ and $\sigma_*^2 = \frac{1}{n^2} \sum_{i=1}^n \frac{1}{p_i} \|F_i(x_*)\|^2.$
\end{proposition}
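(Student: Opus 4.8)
The plan is to exploit the explicit structure of single-element sampling, under which the random sampling vector $v$ takes only $n$ possible values: with probability $p_i$ one has $v = e_i/p_i$, so that $v_i = 1/p_i$ and every other coordinate vanishes. Substituting this into the estimator \eqref{gEstimator} shows that on the event ``index $i$ is drawn'' we simply get $g(x) = \frac{1}{n p_i} F_i(x)$. Both claimed closed-form expressions then follow by evaluating the relevant expectations as finite sums over these $n$ outcomes, in direct analogy with the $\tau$-minibatch computation of Proposition~\ref{Prop_SufficientCondition}.

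For $\sigma_*^2$ I would compute directly $\E\norm{g(x_*)}^2 = \sum_{i=1}^n p_i \norm{\frac{1}{n p_i} F_i(x_*)}^2$. Each summand equals $p_i \cdot \frac{1}{n^2 p_i^2}\norm{F_i(x_*)}^2 = \frac{1}{n^2 p_i}\norm{F_i(x_*)}^2$, and summing over $i$ yields $\sigma_*^2 = \frac{1}{n^2}\sum_{i=1}^n \frac{1}{p_i}\norm{F_i(x_*)}^2$, exactly as stated.

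For $\delta$, the key observation is that the centered estimator $Y \eqdef g(x) - g(x_*)$ is unbiased for $F(x) - F(x_*)$: indeed $\E[Y] = \sum_{i} p_i \cdot \frac{1}{n p_i}\left(F_i(x) - F_i(x_*)\right) = \frac{1}{n}\sum_{i}\left(F_i(x) - F_i(x_*)\right) = F(x) - F(x_*)$. Hence the left-hand side of \eqref{eq: ER Condition} is precisely the variance $\E\norm{Y - \E[Y]}^2 = \E\norm{Y}^2 - \norm{\E[Y]}^2$. Discarding the nonnegative term $\norm{\E[Y]}^2 = \norm{F(x) - F(x_*)}^2$ gives the upper bound $\E\norm{Y}^2 = \frac{1}{n^2}\sum_{i} \frac{1}{p_i}\norm{F_i(x) - F_i(x_*)}^2$. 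Applying the $L_i$-Lipschitzness of each $F_i$, namely $\norm{F_i(x) - F_i(x_*)}^2 \leq L_i^2 \norm{x - x_*}^2$, produces the bound $\left(\frac{1}{n^2}\sum_{i} \frac{L_i^2}{p_i}\right)\norm{x - x_*}^2$. Matching this against the right-hand side $\frac{\delta}{2}\norm{x - x_*}^2$ of the ER condition pins down $\delta = \frac{2}{n^2}\sum_{i=1}^n \frac{L_i^2}{p_i}$.

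The computation is essentially routine, and I do not expect a genuine obstacle; the only point needing care is the bias--variance bookkeeping --- recognizing that the ER quantity is exactly a variance, so that the mean-squared term may be dropped to obtain a clean upper bound. It is precisely this discarding that turns the equality $\E\norm{Y}^2 = \frac{1}{n^2}\sum_i \frac{1}{p_i}\norm{F_i(x)-F_i(x_*)}^2$ into an admissible \emph{bound} certifying the stated $\delta$ as a valid ER parameter, rather than forcing an equality in \eqref{eq: ER Condition}.
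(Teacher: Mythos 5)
Your proposal is correct and follows essentially the same route as the paper: both drop the nonnegative term $\|F(x)-F(x_*)\|^2$ from the variance identity, reduce the second moment $\E\|g(x)-g(x_*)\|^2$ to the diagonal sum $\frac{1}{n^2}\sum_i \frac{1}{p_i}\|F_i(x)-F_i(x_*)\|^2$ (the paper reaches this by setting $P_{ij}=0$ for $i\neq j$ in a more general $P_{ij}/(p_ip_j)$ expansion), and then apply $L_i$-Lipschitzness; the computation of $\sigma_*^2$ is likewise the same direct expectation over the $n$ outcomes.
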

\textbf{Importance Sampling.} In importance sampling we aim to choose the probabilities $p_i$ that optimize the iteration complexity. \cite{gower2019sgd} and \cite{gorbunov2022stochastic} analyze importance sampling for \algname{SGD} and \algname{SEG} respectively. In this chapter, we provide the first convergence guarantees of \algname{SPEG} with importance sampling. In particular, we optimize the expected residual parameter $\delta$ with respect to $p_i$, which in turn affects the iteration complexity. Note that, by using Cauchy-Schwarz inequality~\eqref{eq: Cauchy Schwarz Inequality}, we have 
$\sum_{i = 1}^n \frac{L_i^2}{p_i} \geq \left(\sum_{i = 1}^n L_i \right)^2$,
and this lower bound can be achieved for $p_i^{\delta} = \nicefrac{L_i}{\sum_{j = 1}^n L_j}$. In case of importance sampling, we will use these probabilities $p_i^{\delta}$ which optimizes $\delta$ and define the corresponding $\delta$ as $\delta_{\text{IS}} \coloneqq \frac{2}{n^2}\left( \sum_{i = 1}^n L_i\right)^2$.
For uniform sampling \big(i.e. $p_i = \frac{1}{n}$\big), the value of the parameter is $\delta_{\text{US}} = \frac{2}{n} \sum_{i = 1}^n L_i^2$. Note that, $\delta_{\text{IS}}$ equals $\delta_{\text{US}}$ when all $L_i$ are equal, however $\delta_{\text{IS}}$ can be much smaller than $\delta_{\text{US}}$ when $L_i$ are very different from each other, e.g., when all $L_i$ are relatively small (close to zero) and one $L_i$ is large, $\delta_{\text{IS}}$ is almost $n$ times smaller than $\delta_{\text{US}}$. In this latter scenario (when $\delta_{\text{IS}}$ is much smaller than $\delta_{\text{US}}$), importance sampling could be useful and can significantly improve the performance of \algname{SPEG}. For example, note that the exponentially decaying term in \eqref{rate for Total number of iteration knowledge} decreases with $\delta$. Hence, this term will decrease much faster with importance sampling than with uniform sampling.

\section{Numerical Experiments}
\label{Numerical Experiments}
To verify our theoretical results, we run several experiments on two classes of problems, i.e., strongly monotone problems (a special case of the quasi-strongly monotone problems) and weak Minty problems. The code to reproduce our results can be found at \href{https://github.com/isayantan/Single-Call-Stochastic-Extragradient-Methods}{https://github.com/isayantan/Single-Call-Stochastic-Extragradient-Methods}.
\subsection{Strongly Monotone Problems}\label{sec: experiments on quasi monotone}
Our experiments consider the quadratic strongly-convex strongly-concave min-max problem from \cite{gorbunov2022stochastic}. That is, we implement \algname{SPEG} on quadratic games of the form $\min_{x \in \mathbb{R}^d} \max_{y \in \mathbb{R}^d} \frac{1}{n}\sum_{i = 1}^n \mathcal{L}_i(w_1, w_2)$ where 

\begin{equation}
    \mathcal{L}_i(x,y) \coloneqq \frac{1}{2} w_1^{\intercal}A_i w_1 + w_1^{\intercal} B_i w_2 - \frac{1}{2} w_2^{\intercal} C_i w_2 + a_i^{\intercal}w_1 - c_i^{\intercal}w_2.
\end{equation}
Here $A_i, B_i, C_i$ are generated such that the quadratic game is strongly monotone and smooth. In all our experiments, we take $n = 100$ and $d = 30$. We generate positive semi-definite matrices $A_i, B_i, C_i$ such that their eigenvalues lie in the interval $[\mu_A, L_A], [\mu_B, L_B]$ and $[\mu_C, L_C]$ respectively. In all our experiments, we consider $L_A = L_B = L_C = 1$ and $\mu_A = \mu_C = 0.1, \mu_B = 0$ unless otherwise mentioned. The vectors $a_i$ and $c_i$ are generated from $\mathcal{N}_d (0,I_d)$. 
Here, the $i$th operator is given by

\begin{equation*}\label{operator for quad game}
    \begin{split}
        F_i \begin{pmatrix} w_1\\
        w_2 \end{pmatrix} = \begin{pmatrix} \nabla_{w_1} \mathcal{L}_i(w_1, w_2) \\ - \nabla_{w_2} \mathcal{L}_i(w_1, w_2) \end{pmatrix} =\begin{pmatrix} A_iw_1 + B_i w_2 + a_i \\
        C_i w_2 - B_i^{\intercal}w_1 + c_i
        \end{pmatrix}
    \end{split}
\end{equation*}
In Figures~\ref{fig: Synthetic Dataset 1}, \ref{fig:hsieh vs our steps}, and \ref{fig:us_vs_is}, 
we plot the relative error on the $y$-axis i.e. $\frac{\|x_k - x_*\|^2}{\|x_0 - x_*\|^2}$. 

\begin{figure}
    \centering
    \includegraphics[width=0.5\linewidth]{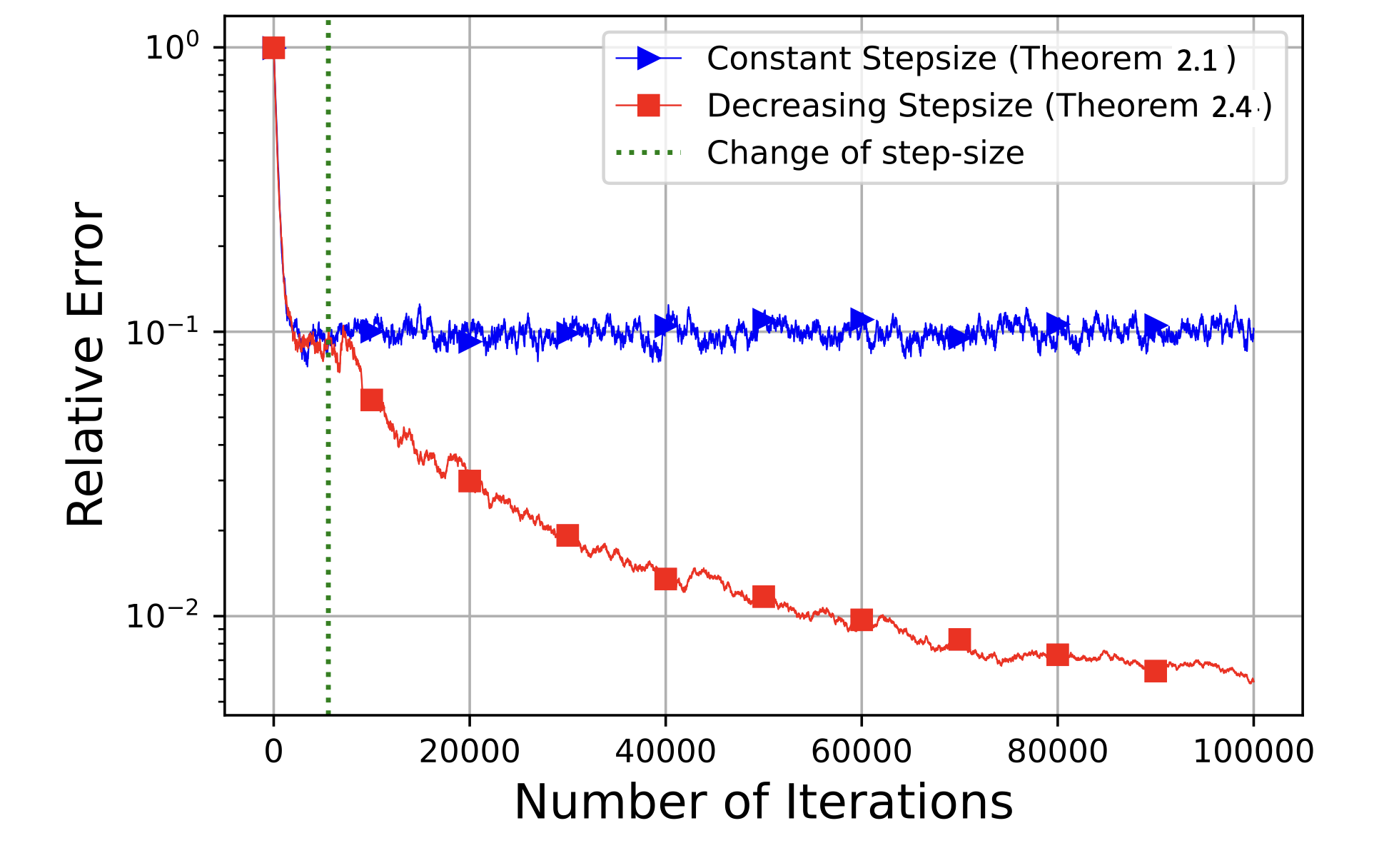}
    \caption{Constant vs Switching}
    \label{fig: Synthetic Dataset 1}
\end{figure}

\paragraph{Constant vs Switching Step-size Rule.}
In Fig.~\ref{fig: Synthetic Dataset 1}, we illustrate the step-size switching rule of Theorem \ref{SPEG switching rule}. We place the dotted line to mark when we switch from constant step-size to decreasing step-size. In Fig.~\ref{fig: Synthetic Dataset 1}, the trajectory of switching step-size rule \eqref{eq:stepsize_switching_1} matches that of constant step-size \eqref{eq:constant_stepsize} in the first phase \big(where \algname{SPEG} runs with constant step-size following \eqref{eq:stepsize_switching_1}\big). However, it becomes stagnant when the constant step-size \algname{SPEG} reaches a neighbourhood of optimality. In contrast, the step-size of Theorem \ref{SPEG switching rule} helps the method to converge to better accuracy.

\paragraph{Comparison to \cite{hsieh2019convergence}.}
In this experiment, we compare \algname{SPEG} step-sizes proposed in Theorems \ref{Theorem: constant stepsize theorem} and \ref{SPEG switching rule} with step-sizes from \cite{hsieh2019convergence}.
To implement \algname{SPEG} with the step-sizes from \cite{hsieh2019convergence}, we choose $\gamma$ and $b$ such that $\frac{1}{\mu} < \gamma \leq \frac{b}{4L}$ and set $\om_k = \gamma_k = \frac{\gamma}{k + b}$. For Fig.~\ref{fig: Interpolated Model}, we generate $A_i, B_i, C_i$ as before. First, we sample optimal points $w_{1*}, w_{2*}$ from $\mathcal{N}_d(0, I_d)$ and then generate $a_i, c_i$ such that $F(w_{1*}, w_{2*}) = 0$.   
\begin{equation*}
    \begin{split}
        \begin{pmatrix} a_i \\c_i \end{pmatrix} = \begin{pmatrix} A_i & B_i \\ -B_i^{\intercal} & C_i \end{pmatrix}^{-1} \begin{pmatrix} w_{1*} \\ w_{2*} \end{pmatrix}.
    \end{split}
\end{equation*}
In Fig.~\ref{fig: Interpolated Model}, we run the algorithms on interpolated model $\big( F_i(x_*) = 0$ for all $i \in [n]\big)$. Since the model is interpolated, we have $\sigma_*^2 = 0$ in Theorem \ref{Theorem: constant stepsize theorem} and linear convergence to the exact optimum asymptotically. In this setting, as shown in Fig.~\ref{fig: Interpolated Model}, our proposed step-size results in major improvement compared to the decreasing step-size selection analyzed in~\cite{hsieh2019convergence}. In Fig.~\ref{fig: Non-Interpolated Model}, we compare the switching step-size rule with step-size from \cite{hsieh2019convergence}. In Fig.~\ref{fig: Non-Interpolated Model}, we generate $a_i, c_i$ from the normal distribution. In this plot, we manually switch the step-size from constant to decreasing after $305$ steps. We observe that such a semi-empirical rule has comparable performance to the step-size selection of \cite{hsieh2019convergence}. 

\begin{figure}[t]
\centering
\begin{subfigure}[b]{.4\textwidth}
    \centering
    \includegraphics[width=\textwidth]{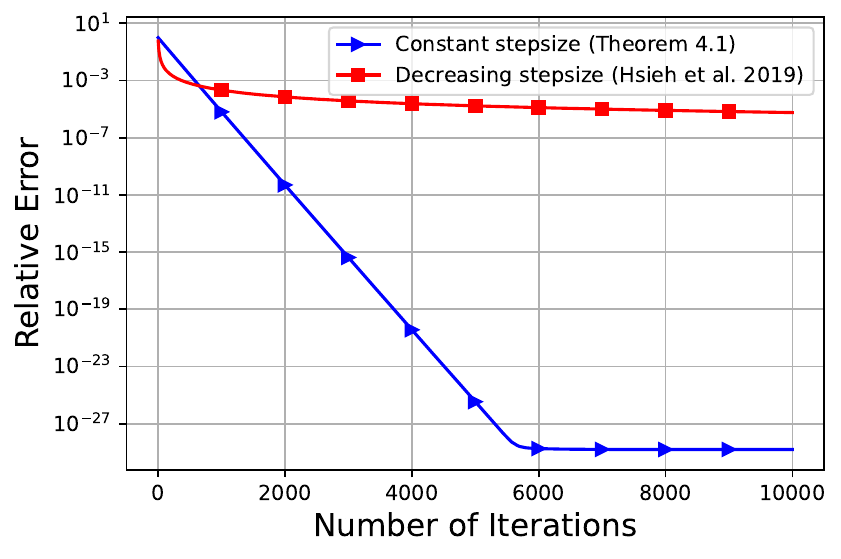}
    \caption{Interpolated Model}\label{fig: Interpolated Model}
\end{subfigure}
\begin{subfigure}[b]{0.4\textwidth}
    \centering
    \includegraphics[width=\textwidth]{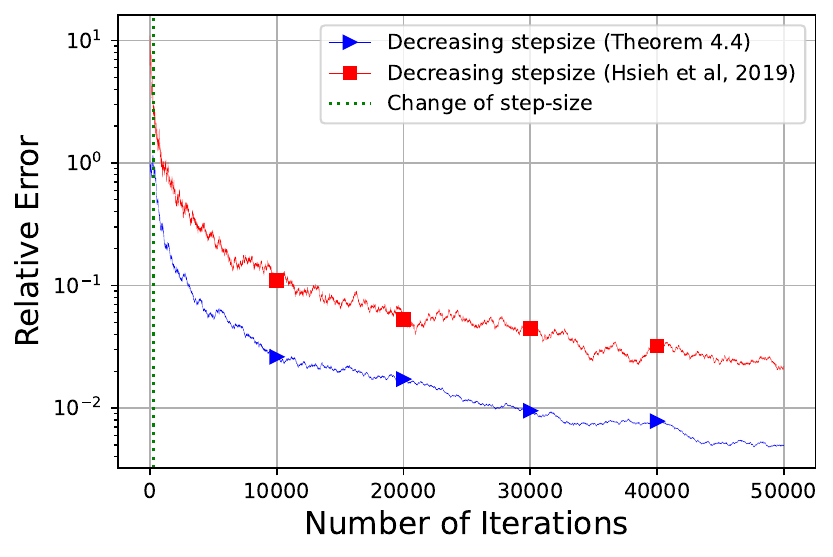}
    \caption{Non-Interpolated Model}\label{fig: Non-Interpolated Model}
\end{subfigure}
\caption{\emph{Comparison of our \algname{SPEG} using our step-size against decreasing step-size of \cite{hsieh2019convergence}. In plot (a), for constant step-size of \algname{SPEG} we use the upper bound of \eqref{eq:constant_stepsize}. In plot (b), we run our switching step-size \algname{SPEG}~\eqref{eq:stepsize_switching_1}.}}
\label{fig:hsieh vs our steps}
\end{figure}

\begin{figure}[t]
\centering
\begin{subfigure}[b]{0.4\linewidth}
    \centering
    \includegraphics[width=\textwidth]{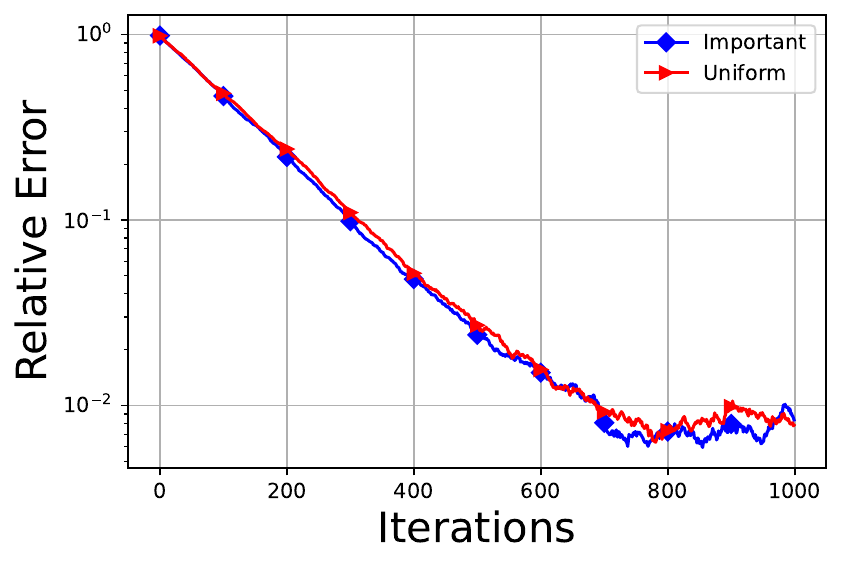}
    \caption{$\Lambda = 2 $}
\end{subfigure}
 \begin{subfigure}[b]{0.4\linewidth}
      \centering
      \includegraphics[width=\textwidth]{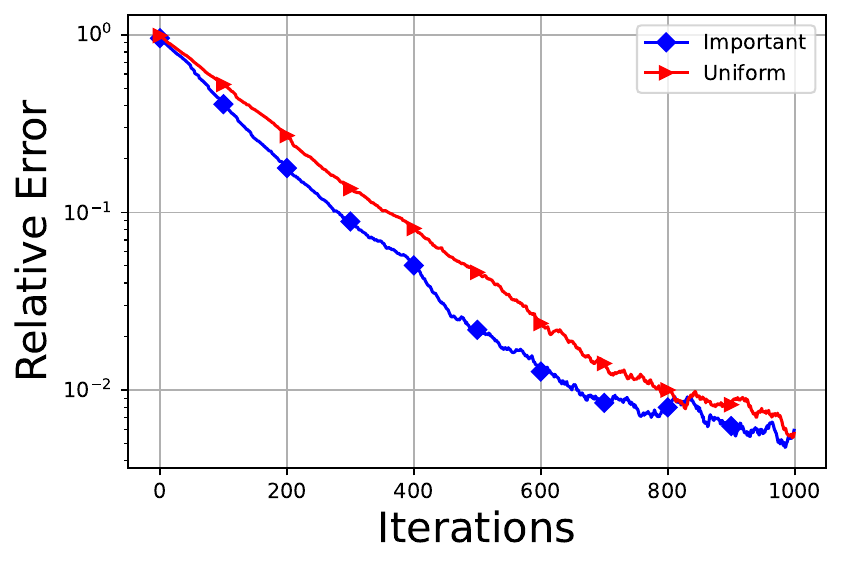}
     \caption{$\Lambda = 5 $}
  \end{subfigure}
\begin{subfigure}[b]{0.4\textwidth}
    \centering
    \includegraphics[width=\textwidth]{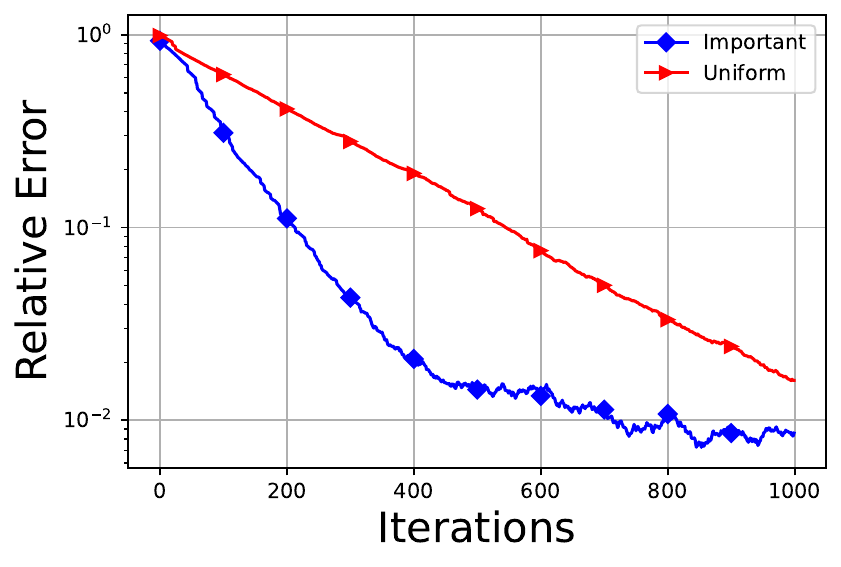}
    \caption{$\Lambda = 10$}
\end{subfigure}
\begin{subfigure}[b]{0.4\textwidth}
    \centering
    \includegraphics[width=\textwidth]{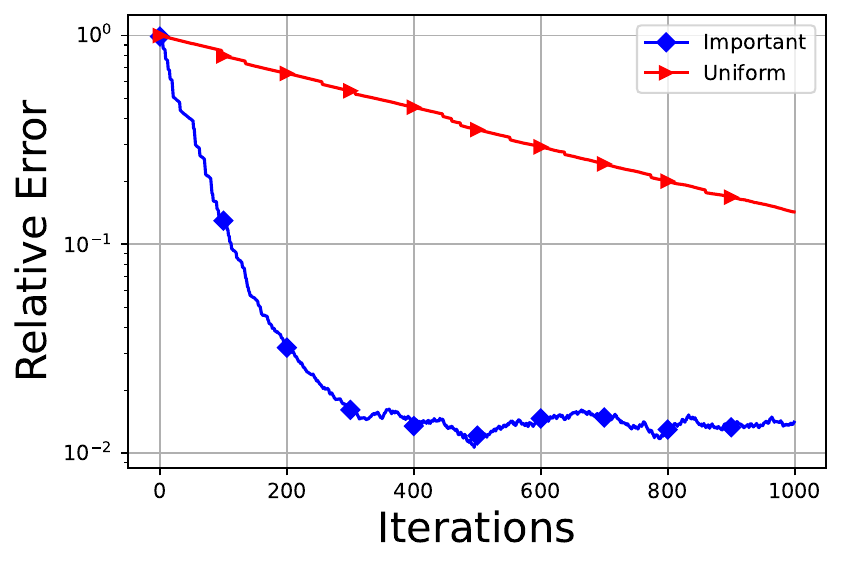}
    \caption{$\Lambda = 20$}
\end{subfigure}  
\caption{\emph{Comparison of \algname{SPEG} with Uniform and Importance Sampling for different $\Lambda \in\{2,5,10,20\}$, where the eigenvalues of matrices $A_1, C_1$ are uniformly generated from the interval $[0.1, \Lambda].$}}
\label{fig:us_vs_is}
\end{figure}

\paragraph{Uniform vs. Importance Sampling.}  In this experiment, we highlight the advantage of using importance sampling over uniform sampling. The eigenvalues of $A_1, C_1$ are uniformly generated from the interval $[0.1, \Lambda]$ while the rest of the matrices are generated as mentioned before. We vary the value of $\Lambda \in \{2,5,10,20\}$ and run and compare \algname{SPEG} with both uniform and importance sampling (see~Fig.~\ref{fig:us_vs_is}).  For importance sampling, we use the probabilities $p_i = \nicefrac{L_i}{\sum_{j = 1}^n L_j}$. In Fig.~\ref{fig:us_vs_is}, it is clear that as the value of $\Lambda$ increases, the trajectories under uniform sampling get worse, while the trajectory under importance sampling remains almost identical. This behaviour aligns well with our discussion in Section~\ref{section: Arbitrary Sampling}.

\subsection{Weak Minty Problems}\label{sec: Experiment on WMVI}

\begin{figure}
    \centering
    \includegraphics[width=0.5\linewidth]{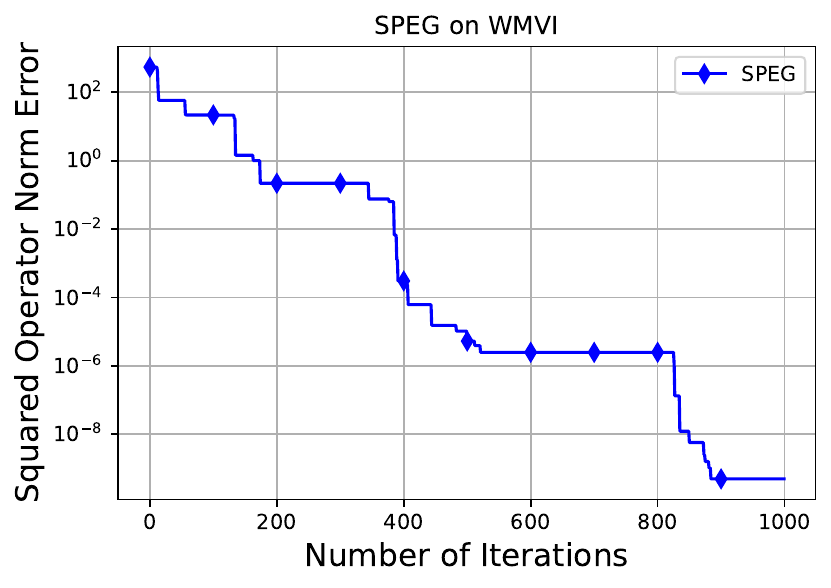}
    \caption{Trajectory of \algname{SPEG} for solving weak Minty problems. "Squared Operator Norm Error'' in vertical axis denotes the $\min\limits_{0\leq k \leq K-1}\E\left[\|F(\hat x_k)\|^2\right]$ of Theorem~\ref{cor:weak_MVI_convergence}.}
    \label{fig:performance of SPEG on WMVI_a}
\end{figure}

This experiment verifies the convergence guarantees of \algname{SPEG} in Theorem \ref{cor:weak_MVI_convergence}. Following the min-max problem mentioned in \cite{bohm2022solving}, we consider the objective function 
\begin{equation}\label{asoxasl}
\begin{split}
       \min_{w_1 \in \mathbb{R}} \max_{w_2 \in \mathbb{R}} \frac{1}{n} \sum_{i = 1}^n \xi_{i} w_1 w_2 + \frac{\zeta_i}{2}(w_1^2 - w_2^2). 
\end{split}
\end{equation}
In this experiment, we generate $\xi_i, \zeta_i$ such that $L = 8$ and $\rho = \nicefrac{1}{32}$ for the above min-max problem \cite{bohm2022solving}. We implement \algname{SPEG} with extrapolation step $\gamma_k = 0.08$ and update step $\omega_k = 0.01$ which satisfies the conditions on step-size in Theorem \ref{cor:weak_MVI_convergence}. In Fig.~\ref{fig:performance of SPEG on WMVI_a}, we use a batchsize of $6$. This plot illustrates that for some weak MVI problems the requirement on the step-size from Theorem~\ref{cor:weak_MVI_convergence} can be too pessimistic and \algname{SPEG} with relatively small batchsize achieves reasonable accuracy of the solution. The choice of batchsize ensures that bound \eqref{eq:SPEG_weak_MVI_batchsize} holds and $\delta$ is small enough to guarantee convergence of \algname{SPEG}. We also tried to compare \algname{SPEG} with \algname{SEG+} from \cite{pethick2022escaping}, however, the authors do not mention their choice of update step-size. We examined several decreasing update step-size for which \algname{SEG+} failed to converge. Further details on experiments can be found in Appendix \ref{subsec:FurtherDetails}.

\chapter{Polyak-type (Stochastic) Extragradient} \label{chap:chap-3}

\section{Introduction}

Among the various algorithms designed for solving the root-fidning problems of the form~\eqref{eq: Variational Inequality Definition} and the unconstrained min-max problems of the form~\eqref{eq:unconstrained_MinMax}, the Extragradient method (\hyperref[eq:EG]{\algname{EG}})~\cite{korpelevich1977extragradient,gorbunov2022extragradient} and its stochastic counterpart \algname{SEG}~\cite{juditsky2011solving, mishchenko2020revisiting,gorbunov2022stochastic} have been particularly successful due to their superior convergence properties and simplicity of analysis. 
The stochastic variant, \algname{SEG}, replaces $F(x)$ with the unbiased estimator $F_{\mathcal{S}_k}(x) \eqdef \frac{1}{B} \sum_{i \in \mathcal{S}_k} F_i(x)$, where $\mathcal{S}_k \subseteq [n]$ is a random subset of datapoints (minibatch) of size $B$ sampled independently at each iteration $k$. In the literature, \hyperref[eq:EG]{\algname{EG}} and \algname{SEG} have been often analyzed with step sizes $\omega_k, \gamma_k$ that depend on the \emph{Lipschitz} parameter $L$ of the operator $F$~\cite{gorbunov2022extragradient, diakonikolas2021efficient, juditsky2011solving, gidel2018variational, mishchenko2020revisiting, gorbunov2022stochastic}. 

However, since in most machine learning applications the parameter $L$ is rarely known a priori, the practicality of such theoretical analyses is limited, motivating the community to design adaptive extragradient methods, that is, methods that adjust the step sizes on the fly without, requiring the knowledge of problem parameters.

However, existing adaptive \algname{EG} variants are typically limited to deterministic settings~\cite{antonakopoulos2020adaptive, pethick2023escaping}. 
In the stochastic setting, a major limitation of some adaptive \algname{SEG} methods is their impracticality due to the requirement of increasing batch sizes~\cite{wang2022self}. Others rely on AdaGrad-type step sizes~\cite{antonakopoulos2021sifting}, which, based on insights from the minimization literature~\cite{zeiler2012adadelta}, are known to be sensitive to hyperparameter tuning. \emph{In this chapter, we aim to design efficient adaptive variants of \algname{EG} and \algname{SEG} so that the proposed step size selection is precisely what one should run in practice without any further expensive tuning.} 

To achieve that, we take inspiration from the popular Polyak step size \cite{polyak1987introduction,hazan2019revisiting}, originally developed for gradient descent and subgradient methods, and more recently used extensively for stochastic optimization and training of deep neural networks ~\cite{loizou2021stochastic, orvieto2022dynamics, dstochastic, gower2021stochastic, oikonomou2025stochastic}. It stands out as a particularly interesting adaptive strategy in optimization, as Polyak-type step sizes have simple update rules and solid theoretical and empirical performance without requiring sensitive hyperparameter tuning.


\begin{table*}[h]
\vspace{-3mm}
        \centering
        \caption{\small
        Correspondence between Polyak step sizes in minimization and min-max optimization.
        }
        \label{tab:polyak_step_stochastic}
        \begin{threeparttable}
        \resizebox{1\textwidth}{!}{%
            \begin{tabular}{|c|c|c|}
            \hline
            Setup & \begin{tabular}{c}
                \large Minimization\\
                $\min_x f(x) \coloneqq \frac{1}{n} \sum_{i = 1}^n f_i(x)$ \\
            \end{tabular} & \begin{tabular}{c}
                 \large Min-Max Optimization / VIPs\\
                 $\text{find } x_* \text{ such that } F(x_*) \coloneqq \frac{1}{n} \sum_{i = 1}^n F_i(x_*) = 0$ \\
            \end{tabular} 
            \\
            \hline\hline
            Deterministic & \begin{tabular}{c}
                {\large $\nicefrac{f(x_k) - \min_x f(x)}{ \left\| \nabla f\left(x_k \right) \right\|^2}$}~\cite{polyak1987introduction}
            \end{tabular} & \cellcolor{bgcolor2} \begin{tabular}{c}
                 \large{$ \nicefrac{\la F(\hx_k), x_k - \hx_k \ra}{\left\|F(\hx_k) \right\|^2}$}
            \end{tabular}  \\[10pt]
            \hline
            Stochastic & \begin{tabular}{c}
                {\large $ \frac{f_{\mathcal{S}_k}(x_k) - \min_x f_{\mathcal{S}_k}(x)}{ \left\| \nabla f_{\mathcal{S}_k} \left(x_k \right) \right\|^2}$}~\cite{loizou2021stochastic}
            \end{tabular} & \cellcolor{bgcolor2} \begin{tabular}{c}
                 {\large $\nicefrac{\la F_{\mathcal{S}_k}(\hx_k), x_k - \hx_k \ra}{\left\|F_{\mathcal{S}_k}(\hx_k) \right\|^2}$}
            \end{tabular}  \\[10pt]
            \hline
             Stochastic & \begin{tabular}{c}
                {\large $\nicefrac{1}{c_{k+1}} \min \left\{ \nicefrac{f_{\mathcal{S}_k}(x_k) - \min_x f_{\mathcal{S}_k}(x)}{c \left\| \nabla f_{\mathcal{S}_k}(x_k) \right\|^2}, c_k \eta_{k-1}\right\}$} \cite{orvieto2022dynamics}
            \end{tabular} &  \cellcolor{bgcolor2} {\large $\min \left\{ \nicefrac{\la F_{\mathcal{S}_k}(\hx_k), x_k - \hx_k \ra}{ \left\|F_{\mathcal{S}_k}(\hx_k) \right\|^2}, \omega_{k-1}\right\}$} \\[10pt]
            \hline
        \end{tabular}%
        }
        \end{threeparttable}
\end{table*}

In this chapter, we establish a connection between \algname{EG} and Polyak step size from the minimization literature, and leverage this insight to design adaptive variants of \algname{SEG}.
In Section~\ref{sec:motivation}, we illustrate how Polyak-like reasoning in the min-max setting for \algname{EG} leads to the update step size identical to that proposed in \cite{solodov1996modified}. Motivated by the developments in the stochastic minimization setting, we then propose Polyak step sizes for \algname{SEG}. Table~\ref{tab:polyak_step_stochastic} summarizes the classical Polyak step sizes $\eta_k$ used in gradient descent and stochastic gradient descent for minimization problems, alongside our proposed counterparts $\omega_k$ for \algname{EG} and \algname{SEG} in the min-max optimization setting. In contrast to the step sizes in minimization, our min-max Polyak updates do not rely on quantities such as functional optimality (e.g., $\min_x f(x)$ or $\min_x f_{\mathcal{S}_k}(x)$). 

Furthermore, we integrate the Polyak update step size with a line-search scheme for the extrapolation step, resulting in \algname{SEG} methods which do not require tuning. A comparative overview of our proposed algorithms against existing \algname{EG} and \algname{SEG} variants is provided in Table~\ref{tab:comparison_of_rates_3}. While many existing \algname{EG} or \algname{SEG} methods require knowledge of the Lipschitz constant~$L$, others rely on either AdaGrad-type step sizes~\cite{antonakopoulos2020adaptive, antonakopoulos2021sifting} or line-search techniques~\cite{pethick2023escaping, solodov1996modified, iusem2019variance}. However, AdaGrad-type step sizes often suffer from hyperparameter sensitivity~\cite{zeiler2012adadelta} and perform poorly without tuning. Among the line-search-based approaches, the method of \cite{pethick2023escaping} utilizes second-order information, and the algorithm of ~\cite{iusem2019variance} requires an increasing batch size, i.e., the size of $\mathcal{S}_k$ increases with $k$. In contrast, our line-search scheme avoids these limitations. Additionally, unlike many existing line-search strategies, our method is guaranteed to terminate, and we provide an explicit upper bound on the number of oracle calls required by the line-search procedure in Theorem~\ref{lemma:LS-step-size-lower-bound}. We summarize the main contributions below.


\begin{table}[H]
    \centering
    \caption{\small Variants of the extragradient algorithm for solving min-max optimization problems. Columns: "Adaptive?" = Does the algorithm require the knowledge of $L$?; "Extra comments for adaptive algorithms" = we provide additional comments about the adaptive algorithms under this column. 
    }
    \label{tab:comparison_of_rates_3}
    \begin{threeparttable}
    \resizebox{\textwidth}{!}{%
        \begin{tabular}{|c|c|c c c c c|}
        \hline
        Setup & Algorithms & Problem &\begin{tabular}{c} Strategy for $\gamma_k$ \end{tabular} & \begin{tabular}{c} Strategy for $\omega_k$ \end{tabular} & Adaptive? & Comments  
        \\
        \hline\hline
        \multirow{12}{2cm}{\centering Determini-\\stic} & \begin{tabular}{c}
            \algname{EG}\\
            \cite{mokhtari2020unified}
        \end{tabular} & \begin{tabular}{c}
             \eqref{eq:strong_monotone}
        \end{tabular} & $\frac{1}{4L}$ & $\frac{1}{4L}$ & \xmark & - \\[10pt]
        
        & \begin{tabular}{c}
            \algname{EG} \\
            \cite{gorbunov2022extragradient} 
        \end{tabular} & \eqref{eq:monotone} & $\frac{1}{L}$ & $\frac{1}{2L}$ & \xmark & - \\[10pt]

        & \begin{tabular}{c}
            \algname{EG} \\
            \cite{antonakopoulos2020adaptive} 
        \end{tabular} & \begin{tabular}{c}
             \eqref{eq:monotone}
        \end{tabular} & $\gamma_k = \omega_k$ & {\large $\frac{1}{\sqrt{1 + \sum_{i = 1}^{k-1} \| F(\hx_k) - F(x_k)\|^2}}$} & \cmark & \tnote{{\color{blue}(4)}}\\[10pt]

        & \begin{tabular}{c}
            \algname{CurvatureEG+} \\
            \cite{pethick2023escaping} 
        \end{tabular} & \eqref{eq:monotone} & line-search & {\large $\frac{\la F(\hx_k), x_k - \hx_k \ra}{\| F(\hx_k)\|^2}$} & \cmark & \tnote{{\color{blue}(1)}} \\[10pt]

        & \begin{tabular}{c}
            \algname{PolyakEG} \\
            \cite{solodov1996modified} 
        \end{tabular} & \begin{tabular}{c}
             \eqref{eq:strong_monotone}
        \end{tabular} & line-search & {\large $\frac{\la F(\hx_k), x_k - \hx_k \ra}{\| F(\hx_k)\|^2}$} & \cmark & \tnote{{\color{blue}(2)}} \\[10pt]

        &\cellcolor{bgcolor2}\begin{tabular}{c}
            \algname{PolyakEG} 
        \end{tabular} & \cellcolor{bgcolor2} \begin{tabular}{c}
             \eqref{eq:strong_monotone}
        \end{tabular} & \cellcolor{bgcolor2} \begin{tabular}{c}
             $\frac{1}{3L}  \text{ for \eqref{eq:strong_monotone} }$
        \end{tabular} & \cellcolor{bgcolor2} {\large $\frac{\la F(\hx_k), x_k - \hx_k \ra}{\| F(\hx_k)\|^2}$} & \cellcolor{bgcolor2} \xmark & \cellcolor{bgcolor2} - \\[10pt]

        &\cellcolor{bgcolor2}\begin{tabular}{c}
            \algname{PolyakEG} 
        \end{tabular} & \cellcolor{bgcolor2} \begin{tabular}{c}
             \eqref{eq:monotone}
        \end{tabular} & \cellcolor{bgcolor2} \begin{tabular}{c}
             $\frac{1}{L} \text{ for \eqref{eq:monotone} }$
        \end{tabular} & \cellcolor{bgcolor2} {\large $\frac{\la F(\hx_k), x_k - \hx_k \ra}{\| F(\hx_k)\|^2}$} & \cellcolor{bgcolor2} \xmark & \cellcolor{bgcolor2} - \\[10pt]

        &\cellcolor{bgcolor2}\begin{tabular}{c}
            \algname{PolyakEG} 
        \end{tabular} & \cellcolor{bgcolor2} \begin{tabular}{c}
             \eqref{eq:monotone}, \eqref{eq:strong_monotone}
        \end{tabular} & \cellcolor{bgcolor2} \begin{tabular}{c}
             line-search  \\
             (Algorithm \ref{alg:PolyakEG_linesearch})
        \end{tabular} & \cellcolor{bgcolor2} {\large $\frac{\la F(\hx_k), x_k - \hx_k \ra}{\| F(\hx_k)\|^2}$} & \cellcolor{bgcolor2} \cmark & \cellcolor{bgcolor2}\tnote{{\color{blue}(3)}} \\[10pt]

        \hline\hline
        
        
        \multirow{18}{2cm}{\centering Stochastic} & \begin{tabular}{c}
            \algname{SEG}\\
            \cite{gorbunov2022stochastic}
        \end{tabular} & \eqref{eq:monotone} & \begin{tabular}{c}
             $\frac{1}{4\mu + \sqrt{2}L}$
        \end{tabular}   & $\frac{1}{16\mu + 4\sqrt{2}L}$ & \xmark & - \\[10pt]

        & \begin{tabular}{c}
            \algname{SEG}\\
            \cite{mishchenko2020revisiting}
        \end{tabular} & \eqref{eq:monotone} & $\mathcal{O}\left( \frac{1}{L\sqrt{k}}\right)$ & $\mathcal{O}\left( \frac{1}{L\sqrt{k}}\right)$ & \xmark & - \\[10pt]

        & \begin{tabular}{c}
            \algname{GEG}\\
            \cite{antonakopoulos2021sifting}
        \end{tabular} & \eqref{eq:monotone} & $\gamma_k = \omega_k$ & {\large $\frac{1}{\sqrt{1 + \sum_{i = 1}^{k-1} \| F_{\mathcal{S}_k}(\hx_k) - F_{\mathcal{S}_k}(x_k)\|^2}}$} & \cmark & \tnote{{\color{blue}(4)}} \\[10pt]


         &\cellcolor{bgcolor2}\begin{tabular}{c}
            \algname{PolyakSEG} 
        \end{tabular} & \cellcolor{bgcolor2} \begin{tabular}{c}
             \eqref{eq:strong_monotone}
        \end{tabular} & \cellcolor{bgcolor2} \begin{tabular}{c}
             $\frac{1}{3L}  \text{ for \eqref{eq:strong_monotone} } $
        \end{tabular} & \cellcolor{bgcolor2} {\large $\frac{\la F_{\mathcal{S}_k}(\hx_k), x_k - \hx_k \ra}{\| F_{\mathcal{S}_k}(\hx_k)\|^2}$} & \cellcolor{bgcolor2} \xmark &\cellcolor{bgcolor2} - \\[10pt]

        &\cellcolor{bgcolor2}\begin{tabular}{c}
            \algname{PolyakSEG} 
        \end{tabular} & \cellcolor{bgcolor2} \begin{tabular}{c}
             \eqref{eq:monotone}
        \end{tabular} & \cellcolor{bgcolor2} \begin{tabular}{c}
             $\frac{1}{L} \text{ for \eqref{eq:monotone} }$
        \end{tabular} & \cellcolor{bgcolor2} {\large $\frac{\la F_{\mathcal{S}_k}(\hx_k), x_k - \hx_k \ra}{\| F_{\mathcal{S}_k}(\hx_k)\|^2}$} & \cellcolor{bgcolor2} \xmark &\cellcolor{bgcolor2} - \\[10pt]

        &\cellcolor{bgcolor2}\begin{tabular}{c}
            \algname{PolyakSEG} 
        \end{tabular} & \cellcolor{bgcolor2} \begin{tabular}{c}
             \eqref{eq:monotone}, \eqref{eq:strong_monotone}
        \end{tabular} & \cellcolor{bgcolor2} \begin{tabular}{c}
             line-search  \\
             (Algorithm \ref{alg:PolyakSEG_linesearch})
        \end{tabular} & \cellcolor{bgcolor2} {\large $\frac{\la F_{\mathcal{S}_k}(\hx_k), x_k - \hx_k \ra}{\| F_{\mathcal{S}_k}(\hx_k)\|^2}$} & \cellcolor{bgcolor2} \cmark & \cellcolor{bgcolor2} \tnote{{\color{blue}(3)}} \\[10pt]

         &\cellcolor{bgcolor2}\begin{tabular}{c}
            \algname{DecPolyakSEG} 
        \end{tabular} & \cellcolor{bgcolor2} \begin{tabular}{c}
             \eqref{eq:monotone}
        \end{tabular} & \cellcolor{bgcolor2} \begin{tabular}{c}
             $\frac{2}{L\sqrt{k+2}} $
        \end{tabular} & \cellcolor{bgcolor2} $ \min \left\{ \frac{\la F_{\mathcal{S}_k}(\hx_k), x_k - \hx_k \ra}{\| F_{\mathcal{S}_k}(\hx_k)\|^2}, \omega_{k-1} \right\}$ & \cellcolor{bgcolor2} \xmark & \cellcolor{bgcolor2} - \\[10pt]

        &\cellcolor{bgcolor2}\begin{tabular}{c}
            \algname{DecPolyakSEG} 
        \end{tabular} & \cellcolor{bgcolor2} \begin{tabular}{c}
             \eqref{eq:monotone}, \eqref{eq:strong_monotone}
        \end{tabular} & \cellcolor{bgcolor2} \begin{tabular}{c}
             line-search  \\
             (Algorithm \ref{alg:decPolyakSEG_linesearch})
        \end{tabular} & \cellcolor{bgcolor2} $ \min \left\{ \frac{\la F_{\mathcal{S}_k}(\hx_k), x_k - \hx_k \ra}{\| F_{\mathcal{S}_k}(\hx_k)\|^2}, \omega_{k-1} \right\}$ & \cellcolor{bgcolor2} \cmark & \cellcolor{bgcolor2} 
        \tnote{{\color{blue}(3)}} \\[10pt]

        \hline
    \end{tabular}%
    }
    \begin{tablenotes}
        {\scriptsize 
        \item \tnote{{\color{blue}(1)}} This algorithm needs second-order information for line-search, while our algorithms only use first-order \\
        information.
        \item \tnote{{\color{blue}(2)}} No explicit bound on the number of oracle calls required for the line-search to terminate.
        
        \item \tnote{{\color{blue}(3)}} We have explicit bound on the number of oracle calls required for the line-search to terminate.
        \item \tnote{{\color{blue}(4)}} AdaGrad type step size.
        }
        \vspace{-5mm}
    \end{tablenotes}
    \end{threeparttable}
\end{table}

\subsection{Main Contributions}
\begin{itemize}[leftmargin=*]
   \setlength{\itemsep}{0pt}
    \item \textbf{Polyak-Inspired \hyperref[eq:EG]{\algname{EG}} Updates.} In Section \ref{sec:motivation}, we explain how the principles from the Polyak step size literature can be applied to derive the adaptive update steps of \hyperref[eq:EG]{\algname{EG}}. Drawing an analogy to the Polyak step size for Gradient Descent, we derive the update step size for the \hyperref[eq:EG]{\algname{EG}} method by optimizing the upper bound on $\|x_{k+1} - x_*\|^2$. Based on this connection, we design a new algorithm for stochastic min-max optimization, importing the proof techniques from stochastic Polyak step size literature for convex minimization.
    
    To the best of our knowledge, we are the first to integrate these two distinct areas of research: stochastic Polyak step sizes and \hyperref[eq:EG]{\algname{EG}} methods. Connecting these concepts lays the groundwork for further advancements in adaptive stochastic \hyperref[eq:EG]{\algname{EG}} methods.

    \item \textbf{Analysis for Deterministic Setting.} We present a refined, unified study of \hyperref[alg:PolyakEG]{\algname{PolyakEG}} using extrapolation step size $\gamma_k$ satisfying a critical condition~\eqref{eqn:gamma-critical-condition}. This condition enables the treatment of both constant and line-search step size strategies as special cases. For each strategy, we establish sublinear (\textit{resp}.\ linear) convergence rates under Lipschitz and monotone (\textit{resp}.\ strongly monotone) settings. 
    
\begin{itemize}[leftmargin=*]
   \setlength{\itemsep}{0pt}
    \item  \textbf{Constant $\gamma_k$:} We show that our proof technique allows us to use $\gamma_k = \nicefrac{1}{3L}$ and $\omega_k \geq \nicefrac{1}{4L}$ to guarantee linear convergence. To the best of our knowledge, this is the largest step size known to ensure linear convergence for \hyperref[eq:EG]{\algname{EG}} with an explicit quantitative bound on convergence rate. 
    
   \item  \textbf{Line-Search Based $\gamma_k$:} We derive a rigorous quantitative bound on the number of iterations required for the line search to terminate (Theorem ~\ref{lemma:LS-step-size-lower-bound}). Furthermore, we highlight that our proof technique ensures linear convergence without requiring prior knowledge of the strong monotonicity parameter $\mu$.
\end{itemize}

    \item \textbf{Analysis for Stochastic Setting.}
    Building upon the analysis of deterministic \algname{PolyakEG}, we extend it to the stochastic setting.
    First, we analyze its stochastic extension \algname{PolyakSEG} and show that it converges provided that an interpolating solution exists (see Section~\ref{subsec:PolyakSEG}). 
    Then, to remove the interpolation assumption, we propose stochastic \algname{EG} with \textit{Decreasing} Polyak step sizes, the algorithm we call \algname{DecPolyakSEG}.

    We prove that \hyperref[alg:decPolyakSEG]{\algname{DecPolyakSEG}} converges at $\mathcal{O}(\nicefrac{1}{\sqrt{K}})$ rate when each stochastic operator is Lipschitz and monotone (Theorem \ref{theorem:DecPolyakSEGLS_monotone}). 
    This result assumes that the algorithm's iterates remain bounded throughout the runtime, a common assumption in the literature of stochastic adaptive algorithms. 
    Nevertheless, we show that the bounded iterates assumption can be removed when stochastic operators are additionally strongly monotone (Proposition \ref{proposition:DecPolyakSEGLS_strong_monotone}).
    \item \textbf{Numerical Experiments.} In Section \ref{sec:numerical_experiment}, we conduct multiple numerical experiments to verify our theoretical findings and demonstrate the superior performance of \hyperref[alg:decPolyakSEG]{\algname{DecPolyakSEG}} for solving a variety of problems.
\end{itemize}

\section{Polyak Update Step Size for \algname{EG}: Algorithmic Design}\label{sec:motivation}
\vspace{-3mm}
We explore the motivation behind our algorithm's design. To gain a comprehensive understanding, we revisit the concept of Polyak steps in the context of minimization problems and explain how the same ideas can be extended in the \algname{EG} update rule for solving min-max problems. \\ 
\newline
\textbf{Polyak Step Size for Gradient Descent.} 
One of the most widely recognized methods for solving smooth convex minimization problems of the form $\min_{x \in \R^d} f(x)$ is Gradient Descent (\algname{GD}). The \algname{GD} algorithm updates the iterates according to $x_{k+1} = x_k - \eta_k \nabla f(x_k)$ where $\eta_k$ represents the step size. Typically, the analysis of \algname{GD} assumes a constant step size $\eta_k = \nicefrac{1}{L}$, where $L$ is the Lipschitz constant of the gradient $\nabla f$. However, \cite{polyak1987introduction} introduced the notion of the Polyak step size, which is an adaptive strategy for choosing the $\eta_k$ (\cite{polyak1987introduction} originally introduced Polyak steps for the subgradient method). Instead of $\nicefrac{1}{L}$, \cite{polyak1987introduction} proposed to use the step size $\eta_k$ minimizing the following upper bound on $\|x_{k+1}-x_*\|^2$: $\|x_{k+1} - x_*\|^2 \leq \|x_k - x_*\|^2 - 2 \eta_k \left( f(x_k) - f(x_*) \right) + \eta_k^2 \| \nabla f(x_k)\|^2,$ where the inequality follows from the convexity of $f$. The upper bound on the right-hand side is minimized for step size $\eta_k = \frac{f(x_k) - f(x_*)}{\| \nabla f(x_k)\|^2}.$

This choice of $\eta_k$ is known as the Polyak step size for \algname{GD}. However, implementing this algorithm is impractical since \textit{it requires prior knowledge of the optimal function value} $f(x_*)$, which is typically unknown. 

\begin{figure}
    \centering
    \includegraphics[width=.6\textwidth]{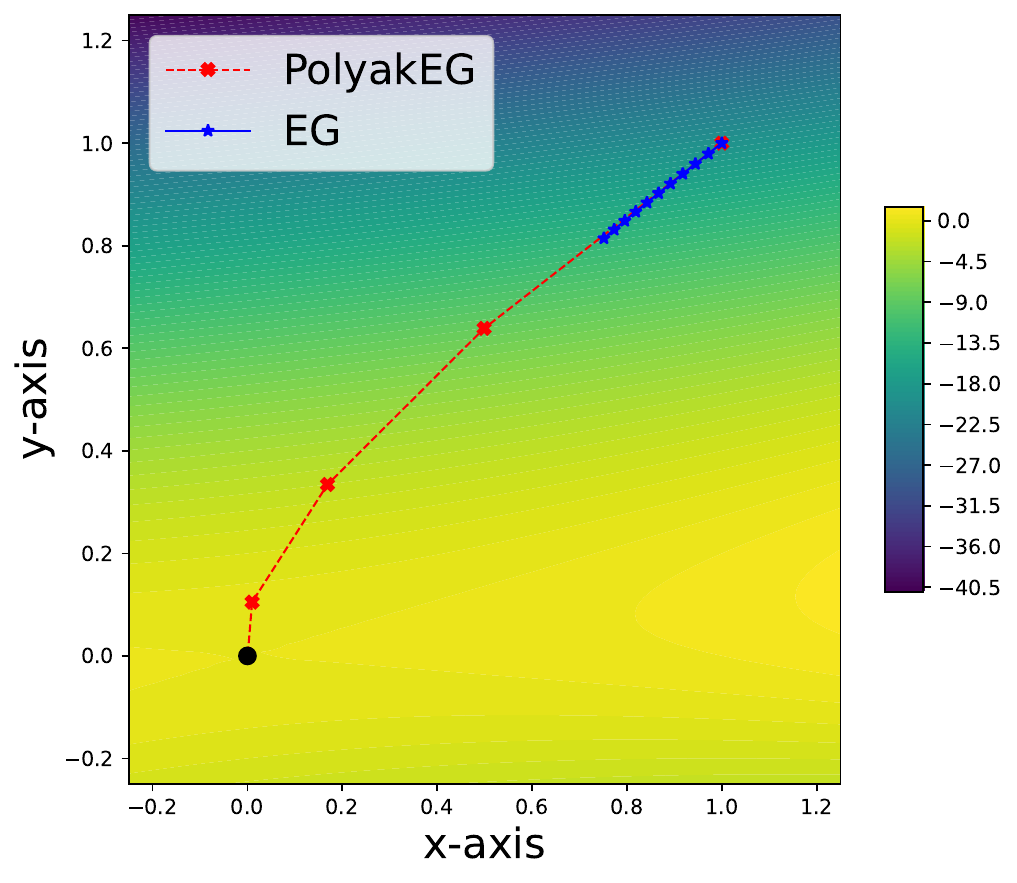} 
    \caption{Comparison of \hyperref[eq:EG]{\algname{EG}} and \hyperref[alg:PolyakEG]{\algname{PolyakEG}} on a two-dimensional \textit{min-max} problem with $\gamma_k = \frac{1}{L}$. Here we plot the trajectories for $10$ iterations. We mark the equilibrium point $(0, 0)$ with $(\bullet)$.}\label{fig:2Dcontour}
\end{figure}

\textbf{Polyak Step Size for Extragradient Method.}
        Here, we want to follow an analogous strategy for choosing the update step size $\omega_k$ of \hyperref[eq:EG]{\algname{EG}}. Consider solving the problem \eqref{eq: Variational Inequality Definition} using \hyperref[eq:EG]{\algname{EG}}. 
        Following the approach of \cite{polyak1987introduction}, we can derive adaptive step size for $\omega_k$, i.e. by minimizing the upper bound on
        \vspace{-2mm}
        \begin{align*}
        \textstyle
            \|x_{k+1} - x_*\|^2 
            = & \left\| x_k - x_* \right\|^2 - 2 \omega_k \la F(\hx_k), x_k - \hx_k\ra - 2 \omega_k \la F(\hx_k), \hx_k - x_* \ra + \omega_k^2 \|F(\hx_k)\|^2. \notag 
        \end{align*}
        Note that for monotone $F$ we have $\la F(\hx_k), \hx_k - x_* \ra \geq 0$. Therefore for $\omega_k > 0$ the above bound reduces to
        $$ \|x_{k+1} - x_*\|^2 \leq  \left\| x_k - x_* \right\|^2 - 2 \omega_k \la F(\hx_k), x_k - \hx_k\ra + \omega_k^2 \|F(\hx_k)\|^2,$$
        where the right-hand side of the inequality is minimised for $\omega_k = \frac{\la F(\hx_k), x_k - \hx_k \ra}{\| F(\hx_k)\|^2}$. This idea of choosing a step size which minimizes a particular upper bound is similar to the Polyak step sizes for minimization problems. However, the Polyak step size in minimization requires the knowledge of 
        $f(x_*)$, which is not known apriori. In contrast, we highlight that even though we use Polyak-like step sizes, \textit{this update step does not run into issues of knowing functional optimality or other quantities that cannot be computed.} We refer to this algorithm as \hyperref[alg:PolyakEG]{\algname{PolyakEG}} (Algorithm \ref{alg:PolyakEG}). Here, we emphasise that neither this step size selection $\omega_k$ nor the algorithm is a contribution of our work. \cite{solodov1996modified} first introduced this step size, and several subsequent works analysed it. However, to our knowledge, the specific connection between this step size $\omega_k$ and the Polyak step size had not been established. This Polyak-type interpretation of $\omega_k$ allows us to extend these ideas to stochastic settings and design new algorithms with rigorous convergence guarantees. In Figure \ref{fig:2Dcontour}, we illustrate the advantages of using \hyperref[alg:PolyakEG]{\algname{PolyakEG}} over the standard \hyperref[eq:EG]{\algname{EG}} method on a two-dimensional min-max problem. We add more details related to the figure in Appendix \ref{sec:compare_polyakeg_eg}. 

\begin{algorithm}[H]
\small
    \caption{\algname{PolyakEG}}
    \label{alg:PolyakEG}
    \begin{algorithmic}[1]
        \REQUIRE Initial point $x_0 \in \R^d$ and positive sequence $\{\gamma_k\}_{k=0}^\infty$.
        \FOR{$k = 0, 1,...,K$}
        \STATE $\hx_k = x_k - \gamma_k F(x_k)$.
        \vspace{2mm}
        \STATE Set \colorbox{green!20}{$\omega_k = \frac{\la F(\hx_k), x_k - \hx_k \ra}{\|F(\hx_k)\|^2}$.}
        \vspace{2mm}
        \STATE $x_{k+1} = x_k - \omega_k F(\hx_k)$.
        \ENDFOR
    \end{algorithmic}
\end{algorithm}

\section{Deterministic Setting}\label{sec:conv_ana1}
In this section, we provide an analysis of \hyperref[alg:PolyakEG]{\algname{PolyakEG}} in the deterministic case. The ideas we present here are not conceptually new.
However, we provide a refined understanding of existing results under a unified perspective.
This will allow us to extend these results to the stochastic setup later.

\subsection{Critical Condition for $\gamma_k$}

While \hyperref[alg:PolyakEG]{\algname{PolyakEG}} specifies the update step size $\omega_k$, it does not specify the extrapolation step size $\gamma_k$. The step size $\gamma_k$ can be a constant depending on $L$ or can be chosen with a line-search technique. Here, we propose a critical condition that captures the theory of both such choices. 
\begin{definition} 
\label{def:critical-condition-deterministic}
We say $\gamma_k$ satisfies the critical condition for some $A \in (0, 1]$ if
\begin{equation}\label{eqn:gamma-critical-condition}
    \norm{F(\hat{x}_k) - F(x_k)} \le A \norm{F(x_k)} \quad \text{for } \hx_k = x_k - \gamma_k F(x_k).
\end{equation}
\end{definition}

This condition captures the cases when $\bullet$ $\gamma_k = \nicefrac{A}{L}$ for $A \in (0, 1]$ and $\bullet$ $\gamma_k$ is selected using line-search (check update rule for \algname{PolyakEG-LS}). The first significance of~\eqref{eqn:gamma-critical-condition} is that it provides a lower bound on $\omega_k$, which guarantees a sufficient progress every iteration. 

\begin{lemma}
\label{lemma:alpha-lower-bound}
If \eqref{eqn:gamma-critical-condition} is satisfied with $A \in (0,1]$, then we have $\textstyle \inprod{F(\hat{x}_k)}{F(x_k)} \ge \frac{1}{2} \left( \sqnorm{F(\hat{x}_k)} + (1 - A^2) \sqnorm{F(x_k)} \right)$ which implies $\omega_k \ge \frac{\gamma_k}{1+A}$.
\end{lemma}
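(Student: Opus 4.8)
The plan is to reduce the whole statement to the two vectors $F(\hx_k)$ and $F(x_k)$ and to exploit the extrapolation step of \hyperref[alg:PolyakEG]{\algname{PolyakEG}}, which gives $x_k - \hx_k = \gamma_k F(x_k)$. Substituting this into the definition of the update step size rewrites it as $\omega_k = \gamma_k \inprod{F(\hx_k)}{F(x_k)} / \sqnorm{F(\hx_k)}$, so the entire lemma becomes a statement about the single scalar ratio $\inprod{F(\hx_k)}{F(x_k)} / \sqnorm{F(\hx_k)}$, and the factor $\gamma_k$ can be carried along untouched.

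First I would establish the inner-product lower bound, which is the main claim. Squaring the critical condition \eqref{eqn:gamma-critical-condition} gives $\sqnorm{F(\hx_k)} - 2\inprod{F(\hx_k)}{F(x_k)} + \sqnorm{F(x_k)} \le A^2 \sqnorm{F(x_k)}$, and simply rearranging isolates $2\inprod{F(\hx_k)}{F(x_k)} \ge \sqnorm{F(\hx_k)} + (1 - A^2)\sqnorm{F(x_k)}$, which is exactly the first inequality of the lemma.

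To convert this into the lower bound on $\omega_k$, I next need an upper bound on $\sqnorm{F(\hx_k)}$. The triangle inequality together with \eqref{eqn:gamma-critical-condition} yields $\norm{F(\hx_k)} \le \norm{F(x_k)} + \norm{F(\hx_k) - F(x_k)} \le (1+A)\norm{F(x_k)}$, hence $\sqnorm{F(x_k)}/\sqnorm{F(\hx_k)} \ge 1/(1+A)^2$. Dividing the inner-product bound by $\sqnorm{F(\hx_k)}$ and inserting this ratio gives $\inprod{F(\hx_k)}{F(x_k)}/\sqnorm{F(\hx_k)} \ge \tfrac{1}{2} + \tfrac{1-A^2}{2(1+A)^2}$.

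The final step, and the only place needing any care, is the algebraic simplification showing that this expression collapses exactly to $1/(1+A)$: writing $1 - A^2 = (1-A)(1+A)$ and combining over the common denominator $2(1+A)$ produces $\tfrac{(1+A)+(1-A)}{2(1+A)} = \tfrac{1}{1+A}$. Multiplying through by $\gamma_k$ then gives $\omega_k \ge \gamma_k/(1+A)$, completing the argument. I expect no genuine obstacle here; the one subtlety to watch is the direction of the inequality when passing to the final estimate, since $1 - A^2 \ge 0$ for $A \in (0,1]$ makes the term $\tfrac{1-A^2}{2}\sqnorm{F(x_k)}/\sqnorm{F(\hx_k)}$ nonnegative, so it is precisely the \emph{upper} bound on $\norm{F(\hx_k)}$ that lower-bounds this term and yields the tight constant.
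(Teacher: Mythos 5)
Your proof is correct and follows essentially the same route as the paper: expand (or equivalently square) the critical condition to obtain the inner-product lower bound, then use the triangle-inequality estimate $\norm{F(\hx_k)} \le (1+A)\norm{F(x_k)}$ to convert it into $\inprod{F(\hx_k)}{F(x_k)} \ge \sqnorm{F(\hx_k)}/(1+A)$ and hence $\omega_k \ge \gamma_k/(1+A)$. The algebraic simplification to $1/(1+A)$ and the remark about which direction of the bound is needed are both accurate.
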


Another importance of \eqref{eqn:gamma-critical-condition} is that it only relies on the local geometry of $F$ around $x_k$. While $L$-Lipschitz assumption assures that \eqref{eqn:gamma-critical-condition} is satisfied with any $\gamma_k \le \frac{A}{L}$, if $F$ has a larger local Lipschitz constant near $x_k$, then larger values of $\gamma_k$ can be accepted, allowing more progress at once.

\subsection{Convergence of \hyperref[alg:PolyakEG]{\algname{PolyakEG}} Under Critical Condition}
First, we present the unified study of \hyperref[alg:PolyakEG]{\algname{PolyakEG}}, which captures the analysis for constant and line-search extrapolation step size $\gamma_k$. Then, we explain how this unified analysis yields the results for specific settings. Under the critical condition~\eqref{eqn:gamma-critical-condition}, we derive the following Theorem.
\begin{theorem}
\label{theorem:deterministic-master-theorem}
Let $F$ is $L$-Lipschitz and (strongly) monotone. Suppose that we choose $\gamma_k$ so that \eqref{eqn:gamma-critical-condition} holds for all $k\ge 0$. Then, \hyperref[alg:PolyakEG]{\algname{PolyakEG}} satisfies, when $F$ is

\begin{itemize}
    \item monotone and $ A \in (0, 1]$,
    \begin{equation}\label{eq:deterministic-master-theorem_eq1}
        \min_{k=0,\dots,K} \gamma_k^2 \sqnorm{F(\hat{x}_k)} \le \frac{(1+A)^2 \sqnorm{x_0 - x_*}}{K+1}.
    \end{equation}
    If $A \in (0, 1)$, we additionally have,
    \begin{equation}\label{eq:deterministic-master-theorem_eq2}
        \min_{k=0,\dots,K} \gamma_k^2 \sqnorm{F(x_k)} \le \frac{(1+A) \sqnorm{x_0 - x_*}}{(1-A) (K+1)}.
    \end{equation}
    \item strongly monotone and $A \in (0, 1)$,
    \begin{equation}\label{eq:deterministic-master-theorem_eq3}
        \sqnorm{x_{k+1} - x_*} \le \prod_{j=0}^{k} \left( 1 - \frac{2(1-A) \gamma_j \mu}{(1+A)^2} \right) \sqnorm{x_0 - x_*}.
    \end{equation}
\end{itemize}
\end{theorem}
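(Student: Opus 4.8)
The plan is to track the squared distance $\|x_{k+1}-x_*\|^2$ to the root $x_*$ (recall $F(x_*)=0$) and convert the Polyak choice of $\omega_k$ into a telescoping or contraction argument. First I would expand, using $x_{k+1}=x_k-\omega_k F(\hx_k)$, the identity $\|x_{k+1}-x_*\|^2 = \|x_k-x_*\|^2 - 2\omega_k\la F(\hx_k), x_k-\hx_k\ra - 2\omega_k\la F(\hx_k), \hx_k-x_*\ra + \omega_k^2\|F(\hx_k)\|^2$. The decisive simplification is that the definition $\omega_k=\la F(\hx_k),x_k-\hx_k\ra/\|F(\hx_k)\|^2$ gives $\omega_k\|F(\hx_k)\|^2=\la F(\hx_k),x_k-\hx_k\ra$, so the first and last terms collapse and leave $\|x_{k+1}-x_*\|^2 = \|x_k-x_*\|^2 - \omega_k^2\|F(\hx_k)\|^2 - 2\omega_k\la F(\hx_k),\hx_k-x_*\ra$. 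Everything then hinges on how (strong) monotonicity controls the cross term $\la F(\hx_k),\hx_k-x_*\ra=\la F(\hx_k)-F(x_*),\hx_k-x_*\ra$.

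\textbf{Monotone case.} Monotonicity makes the cross term nonnegative, so dropping it yields $\|x_{k+1}-x_*\|^2\le\|x_k-x_*\|^2-\omega_k^2\|F(\hx_k)\|^2$. Invoking the bound $\omega_k\ge\gamma_k/(1+A)$ from Lemma~\ref{lemma:alpha-lower-bound} gives $\omega_k^2\|F(\hx_k)\|^2\ge \gamma_k^2\|F(\hx_k)\|^2/(1+A)^2$; summing over $k=0,\dots,K$, telescoping, dropping $\|x_{K+1}-x_*\|^2\ge0$, and bounding the minimum by the average produces \eqref{eq:deterministic-master-theorem_eq1}. For \eqref{eq:deterministic-master-theorem_eq2} I would transfer the guarantee from $F(\hx_k)$ to $F(x_k)$: the critical condition \eqref{eqn:gamma-critical-condition} yields, via the reverse triangle inequality (or the refined inner-product estimate inside Lemma~\ref{lemma:alpha-lower-bound}), a lower bound of $\|F(\hx_k)\|^2$ by a multiple of $\|F(x_k)\|^2$, which I substitute before taking the minimum; tracking constants carefully is what produces the stated $(1+A)/(1-A)$ factor.

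\textbf{Strongly monotone case.} Now the cross term is a genuine gain, $\la F(\hx_k),\hx_k-x_*\ra\ge\mu\|\hx_k-x_*\|^2$, giving $\|x_{k+1}-x_*\|^2\le\|x_k-x_*\|^2-\omega_k^2\|F(\hx_k)\|^2-2\omega_k\mu\|\hx_k-x_*\|^2$. The target is to show the two negative terms together dominate $\tfrac{2(1-A)\gamma_k\mu}{(1+A)^2}\|x_k-x_*\|^2$, so that $\|x_{k+1}-x_*\|^2\le\bigl(1-\tfrac{2(1-A)\gamma_k\mu}{(1+A)^2}\bigr)\|x_k-x_*\|^2$; unrolling this over $j=0,\dots,k$ then gives the product bound \eqref{eq:deterministic-master-theorem_eq3}. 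The main obstacle is precisely this estimate. Since $\|\hx_k-x_*\|$ may be strictly smaller than $\|x_k-x_*\|$—and can even vanish if $\gamma_k$ is too large, which is exactly the regime \eqref{eqn:gamma-critical-condition} with $A<1$ forbids—neither negative term alone suffices and one cannot simply compare $\|\hx_k-x_*\|^2$ with $\|x_k-x_*\|^2$. My plan is to write $\|x_k-x_*\|^2\le(1+s)\|\hx_k-x_*\|^2+(1+s^{-1})\gamma_k^2\|F(x_k)\|^2$ by Young's inequality (using $x_k-\hx_k=\gamma_k F(x_k)$), absorb the remainder $\gamma_k^2\|F(x_k)\|^2$ into $\omega_k^2\|F(\hx_k)\|^2$ through the critical-condition consequence $\omega_k^2\|F(\hx_k)\|^2\ge\tfrac{1-A}{2}\gamma_k^2\|F(x_k)\|^2$ (which follows from $\omega_k\ge\gamma_k/(1+A)$ together with Lemma~\ref{lemma:alpha-lower-bound}), and match the surviving $\|\hx_k-x_*\|^2$ term against $2\omega_k\mu\|\hx_k-x_*\|^2$ via $\omega_k\ge\gamma_k/(1+A)$. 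Choosing $s=\tfrac{2A}{1-A}$ should balance these two requirements and deliver the claimed factor. Verifying that this single $s$ simultaneously satisfies both the absorption and the matching inequalities—which is where the $(1+A)^2$ denominator originates—is the delicate point I would check most carefully.
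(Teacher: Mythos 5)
Your overall architecture is the paper's: expand $\sqnorm{x_{k+1}-x_*}$, use the Polyak identity $\omega_k\sqnorm{F(\hx_k)}=\inprod{F(\hx_k)}{x_k-\hx_k}$ to collapse two terms, handle the cross term with (strong) monotonicity, and invoke Lemma~\ref{lemma:alpha-lower-bound}. The monotone part is sound: your shortcut $-\omega_k^2\sqnorm{F(\hx_k)}\le -\gamma_k^2\sqnorm{F(\hx_k)}/(1+A)^2$ gives \eqref{eq:deterministic-master-theorem_eq1} directly, and for \eqref{eq:deterministic-master-theorem_eq2} you correctly note that the naive substitution $\sqnorm{F(\hx_k)}\ge(1-A)^2\sqnorm{F(x_k)}$ only yields $(1+A)^2/(1-A)^2$; to get the stated $(1+A)/(1-A)$ you must re-expand $\omega_k^2\sqnorm{F(\hx_k)}=\omega_k\gamma_k\inprod{F(\hx_k)}{F(x_k)}\ge\frac{\gamma_k^2}{2(1+A)}\bigl(\sqnorm{F(\hx_k)}+(1-A^2)\sqnorm{F(x_k)}\bigr)$ and only then convert the $\sqnorm{F(\hx_k)}$ term, which is exactly what the paper does.

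The gap is in the strongly monotone case, at the absorption step. After Young's inequality with parameter $s$, the error term is $\frac{2\omega_k\mu}{s}\gamma_k^2\sqnorm{F(x_k)}$, and you want to dominate it by $\omega_k^2\sqnorm{F(\hx_k)}\ge\omega_k\gamma_k(1-A)\sqnorm{F(x_k)}$. Dividing out, this requires $\frac{2\gamma_k\mu}{s}\le 1-A$, i.e.\ $\gamma_k\mu\le A$ for your choice $s=\frac{2A}{1-A}$. None of the ingredients you list ($\omega_k\ge\gamma_k/(1+A)$, the critical condition as a bound on $\norm{F(\hx_k)-F(x_k)}$) bounds $\gamma_k\mu$, so as written the two requirements do \emph{not} balance for a single $s$: the absorption inequality carries an uncontrolled factor $\gamma_k\mu$. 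The missing ingredient is to apply strong monotonicity to the pair $(x_k,\hx_k)$: $\inprod{x_k-\hx_k}{F(x_k)-F(\hx_k)}\ge\mu\sqnorm{x_k-\hx_k}$ gives $\gamma_k\mu\sqnorm{F(x_k)}\le\inprod{F(x_k)}{F(x_k)-F(\hx_k)}\le A\sqnorm{F(x_k)}$ by Cauchy--Schwarz and \eqref{eqn:gamma-critical-condition}, hence $\gamma_k\mu\le A$ and the absorption closes with equality. This is precisely the extra inequality the paper's proof injects (it substitutes $\frac{2\gamma_k\mu}{\omega}\sqnorm{F(x_k)}\le\frac{2}{\omega}\inprod{F(x_k)}{F(x_k)-F(\hx_k)}$ before choosing $\omega=\frac{2A}{1-A}$). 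You correctly flagged this as the delicate point, but without supplying that one fact the strongly monotone contraction \eqref{eq:deterministic-master-theorem_eq3} does not follow from your stated steps.
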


\textbf{Constant $\gamma_k$ Case.}
With the choice $A=1$, $\gamma_k = \nicefrac{1}{L}$, \eqref{eq:deterministic-master-theorem_eq1} becomes $\textstyle \min_{0 \leq k \leq K} \sqnorm{F(\hx_k)} \leq \frac{4 L^2\|x_0 - x_*\|^2}{K+1},$ which recovers the rate of Corollary~3.2 in \cite{pethick2023escaping} for the unconstrained monotone setup. Moreover, for $A \in (0, 1]$ and $\gamma_k = \nicefrac{A}{L}$, \eqref{eq:deterministic-master-theorem_eq2} gives us $\min_{0 \leq k \leq K} \| F(x_k)\|^2 \leq \frac{(1 + A) L^2 \|x_0 - x_*\|^2}{A^2(1 - A) (K+1)}.$

This result ensures convergence of the iterates $x_k$, indicating that it suffices to track only the sequence $x_k$ of \algname{PolyakEG} regardless of whether the problem is monotone or strongly monotone. 
\newline
The linear convergence result for \hyperref[alg:PolyakEG]{\algname{PolyakEG}} was first shown in \cite{solodov1996modified}, while we refine it to an explicit quantitative form with tight analysis. 
In \eqref{eq:deterministic-master-theorem_eq3}, for any $A \in (0,1)$ we can take $\gamma_k \equiv \nicefrac{A}{L}$ and obtain the $\sqnorm{x_{k+1} - x_*} \le \left(1 - \frac{2A(1-A)}{(1+A)^2} \frac{\mu}{L}\right)^{k+1} \sqnorm{x_0 - x_*}$ rate.
With $A=\frac{1}{3}$ and $\gamma_k \equiv \frac{1}{3L}$, the linear convergence factor is $1 - \nicefrac{\mu}{4L}$, matching the state-of-the-art rate for \hyperref[eq:EG]{\algname{EG}} \cite{mokhtari2020unified, azizian2020tight} that uses $\omega_k = \gamma_k \le \nicefrac{1}{4L}$. 
We highlight that our proof technique is new compared to these works, as we allow for $\gamma_k$ and $\omega_k$ exceeding this range.
Even though our theoretical linear convergence rate is the same as in standard \hyperref[eq:EG]{\algname{EG}} with constant step size $\nicefrac{1}{4L}$, experiments show that \hyperref[alg:PolyakEG]{\algname{PolyakEG}} often makes more progress per iteration and converges much faster than standard \hyperref[eq:EG]{\algname{EG}} (Figure \ref{fig:PolyakEG_theoretical_gradients}). 

\subsection{\hyperref[alg:PolyakEG_linesearch]{\hyperref[alg:PolyakEG_linesearch]{\algname{PolyakEG-LS}}}: Parameter Free Version of \hyperref[alg:PolyakEG]{\algname{PolyakEG}} Using Line Search}
When one does not have knowledge of problem parameters $L$ and $\mu$, we can use line search to find a $\gamma_k$ satisfying \eqref{eqn:gamma-critical-condition}.
The algorithm, \hyperref[alg:PolyakEG_linesearch]{\hyperref[alg:PolyakEG_linesearch]{\algname{PolyakEG-LS}}} (Algorithm \ref{alg:PolyakEG_linesearch}), combines a specific line search strategy with \hyperref[alg:PolyakEG]{\algname{PolyakEG}}. While \hyperref[alg:PolyakEG_linesearch]{\hyperref[alg:PolyakEG_linesearch]{\algname{PolyakEG-LS}}} involves a line search loop every iteration, the total number of line search iterations throughout the algorithm's execution is bounded.

\begin{theorem}
    \label{lemma:LS-step-size-lower-bound}
    Suppose $F$ is $L$-Lipschitz. Then the total number of while loop calls in \hyperref[alg:PolyakEG_linesearch]{\hyperref[alg:PolyakEG_linesearch]{\algname{PolyakEG-LS}}} is at most $\left\lfloor \nicefrac{\log (L\gamma_{-1} / A)}{\log 1/\beta} \right\rfloor + 1$, and $\gamma_k \ge \min \left\{ \nicefrac{\beta A}{L} , \gamma_{-1} \right\}$ for all $k\ge 0$.
\end{theorem}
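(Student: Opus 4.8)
The plan is to exploit two structural features of the backtracking line search in \algname{PolyakEG-LS}: that $L$-Lipschitzness forces the critical condition~\eqref{eqn:gamma-critical-condition} to hold once the trial step is small enough (guaranteeing termination at every iteration), and that the warm-started search makes the accepted step sizes a non-increasing sequence bounded below, so the \emph{total} number of contractions collapses into a single telescoping estimate.

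First I would establish the per-iteration acceptance guarantee. Since $F$ is $L$-Lipschitz and $\hx_k = x_k - \gamma_k F(x_k)$, we have $\norm{F(\hx_k) - F(x_k)} \le L \norm{\hx_k - x_k} = L\gamma_k \norm{F(x_k)}$, so~\eqref{eqn:gamma-critical-condition} is automatically satisfied as soon as $L\gamma_k \le A$, i.e. $\gamma_k \le \nicefrac{A}{L}$. In particular, any trial value at or below $\nicefrac{A}{L}$ is accepted, which shows the inner while loop cannot run forever. Next I would prove the lower bound on $\gamma_k$. The search warm-starts iteration $k$ from the previously accepted value $\gamma_{k-1}$ (with $\gamma_{-1}$ the initial guess) and multiplies the trial by $\beta \in (0,1)$ until~\eqref{eqn:gamma-critical-condition} holds; hence $\gamma_k \le \gamma_{k-1}$ and the accepted sequence is non-increasing. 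If any contraction fires at iteration $k$, the last rejected trial $\gamma'$ violates~\eqref{eqn:gamma-critical-condition}, which by the previous step forces $\gamma' > \nicefrac{A}{L}$, so the accepted value obeys $\gamma_k = \beta\gamma' > \nicefrac{\beta A}{L}$. Combining with the no-contraction case $\gamma_k = \gamma_{k-1}$ and inducting from $\gamma_{-1}$ yields $\gamma_k \ge \min\{\nicefrac{\beta A}{L},\, \gamma_{-1}\}$ for all $k\ge 0$.

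Finally I would amortize to get the global count. Writing $M_k$ for the number of while-loop calls at iteration $k$, the warm start gives $\gamma_K = \gamma_{-1}\beta^{M_0 + \cdots + M_K}$, since each accepted step is the initial guess shrunk by all contractions performed so far. In the nontrivial regime $\gamma_{-1} > \nicefrac{\beta A}{L}$ (otherwise no contraction ever fires and the total is $0$), the lower bound just proved reads $\gamma_{-1}\beta^{\sum_{k\le K} M_k} = \gamma_K \ge \nicefrac{\beta A}{L}$. Taking logarithms and dividing by $\log(1/\beta) > 0$ (which flips the inequality) gives $\sum_{k\le K} M_k \le \nicefrac{\log(L\gamma_{-1}/(\beta A))}{\log(1/\beta)} = \nicefrac{\log(L\gamma_{-1}/A)}{\log(1/\beta)} + 1$; since the total is an integer it is at most $\lfloor \nicefrac{\log(L\gamma_{-1}/A)}{\log(1/\beta)} \rfloor + 1$, as claimed.

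The main obstacle is conceptual rather than computational: the bound is on the total number of while-loop calls over the entire run, not per iteration, so one must recognize that the warm-started search makes $\{\gamma_k\}$ monotone non-increasing and uniformly bounded below by $\nicefrac{\beta A}{L}$. Once this amortized viewpoint is in place the per-iteration counts $M_k$ telescope through the product $\gamma_K = \gamma_{-1}\beta^{\sum M_k}$ into a single geometric bound. The only care needed is the degenerate case where $\gamma_{-1}$ already satisfies~\eqref{eqn:gamma-critical-condition} at every iteration, which is exactly what the minimum in the lower bound accounts for.
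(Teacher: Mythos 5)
Your proposal is correct and follows essentially the same route as the paper: Lipschitzness guarantees acceptance once the trial step drops to $A/L$, the last rejected trial must therefore exceed $A/L$ (giving the $\beta A/L$ lower bound), and the warm-started, monotonically shrinking sequence $\gamma_k$ lets the total contraction count telescope into a single geometric bound. Your packaging of the count via $\gamma_K = \gamma_{-1}\beta^{\sum_k M_k} \ge \beta A/L$ is only a cosmetic variant of the paper's "largest $N$ with $\beta^N\gamma_{-1} > A/L$" argument.
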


Combining Theorems~\ref{lemma:LS-step-size-lower-bound} and \ref{theorem:deterministic-master-theorem}, we observe the complexity of \hyperref[alg:PolyakEG_linesearch]{\hyperref[alg:PolyakEG_linesearch]{\algname{PolyakEG-LS}}} is comparable to that of the constant $\gamma_k$ case, despite being parameter-free. 

\textbf{Choosing $\gamma_{-1}$.} When the initial guess $\gamma_{-1}$ is conservative (small), the convergence may be slow.
To avoid this and exploit the local geometry of $F$ further, one can start with any value of $\gamma_{-1}$ and repetitively \emph{increase} it by a factor of $\beta$ as long as \eqref{eqn:gamma-critical-condition} holds, and start the line search from there.
This procedure guarantees $\gamma_{-1} \ge \frac{A}{L}$ (without knowing $L$).

\begin{algorithm}[H]
    \caption{\algname{PolyakEG-LS}}
    \label{alg:PolyakEG_linesearch}
    \begin{algorithmic}[1]
        \REQUIRE Initial point $x_0 \in \R^d$, Initial step size $\gamma_{-1} > 0$, Line search factor $\beta \in (0, 1)$ and $0 < A \le 1$
        \FOR{$k = 0, 1,...,K$}
        \STATE $\gamma_k = \gamma_{k-1}$ 
        \STATE $\hx_k = x_k - \gamma_k F(x_k)$.
        \WHILE{$\| F(x_k) - F(\hx_k) \| > A \|F(x_k)\|$}
            \STATE $\gamma_k = \beta \gamma_k$
            \STATE $\hx_k = x_k - \gamma_k F(x_k)$
        \ENDWHILE
        \STATE Set $\omega_k = \frac{\la F(\hx_k), x_k - \hx_k \ra}{\|F(\hx_k)\|^2}$.
        \vspace{2mm}
        \STATE $x_{k+1} = x_k - \omega_k F(\hx_k)$.
        \ENDFOR
    \end{algorithmic}
\end{algorithm}
\textbf{\color{black}Comparison with Prior Work.} \hyperref[alg:PolyakEG_linesearch]{\hyperref[alg:PolyakEG_linesearch]{\algname{PolyakEG-LS}}} uses the same idea of reusing the step size from the previous iteration as in \cite{solodov1996modified}. \cite{pethick2023escaping} also proposes a line search variant of \hyperref[alg:PolyakEG]{\algname{PolyakEG}}, but they re-initialize the step size based on the Jacobian of $F$ at every iteration, requiring more computations.

\section{Stochastic Setting}\label{sec:stochastic}

So far, we have established a concise, unified analysis of \algname{PolyakEG} in the deterministic setup.
In this section, we extend it to the more general stochastic setup where we use $F_{\mathcal{S}_k}(x_k), F_{\mathcal{S}_k}(\hx_k)$ to estimate $F(x_k)$ and $F(\hx_k)$.
The naive extension \algname{PolyakSEG} 
which simply replaces the occurrences of $F$ with $F_{\mathcal{S}_k}$ in \algname{PolyakEG}, requires us to assume the existence of an interpolating solution $x_*$ (for which $F_{\mathcal{S}_k}(x_\star) = 0$ a.s.) for the convergence analysis.
To remove this stringent assumption, we propose and analyze \textit{Decreasing Polyak} \algname{SEG} (\algname{DecPolyakSEG}; Algorithm \ref{alg:decPolyakSEG}), the main contribution of this chapter.
This sequence of ideas parallels the development led by the prior work \cite{loizou2021stochastic, orvieto2022dynamics} on Polyak step size for stochastic convex minimization, outlined below.

\textbf{Connection to the Minimization Setting.} In the stochastic convex minimization setting $\min_x f(x) = \frac{1}{n} \sum_{i = 1}^n f_i(x)$, \cite{loizou2021stochastic} and \cite{orvieto2022dynamics} respectively studied the Stochastic Polyak step size (\algname{SPS}) $$\eta_k = \frac{f_{\mathcal{S}_k}(x_k) - \min_x f_{\mathcal{S}_k}(x)}{\| \nabla f_{\mathcal{S}_k}(x_k)\|^2}$$
\big(here $f_{\mathcal{S}_k}(x) = \frac{1}{B} \sum_{i \in \mathcal{S}_k} f_i(x)$ and $\nabla f_{\mathcal{S}_k}(x) = \frac{1}{B} \sum_{i \in \mathcal{S}_k} \nabla f_i(x)$ \big) and its decreasing variant \algname{DecSPS} $\eta_k = \frac{1}{c_{k+1}} \min \left\{ \frac{f_{\mathcal{S}_k}(x_k) - \min_x f_{\mathcal{S}_k}(x)}{c \| \nabla f_{\mathcal{S}_k}(x_k)\|^2}, c_k \eta_{k-1}\right\}$ for \algname{SGD}: $x_{k+1} - x_k - \eta_k \nabla f_{\mathcal{S}_k}(x_k)$. 
Specifically, \cite{loizou2021stochastic} showed that \algname{SPS} converges linearly given that each $f_i$ is smooth and strongly convex and the solution $x_*$ is interpolated, but without interpolation, one can only guarantee convergence to a neighborhood.
\cite{orvieto2022dynamics} introduced \algname{DecSPS} to resolve this issue and proved its (sublinear) convergence without assuming an interpolated solution.

Moreover, similar to Definition \ref{def:critical-condition-deterministic} in the deterministic setting, we consider the following critical condition for the stochastic algorithms, which captures constant and line-search step size schemes.
\begin{definition} 
\label{def:critical-condition-stochastic}
We say $\gamma_k$ satisfies the critical condition with respect to $F_{\mathcal{S}_k}$ for some $A \in (0, 1]$ if
\begin{equation}\label{eqn:gamma-critical-condition-stochastic}
    \norm{F_{\mathcal{S}_k}(\hx_k) - F_{\mathcal{S}_k}(x_k)} \le A \norm{F_{\mathcal{S}_k}(x_k)}
\end{equation}
for $\hx_k = x_k - \gamma_k F_{\mathcal{S}_k}(x_k)$.
\end{definition}

\subsection{Analysis of \algname{PolyakSEG}}\label{subsec:PolyakSEG}

\begin{algorithm}[H]
   \small
    \caption{\algname{PolyakSEG}}
    \label{alg:PolyakSEG}
    \begin{algorithmic}[1]
        \REQUIRE Initial point $x_0 \in \R^d$ and positive sequence $\{\gamma_k\}_{k=0}^\infty$.
        \FOR{$k = 0, 1,...,K$}
        \STATE Sample $\mathcal{S}_k \subseteq [n]$.
        \STATE $\hx_k = x_k - \gamma_k F_{\mathcal{S}_k}(x_k)$.
        \STATE Set $\omega_k = \frac{\la F_{\mathcal{S}_k}(\hx_k), x_k - \hx_k \ra}{\|F_{\mathcal{S}_k}(\hx_k)\|^2}$.
        \vspace{2mm}
        \STATE $x_{k+1} = x_k - \omega_k F_{\mathcal{S}_k}(\hx_k)$.
        \ENDFOR
    \end{algorithmic}
\end{algorithm}

Consider \algname{PolyakSEG} (Algorithm \ref{alg:PolyakSEG}), where at each iteration $k$, a random sample $\mathcal{S}_k \subseteq [n]$ is drawn and a \algname{PolyakEG} step is performed with respect to $F_{\mathcal{S}_k}$. Under the interpolation assumption, i.e. $F_i(x_*) = 0, \forall i \in [n]$, essentially the same proof techniques as in the deterministic setting apply. 

\begin{theorem}\label{theorem:PolyakSEG-convergence}
    Let $F_i$ are $L$-Lipschitz, (strongly) monotone and there exists an interpolating solution $x_*$ for which $F_i(x_*) = 0$ for all $i \in [n]$ almost surely.
    Then, provided that $\gamma_k$ satisfy the critical condition ~\eqref{eqn:gamma-critical-condition-stochastic} with $0 < A < 1$, \algname{PolyakSEG} satisfies 

    $\bullet$ When $F_i$ are monotone~\eqref{eq:monotone}, $\min_{0 \leq k \leq K} \Exp{\gamma_k^2 \|F(x_k)\|^2} \leq \frac{(1+A) \|x_0 - x_*\|^2}{(1-A)(K+1)}.$ \\
    $\bullet$ When  $F_i$ are strongly monotone~\eqref{eq:strong_monotone}, $\Exp{\|x_{k+1} - x_*\|^2} \le \Exp{ \left(1 - \frac{2 (1-A) \gamma_k \mu}{(1+A)^2} \right)\sqnorm{x_k - x_*}}.$
\end{theorem}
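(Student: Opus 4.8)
The plan is to exploit the observation that one iteration of \algname{PolyakSEG} (Algorithm~\ref{alg:PolyakSEG}) with a drawn sample $\mathcal{S}_k$ is \emph{exactly} one iteration of deterministic \algname{PolyakEG} applied to the operator $F_{\mathcal{S}_k}$: the extrapolation $\hx_k = x_k - \gamma_k F_{\mathcal{S}_k}(x_k)$, the step $\omega_k = \la F_{\mathcal{S}_k}(\hx_k), x_k - \hx_k\ra / \norm{F_{\mathcal{S}_k}(\hx_k)}^2$, and the update $x_{k+1} = x_k - \omega_k F_{\mathcal{S}_k}(\hx_k)$ involve only $F_{\mathcal{S}_k}$, which is fixed once $\mathcal{S}_k$ is revealed. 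The interpolation hypothesis is what makes this reduction useful: since $F_i(x_*) = 0$ for every $i$, we have $F_{\mathcal{S}_k}(x_*) = \frac{1}{B}\sum_{i \in \mathcal{S}_k} F_i(x_*) = 0$ almost surely, and as an average of $L$-Lipschitz, (strongly) monotone operators, $F_{\mathcal{S}_k}$ is itself $L$-Lipschitz and (strongly) monotone with $x_*$ as a root. Hence every structural ingredient behind the one-step estimates of Theorem~\ref{theorem:deterministic-master-theorem} — including the lower bound $\omega_k \ge \gamma_k/(1+A)$ from Lemma~\ref{lemma:alpha-lower-bound} and the reverse-triangle consequence $\norm{F_{\mathcal{S}_k}(\hx_k)} \ge (1-A)\norm{F_{\mathcal{S}_k}(x_k)}$ of the stochastic critical condition~\eqref{eqn:gamma-critical-condition-stochastic} — holds verbatim for each realization of $F_{\mathcal{S}_k}$. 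The only genuinely stochastic work is then to take conditional expectations and pass from $F_{\mathcal{S}_k}$ back to $F$.

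For the monotone case, I would start from the expansion $\norm{x_{k+1} - x_*}^2 = \norm{x_k - x_*}^2 - 2\omega_k \la F_{\mathcal{S}_k}(\hx_k), x_k - \hx_k\ra - 2\omega_k \la F_{\mathcal{S}_k}(\hx_k), \hx_k - x_*\ra + \omega_k^2 \norm{F_{\mathcal{S}_k}(\hx_k)}^2$. Substituting the definition of $\omega_k$ collapses the first and last terms into $-\omega_k \la F_{\mathcal{S}_k}(\hx_k), x_k - \hx_k\ra$, while the cross term is nonnegative by monotonicity of $F_{\mathcal{S}_k}$ together with $F_{\mathcal{S}_k}(x_*)=0$ (this is precisely where interpolation is used). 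Writing $x_k - \hx_k = \gamma_k F_{\mathcal{S}_k}(x_k)$ and combining Lemma~\ref{lemma:alpha-lower-bound} with the bound $\norm{F_{\mathcal{S}_k}(\hx_k)} \ge (1-A)\norm{F_{\mathcal{S}_k}(x_k)}$ gives $\la F_{\mathcal{S}_k}(\hx_k), F_{\mathcal{S}_k}(x_k)\ra \ge (1-A)\norm{F_{\mathcal{S}_k}(x_k)}^2$ and $\omega_k \ge \gamma_k/(1+A)$, which yield the per-realization descent $\norm{x_{k+1}-x_*}^2 \le \norm{x_k-x_*}^2 - \frac{1-A}{1+A}\gamma_k^2 \norm{F_{\mathcal{S}_k}(x_k)}^2$. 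Taking the conditional expectation $\E_k$ given $\mathcal{S}_0,\dots,\mathcal{S}_{k-1}$ and using unbiasedness with Jensen's inequality, $\E_k\norm{F_{\mathcal{S}_k}(x_k)}^2 \ge \norm{\E_k F_{\mathcal{S}_k}(x_k)}^2 = \norm{F(x_k)}^2$, replaces the minibatch norm by the full operator norm. Telescoping over $k=0,\dots,K$, taking total expectation, and bounding the minimum by the average then gives $\min_{k}\Exp{\gamma_k^2\norm{F(x_k)}^2} \le \frac{(1+A)\norm{x_0-x_*}^2}{(1-A)(K+1)}$.

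For the strongly monotone case, I would instead retain the cross term and lower-bound it by strong monotonicity, $\la F_{\mathcal{S}_k}(\hx_k), \hx_k - x_*\ra \ge \mu\norm{\hx_k - x_*}^2$. Combining this with $\omega_k \ge \gamma_k/(1+A)$ and handling the progress term exactly as in the deterministic proof of Theorem~\ref{theorem:deterministic-master-theorem} produces, for each realization, the one-step contraction $\norm{x_{k+1}-x_*}^2 \le \bigl(1 - \frac{2(1-A)\gamma_k\mu}{(1+A)^2}\bigr)\norm{x_k-x_*}^2$. Since this inequality holds pointwise in $\mathcal{S}_k$, taking total expectation immediately delivers $\Exp{\norm{x_{k+1}-x_*}^2} \le \Exp{\bigl(1 - \frac{2(1-A)\gamma_k\mu}{(1+A)^2}\bigr)\norm{x_k-x_*}^2}$, which is the claim (the expectation is kept around the factor to accommodate a possibly random, e.g.\ line-search, $\gamma_k$).

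The main obstacle — and the conceptual heart of the result — is the reliance on interpolation. It enters twice: it guarantees $F_{\mathcal{S}_k}(x_*)=0$, without which neither the collapse of the $\omega_k$ terms nor the nonnegativity of the monotonicity cross term survives, and it is what lets the deterministic single-iteration analysis be applied sample-by-sample rather than only in expectation. The remaining care is routine: checking that $\omega_k$ is well defined (i.e.\ $F_{\mathcal{S}_k}(\hx_k)\neq 0$, which holds away from $x_*$ by strong monotonicity, or is handled by a standard stopping convention), and verifying that the Jensen step is the only place variance enters, so that no bounded-variance or growth condition is required. I expect the genuine difficulty in \emph{removing} interpolation — addressed later by \algname{DecPolyakSEG} — to be exactly that the cross-term sign and the $\omega_k$-collapse break when $F_{\mathcal{S}_k}(x_*)\neq 0$, which is why this theorem is stated only under the interpolation assumption.
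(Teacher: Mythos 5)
Your proposal is correct and follows essentially the same route as the paper's proof: expand $\sqnorm{x_{k+1}-x_*}$, use interpolation to write the cross term as $\inprod{F_{\mathcal{S}_k}(\hx_k)-F_{\mathcal{S}_k}(x_*)}{\hx_k-x_*}$ and drop it (monotone case) or keep it and apply strong monotonicity together with a Young-type bound relating $\sqnorm{\hx_k-x_*}$ to $\sqnorm{x_k-x_*}$ (strongly monotone case), collapse the quadratic term via the definition of $\omega_k$, invoke the key-properties lemma for $\omega_k\ge\gamma_k/(1+A)$ and $\inprod{F_{\mathcal{S}_k}(\hx_k)}{F_{\mathcal{S}_k}(x_k)}\ge(1-A)\sqnorm{F_{\mathcal{S}_k}(x_k)}$, and finish with Jensen plus telescoping. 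Your framing of a \algname{PolyakSEG} step as a deterministic \algname{PolyakEG} step on $F_{\mathcal{S}_k}$, and your identification of exactly where interpolation is needed, match the paper's reasoning.
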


In particular, this implies that \algname{PolyakSEG} converges linearly to the interpolating solution $x_*$ when $\mu > 0$, which is similar to the result of \cite[Appendix~E.4]{vaswani2019painless} on stochastic \algname{EG} (without Polyak step sizes) using the line search scheme identical to that of \algname{PolyakEG-LS}. However, while the linear convergence results in \cite{vaswani2019painless} requires $\gamma_{-1} \le \frac{1}{4\mu}$, we do not need to choose $\gamma_{-1}$ based on $\mu$ (or $L$). We provide the pseudocode for \algname{PolyakSEG-LS} in Algorithm \ref{alg:PolyakSEG_linesearch}.

Beyond the cases where we have an interpolating solution, \algname{PolyakSEG} seems to fail to converge. The following section is devoted to an algorithmic fix resolving this issue. 

\begin{algorithm}[h]
\small
\caption{\algname{PolyakSEG-LS}}
\label{alg:PolyakSEG_linesearch}
\begin{algorithmic}[1]
    \REQUIRE Initial point $x_0 \in \R^d$, initial step size $\gamma_{-1} > 0$, line search factor $\beta \in (0, 1)$ and $A \in (0, 1)$.
    \FOR{$k = 0, 1,...,K$}
    \STATE $\gamma_k = \gamma_{k-1}$ 
    \STATE Sample $\mathcal{S}_k \subseteq [n]$.
    \STATE $\hx_k = x_k - \gamma_k F_{\mathcal{S}_k}(x_k)$.
    \WHILE{$\| F_{\mathcal{S}_k}(x_k) - F_{\mathcal{S}_k}(\hx_k) \| > A \|F_{\mathcal{S}_k}(x_k)\|$}
        \STATE $\gamma_k = \beta \gamma_k$
        \STATE $\hx_k = x_k - \gamma_k F_{\mathcal{S}_k}(x_k)$
    \ENDWHILE
    \vspace{2mm}
    \STATE Set $\omega_k = \frac{\la F_{\mathcal{S}_k}(\hx_k), x_k - \hx_k \ra}{\|F_{\mathcal{S}_k}(\hx_k)\|^2}$.
    \vspace{2mm}
    \STATE $x_{k+1} = x_k - \omega_k F_{\mathcal{S}_k}(\hx_k)$.
    \ENDFOR
\end{algorithmic}
\end{algorithm}
    
\subsection{Analysis of \algname{DecPolyakSEG}}
Here, we propose the novel algorithm \algname{DecPolyakSEG} (Algorithm \ref{alg:decPolyakSEG}) whose convergence analysis does not require the stringent interpolation condition. The main distinction from \algname{PolyakSEG} is that the step sizes are decreasing (highlighted in green): we ensure 
\textbf{(1)} $\gamma_k \le \frac{c_{k-1}}{c_k} \gamma_{k-1}$ and \textbf{(2)} $\omega_k \le \omega_{k-1}$.
While condition~\textbf{(1)} is similar to \cite{orvieto2022dynamics}, condition \textbf{(2)} is a new component.

\textbf{Intuition of Decreasing Step Sizes.}
When analyzing \algname{SEG} with non-adaptive $\omega_k$, one can use $\Expk{\omega_k \la F_{\mathcal{S}_k}(u), \hx_k - u\ra} = 0$ for $u=x_*$, implied by $\Expk{F_{\mathcal{S}_k}(x_*)} = F(x_*) = 0$~\cite{gorbunov2022stochastic}.
This is not possible when $\omega_k$ is chosen adaptively and thus is random. Instead, we show $\frac{\gamma_k}{2} \le \omega_k \le \frac{\gamma_k}{1-A}$ (Lemma \ref{lemma:PolyakSEG-LS-key-properties}) and control $\gamma_k, \omega_k$ by selecting $c_k$ appropriately.
Having $\omega_k$ within a desirable range, we can manipulate the non-vanishing $\Expk{\omega_k \la F_{\mathcal{S}_k}(u), \hx_k - u\ra}$ term to prove convergence result.

For Theorem \ref{theorem:DecPolyakSEGLS_monotone}, we do not use $\sqnorm{F(\cdot)}$ as a convergence measure, unlike in deterministic setups.
Generally, convergence guarantees for stochastic algorithms are given as a sum of (sub)linearly converging terms and error terms proportional to the step size, 
and one can obtain asymptotic rates $\mathcal{O}(\nicefrac{1}{T^\beta})$ by tuning the step size.
However, this is not the case for \algname{SEG}; using $\sqnorm{F(\cdot)}$ as a convergence measure for monotone problems yields irreducible error terms independent of the step size.
Multiple prior works \cite{diakonikolas2021efficient, lee2021fast, bohm2022solving, choudhury2024single} observed this and they used increasing batch sizes or variance reduction techniques to ensure a meaningful rate on $\sqnorm{F(\cdot)}$, which we do not pursue here. Instead, we provide convergence using a distinct measure, under the following assumption.

\begin{assumption}
\label{assumption:bounded-iterates}
    There exists a compact subset $\mathcal{C} \subseteq \reals^d$ containing a zero $x_*$ of $F$ such that $x_k, \hx_k \in \mathcal{C}$ for all $k \geq 1$ during the runtime of \algname{DecPolyakSEG}.
\end{assumption}

\begin{theorem}\label{theorem:DecPolyakSEGLS_monotone}
    Let $F_i$ are $L$-Lipschitz, monotone, and Assumption~\ref{assumption:bounded-iterates} holds. Then $\overline{x}_K = \frac{1}{K+1} \sum_{k=0}^K x_k$ generated by \algname{DecPolyakEG} with $c_k = \sqrt{k+1}$ satisfies
    \begin{align*}
        \Exp{\inprod{F(u)}{\overline{x}_K - u}}
        \le \frac{1}{K+1} \Exp{\frac{D^2}{\gamma_{K+1}}} + \frac{\sigma^2 \gamma_0}{(1-A)(K+1)} \sum_{k=0}^K \frac{1}{\sqrt{k+1}} 
    \end{align*}
    \vspace{-1.5mm}
    for any $u \in \mathcal{C}$. Here, $\sigma^2 \eqdef 2L^2 \max_{u_1, u_2 \in \mathcal{C}} \|u_1 - u_2\|^2 + 2 \Exp{\|F_{\mathcal{S}}(x_*)\|^2}$.
\end{theorem}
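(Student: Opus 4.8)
The plan is to control the variational-inequality gap $\inprod{F(u)}{\overline{x}_K - u}$ by a telescoping argument on the weighted iterate distances $\sqnorm{x_k - u}/\omega_k$, in the spirit of the \algname{DecSPS} analysis of \cite{orvieto2022dynamics}, adapted to the operator setting. First I would establish a clean one-step descent inequality. Expanding $\sqnorm{x_{k+1} - u} = \sqnorm{x_k - u} - 2\omega_k\inprod{F_{\mathcal{S}_k}(\hx_k)}{x_k - u} + \omega_k^2\sqnorm{F_{\mathcal{S}_k}(\hx_k)}$ and splitting $x_k - u = (x_k - \hx_k) + (\hx_k - u)$, the key observation is that the definition $\omega_k = \min\{\inprod{F_{\mathcal{S}_k}(\hx_k)}{x_k - \hx_k}/\sqnorm{F_{\mathcal{S}_k}(\hx_k)}, \omega_{k-1}\}$ forces $\omega_k\sqnorm{F_{\mathcal{S}_k}(\hx_k)} \le \inprod{F_{\mathcal{S}_k}(\hx_k)}{x_k - \hx_k}$, and the right-hand side is nonnegative by the stochastic analogue of Lemma~\ref{lemma:alpha-lower-bound}. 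This collapses the $x_k - \hx_k$ and $\omega_k^2$ contributions into a single nonpositive term, yielding $\sqnorm{x_{k+1} - u} \le \sqnorm{x_k - u} - 2\omega_k\inprod{F_{\mathcal{S}_k}(\hx_k)}{\hx_k - u}$.

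Next I would invoke monotonicity of $F_{\mathcal{S}_k}$ (an average of monotone $F_i$) to pass from $F_{\mathcal{S}_k}(\hx_k)$ to the fixed comparator, i.e.\ $\inprod{F_{\mathcal{S}_k}(\hx_k)}{\hx_k - u} \ge \inprod{F_{\mathcal{S}_k}(u)}{\hx_k - u}$, giving $\sqnorm{x_{k+1} - u} \le \sqnorm{x_k - u} - 2\omega_k\inprod{F_{\mathcal{S}_k}(u)}{\hx_k - u}$. Dividing through by $\omega_k > 0$ (positive since $\omega_k \ge \gamma_k/2$ by Lemma~\ref{lemma:PolyakSEG-LS-key-properties}) removes the random step size from the inner-product term and leaves $2\inprod{F_{\mathcal{S}_k}(u)}{\hx_k - u} \le \sqnorm{x_k - u}/\omega_k - \sqnorm{x_{k+1} - u}/\omega_k$. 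Summing over $k$ and reindexing, the decreasing property $\omega_k \le \omega_{k-1}$ (condition~(2) of \algname{DecPolyakSEG}) makes the coefficients $1/\omega_k - 1/\omega_{k-1}$ nonnegative, so the bounded-iterates Assumption~\ref{assumption:bounded-iterates} (with diameter $D$) lets me bound each $\sqnorm{x_k - u} \le D^2$ and collapse the sum to $2\sum_{k=0}^K \inprod{F_{\mathcal{S}_k}(u)}{\hx_k - u} \le D^2/\omega_K$; using $\omega_K \ge \gamma_K/2$ converts this into the advertised $D^2/\gamma_{K+1}$ term up to constants and indexing.

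Finally I would take expectations and convert the stochastic inner products into the deterministic gap. Since $\gamma_k$ is deterministic and $\hx_k = x_k - \gamma_k F_{\mathcal{S}_k}(x_k)$, the tower property gives $\Exp{\inprod{F_{\mathcal{S}_k}(u)}{\hx_k - u}} = \Exp{\inprod{F(u)}{x_k - u}} - \gamma_k\Exp{\inprod{F_{\mathcal{S}_k}(u)}{F_{\mathcal{S}_k}(x_k)}}$, so $\Exp{\inprod{F(u)}{x_k - u}}$ equals the expected stochastic term plus a cross term. Bounding the cross term by $\gamma_k\sigma^2$ via Cauchy--Schwarz and the uniform second-moment estimate $\Exp{\sqnorm{F_{\mathcal{S}}(x)}} \le \sigma^2$ for $x\in\mathcal{C}$ (which follows from $L$-Lipschitzness, $\norm{F_{\mathcal{S}}(x)} \le L\norm{x - x_*} + \norm{F_{\mathcal{S}}(x_*)}$, and boundedness), then summing and dividing by $K+1$, I obtain $\Exp{\inprod{F(u)}{\overline{x}_K - u}} = \frac{1}{K+1}\sum_{k}\Exp{\inprod{F(u)}{x_k - u}}$ bounded by the two stated terms, where $\sum_k \gamma_k \lesssim \gamma_0\sum_k (k+1)^{-1/2}$ for the choice $c_k = \sqrt{k+1}$ (condition~(1)) and the factor $1/(1-A)$ emerges from the sandwich $\omega_k \le \gamma_k/(1-A)$.

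The main obstacle is exactly the one flagged before the theorem: because $\omega_k$ is adaptive, it is correlated with $F_{\mathcal{S}_k}$, so one cannot use $\Expk{\omega_k\inprod{F_{\mathcal{S}_k}(u)}{\hx_k - u}} = \omega_k\inprod{F(u)}{\hx_k - u}$ as in the non-adaptive \algname{SEG} analysis. The resolution is structural --- dividing the descent inequality by $\omega_k$ \emph{before} taking expectations strips $\omega_k$ out of the inner product so that only the past-measurable comparator $F(u)$ survives under $\Expk{\cdot}$ --- but this is legitimate only because the two-sided sandwich $\gamma_k/2 \le \omega_k \le \gamma_k/(1-A)$ and the monotone decrease $\omega_k \le \omega_{k-1}$ simultaneously keep $\omega_k$ positive, give the telescope coefficients the correct sign, and keep the residual noise proportional to $\gamma_k$. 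Verifying that these three requirements hold together, and that the bounded-iterates assumption is genuinely what closes the telescope (in contrast to the interpolation-based argument used for \algname{PolyakSEG}), is the delicate part of the proof.
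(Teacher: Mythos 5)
Your proposal is correct and follows essentially the same route as the paper's proof: the same one-step bound using $\omega_k \sqnorm{F_{\mathcal{S}_k}(\hx_k)} \le \inprod{F_{\mathcal{S}_k}(\hx_k)}{x_k-\hx_k}$, monotonicity at the comparator, division by the adaptive step before taking expectations, telescoping via $\omega_k \le \omega_{k-1}$ and bounded iterates, and a $\gamma_k\sigma^2$ bound on the residual noise (the paper merely expands $\hx_k = x_k - \gamma_k F_{\mathcal{S}_k}(x_k)$ and Cauchy--Schwarzes the cross term \emph{before} dividing by $\omega_k$, rather than after taking expectations as you do). The only caveat is your claim that $\gamma_k$ is deterministic --- it need not be when chosen by line search --- but the deterministic upper bound $\gamma_k \le \gamma_0/\sqrt{k+1}$ from the $c_k$-decrease condition is all your argument actually uses, so the proof goes through.
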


The choice $c_k = \sqrt{k+1}$ in Theorem \ref{theorem:DecPolyakSEGLS_monotone} follows \cite{orvieto2022dynamics}, where similar ideas were used to analyze \algname{DecSPS}. However, we need some control on the extrapolation step size $\gamma_K$ to obtain a quantitative convergence rate from Theorem \ref{theorem:DecPolyakSEGLS_monotone}.

\begin{corollary}
\label{corollary:DecPolyakSEG-1/sqrtK-convergence}
    Under the assumptions of Theorem \ref{theorem:DecPolyakSEGLS_monotone}, if $\gamma_K = \Omega\left(\nicefrac{1}{\sqrt{K}}\right)$, then for any $u \in \mathcal{C}$, $\Exp{\inprod{F(u)}{\overline{x}_K - u}} = \cO\left(\nicefrac{1}{\sqrt{K}}\right)$.
\end{corollary}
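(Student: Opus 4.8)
The plan is to substitute the growth hypothesis $\gamma_K = \Omega(\nicefrac{1}{\sqrt{K}})$ directly into the bound supplied by Theorem~\ref{theorem:DecPolyakSEGLS_monotone} and verify that each of its two terms decays at the rate $\cO(\nicefrac{1}{\sqrt{K}})$. Since the bound is already established, the corollary reduces to elementary asymptotic bookkeeping, so I expect no genuine obstacle; the work lies in controlling the partial harmonic sum and in reading off the constant from the $\Omega$-hypothesis.

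First I would treat the error term $\frac{\sigma^2 \gamma_0}{(1-A)(K+1)} \sum_{k=0}^K \frac{1}{\sqrt{k+1}}$. The key estimate is a standard integral comparison: reindexing by $j = k+1$ gives $\sum_{k=0}^K \frac{1}{\sqrt{k+1}} = \sum_{j=1}^{K+1} \frac{1}{\sqrt{j}}$, and because $x \mapsto x^{-1/2}$ is decreasing, this sum is at most $\int_0^{K+1} x^{-1/2}\,dx = 2\sqrt{K+1}$. Dividing by $K+1$ produces the factor $\frac{2\sqrt{K+1}}{K+1} = \frac{2}{\sqrt{K+1}}$, so the whole error term is bounded by $\frac{2\sigma^2 \gamma_0}{(1-A)\sqrt{K+1}} = \cO(\nicefrac{1}{\sqrt{K}})$. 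Here I would remark that $\sigma^2$ is a genuine constant: since $\mathcal{C}$ is compact, $\max_{u_1,u_2\in\mathcal{C}}\|u_1-u_2\|^2$ is its squared diameter and $\Exp{\|F_{\mathcal{S}}(x_*)\|^2}$ is finite, so $\sigma^2 < \infty$ by its definition in Theorem~\ref{theorem:DecPolyakSEGLS_monotone}.

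Next I would handle the first term $\frac{1}{K+1}\Exp{\frac{D^2}{\gamma_{K+1}}}$. By hypothesis there is a constant $c>0$ with $\gamma_{K+1} \ge c/\sqrt{K+1}$ for all sufficiently large $K$, hence $\nicefrac{1}{\gamma_{K+1}} \le \sqrt{K+1}/c$. As $D$ is a deterministic constant (the diameter of the compact set $\mathcal{C}$), the expectation is harmless and this term is bounded by $\frac{1}{K+1}\cdot \frac{D^2\sqrt{K+1}}{c} = \frac{D^2}{c\,\sqrt{K+1}} = \cO(\nicefrac{1}{\sqrt{K}})$.

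Finally I would add the two $\cO(\nicefrac{1}{\sqrt{K}})$ estimates to conclude $\Exp{\inprod{F(u)}{\overline{x}_K - u}} = \cO(\nicefrac{1}{\sqrt{K}})$ for every $u \in \mathcal{C}$. The only point warranting a sentence of justification — and it is minor — is that the $\Omega(\nicefrac{1}{\sqrt{K}})$ lower bound on $\gamma_{K+1}$ is actually attainable by \algname{DecPolyakSEG}; this follows from the decreasing step-size construction together with the two-sided bound $\frac{\gamma_k}{2} \le \omega_k \le \frac{\gamma_k}{1-A}$ of Lemma~\ref{lemma:PolyakSEG-LS-key-properties}, which keeps $\gamma_k$ from collapsing faster than the prescribed schedule. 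Beyond tracking these constants, no further difficulty arises.
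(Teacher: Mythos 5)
Your proof is correct and follows essentially the same route the paper intends: the integral comparison $\sum_{k=0}^K (k+1)^{-1/2} \le 2\sqrt{K+1}$ is exactly Lemma~\ref{lemma:sum_1_by_sqrt_k}, and substituting the $\Omega(\nicefrac{1}{\sqrt{K}})$ lower bound on $\gamma_{K+1}$ into the first term is the whole content of the corollary. One tangential slip: the attainability of that lower bound for \algname{DecPolyakSEG-LS} comes from Lemma~\ref{lemma:DecPolyakSEGLS-gamma-lower-bound} (the line-search step-size lower bound), not from the relation $\frac{\gamma_k}{2} \le \omega_k \le \frac{\gamma_k}{1-A}$ — but since the corollary takes $\gamma_K = \Omega(\nicefrac{1}{\sqrt{K}})$ as a hypothesis, this does not affect your argument.
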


Assuming that $L$ is known, one can use $\gamma_0 = \nicefrac{1}{L}$ and $\gamma_k = \nicefrac{c_k}{c_{k+1}} \gamma_{k-1}$ for $k \ge 1$ (which gives $\gamma_k = \nicefrac{2}{L\sqrt{k+2}} = \Omega\left(\nicefrac{1}{\sqrt{k}}\right)$).
However, as this is rarely the case in practice, we propose to use the efficient line search scheme as before, which gives \algname{DecPolyakSEG-LS} (Algorithm \ref{alg:decPolyakSEG_linesearch}). In Lemma \ref{lemma:DecPolyakSEGLS-gamma-lower-bound}, we show that \algname{DecPolyakSEG-LS} ensures $\gamma_K = \Omega\left(\nicefrac{1}{\sqrt{K}}\right)$ and thus $\max_{u \in \mathcal{C}}\Exp{\inprod{F(u)}{\overline{x}_K - u}} = \cO\left(\nicefrac{1}{\sqrt{K}}\right)$ in Corollary \ref{corollary:DecPolyakSEG-1/sqrtK-convergence}. 

\begin{algorithm}[h]
        \small
            \caption{\algname{DecPolyakSEG}}
            \label{alg:decPolyakSEG}
            \begin{algorithmic}[1]
                \REQUIRE Initial point $x_0 \in \R^d$, positive sequence $\{\gamma_k\}_{k=0}^\infty$, $\omega_{-1} = \infty$ and a non-decreasing sequence $\{ c_k\}_{k=0}^{\infty}$.
                \FOR{$k = 0, 1,...,K$}
                \STATE Sample $\mathcal{S}_k \subseteq [n]$.
                \STATE Ensure \colorbox{green!20}{$\gamma_k \le \frac{c_{k-1}}{c_k}\gamma_{k-1}$ and \eqref{eqn:gamma-critical-condition-stochastic}}.
                \STATE $\hx_k = x_k - \gamma_k F_{\mathcal{S}_k}(x_k)$.
                \STATE Set \colorbox{green!20}{$\omega_k = \min \left\{ \frac{\la F_{\mathcal{S}_k}(\hx_k), x_k - \hx_k \ra}{\|F_{\mathcal{S}_k}(\hx_k)\|^2}, \omega_{k-1}\right\}$.}
                \vspace{2mm}
                \STATE $x_{k+1} = x_k - \omega_k F_{\mathcal{S}_k}(\hx_k)$.
                \ENDFOR
            \end{algorithmic}
        \end{algorithm}

\textbf{Remark on the Measure of Convergence.}
Theorem~\ref{theorem:DecPolyakSEGLS_monotone} indicates $\max_{u \in \mathcal{C}} \langle F(u), \mathbb{E}\left[\overline{x}_K\right] - u \rangle = \mathcal{O}(\nicefrac{1}{\sqrt{K}})$, i.e., the restricted merit function $\mathrm{Err}(\cdot) = \max_{u \in \mathcal{C}} \inprod{F(u)}{\cdot - u}$ \cite{nesterov2007dual}, a standard performance metric for stochastic variational inequality and minimax optimization, converges to zero for the expected iterate $\mathbb{E}[\overline{x}_K]$.
That is, $\mathbb{E}[\overline{x}_K]$ is a good approximate solution for large $K$. 
Note that, however, this does not guarantee that $\overline{x}_K$ obtained from a single execution of the algorithm behaves nicely, as highlighted in \cite{alacaoglu2022complexity} using an example where $\mathbb{E}[x_k] = x_*$ for all $k$ but $\norm{x_k - x_*} \to \infty$.
In principle, to obtain a well-behaved solution given the guarantee of the form of Theorem \ref{theorem:DecPolyakSEGLS_monotone}, one needs to run the algorithm multiple times and take the mean of $\overline{x}_K$'s over the trials.
Therefore, as \cite{alacaoglu2022complexity} points out, a theoretically more meaningful result would be a bound on $\mathbb{E}\left[\mathrm{Err}(\overline{x}_K)\right]$ (which is generally larger than $\mathrm{Err}(\mathbb{E}\left[\overline{x}_K\right])$). 

Interestingly, on the other hand, we are not aware of any convergence result on same-sample \algname{SEG} (using the same minibatches for extrapolation and update steps) achieving that in the monotone setup.
Prior works either provide a guarantee on $\mathrm{Err}(\mathbb{E}\left[\overline{x}_K\right])$ for same-sample \algname{SEG}  \cite{mishchenko2020revisiting, beznosikov2022decentralized} or use independent minibatches for extrapolation and update steps \cite{juditsky2011solving, gidel2018variational, antonakopoulos2021sifting}.
In practice, however, same-sample \algname{SEG} is frequently used and provides consistent results over distinct trials, i.e., $\mathrm{Var}(\overline{x}_K)$ is small.
This indicates that the extreme cases like the example of \cite{alacaoglu2022complexity} are empirically rare with same-sample \algname{SEG}.
We expect $\mathrm{Var}(\overline{x}_K)$ could be bounded for same-sample \algname{SEG} under certain scenarios, with which Theorem~\ref{theorem:DecPolyakSEGLS_monotone} will indicate the in-probability convergence $\mathrm{Err}(\overline{x}_K) \xrightarrow{P} 0$. For instance, this will be the case if minibatch operators $F_\mathcal{S}(\cdot)$ have uniformly small variance.

\textbf{Removing Bounded Iterates Assumption.} We required Assumption~\ref{assumption:bounded-iterates} to prove  Theorem \ref{theorem:DecPolyakSEGLS_monotone}.
Such an assumption is common in the convergence analysis of adaptive stochastic methods:
\cite{orvieto2022dynamics} also assumes bounded iterates, and \cite{antonakopoulos2021sifting} assumes bounded operator (gradient) norm. Nevertheless, we show that the bounded iterate assumption can be removed under strong monotonicity.

\begin{proposition}\label{proposition:DecPolyakSEGLS_strong_monotone}
    Let $F_i$ are $L$-Lipschitz and strongly monotone.
    Then, for $A \in (0,1)$ and $c_k = \sqrt{k+1}$, the iterates $x_k$ of \hyperref[alg:decPolyakSEG_linesearch]{\algname{DecPolyakSEG}} stay bounded almost surely.
\end{proposition}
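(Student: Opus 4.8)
The plan is to track the single Lyapunov quantity $V_k \eqdef \sqnorm{x_k - x_*}$, where $x_*$ is the unique zero of the strongly monotone $F$, and to establish a \emph{pathwise} contraction recursion $V_{k+1} \le (1 - a_k) V_k + \beta_k$ with $a_k \in (0,1)$ and $\beta_k \le C a_k$ for a deterministic constant $C$. The decisive structural fact I would exploit is that there are only finitely many minibatches $\mathcal{S} \subseteq [n]$, so that $\sigma_{\max}^2 \eqdef \max_{\mathcal{S}} \sqnorm{F_{\mathcal{S}}(x_*)} < \infty$. This renders the stochastic error term bounded along every trajectory rather than only in expectation, which is exactly what lets me sidestep the delicate correlation between the adaptive (hence random) step size $\omega_k$ and the minibatch noise $F_{\mathcal{S}_k}(x_*)$ --- the obstruction flagged before Theorem~\ref{theorem:DecPolyakSEGLS_monotone}.

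First I would expand $\sqnorm{x_{k+1} - x_*}$ along $x_{k+1} = x_k - \omega_k F_{\mathcal{S}_k}(\hx_k)$. Since the minimum in \algname{DecPolyakSEG} forces $\omega_k \le \inprod{F_{\mathcal{S}_k}(\hx_k)}{x_k - \hx_k}/\sqnorm{F_{\mathcal{S}_k}(\hx_k)}$, multiplying through gives $\omega_k^2 \sqnorm{F_{\mathcal{S}_k}(\hx_k)} \le \omega_k \inprod{F_{\mathcal{S}_k}(\hx_k)}{x_k - \hx_k}$. Splitting $x_k - x_* = (x_k - \hx_k) + (\hx_k - x_*)$ and discarding the nonnegative term $\omega_k\inprod{F_{\mathcal{S}_k}(\hx_k)}{x_k - \hx_k} = \omega_k\gamma_k \inprod{F_{\mathcal{S}_k}(\hx_k)}{F_{\mathcal{S}_k}(x_k)} \ge 0$ (the stochastic analogue of Lemma~\ref{lemma:alpha-lower-bound}, via~\eqref{eqn:gamma-critical-condition-stochastic}), I reach $\sqnorm{x_{k+1} - x_*} \le \sqnorm{x_k - x_*} - 2\omega_k \inprod{F_{\mathcal{S}_k}(\hx_k)}{\hx_k - x_*}$. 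Invoking strong monotonicity of $F_{\mathcal{S}_k}$ (an average of strongly monotone $F_i$) as $\inprod{F_{\mathcal{S}_k}(\hx_k)}{\hx_k - x_*} \ge \mu \sqnorm{\hx_k - x_*} + \inprod{F_{\mathcal{S}_k}(x_*)}{\hx_k - x_*}$ and absorbing the cross term by Young's inequality lands me on $\sqnorm{x_{k+1} - x_*} \le \sqnorm{x_k - x_*} - \mu\omega_k \sqnorm{\hx_k - x_*} + \frac{\omega_k}{\mu}\sqnorm{F_{\mathcal{S}_k}(x_*)}$.

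Next I would convert the $\hx_k$-distance into an $x_k$-distance. Using $\sqnorm{\hx_k - x_*} \ge \tfrac12 \sqnorm{x_k - x_*} - \gamma_k^2 \sqnorm{F_{\mathcal{S}_k}(x_k)}$ and the $L$-Lipschitz bound $\sqnorm{F_{\mathcal{S}_k}(x_k)} \le 2L^2\sqnorm{x_k - x_*} + 2\sqnorm{F_{\mathcal{S}_k}(x_*)}$, I get, for all $k$ large enough that $2\gamma_k^2 L^2 \le \tfrac14$, the estimate $\sqnorm{\hx_k - x_*} \ge \tfrac14 \sqnorm{x_k - x_*} - 2\gamma_k^2 \sqnorm{F_{\mathcal{S}_k}(x_*)}$. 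Because the constraint $\gamma_k \le \tfrac{c_{k-1}}{c_k}\gamma_{k-1}$ with $c_k = \sqrt{k+1}$ telescopes to $\gamma_k \le \gamma_0/\sqrt{k+1} \to 0$, and because $\tfrac{\gamma_k}{2} \le \omega_k \le \tfrac{\gamma_k}{1-A}$ (Lemma~\ref{lemma:PolyakSEG-LS-key-properties}), substituting and collecting terms yields, for $k \ge k_2$ with $k_2$ a deterministic threshold, the pathwise recursion $V_{k+1} \le (1 - a_k) V_k + \beta_k$ with $a_k = \tfrac{\mu\gamma_k}{8} \in (0,1)$ and $\beta_k = \tfrac{\gamma_k}{1-A}\big(2\mu\gamma_k^2 + \tfrac1\mu\big)\sigma_{\max}^2$. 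Here the lower bound $\omega_k \ge \gamma_k/2$ upper-bounds $1-\tfrac{\mu\omega_k}{4}$ by $1-a_k$, while the upper bound $\omega_k \le \gamma_k/(1-A)$ together with $\sqnorm{F_{\mathcal{S}_k}(x_*)} \le \sigma_{\max}^2$ makes $\beta_k$ purely deterministic.

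Finally I would close the argument by a ratio estimate: defining $C \eqdef \tfrac{8}{\mu(1-A)}\big(2\mu\gamma_0^2 + \tfrac1\mu\big)\sigma_{\max}^2$ and using $\gamma_k \le \gamma_0$ gives $\beta_k/a_k \le C$ for all $k \ge k_2$. Setting $B \eqdef \max\{V_{k_2}, C\}$, a one-line induction shows $V_k \le B$ for every $k \ge k_2$: if $V_k \le B$ then $V_{k+1} \le (1-a_k)B + \beta_k \le (1-a_k)B + a_k C \le B$. The finitely many earlier iterates $x_0, \dots, x_{k_2}$ are finite on every sample path, so $\sup_k \sqnorm{x_k - x_*} < \infty$ almost surely, i.e. the iterates stay bounded. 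The main obstacle is the one isolated above, namely taming the noise contribution $\omega_k \inprod{F_{\mathcal{S}_k}(x_*)}{\hx_k - x_*}$ in which the random step size and the minibatch are correlated; the key idea that removes it is to bound the noise pathwise through the finiteness of the minibatch sample space, after which the strong-monotonicity contraction dominates on every trajectory.
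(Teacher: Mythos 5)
Your proposal is correct and follows essentially the same route as the paper: a pathwise one-step contraction $\sqnorm{x_{k+1}-x_*} \le (1-\Theta(\mu\gamma_k))\sqnorm{x_k-x_*} + \cO(\gamma_k)\sigma_{\max}^2$ obtained from the Polyak bound $\omega_k^2\sqnorm{F_{\mathcal{S}_k}(\hx_k)} \le \omega_k\inprod{F_{\mathcal{S}_k}(\hx_k)}{x_k-\hx_k}$, strong monotonicity plus Young's inequality on the $F_{\mathcal{S}_k}(x_*)$ cross term, the sandwich $\nicefrac{\gamma_k}{2} \le \omega_k \le \nicefrac{\gamma_k}{(1-A)}$, and $\gamma_k \le \nicefrac{\gamma_0}{\sqrt{k+1}}$, closed by the max-induction that is precisely the content of the paper's Lemma~\ref{lemma:orvieto}. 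Your only departure is cosmetic: you re-derive that induction directly (with the noise bounded pathwise via the finiteness of the minibatch sample space) rather than invoking Lemma~\ref{lemma:orvieto}, which changes nothing of substance.
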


\begin{algorithm}[H]
\small
    \caption{\algname{DecPolyakSEG-LS}}
    \label{alg:decPolyakSEG_linesearch}
    \begin{algorithmic}[1]
        \REQUIRE Initial point $x_0 \in \R^d$, initial step size $\gamma_{-1} > 0$, $\omega_{-1} = \infty$, line search factor $\beta \in (0, 1)$, $A \in (0, 1)$ and a non-decreasing sequence $\{ c_k\}_{k=-1}^{\infty}$.
        \FOR{$k = 0, 1,...,K$}
        \STATE \colorbox{green!20}{$\gamma_k = \frac{c_{k-1}}{c_{k}}\gamma_{k-1}$.}
        \STATE Sample $\mathcal{S}_k \subseteq [n]$.
        \STATE $\hx_k = x_k - \gamma_k F_{\mathcal{S}_k}(x_k)$.
        \WHILE{$\| F_{\mathcal{S}_k}(x_k) - F_{\mathcal{S}_k}(\hx_k) \| > A \|F_{\mathcal{S}_k}(x_k)\|$}
            \STATE $\gamma_k = \beta \gamma_k$
            \STATE $\hx_k = x_k - \gamma_k F_{\mathcal{S}_k}(x_k)$
        \ENDWHILE
        \vspace{2mm}
        \STATE Set \colorbox{green!20}{$\omega_k = \min \left\{ \frac{\la F_{\mathcal{S}_k}(\hx_k), x_k - \hx_k \ra}{\|F_{\mathcal{S}_k}(\hx_k)\|^2}, \omega_{k-1}\right\}$.}
        \vspace{2mm}
        \STATE $x_{k+1} = x_k - \omega_k F_{\mathcal{S}_k}(\hx_k)$.
        \ENDFOR
    \end{algorithmic}
\end{algorithm}

\section{Numerical Experiments}\label{sec:numerical_experiment}

In this section, we conduct several experiments to verify our theoretical findings and demonstrate our proposed algorithm's efficiency. All experiments in this chapter were conducted using a personal MacBook with an Apple M3 chip and 16GB of RAM. 

\subsection{Deterministic Setting}
\textbf{Comparison of \hyperref[eq:EG]{\algname{EG}}, \hyperref[alg:PolyakEG]{\algname{PolyakEG}} and \hyperref[alg:PolyakEG_linesearch]{\algname{PolyakEG-LS}}.}\label{subsec:compare_theoreticalstep_SM}
In Figure \ref{fig:PolyakEG_theoretical_gradients} and \ref{fig:PolyakEG_tuned_gradients}, we compare the performance of the \hyperref[eq:EG]{\algname{EG}}, \hyperref[alg:PolyakEG]{\algname{PolyakEG}} and \hyperref[alg:PolyakEG_linesearch]{\algname{PolyakEG-LS}}. For this experiment, we focus on implementing our methods on the following strongly convex strongly concave min-max optimization problem $\min_{w_1 \in \mathbb{R}^{d_1}} \max_{w_2 \in \mathbb{R}^{d_2}} \mathcal{L}(w_1, w_2) \eqdef \frac{1}{n} \sum_{i = 1}^n \mathcal{L}_i(w_1, w_2),$~\cite{gorbunov2022stochastic} where 
\begin{equation}\label{eq:quad_minmax}
\textstyle
    \mathcal{L}_i(w_1, w_2) = \frac{1}{2}w_1^{\top}\mathbf{A}_i w_1 + w_1^{\top}\mathbf{B}_i w_2 - \frac{1}{2}w_2^{\top}\mathbf{C}_iw_2 + a_i^{\top}w_1 - c_i^{\top}w_2.
\end{equation}
For \hyperref[alg:PolyakEG_linesearch]{\algname{PolyakEG-LS}}, we start with different initialization of step size $\gamma_{-1} \in \left\{ 10^{-3}, 10^{-1}, 10, 10^{3}\right\}$. While for \algname{EG} and \algname{PolyakEG} we use the theoretical step sizes in Figure \ref{fig:PolyakEG_theoretical_gradients} and tuned step sizes in \ref{fig:PolyakEG_tuned_gradients}. In both the figures, we have the number of gradient computations on $x$-axis against relative error $\nicefrac{\|x_k - x_*\|^2}{\|x_0 - x_*\|^2}$ on $y$-axis. We find that, with theoretical steps \algname{PolyakEG} performs better than \algname{EG} and has similar performance with tuned ones. Moreover, \algname{PolyakEG-LS} performs better than other two for theoretical steps while having a comparable performance with tuned step sizes. 

\textbf{Comparison with \cite{pethick2023escaping}, \cite{solodov1996modified}.} In Figure \ref{fig:PolyakEGLS_CurvatureEG}, we compare the performance of \algname{PolyakEG-LS} with other deterministic tuning free \algname{EG} algorithms: \algname{CurvatureEG+}~\cite{pethick2023escaping} and the algorithm from ~\cite{solodov1996modified}, which we call \algname{Projection-EG}. We run our experiments to solve \eqref{eq:quad_minmax} and find that \algname{CurvatureEG+} performs better than \algname{PolyakEG-LS}. The superior performance of \algname{CurvatureEG+} can be attributed to its dependence on second-order information for initializing the $\gamma_k$ in each iteration. \algname{CurvatureEG+} starts its line-search with $\gamma_k = \nicefrac{A}{\|\mathbf{J}(x_k)\|}$ where $\mathbf{J}(x_k)$ represents the Jacobian of \eqref{eq:quad_minmax}. However, we emphasize that, \algname{PolyakEG-LS} only uses first-order information.

\textbf{Verification of Theorem \ref{lemma:LS-step-size-lower-bound}.} In this experiment, we verify the tightness of the upper bound on the number of  while loop calls in Theorem \ref{lemma:LS-step-size-lower-bound}. 
We implement \hyperref[alg:PolyakEG_linesearch]{\hyperref[alg:PolyakEG_linesearch]{\algname{PolyakEG-LS}}} to solve~\eqref{eq:quad_minmax} for $100$ different choices of step size initialization $\gamma_{-1}$. The purple line in Figure \ref{fig:PolyakEGLS_upperbound} denotes the upper bound $\left\lfloor \nicefrac{\log (L\gamma_{-1} / A)}{\log 1/\beta} \right\rfloor + 1$ for different choices of $\gamma_{-1}$, while the yellow line represents the number of while loop calls required by {\hyperref[alg:PolyakEG_linesearch]{\algname{PolyakEG-LS}}} for each $\gamma_{-1}$ in practice. We observe that the while-loop calls never exceed the theoretical bound, validating our prediction. Furthermore, the close alignment between the violet and yellow lines demonstrates the tightness of our proposed bound.
\begin{figure}[h]
\centering
\begin{subfigure}[b]{0.3\textwidth}
    \centering
    \includegraphics[width=\textwidth]{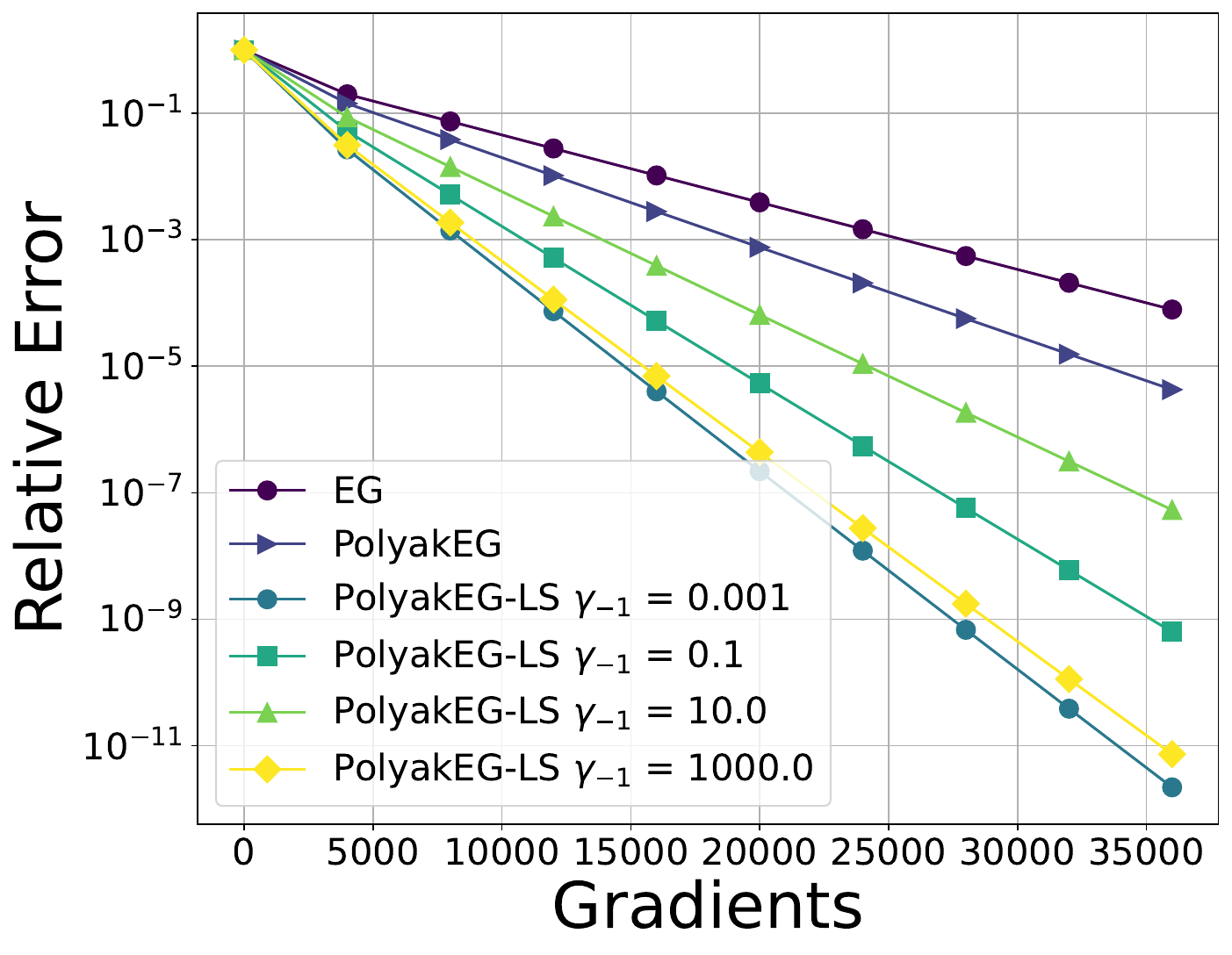}
    \caption{}\label{fig:PolyakEG_theoretical_gradients}
\end{subfigure}
\begin{subfigure}[b]{0.3\textwidth}
    \centering
    \includegraphics[width=\textwidth]{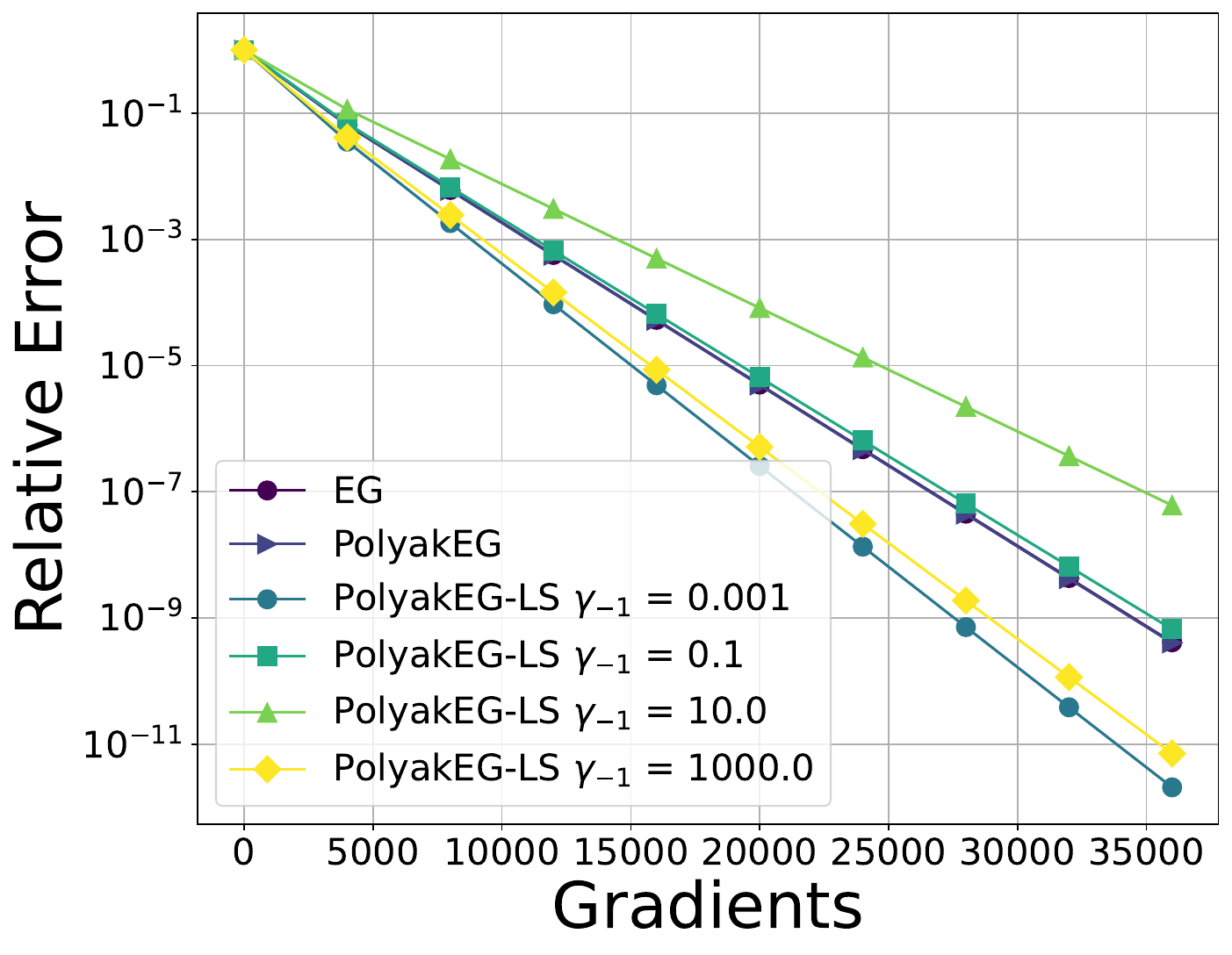}
    \caption{}\label{fig:PolyakEG_tuned_gradients}
\end{subfigure}
\begin{subfigure}[b]{0.3\textwidth}
    \centering
    \includegraphics[width=\textwidth]{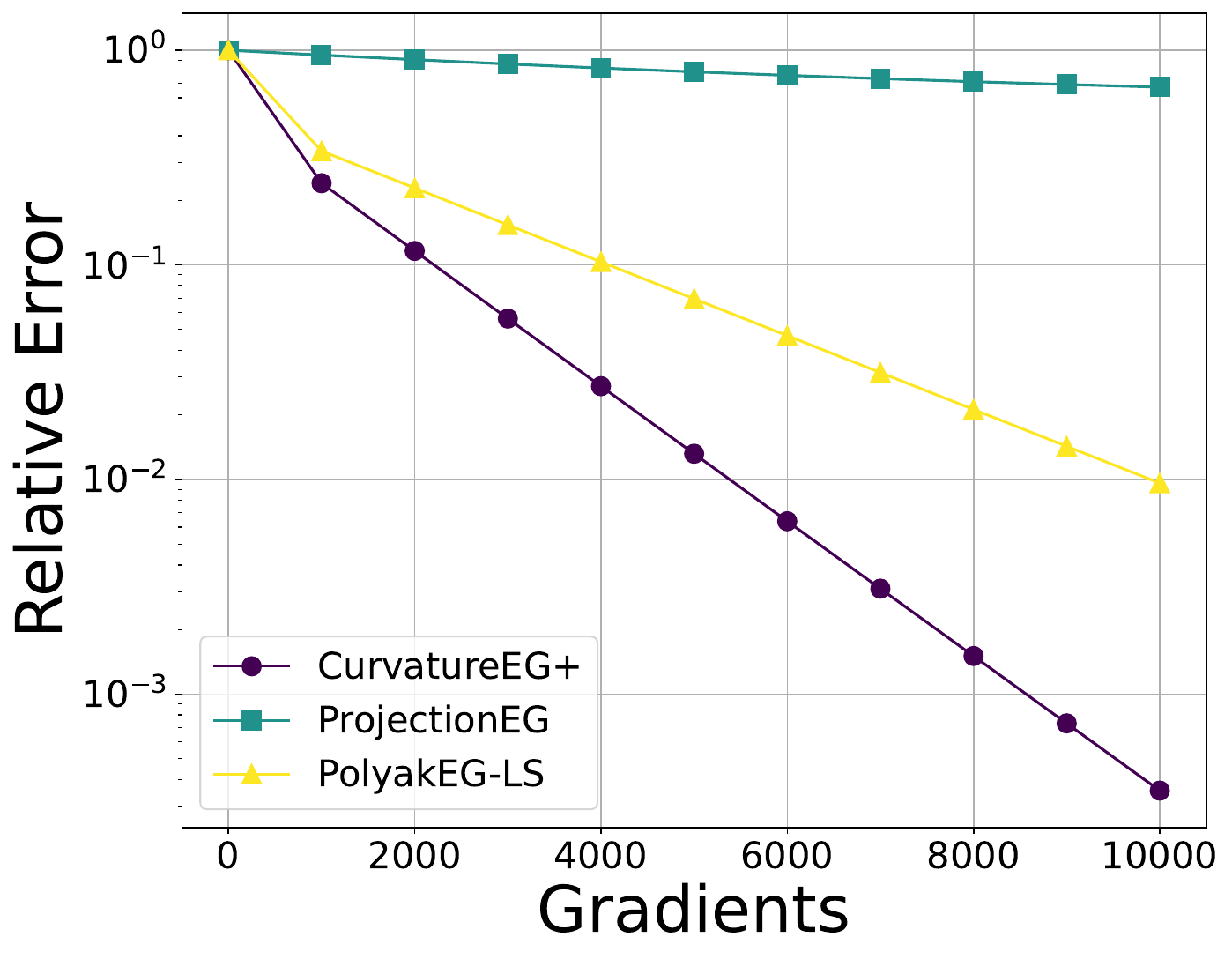}
    \caption{}\label{fig:PolyakEGLS_CurvatureEG}
\end{subfigure}

\begin{subfigure}[b]{0.3\textwidth}
    \centering
    \includegraphics[width=\textwidth]{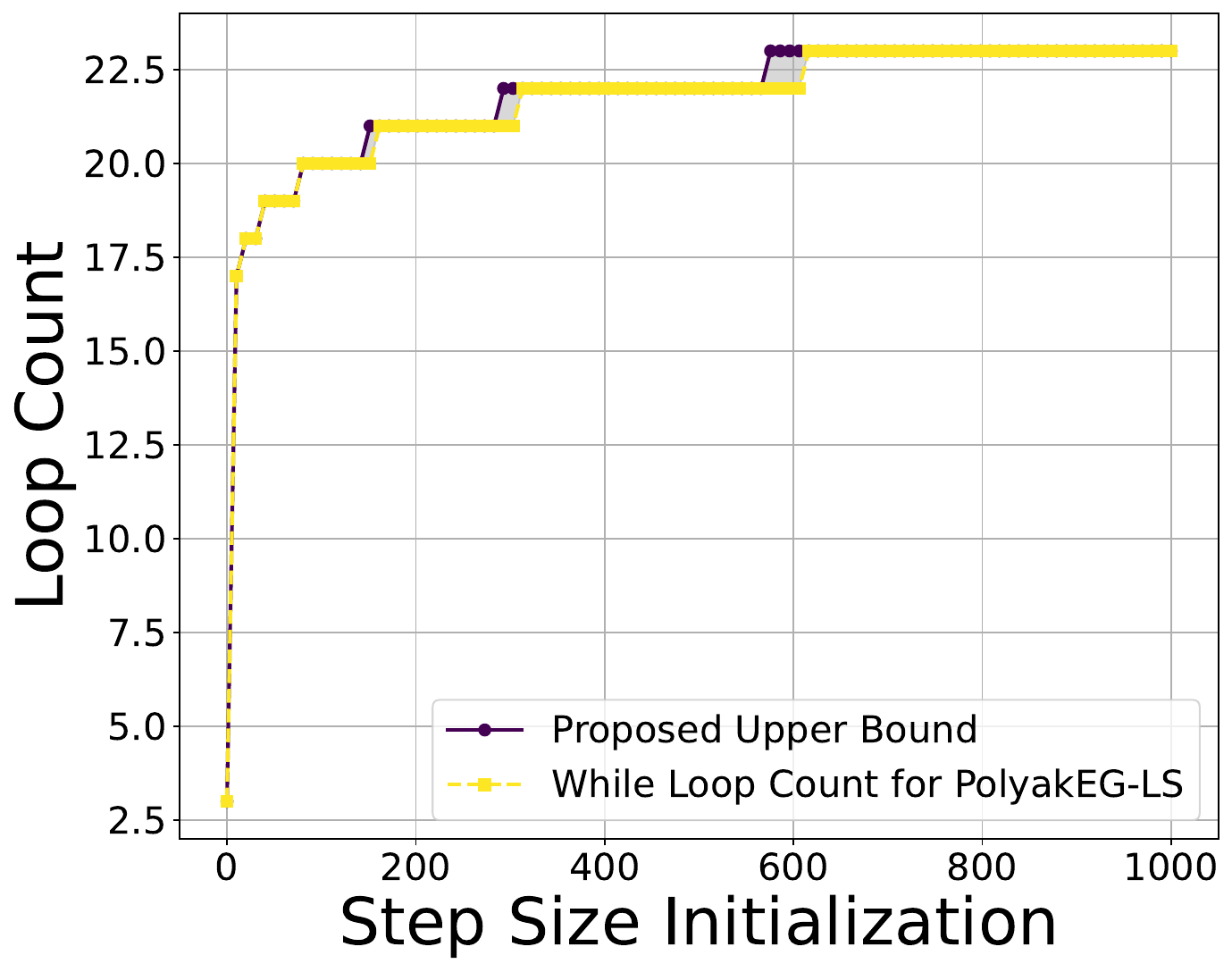}
    \caption{}\label{fig:PolyakEGLS_upperbound}
\end{subfigure}
\begin{subfigure}[b]{0.3\textwidth}
    \centering
    \includegraphics[width=\textwidth]{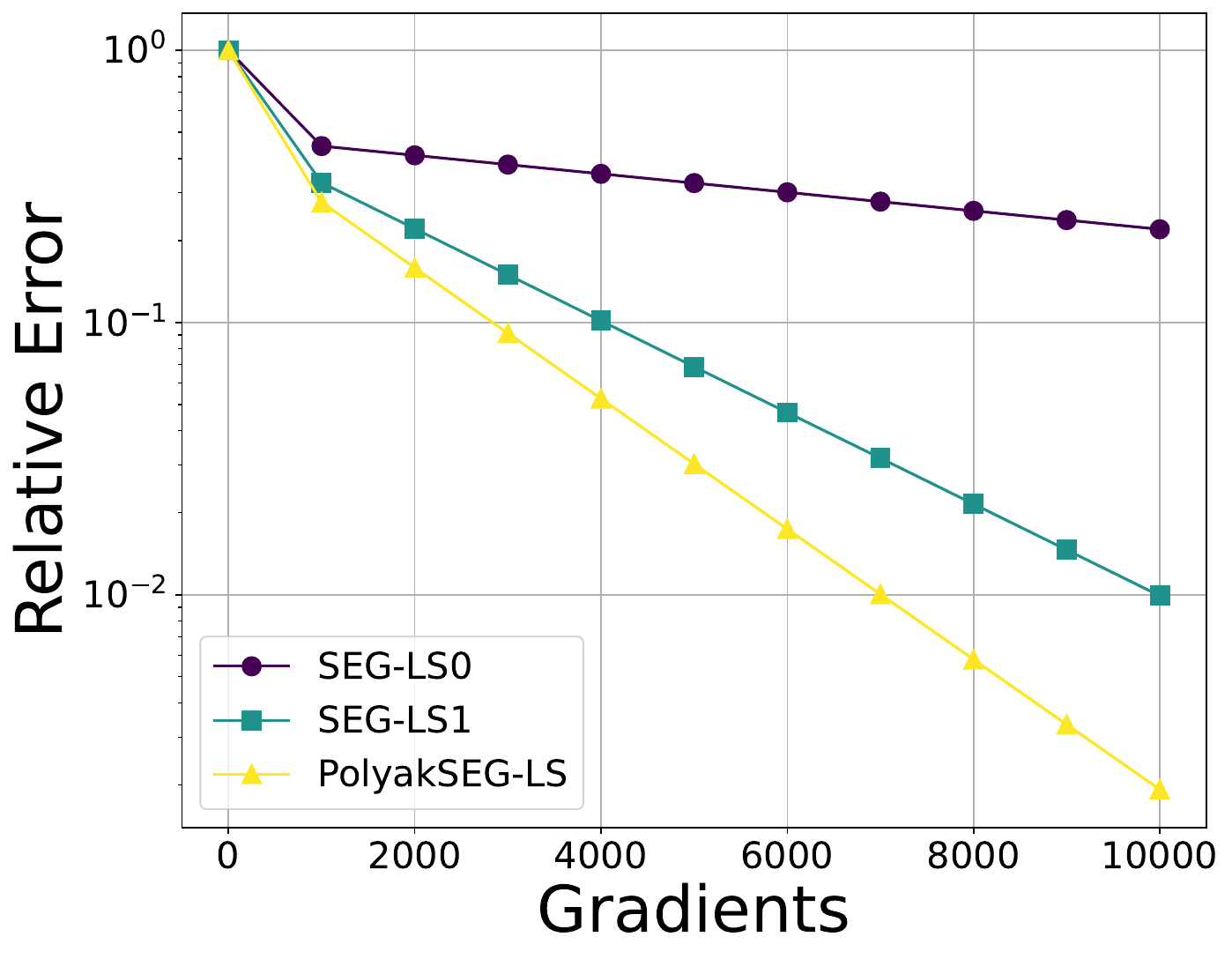}
    \caption{}\label{fig:PolyakSEGLS_interpolation_v1}
\end{subfigure}
\begin{subfigure}[b]{0.3\textwidth}
    \centering
    \includegraphics[width=\textwidth]{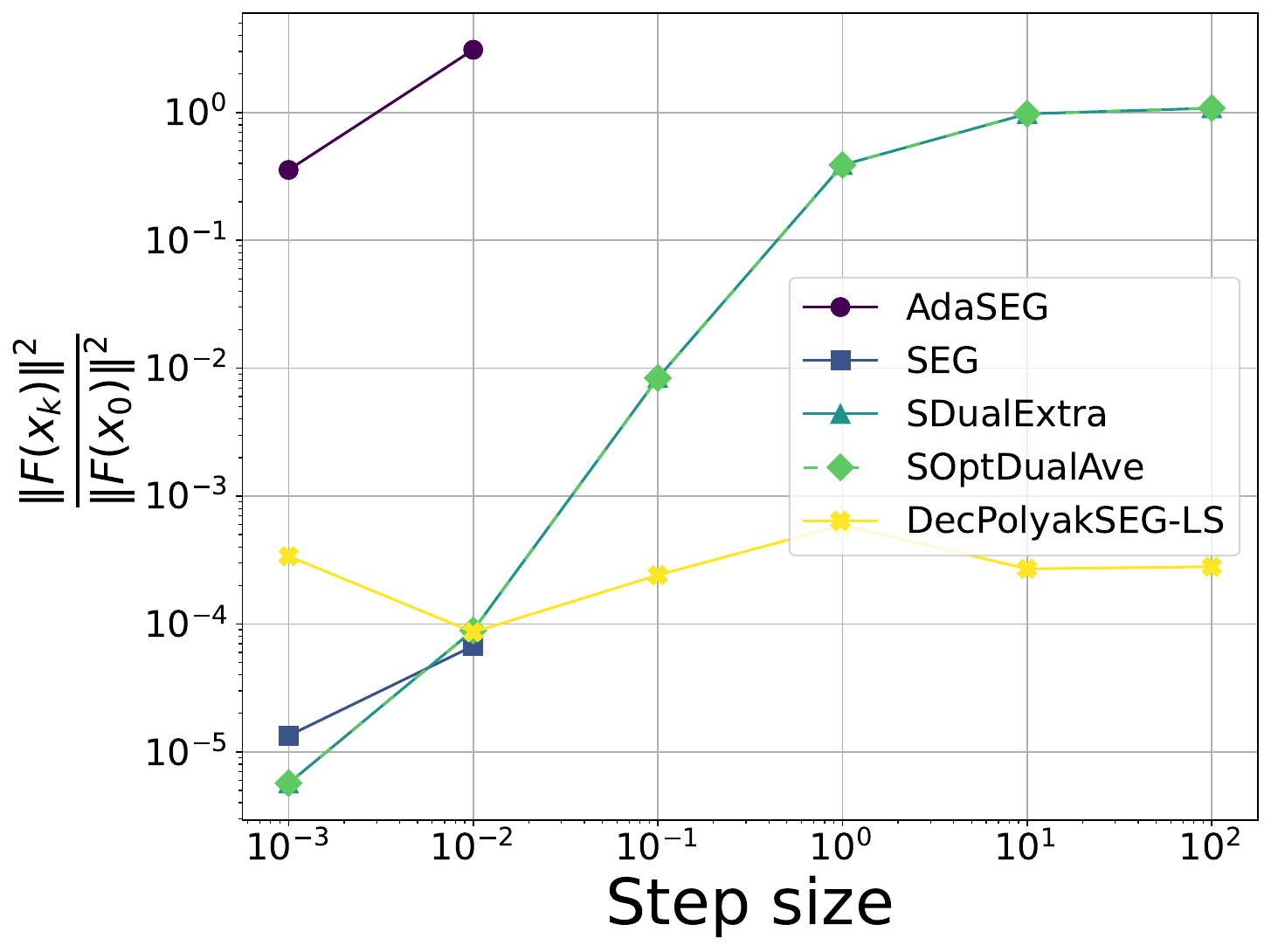}
    \caption{}\label{fig:DecPolyakSEGLS_RL}
\end{subfigure}
\begin{subfigure}[b]{0.3\textwidth}
    \centering
    \includegraphics[width=\textwidth]{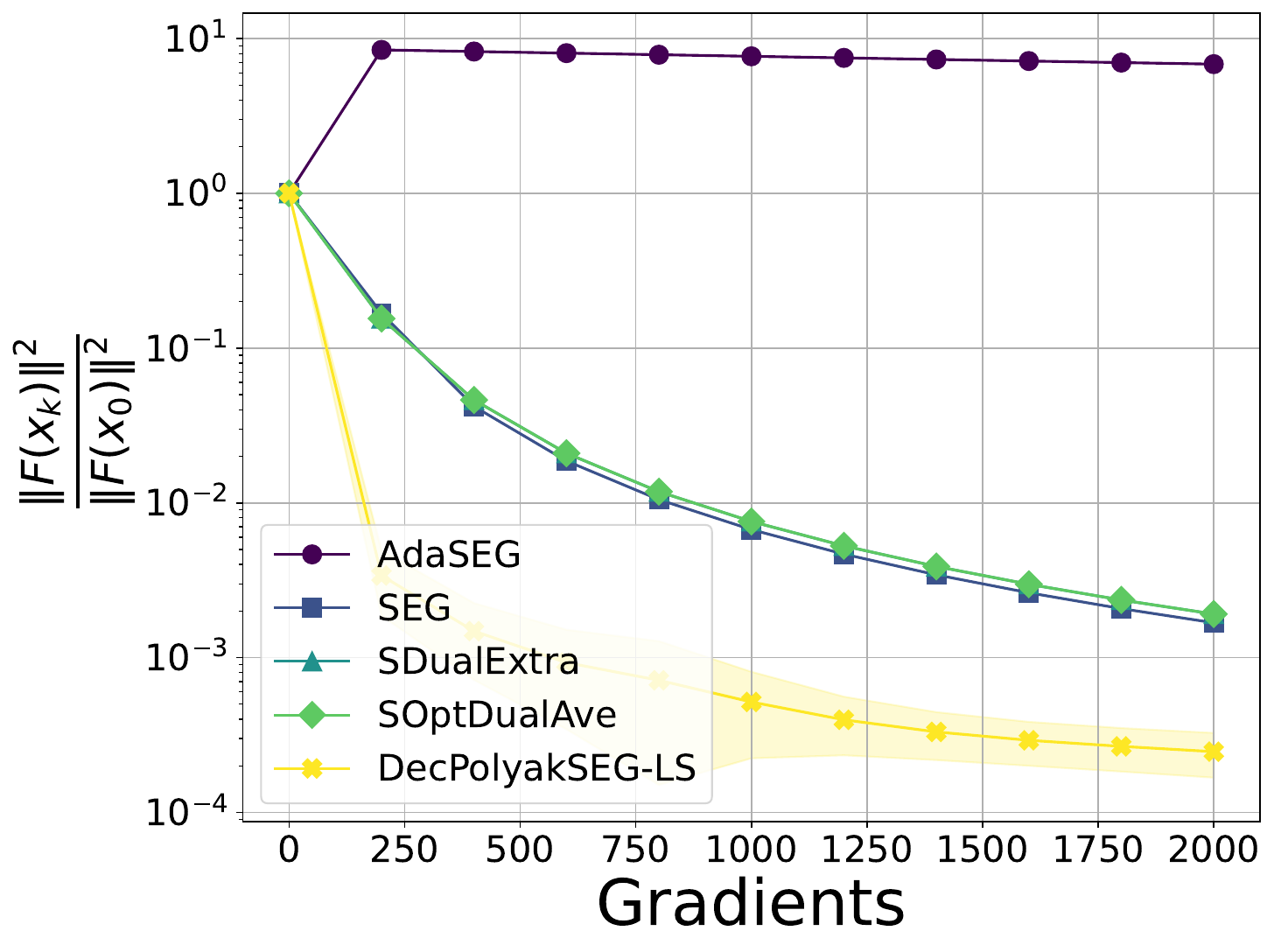}
    \caption{}\label{fig:DecPolyakSEGLS_RL_v3}
\end{subfigure}
\begin{subfigure}[b]{0.3\textwidth}
    \centering
    \includegraphics[width=\textwidth]{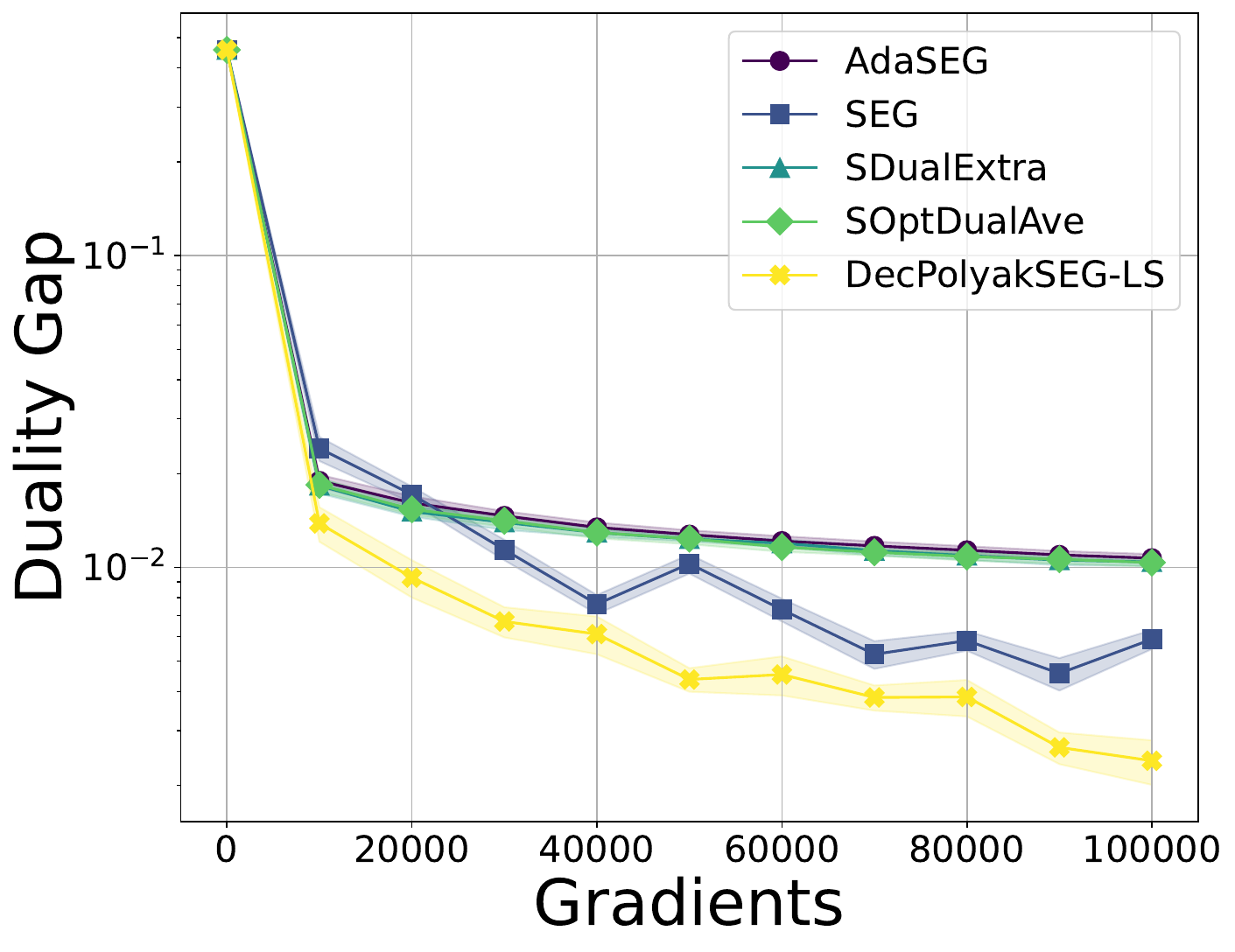}
    \caption{}\label{fig:DecPolyakSEGLS_matrix}
\end{subfigure}
    \caption{\small In Figure \ref{fig:PolyakEG_theoretical_gradients} and \ref{fig:PolyakEG_tuned_gradients}, we compare the performance of \algname{EG}, \algname{PolyakEG} and \algname{PolyakEG-LS} with theoretical and tuned step sizes respectively. In Figure \ref{fig:PolyakEGLS_CurvatureEG}, we compare the performance of \algname{PolyakEG-LS} with \algname{CurvatureEG+} and \algname{ProjectionEG}. In Figure \ref{fig:PolyakEGLS_upperbound}, we compare the number of while loop calls required by \hyperref[alg:PolyakEG_linesearch]{\hyperref[alg:PolyakEG_linesearch]{\algname{PolyakEG-LS}}} and the proposed theoretical upper bound $\left\lfloor \nicefrac{\log (L\gamma_{-1} / A)}{\log 1/\beta} \right\rfloor + 1$ from Theorem \ref{lemma:LS-step-size-lower-bound}.
    In Figure \ref{fig:PolyakSEGLS_interpolation_v1} we compare the performance of \algname{PolyakSEG-LS} with \algname{SEG} from \cite{vaswani2019painless}. 
    In Figure \ref{fig:DecPolyakSEGLS_RL}, we compare the performance of \algname{DecPolyakSEG-LS} with other stochastic algorithms for different step size initialization ($x$-axis) after $10^4$ gradient computations.
    In Figure \ref{fig:DecPolyakSEGLS_RL_v3} and \ref{fig:DecPolyakSEGLS_matrix}, we compare the performance of \algname{DecPolyakSEG-LS} with other stochastic algorithms on the robust least squares and matrix game problems.} 
    \vspace{-2.5mm}
\end{figure}

\subsection{Stochastic Setting}

\textbf{Comparison of \algname{PolyakSEG-LS} and \algname{SEG-LS}.} In Figure \ref{fig:PolyakSEGLS_interpolation_v1}, we compare the performance of \algname{PolyakSEG-LS} and \algname{SEG} with line-search from \cite{vaswani2019painless} on an interpolated model. For this, we consider the strongly convex- strongly concave quadratic min-max problem of \eqref{eq:quad_minmax} such that $\nabla_{w_1} \mathcal{L}_i(w_{1*}, w_{2*}) = \nabla_{w_2} \mathcal{L}_i(w_{1*}, w_{2*}) = 0$ (i.e. $F_i(x_*) = 0$). We compare \algname{PolyakSEG-LS} with two line-search strategies of \cite{vaswani2019painless}: \textbf{(1)} \algname{SEG-LS0}, which restarts line-search in every iteration from $\gamma_{-1}$ and \textbf{(2)} \algname{SEG-LS1}, which uses $\gamma_{k-1}$ from previous iteration similar to \algname{PolyakSEG-LS}. In Figure \ref{fig:PolyakSEGLS_interpolation_v1}, \algname{PolyakSEG-LS} enjoys linear convergence and outperforms both the methods from \cite{vaswani2019painless}.

\textbf{Evaluation of \algname{DecPolyakSEG-LS}.} In this experiment, we compare the performance of \algname{DecPolyakSEG-LS} with \algname{SEG}, \algname{SDualExtra}~\cite{antonakopoulos2021sifting}, \algname{SOptDualAve}~\cite{antonakopoulos2021sifting} and \algname{AdaSEG} (stochastic version of \cite{antonakopoulos2020adaptive}). In Figure \ref{fig:DecPolyakSEGLS_RL} and \ref{fig:DecPolyakSEGLS_RL_v3} we implement these algorithms for solving a robust least square problem while in Figure \ref{fig:DecPolyakSEGLS_matrix}, we solve a matrix game. In Figure \ref{fig:DecPolyakSEGLS_RL}, we analyze the robustness of these algorithms for different step size initilizations (on $x$-axis) similar to \cite{ward2020adagrad}.  For each of these algorithms, we start with a step size from the grid $\left\{10^{-3}, 10^{-2}, 10^{-1}, 1, 10, 10^{2} \right\}$ and plot $\nicefrac{\|F(\bar{x}_k)\|^2}{\|F(x_0)\|^2}$ after $10^4$ gradient computations. 
The performance of \hyperref[alg:decPolyakSEG_linesearch]{\algname{DecPolyakSEG-LS}} remains consistent over step size initializations, while the performance of other algorithms degrades with larger step size initlizations. 
This highlights that our proposed algorithm \hyperref[alg:decPolyakSEG_linesearch]{\algname{DecPolyakSEG-LS}} \textit{is robust to step size initialization.} In Figure \ref{fig:DecPolyakSEGLS_RL_v3} and \ref{fig:DecPolyakSEGLS_matrix}, we compare the performance of these algorithms in terms of $\nicefrac{\|F(\bar{x}_k)\|^2}{\|F(x_0)\|^2}$ and duality gap on $y$-axis. We observe that \hyperref[alg:decPolyakSEG_linesearch]{\algname{DecPolyakSEG-LS}} outperforms others in solving these problem. We present details related to these experiments in Appendix~\ref{sec:more_numerical_exp}.

\chapter{Extragradient Method for $(L_0, L_1)$-Lipschitz Root-finding Problems} \label{chap:chap-4}

\section{Introduction}

In this chapter, we focus on solving the root-finding problem of the form~\eqref{eq: Variational Inequality Definition} with the \algname{EG} algorithm. Despite a rich literature for analysing \algname{EG} and its variants, most of the existing convergence guarantees heavily rely on the $L$-Lipschitz assumption of the operator $F$~\cite{korpelevich1977extragradient, diakonikolas2021efficient}, i.e.
\begin{equation}\label{eq:L-Lipschitz}
\textstyle
    \|F(x) - F(y)\| \leq L\| x - y\|
\end{equation}
for all $x, y \in \R^d$. However, this assumption can be restrictive; for instance, the operator $F(x) = x^2$ for $x \in \R$ does not satisfy \eqref{eq:L-Lipschitz} for any finite $L$~\cite{zhang2019gradient}. \emph{The primary goal of this section is to relax this assumption and establish convergence guarantees for solving the root-finding problem of the form \eqref{eq: Variational Inequality Definition} under a more general framework.} 

\textbf{Relaxing the $L$-Lipschitz Assumption.} Recently, \cite{zhang2019gradient} introduced the $(L_0, L_1)$-smoothness assumption for the minimization problems. Specifically, for $\min_{x \in \R^d} f(x)$, \cite{zhang2019gradient} assume $ \| \nabla^2 f(x) \| \leq L_0 + L_1 \| \nabla f(x)\|$ (when $f$ is twice differentiable) and later~\cite{chen2023generalized} proved that this is equivalent to:
\begin{equation}\label{eq:(L0,L1)}
\textstyle
    \|\nabla f(x) - \nabla f(y)\| \leq (L_0 + L_1 \|\nabla f(x)\|) \|x - y\|.    
\end{equation}
\cite{zhang2019gradient} demonstrated that modern neural networks, such as LSTMs (Long Short-Term Memorys)~\cite{hochreiter1997long}, 
align with $(L_0, L_1)$-smoothness assumption rather than the traditional $L$-smoothness assumption (i.e. $\|\nabla f(x) - \nabla f(y)\| \leq L \|x - y\|$). Moreover, they used this assumption to justify why gradient clipping speeds up neural network training. Later, \cite{ahn2023linear} showed that similar trends hold for the transformer~\cite{vaswani2017attention} architecture. 

In Figure~\ref{fig:hessian_vs_gradient_plot}, we present an example demonstrating the validity of the $(L_0, L_1)$-smoothness condition~\eqref{eq:(L0,L1)} for the iterates of the \algname{EG}.

Similar plots have been presented for gradient descent methods~\cite{zhang2019gradient,gorbunov2024methods}, but to the best of our knowledge, this linear connection between $\|\nabla^2 f(x_k)\|$ and $\|\nabla f(x_k)\|$ for \algname{EG} iterates was never reported before. Following~\cite{gorbunov2024methods}, we consider the minimization problem $\min_{x \in \R^d} f(x) \eqdef \log \left(1 + \exp(- a^{\top}x) \right)$, and plot the values of $\|\nabla^2 f(x_k)\|$ on the $y$-axis against $\|\nabla f(x_k)\|$ on the $x$-axis. Each point is colored according to the iteration index $k$, as indicated by the accompanying colorbar. 

\begin{figure}
    \centering
    \includegraphics[width=.5\linewidth]{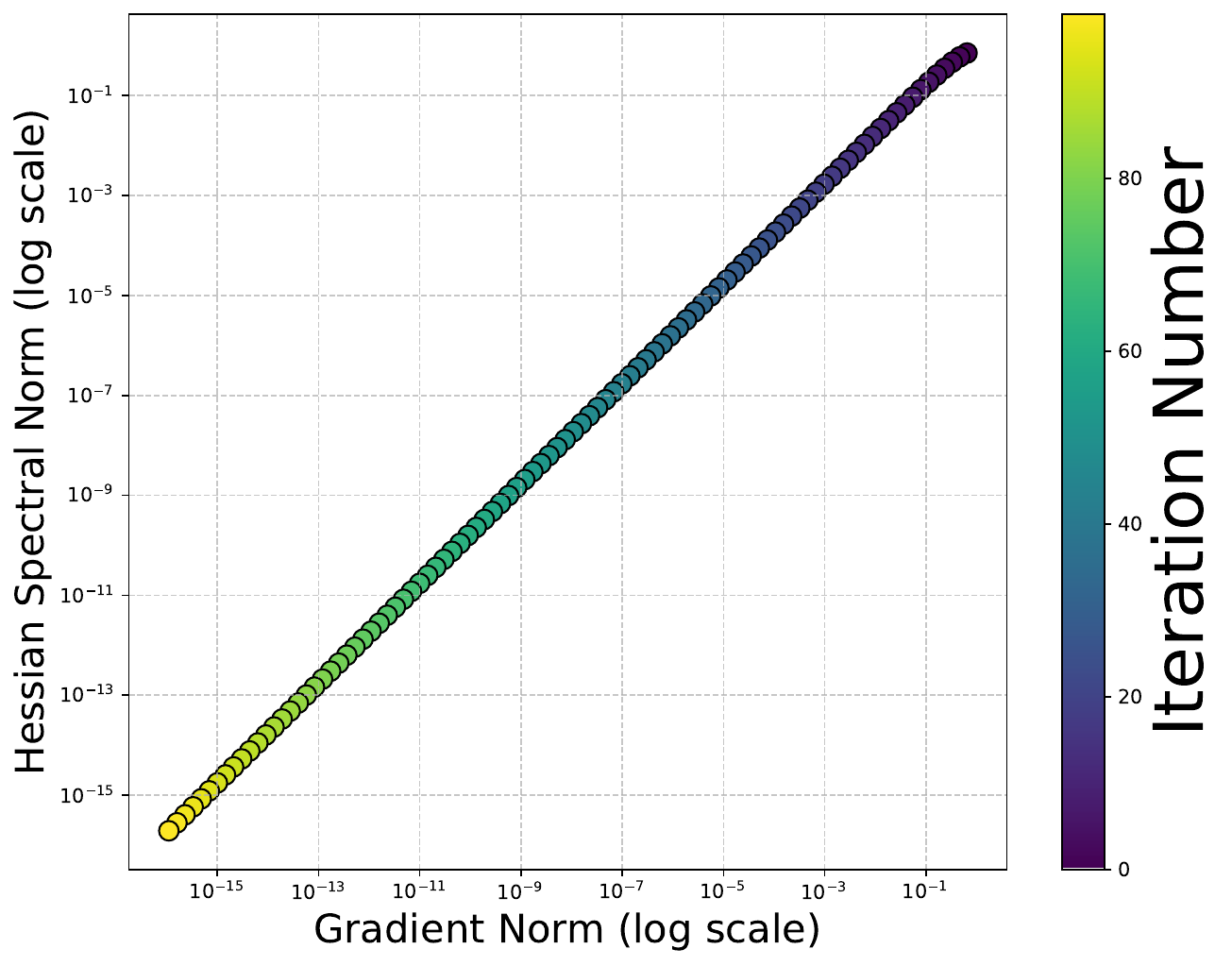}
        \caption{Scatter plot of $\| \nabla^2 f(x_k)\|$ on $y$-axis and $\|\nabla f(x_k)\|$ on $x$-axis.}\label{fig:hessian_vs_gradient_plot}
\end{figure}

The resulting plot reveals an approximately linear relationship between $\|\nabla^2 f(x_k)\|$ and $\|\nabla f(x_k)\|$, thereby supporting the modeling of this function within the $\| \nabla^2 f(x)\| \leq L_0 + L_1 \| \nabla f(x)\|$ or $(L_0, L_1)$-smoothness framework.

For a min-max optimization problem if the associaed operator $F$ is $L$-Lipschitz, then its Jacobian matrix $\mathbf{J}(x)$, defined in \eqref{eq:jacobian}, satisfies $\|\mathbf{J}(x)\| \leq L$ for all $x = (w_1^{\top}, w_2^{\top})^{\top}$(follows from Theorem \ref{thm:equiv_formulation} with $L_1 = 0$). For example, consider the quadratic min-max objective $\min_{w_1} \max_{w_2} \mathcal{L}(w_1, w_2) = \frac{1}{2}w_1^2 + w_1 w_2 - \frac{1}{2}w_2^2$. In this case, implementing the \algname{EG} method and plotting the Jacobian norm $\| \mathbf{J}(x_k) \|$ (on the $y$-axis) against the operator norm $\|F(x_k) \|$ (on the $x$-axis) yields a horizontal line parallel to $x$-axis (check Appendix \ref{appendix:tech_lemma}). 
\begin{figure}
    \centering
    \includegraphics[width=.5\linewidth]{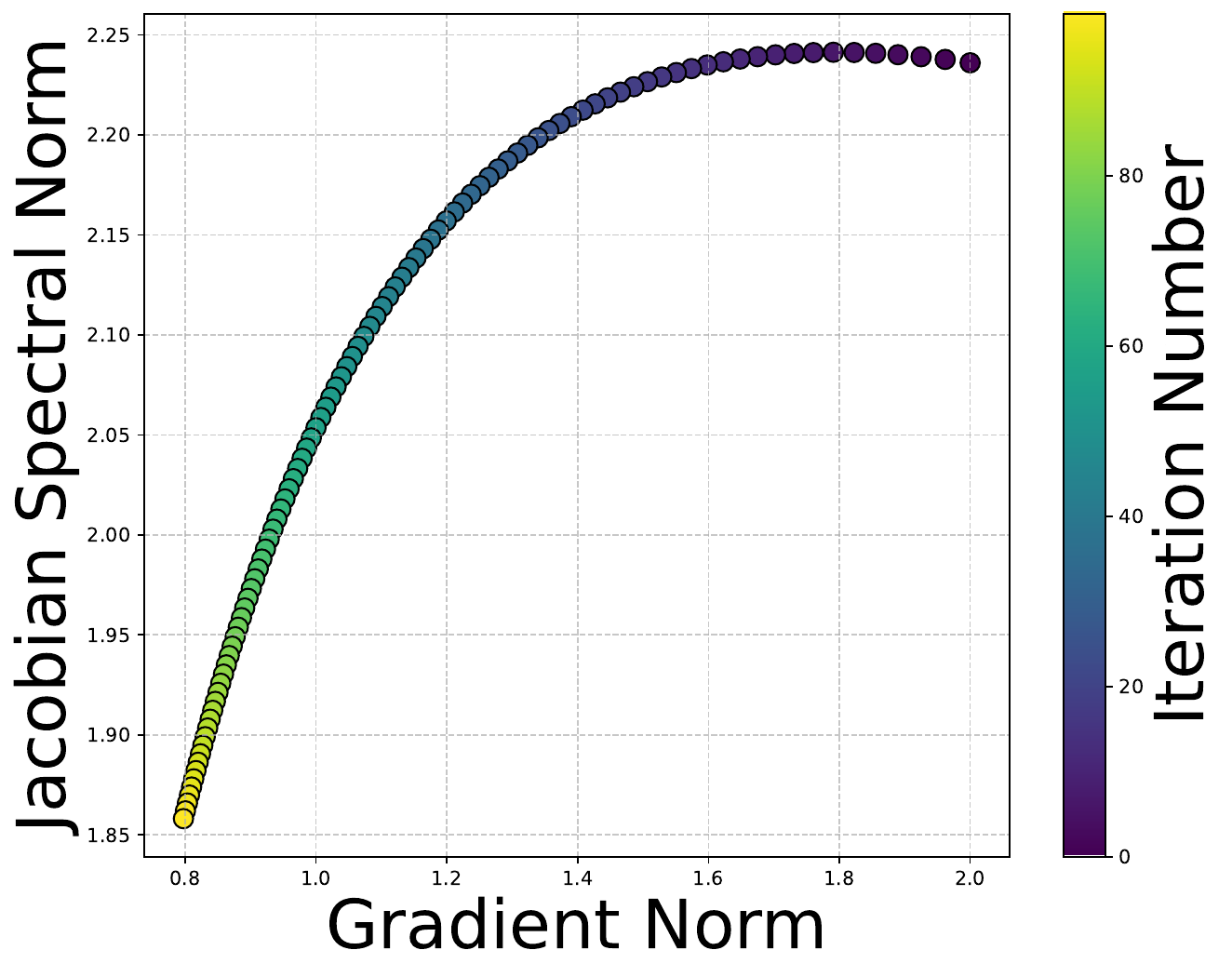}
        \caption{Scatter plot of $\| \mathbf{J}(x_k)\|$ on $y$-axis and $\| F(x_k)\|$ on $x$-axis.}\label{fig:jacobian_vs_operator_plot}
\end{figure}
 However, this behaviour does not persist for more complex problems. For instance, for the cubic objective
\begin{equation}\label{eq:min_max_cubic}
    \min_{w_1} \max_{w_2} \mathcal{L}(w_1, w_2) = \frac{1}{3}w_1^3 + w_1w_2 - \frac{1}{3}w_2^3,
\end{equation}
$\|\mathbf{J}(x_k)\|$ increases with the $\|F(x_k)\|$. This observation suggests that the standard Lipschitz assumption may be overly restrictive for capturing the structure of such problems (check Figure \ref{fig:jacobian_vs_operator_plot}).
        
To better model this relationship, we investigate a relaxed condition of the form $\| \mathbf{J}(x) \| \leq L_0 + L_1 \|F(x)\|^\alpha$ with
$\alpha \in (0, 1]$ which generalizes the standard Lipschitz bound (for $L_1 = 0$, this boils down to $\|\mathbf{J}(x)\| \leq L_0$, which is the Lipschitz property). Note that, instead of $\alpha = 1$, our formulation permits $\alpha$ to lie in the broader interval $(0, 1]$. This condition is motivated by the plot in Figure~\ref{fig:jacobian_vs_operator_plot}, which suggests a sublinear relationship, resembling the form $h(r) = L_0 + L_1 r^{\alpha}$ for some $\alpha \in (0, 1)$, rather than a linear trend. 

As we will prove later (in Theorem~\ref{thm:equiv_formulation}), for any  doubly differentiable min-max optimization problem $\min_{w_1} \max_{w_2} \mathcal{L}(w_1, w_2)$, the condition $\| \mathbf{J}(x) \| \leq L_0 + L_1 \| F(x) \|^{\alpha}$ is equivalent to the operator $F$ satisfying the $\alpha$-symmetric $(L_0, L_1)$-Lipschitz condition (see Assumption~\ref{asdioa}). 
The equivalent $\alpha$-symmetric $(L_0, L_1)$-Lipschitz condition does not rely on second-order information and applies to a broader class of problems (no need for double differentiability). Therefore, in the remainder of this chapter, we focus on analyzing the convergence of \algname{EG} under the $\alpha$-symmetric $(L_0, L_1)$-Lipschitz assumption~\cite{pmlr-v235-vankov24a} on the operator $F$, defined below.

\begin{assumption}
\label{asdioa}
$F$ is called $\alpha$-symmetric $(L_0, L_1)$-Lipschitz operator if for some $L_0, L_1 \geq 0$ and $\alpha \in (0, 1]$,
    \begin{equation}\label{eq:(L0,L1)-Lipschitz}
        \| F(x) - F(y)\| \leq \left( L_0 + L_1 \max_{\theta \in [0, 1]} \left\|F(\theta x + ( 1- \theta) y) \right\|^{\alpha} \right) \| x- y\| \quad \forall x, y \in \R^d.
    \end{equation}
\end{assumption}
Instead of a fixed Lipschitz constant in \eqref{eq:L-Lipschitz}, Assumption~\ref{asdioa} allows the Lipschitz-like quantity to depend on the norm of the operator itself along the path from $x$ to $y$. This assumption generalizes the standard $L$-Lipschitz condition \eqref{eq:L-Lipschitz}, corresponding to the special case where $L_0 = L$ and $L_1 = 0$. 

Moreover, the $\alpha$-symmetric $(L_0, L_1)$-Lipschitz condition provides a more refined characterisation of operators whose Lipschitz constant depends on their norm, offering a tighter bound by balancing $L_0 \ll L$ and $L_1 \ll L$. Additionally, \eqref{eq:(L0,L1)-Lipschitz} provides a more relaxed bound compared to \eqref{eq:(L0,L1)} with $\alpha = 1$.

\begin{table*}[htbp]
    \centering
    \caption{
       Summary of step size selection for \algname{EG} under the $L$-Lipschitz and $\alpha$-symmetric $(L_0, L_1)$-Lipschitz assumptions. Our proposed step size strategy is of the general form $\gamma_k = \frac{1}{c_0 + c_1 \|F(x_k)\|^{\alpha}}$, tailored for solving problems involving $\alpha$-symmetric $(L_0, L_1)$-Lipschitz operators.
    }
    \label{tab:comparison_of_rates_41}
    \begin{threeparttable}
    \resizebox{\textwidth}{!}{%
        \begin{tabular}{|c|c|c c c|}
        \hline
        Setup & Assumption & $\alpha$ & $\gamma_k$ & $\omega_k$ 
        \\
        \hline\hline
        \multirow{7}{2cm}{\centering Strongly \\ Monotone \tnote{{\color{blue}(1)}}} & \begin{tabular}{c}
            $L$-Lipschitz \tnote{{\color{blue}(2)}}\\
            \cite{mokhtari2020unified}
        \end{tabular} & - & $\frac{0.25}{L}$ & $\gamma_k$\\[10pt]
        
        & \begin{tabular}{c}
            $\alpha$-symmetric $(L_0, L_1)$-Lipschitz \\
            \cite{pmlr-v235-vankov24a} 
        \end{tabular} & $1$ & $\min \left\{ \frac{1}{4 \mu}, \frac{1}{2\sqrt{2} e L_0}, \frac{1}{2 \sqrt{2} e L_1 \|F(x_k)\|} \right\}$ & $\gamma_k$ \\[10pt]
      
        &\cellcolor{bgcolor2}\begin{tabular}{c}
            $\alpha$-symmetric $(L_0, L_1)$-Lipschitz \\
            (Theorem \ref{theorem:1symm_strongmonotone})
        \end{tabular} & \cellcolor{bgcolor2} $1$ & \cellcolor{bgcolor2} $\frac{0.21}{L_0 + L_1 \|F(x_k)\|}$ & \cellcolor{bgcolor2} $\gamma_k$ \\[10pt]
        &\cellcolor{bgcolor2}\begin{tabular}{c}
            $\alpha$-symmetric $(L_0, L_1)$-Lipschitz\\
            (Theorem \ref{theorem:1symm_strongmonotone_alhpha01})
        \end{tabular} & \cellcolor{bgcolor2} $(0, 1)$ & \cellcolor{bgcolor2} $\frac{0.61}{2K_0 + \left( 2 K_1 + 2^{1 - \alpha} K_2^{1 - \alpha} \right) \| F(x_k)\|^{\alpha}}$ \tnote{{\color{blue}(2)}} & \cellcolor{bgcolor2} $\gamma_k$ \\[10pt]

        \hline\hline
        \multirow{6}{2cm}{\centering Monotone \tnote{{\color{blue}(1)}}} & \begin{tabular}{c}
            $L$-Lipschitz\\
            \cite{gorbunov2022extragradient}
        \end{tabular} & - & $\frac{1}{L}$  & $\frac{\gamma_k}{2}$ \\[10pt]
    
        &\cellcolor{bgcolor2}\begin{tabular}{c}
            $\alpha$-symmetric $(L_0, L_1)$-Lipschitz\\
            (Theorem \ref{theorem:1symm_monotone})
        \end{tabular} & \cellcolor{bgcolor2} $1$ & \cellcolor{bgcolor2} $\frac{0.45}{L_0 + L_1 \|F(x_k)\|}$ & \cellcolor{bgcolor2} $\gamma_k$  \\[10pt]

        &\cellcolor{bgcolor2}\begin{tabular}{c}
            $\alpha$-symmetric $(L_0, L_1)$-Lipschitz\\
            (Theorem \ref{theorem:alpha01})
        \end{tabular} & \cellcolor{bgcolor2} $(0, 1)$ & \cellcolor{bgcolor2} $\frac{1}{2 \sqrt{2} K_0  + \left( 2\sqrt{2} K_1 + 2^{\nicefrac{3 (1 - \alpha)}{2}} K_2^{1 - \alpha} \right) \|F(x_k)\|^{\alpha}}$ & \cellcolor{bgcolor2} $\gamma_k$ \\[10pt]

        \hline\hline
        \multirow{8}{2cm}{\centering Weak Minty \tnote{{\color{blue}(1)}}} & \begin{tabular}{c}
            $L$-Lipschitz\\
            \cite{diakonikolas2021efficient}
        \end{tabular} & - & $\frac{1}{L}$ & $\frac{\gamma_k}{2}$ \\[10pt]
        & \begin{tabular}{c}
            $L$-Lipschitz\\
            \cite{pethick2023escaping}
        \end{tabular} & - & $\frac{1}{L}$ & $\rho + \frac{\la F(\hx_k), x_k - \hx_k\ra}{\| F(\hx_k)\|^2}$ \\[10pt]

        &\cellcolor{bgcolor2}\begin{tabular}{c}
            $\alpha$-symmetric $(L_0, L_1)$-Lipschitz\\
            (Theorem \ref{theorem:weak_minty_alpha1})
        \end{tabular} & \cellcolor{bgcolor2} $1$ & \cellcolor{bgcolor2} $\frac{0.56}{L_0 + L_1 \|F(x_k)\|}$ & \cellcolor{bgcolor2} $\frac{\gamma_k}{2}$ \\[10pt]
        
        & \cellcolor{bgcolor2}\begin{tabular}{c}
            $\alpha$-symmetric $(L_0, L_1)$-Lipschitz \\
            (Theorem \ref{theorem:weak_minty_alpha01})
        \end{tabular} & \cellcolor{bgcolor2} $(0, 1)$ & \cellcolor{bgcolor2} $\frac{1}{2 \sqrt{2} K_0 + \left(2 \sqrt{2}K_1 + 2^{\nicefrac{3 (1 - \alpha)}{2}} K_2^{1 - \alpha} \right)\| F(x_k)\|^{\alpha}}$ &\cellcolor{bgcolor2} $\frac{\gamma_k}{2}$ \; \\[10pt]
        \hline
    \end{tabular}%
    }
    \begin{tablenotes}
        {\scriptsize 
        \item [{\color{blue}(1)}] Convergence measure $\bullet$ strongly monotone: $\|x_K - x_*\|^2$, \\
        $\bullet$ monotone: $ \min_{0 \leq k \leq K}\|F(x_k)\|^2$, \\
        $\bullet$ weak minty: $ \min_{0 \leq k \leq K}\|F(\hx_k)\|^2$.
        \item [{\color{blue}(2)}] For $K_0, K_1, K_2$, check Proposition \ref{prop:equiv_formulation}. Note that, for $L_1 = 0$ we have $K_1 = K_2 = 0$.
        }
        \vspace{-4mm}
    \end{tablenotes}
    \end{threeparttable}
\end{table*}

\subsection{Main Contributions}
We summarize the main contributions of this chapter below.
\begin{itemize}[itemsep=0pt, leftmargin=*]

    \item \textbf{Tighter analysis for strongly monotone:} We establish linear convergence guarantees for strongly monotone~\eqref{eq:strong_monotone} $\alpha$-symmetric $(L_0, L_1)$-Lipschitz problems (see Theorem \ref{theorem:1symm_strongmonotone}, \ref{theorem:1symm_strongmonotone_alhpha01}). In contrast to the results in~\cite{pmlr-v235-vankov24a} for $\alpha = 1$, our analysis shows that linear convergence can be achieved without incurring exponential dependence on the initial distance to the solution $\|x_0 - x_*\|$ (see Corollary~\ref{corollary:1symm_strongmonotone}).

    \item \textbf{First analysis for monotone and weak Minty:} We provide the first convergence analysis of \algname{EG} for solving monotone~\eqref{eq:monotone} and weak Minty \eqref{eq: weak MVI} problems under $\alpha$-symmetric $(L_0, L_1)$-Lipschitz assumption. We establish global sublinear convergence for monotone problems (see Theorem \ref{theorem:1symm_monotone}, \ref{theorem:alpha01}) and local sublinear convergence for weak Minty problems (see Theorem \ref{theorem:weak_minty_alpha1}, \ref{theorem:weak_minty_alpha01}).

    \item \textbf{Novel step size for \algname{EG}:} We propose a novel adaptive step-size strategy for the \algname{EG} method designed to handle $\alpha$-symmetric $(L_0, L_1)$-Lipschitz operators. Specifically, all our step-size schemes adopt the general form $\gamma_k = \frac{1}{c_0 + c_1 \|F(x_k)\|^{\alpha}},$ where $c_0, c_1 > 0$ are constants determined by the problem-dependent parameters $L_0$, $L_1$, and $\alpha$. In Table \ref{tab:comparison_of_rates_41}, we included a detailed summary of our proposed step size selection for different classes of problems and compared it with closely related works. 
    
    \item \textbf{Numerical experiments:} 
    Finally, in Section~\ref{sec:experiments}, we present experiments validating different aspects of our theoretical results. We compare our proposed step size selections with existing alternatives, demonstrating the effectiveness and robustness of our approach.

\end{itemize}

\section{On the $\alpha$-Symmetric $(L_0, L_1)$-Lipschitz Assumption}
We divide this section into two parts. In the first subsection, we present an equivalent reformulation of the $\alpha$-symmetric $(L_0, L_1)$-Lipschitz condition~\eqref{eq:(L0,L1)-Lipschitz} in the context of min-max optimization. In the second subsection, we provide some examples of operators that satisfy \eqref{eq:(L0,L1)-Lipschitz} and highlight its significance.

\subsection{Equivalent Formulation of $\alpha$-Symmetric $(L_0, L_1)$-Lipschitz Assumption}
In this subsection, we consider the min-max optimization problem $\min_{w_1} \max_{w_2} \mathcal{L}(w_1, w_2)$. The corresponding operator $F$ and Jacobian $\mathbf{J}$ are defined as
\begin{equation}\label{eq:jacobian}
\textstyle
    F(x) = \begin{bmatrix}
        \nabla_{w_1} \mathcal{L}(w_1, w_2) \\
        - \nabla_{w_2} \mathcal{L}(w_1, w_2)
    \end{bmatrix} \text{ and  } 
    \mathbf{J}(x) = \begin{bmatrix}
    \nabla^2_{w_1 w_1} \mathcal{L}(w_1, w_2) & \nabla^2_{w_2 w_1} \mathcal{L}(w_1, w_2) \\
    -\nabla^2_{w_1 w_2} \mathcal{L}(w_1, w_2) & -\nabla^2_{w_2 w_2} \mathcal{L}(w_1, w_2)
    \end{bmatrix},
\end{equation}
where $x = (w_1^{\top}, w_2^{\top})^{\top}$. Assuming that $F$ is $\alpha$-symmetric $(L_0, L_1)$-Lipschitz, we obtain the following theorem.

\begin{theorem}\label{thm:equiv_formulation}
    Suppose $F$ is the differentiable operator associated with the problem $\min_{w_1} \max_{w_2} \mathcal{L}(w_1, w_2)$. Then $F$ satisfies the $\alpha$-symmetric $(L_0, L_1)$-Lipschitz condition~\eqref{eq:(L0,L1)-Lipschitz} if and only if
    \begin{equation}\label{eq:equiv_formulation}
    \textstyle
        \|\mathbf{J}(x)\| \leq L_0 + L_1 \|F(x)\|^{\alpha}.
    \end{equation}
    Here $\mathbf{J}(x)$ is the Jacobian defined in \eqref{eq:jacobian} and $\| \mathbf{J}(x)\| = \sigma_{\max}(\mathbf{J}(x))$ i.e. maximum singular value of $\mathbf{J}(x)$. In particular, we have $\| \mathbf{J}(x)\| \leq L$ when operator $F$ is $L$-Lipschitz.
\end{theorem}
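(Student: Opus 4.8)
The plan is to prove the two implications in Theorem~\ref{thm:equiv_formulation} separately, exploiting the fact that the block matrix $\mathbf{J}(x)$ defined in~\eqref{eq:jacobian} is precisely the Jacobian of $F$: differentiating $F(x) = (\nabla_{w_1}\mathcal{L}, -\nabla_{w_2}\mathcal{L})$ componentwise in $x = (w_1^\top, w_2^\top)^\top$ reproduces the block structure in~\eqref{eq:jacobian}, so that $\tfrac{d}{d\theta} F(z(\theta)) = \mathbf{J}(z(\theta)) z'(\theta)$ for any smooth path $z$.

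For the direction~\eqref{eq:equiv_formulation} $\Rightarrow$ \eqref{eq:(L0,L1)-Lipschitz}, I would fix $x, y \in \R^d$ and parametrize the segment joining them by $z(\theta) = \theta x + (1-\theta)y$. The fundamental theorem of calculus gives $F(x) - F(y) = \int_0^1 \mathbf{J}(z(\theta))(x - y)\,d\theta$, and hence $\|F(x) - F(y)\| \le \int_0^1 \|\mathbf{J}(z(\theta))\|\,d\theta \cdot \|x - y\|$. Substituting the pointwise bound~\eqref{eq:equiv_formulation} into the integrand yields $\|F(x) - F(y)\| \le \int_0^1 \bigl(L_0 + L_1 \|F(z(\theta))\|^\alpha\bigr)\,d\theta \cdot \|x - y\|$, and bounding the $F$-norm term inside the integral by its maximum over $\theta \in [0,1]$ produces exactly~\eqref{eq:(L0,L1)-Lipschitz}.

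For the converse~\eqref{eq:(L0,L1)-Lipschitz} $\Rightarrow$ \eqref{eq:equiv_formulation}, I would use the variational characterization $\|\mathbf{J}(x)\| = \sigma_{\max}(\mathbf{J}(x)) = \sup_{\|u\| = 1}\|\mathbf{J}(x)u\|$. Fixing a unit vector $u$ and setting $y = x + tu$ with $t > 0$, differentiability of $F$ gives $\|F(x + tu) - F(x)\| = t\,\|\mathbf{J}(x)u\| + o(t)$ as $t \to 0^+$. On the other hand, the segment from $x$ to $x + tu$ is $\{x + su : s \in [0,t]\}$, so~\eqref{eq:(L0,L1)-Lipschitz} gives $\|F(x + tu) - F(x)\| \le \bigl(L_0 + L_1 \max_{s \in [0,t]}\|F(x + su)\|^\alpha\bigr)\,t$. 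Dividing by $t$ and letting $t \to 0^+$, the continuity of $F$ (inherited from differentiability) forces $\max_{s \in [0,t]}\|F(x + su)\|^\alpha \to \|F(x)\|^\alpha$, so $\|\mathbf{J}(x)u\| \le L_0 + L_1\|F(x)\|^\alpha$. Since the right-hand side does not depend on $u$, taking the supremum over all unit $u$ establishes~\eqref{eq:equiv_formulation}. The final assertion $\|\mathbf{J}(x)\| \le L$ under $L$-Lipschitzness is then the special case $L_0 = L$, $L_1 = 0$.

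The step I expect to be the main obstacle is the limiting argument in the reverse direction: one must justify that the path-maximum $\max_{s \in [0,t]}\|F(x + su)\|^\alpha$ collapses to $\|F(x)\|^\alpha$ as $t \to 0^+$, which I would argue from the uniform continuity of $F$ on the shrinking segment together with the continuity of $r \mapsto r^\alpha$ on $[0,\infty)$. Care is also needed to ensure the directional-derivative estimate holds uniformly enough that the per-direction bound passes cleanly to the supremum defining $\|\mathbf{J}(x)\|$; since the right-hand side $L_0 + L_1\|F(x)\|^\alpha$ is independent of the direction $u$, this last step is in fact immediate once the per-direction inequality is secured.
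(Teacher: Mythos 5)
Your proof is correct, and its skeleton matches the paper's: one direction via the fundamental theorem of calculus along the segment, the other via a directional difference quotient sent to zero and a supremum over unit directions. The difference is in the technical handling of the max in~\eqref{eq:(L0,L1)-Lipschitz}. The paper routes both directions through the integral reformulation of Lemma~\ref{lemma:reform_integration} (imported from \cite{chen2023generalized}), so that the limit $\frac{1}{\theta'}\int_0^{\theta'}\|F(x+\varphi u)\|^{\alpha}\,d\varphi \to \|F(x)\|^{\alpha}$ is justified by L'H\^opital's rule and the Leibniz integral rule, and the forward direction is closed by invoking that same lemma to pass back from the integral form to the max form. You instead work with the max form directly: in one direction you bound the integral average $\int_0^1\|F(z(\theta))\|^{\alpha}\,d\theta$ by the segment maximum (which is immediate), and in the other you argue that $\max_{s\in[0,t]}\|F(x+su)\|^{\alpha}\to\|F(x)\|^{\alpha}$ by continuity of $F$ on the shrinking segment. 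This is slightly more elementary and self-contained, since it removes the dependence on the external equivalence lemma; what the paper's route buys is that Lemma~\ref{lemma:reform_integration} is needed elsewhere anyway (it underlies Proposition~\ref{prop:equiv_formulation}), so the authors get the theorem essentially for free once that lemma is in place. Both arguments are sound, and your identification of the path-maximum collapse as the delicate step is exactly where the paper also concentrates its effort.
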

This result provides an equivalent characterization of the $\alpha$-symmetric $(L_0, L_1)$-Lipschitz condition~\eqref{eq:(L0,L1)-Lipschitz} for min-max optimization problems. In practice, it is often easier to verify~\eqref{eq:equiv_formulation} than to directly check~\eqref{eq:(L0,L1)-Lipschitz}. In Appendix~\ref{appendix:equiv_formulation}, we provide an example where we use Theorem \ref{thm:equiv_formulation} to verify if an operator satisfies \eqref{eq:(L0,L1)-Lipschitz}. 

\subsection{Examples of $\alpha$-Symmetric $(L_0, L_1)$-Lipschitz Operators}\label{sec:examples}
To motivate the significance of this relaxed assumption~\eqref{eq:(L0,L1)-Lipschitz}, we present a few instances of $\alpha$-symmetric $(L_0, L_1)$-Lipschitz operators that highlight its advantages over the conventional $L$-Lipschitz assumption.\\
\newline
\textbf{Example 1}\cite{gorbunov2024methods}: We start with an example from the minimization setting. Consider the logistic regression loss function $f(x) = \log \left( 1 + \exp{\left( - a^{\top}x \right)}\right)$. Then the corresponding gradient operator $F = \nabla f$ satisfies the $L$-Lipschitz assumption with $L = \|a\|^2$ and $\alpha$-symmetric $(L_0, L_1)$-Lipschitz assumption with $L_0 = 0, L_1 = \|a\|, \alpha = 1$. Therefore, when $\|a\| \gg 1$, the bound provided by $1$-symmetric $(L_0, L_1)$-Lipschitz can be significantly tighter than the one imposed by the $L$-Lipschitz condition. This example emphasizes the benefit of the $\alpha$-symmetric $(L_0, L_1)$-Lipschitz framework in scenarios where standard Lipschitz constants are overly pessimistic.

\textbf{Example 2}: Consider the operator $F(x) = (u_1^2, u_2^2)$ for $x = (u_1, u_2) \in \R^2$ with $x_* = (0, 0)$. Then we can show that
\begin{eqnarray*}
    \|F(x) - F(y)\| & \leq & 2 \left\|F \left(\frac{x+y}{2} \right) \right\|^{\nicefrac{1}{2}} \|x - y\| \\
    & \leq & 2 \left\| \max_{\theta \in [0, 1]} F \left(\theta x + (1 - \theta) y \right) \right\|^{\nicefrac{1}{2}} \|x - y\|.
\end{eqnarray*}
This establishes that $F$ is $\nicefrac{1}{2}$-symmetric $(0, 2)$-Lipschitz operator. However, this operator $F$ does not satisfy the standard $L$-Lipschitz assumption for any finite choice of $L$. We add the related details to Appendix \ref{appendix:examples}. Therefore, this example highlights the need for relaxed assumptions on operators beyond standard $L$-Lipschitz~\eqref{eq:L-Lipschitz}.

We provide additional examples in Appendix~\ref{appendix:examples} to illustrate cases where the operator associated with a bilinearly coupled min-max optimization problem or an $N$-player game satisfies the $\alpha$-symmetric $(L_0, L_1)$-Lipschitz condition.

\section{Convergence Analysis}\label{sec:convergence_analysis}
In this section, we present the convergence guarantees of \algname{EG} for solving monotone, strongly monotone, and weakly Minty operators. For strongly monotone operators, we have linear convergence, while for monotone and weak Minty operators, we provide sublinear convergence guarantees. To prove these results, we rely on the similar expression presented in~\cite{chen2023generalized} for the $(L_0, L_1)$-smooth minimization problem. For completeness, we include the proof for $\alpha$-symmetric $(L_0, L_1)$-Lipschitz operators in the Appendix. 

\begin{proposition} \label{prop:equiv_formulation}
    Suppose $F$ is $\alpha$-symmetric $(L_0, L_1)$-Lipschitz operator. Then, for $\alpha = 1$
    \begin{equation}\label{eq:alpha=1}
        \| F(x) - F(y) \| \leq (L_0 + L_1 \| F(x)\|) \exp{(L_1 \|x - y\|)} \| x- y \|,
    \end{equation}
    and for $\alpha \in (0, 1)$ we have
    \begin{equation}\label{eq:alpha(0,1)}
        \| F(x) - F(y) \| \leq \left(K_0 + K_1 \|F(x)\|^{\alpha} + K_2 \|x - y\|^{\nicefrac{\alpha}{1 - \alpha}} \right) \|x - y\|
    \end{equation}
    where $K_0 = L_0 (2^{\nicefrac{\alpha^2}{1 - \alpha}} + 1)$, $K_1 = L_1 \cdot 2^{\nicefrac{\alpha^2}{1 - \alpha}}$ and $K_2 = L_1^{\nicefrac{1}{1 - \alpha}} \cdot 2^{\nicefrac{\alpha^2}{1 - \alpha}} \cdot 3^{\alpha} (1 - \alpha)^{\nicefrac{\alpha}{1 - \alpha}}$.
\end{proposition}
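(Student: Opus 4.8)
The plan is to fix $x, y \in \R^d$ and reduce the global condition \eqref{eq:(L0,L1)-Lipschitz} to a one-dimensional differential inequality along the segment joining them. I introduce $z_t = x + t(y - x)$ for $t \in [0,1]$ and the scalar function $g(t) = \|F(z_t)\|$, so that $g(0) = \|F(x)\|$ and the quantity $\max_{\theta \in [0,1]} \|F(\theta x + (1-\theta)y)\|$ equals $M := \max_{t \in [0,1]} g(t)$. Since $F$ is continuous, $M$ is finite; applying \eqref{eq:(L0,L1)-Lipschitz} to the pair $(z_s, z_t)$ together with the reverse triangle inequality gives $|g(s) - g(t)| \le (L_0 + L_1 M^\alpha)\|x-y\|\,|s-t|$, so $g$ is Lipschitz on $[0,1]$, hence differentiable almost everywhere, and letting the increment tend to zero yields
\[
g'(t) \le \bigl(L_0 + L_1 g(t)^\alpha\bigr)\,\|x - y\| \quad \text{for a.e. } t \in [0,1].
\]

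For $\alpha = 1$ this is a linear differential inequality, and I would apply Gr\"onwall's inequality. Multiplying by the integrating factor $e^{-L_1\|x-y\|t}$ and integrating from $0$ to $t$ yields $g(t) \le e^{L_1\|x-y\|t}\|F(x)\| + \tfrac{L_0}{L_1}\bigl(e^{L_1\|x-y\|t}-1\bigr)$. Since the right-hand side is increasing in $t$, maximizing over $[0,1]$ bounds $M$ by $e^{L_1\|x-y\|}\bigl(\|F(x)\| + \tfrac{L_0}{L_1}\bigr) - \tfrac{L_0}{L_1}$; substituting this back into the factor $L_0 + L_1 M^\alpha$ of \eqref{eq:(L0,L1)-Lipschitz} collapses, after cancellation of the $L_0/L_1$ terms, to exactly $e^{L_1\|x-y\|}(L_0 + L_1\|F(x)\|)\|x-y\|$, which is the claimed estimate \eqref{eq:alpha=1}.

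For $\alpha \in (0,1)$ the inequality is nonlinear, and this is the main obstacle. A first, crude route integrates the inequality using $g(u) \le M$ pointwise to obtain the self-referential bound $M \le \|F(x)\| + (L_0 + L_1 M^\alpha)\|x-y\|$, and then invokes the elementary lemma that any $M \ge 0$ satisfying $M \le a + bM^\alpha$ (with $a,b \ge 0$, $\alpha\in(0,1)$) admits an explicit bound, proved by splitting into the cases $bM^\alpha \le a$ and $bM^\alpha > a$. This produces $M \lesssim (\|F(x)\| + L_0\|x-y\|) + (L_1\|x-y\|)^{1/(1-\alpha)}$; taking the $\alpha$-th power and repeatedly using subadditivity $(p+q)^\alpha \le p^\alpha + q^\alpha$ separates the contributions into a constant, a $\|F(x)\|^\alpha$, and a $\|x-y\|^{\alpha/(1-\alpha)}$ term, which feeds into \eqref{eq:(L0,L1)-Lipschitz} to give the structural form of \eqref{eq:alpha(0,1)}.

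The delicate part is the constant bookkeeping: to land on precisely $K_0 = L_0(2^{\alpha^2/(1-\alpha)}+1)$, $K_1 = L_1 2^{\alpha^2/(1-\alpha)}$, and $K_2 = L_1^{1/(1-\alpha)} 2^{\alpha^2/(1-\alpha)} 3^{\alpha}(1-\alpha)^{\alpha/(1-\alpha)}$, the crude pointwise bound $g(u)\le M$ is too lossy and should be replaced by a careful treatment of the comparison ODE $G' = (L_0 + L_1 G^\alpha)\|x-y\|$, $G(0)=\|F(x)\|$, whose integration is where the factors $2^{\alpha^2/(1-\alpha)}$, $3^\alpha$, and $(1-\alpha)^{\alpha/(1-\alpha)}$ arise. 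Because the analogous estimate for $(L_0,L_1)$-smooth minimization is established in \cite{chen2023generalized}, I would mirror that argument, checking at each step that the symmetric operator condition \eqref{eq:(L0,L1)-Lipschitz} supplies the same segment-wise structure that the Hessian--gradient bound provides there, and thereby transfer the constants verbatim to the operator setting.
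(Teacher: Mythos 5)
Your argument is correct and follows essentially the same route as the paper: a Gr\"onwall estimate along the segment joining $x$ and $y$ for $\alpha=1$ (the paper runs Gr\"onwall on the cumulative integral $H(\theta)=L_0\theta+L_1\int_0^\theta\|F(x_u)\|\,du$ via the integral reformulation of the assumption, whereas you run it directly on $g(t)=\|F(z_t)\|$ using the max-form; after the analogous cancellation both land on the same factor $(L_0+L_1\|F(\cdot)\|)e^{L_1\|x-y\|}$, with the anchor point $x$ versus $y$ being immaterial by symmetry). For $\alpha\in(0,1)$ the paper likewise gives no self-contained derivation of the constants $K_0,K_1,K_2$ and defers to \cite{chen2023generalized}, exactly as you propose.
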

Proposition \ref{prop:equiv_formulation} eliminates the maximum over $\theta \in [0, 1]$ in \eqref{eq:(L0,L1)-Lipschitz} and provides a simpler upper bound on the $\|F(x) - F(y)\|$. We divide the rest of the section into three subsections based on the structure of operators. Moreover, each of these subsections is divided into two parts depending on the value of $\alpha$, i.e. $\alpha = 1$ and $\alpha \in (0, 1)$.

\subsection{Convergence Guarantees for Strongly Monotone Operators}
In case of strongly monotone operators~\eqref{eq:strong_monotone}, we achieve linear convergence rates, analogous to those obtained under standard $L$-Lipschitz assumptions~\cite{tseng1995linear, mokhtari2020unified}. For $\alpha = 1$, operator $F$ satisfies the condition \eqref{eq:alpha=1}. To guarantee convergence for this class of operators, we use the \algname{EG} with step size $\gamma_k = \omega_k = \nicefrac{\nu}{\left( L_0 + L_1 \|F(x_k)\| \right)}$ and $\nu > 0$. \cite{gorbunov2024methods} used similar step sizes for the Gradient Descent algorithm to solve convex minimization problems. 
\begin{theorem}\label{theorem:1symm_strongmonotone}
    Suppose $F$ is $\mu$-strongly monotone and $1$-symmetric $(L_0, L_1)$-Lipschitz operator. Then \algname{EG} with step size $\gamma_k = \omega_k = \frac{\nu}{L_0 + L_1 \|F(x_k)\|}$ satisfy
    \begin{eqnarray*}
        \|x_{k+1} - x_*\|^2 & \leq & \left( 1 - \frac{\nu \mu}{L_0 \left( 1 + L_1 \exp{\left( L_1 \|x_0 - x_*\|\right)} \|x_0 - x_* \|\right)} \right)^{k+1} \|x_0 - x_*\|^2
    \end{eqnarray*}
    where $\nu > 0$ satisfy $ 1 - 2 \nu - \nu^2 \exp{2\nu} = 0$.
\end{theorem}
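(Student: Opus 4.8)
The plan is to run the standard one-step energy argument for \algname{EG} but to replace every appeal to a global Lipschitz constant by Proposition~\ref{prop:equiv_formulation} (the $\alpha=1$ bound \eqref{eq:alpha=1}), and to carry a running invariant $\norm{x_k - x_*} \le \norm{x_0 - x_*}$ that lets me convert the state-dependent step size $\gamma_k$ into a uniform lower bound. First I would expand $\sqnorm{x_{k+1} - x_*}$ using $x_{k+1} = x_k - \omega_k F(\hx_k)$, decompose $x_k - x_* = (x_k - \hx_k) + (\hx_k - x_*)$, substitute the extrapolation identity $x_k - \hx_k = \gamma_k F(x_k)$, and invoke $\mu$-strong monotonicity at $\hx_k$ together with $F(x_*) = 0$ to bound $\inprod{F(\hx_k)}{\hx_k - x_*} \ge \mu \sqnorm{\hx_k - x_*}$. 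Setting $\omega_k = \gamma_k$ and using the identity $\sqnorm{F(\hx_k)} - 2\inprod{F(\hx_k)}{F(x_k)} = \sqnorm{F(\hx_k) - F(x_k)} - \sqnorm{F(x_k)}$, this yields the master inequality
$$\sqnorm{x_{k+1} - x_*} \le \sqnorm{x_k - x_*} + \gamma_k^2 \sqnorm{F(\hx_k) - F(x_k)} - \gamma_k^2 \sqnorm{F(x_k)} - 2\gamma_k \mu \sqnorm{\hx_k - x_*}.$$

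The second step is to control the deviation term. Applying \eqref{eq:alpha=1} to the pair $(x_k, \hx_k)$ and using $\norm{x_k - \hx_k} = \gamma_k \norm{F(x_k)}$ together with the defining property $\gamma_k (L_0 + L_1\norm{F(x_k)}) = \nu$, I get $\norm{F(\hx_k) - F(x_k)} \le \nu \exp(L_1 \gamma_k \norm{F(x_k)}) \norm{F(x_k)} \le \nu \exp(\nu) \norm{F(x_k)}$, where the last bound uses $L_1 \gamma_k \norm{F(x_k)} \le \nu$ (the fraction $L_1\norm{F(x_k)}/(L_0 + L_1\norm{F(x_k)}) \le 1$). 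Squaring and substituting, the $\sqnorm{F(x_k)}$ terms combine into $-\gamma_k^2(1 - \nu^2 \exp(2\nu))\sqnorm{F(x_k)}$, and the defining equation $1 - 2\nu - \nu^2\exp(2\nu) = 0$ collapses the coefficient to exactly $2\nu$. Next I would use $\sqnorm{x_k - x_*} \le 2\sqnorm{\hx_k - x_*} + 2\gamma_k^2\sqnorm{F(x_k)}$ to show the two remaining negative terms dominate $\gamma_k\mu\sqnorm{x_k - x_*}$; this reduction needs $\gamma_k \mu \le \nu$, which follows from $\gamma_k \le \nu/L_0$ and the inequality $\mu \le L_0$ (obtained by combining strong monotonicity, Cauchy--Schwarz, and \eqref{eq:alpha=1} evaluated at $x_*$, then letting $x \to x_*$). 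The outcome is the clean per-step contraction $\sqnorm{x_{k+1} - x_*} \le (1 - \gamma_k\mu)\sqnorm{x_k - x_*}$.

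Finally I would convert the adaptive factor $\gamma_k$ into the uniform one in the statement. Using \eqref{eq:alpha=1} at $(x_*, x_k)$ with $F(x_*) = 0$ gives $\norm{F(x_k)} \le L_0 \exp(L_1 \norm{x_k - x_*})\norm{x_k - x_*}$; under the invariant $\norm{x_k - x_*}\le \norm{x_0 - x_*}$ and monotonicity of $r \mapsto L_0 e^{L_1 r} r$, this yields $L_0 + L_1\norm{F(x_k)} \le L_0(1 + L_1 \exp(L_1\norm{x_0 - x_*})\norm{x_0 - x_*})$ and hence $\gamma_k \ge \gamma_{\min} := \nu / \big(L_0(1 + L_1\exp(L_1\norm{x_0 - x_*})\norm{x_0 - x_*})\big)$. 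Since $\gamma_k\mu \le \nu < 1$, the contraction factor lies in $[0,1)$ and $\sqnorm{x_{k+1} - x_*} \le \sqnorm{x_k - x_*}$ preserves the invariant, so the entire argument closes by induction on $k$; iterating $\sqnorm{x_{k+1}-x_*} \le (1-\gamma_{\min}\mu)\sqnorm{x_k - x_*}$ gives the claimed rate. The main obstacle is precisely this circularity: the step-size lower bound requires the iterates to stay within the initial ball, while that boundedness is itself a consequence of the contraction it is used to establish. I expect the bookkeeping of this induction — verifying the invariant is self-sustaining and that $\nu<1$ so no step is ever too large — to be the most delicate part, whereas the exponential factors are tamed cheaply because the step size is calibrated to keep $L_1\gamma_k\norm{F(x_k)} \le \nu$.
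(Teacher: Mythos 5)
Your proposal is correct and follows essentially the same route as the paper's proof: the same one-step energy expansion with strong monotonicity applied at $\hx_k$, the same use of the $\alpha=1$ bound \eqref{eq:alpha=1} calibrated so that $\gamma_k\left(L_0+L_1\|F(x_k)\|\right)=\nu$ tames the exponential, and the same conversion of the adaptive step size into a uniform lower bound via $\|F(x_k)\|\le L_0 \exp\left(L_1\|x_0-x_*\|\right)\|x_0-x_*\|$ under the invariant $\|x_k-x_*\|\le\|x_0-x_*\|$. The circularity you flag as the main obstacle does not actually arise: the per-step contraction $\|x_{k+1}-x_*\|^2\le(1-\gamma_k\mu)\|x_k-x_*\|^2$ holds without any lower bound on $\gamma_k$ (it only needs the step-size formula and $\mu\le L_0$), so it delivers the boundedness invariant for free, and the lower bound on $\gamma_k$ is invoked only afterwards to uniformize the rate — exactly as in the paper.
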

The equation $1 - 2 \nu - \nu^2 \exp{(2\nu)} = 0$ admits a positive solution, approximately $\nu \approx 0.363$. 

Specifically, to ensure $\|x_K - x_*\|^2 \leq \varepsilon$, we require 
$$K = \mathcal{O} \left( \left(\frac{L_0}{\mu} + \frac{L_0 L_1 \|x_0 - x_*\| \exp{ \left(L_1\|x_0 - x_*\| \right)}}{\mu} \right) \log \frac{1}{\varepsilon} \right)$$
iterations. When $L_1 = 0$, we recover the best-known results for the strongly monotone $L$-Lipschitz setting~\cite{tseng1995linear, mokhtari2020unified}. \cite{pmlr-v235-vankov24a} also studied constrained strongly monotone problems and obtained similar guarantees with an alternative step size scheme.

However, using a refined proof technique, we can eliminate the $\exp(L_1\|x_0 - x_*\|)$ term from the convergence rate and establish a tighter bound.

One of the intermediate steps of Theorem \ref{theorem:1symm_strongmonotone} is proving a lower bound on the step size $\gamma_k$, which can be very loose for large $k$.

We now show that after a certain number of iterations $K'$~\eqref{eq:1symm_strongmonoton_noexp_eq2}, the operator norm satisfies $\|F(x_k)\| \leq \nicefrac{L_0}{L_1}$ for all $k \geq K'$, which implies $\gamma_k = \omega_k \geq \nicefrac{\nu}{2L_0}$ for all $k \geq K'$.

\begin{corollary}\label{corollary:1symm_strongmonotone}
    Suppose $F$ is a $\mu$-strongly monotone and $1$-symmetric $(L_0, L_1)$-Lipschitz operator. Then, \algname{EG} with step sizes $\gamma_k = \omega_k = \frac{\nu}{L_0 + L_1 \|F(x_k)\|}$ guarantees $\|x_{K+1} - x_*\|^2 \leq \varepsilon$ after at most
    \begin{equation}\label{corollary:1symm_strongmonotone_eq1}
        K = \underbrace{\frac{2 L_0}{\nu \mu} \log \left( \frac{\|x_0 - x_*\|^2}{\varepsilon}\right)}_{\text{Term I}} + \underbrace{\frac{1}{\zeta \mu} \log \left( \frac{2L_1 \|x_0 - x_*\|^2}{\zeta^2 L_0} \right)}_{\text{Term II}} 
    \end{equation}
    iterations, where we have $$\zeta \coloneqq \frac{\nu}{L_0 \left( 1 + L_1 \exp\left( L_1 \|x_0 - x_*\|\right) \|x_0 - x_* \| \right)},$$
    and $\nu > 0$ satisfies $1 - 4 \nu - 2\nu^2 \exp(2\nu) = 0$. 
\end{corollary}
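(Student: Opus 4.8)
The plan is to upgrade the crude linear rate of Theorem~\ref{theorem:1symm_strongmonotone}, whose contraction factor $1-\zeta\mu$ carries the undesirable $\exp(L_1\|x_0-x_*\|)$ penalty through $\zeta$, into the two-term bound \eqref{corollary:1symm_strongmonotone_eq1} by running \algname{EG} in two phases. In the \emph{warm-up phase} I would rely only on the pessimistic factor $1-\zeta\mu$; in the \emph{fast phase}, which begins once the iterates enter the region where $\|F(x_k)\|\le\nicefrac{L_0}{L_1}$, the adaptive step size satisfies $\gamma_k=\omega_k=\nicefrac{\nu}{(L_0+L_1\|F(x_k)\|)}\ge\nicefrac{\nu}{(2L_0)}$, which is far larger than the uniform lower bound $\zeta$. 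The key accounting observation is that the expensive $\exp(L_1\|x_0-x_*\|)$ dependence is confined to the \emph{length} of the warm-up phase (Term~II), whereas the accuracy factor $\log(\nicefrac{\|x_0-x_*\|^2}{\varepsilon})$ is paid at the cheap rate $\nicefrac{2L_0}{(\nu\mu)}$ (Term~I).

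For the warm-up phase I would start from the one-step contraction $\|x_{k+1}-x_*\|^2\le(1-\zeta\mu)\|x_k-x_*\|^2$ already established in the proof of Theorem~\ref{theorem:1symm_strongmonotone}, giving $\|x_k-x_*\|^2\le(1-\zeta\mu)^k\|x_0-x_*\|^2$. The target is the first index $K'$ with $\|F(x_{K'})\|\le\nicefrac{L_0}{L_1}$. To locate it, I would bound $\|F(x_k)\|$ by a multiple of $\|x_k-x_*\|$ using the equivalent form \eqref{eq:alpha=1} of Proposition~\ref{prop:equiv_formulation} anchored at $y=x_*$ (where $F(x_*)=0$), thereby converting the operator-norm target into the distance threshold $\|x_k-x_*\|^2\le\nicefrac{\zeta^2L_0}{(2L_1)}$ recorded in \eqref{eq:1symm_strongmonoton_noexp_eq2}. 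Inserting this threshold into the geometric decay and using $1-\zeta\mu\le e^{-\zeta\mu}$ then yields $K'\le\frac{1}{\zeta\mu}\log\bigl(\nicefrac{2L_1\|x_0-x_*\|^2}{(\zeta^2L_0)}\bigr)$, which is precisely Term~II.

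For the fast phase I would first show that the good region is forward invariant: since $\|x_k-x_*\|$ is non-increasing along the trajectory, the distance threshold, and hence $\|F(x_k)\|\le\nicefrac{L_0}{L_1}$, persists for every $k\ge K'$. Thus $L_0+L_1\|F(x_k)\|\le 2L_0$ and $\gamma_k\ge\nicefrac{\nu}{(2L_0)}$ for all such $k$. Feeding this improved lower bound into the same strongly monotone one-step estimate gives the accelerated contraction $\|x_{k+1}-x_*\|^2\le(1-\nicefrac{\nu\mu}{(2L_0)})\|x_k-x_*\|^2$; iterating from $\|x_{K'}-x_*\|^2\le\|x_0-x_*\|^2$ and taking logarithms bounds the number of additional steps to reach accuracy $\varepsilon$ by $\frac{2L_0}{\nu\mu}\log(\nicefrac{\|x_0-x_*\|^2}{\varepsilon})$, i.e.\ Term~I. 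Summing the two phase lengths gives \eqref{corollary:1symm_strongmonotone_eq1}.

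The main obstacle I anticipate is ensuring that a \emph{single} choice of $\nu$ makes the one-step contraction valid in both regimes, since the fast phase effectively doubles the admissible Lipschitz constant (from $L_0+L_1\|F\|$ down to the bound $2L_0$ used in $\gamma_k\ge\nicefrac{\nu}{(2L_0)}$). This is exactly why the corollary tightens the defining relation for $\nu$ from $1-2\nu-\nu^2\exp(2\nu)=0$ to $1-4\nu-2\nu^2\exp(2\nu)=0$: the additional factor two absorbs this worst-case doubling and keeps the accelerated contraction coefficient $1-\nicefrac{\nu\mu}{(2L_0)}$ inside $(0,1)$. A secondary technical point is inverting the implicit bound \eqref{eq:alpha=1}, in which $\|F(x_k)\|$ appears on both sides; I would resolve it by restricting to the regime $L_1\exp(L_1\|x_k-x_*\|)\|x_k-x_*\|<1$, which is guaranteed once the distance is below the threshold, so that the inequality can be solved explicitly for $\|F(x_k)\|$.
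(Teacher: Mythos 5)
Your two-phase skeleton --- a warm-up governed by the pessimistic contraction $1-\zeta\mu$, followed by a fast phase triggered once $\|F(x_k)\|\le L_0/L_1$, with forward invariance supplied by the monotone decrease of $\|x_k-x_*\|$, and Term I obtained from the improved lower bound $\gamma_k\ge \nu/(2L_0)$ --- is exactly the paper's strategy. The divergence, and the gap, is in how the switching time $K'$ is located. The paper does \emph{not} convert the operator-norm target into a distance threshold via the Lipschitz bound at $x_*$. Instead, the tightened equation $1-4\nu-2\nu^2\exp(2\nu)=0$ is chosen precisely so that the bracket multiplying $\|F(x_k)\|^2$ in the one-step estimate is at least $\tfrac12$, which gives the descent inequality \eqref{eq:1symm_strongmonoton_noexp_eq1} with the retained term $-\tfrac{\gamma_k^2}{2}\|F(x_k)\|^2$; chaining this with the geometric decay yields $\|F(x_k)\|^2\le \tfrac{2}{\zeta^2}(1-\zeta\mu)^{k+1}\|x_0-x_*\|^2$ as in \eqref{eq:1symm_strongmonoton_noexp_eq2}, and forcing the right-hand side below $L_0/L_1$ produces exactly the Term II of \eqref{corollary:1symm_strongmonotone_eq1}. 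Your route --- bounding $\|F(x_k)\|\le L_0 e^{L_1\|x_0-x_*\|}\|x_k-x_*\|$ and thresholding the distance --- is workable, but it gives the condition $\|x_k-x_*\|\le \bigl(L_1 e^{L_1\|x_0-x_*\|}\bigr)^{-1}$ and hence a Term II of the form $\tfrac{1}{\zeta\mu}\log\bigl(L_1^2 e^{2L_1\|x_0-x_*\|}\|x_0-x_*\|^2\bigr)$, which is of the same order but is not the quantity $\tfrac{1}{\zeta\mu}\log\bigl(2L_1\|x_0-x_*\|^2/(\zeta^2 L_0)\bigr)$ appearing in the statement. As written you therefore prove a corollary of the same shape, not the stated formula.

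Two smaller points. First, your explanation of the tightened $\nu$ is not correct: the fast phase still runs the same adaptive step $\gamma_k=\nu/(L_0+L_1\|F(x_k)\|)$, and the one-step contraction of Theorem~\ref{theorem:1symm_strongmonotone} is verified for that actual step size, so there is no ``doubling'' to absorb and the coefficient $1-\nu\mu/(2L_0)$ lies in $(0,1)$ automatically; the factor-of-two tightening exists solely to keep the $\tfrac12\gamma_k^2\|F(x_k)\|^2$ term alive, which your derivation never uses. Second, the implicit-bound difficulty you flag dissolves if you evaluate \eqref{eq:alpha=1} anchored at $x=x_*$ rather than $x=x_k$, so that the $L_1\|F(x)\|$ factor vanishes because $F(x_*)=0$; this is exactly how the paper obtains \eqref{eq:1symm_strongmonotone_eq2}, with no regime restriction needed.
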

This result shows that to reach an accuracy of $\varepsilon > 0$, we need \eqref{corollary:1symm_strongmonotone_eq1} iterations. Importantly, Term II in \eqref{corollary:1symm_strongmonotone_eq1} is independent of $\varepsilon$, and the Term I of \eqref{corollary:1symm_strongmonotone_eq1} no longer depends on $\exp(L_1\|x_0 - x_*\|)$. Technically, Term II corresponds to the number of iterations required for the step sizes $\gamma_k$ and $\omega_k$ to exceed $\nicefrac{\nu}{2L_0}$, while Term I captures the iteration complexity of \algname{EG} with a fixed step size $\nicefrac{\nu}{2L_0}$.

Now, we investigate the behavior of $\alpha$-symmetric $(L_0, L_1)$-Lipschitz operators for $0 < \alpha < 1$. In this regime, we adopt a step size of the order $\mathcal{O}\left( \|F(x_k)\|^{-\alpha} \right)$ and prove the following result.

\begin{theorem}\label{theorem:1symm_strongmonotone_alhpha01}
    Suppose $F$ is $\mu$-strongly monotone and $\alpha$-symmetric $(L_0, L_1)$-Lipschitz operator with $\alpha \in (0, 1)$. Then \algname{EG} with $\gamma_k = \omega_k = \frac{\nu}{2K_0 + \left( 2 K_1 + 2^{1 - \alpha} K_2^{1 - \alpha} \right) \| F(x_k)\|^{\alpha}}$ satisfy
    \begin{eqnarray*}
        \|x_{k+1} - x_*\|^2 \leq \left( 1 - \varrho \right)^{k+1} \|x_0 - x_*\|^2
    \end{eqnarray*}
    where $$\varrho = \frac{\nu \mu}{2 K_0 + (2 K_1 + 2^{1 - \alpha} K_2^{1 - \alpha})(K_0 + K_2 \|x_0 - x_*\|^{\nicefrac{\alpha}{1 - \alpha}})^{\alpha} \|x_0 - x_*\|^{\alpha}}$$ and $\nu \in (0, 1)$ is a constant such that $1 - \nu - \nu^2 = 0$.
\end{theorem}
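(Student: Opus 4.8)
The plan is to mirror the one-step contraction argument of Theorem~\ref{theorem:1symm_strongmonotone} (the $\alpha=1$ case), but replacing the exponential bound~\eqref{eq:alpha=1} with its $\alpha\in(0,1)$ counterpart~\eqref{eq:alpha(0,1)} from Proposition~\ref{prop:equiv_formulation}. First I would expand the squared distance along the \algname{EG} update. Writing $x_k-x_*=(x_k-\hx_k)+(\hx_k-x_*)$ and using $x_k-\hx_k=\gamma_k F(x_k)$ gives
\begin{equation*}
    \|x_{k+1}-x_*\|^2 = \|x_k-x_*\|^2 - 2\omega_k\gamma_k\inprod{F(\hx_k)}{F(x_k)} - 2\omega_k\inprod{F(\hx_k)}{\hx_k-x_*} + \omega_k^2\|F(\hx_k)\|^2.
\end{equation*}
Strong monotonicity~\eqref{eq:strong_monotone} with $F(x_*)=0$ yields $\inprod{F(\hx_k)}{\hx_k-x_*}\ge\mu\|\hx_k-x_*\|^2$, and the polarization identity $\inprod{F(\hx_k)}{F(x_k)}=\tfrac12\left(\|F(\hx_k)\|^2+\|F(x_k)\|^2-\|F(x_k)-F(\hx_k)\|^2\right)$ makes the $\|F(\hx_k)\|^2$ terms cancel (since $\gamma_k=\omega_k$), leaving
\begin{equation*}
    \|x_{k+1}-x_*\|^2 \le \|x_k-x_*\|^2 - \omega_k^2\|F(x_k)\|^2 + \omega_k^2\|F(x_k)-F(\hx_k)\|^2 - 2\omega_k\mu\|\hx_k-x_*\|^2.
\end{equation*}

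The crux, and the step I expect to be the main obstacle, is to control $\|F(x_k)-F(\hx_k)\|$ by a fixed fraction of $\|F(x_k)\|$. Applying~\eqref{eq:alpha(0,1)} with $\|x_k-\hx_k\|=\gamma_k\|F(x_k)\|$ gives
\begin{equation*}
    \|F(x_k)-F(\hx_k)\| \le \gamma_k\left(K_0 + K_1\|F(x_k)\|^\alpha + K_2\left(\gamma_k\|F(x_k)\|\right)^{\nicefrac{\alpha}{1-\alpha}}\right)\|F(x_k)\|.
\end{equation*}
Unlike the $\alpha=1$ case, the self-referential term $(\gamma_k\|F(x_k)\|)^{\alpha/(1-\alpha)}$ depends nonlinearly on $\gamma_k$, so I would exploit that the denominator of $\gamma_k$ carries the summand $2^{1-\alpha}K_2^{1-\alpha}\|F(x_k)\|^\alpha$: this forces $\gamma_k\|F(x_k)\|\le \nu\,2^{-(1-\alpha)}\left(\nicefrac{\|F(x_k)\|}{K_2}\right)^{1-\alpha}$, whence (using $\nu<1$) $K_2(\gamma_k\|F(x_k)\|)^{\alpha/(1-\alpha)}\le \tfrac12\,2^{1-\alpha}K_2^{1-\alpha}\|F(x_k)\|^\alpha$. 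Substituting, the parenthesized factor is at most half the denominator of $\gamma_k$, so $\gamma_k(\cdots)\le\nu$ and therefore $\|F(x_k)-F(\hx_k)\|\le\nu\|F(x_k)\|$. This is precisely the design principle behind the step size, and the extra $2^{1-\alpha}K_2^{1-\alpha}$ correction in the denominator is exactly what is needed to tame the fractional power.

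With this bound the intermediate inequality becomes $\|x_{k+1}-x_*\|^2\le\|x_k-x_*\|^2-\omega_k^2(1-\nu^2)\|F(x_k)\|^2-2\omega_k\mu\|\hx_k-x_*\|^2$. I would then convert the $\|\hx_k-x_*\|^2$ term into a contraction on $\|x_k-x_*\|^2$: expanding $\|x_k-x_*\|^2=\|\hx_k-x_*\|^2+2\gamma_k\inprod{F(x_k)}{\hx_k-x_*}+\gamma_k^2\|F(x_k)\|^2$ and applying Young's inequality to the cross term reduces the target $\|x_{k+1}-x_*\|^2\le(1-\omega_k\mu)\|x_k-x_*\|^2$ to the single requirement $2\mu\omega_k\le 1-\nu^2$. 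Since the denominator of $\gamma_k$ is at least $2K_0\ge 2\mu$ (as $\mu\le L_0\le K_0$), we have $2\mu\omega_k\le\nu$, so it suffices that $\nu\le 1-\nu^2$, which holds exactly under the stated condition $1-\nu-\nu^2=0$. This leaves the clean one-step bound $\|x_{k+1}-x_*\|^2\le(1-\omega_k\mu)\|x_k-x_*\|^2$.

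Finally, to replace the iteration-dependent factor by the uniform $\varrho$, I would argue by induction that $\|x_k-x_*\|\le\|x_0-x_*\|$ for all $k$, which is immediate from the monotone decrease just established. Proposition~\ref{prop:equiv_formulation} with $y=x_*$ and $F(x_*)=0$ then gives the uniform estimate $\|F(x_k)\|\le\left(K_0+K_2\|x_0-x_*\|^{\alpha/(1-\alpha)}\right)\|x_0-x_*\|$, hence an upper bound on the denominator of $\gamma_k$ and the matching lower bound $\omega_k\mu\ge\varrho$ with $\varrho$ as stated. Unrolling $\|x_{k+1}-x_*\|^2\le(1-\varrho)\|x_k-x_*\|^2$ over $k$ yields the claimed $(1-\varrho)^{k+1}$ rate. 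The only genuinely new difficulty relative to the $\alpha=1$ proof is the second paragraph above—bounding the nonlinear $(\gamma_k\|F(x_k)\|)^{\alpha/(1-\alpha)}$ term—while the remaining steps are structurally identical to the Lipschitz analysis.
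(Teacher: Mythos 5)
Your proposal is correct and follows essentially the same route as the paper's proof: the same expansion of the \algname{EG} update with the polarization identity, the same use of Proposition~\ref{prop:equiv_formulation} together with the observation that the $2^{1-\alpha}K_2^{1-\alpha}\|F(x_k)\|^{\alpha}$ summand in the step-size denominator (plus $\nu<1$) tames the fractional term, the same reduction to $1-\nu-\nu^2\ge 0$ via $\mu\le K_0$, and the same uniform lower bound on $\gamma_k$ from the monotone decrease of $\|x_k-x_*\|$. The only cosmetic difference is that you convert $\|\hx_k-x_*\|^2$ into $\|x_k-x_*\|^2$ at the end rather than immediately after applying strong monotonicity, which changes nothing.
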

This result establishes linear convergence. In particular, to ensure $\|x_{K} - x_*\|^2 \leq \varepsilon$, it suffices to run $$K = \mathcal{O} \left(\left( \frac{K_0}{\mu} + \frac{(K_1 K_2^{\alpha} + K_2)\|x_0 - x_*\|^{\nicefrac{\alpha}{1- \alpha}}}{\mu}\right) \log \frac{1}{\varepsilon} \right)$$ iterations. Compared to the $L$-Lipschitz setting, the bound here includes an additional dependence on $\|x_0 - x_*\|^{\nicefrac{\alpha}{1 - \alpha}}$, which grows larger as $\alpha \to 1$.

\subsection{Convergence Guarantees for Monotone Operators} 
In this subsection, we focus on the monotone operators~\eqref{eq:monotone}. Here we provide the first analysis for the monotone $1$-symmetric $(L_0, L_1)$-Lipschitz operators.

\begin{theorem}\label{theorem:1symm_monotone}
    Suppose $F$ is monotone and $1$-symmetric $(L_0, L_1)$-Lipschitz operator. Then \algname{EG} with step size $\gamma_k = \omega_k = \frac{\nu}{L_0 + L_1 \|F(x_k)\|}$ satisfy 
    \begin{equation}\label{eq:1symm_monotone}
        \min_{0 \leq k \leq K} \|F(x_k)\|^2 \leq \frac{2L_0^2 \left( 1 + L_1 \exp{\left( L_1 \|x_0 - x_*\|\right) \|x_0 - x_*\|}\right)^2 \|x_0 - x_*\|^2}{\nu^2(K+1)}.
    \end{equation}
    where $\nu \exp{\nu} = \nicefrac{1}{\sqrt{2}}$.
\end{theorem}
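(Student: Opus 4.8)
The plan is to adapt the sublinear-rate argument for $L$-Lipschitz \algname{EG} reproduced earlier in the excerpt, replacing the single constant $L$ by the $\alpha=1$ consequence of the $\alpha$-symmetric $(L_0,L_1)$-Lipschitz assumption, namely \eqref{eq:alpha=1}, and then to deal with the adaptivity of the step size by pinning down a uniform lower bound on $\gamma_k$. First I would expand $\|x_{k+1}-x_*\|^2$ using $x_{k+1}=x_k-\omega_k F(\hx_k)$, split $x_k-x_*=(x_k-\hx_k)+(\hx_k-x_*)$, and discard the term $-2\omega_k\la F(\hx_k),\hx_k-x_*\ra$ by monotonicity together with $F(x_*)=0$ (so $\la F(\hx_k),\hx_k-x_*\ra=\la F(\hx_k)-F(x_*),\hx_k-x_*\ra\ge 0$). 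Substituting $x_k-\hx_k=\gamma_k F(x_k)$, the polarization identity $\la F(\hx_k),F(x_k)\ra=\tfrac12(\|F(\hx_k)\|^2+\|F(x_k)\|^2-\|F(x_k)-F(\hx_k)\|^2)$, and $\gamma_k=\omega_k$, the $\|F(\hx_k)\|^2$ terms cancel, leaving
$$\|x_{k+1}-x_*\|^2 \le \|x_k-x_*\|^2 - \gamma_k^2\|F(x_k)\|^2 + \gamma_k^2\|F(x_k)-F(\hx_k)\|^2.$$

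The crucial step is controlling $\|F(x_k)-F(\hx_k)\|$. Applying \eqref{eq:alpha=1} with $x=x_k$, $y=\hx_k$ and using $\|x_k-\hx_k\|=\gamma_k\|F(x_k)\|$, two exact cancellations occur by the very choice of step size: $(L_0+L_1\|F(x_k)\|)\gamma_k=\nu$, and $L_1\|x_k-\hx_k\|=\nu\cdot\tfrac{L_1\|F(x_k)\|}{L_0+L_1\|F(x_k)\|}\le\nu$. Together these give $\|F(x_k)-F(\hx_k)\|\le \nu e^{\nu}\|F(x_k)\|$. With the calibration $\nu e^{\nu}=\tfrac{1}{\sqrt2}$, i.e. $\nu^2 e^{2\nu}=\tfrac12$, the recursion collapses to the one-step descent $\|x_{k+1}-x_*\|^2\le\|x_k-x_*\|^2-\tfrac12\gamma_k^2\|F(x_k)\|^2$. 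In particular $\|x_k-x_*\|$ is non-increasing, so $\|x_k-x_*\|\le\|x_0-x_*\|$ for all $k$.

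The remaining obstacle, and the genuine point of departure from the $L$-Lipschitz proof, is that $\gamma_k^2\|F(x_k)\|^2$ cannot be telescoped directly into $\min_k\|F(x_k)\|^2$, since $\gamma_k$ itself depends on $\|F(x_k)\|$. I therefore need a uniform lower bound on $\gamma_k$, equivalently a uniform upper bound on $\|F(x_k)\|$. The clean way to get it is to apply \eqref{eq:alpha=1} with the base point $x=x_*$ rather than $x_k$: because $F(x_*)=0$, the entire $L_1\|F(x)\|$ term disappears, yielding $\|F(x_k)\|=\|F(x_*)-F(x_k)\|\le L_0 e^{L_1\|x_k-x_*\|}\|x_k-x_*\|\le L_0 e^{L_1\|x_0-x_*\|}\|x_0-x_*\|$, using the monotone decrease of the distance. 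Consequently $\gamma_k\ge \nu/\bigl(L_0(1+L_1 e^{L_1\|x_0-x_*\|}\|x_0-x_*\|)\bigr)$ for every $k$.

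Finally I would insert this lower bound into the descent inequality, sum over $k=0,\dots,K$, telescope the right-hand side to $\|x_0-x_*\|^2$, and apply $\min_{0\le k\le K}\|F(x_k)\|^2\le\tfrac{1}{K+1}\sum_{k=0}^K\|F(x_k)\|^2$. This produces exactly the stated bound
$$\min_{0\le k\le K}\|F(x_k)\|^2 \le \frac{2L_0^2\bigl(1+L_1 e^{L_1\|x_0-x_*\|}\|x_0-x_*\|\bigr)^2\|x_0-x_*\|^2}{\nu^2(K+1)}.$$
I expect the main subtlety to be the uniform-upper-bound step, where choosing the anchor $x_*$ to kill the $L_1\|F(x)\|$ contribution is what keeps the constant clean; everything else is a faithful transcription of the standard \algname{EG} descent argument with $L$ replaced by the path-dependent factor $L_0+L_1\|F(x_k)\|$.
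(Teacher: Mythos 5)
Your proposal is correct and follows essentially the same route as the paper's proof: the same \algname{EG} expansion with monotonicity and the polarization identity, the same use of \eqref{eq:alpha=1} with the step-size choice to force $\|F(x_k)-F(\hx_k)\|\le \nu e^{\nu}\|F(x_k)\|=\tfrac{1}{\sqrt 2}\|F(x_k)\|$, and the same anchoring of \eqref{eq:alpha=1} at $x_*$ (where $F(x_*)=0$ kills the $L_1\|F\|$ term) to obtain the uniform lower bound on $\gamma_k$ before telescoping. No gaps.
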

Note that the solution of $\nu \exp{\nu} = \nicefrac{1}{\sqrt{2}}$ is approximately $0.45$. Hence, this result proves sublinear convergence of \algname{EG} when $F$ is monotone. Moreover, \eqref{eq:1symm_monotone} implies, \algname{EG} will  need $$K = \mathcal{O} \left( \frac{L_0^2 \|x_0 - x_*\|^2}{\varepsilon} + \frac{L_0^2 L_1^2 \exp{\left(2 L_1 \|x_0 - x_*\| \right) \|x_0 - x_*\|^4}}{\varepsilon} \right)$$ iterations to get $\|F(x_k)\|^2 \leq \varepsilon$ for some $k \leq K$. Therefore the convergence rate exponentially depends on $\|x_0 - x_*\|$ when $L_1 > 0$. This shows that $1$-symmetric $(L_0, L_1)$-Lipschitz operators potentially require more iterations of \algname{EG} compared to $L$-Lipschitz operators when initialization $x_0$ is far from $x_*$. However, \eqref{eq:1symm_monotone} recovers the best known dependence on $\|x_0 - x_*\|$ as a special case when $L_1 = 0$, i.e. $F$ is a standard Lipschitz operator \cite{gorbunov2022extragradient}.

Theorem \ref{theorem:1symm_monotone} shows that the \algname{EG}'s convergence rate has an extra term $\exp{(L_1\|x_0 - x_*\|)}$ compared to the results of the Lipschitz setting. One of the intermediate steps in this proof involves an upper bound on $\sum_{k = 0}^K \gamma_k^2 \|F(x_k)\|^2$ (see \eqref{eq:1symm_monotone_eq3} in Appendix \ref{appendix:convergence_analysis}). Then the simple approach is to get a lower bound on $\gamma_k^2$ for all $k$ and derive \eqref{eq:1symm_monotone}. This lower bound on $\gamma_k^2$ involves the $\exp{(L_1\|x_0 - x_*\|)}$ term (see \eqref{eq:1symm_monotone_eq2} in Appendix \ref{appendix:convergence_analysis}) and can be potentially very small. However, it is possible to eliminate this exponential dependence using a refined proof technique.
\begin{theorem}\label{theorem:1symm_monotone_noexp}
    Suppose $F$ is monotone and $1$-symmetric $(L_0, L_1)$-Lipschitz operator. Then \algname{EG} with step size $\gamma_k = \omega_k = \frac{\nu}{L_0 + L_1 \|F(x_k)\|}$ satisfy 
    \begin{equation*}
        \min_{0 \leq k \leq K} \| F(x_k)\| \leq \frac{\sqrt{2} L_0 \|x_0 - x_*\|}{\nu \sqrt{K+1} - \sqrt{2} L_1 \|x_0 - x_*\|}
    \end{equation*}
    where $\nu \exp{\nu} = \nicefrac{1}{\sqrt{2}}$ and $K+1 \geq \frac{2L_1^2 \|x_0 - x_*\|^2}{\nu^2}$.
\end{theorem}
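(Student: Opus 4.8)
The plan is to reuse the energy inequality already established for Theorem~\ref{theorem:1symm_monotone} and to replace only the final extraction step, where the spurious exponential factor of \eqref{eq:1symm_monotone_eq2} enters. First I would record the one-step descent. Because the step size satisfies $\gamma_k\left(L_0 + L_1\|F(x_k)\|\right) = \nu$, we get $L_1\gamma_k\|F(x_k)\| \le \nu$, so applying \eqref{eq:alpha=1} of Proposition~\ref{prop:equiv_formulation} to $x = x_k$ and $y = \hx_k = x_k - \gamma_k F(x_k)$ (for which $\|x_k - \hx_k\| = \gamma_k\|F(x_k)\|$) gives
\[
\|F(x_k) - F(\hx_k)\| \le \left(L_0 + L_1\|F(x_k)\|\right)\exp\!\left(L_1\gamma_k\|F(x_k)\|\right)\gamma_k\|F(x_k)\| \le \nu e^{\nu}\|F(x_k)\| = \tfrac{1}{\sqrt{2}}\|F(x_k)\|,
\]
using $\nu e^{\nu} = \nicefrac{1}{\sqrt{2}}$. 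Substituting $\|F(x_k)-F(\hx_k)\|^2 \le \tfrac12\|F(x_k)\|^2$ into the \algname{EG} expansion with $\gamma_k = \omega_k$ yields $\|x_{k+1}-x_*\|^2 \le \|x_k - x_*\|^2 - \tfrac12\gamma_k^2\|F(x_k)\|^2$, and telescoping over $k = 0,\dots,K$ produces the intermediate bound $\sum_{k=0}^K \gamma_k^2\|F(x_k)\|^2 \le 2\|x_0 - x_*\|^2$, which is exactly \eqref{eq:1symm_monotone_eq3}.

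The refinement lies entirely in how one reads off a rate from this sum. Instead of lower-bounding each $\gamma_k$ separately — which forces an \emph{a priori} upper bound on $\|F(x_k)\|$ via $F(x_*)=0$ and thereby injects the $\exp(L_1\|x_0-x_*\|)$ factor — I would keep the summand intact. Writing $g_k = \|F(x_k)\|$, note $\gamma_k^2 g_k^2 = \nu^2\big(g_k/(L_0+L_1 g_k)\big)^2$, and that $g \mapsto g/(L_0+L_1 g)$ is increasing on $[0,\infty)$. Hence, with $g_{\min} := \min_{0\le k\le K} g_k$, every summand is at least $\nu^2\big(g_{\min}/(L_0+L_1 g_{\min})\big)^2$, so that
\[
(K+1)\,\frac{\nu^2 g_{\min}^2}{(L_0 + L_1 g_{\min})^2} \le \sum_{k=0}^K \gamma_k^2 g_k^2 \le 2\|x_0 - x_*\|^2.
\]

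Taking square roots and solving the resulting linear inequality for $g_{\min}$ then completes the argument: from $\nu g_{\min}\sqrt{K+1} \le \sqrt{2}\,\|x_0-x_*\|\,(L_0 + L_1 g_{\min})$ one isolates $g_{\min}\big(\nu\sqrt{K+1} - \sqrt{2}L_1\|x_0-x_*\|\big) \le \sqrt{2}L_0\|x_0-x_*\|$, and dividing by the bracketed factor gives the claimed bound. The hypothesis $K+1 \ge 2L_1^2\|x_0-x_*\|^2/\nu^2$ is exactly what keeps that factor nonnegative, and strict inequality there is what makes the division valid and the right-hand side finite. The main obstacle is conceptual rather than computational: one must recognize that the monotonicity of $g\mapsto g/(L_0+L_1 g)$ allows postponing the resolution for the operator norm until \emph{after} summing, so that a crude exponential upper bound on $\|F(x_k)\|$ is never needed. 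Notably, unlike the proof of Theorem~\ref{theorem:1symm_monotone}, this route does not even require tracking $\|x_k-x_*\| \le \|x_0-x_*\|$, since the summed bound already encapsulates all the needed information.
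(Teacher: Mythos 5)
Your proposal is correct and follows essentially the same route as the paper: both reuse the telescoped bound $\sum_{k=0}^K \gamma_k^2\|F(x_k)\|^2 \le 2\|x_0-x_*\|^2$ from Theorem~\ref{theorem:1symm_monotone} and then solve the resulting inequality in $\|F(x_k)\|$ without ever lower-bounding $\gamma_k$ by a constant, which is exactly how the exponential factor is avoided. The only cosmetic difference is that you invoke monotonicity of $g\mapsto g/(L_0+L_1 g)$ to lower-bound every summand by its value at $g_{\min}$, whereas the paper works with the index $k_0$ minimizing the summand itself and then uses $\min_k\|F(x_k)\|\le\|F(x_{k_0})\|$; the two are logically equivalent, and your observation about the strict-inequality requirement for the division is a fair (shared) technicality.
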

Note that to obtain this convergence guarantee, a sufficiently large number of iterations is required, specifically $K+1 \geq \frac{2L_1^2 \|x_0 - x_*\|^2 }{\nu^2}$. \cite{gorbunov2024methods} employed a similar proof technique to eliminate the exponential dependence on the initial distance $\exp(L_1\|x_0 - x_*\|)$ in the context of the Adaptive Gradient method.

Next we state our result for $\alpha$-symmetric $(L_0, L_1)$-Lipschitz monotone operator with $\alpha \in (0, 1).$
\begin{theorem}\label{theorem:alpha01}
    Suppose $F$ is monotone and $\alpha$-symmetric $(L_0, L_1)$-Lipschitz operator with $\alpha \in (0, 1)$. Then \algname{EG} with $\gamma_k = \omega_k = \frac{1}{2 \sqrt{2} K_0  + \left( 2\sqrt{2} K_1 + 2^{\nicefrac{3 (1 - \alpha)}{2}} K_2^{1 - \alpha} \right) \|F(x_k)\|^{\alpha}}$ satisfy 
    \begin{eqnarray*}
        \min_{0 \leq k \leq K} \|F(x_k)\|^2 \leq \frac{16 \left( K_0 + (K_1 + 2^{\nicefrac{-3}{2}} K_2^{1 - \alpha}) (K_0 + K_2 \|x_0 - x_*\|^{\nicefrac{\alpha}{1 - \alpha}})^{\alpha}\|x_0 - x_*\|^{\alpha}\right)^2 \|x_0 - x_*\|^2}{K+1}.
    \end{eqnarray*}
\end{theorem}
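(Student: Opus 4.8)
The plan is to adapt the deterministic monotone \algname{EG} argument already presented in the excerpt (the sublinear convergence theorem under $L$-Lipschitzness) to the $\alpha$-symmetric $(L_0,L_1)$-Lipschitz setting, replacing the single constant $L$ by the norm-dependent bound of Proposition~\ref{prop:equiv_formulation}. First I would expand $\|x_{k+1}-x_*\|^2$ using the update $x_{k+1}=x_k-\omega_k F(\hx_k)$, split $x_k-x_*=(x_k-\hx_k)+(\hx_k-x_*)$, and discard the term $\la F(\hx_k),\hx_k-x_*\ra=\la F(\hx_k)-F(x_*),\hx_k-x_*\ra\ge 0$ by monotonicity (using $F(x_*)=0$). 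With $\gamma_k=\omega_k$ and the identity $\la F(\hx_k),F(x_k)\ra=\tfrac12(\|F(\hx_k)\|^2+\|F(x_k)\|^2-\|F(x_k)-F(\hx_k)\|^2)$, this collapses to the one-step inequality
\[
\|x_{k+1}-x_*\|^2 \le \|x_k-x_*\|^2-\gamma_k^2\|F(x_k)\|^2+\gamma_k^2\|F(x_k)-F(\hx_k)\|^2 .
\]
Everything then hinges on controlling the last term.

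The crux of the argument, and the step I expect to be the main obstacle, is showing that the prescribed step size forces $\|F(x_k)-F(\hx_k)\|\le\tfrac{1}{\sqrt2}\|F(x_k)\|$. Applying Proposition~\ref{prop:equiv_formulation} with $x=x_k$, $y=\hx_k$ and using $\|x_k-\hx_k\|=\gamma_k\|F(x_k)\|$ gives
\[
\|F(x_k)-F(\hx_k)\|\le\Big(K_0+K_1\|F(x_k)\|^\alpha+K_2(\gamma_k\|F(x_k)\|)^{\nicefrac{\alpha}{1-\alpha}}\Big)\gamma_k\|F(x_k)\| .
\]
The difficulty is the third term, which contains $\gamma_k$ \emph{inside} the exponent $\nicefrac{\alpha}{1-\alpha}$, so the step size appears implicitly on both sides. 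I would resolve this by the crude bound $\gamma_k\le \big(2^{\nicefrac{3(1-\alpha)}{2}}K_2^{1-\alpha}\|F(x_k)\|^\alpha\big)^{-1}$, obtained by dropping the positive $K_0,K_1$ contributions from the denominator of $\gamma_k$; this yields $K_2(\gamma_k\|F(x_k)\|)^{\nicefrac{\alpha}{1-\alpha}}\le 2^{-\nicefrac{3\alpha}{2}}K_2^{1-\alpha}\|F(x_k)\|^\alpha$, which is precisely why the coefficient $2^{\nicefrac{3(1-\alpha)}{2}}$ is built into the denominator of $\gamma_k$. Substituting back and comparing term by term against the definition of $\gamma_k$ then gives $\big(K_0+K_1\|F(x_k)\|^\alpha+2^{-\nicefrac{3\alpha}{2}}K_2^{1-\alpha}\|F(x_k)\|^\alpha\big)\gamma_k\le\tfrac{1}{\sqrt2}$, as required.

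With this bound in hand the one-step inequality becomes $\|x_{k+1}-x_*\|^2\le\|x_k-x_*\|^2-\tfrac12\gamma_k^2\|F(x_k)\|^2$. In particular $\|x_k-x_*\|\le\|x_0-x_*\|$ for all $k$, and telescoping over $k=0,\dots,K$ yields $\sum_{k=0}^K\gamma_k^2\|F(x_k)\|^2\le 2\|x_0-x_*\|^2$. To convert this into the stated rate I need a uniform lower bound on $\gamma_k$, i.e.\ a uniform upper bound on $\|F(x_k)\|$. Using Proposition~\ref{prop:equiv_formulation} with $y=x_*$ (so the $K_1$ term vanishes because $F(x_*)=0$) together with $\|x_k-x_*\|\le\|x_0-x_*\|$ gives $\|F(x_k)\|\le(K_0+K_2\|x_0-x_*\|^{\nicefrac{\alpha}{1-\alpha}})\|x_0-x_*\|=:M$, hence $\gamma_k\ge 1/\Gamma$ with $\Gamma=2\sqrt2 K_0+(2\sqrt2 K_1+2^{\nicefrac{3(1-\alpha)}{2}}K_2^{1-\alpha})M^\alpha$.

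Finally, since $\min_{0\le k\le K}\|F(x_k)\|^2\le\Gamma^2\min_{0\le k\le K}\gamma_k^2\|F(x_k)\|^2\le\frac{\Gamma^2}{K+1}\sum_{k=0}^K\gamma_k^2\|F(x_k)\|^2\le\frac{2\Gamma^2\|x_0-x_*\|^2}{K+1}$, factoring $2\sqrt2=2^{\nicefrac{3}{2}}$ out of $\Gamma$ and substituting the expression for $M^\alpha$ reproduces the claimed bound $\frac{16(\,\cdot\,)^2\|x_0-x_*\|^2}{K+1}$, up to the exact power of two appearing in the $K_2^{1-\alpha}$ coefficient, which merely collects the slack from the step-size estimate in the second paragraph. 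The only genuinely new ingredient relative to the $\alpha=1$ case is the implicit-in-$\gamma_k$ term handled there; the remaining telescoping and min-over-average steps are structurally identical to the Lipschitz proof quoted earlier.
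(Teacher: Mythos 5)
Your proposal is correct and follows essentially the same route as the paper's proof: the same one-step expansion under monotonicity, the same decoupling of the implicit $\gamma_k$-dependence in the $K_2$ term (the paper enforces $\gamma_k^{\nicefrac{1}{1-\alpha}}K_2\|F(x_k)\|^{\nicefrac{\alpha}{1-\alpha}}\le \tfrac{1}{2\sqrt{2}}$, which is exactly your "crude bound" on $\gamma_k$ obtained by dropping the $K_0,K_1$ contributions), the same telescoping, and the same uniform bound $\|F(x_k)\|\le(K_0+K_2\|x_0-x_*\|^{\nicefrac{\alpha}{1-\alpha}})\|x_0-x_*\|$ to lower-bound the step size. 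The residual power-of-two discrepancy you flag in the $K_2^{1-\alpha}$ coefficient is also present in the paper's own final simplification and does not affect the substance of the argument.
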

This theorem establishes a sublinear convergence rate for $\alpha \in (0, 1)$. In the special case where $L_1 = 0$ (i.e., the standard $L$-Lipschitz setting), we have $K_1 = K_2 = 0$ by Proposition~\ref{prop:equiv_formulation}. Thus, our result recovers the best-known rate $\mathcal{O} \left( \frac{L_0^2 \|x_0 - x_*\|^2}{K+1} \right)$ from~\cite{gorbunov2022extragradient}. On the other hand, when $L_1 > 0$, we obtain a convergence rate of $\mathcal{O} \left( \frac{\|x_0 - x_*\|^{\frac{2 + 4 \alpha - 2\alpha^2}{1 - \alpha}}}{K+1}\right)$. Furthermore, as $\alpha \to 0$—which corresponds again to the $L$-Lipschitz setting—our step sizes $\gamma_k$ and $\omega_k$ become constant, and we recover the standard convergence rate $\mathcal{O} \left( \frac{\|x_0 - x_*\|^2}{K+1}\right)$. This matches the classical result for monotone $L$-Lipschitz operators up to constants, emphasizing the tightness of our analysis.
\vspace{-1mm}
\subsection{Local Convergence Guarantees for Weak Minty Operators}
Beyond the monotone operators, it is also possible to provide convergence for weak Minty operators~\eqref{eq: weak MVI} under some restrictions on $\rho > 0$. In contrast to the monotone problems where we used the same extrapolation and update step $\gamma_k, \omega_k$, here we use smaller update step size $\omega_k$. Specifically, we employ $\omega_k = \nicefrac{\gamma_k}{2}$, similar to \cite{diakonikolas2021efficient} for handling weak Minty $L$-Lipschitz operators. 
\begin{theorem}\label{theorem:weak_minty_alpha1}
     Suppose $F$ is weak Minty and $1$-symmetric $(L_0, L_1)$-Lipschitz assumption. Moreover we assume 
    \begin{equation}\label{eq:restriction_rho}
        \Delta_1 \eqdef \frac{\nu}{L_0 \left( 1 + L_1 \|x_0 - x_*\| e^{L_1 \|x_0 - x_*\|}\right)} - 4 \rho > 0.
    \end{equation} Then \algname{EG} with step size $\gamma_k = \frac{\nu}{L_0 + L_1 \|F(x_k)\|}$ and $\omega_k = \nicefrac{\gamma_k}{2}$ satisfies
    \begin{eqnarray}\label{eq:monotone_asymmetric}
        && \min_{0 \leq k \leq K} \|F(\hx_k)\|^2 \leq \frac{4 L_0 \left( 1 + L_1 \exp{\left( L_1 \|x_0 - x_*\|\right) \|x_0 - x_*\|}\right) \|x_0 - x_*\|^2}{\nu \Delta_1 (K+1)} 
    \end{eqnarray}
    where $\nu \exp{\nu} = 1$.
\end{theorem}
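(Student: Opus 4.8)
The plan is to run the standard one-step energy analysis for \algname{EG}, adapted to the $\alpha$-symmetric $(L_0, L_1)$-Lipschitz setting in two essential ways: a step size chosen so that the exponential factor from Proposition~\ref{prop:equiv_formulation} is exactly absorbed, and an inductive boundedness argument guaranteeing the per-step decrease coefficient stays positive. First I would expand $\|x_{k+1} - x_*\|^2$ via the update $x_{k+1} = x_k - \omega_k F(\hx_k)$, split $x_k - x_* = (x_k - \hx_k) + (\hx_k - x_*)$, and apply the weak Minty inequality~\eqref{eq: weak MVI} at $\hx_k$ to get $-2\omega_k \la F(\hx_k), \hx_k - x_*\ra \le 2\omega_k \rho \|F(\hx_k)\|^2$. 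Using $x_k - \hx_k = \gamma_k F(x_k)$ together with the polarization identity $\la F(\hx_k), F(x_k)\ra = \tfrac{1}{2}(\|F(\hx_k)\|^2 + \|F(x_k)\|^2 - \|F(x_k) - F(\hx_k)\|^2)$, this produces a recursion in which the only uncontrolled contribution is the term $\omega_k \gamma_k \|F(x_k) - F(\hx_k)\|^2$.

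The crucial step is bounding $\|F(x_k) - F(\hx_k)\|$. Applying Proposition~\ref{prop:equiv_formulation} with $\alpha = 1$ gives $\|F(x_k) - F(\hx_k)\| \le (L_0 + L_1\|F(x_k)\|)\exp(L_1\|x_k - \hx_k\|)\|x_k - \hx_k\|$, and since $\|x_k - \hx_k\| = \gamma_k\|F(x_k)\|$ with $\gamma_k = \nu/(L_0 + L_1\|F(x_k)\|)$, the prefactor $(L_0 + L_1\|F(x_k)\|)\gamma_k$ collapses to exactly $\nu$, while the exponent obeys $L_1\gamma_k\|F(x_k)\| = \nu \cdot \tfrac{L_1\|F(x_k)\|}{L_0 + L_1\|F(x_k)\|} \le \nu$. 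Invoking the defining relation $\nu\exp(\nu) = 1$, this yields the clean bound $\|F(x_k) - F(\hx_k)\| \le \|F(x_k)\|$, so the $\|F(x_k)\|^2$ terms cancel. Substituting $\omega_k = \gamma_k/2$ then gives the one-step inequality $\|x_{k+1} - x_*\|^2 \le \|x_k - x_*\|^2 - \tfrac{\gamma_k}{4}(\gamma_k - 4\rho)\|F(\hx_k)\|^2$.

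The main obstacle is the circular dependence between iterate boundedness and positivity of the decrease coefficient $\gamma_k - 4\rho$: the lower bound on $\gamma_k$ requires control of $\|F(x_k)\|$, which needs $\|x_k - x_*\|$ bounded, yet boundedness only follows once the coefficient is positive. I would resolve this by induction on the invariant $\|x_k - x_*\| \le \|x_0 - x_*\|$. Assuming it at step $k$, applying Proposition~\ref{prop:equiv_formulation} at $x_k$ and $x_*$ with $F(x_*) = 0$ gives $\|F(x_k)\| \le L_0\exp(L_1\|x_k - x_*\|)\|x_k - x_*\|$, which feeds into the step size to yield $\gamma_k \ge \gamma_{\min} \eqdef \nu/(L_0(1 + L_1\exp(L_1\|x_0 - x_*\|)\|x_0 - x_*\|))$. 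Hence $\gamma_k - 4\rho \ge \Delta_1 > 0$ by hypothesis~\eqref{eq:restriction_rho}, the one-step inequality forces $\|x_{k+1} - x_*\| \le \|x_k - x_*\| \le \|x_0 - x_*\|$, and the induction closes.

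Finally, with $\gamma_k \ge \gamma_{\min}$ and $\gamma_k - 4\rho \ge \Delta_1$ established for all $k$, the one-step inequality becomes $\tfrac{\gamma_{\min}\Delta_1}{4}\|F(\hx_k)\|^2 \le \|x_k - x_*\|^2 - \|x_{k+1} - x_*\|^2$. Telescoping over $k = 0, \dots, K$, discarding $\|x_{K+1} - x_*\|^2 \ge 0$, and bounding the minimum by the average gives $\min_{0 \le k \le K}\|F(\hx_k)\|^2 \le 4\|x_0 - x_*\|^2/(\gamma_{\min}\Delta_1(K+1))$; substituting $1/\gamma_{\min} = L_0(1 + L_1\exp(L_1\|x_0 - x_*\|)\|x_0 - x_*\|)/\nu$ recovers exactly~\eqref{eq:monotone_asymmetric}.
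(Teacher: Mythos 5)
Your proposal is correct and follows essentially the same route as the paper's proof: the same one-step expansion with the weak Minty condition, the same use of Proposition~\ref{prop:equiv_formulation} with the step size $\gamma_k = \nu/(L_0 + L_1\|F(x_k)\|)$ and $\nu e^{\nu}=1$ to make the $\|F(x_k)\|^2$ term vanish, the same lower bound on $\gamma_k$ via iterate boundedness, and the same telescoping. Your explicit induction on $\|x_k - x_*\| \le \|x_0 - x_*\|$ is a welcome clarification of a point the paper treats only implicitly (by appeal to the monotone case), since in the weak Minty setting the sign of the decrease coefficient and the boundedness of the iterates genuinely depend on each other.
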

To the best of our knowledge, this is the first result establishing convergence guarantees for weak Minty, $\alpha$-symmetric $(L_0, L_1)$-Lipschitz operators. Similar to the monotone case, we obtain a sublinear convergence rate for weak Minty operators. However, the condition in~\eqref{eq:restriction_rho} indicates that the initialization point $x_0$ must be sufficiently close to the solution $x_*$ in order to ensure convergence. Consequently, Theorem~\ref{theorem:weak_minty_alpha1} only provides a local convergence guarantee.

In the special case where $L_1 = 0$ i.e., the standard $L$-Lipschitz setting, condition~\eqref{eq:restriction_rho} reduces to the simpler requirement $\rho < \frac{\nu}{4L_0}$. Similar assumptions on $\rho$ have been made in prior works such as~\cite{diakonikolas2021efficient} and~\cite{pethick2023escaping} for the $L$-Lipschitz weak Minty setting. Finally, we extend our analysis to the case $\alpha \in (0, 1)$, and present a corresponding theorem establishing sublinear convergence under analogous restrictions on $\rho$.

\begin{theorem}\label{theorem:weak_minty_alpha01}
    Suppose $F$ is weak Minty and $\alpha$-symmetric $(L_0, L_1)$-Lipschitz operator with $\alpha \in (0, 1)$. Moreover we assume 
    \begin{equation}\label{eq:restriction_rho_alpha}
        \Delta_{\alpha} \eqdef \frac{1}{2 \sqrt{2} K_0 + 2 \sqrt{2} (K_1 + 2^{\nicefrac{-3}{2}} K_2^{1 - \alpha}) (K_0 + K_2 \|x_0 - x_*\|^{\nicefrac{\alpha}{1 - \alpha}})^{\alpha} \|x_0 - x_*\|^{\alpha}} - 4 \rho > 0.
    \end{equation} Then \algname{EG} with step size $\gamma_k = \frac{1}{2 \sqrt{2} K_0 + \left(2 \sqrt{2}K_1 + 2^{\nicefrac{3 (1 - \alpha)}{2}} K_2^{1 - \alpha} \right) \|F(x_k)\|^{\alpha}}$ and $\omega_k = \nicefrac{\gamma_k}{2}$ satisfy 
    \begin{eqnarray*}
        \min_{0 \leq k \leq K} \|F(\hx_k)\|^2 \leq \frac{4 \left( K_0 + \left(K_1 + 2^{\nicefrac{-3}{2}} K_2^{1 - \alpha} \right) (K_0 + K_2 \|x_0 - x_*\|^{\nicefrac{\alpha}{1 - \alpha}})^{\alpha} \|x_0 - x_*\|^{\alpha}\right) \|x_0 - x_*\|^2}{\Delta_{\alpha}(K+1)}.
    \end{eqnarray*}
\end{theorem}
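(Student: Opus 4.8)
The plan is to mirror the structure of the monotone $\alpha\in(0,1)$ proof (Theorem~\ref{theorem:alpha01}) while substituting the weak Minty inequality for monotonicity, and to confine the whole argument to a neighborhood of $x_*$ through an induction that exploits the restriction $\Delta_{\alpha}>0$. The skeleton is the same three-ingredient recipe as in the $\alpha=1$ weak Minty result (Theorem~\ref{theorem:weak_minty_alpha1}): a critical-condition lemma controlling $\|F(x_k)-F(\hx_k)\|$, a one-step descent inequality, and a telescoping argument, all glued together by a bounded-iterate induction.

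First I would prove a critical-condition lemma: the prescribed step size forces $\norm{F(x_k)-F(\hx_k)} \le \tfrac{1}{\sqrt 2}\norm{F(x_k)}$. Since $\hx_k = x_k - \gamma_k F(x_k)$ gives $\norm{x_k-\hx_k}=\gamma_k\norm{F(x_k)}$, I would feed this into the $\alpha\in(0,1)$ bound of Proposition~\ref{prop:equiv_formulation}, namely $\norm{F(x_k)-F(\hx_k)} \le (K_0 + K_1\norm{F(x_k)}^{\alpha} + K_2\norm{x_k-\hx_k}^{\alpha/(1-\alpha)})\norm{x_k-\hx_k}$. The three summands $2\sqrt2 K_0$, $2\sqrt2 K_1\norm{F(x_k)}^{\alpha}$, and $2^{3(1-\alpha)/2}K_2^{1-\alpha}\norm{F(x_k)}^{\alpha}$ in the denominator of $\gamma_k$ are designed so that each of the three terms contributes at most $\tfrac{1}{2\sqrt2}$, the third after bounding $(\gamma_k\norm{F(x_k)})^{\alpha/(1-\alpha)}$ by $\norm{F(x_k)}^{\alpha}/(2^{3\alpha/2}K_2^{\alpha})$; summing these gives the constant $\tfrac{1}{\sqrt 2}$, hence $\norm{F(x_k)-F(\hx_k)}^2 \le \tfrac12\norm{F(x_k)}^2$.

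Next I would derive the one-step descent. Expanding $\norm{x_{k+1}-x_*}^2$ from the update, splitting $x_k - x_* = (x_k-\hx_k)+(\hx_k-x_*)$, applying weak Minty $\langle F(\hx_k), \hx_k - x_*\rangle \ge -\rho\norm{F(\hx_k)}^2$, substituting $x_k-\hx_k=\gamma_k F(x_k)$, and using $\langle F(\hx_k),F(x_k)\rangle = \tfrac12(\norm{F(\hx_k)}^2+\norm{F(x_k)}^2-\norm{F(x_k)-F(\hx_k)}^2)$ with the critical-condition lemma, I would obtain, after setting $\omega_k=\gamma_k/2$ and collecting coefficients,
$$\norm{x_{k+1}-x_*}^2 \le \norm{x_k-x_*}^2 - \tfrac{\gamma_k}{4}(\gamma_k - 4\rho)\norm{F(\hx_k)}^2.$$
The decisive step is then a bounded-iterate induction proving $\norm{x_k - x_*} \le \norm{x_0 - x_*}$ for all $k$, which breaks the circular dependence between "descent needs $\gamma_k>4\rho$" and "a lower bound on $\gamma_k$ needs bounded iterates." Granting the hypothesis, I would apply Proposition~\ref{prop:equiv_formulation} with $y=x_*$ and $F(x_*)=0$ to get $\norm{F(x_k)} \le (K_0 + K_2\norm{x_0-x_*}^{\alpha/(1-\alpha)})\norm{x_0-x_*}$; raising to the power $\alpha$ and inserting into $\gamma_k$ yields a uniform lower bound $\gamma_k \ge \underline{\gamma}$, where $\underline{\gamma}$ is exactly the reciprocal appearing inside $\Delta_{\alpha}$, so that $\underline{\gamma}-4\rho = \Delta_{\alpha}>0$. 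This makes the descent coefficient nonpositive, giving $\norm{x_{k+1}-x_*}\le\norm{x_k-x_*}\le\norm{x_0-x_*}$ and closing the induction. Finally, using $\gamma_k \ge \underline{\gamma}$ and $\gamma_k-4\rho\ge\Delta_{\alpha}$, I would telescope over $k=0,\dots,K$, bound the right side by $\norm{x_0-x_*}^2$, and replace the sum by $(K+1)\min_k\norm{F(\hx_k)}^2$ to reach the stated rate.

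I expect the main obstacle to be the interlocking induction: one must verify that the lower bound on $\gamma_k$ extracted from the bounded-iterate hypothesis is exactly strong enough — via $\Delta_{\alpha}>0$ — to re-certify the descent that justifies the hypothesis at the next index. A secondary, purely bookkeeping difficulty is reconciling the step-size denominator constants ($2\sqrt2 K_0$, $2^{3(1-\alpha)/2}K_2^{1-\alpha}$) with the $2^{-3/2}K_2^{1-\alpha}$ factor that surfaces inside $\Delta_{\alpha}$ and in the numerator of the final bound; tracking these powers of $2$ through the critical-condition lemma and the $\gamma_k$ lower bound is where the stated constants are pinned down. This interlocking requirement is also the structural reason the guarantee is merely local: $\Delta_{\alpha}>0$ caps $\norm{x_0-x_*}$, since the denominator defining $\Delta_{\alpha}$ grows with the initial distance through the $(K_0 + K_2\norm{x_0-x_*}^{\alpha/(1-\alpha)})^{\alpha}\norm{x_0-x_*}^{\alpha}$ factor.
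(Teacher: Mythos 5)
Your proposal is correct and follows essentially the same route as the paper's proof: the same expansion of $\norm{x_{k+1}-x_*}^2$ with $\omega_k=\gamma_k/2$, the weak Minty substitution producing the $-\tfrac{\gamma_k}{4}(\gamma_k-4\rho)\sqnorm{F(\hx_k)}$ term, the step-size choice forcing the $\sqnorm{F(x_k)}$ coefficient to be nonnegative via the $\tfrac{1}{\sqrt2}$ bound, the lower bound $\gamma_k\ge\underline{\gamma}$ from bounded iterates and Proposition~\ref{prop:equiv_formulation}, and the telescoping against $\Delta_\alpha$. Your explicit bounded-iterate induction is a slightly more careful packaging of a step the paper asserts without elaboration, but it is not a different argument.
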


\section{Numerical Experiments}\label{sec:experiments}

\vspace{-2mm}
In this section, we conduct experiments to validate the efficiency of our proposed step size strategy $\gamma_k = \frac{1}{c_0 + c_1 \|F(x_k)\|^{\alpha}}$ with $\alpha = 1$. In the first experiment, we compare our step size choice with that of \cite{pmlr-v235-vankov24a} on a strongly monotone problem, and in the second experiment, we make a comparison with the constant step size strategy for solving a monotone problem. Finally, we evaluate our scheme for solving the GlobalForsaken problem from \cite{pethick2023escaping}. All experiments in this chapter were conducted using a personal MacBook with an Apple M3 chip and 16GB of RAM. 
\begin{figure}[h]
\centering
\begin{subfigure}[b]{0.4\textwidth}
    \centering
    \includegraphics[width=\textwidth]{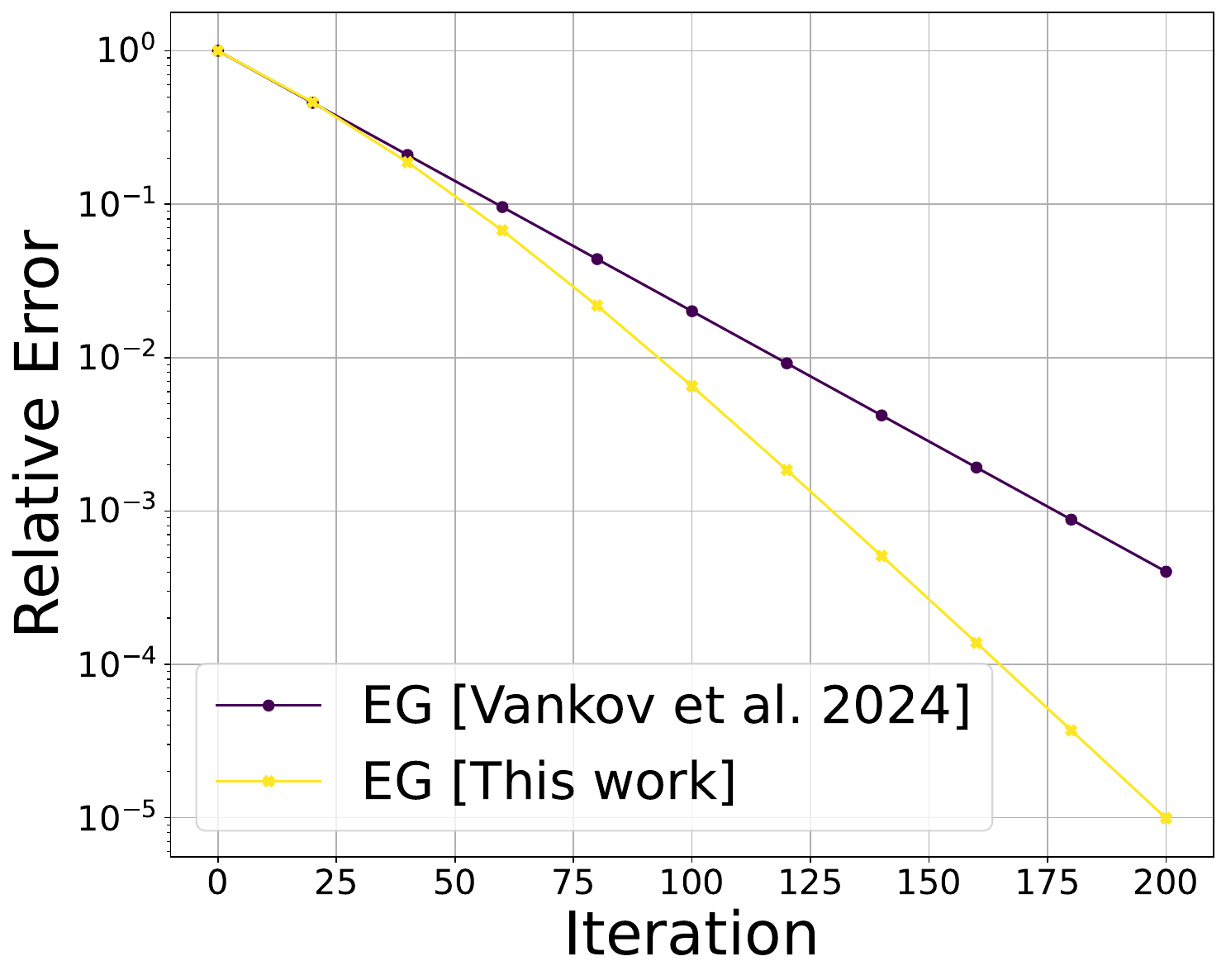}
    \caption{}\label{fig:L0L1comparison_opt_dist}
\end{subfigure}
\hfill
\begin{subfigure}[b]{0.4\textwidth}
    \centering
    \includegraphics[width=\textwidth]{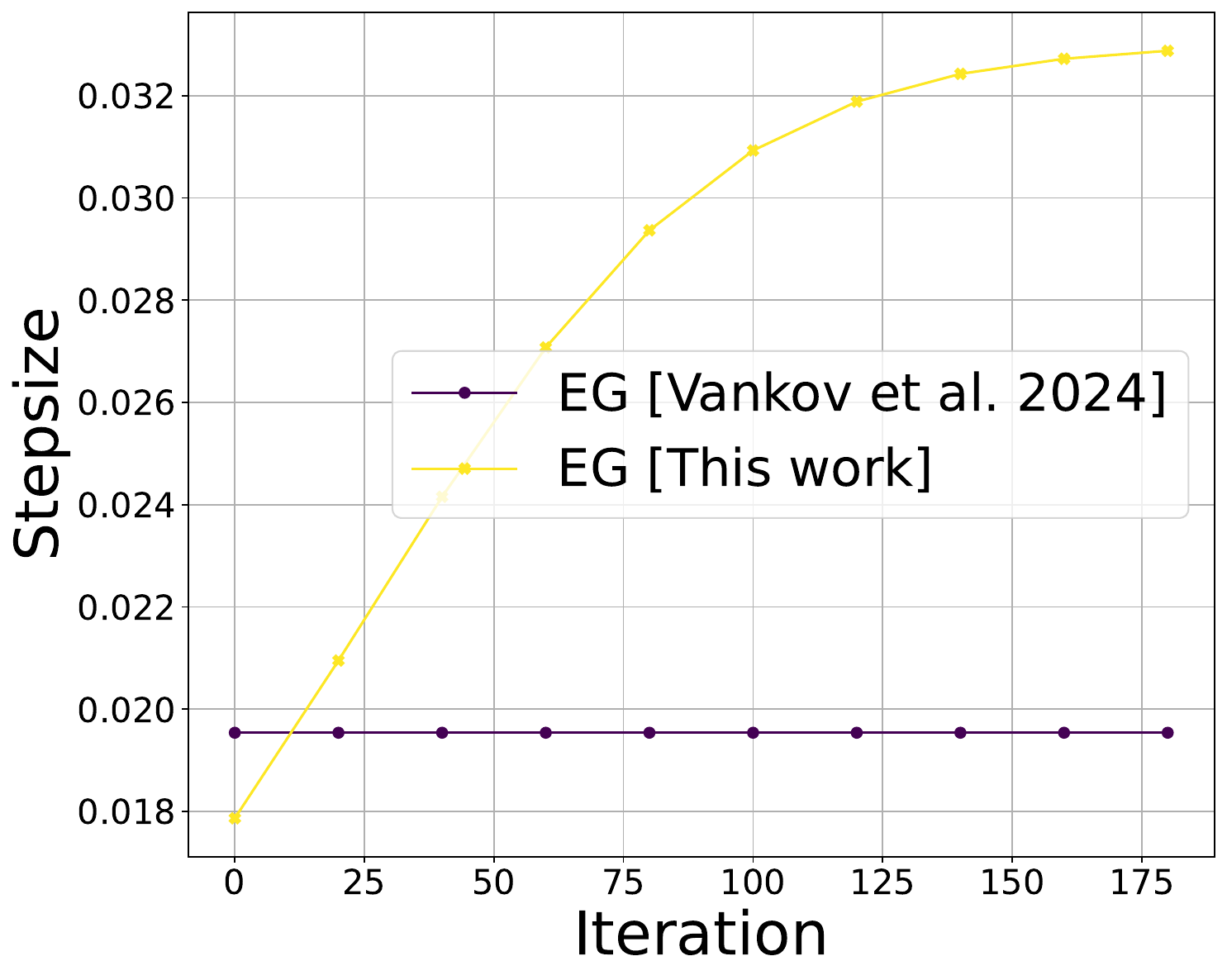}
    \caption{}\label{fig:L0L1comparison_stepsize}
\end{subfigure}
\begin{subfigure}[b]{0.4\textwidth}
    \centering
    \includegraphics[width=\textwidth]{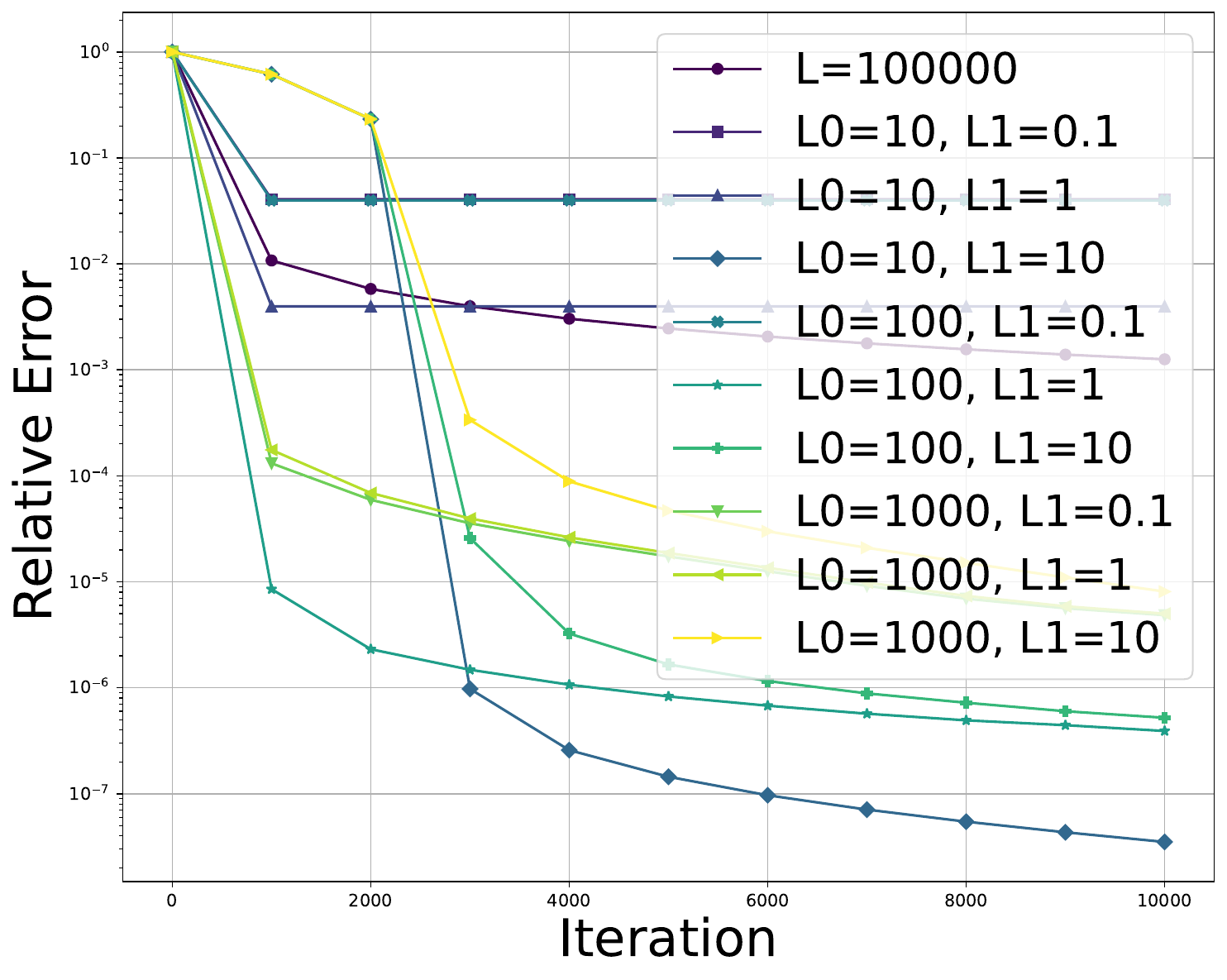}
    \caption{}\label{fig:monotone_cubic_relative_error}
\end{subfigure}
\hfill
\begin{subfigure}[b]{0.4\textwidth}
    \centering
    \includegraphics[width=\textwidth]{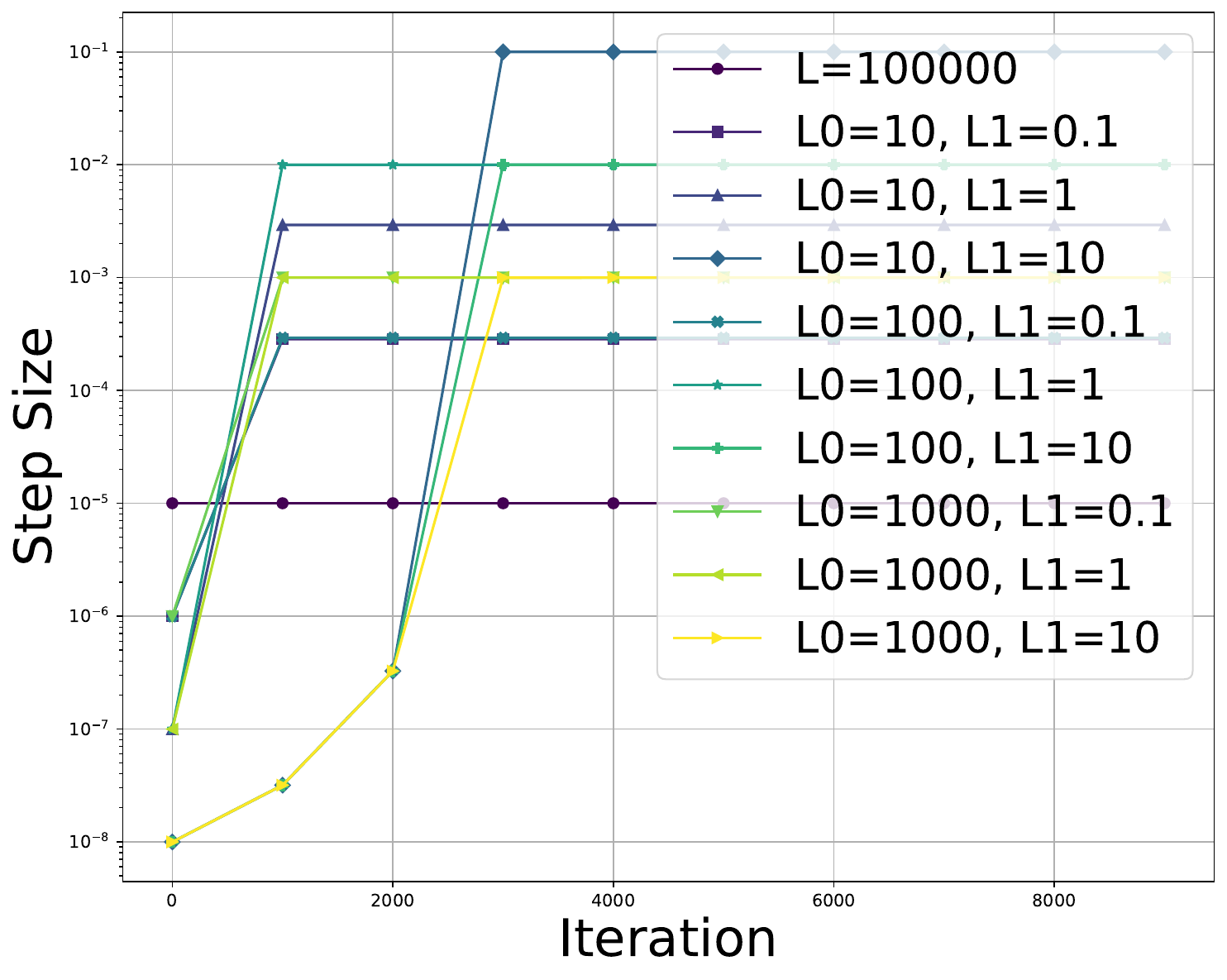}
    \caption{}\label{fig:monotone_cubic_step_size}
\end{subfigure}
    \caption{In Figures~\ref{fig:L0L1comparison_opt_dist} and~\ref{fig:L0L1comparison_stepsize}, we compare our proposed adaptive step size strategy with that of \cite{pmlr-v235-vankov24a}. In Figures~\ref{fig:monotone_cubic_relative_error} and~\ref{fig:monotone_cubic_step_size}, we evaluate the performance of the \algname{EG} method on the problem in~\eqref{eq:min_max_cubic_Rd}, using both a constant step size and the $(L_0, L_1)$-adaptive step size. For both sets of experiments, we report the relative error and the magnitude of the step size over iterations.
    }\label{fig:monotone_and_vankov}
\end{figure}

\textbf{Performance on a Strongly Monotone Problem.}
In this experiment, we compare our theoretical step sizes with those from \cite{pmlr-v235-vankov24a}. Here, we implement \algname{EG} for solving the operator $F(x) = (\text{sign}\left(u_1 \right) \left|u_1 \right| + u_2, \text{sign}\left(u_2 \right) \left|u_2 \right| - u_1)$.

This problem has constants $L_0 = 1 + 2 \sqrt{2}$ and $L_1 = 2\sqrt{2}$. For our method, we use $\gamma_k = \omega_k =  \frac{\nu}{L_0 + L_1 \|F(x_k)\|}$ while for \algname{EG}~\cite{pmlr-v235-vankov24a} we use stepsize $\gamma_k = \omega_k = \min \left\{ \frac{1}{4 \mu}, \frac{1}{2 \sqrt{2}e L_0}, \frac{1}{2 \sqrt{2}e L_1 \|F(x_k)\|} \right\}$. In Figure \ref{fig:L0L1comparison_opt_dist}, we plot the relative error $\frac{\|x_k - x_*\|^2}{\|x_0 - x_*\|^2}$ on the $y$-axis while number of iterations on the $x$-axis. We find that our proposed step size outperforms that of \cite{pmlr-v235-vankov24a}. Moreover, in Figure \ref{fig:L0L1comparison_stepsize}, we compare the magnitude of the step size and how it evolves over the iterations. We find that the step size of \cite{pmlr-v235-vankov24a} remains constant at approximately $0.02$, whereas our proposed step size increases to a value larger than $0.032$. These experiments highlight the efficiency of our proposed step size. 

\textbf{Performance on a Monotone Problem.} Here we consider the following min-max optimization problem
\begin{eqnarray}\label{eq:min_max_cubic_Rd}
    \min_{w_1 \in \R^d} \max_{w_2 \in \R^d} \mathcal{L}(w_1, w_2) = \frac{1}{3} \left( w_1^{\top} \A w_1 \right)^{\nicefrac{3}{2}} + w_1^{\top} \B w_2 - \frac{1}{3} \left(w_2^{\top} \C w_2 \right)^{\nicefrac{3}{2}}.
\end{eqnarray}
where $\A, \B, \C \in \R^{d \times d}$ are positive definite matrices. Note that, when $d = 1$, and $\A, \B, \C$ are just scalars equal to $1$, this problem reduces to \eqref{eq:min_max_cubic}.

The corresponding operator of this problem is given by 
\begin{equation*}
    F(x)  
    = \begin{bmatrix}
        \nabla_{w_1} \mathcal{L}(w_1, w_2) \\
        - \nabla_{w_2} \mathcal{L}(w_1, w_2)
    \end{bmatrix}
    = \begin{bmatrix}
        \left( w_1^{\top} \A w_1\right)^{\nicefrac{1}{2}} \A w_1 + \B w_2 \\
        \left( w_2^{\top} \C w_2\right)^{\nicefrac{1}{2}} \C w_2 - \B^{\top} w_1
    \end{bmatrix}.
\end{equation*} 
Furthermore, we show that $\mathcal{L}$ is convex-concave and has an equilibrium only at $w_1, w_2 = 0 \in \R^d$ (check Appendix \ref{appendix:num_exp}). To solve~\eqref{eq:min_max_cubic_Rd}, we implement the \algname{EG} method using two types of step size strategies: (1) a constant step size $\gamma_k = \omega_k = \nicefrac{1}{c}$, and (2) an adaptive step size $\gamma_k = \omega_k = \nicefrac{1}{\left(c_0 + c_1 \|F(x_k)\| \right)}$. For the constant step size \algname{EG}, we perform a grid search over $c \in \{10^2, 10^3, 10^4, 10^5, 10^6, 10^7\}$. We find that $c = 10^5$ yields the best performance: larger values lead to slower convergence, while smaller values cause divergence. Figures~\ref{fig:monotone_cubic_relative_error} and~\ref{fig:monotone_cubic_step_size} present the relative error and step size for the case $c = 10^5$. For our adaptive \algname{EG} method, we perform a grid search over $c_0 \in \{10, 100, 1000\}$ and $c_1 \in \{0.1, 1, 10\}$, evaluating all $9$ possible combinations. The performance of all adaptive variants is plotted in Figures~\ref{fig:monotone_cubic_relative_error} and~\ref{fig:monotone_cubic_step_size}. We observe that most combinations outperform the constant step size \algname{EG}, with $(c_0, c_1) = (10, 10)$ achieving the best results (see Figure \ref{fig:monotone_cubic_relative_error}).

Interestingly, while the adaptive \algname{EG} starts with smaller step sizes compared to the constant step size \algname{EG}, its step sizes increase over time and eventually surpass those of the constant step size approach (see Figure \ref{fig:monotone_cubic_step_size}). This highlights the practical effectiveness of our proposed method in handling non-Lipschitz operators such as the one in~\eqref{eq:min_max_cubic_Rd}.

\begin{figure}
	\centering
	\includegraphics[width=0.6\linewidth]{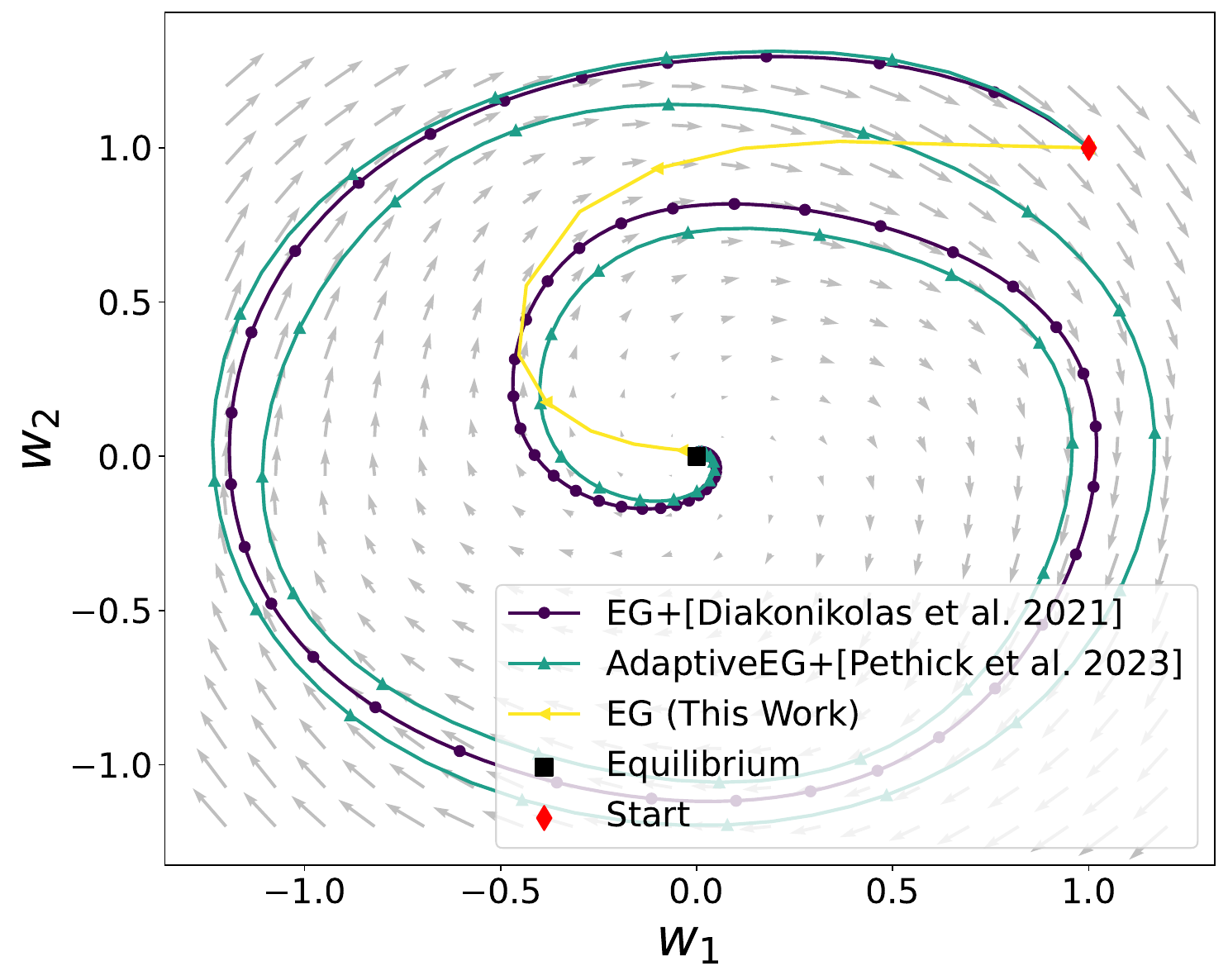}
	\caption{\small Trajectories of algorithms for solving problem~\eqref{eq:globalforsaken}.}\label{fig:globalforsaken}
\end{figure}

\textbf{Performance on a Weak Minty Problem.} Here we consider the unconstrained GlobalForsaken problem from \cite{pethick2023escaping} given by
\begin{equation}\label{eq:globalforsaken}
    \min_{w_1 \in \R} \max_{w_2 \in \R} \mathcal{L}(w_1, w_2) \eqdef w_1 w_2 + \psi(w_1) - \psi(w_2), 
\end{equation}
where $\psi(w) = \frac{2w^6}{21} - \frac{w^4}{3} + \frac{w^2}{3}$. As shown in~\cite{pethick2023escaping}, the saddle-point problem in~\eqref{eq:globalforsaken} admits a global Nash equilibrium at $(w_1, w_2) = (0, 0)$ and satisfies the weak Minty condition~\eqref{eq: weak MVI} with parameter $\rho \approx 0.119732$. We implement \algname{AdaptiveEG+}~\cite{pethick2022escaping}, \algname{EG+}~\cite{diakonikolas2021efficient}, and \algname{EG} with our step size strategy to solve this problem. For each algorithm, we perform step size tuning on a grid of $\gamma_k \in \{10^{-5}, 10^{-4}, \cdots, 10^2 \}$. We observe that both \algname{AdaptiveEG+} $\left( \text{extrapolation step size }\omega_k = \frac{\left\langle F(\hx_k), x_k - \hx_k \right\rangle}{ \left\| F(\hx_k) \right\|^2} \right)$ and \algname{EG+} $\left( \text{extrapolation step size }\omega_k = \frac{\gamma_k}{2} \right)$ perform best with a fixed step size of $\gamma_k = 0.1$. For our method, we set the step size parameters as $(c_0, c_1) = (1, 1)$. In Figure \ref{fig:globalforsaken}, we present the trajectory plots of these algorithms, all initialized at $(w_1, w_2) = (1, 1)$. The dots on these trajectories mark every $10$ iterations of the algorithm. Our findings indicate that all algorithms eventually converge to the equilibrium $(0, 0)$, but the convergence of our method is significantly faster. This demonstrates the advantage of our step-size strategy in solving challenging problems that satisfy only weak Minty conditions.

\chapter{Communication-Efficient Gradient Descent Ascent Methods for Distributed Min-Max Optimization} \label{chap:chap-5}

\section{Introduction}

Federated learning (FL) \cite{konevcny2016federated,mcmahan2017communication,kairouz2021advances} has become a fundamental distributed machine learning framework in which multiple clients collaborate to train a model while keeping their data decentralized. 
Communication overhead is one of the main bottlenecks in FL \cite{karimireddy2020scaffold}, which motivates the use by the practitioners of advanced algorithmic strategies to alleviate the communication burden. One of the most popular and well-studied strategies to reduce the communication cost is increasing the number of local steps between the communication rounds~\cite{mcmahan2017communication,stich2018local,assran2019stochastic,khaled2020tighter,koloskova2020unified}.

Currently, federated learning algorithms and techniques are associated primarily with minimization problems. However, with the increase of game-theoretical formulations in machine learning, the necessity of designing efficient federated learning approaches for these problems is apparent. In this chapter, we are interested in the design of communication-efficient federated learning algorithms suitable for multi-player game formulations. In particular, we consider a more
abstract formulation and focus on solving the following distributed/federated variational inequality problem (VIP):
\begin{equation}
  \label{eq:FedVIP}
  \text{Find } z_*\in\mathbb{R}^{d'}, \text{ such that } \autoprod{F(z_*),z - z_*}\geq 0,\quad
   \forall z\in\mathbb{R}^{d'},
\end{equation}
 where $F:\mathbb{R}^{d'}\rightarrow\mathbb{R}^{d'}$ is a structured non-monotone operator. We assume the operator $F$ is distributed across $n$ different nodes/clients and written as $F(z)=\frac{1}{n}\sum_{i=1}^n H_i(z)$. In this setting, the operator $H_i:\mathbb{R}^{d'}\rightarrow\mathbb{R}^{d'}$  is
owned by and stored on client $i$. Since we are in the unconstrained scenario, problem~\eqref{eq:FedVIP} can be equivalently written as finding $z_*\in\mathbb{R}^{d'}$, such that $F(z_*)= 0$~\cite{loizou2021stochastic,gorbunov2022stochastic}.

In the distributed/federated learning setting, formulation~\eqref{eq:FedVIP} captures both classical FL minimization~\cite{mcmahan2017communication} and FL minimax optimization problems~\cite{sharma2022federated, deng2021local} as special cases.

\subsection{Main Contributions}
\label{MainCont}

Our key contributions are summarized as follows:
\begin{itemize}[leftmargin=1em]
\item \textbf{VIPs and Federated Learning:} We present the first connection between regularized VIPs and federated learning setting by explaining how one can construct a consensus reformulation of problem\eqref{eq:FedVIP} by appropriately selecting the regularizer term $R(x)$ (see discussion in Section~\ref{sec:connection_VIP_FL}). 

    \item \textbf{Unified Framework:} 
    We provide a unified theoretical framework for the design of efficient local training methods for solving the distributed VIP~\eqref{eq:FedVIP}. For our analysis, we use a general key assumption on the stochastic estimates that allows us to study, under a single framework, several stochastic local variants of the proximal gradient descent-ascent (\algname{GDA}) method \footnote{Throughout this chapter we use this suggestive name (\algname{GDA} and Stochastic \algname{GDA}/ \algname{SDA}) motivated by the minimax formulation, but we highlight that our results hold for the more general VIP~\eqref{eq:FedVIP}}. 

    \item    
    \textbf{Communication Acceleration:} In practice, local algorithms are superior in communication complexity compared to their non-local counterparts. However, in the literature, the theoretical communication complexity of \algname{Local GDA} does not improve upon the vanilla distributed \algname{GDA} (i.e., communication in every iteration). Here we close the gap and provide the first communication-accelerated local GDA methods. For the deterministic strongly monotone and smooth setting, our method requires $\mathcal{O}\autopar{\kappa\ln\frac{1}{\epsilon}}$ communication rounds over the $\mathcal{O}\autopar{\kappa^2\ln\frac{1}{\epsilon}}$ of vanilla \algname{GDA} (and previous analysis of \algname{Local GDA}), where $\kappa$ is the condition number. See Table~\ref{TableFirst} for further details .

    \item \textbf{Heterogeneous Data:} Designing algorithms for federated minimax optimization and distributed VIPs,
    is a relatively recent research topic, and existing works heavily rely on the bounded heterogeneity assumption, which may be unrealistic in FL setting~\cite{hou2021efficient, beznosikov2020distributed,sharma2022federated}. Thus, they can only solve problems with similar data between clients/nodes. In practical scenarios, the private data stored by a user on a mobile device and the data of different users can be arbitrarily heterogeneous. Our analysis does not assume bounded heterogeneity, and the proposed algorithms (\algname{ProxSkip-VIP-FL} and \algname{ProxSkip-L-SVRGDA-FL}) guarantee convergence with an improved communication complexity.

    \item \textbf{Sharp rates for known special cases:} For the known methods/settings fitting our framework, our general theorems recover the best rates known for these methods. For example, the convergence results of the Proximal (non-local) \algname{SGDA} algorithm for regularized VIPs \cite{beznosikov2022stochastic} and the ProxSkip algorithm for composite minimization problems \cite{mishchenko2022proxskip} can be obtained as special cases of our analysis, showing the tightness of our convergence guarantees.

    \item \textbf{Numerical Evaluation:} 
    In numerical experiments, we illustrate the most important properties of the proposed methods by comparing them with existing algorithms in federated minimax learning tasks. The numerical results corroborate our theoretical findings.
\end{itemize}

\begin{table}[tbp]
    \centering
    \footnotesize
    \renewcommand{\arraystretch}{1.25}
    \caption{
    Summary and comparison of algorithms for solving strongly-convex-strongly-concave federated minimax optimization problems (a special case of the distributed VIPs \eqref{eq:FedVIP}).
    }
    \begin{threeparttable}[htb]
        \begin{tabular}{c | c | c | c }
            \hline \hline
            \textbf{Algorithm}\tnote{1}
            & \textbf{Acceleration?}
            & \begin{tabular}{c}
            \textbf{Variance}\\
            \textbf{Reduction?}
            \end{tabular}  
            & \textbf{Communication Complexity}
            \\
            \hline \hline
            \makecell[c]{
                \algname{GDA/SGDA}
                \\
                \cite{fallah2020optimal}
            }            
            & ---
            & \xmark
            & $\mathcal{O}\autopar{\max\autobigpar{\kappa^2, \frac{\sigma_*^2}{\mu^2\eps}}\ln\frac{1}{\eps}}$
            \\
            \hline
            \makecell[c]{
                \algname{Local GDA/SGDA}
                \\
                \cite{deng2021local}
            }            
            & \xmark
            & \xmark
            & $\mathcal{O}\autopar{\sqrt{\frac{\kappa^2(\sigma_*^2+\Delta^2)}{\mu\epsilon}}}$
            \\
            \hline
            \cellcolor{bgcolor2}\begin{tabular}{c}
            \algname{ProxSkip-GDA/SGDA-FL}\\
            (This chapter)
            \end{tabular} 
            & \cellcolor{bgcolor2}\cmark
            & \cellcolor{bgcolor2}\xmark
            & \cellcolor{bgcolor2} {\small $\mathcal{O}\autopar{\sqrt{\max\autobigpar{\kappa^2, \frac{\sigma_*^2}{\mu^2\epsilon}}}\ln\frac{1}{\eps}}$}
            \\
            \hline
            \cellcolor{bgcolor2}\begin{tabular}{c}
            \algname{ProxSkip-L-SVRGDA-FL}\\
            (This chapter)
            \end{tabular} 
            & \cellcolor{bgcolor2}\cmark
            & \cellcolor{bgcolor2}\cmark
            & \cellcolor{bgcolor2} $\mathcal{O}\autopar{\kappa\ln\frac{1}{\eps}}$
            \\
            \hline \hline
       \end{tabular}
       \begin{tablenotes}
            \footnotesize
            \item[1] ``Acceleration"~=~whether the algorithm enjoys acceleration in communication compared to its non-local counterpart, ``Variance Red.?"~=~whether the algorithm (in stochastic setting) applies variance reduction, ``Communication Complexity"~=~the communication complexity of the algorithm.  Here $\kappa=L/\mu$ denotes the condition number where $L$ is the Lipschitz parameter and $\mu$ is the modulus of strong convexity, $\sigma_*^2$ corresponds to the variance level at the optimum ($\sigma_*^2=0$ in deterministic setting), and $\Delta$ captures the bounded variance.  For a more detailed comparison of complexities, please also refer to Table~\ref{table:comparison_v2}.
        \end{tablenotes}
    \end{threeparttable}
\label{TableFirst}
\end{table}

\section{Technical Preliminaries}
\subsection{Regularized VIP and Consensus Reformulation}
\label{sec:connection_VIP_FL}

Following classical techniques from \cite{parikh2014proximal}, the distributed VIP~\eqref{eq:FedVIP} can be recast into a consensus form:

\newpage

\begin{equation}
\label{eq:objective_FL_reformulation}
   \text{Find } x_*\in\mathbb{R}^d, \text{such that }  \autoprod{F(x_*),x-x_*}+R(x)-R(x_*)\geq 0,\quad
    \forall x\in\mathbb{R}^d,
\end{equation}
where $d=nd'$ and
\begin{equation}
    \label{eq:mpFL_VIP_form}
    \small
    F(x)\triangleq\sum_{i=1}^n F_i(x_i),
    \quad
    R(x)
    \triangleq
    \begin{cases}
        0 & \text{if } x_1=x_2=\cdots=x_n\\
        +\infty & \text{otherwise}.
    \end{cases}
\end{equation}
Here $x=\autopar{x_1, x_2, \cdots, x_n}\in\mathbb{R}^d$, $x_i\in\mathbb{R}^{d'}$, $F:\mathbb{R}^d\rightarrow\mathbb{R}^d$, $F_i:\mathbb{R}^{d'}\rightarrow\mathbb{R}^d$ and $F_i(x_i)=(0, \cdots, H_i(x_i), \cdots, 0)$ where $[F(x)]_{(id'+1):(id'+d')}=H_i(x_i)$.
Note that the reformulation requires a dimension expansion on the variable enlarged from $d'$ to $d$. It is well-known that the two problems \eqref{eq:FedVIP} and \eqref{eq:objective_FL_reformulation} are equivalent in terms of the solution, which we detail in Appendix \ref{apdx:thm_FL_Operator_Check}.

Having explained how the problem~\eqref{eq:FedVIP} can be converted into a regularized VIP~\eqref{eq:objective_FL_reformulation}, let us now present the Stochastic Proximal Method \cite{parikh2014proximal, beznosikov2022stochastic}, one of the most popular algorithms for solving  general regularized VIPs\footnote{We call general regularized VIPs, the problem \eqref{eq:objective_FL_reformulation} where $F:\mathbb{R}^d\rightarrow\mathbb{R}^d$ is an operator and $R:\mathbb{R}^d\rightarrow\mathbb{R}$ is a regularization term (a proper lower semicontinuous convex function). 
}. The update rule of the Stochastic Proximal Method defines as follows:
\begin{equation}
    \label{proxOpe}
    x_{k+1}=\prox_{\gamma R}\autopar{x_k-\gamma g_k}
    \quad\text{where}\ \ 
    \prox_{\gamma R}\autopar{x}
    \triangleq
    \argmin_{v\in\mathbb{R}^d}\autobigpar{R(v)+\frac{1}{2\gamma}\autonorm{v-x}^2}.
\end{equation}

Here $g_k$ is an unbiased estimator of $F(x_k)$ and $\gamma>0$ is the step-size of the method.
As explained in \cite{mishchenko2022proxskip}, it is typically assumed that the proximal computation~\eqref{proxOpe} can be evaluated in closed form (has exact value), and its computation it is relatively cheap. That is, the bottleneck in the update rule of the Stochastic Proximal Method is the computation of $g_k$. This is normally the case when the regularizer term $R(x)$ in general regularized VIP has a simple expression. For $R(v) = \frac{1}{2}\|v\|_2^2$, the proximal operator admits a closed-form expression given by $
\mathbf{prox}_{\gamma R}(x) = \frac{x}{1 + \gamma}$.
For $R(v) = \|v\|_1$, the proximal operator is the well-known soft-thresholding operator, expressed as  $\mathbf{prox}_{\gamma R}(x) = \operatorname{sign}(x) \odot \max\{ |x| - \gamma,\, 0 \},$ where \(\odot\) denotes elementwise multiplication. For more details on closed-form expression~\eqref{proxOpe} under simple choices of regularizers $R(x)$, we refer readers to check \cite{parikh2014proximal}.

However, in the consensus reformulation of the distributed VIP, the regularizer $R(x)$ has a specific expression~\eqref{eq:mpFL_VIP_form} that makes the proximal computation~\eqref{proxOpe} expensive compared to the evaluation of $g_k$.

In particular, note that by following the definition of $R(x)$ in \eqref{eq:mpFL_VIP_form}, we get that $\Bar{x}=\frac{1}{n}\sum_{i=1}^n x_i$ and $\prox_{\gamma R}\autopar{x}=\autopar{\Bar{x}, \Bar{x}, \cdots, \Bar{x}}.$

So evaluating $\prox_{\gamma R}\autopar{x}$ is equivalent to taking the average of the variables $x_i$~\cite{parikh2014proximal},

meaning that it involves a high communication cost in distributed/federated learning settings. This was exactly the motivation behind the proposal of the \algname{ProxSkip} algorithm in \cite{mishchenko2022proxskip}, which reduced the communication cost by allowing the expensive proximal
operator to be skipped in most iterations.

In this chapter, inspired by the \algname{ProxSkip} approach of \cite{mishchenko2022proxskip}, we provide a unified framework for analyzing efficient algorithms for solving the distributed VIP~\eqref{eq:FedVIP} and its consensus reformulation problem~\eqref{eq:objective_FL_reformulation}. In particular in Section \ref{sec:centralized}, we provide algorithms for solving general regularized VIPs (not necessarily a distributed setting). Later in Section~\ref{asdas} we explain how the proposed algorithms can be interpreted as distributed/federated learning methods.

\subsection{Main Assumptions}

Having presented the consensus reformulation of distributed VIPs, let us now provide the main conditions of problem \eqref{eq:objective_FL_reformulation} assumed throughout the paper.
\begin{assumption}
    \label{assume:main}
    We assume that Problem \eqref{eq:objective_FL_reformulation} has a unique solution $x_*$ and
    \begin{itemize}
        \item The operator $F$ is $\mu$-quasi-strongly monotone and $\ell$-star-cocoercive with $\mu, \ell>0$, i.e., $\forall x\in\mathbb{R}^d$, 
        \begin{eqnarray*}
            \autoprod{F(x)-F(x_*), x-x_*}& \geq & \mu\autonorm{x-x_*}^2, \\
            \autoprod{F(x)-F(x_*), x-x_*} & \geq & \frac{1}{\ell}\autonorm{F(x)-F(x_*)}^2.
        \end{eqnarray*}
        \item The function $R(\cdot)$ is a proper lower semicontinuous convex function.
    \end{itemize}
\end{assumption}

Assumption~\ref{assume:main} 
is weaker than the classical strong-monotonicity and Lipschitz continuity assumptions commonly used in analyzing methods for solving problem \eqref{eq:objective_FL_reformulation} and captures non-monotone and non-Lipschitz problems as special cases~\cite{loizou2021stochastic}. In addition, given that the operator $F$ is $L$-Lipschitz continuous and $\mu$-strongly monotone, it can be shown that the operator $F$ is ($\kappa L$)-star-cocoercive where $\kappa\triangleq L/\mu$ is the condition number of the operator~\cite{loizou2021stochastic}. 

Motivated by recent applications in machine learning, in this chapter, we mainly focus on the case where we have access to unbiased estimators of the operator $F$. Regarding the inherent stochasticity, we further use the following key assumption, previously used in \cite{beznosikov2022stochastic} for the analysis of \algname{Proximal SGDA}, to characterize the behavior of the operator estimation.
\begin{assumption}[Estimator]
    \label{assume:stochastic}

    For all $k\geq 0$, we assume that the estimator $g_k$ is unbiased ($\EE[g_k]=F(x_k)$). Next, we assume that there exist non-negative constants $A, B, C, D_1, D_2\geq 0$, $\rho\in(0,1]$ and a sequence of (possibly random) non-negative variables $\{\sigma_{k}\}_{k\geq0}$ such that for all $k \geq 0$:
        \begin{equation*} 
            \begin{split}
                \mathbb{E}\autonorm{g_k-F(x_*)}^2
                \leq &\ 
                2A\autoprod{F(x)-F(x_*), x-x_*}+B\sigma_k^2+D_1,\\
                \mathbb{E}[\sigma_{k+1}^2]
                \leq &\ 
                2C\autoprod{F(x)-F(x_*), x-x_*}+(1-\rho)\sigma_k^2+D_2.
            \end{split}
        \end{equation*}   
\end{assumption}

Variants of Assumption~\ref{assume:stochastic} have first proposed in classical minimization setting for providing unified analysis for several stochastic optimization methods and, at the same time, avoid the more restrictive bounded gradients and bounded variance assumptions~\cite{gorbunov2020unified,khaled2020unified}. Recent versions of Assumption~\ref{assume:stochastic} have been used in the analysis of Stochastic Extragradient~\cite{gorbunov2022stochastic} and stochastic gradient-descent assent~\cite{beznosikov2022stochastic} for solving minimax optimization and VIP problems. To our knowledge, analysis of existing local training methods for distributed VIPs depends on bounded variance conditions. Thus, via Assumption~\ref{assume:stochastic}, our convergence guarantees hold under more relaxed assumptions as well. Note that Assumption~\ref{assume:stochastic} covers many well-known conditions: for example, when $\sigma_k \equiv 0$ and $F(x_*)=0$, it will recover the recently introduced expected co-coercivity condition \cite{loizou2021stochastic} where $A$ corresponds to the modulus of expected co-coercivity, and $D_1$ corresponds to the scale of estimator variance at $x_*$. In Section~\ref{sec:centralized_special_case}, we explain how Assumption~\ref{assume:stochastic} is satisfied for many well-known estimators $g_k$, including the vanilla mini-batch estimator and variance reduced-type estimator. That is, for the different estimators (and, as a result, different algorithms), we prove the closed-form expressions of the parameters $A, B, C, D_1, D_2\geq 0$, $\rho\in(0,1]$ for which Assumption~\ref{assume:stochastic} is satisfied.

\section{General Framework: \algname{ProxSkip-VIP}}
\label{sec:centralized}
\vspace{-3mm}

Here we provide a unified algorithm framework (Algorithm~\ref{alg:Stoc-ProxSkip-VIP}) for solving regularized VIPs~\eqref{eq:objective_FL_reformulation}. Note that in this section, the guarantees hold under the general assumptions~\ref{assume:main} and~\ref{assume:stochastic}. In Section~\ref{asdas} we will further specify our results to the consensus reformulation setting where \eqref{eq:mpFL_VIP_form} also holds.

Algorithm~\ref{alg:Stoc-ProxSkip-VIP} is inspired by the \algname{ProxSkip} proposed in \cite{mishchenko2022proxskip} for solving composite minimization problems. The two key elements of the algorithm are the randomized prox-skipping, and the control variate $h_k$. Via prox-skipping, the proximal oracle is rarely called if $p$ is small, which helps to reduce the computational cost when the proximal oracle is expensive. Also, for general regularized VIPs we have $F(x_*)\neq 0$ at the optimal point $x_*$ due to the composite structure. Thus skipping the proximal operator will not allow the method to converge. On this end, the introduction of $h_k$ alleviates such drift and stabilizes the iterations toward the optimal point $(h_k \rightarrow F(x_*))$.

We also highlight that Algorithm~\ref{alg:Stoc-ProxSkip-VIP} is a general update rule and can vary based on the selection of the unbiased estimator $g_k$ and the probability $p$. For example, if $p=1$, and $h_k\equiv 0$, the algorithm is reduced to the \algname{Proximal SGDA} algorithm proposed in \cite{beznosikov2022stochastic} (with the associated theory). We will focus on further important special cases of Algorithm \ref{alg:Stoc-ProxSkip-VIP} below.
\vspace{-3mm}

\begin{algorithm}[h]
    \caption{\algname{ProxSkip-VIP}}
    \label{alg:Stoc-ProxSkip-VIP}
    \begin{algorithmic}[1]
        \REQUIRE Initial point $x_0$, parameters $\gamma, p$, initial control variate $h_0$, number of iterations $T$
        \FORALL{$k = 0,1,..., K$}
            \STATE $\widehat{x}_{k+1}=x_k-\gamma (g_k-h_k)$
            \STATE Flip a coin $\theta_k$, and $\theta_k=1$ w.p. $p$, otherwise $0$
            \IF{$\theta_k=1$}
            \STATE $x_{k+1}=\prox_{\frac{\gamma}{p} R}\autopar{\widehat{x}_{k+1}-\frac{\gamma}{p} h_k}$
            \ELSE
            \STATE $x_{k+1}=\widehat{x}_{k+1}$
            \ENDIF
            \STATE $h_{k+1}=h_k+\frac{p}{\gamma}(x_{k+1}-\widehat{x}_{k+1})$ 
        \ENDFOR
    \end{algorithmic}
\end{algorithm}

\subsection{Convergence of \algname{ProxSkip-VIP}} 
\label{sec:general_result_proxskip-vip}
\vspace{-2mm}
Our main convergence quarantees are presented in the following Theorem and Corollary.
\begin{theorem}[Convergence of \algname{ProxSkip-VIP}]
    \label{thm:convergence_Stoc-ProxSkip-VIP}
    With Assumptions \ref{assume:main} and \ref{assume:stochastic}, let 
    $
    \gamma\leq \min\autobigpar{\frac{1}{\mu}, \frac{1}{2(A+MC)}}$, $\tau\triangleq \min\autobigpar{\gamma\mu, p^2, \rho-\frac{B}{M} }$, for some $M>\frac{B}{\rho}$. Denote $$V_k\triangleq\autonorm{x_k-x_*}^2+ \frac{\gamma^2}{p^2} \autonorm{h_k- F(x_*)}^2+M\gamma^2\sigma_k^2,$$ Then the iterates of \algname{ProxSkip-VIP} (Algorithm~\ref{alg:Stoc-ProxSkip-VIP}), satisfy:
    \begin{equation*}
        \mathbb{E}\automedpar{V_K}\leq
        \autopar{1-\tau}^TV_0+\frac{\gamma^2\autopar{D_1+MD_2}}{\tau}.
    \end{equation*}

\end{theorem}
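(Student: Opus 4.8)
The plan is to exhibit a single Lyapunov function that contracts by a factor $(1-\tau)$ at every iteration up to an additive noise floor, and then unroll the resulting recursion as a geometric series. I would set $h_* \eqdef F(x_*)$ and first record the fixed-point identity: since $x_*$ solves \eqref{eq:objective_FL_reformulation}, we have $0 \in F(x_*) + \partial R(x_*)$, equivalently $x_* = \prox_{\frac{\gamma}{p}R}\autopar{x_* - \frac{\gamma}{p}h_*}$, which makes $(x_*, h_*)$ a stationary pair of the prox-skipping recursion. The target is the one-step bound $\EE_k\automedpar{V_{k+1}} \le (1-\tau)V_k + \gamma^2(D_1 + MD_2)$, where $\EE_k$ denotes expectation conditioned on everything up to iteration $k$.

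The first technical step is a prox-skipping lemma handling the coin flip $\theta_k$. Conditioning on $\mathcal{F}_k$ and the draw of $g_k$, I would average over $\theta_k$ only: with probability $1-p$ the pair $(x_{k+1}, h_{k+1})$ equals $(\hat x_{k+1}, h_k)$, and with probability $p$ it equals the virtual full-prox pair $(\tilde x_{k+1}, \tilde h_{k+1})$. A direct computation shows this virtual pair satisfies $\tilde x_{k+1} - \frac{\gamma}{p}\tilde h_{k+1} = \hat x_{k+1} - \frac{\gamma}{p}h_k$, so comparing with the fixed-point identity and invoking firm nonexpansiveness of $\prox_{\frac{\gamma}{p}R}$ yields $\norm{\tilde x_{k+1} - x_*}^2 + \frac{\gamma^2}{p^2}\norm{\tilde h_{k+1} - h_*}^2 \le \norm{(\hat x_{k+1}-x_*) - \frac{\gamma}{p}(h_k - h_*)}^2$. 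Averaging the two coin outcomes and expanding collapses to
$$\EE_{\theta_k}\automedpar{\norm{x_{k+1}-x_*}^2 + \tfrac{\gamma^2}{p^2}\norm{h_{k+1}-h_*}^2} \le \norm{\hat x_{k+1}-x_*}^2 + \tfrac{\gamma^2}{p^2}\norm{h_k-h_*}^2 - 2\gamma\inprod{\hat x_{k+1}-x_*}{h_k-h_*}.$$

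Next I would substitute $\hat x_{k+1} - x_* = (x_k - x_*) - \gamma(g_k - h_k)$ and take $\EE_k$ over $g_k$, using unbiasedness $\EE_k[g_k] = F(x_k)$. The crucial cancellation is that the linear-in-$h_k$ terms combine as $-2\gamma\inprod{x_k-x_*}{F(x_k)-h_k} - 2\gamma\inprod{x_k-x_*}{h_k-h_*} = -2\gamma\inprod{x_k-x_*}{F(x_k)-F(x_*)}$, eliminating $h_k$ entirely. Grouping the remaining quadratic $h$-terms (the $\frac{\gamma^2}{p^2}\norm{h_k-h_*}^2$ carryover, the cross term, and $\gamma^2\norm{F(x_k)-h_k}^2$ from the variance decomposition of $\EE_k\norm{g_k-h_k}^2$) produces exactly $\gamma^2\norm{F(x_k)-F(x_*)}^2 + \frac{\gamma^2}{p^2}(1-p^2)\norm{h_k-h_*}^2$, giving the control-variate contraction at rate $p^2$. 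The leftover $\gamma^2\EE_k\norm{g_k - F(x_k)}^2 = \gamma^2(\EE_k\norm{g_k-F(x_*)}^2 - \norm{F(x_k)-F(x_*)}^2)$ cancels the stray $\gamma^2\norm{F(x_k)-F(x_*)}^2$, after which Assumption~\ref{assume:stochastic} bounds $\EE_k\norm{g_k-F(x_*)}^2$ by $2A\inprod{F(x_k)-F(x_*)}{x_k-x_*} + B\sigma_k^2 + D_1$. Finally I would add $M\gamma^2\EE_k[\sigma_{k+1}^2]$, bounded via the second inequality of Assumption~\ref{assume:stochastic} by $M\gamma^2(2C\inprod{F(x_k)-F(x_*)}{x_k-x_*} + (1-\rho)\sigma_k^2 + D_2)$. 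Collecting, the coefficient of the inner product is $-2\gamma(1 - \gamma(A+MC))$, which the step-size bound $\gamma \le \frac{1}{2(A+MC)}$ keeps below $-\gamma$; quasi-strong monotonicity then turns it into $-\gamma\mu\norm{x_k-x_*}^2$, the $\sigma_k^2$ coefficient becomes $M\gamma^2(1-(\rho - B/M))$ (well-defined and contractive since $M > B/\rho$), and taking $\tau = \min\{\gamma\mu, p^2, \rho - B/M\}$ with $\gamma \le 1/\mu$ bounds every component by $(1-\tau)$. Unrolling and summing $\sum_{j\ge 0}(1-\tau)^j = 1/\tau$ delivers the claim.

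I expect the main obstacle to be the prox-skipping lemma of the second paragraph: correctly setting up the virtual full-prox pair, verifying the relation $\tilde x_{k+1} - \frac{\gamma}{p}\tilde h_{k+1} = \hat x_{k+1} - \frac{\gamma}{p}h_k$, and deploying firm nonexpansiveness (rather than mere nonexpansiveness) so that the $x$- and $h$-discrepancies are controlled simultaneously. The subsequent algebra is delicate but mechanical; the genuinely fortunate events are the exact cancellation of the $h_k$ linear terms and of the $\norm{F(x_k)-F(x_*)}^2$ terms, and I would verify those two cancellations carefully before tuning the constants $\gamma$, $M$, and $\tau$.
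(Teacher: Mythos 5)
Your proposal is correct and follows essentially the same route as the paper's proof: the same Lyapunov function, the same conditioning on the coin flip combined with firm nonexpansiveness of the proximal operator at the fixed point $x_* = \prox_{\frac{\gamma}{p}R}\autopar{x_*-\frac{\gamma}{p}F(x_*)}$, the same use of Assumption~\ref{assume:stochastic} on both $\EE\autonorm{g_k-F(x_*)}^2$ and $\EE[\sigma_{k+1}^2]$, and the same final contraction and geometric-series unrolling. The only difference is organizational: the paper introduces the auxiliary points $w_k = x_k-\gamma g_k$ and $w^* = x_*-\gamma F(x_*)$ to package the algebra, while you expand directly and track the two cancellations (of the linear $h_k$ terms and of $\autonorm{F(x_k)-F(x_*)}^2$) by hand — both yield the identical intermediate inequality.
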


Theorem~\ref{thm:convergence_Stoc-ProxSkip-VIP} show that ProxSkip-VIP converges linearly to the neighborhood of the solution. The neighborhood is proportional to the step-size $\gamma$ and the parameters $D_1$ and $D_2$ of Assumption~\ref{assume:stochastic}. 
In addition, we highlight that if we set  $p=1$, and $h_k\equiv 0$, then Theorem~\ref{thm:convergence_Stoc-ProxSkip-VIP} recovers the convergence guarantees of Proximal-SGDA of \cite[Theorem 2.2]{beznosikov2022stochastic}.
As a corollary of Theorem~\ref{thm:convergence_Stoc-ProxSkip-VIP}, we can also obtain the following corresponding complexity results.

\begin{corollary}
    \label{cor:Stoc-ProxSkip-VIP-Complexity}
    With the setting in Theorem \ref{thm:convergence_Stoc-ProxSkip-VIP}, if we set $M=\frac{2B}{\rho}$, $p=\sqrt{\gamma\mu}$ and $$\gamma\leq \min\autobigpar{\frac{1}{\mu}, \frac{1}{2(A+MC)}, \frac{\rho}{2\mu}, \frac{\mu\epsilon}{2\autopar{D_1+\frac{2B}{\rho} D_2}}},$$ we have $\mathbb{E}\automedpar{V_K}\leq\epsilon$ with iteration complexity and the number of calls to the proximal oracle $\prox(\cdot)$ as 
    \begin{eqnarray*}
        \mathcal{O}\autopar{\max\autobigpar{\frac{A+\frac{BC}{\rho}}{\mu}, \frac{1}{\rho}, \frac{D_1+\frac{B}{\rho} D_2}{\mu^2\epsilon}}\ln\frac{V_0}{\epsilon}}
    \end{eqnarray*}
    and
    \begin{eqnarray*}
        \mathcal{O}\autopar{\sqrt{\max\autobigpar{\frac{A+\frac{BC}{\rho}}{\mu}, \frac{1}{\rho}, \frac{D_1+\frac{B}{\rho} D_2}{\mu^2\epsilon}}}\ln\frac{V_0}{\epsilon}}.
    \end{eqnarray*}
\end{corollary}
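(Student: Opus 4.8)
The plan is to specialize the bound of Theorem~\ref{thm:convergence_Stoc-ProxSkip-VIP} to the prescribed parameter choices and then balance the two terms in the resulting estimate. First I would substitute $M = \frac{2B}{\rho}$, which gives $\rho - \frac{B}{M} = \frac{\rho}{2}$, and $p = \sqrt{\gamma\mu}$, which gives $p^2 = \gamma\mu$. Hence the contraction factor becomes $\tau = \min\{\gamma\mu,\, \gamma\mu,\, \frac{\rho}{2}\} = \min\{\gamma\mu, \frac{\rho}{2}\}$, and the extra constraint $\gamma \leq \frac{\rho}{2\mu}$ in the definition of $\gamma$ forces $\gamma\mu \leq \frac{\rho}{2}$, so that $\tau = \gamma\mu$. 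With these substitutions Theorem~\ref{thm:convergence_Stoc-ProxSkip-VIP} reads $\mathbb{E}[V_K] \leq (1-\gamma\mu)^K V_0 + \frac{\gamma(D_1 + \frac{2B}{\rho}D_2)}{\mu}$, where I used $\frac{\gamma^2(D_1+MD_2)}{\tau} = \frac{\gamma(D_1 + \frac{2B}{\rho}D_2)}{\mu}$.

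Next I would force each of the two terms to be at most $\frac{\epsilon}{2}$. For the neighborhood term, the requirement $\frac{\gamma(D_1 + \frac{2B}{\rho}D_2)}{\mu} \leq \frac{\epsilon}{2}$ is precisely the last entry $\gamma \leq \frac{\mu\epsilon}{2(D_1 + \frac{2B}{\rho}D_2)}$ imposed in the statement, so this is satisfied for free. For the transient term, using the elementary inequality $(1-\gamma\mu)^K \leq \exp(-\gamma\mu K)$ I would demand $K \geq \frac{1}{\gamma\mu}\ln\frac{2V_0}{\epsilon}$, which makes $(1-\gamma\mu)^K V_0 \leq \frac{\epsilon}{2}$ and hence $\mathbb{E}[V_K] \leq \epsilon$.

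It then remains to express $\frac{1}{\gamma\mu}$ in closed form. Since $\gamma$ is defined as the minimum of four quantities, $\frac{1}{\gamma}$ equals the maximum of their reciprocals; using $M=\frac{2B}{\rho}$ so that $A+MC = A + \frac{2BC}{\rho}$, discarding the harmless standalone factor $\mu$, and absorbing numerical constants, I obtain $\frac{1}{\gamma\mu} = \mathcal{O}\!\left(\max\{\frac{A + BC/\rho}{\mu},\, \frac{1}{\rho},\, \frac{D_1 + BD_2/\rho}{\mu^2\epsilon}\}\right)$. Multiplying by $\ln\frac{V_0}{\epsilon}$ recovers the claimed iteration complexity.

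Finally, since the proximal operator in Algorithm~\ref{alg:Stoc-ProxSkip-VIP} is invoked only when the coin flip $\theta_k=1$, which occurs with probability $p$ independently at each step, the expected number of calls to $\prox(\cdot)$ over $K$ iterations is $pK = \sqrt{\gamma\mu}\,K$. Substituting the iteration count gives $\sqrt{\gamma\mu}\cdot\frac{1}{\gamma\mu}\ln\frac{2V_0}{\epsilon} = \sqrt{\frac{1}{\gamma\mu}}\,\ln\frac{2V_0}{\epsilon}$, and pulling the square root inside the $\max$ yields the stated $\mathcal{O}\!\left(\sqrt{\max\{\cdots\}}\,\ln\frac{V_0}{\epsilon}\right)$ bound. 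I expect the main bookkeeping hurdle to be the careful tracking of numerical constants through the $\max$ and $\sqrt{\max}$ manipulations, together with verifying that the choice $p=\sqrt{\gamma\mu}$ is consistent with $p\in(0,1]$ under the step-size constraints; I would also state explicitly that the proximal-call count is an \emph{expected} quantity, since each iteration calls the prox randomly.
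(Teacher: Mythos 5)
Your proposal is correct and follows essentially the same route as the paper's own proof: substitute $M=\frac{2B}{\rho}$ and $p=\sqrt{\gamma\mu}$ so that $\tau=\gamma\mu$ (using $\gamma\le\frac{\rho}{2\mu}$), split the resulting bound into a geometric term and a neighborhood term each controlled to be at most $\frac{\epsilon}{2}$, and then unpack $\frac{1}{\gamma\mu}$ as a maximum of reciprocals for the iteration count and multiply by $p$ for the proximal-call count. Your added remarks about the prox count being an expected quantity and about checking $p\in(0,1]$ are sensible clarifications the paper leaves implicit, but they do not change the argument.
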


\subsection{Special Cases of General Analysis}
\label{sec:centralized_special_case}
\vspace{-2mm}
Theorem~\ref{thm:convergence_Stoc-ProxSkip-VIP} holds under the general key Assumption~\ref{assume:stochastic} on the stochastic estimates. In this subsection, via Theorem~\ref{thm:convergence_Stoc-ProxSkip-VIP}, we explain how different selections of the unbiased estimator $g_k$ in Algorithm~\ref{alg:Stoc-ProxSkip-VIP} lead to various convergence guarantees.
In particular, here we cover (i) \algname{ProxSkip-SGDA}, (ii) \algname{ProxSkip-GDA}, and (iii)variance-reduced method \algname{ProxSkip-L-SVRGDA}. To the best of our knowledge, none of these algorithms have been proposed and analyzed before for solving VIPs.

\vspace{-2mm}
\paragraph{(i) Algorithm: \algname{ProxSkip-SGDA}.} 
Let us have the following assumption:
\begin{assumption}[Expected Cocoercivity]
    \label{assume:ECC}
    We assume that for all $k \geq 0$, the stochastic operator $g_k\triangleq g(x_k)$, which is an unbiased estimator of $F(x_k)$, satisfies expected cocoercivity, i.e., for all $x \in \R^d$ there is $L_g>0$ such that
    \begin{equation*}
        \mathbb{E}\autonorm{g(x)-g(x_*)}^2\leq L_g \autoprod{F(x)-F(x_*), x-x_*}.
    \end{equation*}
\end{assumption}

The expected cocoercivity condition was first proposed in \cite{loizou2021stochastic} to analyze SGDA and the stochastic consensus optimization algorithms efficiently. It is strictly weaker compared to the bounded variance assumption and “growth
conditions,” and it implies the star-cocoercivity of the operator $F$. Assuming expected co-coercivity allows us to characterize the estimator $g_k$. 
\begin{lemma}[\cite{beznosikov2022stochastic}]
    \label{thm:property_local_estimator}
    Let Assumptions \ref{assume:main} and \ref{assume:ECC} hold and let $\sigma_*^2 \triangleq \E \left[\|g(x_*) - F(x_*)\|^2\right] <+\infty$.
    Then $g_k$ satisfies Assumption \ref{assume:stochastic} with 
    \begin{equation*}
        A=L_g,\ D_1=2\sigma_*^2, \rho=1, \text{ and }  B=C=D_2=\sigma_k^2\equiv 0.
    \end{equation*}
\end{lemma}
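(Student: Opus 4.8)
The plan is to directly verify the two defining inequalities of Assumption~\ref{assume:stochastic} under the stated choice of constants $A=L_g$, $D_1=2\sigma_*^2$, $\rho=1$, and $B=C=D_2=\sigma_k^2\equiv 0$. The second inequality is immediate: with $\sigma_k\equiv 0$ the sequence of variance proxies is identically zero, and since $C=0$, $\rho=1$, $D_2=0$, the claimed bound $\mathbb{E}[\sigma_{k+1}^2]\le 2C\langle F(x)-F(x_*),x-x_*\rangle+(1-\rho)\sigma_k^2+D_2$ reduces to $0\le 0$, which holds trivially. So all the work concentrates on the first (estimator-variance) inequality.

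For the first inequality, I would start from the identity $g_k-F(x_*)=\big(g(x_k)-g(x_*)\big)+\big(g(x_*)-F(x_*)\big)$, splitting the deviation of the estimator from $F(x_*)$ into the part controlled by expected cocoercivity and the irreducible noise at the optimum. Applying the elementary bound $\|a+b\|^2\le 2\|a\|^2+2\|b\|^2$ and then taking expectations gives $\mathbb{E}\|g_k-F(x_*)\|^2\le 2\,\mathbb{E}\|g(x_k)-g(x_*)\|^2+2\,\mathbb{E}\|g(x_*)-F(x_*)\|^2$. Now I would invoke Assumption~\ref{assume:ECC} (expected cocoercivity) to bound the first term by $2L_g\langle F(x_k)-F(x_*),x_k-x_*\rangle$, and use the definition $\sigma_*^2:=\mathbb{E}\|g(x_*)-F(x_*)\|^2$ to identify the second term as $2\sigma_*^2$. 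Collecting these yields
\begin{equation*}
    \mathbb{E}\|g_k-F(x_*)\|^2 \le 2L_g\,\langle F(x_k)-F(x_*),\,x_k-x_*\rangle + 2\sigma_*^2,
\end{equation*}
which is exactly the first bound of Assumption~\ref{assume:stochastic} with $A=L_g$, $B=0$, and $D_1=2\sigma_*^2$.

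Since this is a short consequence of the hypotheses, there is no serious obstacle; the only point requiring care is the constant bookkeeping. In particular, the factor $2$ coming from the Young-type inequality $\|a+b\|^2\le 2\|a\|^2+2\|b\|^2$ is what propagates into both $A=L_g$ (rather than $A=L_g/2$) and $D_1=2\sigma_*^2$, so I would make sure to track it consistently rather than using a tighter but parameter-dependent Young inequality. I would also note in passing that the finiteness assumption $\sigma_*^2<+\infty$ is precisely what guarantees $D_1$ is a well-defined finite constant, and that the trivial choice $\sigma_k\equiv 0$ is admissible here only because the estimator's variance is absorbed entirely into the static term $D_1$ rather than tracked by a recursion (this is the feature that later distinguishes \algname{ProxSkip-SGDA} from the variance-reduced \algname{ProxSkip-L-SVRGDA}, where a nontrivial $\sigma_k$ sequence is needed).
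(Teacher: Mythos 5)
Your proposal is correct and follows essentially the same route as the paper: the same decomposition $g_k-F(x_*)=(g(x_k)-g(x_*))+(g(x_*)-F(x_*))$, the same Young-type inequality $\|a+b\|^2\le 2\|a\|^2+2\|b\|^2$, the application of expected cocoercivity to the first term, and the identification of the second term with $2\sigma_*^2$, together with the trivial verification of the second inequality of Assumption~\ref{assume:stochastic} under $\sigma_k\equiv 0$. Your additional remarks on constant bookkeeping and on why $\sigma_k\equiv 0$ suffices here (in contrast to the variance-reduced case) are accurate but not needed for the argument.
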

By combining Lemma~\ref{thm:property_local_estimator} and Corollary~\ref{cor:Stoc-ProxSkip-VIP-Complexity}, we obtain the following result.

\begin{corollary}[Convergence of \algname{ProxSkip-SGDA}]
    \label{thm:convergence_ProxSkip-SGDA}
    With Assumption \ref{assume:main} and \ref{assume:ECC}, if we further set 
    $$\gamma\leq \min\autobigpar{\frac{1}{2L_g}, \frac{1}{2\mu}, \frac{\mu\epsilon}{8\sigma_*^2}}$$
    and $p=\sqrt{\gamma\mu}$, then for the iterates of \algname{ProxSkip-SGDA}, we have $\mathbb{E}\automedpar{V_K}\leq\epsilon$ with iteration complexity and the number of calls of the proximal oracle $\prox(\cdot)$ as
    \begin{eqnarray*}
        \mathcal{O}\autopar{\max\autobigpar{\frac{L_g}{\mu}, \frac{\sigma_*^2}{\mu^2\epsilon}}\ln\frac{1}{\epsilon}}
    \end{eqnarray*}
    and
    \begin{eqnarray*}
        \mathcal{O}\autopar{\sqrt{\max\autobigpar{\frac{L_g}{\mu}, \frac{\sigma_*^2}{\mu^2\epsilon}}}\ln\frac{1}{\epsilon}},
    \end{eqnarray*}
    respectively.
\end{corollary}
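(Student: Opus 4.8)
The plan is to derive Corollary~\ref{thm:convergence_ProxSkip-SGDA} as a direct specialization of the general complexity result in Corollary~\ref{cor:Stoc-ProxSkip-VIP-Complexity}, using Lemma~\ref{thm:property_local_estimator} to certify that the hypotheses are met. First I would invoke Lemma~\ref{thm:property_local_estimator}: since Assumptions~\ref{assume:main} and~\ref{assume:ECC} hold and $\sigma_*^2 = \E\|g(x_*) - F(x_*)\|^2 < \infty$, the \algname{SGDA} estimator $g_k$ satisfies the generic estimator Assumption~\ref{assume:stochastic} with the explicit constants $A = L_g$, $D_1 = 2\sigma_*^2$, $\rho = 1$, and $B = C = D_2 = 0$, $\sigma_k \equiv 0$. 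This is exactly the entry hypothesis needed to apply Theorem~\ref{thm:convergence_Stoc-ProxSkip-VIP} and its Corollary~\ref{cor:Stoc-ProxSkip-VIP-Complexity}.

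Next I would substitute these values into Corollary~\ref{cor:Stoc-ProxSkip-VIP-Complexity}. The only point requiring care is that the prescribed choice $M = 2B/\rho$ degenerates to $M = 0$, whereas Theorem~\ref{thm:convergence_Stoc-ProxSkip-VIP} formally demands $M > B/\rho$. I would resolve this by observing that when $B = C = D_2 = 0$ and $\sigma_k \equiv 0$, every quantity in which $M$ appears---namely the Lyapunov term $M\gamma^2\sigma_k^2$, the products $MC$ and $MD_2$, and the ratios $BC/\rho$ and $\tfrac{B}{\rho}D_2$---vanishes identically, and the contraction factor reduces to $\tau = \min\autobigpar{\gamma\mu,\, p^2}$. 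Hence $M$ can be taken to be any fixed positive constant (equivalently, one passes to the limit $M\downarrow 0$) without affecting any of the bounds, so the requirement $M > B/\rho = 0$ is satisfied. With $p = \sqrt{\gamma\mu}$ and the restriction $\gamma \le \tfrac{1}{2\mu}$ one has $p^2 = \gamma\mu \le \tfrac12$, giving $\tau = \gamma\mu$.

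Then I would simplify the resulting expressions. Plugging $A = L_g$, $\rho = 1$, $D_1 = 2\sigma_*^2$ (and the vanishing constants) into the admissible step size of Corollary~\ref{cor:Stoc-ProxSkip-VIP-Complexity} yields $\gamma \le \min\autobigpar{\tfrac{1}{\mu}, \tfrac{1}{2L_g}, \tfrac{1}{2\mu}, \tfrac{\mu\epsilon}{4\sigma_*^2}}$, in which $\tfrac{1}{\mu}$ is redundant against $\tfrac{1}{2\mu}$; the (slightly more conservative) bound $\gamma \le \min\autobigpar{\tfrac{1}{2L_g}, \tfrac{1}{2\mu}, \tfrac{\mu\epsilon}{8\sigma_*^2}}$ stated in the corollary then trivially implies it, so all hypotheses remain in force. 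The iteration-complexity bound $\mathcal{O}\autopar{\max\autobigpar{\tfrac{A + BC/\rho}{\mu},\, \tfrac{1}{\rho},\, \tfrac{D_1 + \frac{B}{\rho}D_2}{\mu^2\epsilon}}\ln\tfrac{V_0}{\epsilon}}$ collapses to $\mathcal{O}\autopar{\max\autobigpar{\tfrac{L_g}{\mu},\, 1,\, \tfrac{\sigma_*^2}{\mu^2\epsilon}}\ln\tfrac{1}{\epsilon}}$, where I treat $V_0$ as an $\mathcal{O}(1)$ initialisation constant so that $\ln\tfrac{V_0}{\epsilon} = \mathcal{O}\autopar{\ln\tfrac1\epsilon}$; absorbing the additive constant $1$ into the maximum gives the claimed $\mathcal{O}\autopar{\max\autobigpar{\tfrac{L_g}{\mu},\, \tfrac{\sigma_*^2}{\mu^2\epsilon}}\ln\tfrac{1}{\epsilon}}$. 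The proximal-oracle count is the square root of the same quantity, since each of the $T$ iterations triggers a prox evaluation independently with probability $p = \sqrt{\gamma\mu}$, so the expected number of calls is $pT = \sqrt{1/(\gamma\mu)}\,\ln\tfrac{V_0}{\epsilon}$, matching the general formula.

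Because the entire argument is a substitution into an already-established general theorem, there is no genuinely hard analytic step; the only mild obstacle is the bookkeeping around the degenerate $M = 0$ case (formally $M > B/\rho$ is required, but all $M$-dependent terms are zero here) and the reconciliation of universal constant factors, which I would handle simply by noting that a more restrictive step size still satisfies every hypothesis of Corollary~\ref{cor:Stoc-ProxSkip-VIP-Complexity}.
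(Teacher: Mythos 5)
Your proposal is correct and follows exactly the route the paper takes: the paper derives this corollary by combining Lemma~\ref{thm:property_local_estimator} (which gives $A=L_g$, $D_1=2\sigma_*^2$, $\rho=1$, $B=C=D_2=0$) with Corollary~\ref{cor:Stoc-ProxSkip-VIP-Complexity}, precisely as you do. Your explicit handling of the degenerate $M=0$ case and the observation that the stated step size is a (harmless) tightening of the one produced by direct substitution are careful additions that the paper leaves implicit, but they do not change the argument.
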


\paragraph{ (ii) Deterministic Case: \algname{ProxSkip-GDA}.} In the deterministic case where $g(x_k)= F(x_k)$, the algorithm \algname{ProxSkip-SGDA} is reduced to \algname{ProxSkip-GDA}. Thus, Lemma~\ref{thm:property_local_estimator} and Corollary~\ref{cor:Stoc-ProxSkip-VIP-Complexity} but with $\sigma_*^2=0$. In addition, the expected co-coercivity parameter becomes $L_g=\ell$ by Assumption ~\ref{assume:main}.

\begin{corollary}[Convergence of \algname{ProxSkip-GDA}]
\label{cor:ProxSkip_GDA_complexity}
    With the same setting in Corollary \ref{thm:convergence_ProxSkip-SGDA}, if we set the estimator $g_k= F(x_k)$, and 
 
    $\gamma= \frac{1}{2\ell}$
    the iterates of \algname{ProxSkip-GDA} satisfy:
    $$\mathbb{E} \automedpar{V_K}\leq
        \autopar{1-\min\autobigpar{\gamma\mu, p^2 }}^TV_0,$$

    and we get $\mathbb{E}\automedpar{V_K}\leq\epsilon$ with iteration complexity and number of calls of the proximal oracle $\prox(\cdot)$ as 

    \begin{equation*}
        \mathcal{O}\autopar{\frac{L_g}{\mu}\ln\frac{1}{\epsilon}}
        \ \ \text{and}\ \ 
        \mathcal{O}\autopar{\sqrt{\frac{L_g}{\mu}}\ln\frac{1}{\epsilon}},
    \end{equation*}
    respectively.

\end{corollary}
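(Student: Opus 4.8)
The plan is to obtain this corollary as a direct specialization of the \algname{ProxSkip-SGDA} result (Corollary~\ref{thm:convergence_ProxSkip-SGDA}) to the noiseless regime, where all quantities measuring stochastic noise collapse. First I would verify that setting $g_k = F(x_k)$ makes the estimator exact. Concretely, since $g(x) = F(x)$ for every $x$, the expected-cocoercivity condition (Assumption~\ref{assume:ECC}) reads $\|F(x) - F(x_*)\|^2 \le L_g \langle F(x) - F(x_*), x - x_* \rangle$, which is precisely the $\ell$-star-cocoercivity guaranteed by Assumption~\ref{assume:main}; hence it holds with $L_g = \ell$. Moreover $\sigma_*^2 = \E\|g(x_*) - F(x_*)\|^2 = \|F(x_*) - F(x_*)\|^2 = 0$. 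Feeding these into Lemma~\ref{thm:property_local_estimator} yields the estimator parameters $A = L_g = \ell$, $D_1 = 2\sigma_*^2 = 0$, $\rho = 1$, and $B = C = D_2 = 0$ with $\sigma_k \equiv 0$.

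Next I would trace these parameters through Theorem~\ref{thm:convergence_Stoc-ProxSkip-VIP}. Because $\sigma_k \equiv 0$, the Lyapunov function reduces to $V_k = \|x_k - x_*\|^2 + \tfrac{\gamma^2}{p^2}\|h_k - F(x_*)\|^2$, and the contraction factor simplifies to $\tau = \min\{\gamma\mu, p^2, \rho - B/M\} = \min\{\gamma\mu, p^2\}$, since $B = 0$ forces $\rho - B/M = 1$ for any admissible $M$. The step-size constraint $\gamma \le \min\{1/\mu, 1/(2(A + MC))\}$ becomes $\gamma \le \min\{1/\mu, 1/(2\ell)\}$ because $C = 0$, so the stated choice $\gamma = 1/(2\ell)$ is admissible (using $\mu \le \ell$). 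Crucially, the neighborhood term $\gamma^2(D_1 + M D_2)/\tau$ vanishes, and Theorem~\ref{thm:convergence_Stoc-ProxSkip-VIP} gives exactly $\E[V_K] \le (1 - \min\{\gamma\mu, p^2\})^T V_0$, which is the first claimed bound.

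Finally, for the complexity counts I would invoke Corollary~\ref{cor:Stoc-ProxSkip-VIP-Complexity} (equivalently, Corollary~\ref{thm:convergence_ProxSkip-SGDA} with $\sigma_*^2 = 0$). With $p = \sqrt{\gamma\mu}$ the two candidates in $\tau$ are balanced, giving $\tau = \gamma\mu = \mu/(2\ell) = \mu/(2L_g)$, so reaching $\E[V_K] \le \epsilon$ requires $T = \mathcal{O}((L_g/\mu)\ln(1/\epsilon))$ iterations (the $\sigma_*^2/(\mu^2\epsilon)$ term in the general complexity drops out). Since the proximal step is executed only when the Bernoulli coin $\theta_k = 1$, which occurs with probability $p$, the expected number of prox evaluations over $T$ iterations is $pT = \sqrt{\gamma\mu}\,T = \mathcal{O}(\sqrt{L_g/\mu}\,\ln(1/\epsilon))$, matching the second claim.

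The main item to handle carefully—rather than a genuine obstacle—is the degenerate bookkeeping around $M$ and $\sigma_k$: because $B = 0$, the auxiliary weight $M$ and the variance sequence $\sigma_k$ play no role, and one must confirm that both $\rho - B/M$ and the Lyapunov component $M\gamma^2\sigma_k^2$ behave correctly (the former equals $\rho = 1$ and the latter is identically zero), so that the admissibility requirement $M > B/\rho$ imposes no binding constraint. Everything else is a straightforward substitution into the already-proven general results.
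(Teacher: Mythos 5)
Your proposal is correct and follows essentially the same route as the paper: the paper also obtains this corollary by specializing Lemma~\ref{thm:property_local_estimator} and Corollary~\ref{cor:Stoc-ProxSkip-VIP-Complexity} to the deterministic estimator, setting $\sigma_*^2=0$ and $L_g=\ell$ (via star-cocoercivity from Assumption~\ref{assume:main}), so that the neighborhood term vanishes and $\tau=\min\{\gamma\mu,p^2\}=\gamma\mu$ under $p=\sqrt{\gamma\mu}$. Your bookkeeping of the degenerate parameters $B=C=D_1=D_2=0$, $\rho=1$, $\sigma_k\equiv 0$ and the resulting complexities $\mathcal{O}\autopar{(L_g/\mu)\ln(1/\epsilon)}$ and $\mathcal{O}\autopar{\sqrt{L_g/\mu}\,\ln(1/\epsilon)}$ is accurate.
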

Note that if we further have $F$ to be $L$-Lipschitz continuous and $\mu$-strongly monotone, then $\ell=\kappa L$ where $\kappa= \nicefrac{L}{\mu}$ is the condition number~\cite{loizou2021stochastic}. Thus the number of iteration and calls of the proximal oracles are $\mathcal{O}\autopar{\kappa^2\ln\frac{1}{\epsilon}}$ and $\mathcal{O}\autopar{\kappa\ln\frac{1}{\epsilon}}$ respectively. In the minimization setting, these two complexities are equal to $\mathcal{O}\autopar{\kappa\ln\frac{1}{\epsilon}}$ and $\mathcal{O}\autopar{\sqrt{\kappa}\ln\frac{1}{\epsilon}}$ since $L_g=\ell=L$. In this case, our result recovers the result of the original \algname{ProxSkip} method in \cite{mishchenko2022proxskip}.

\paragraph{(iii) Algorithm: \algname{ProxSkip-L-SVRGDA}.}
Here, we focus on a variance-reduced variant of the proposed \algname{ProxSkip} framework (Algorithm~\ref{alg:Stoc-ProxSkip-VIP}). We further specify the operator $F$ in \eqref{eq:objective_FL_reformulation} to be in a finite-sum formulation: $F(x)=\frac{1}{n}\sum_{i=1}^nF_i(x).$

We propose the ProxSkip-Loopless-SVRG (\algname{ProxSkip-L-SVRGDA}) algorithm (Algorithm~\ref{alg:Stoc-ProxSkip-L-SVRGDA} in Appendix) which generalizes the L\nobreakdash-SVRGDA proposed in \cite{beznosikov2022stochastic}. In this setting, we need to introduce the following assumption:

\begin{assumption}
    \label{assume:average_star_coco}
    We assume that there exist a constant $\widehat{\ell}$ such that for all $x \in \R^d$: 
    \begin{equation*}
        \frac{1}{n}\sum_{i=1}^n\autonorm{F_i(x)-F_i(x_*)}^2\leq\widehat{\ell}\autoprod{F(x)-F(x_*), x-x_*}.
    \end{equation*}
\end{assumption}

If each $F_i$ is $\ell_i$-cocoercive, then Assumption \ref{assume:average_star_coco} holds with $\widehat{\ell}\leq\max_{i\in[n]}L_i$. Using Assumption~\ref{assume:average_star_coco} we obtain the following result~\cite{beznosikov2022stochastic}.
\begin{lemma}[\cite{beznosikov2022stochastic}]
    \label{lm:ABC_SVRG}
    With Assumption~\ref{assume:main} and \ref{assume:average_star_coco}, we have the estimator $g_k$ in Algorithm~\ref{alg:Stoc-ProxSkip-L-SVRGDA} satisfies Assumption \ref{assume:main} with 

    \begin{equation*}
        A=\widehat{\ell},\ B=2,\ D_1=D_2=0,\ C=\frac{q\widehat{\ell}}{2},\ \rho=q,\ \text{ and } \sigma_k^2=\frac{1}{n}\sum_{i=1}^n\autonorm{F_i(x_k)-F_i(x_*)}^2.
    \end{equation*}
\end{lemma}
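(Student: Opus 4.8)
The plan is to verify that the loopless variance-reduced estimator underlying \algname{ProxSkip-L-SVRGDA} fits the abstract template of Assumption~\ref{assume:stochastic} with the stated constants; since the result is borrowed from \cite{beznosikov2022stochastic}, the proof reduces to checking these identifications. Recall that at iteration $k$ the method draws an index $i_k$ uniformly from $[n]$ and forms $g_k = F_{i_k}(x_k) - F_{i_k}(w_k) + F(w_k)$, where $w_k$ is the snapshot (reference) point refreshed by the loopless rule $w_{k+1}=x_k$ with probability $q$ and $w_{k+1}=w_k$ otherwise, and the control sequence $\sigma_k^2 = \frac{1}{n}\sum_{i=1}^n \autonorm{F_i(w_k)-F_i(x_*)}^2$ tracks this snapshot. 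Unbiasedness $\Expk{g_k}=F(x_k)$ is immediate from $\Expk{F_{i_k}(w_k)}=F(w_k)$, so it remains to establish the two inequalities of Assumption~\ref{assume:stochastic}.

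For the first inequality I would split $g_k - F(x_*) = a - b$ with $a = F_{i_k}(x_k)-F_{i_k}(x_*)$ and $b = \bigl(F_{i_k}(w_k)-F_{i_k}(x_*)\bigr) - \bigl(F(w_k)-F(x_*)\bigr)$, then apply $\autonorm{a-b}^2 \le 2\autonorm{a}^2 + 2\autonorm{b}^2$. Taking $\Expk{\cdot}$, the first term is $\frac{2}{n}\sum_i \autonorm{F_i(x_k)-F_i(x_*)}^2$, which Assumption~\ref{assume:average_star_coco} bounds by $2\widehat{\ell}\,\autoprod{F(x_k)-F(x_*),\,x_k-x_*}$. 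The key observation is that $b$ has conditional mean zero, so $\Expk{\autonorm{b}^2}$ is the variance of $F_{i_k}(w_k)-F_{i_k}(x_*)$; bounding variance by the second moment gives $\Expk{\autonorm{b}^2}\le \frac{1}{n}\sum_i\autonorm{F_i(w_k)-F_i(x_*)}^2 = \sigma_k^2$. Combining yields the first bound with $A=\widehat{\ell}$, $B=2$, $D_1=0$.

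For the second inequality I would evaluate $\Expk{\sigma_{k+1}^2}$ directly from the Bernoulli refresh, obtaining the exact convex combination $\Expk{\sigma_{k+1}^2} = q\cdot\frac{1}{n}\sum_i\autonorm{F_i(x_k)-F_i(x_*)}^2 + (1-q)\,\sigma_k^2$. Applying Assumption~\ref{assume:average_star_coco} to the remaining $x_k$-term gives $\Expk{\sigma_{k+1}^2} \le q\widehat{\ell}\,\autoprod{F(x_k)-F(x_*),\,x_k-x_*} + (1-q)\,\sigma_k^2$, which is precisely the required recursion with $C=q\widehat{\ell}/2$, $\rho=q$, and $D_2=0$.

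The step I expect to require the most care is the centering in the first inequality: one must group the stochastic and correction terms so that $b$ has conditional mean exactly zero, because only then does the cross term vanish in expectation and the residual collapse to $\sigma_k^2$ through the variance-$\le$-second-moment estimate. Once that decomposition is in place, both bounds follow from a single invocation of the average star-cocoercivity Assumption~\ref{assume:average_star_coco}, and matching coefficients delivers exactly the constants claimed in the lemma.
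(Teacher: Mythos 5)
Your proposal is correct and follows essentially the same route as the paper's proof: Young's inequality on the decomposition $g_k-F(x_*)=\bigl(F_{i_k}(x_k)-F_{i_k}(x_*)\bigr)-\bigl[(F_{i_k}(w_k)-F_{i_k}(x_*))-(F(w_k)-F(x_*))\bigr]$, the variance-$\le$-second-moment bound on the centered term, a single application of Assumption~\ref{assume:average_star_coco}, and the exact Bernoulli convex combination for $\Expk{\sigma_{k+1}^2}$. You also correctly use $\sigma_k^2=\frac{1}{n}\sum_i\autonorm{F_i(w_k)-F_i(x_*)}^2$ (the snapshot point $w_k$), which is what the paper's proof actually uses even though the lemma statement writes $x_k$ there.
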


By combining Lemma~\ref{lm:ABC_SVRG} and Corollary~\ref{cor:Stoc-ProxSkip-VIP-Complexity}, we obtain the following result for \algname{ProxSkip-L-SVRGDA}:

\begin{corollary}[Complexities of \algname{ProxSkip-L-SVRGDA}]
    \label{cor:ProxSkip-L-SVRGDA-FL_complexity}
    Let Assumption \ref{assume:main} and \ref{assume:average_star_coco} hold. If we further set 
    
    \begin{equation*}
        q=2\gamma\mu, M=\frac{4}{q}, p=\sqrt{\gamma\mu} \text{ and }  \gamma=\min\autobigpar{\frac{1}{\mu}, \frac{1}{6\widehat{\ell}}},
    \end{equation*}
    then we obtain $\mathbb{E}\automedpar{V_K}\leq\epsilon$ with iteration complexity and the number of calls of the proximal oracle $\prox(\cdot)$ 
    as $\mathcal{O}(\widehat{\ell}/\mu\ln\frac{1}{\epsilon})$ and the number of calls of the proximal oracle $\prox(\cdot)$ as $\mathcal{O}(\sqrt{\widehat{\ell}/\mu}\ln\frac{1}{\epsilon})$.
 
\end{corollary}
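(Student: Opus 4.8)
The plan is to obtain this corollary purely by specialization: substitute the parameter values that Lemma~\ref{lm:ABC_SVRG} supplies for the \algname{L-SVRGDA} estimator into the general complexity statement of Corollary~\ref{cor:Stoc-ProxSkip-VIP-Complexity}, and then simplify. By Lemma~\ref{lm:ABC_SVRG}, under Assumptions~\ref{assume:main} and~\ref{assume:average_star_coco} the estimator $g_k$ satisfies Assumption~\ref{assume:stochastic} with $A=\widehat{\ell}$, $B=2$, $C=\frac{q\widehat{\ell}}{2}$, $\rho=q$, and crucially $D_1=D_2=0$. The vanishing of $D_1$ and $D_2$ is the structural feature of the variance-reduced estimator that matters here: it removes the additive error neighborhood in Theorem~\ref{thm:convergence_Stoc-ProxSkip-VIP}, so that $\mathbb{E}\automedpar{V_K}$ converges linearly to zero rather than to a ball around the solution.

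First I would verify that the prescribed choices $M=\frac{4}{q}$, $p=\sqrt{\gamma\mu}$, $\rho=q=2\gamma\mu$ are mutually consistent with the hypotheses of Corollary~\ref{cor:Stoc-ProxSkip-VIP-Complexity}, which fixes $M=\frac{2B}{\rho}$. Indeed $\frac{2B}{\rho}=\frac{4}{q}=M$, so the control-variate weight is exactly the one required by the general result, and $M>\frac{B}{\rho}$ holds automatically. Next I would simplify the step-size bound of the corollary. With $D_1=D_2=0$ the last term $\frac{\mu\epsilon}{2\autopar{D_1+\frac{2B}{\rho}D_2}}$ becomes vacuous, while $A+MC=\widehat{\ell}+\frac{4}{q}\cdot\frac{q\widehat{\ell}}{2}=3\widehat{\ell}$ gives $\frac{1}{2(A+MC)}=\frac{1}{6\widehat{\ell}}$. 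Because $\rho=q=2\gamma\mu$, the remaining constraint $\gamma\le\frac{\rho}{2\mu}$ reduces to the tautology $\gamma\le\gamma$. Hence the admissible range collapses to exactly $\gamma=\min\autobigpar{\frac{1}{\mu},\frac{1}{6\widehat{\ell}}}$, matching the stated choice.

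Then I would evaluate the two complexity expressions term by term. In the bracketed quantity $\max\autobigpar{\frac{A+BC/\rho}{\mu},\frac{1}{\rho},\frac{D_1+(B/\rho)D_2}{\mu^2\epsilon}}$ one has $BC/\rho=\frac{2\cdot(q\widehat{\ell}/2)}{q}=\widehat{\ell}$, so $\frac{A+BC/\rho}{\mu}=\frac{2\widehat{\ell}}{\mu}$; the middle term $\frac{1}{\rho}=\frac{1}{2\gamma\mu}=\mathcal{O}(\widehat{\ell}/\mu)$ for the chosen $\gamma$; and the last term is $0$. The maximum is therefore $\mathcal{O}(\widehat{\ell}/\mu)$, yielding iteration complexity $\mathcal{O}\autopar{\frac{\widehat{\ell}}{\mu}\ln\frac{1}{\epsilon}}$, and—via the square root in the proximal-call bound of Corollary~\ref{cor:Stoc-ProxSkip-VIP-Complexity}, which is tied to the choice $p=\sqrt{\gamma\mu}$ (the expected number of prox calls being $p$ times the iteration count)—a proximal-oracle complexity of $\mathcal{O}\autopar{\sqrt{\widehat{\ell}/\mu}\ln\frac{1}{\epsilon}}$.

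I expect the only genuinely delicate point to be the self-referential parameter bookkeeping: $q$, $M$, $\rho$, and $p$ are all tied to $\gamma$, so one must confirm that substituting $\rho=q=2\gamma\mu$ does not introduce a hidden circular constraint on $\gamma$ and that the constant factors ($A+MC=3\widehat{\ell}$ and $BC/\rho=\widehat{\ell}$) come out exactly as needed to reproduce the clean $\mathcal{O}(\widehat{\ell}/\mu)$ rate. Once this consistency is checked, the corollary follows immediately from Lemma~\ref{lm:ABC_SVRG} and Corollary~\ref{cor:Stoc-ProxSkip-VIP-Complexity}, with the logarithmic factor absorbing the dependence on $V_0$.
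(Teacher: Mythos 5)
Your proposal is correct and follows exactly the route the paper takes: the corollary is obtained by substituting the parameters $A=\widehat{\ell}$, $B=2$, $C=\frac{q\widehat{\ell}}{2}$, $\rho=q$, $D_1=D_2=0$ from Lemma~\ref{lm:ABC_SVRG} into Corollary~\ref{cor:Stoc-ProxSkip-VIP-Complexity}, and your arithmetic ($A+MC=3\widehat{\ell}$, $BC/\rho=\widehat{\ell}$, the vacuous constraints from $D_1=D_2=0$ and $\gamma\le\rho/(2\mu)$) checks out. The paper gives no further detail beyond this combination, so your write-up is, if anything, more explicit than the original.
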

If we further consider Corollary~\ref{cor:ProxSkip-L-SVRGDA-FL_complexity} in the minimization setting and assume each $F_i$ is $L$-Lipschitz and $\mu$-strongly convex, the number of iteration and calls of the proximal oracles will be $\mathcal{O}\autopar{\kappa\ln\frac{1}{\epsilon}}$ and $\mathcal{O}\autopar{\sqrt{\kappa}\ln\frac{1}{\epsilon}}$, which recover the results of variance-reduced \algname{ProxSkip} in \cite{malinovsky2022variance}.

\section{Application of \algname{ProxSkip} to Federated Learning}
\label{asdas}

In this section, by specifying the general problem to the expression of~\eqref{eq:mpFL_VIP_form} as a special case of \eqref{eq:objective_FL_reformulation},  we explain how the proposed algorithmic framework can be interpreted as federated learning algorithms. As discussed in Section~\ref{sec:connection_VIP_FL}, in the distributed/federated setting, evaluating the $\prox_{\gamma R}\autopar{x}$ is equivalent to a communication between the $n$ workers. In the FL setting, Algorithm~\ref{alg:Stoc-ProxSkip-VIP} can be expressed as Algorithm~\ref{alg:ProxSkip-VIP-FL}. Note that skipping the proximal operator in Algorithm~\ref{alg:Stoc-ProxSkip-VIP} corresponds to local updates in Algorithm~\ref{alg:ProxSkip-VIP-FL}. In Algorithm~\ref{alg:ProxSkip-VIP-FL}, $g_{i,k}=g_i(x_{i,k})$ is the unbiased estimator of the $H_i(x_{i,k})$ of the original problem \eqref{eq:FedVIP}, while the client control vectors $h_{i,k}$ satisfy $h_{i,k} \rightarrow H_i(z_*)$. The probability $p$ in this setting shows how often a communication takes place (averaging of the workers' models).
\begin{algorithm}[htbp]
    \caption{\algname{ProxSkip-VIP-FL}}
    \label{alg:ProxSkip-VIP-FL}
    \begin{algorithmic}[1]
        \REQUIRE Initial points $\{x_{i,0}\}_{i=1}^n$ and $\{h_{i,0}\}_{i=1}^n$, parameters $\gamma, p, K$
        \FORALL{$k = 0,1,..., K$}
            \STATE \textbf{Server:} 
            Flip a coin $\theta_k$, $\theta_k=1$ w.p. $p$, otherwise $0$.
            Send $\theta_k$ to all workers
            \FOR{{\bf each workers $i\in\automedpar{n}$ in parallel}} 
                \STATE $\widehat{x}_{i, k+1}=x_{i, t}-\gamma (g_{i,k}-h_{i,k})$             
                \COMMENT{Local update with control variate}
                \IF{$\theta_k=1$}
                \STATE Worker: $x_{i, k+1}'=\widehat{x}_{i, k+1}-\frac{\gamma}{p} h_{i,k}$, sends $x_{i, k+1}'$ to the server
                \STATE Server: computes $x_{i, k+1}=\frac{1}{n}\sum_{i=1}^n x_{i, k+1}'$ and send to workers
                \COMMENT{Communication}
                \ELSE
                \STATE $x_{i, k+1}=\widehat{x}_{i, k+1}$
                \COMMENT{Otherwise skip the communication step}
                \ENDIF
                \STATE $h_{i, k+1}=h_{i, k}+\frac{p}{\gamma}(x_{i, k+1}-\widehat{x}_{i, k+1})$ 
            \ENDFOR
        \ENDFOR
        \vspace{-1mm}
    \end{algorithmic}
\end{algorithm}

\textbf{Algorithm: \algname{ProxSkip-SGDA-FL}.}
The first implementation of the framework we consider is \algname{ProxSkip-SGDA-FL}. Similar to \algname{ProxSkip-SGDA} in the centralized setting, here we set the estimator to be the vanilla estimator of $F$, i.e., 
$g_{i,k}=g_i(x_{i,k})$, where $g_i$ is an unbiased estimator of $H_i$. We note that if we set $h_{i,k}\equiv 0$, then Algorithm~\ref{alg:ProxSkip-VIP-FL} is reduced to the typical \algname{Local SGDA}~\cite{deng2021local}.

To proceed with the analysis in FL, we require the following assumption:

\begin{assumption}
    \label{assume:additional_FL}
    The problem \eqref{eq:FedVIP} attains a unique solution $z_*\in\mathbb{R}^{d'}$. Each $H_i$ in \eqref{eq:FedVIP}, it is $\mu$-quasi-strongly monotone around $z_*$, i.e., for any $x_i \in\mathbb{R}^{d'}$,  $$\autoprod{H_i(x_i)-H_i(z_*), x_i-z_*} \geq\mu\autonorm{x_i-z_*}^2. $$ Operator $g_i(x_i)$, is an unbiased estimator of $H_i(x_i)$, and for all $x_i\in\mathbb{R}^{d'}$ we have 
    $\mathbb{E}\autonorm{g_i(x_i)-g_i(z_*)}^2\leq L_g \autoprod{H_i(x_i)-H_i(z_*), x_i-z_*}.$
\end{assumption}

\vspace{-2mm}
The assumption on the uniqueness of the solution is pretty common in the literature. For example, in the (unconstrained) minimization case, quasi-strong monotonicity implies uniqueness $z_*$~\cite{hinder2020near}. Assumption~\ref{assume:additional_FL} is required as through it, we can prove that the operator $F$ in \eqref{eq:objective_FL_reformulation} satisfies Assumption~\ref{assume:main}, and its corresponding estimator satisfies Assumption~\ref{assume:stochastic}, which we detail in Appendix 
\ref{apdx:thm_FL_Operator_Check}. 

Let us now present the convergence guarantees.
\begin{theorem}[Convergence of \algname{ProxSkip-SGDA-FL}]
    \label{thm:complexity_FL_ProxSkip}
    With Assumption~\ref{assume:additional_FL}, then \algname{ProxSkip-VIP-FL}~(Algorithm~\ref{alg:ProxSkip-VIP-FL}) achieves $\mathbb{E}\automedpar{V_K}\leq\epsilon$ (where $V_K$ is defined in Theorem \ref{thm:convergence_Stoc-ProxSkip-VIP}), with iteration complexity 

     $$\mathcal{O} \left(\max\autobigpar{\frac{L_g}{\mu}, \frac{\sigma_*^2}{\mu^2\epsilon}}\ln\frac{1}{\epsilon} \right)$$ 
    and communication complexity  $$\mathcal{O} \left(\sqrt{\max\autobigpar{\frac{L_g}{\mu}, \frac{\sigma_*^2}{\mu^2\epsilon}}}\ln\frac{1}{\epsilon} \right).$$

\end{theorem}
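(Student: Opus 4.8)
The plan is to reduce this federated statement to the centralized machinery already developed, exploiting the consensus reformulation \eqref{eq:objective_FL_reformulation}--\eqref{eq:mpFL_VIP_form} and the fact that \algname{ProxSkip-VIP-FL} (Algorithm~\ref{alg:ProxSkip-VIP-FL}) is nothing but \algname{ProxSkip-VIP} (Algorithm~\ref{alg:Stoc-ProxSkip-VIP}) executed on the lifted variable $x=(x_1,\dots,x_n)\in\R^{d}$ with $d=nd'$. First I would verify that the lifted problem satisfies the centralized Assumption~\ref{assume:main}. Writing $x_*=(z_*,\dots,z_*)$ and $F(x)=(H_1(x_1),\dots,H_n(x_n))$, the separable block structure gives $\autoprod{F(x)-F(x_*),x-x_*}=\sum_{i=1}^n\autoprod{H_i(x_i)-H_i(z_*),x_i-z_*}$, so the per-client $\mu$-quasi-strong monotonicity around $z_*$ from Assumption~\ref{assume:additional_FL} sums to $\autoprod{F(x)-F(x_*),x-x_*}\ge\mu\sum_i\autonorm{x_i-z_*}^2=\mu\autonorm{x-x_*}^2$, i.e.\ $F$ is $\mu$-quasi-strongly monotone; the required star-cocoercivity will follow analogously from the expected-cocoercivity hypothesis below (this operator check is the content of Appendix~\ref{apdx:thm_FL_Operator_Check}).

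Next I would identify the estimator parameters. Because \algname{ProxSkip-SGDA-FL} uses the vanilla estimator $g(x)=(g_1(x_1),\dots,g_n(x_n))$, the same separable summation yields $\E\autonorm{g(x)-g(x_*)}^2=\sum_i\E\autonorm{g_i(x_i)-g_i(z_*)}^2\le L_g\sum_i\autoprod{H_i(x_i)-H_i(z_*),x_i-z_*}=L_g\autoprod{F(x)-F(x_*),x-x_*}$, using the per-client expected cocoercivity in Assumption~\ref{assume:additional_FL}. Hence the lifted estimator satisfies the Expected Cocoercivity condition (Assumption~\ref{assume:ECC}) with the same constant $L_g$, and Lemma~\ref{thm:property_local_estimator} then certifies Assumption~\ref{assume:stochastic} with $A=L_g$, $D_1=2\sigma_*^2$, $\rho=1$ and $B=C=D_2=0$, where $\sigma_*^2=\E\autonorm{g(x_*)-F(x_*)}^2$.

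With Assumptions~\ref{assume:main} and~\ref{assume:stochastic} in force for the lifted problem, I would invoke the centralized guarantee: substituting these parameters into Corollary~\ref{cor:Stoc-ProxSkip-VIP-Complexity} (equivalently, directly citing Corollary~\ref{thm:convergence_ProxSkip-SGDA}) with the prescribed $\gamma\le\min\{1/(2L_g),1/(2\mu),\mu\epsilon/(8\sigma_*^2)\}$ and $p=\sqrt{\gamma\mu}$ gives $\E\automedpar{V_K}\le\epsilon$ after $\mathcal{O}(\max\{L_g/\mu,\sigma_*^2/(\mu^2\epsilon)\}\ln(1/\epsilon))$ iterations, while the number of calls to the proximal oracle is the square root of this quantity. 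The final step is the translation to communication: in Algorithm~\ref{alg:ProxSkip-VIP-FL} the event $\theta_k=1$ triggers exactly the averaging step $\prox_{\gamma R/p}$, and evaluating this prox is equivalent to one synchronization/averaging round across the $n$ workers, so the communication complexity coincides with the proximal-oracle complexity, namely $\mathcal{O}(\sqrt{\max\{L_g/\mu,\sigma_*^2/(\mu^2\epsilon)\}}\ln(1/\epsilon))$.

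The main obstacle I anticipate is not any single inequality but making the reduction airtight: I must confirm that the block-concatenated point $x_*=(z_*,\dots,z_*)$ is genuinely the unique solution of the consensus VIP \eqref{eq:objective_FL_reformulation}, that $\sigma_*^2$ for the lifted estimator is finite and relates correctly to the per-client noise at $z_*$, and that the lifted operator inherits star-cocoercivity (not merely quasi-strong monotonicity) so that \emph{every} hypothesis of Corollary~\ref{cor:Stoc-ProxSkip-VIP-Complexity} is met; these bookkeeping facts are exactly what Appendix~\ref{apdx:thm_FL_Operator_Check} must supply.
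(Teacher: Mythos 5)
Your proposal is correct and follows essentially the same route as the paper: verify that the lifted consensus operator $F(x)=\sum_i F_i(x_i)$ inherits $\mu$-quasi-strong monotonicity and that the block estimator inherits $L_g$-expected cocoercivity by summing the per-client bounds from Assumption~\ref{assume:additional_FL} (this is exactly Proposition~\ref{prop:FL_Operator_Check} in Appendix~\ref{apdx:thm_FL_Operator_Check}), then invoke Lemma~\ref{thm:property_local_estimator} and Corollary~\ref{thm:convergence_ProxSkip-SGDA}, and identify proximal evaluations with communication rounds. The bookkeeping items you flag (uniqueness of $x_*=(z_*,\dots,z_*)$ and finiteness of $\sigma_*^2$) are precisely what Propositions~\ref{prop:tranform_equiv_sol} and~\ref{prop:FL_Operator_Check} supply.
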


\textbf{Comparison with Literature.}

Note that Theorem~\ref{thm:complexity_FL_ProxSkip} is quite general, which holds under any reasonable, unbiased estimator. In the special case of federated minimax problems, 
one can use the same (mini-batch) gradient estimator from \algname{Local SGDA}~\cite{deng2021local} in Algorithm~\ref{alg:ProxSkip-VIP-FL} and our results still hold. The benefit of our approach compared to \algname{Local SGDA} is the communication acceleration, as pointed out in Table~\ref{TableFirst}. In addition, in the deterministic setting ($g_i(x_{i,k})=H_i(x_{i,k})$) we have $\sigma_*^2=0$ and Theorem~\ref{thm:complexity_FL_ProxSkip} reveals  $\mathcal{O} \left( \nicefrac{\ell}{\mu} \ln\frac{1}{\epsilon} \right)$ iteration complexity and $\mathcal{O} \left(\sqrt{\nicefrac{\ell}{\mu}}\ln\frac{1}{\epsilon} \right)$ communication complexity for \algname{ProxSkip-GDA-FL}. In Table~\ref{table:comparison_v2} of the appendix, we provide a more detailed comparison of our Algorithm~\ref{alg:ProxSkip-VIP-FL} (Theorem~\ref{thm:complexity_FL_ProxSkip}) with existing literature in FL. The proposed approach outperforms other algorithms (\algname{Local SGDA}, \algname{Local SEG}, \algname{FedAvg-S}) in terms of iteration and communication complexities.

As the baseline, the distributed (centralized) gradient descent-ascent (\algname{GDA}) and extragradient (\algname{EG}) algorithms achieve $\mathcal{O} \left(\kappa^2\ln\frac{1}{\eps} \right)$ and $\mathcal{O} \left(\kappa\ln\frac{1}{\eps} \right)$ communication complexities, respectively~\cite{fallah2020optimal,mokhtari2019convergence}.
We highlight that our analysis does not require an assumption on bounded heterogeneity~/~dissimilarity, and as a result, we can solve problems with heterogeneous data.

Finally, as reported by \cite{beznosikov2020distributed}, the lower communication complexity bound for problem \eqref{eq:FedVIP} is given by $\Omega \left(\kappa\ln\frac{1}{\eps} \right)$, which further highlights the optimality of our proposed \algname{ProxSkip-SGDA-FL} algorithm.

\paragraph{Algorithm: \algname{ProxSkip-L-SVRGDA-FL}.}
Next, we focus on variance-reduced variants of \algname{ProxSkip-SGDA-FL}
and we further specify the operator $F$ in \eqref{eq:objective_FL_reformulation} as $F(x)\triangleq\sum_{i=1}^n F_i(x_i)$ and $F_i(x)\triangleq \frac{1}{m_i}\sum_{j=1}^{m_i} F_{i,j}(x_i)$.

The proposed algorithm, \algname{ProxSkip-L-SVRGDA-FL}, is presented in the Appendix as Algorithm \ref{alg:ProxSkip-L-SVRGDA-FL}. 
In this setting, we need the following assumption on $F_i$ to proceed with the analysis.

\begin{assumption}
    \label{assume:additional_SVRG_FL}
    The Problem \eqref{eq:FedVIP} attains a unique solution $z_*$. Also for each $H_i$ in \eqref{eq:FedVIP}, it is $\mu$-quasi-strongly monotone around $z_*$. Moreover we assume for all $x_i\in\mathbb{R}^{d'}$ we have 
    $$\frac{1}{m_i}\sum_{j=1}^{m_i}\autonorm{F_{i,j}(x_i)-F_{i,j}(z_*)}^2\leq \widehat{\ell} \autoprod{H_i(x_i)-H_i(z_*), x_i-z_*}.$$
\end{assumption}

Similar to the \algname{ProxSkip-SGDA-FL} case, we can show that under Assumption \eqref{assume:additional_SVRG_FL}, the operator and unbiased estimator fit into the setting of Assumptions~\ref{assume:main} and \ref{assume:stochastic} (see derivation in Appendix \ref{apdx:FL_operator_check}). As a result, we can obtain the following complexity result.

\begin{theorem}[Convergence of \algname{ProxSkip-L-SVRGDA-FL}]
\label{dnaoao}
    Let Assumption \ref{assume:additional_SVRG_FL} hold. Then the iterates of \algname{ProxSkip-L-SVRGDA-FL} achieve $\mathbb{E}[V_K]\leq\epsilon$ (where $V_K$ is defined in Theorem \ref{thm:convergence_Stoc-ProxSkip-VIP})with iteration complexity 

    $\mathcal{O} \left(\widehat{\ell}/\mu\ln\frac{1}{\epsilon} \right)$ and communication complexity $\mathcal{O} \left(\sqrt{\widehat{\ell}/\mu}\ln\frac{1}{\epsilon} \right)$.
\end{theorem}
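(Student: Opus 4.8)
The plan is to prove Theorem~\ref{dnaoao} by reduction to the general machinery already in place: I would verify that the consensus-lifted operator and the loopless-SVRG estimator used by \algname{ProxSkip-L-SVRGDA-FL} fit Assumptions~\ref{assume:main} and~\ref{assume:stochastic}, after which the complexities follow directly from the centralized result Corollary~\ref{cor:ProxSkip-L-SVRGDA-FL_complexity} (equivalently, from Corollary~\ref{cor:Stoc-ProxSkip-VIP-Complexity}). Throughout I would use the correspondence established in Section~\ref{asdas}: evaluating $\prox_{\gamma R}$ is one averaging/communication round and skipping the prox is a purely local step, so the number of proximal-oracle calls equals the communication complexity. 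Hence the iteration bound becomes the iteration complexity and the (square-root) prox-call bound becomes the communication complexity.

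First I would lift problem~\eqref{eq:FedVIP} to its consensus form~\eqref{eq:objective_FL_reformulation}--\eqref{eq:mpFL_VIP_form}, where $x_* = (z_*,\dots,z_*)$ and $F(x) = \sum_{i=1}^n F_i(x_i)$ with block embeddings $F_i$, and verify Assumption~\ref{assume:main} for this lifted $F$. Quasi-strong monotonicity transfers block-wise with the same constant, since $\la F(x) - F(x_*), x - x_* \ra = \sum_i \la H_i(x_i) - H_i(z_*), x_i - z_* \ra \ge \mu \sum_i \autonorm{x_i - z_*}^2 = \mu \autonorm{x - x_*}^2$. For star-cocoercivity, Jensen's inequality applied to the inner finite sum in Assumption~\ref{assume:additional_SVRG_FL} gives $\autonorm{H_i(x_i) - H_i(z_*)}^2 \le \widehat{\ell}\,\la H_i(x_i) - H_i(z_*), x_i - z_* \ra$ per block; summing over $i$ and using $\autonorm{F(x) - F(x_*)}^2 = \sum_i \autonorm{H_i(x_i) - H_i(z_*)}^2$ shows $F$ is $\widehat{\ell}$-star-cocoercive, i.e.\ $\ell = \widehat{\ell}$. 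In particular Assumption~\ref{assume:average_star_coco} holds for the lifted finite sum, so the centralized estimator analysis becomes applicable.

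The main technical step, and the hard part, is confirming that the \algname{ProxSkip-L-SVRGDA-FL} estimator satisfies Assumption~\ref{assume:stochastic} with the parameters of Lemma~\ref{lm:ABC_SVRG}, namely $A = \widehat{\ell}$, $B = 2$, $D_1 = D_2 = 0$, $C = \tfrac{q\widehat{\ell}}{2}$, $\rho = q$, and memory term $\sigma_k^2$ equal to the average squared snapshot drift $\autonorm{F_{i,j}(w_k) - F_{i,j}(z_*)}^2$. This requires the standard loopless-SVRG bookkeeping: bounding the second moment $\E\autonorm{g_k - F(x_*)}^2$ by $2A \la F(x) - F(x_*), x - x_* \ra + B\sigma_k^2$ via Young's inequality together with $\widehat{\ell}$-star-cocoercivity, and separately showing the snapshot refreshes with probability $q$, so that $\E[\sigma_{k+1}^2] \le 2C \la F(x) - F(x_*), x - x_* \ra + (1-q)\sigma_k^2$. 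The delicate point is the double finite-sum indexing across clients and local samples, and in particular keeping $D_1 = D_2 = 0$ (no irreducible noise floor), which is what yields exact linear convergence to $x_*$ rather than convergence to a neighborhood.

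Finally, I would substitute these parameters into Corollary~\ref{cor:ProxSkip-L-SVRGDA-FL_complexity} with the prescribed choices $q = 2\gamma\mu$, $M = 4/q$, $p = \sqrt{\gamma\mu}$, and $\gamma = \min\autobigpar{\tfrac{1}{\mu}, \tfrac{1}{6\widehat{\ell}}}$. Since $A + BC/\rho = \widehat{\ell} + 2 \cdot \tfrac{q\widehat{\ell}/2}{q} = 2\widehat{\ell}$ and $D_1 = D_2 = 0$, the quantity governing the rate in Corollary~\ref{cor:Stoc-ProxSkip-VIP-Complexity} collapses to $\mathcal{O}(\widehat{\ell}/\mu)$, giving iteration complexity $\mathcal{O}\autopar{\tfrac{\widehat{\ell}}{\mu} \ln \tfrac{1}{\epsilon}}$ and, through the square-root prox-call bound which equals the communication complexity by the prox-skipping correspondence, communication complexity $\mathcal{O}\autopar{\sqrt{\widehat{\ell}/\mu}\,\ln\tfrac{1}{\epsilon}}$, as claimed.
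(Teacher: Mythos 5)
Your proposal is correct and follows essentially the same route as the paper: lift to the consensus reformulation, verify that the lifted operator and the loopless-SVRG estimator satisfy Assumptions~\ref{assume:main} and~\ref{assume:stochastic} with the parameters $A=\widehat{\ell}$, $B=2$, $C=\tfrac{q\widehat{\ell}}{2}$, $\rho=q$, $D_1=D_2=0$ (this is exactly Proposition~\ref{prop:FL_Operator_Check_SVRG} in Appendix~\ref{apdx:FL_operator_check}), and then read off the complexities from Corollary~\ref{cor:Stoc-ProxSkip-VIP-Complexity}/\ref{cor:ProxSkip-L-SVRGDA-FL_complexity}, identifying proximal calls with communication rounds. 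The only cosmetic difference is that you additionally verify star-cocoercivity of the lifted $F$ via Jensen's inequality, which the paper does not spell out (and which is not actually needed, since the proof of Theorem~\ref{thm:convergence_Stoc-ProxSkip-VIP} uses only quasi-strong monotonicity plus the estimator bounds).
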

\vspace{-2mm}
In the special case of minimization problems $\min_{z\in\mathbb{R}^d}f(z)$, i.e., $F(z)=\nabla f(z)$, we have Theorem~\ref{dnaoao} recovers the theoretical result of \cite{malinovsky2022variance} which focuses on ProxSkip methods for minimization problems, showing the tightness of our approach.

Similar to the \algname{ProxSkip-VIP-FL}, the \algname{ProxSkip-L-SVRGDA-FL} enjoys a communication complexity improvement in terms of the condition number $\kappa$. For more details, please refer to Table~\ref{TableFirst} (and Table~\ref{table:comparison_v2} in the appendix).

\section{Numerical Experiments}
\vspace{-2mm}
We corroborate our theory with the experiments, and test the performance of the proposed algorithms \algname{ProxSkip-(S)GDA-FL} (Algorithm~\ref{alg:ProxSkip-VIP-FL} with $g_i(x_{i,k})=H_i(x_{i,k})$ or its unbiased estimator)  and \algname{ProxSkip-L-SVRGDA-FL} (Algorithm~\ref{alg:ProxSkip-L-SVRGDA-FL}). We focus on two classes of problems: (i) strongly monotone quadratic games and (ii) robust least squares. See Appendix~\ref{apdx:numerical_experiments} and \ref{apdx:more_experiment} for more details and extra experiments.

To evaluate the performance, we use the relative error measure $\frac{\|x_k - x_*\|^2}{\|x_0 - x_*\|^2}$. The 
horizontal axis corresponds to the number of communication rounds.
For all experiments, we pick the step-size $\gamma$ and probability $p$ for different algorithms according to our theory. That is, ProxSkip-(S)GDA-FL based on Corollary~\ref{thm:convergence_ProxSkip-SGDA} and \ref{cor:ProxSkip_GDA_complexity}, \algname{ProxSkip-L-SVRGDA-FL} by Cororllary~\ref{cor:ProxSkip-L-SVRGDA-FL_complexity}. See also Table~\ref{table:stepsize} in the appendix for the settings of parameters. We compare our methods to \algname{Local SGDA}~\cite{deng2021local}, \algname{Local SEG}~\cite{beznosikov2020distributed} (and their deterministic variants), and \algname{FedGDA-GT}, a deterministic algorithm proposed in ~\cite{sun2022communication}. For all methods, we use parameters based on their theoretical convergence guarantees. For more details on our implementations and additional experiments, see Appendix~\ref{apdx:numerical_experiments} and \ref{apdx:more_experiment}.

\begin{figure}
 \begin{subfigure}[b]{0.45\textwidth}
        \centering
        \includegraphics[width=\textwidth]{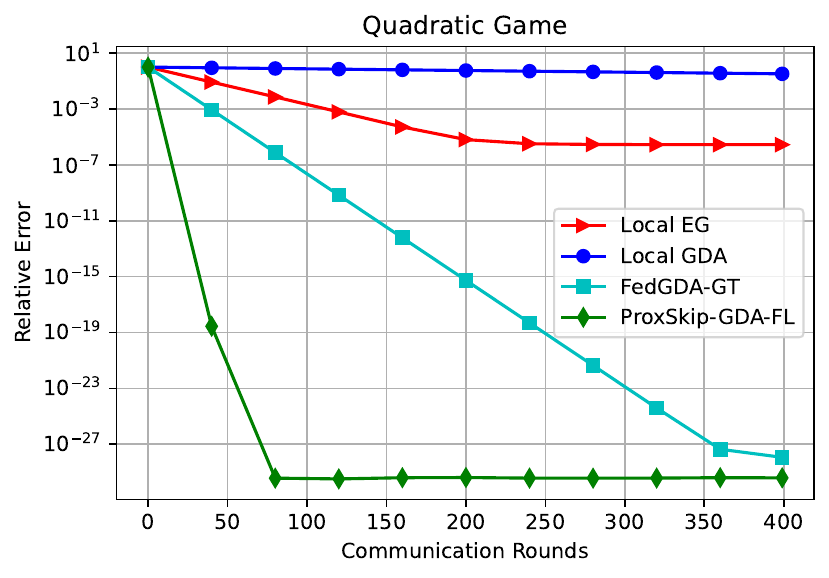}
        \caption{Deterministic Case}
        \label{fig:deterministicXtheoreticalXDatasetI}
    \end{subfigure}
    \hspace{1em}
    \begin{subfigure}[b]{0.45\textwidth}
        \centering
        \includegraphics[width=\textwidth]{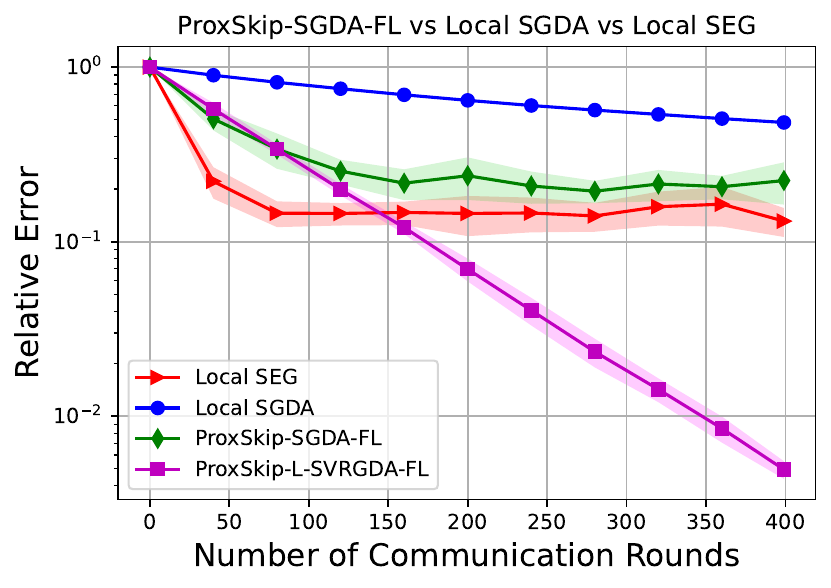}
        \caption{Stochastic Case}
        \label{fig:stochasticXtheoreticalXDatasetI}
    \end{subfigure}
    \caption{Comparison of algorithms on the strongly-monotone quadratic game \eqref{quadraticgame}.}
    \label{fig: Comparison of ProxSkip-VIP-FL vs Local SGDA vs Local SEG on Heterogeneous Data in the deterministic setting.}
\end{figure}

\paragraph{Strongly-monotone Quadratic Games.}
In the first experiment, we consider a problem of the form 
\begin{equation}
\label{quadraticgame}
    \min_{w_1 \in \mathbb{R}^d} \max_{w_2 \in \mathbb{R}^d} \frac{1}{n} \sum_{i = 1}^n \frac{1}{m_i} \sum_{j = 1}^{m_i} \mathcal{L}_{ij}(w_1, w_2) ,
\end{equation}
where $\mathcal{L}_{ij}(w_1, w_2) \triangleq \frac{1}{2} w_1^{\intercal}\A_{ij} w_1 + w_1^{\intercal}\B_{ij} w_2 - \frac{1}{2}w_1^{\intercal}\C_{ij} w_1
+ a_{ij}^{\intercal}w_1 - c_{ij}^{\intercal}w_2.$ Here we set the number of clients $n = 20$, $m_i = 100$ for all $i \in [n]$ and $d = 20$. We generate positive semidefinite matrices $\A_{ij}, \B_{ij}, \C_{ij} \in \mathbb{R}^{d \times d}$ such that eigenvalues of $\A_{ij}, \C_{ij}$ lie in the interval $[0.01, 1]$ while those of $\B_{ij}$ lie in $[0,1]$. The vectors $a_{ij}, c_{ij} \in \mathbb{R}^{d}$ are generated from $\mathcal{N}_d(0, I_d)$ distribution. This data generation process ensures that the quadratic game satisfies the assumptions of our theory. To make the data heterogeneous, we produce different $\A_{ij}, \B_{ij}, \C_{ij}, a_{ij}, c_{ij}$ across the clients indexed by $i \in [n]$.

We present the results in Figure~\ref{fig: Comparison of ProxSkip-VIP-FL vs Local SGDA vs Local SEG on Heterogeneous Data in the deterministic setting.} for both deterministic and stochastic settings. As our theory predicted, our proposed methods are always faster in terms of communication rounds than Local (S)GDA. The current analysis of \algname{Local EG} requires the bounded heterogeneity assumption which leads to convergence to a neighborhood even in a deterministic setting. As also expected by the theory, our proposed variance-reduced algorithm converges linearly to the exact solution. 

\paragraph{Robust Least Square.}\label{sec:RobustLearning}

\begin{figure}
     \centering
    \begin{subfigure}[b]{0.45\textwidth}
        \centering
        \includegraphics[width=\textwidth]{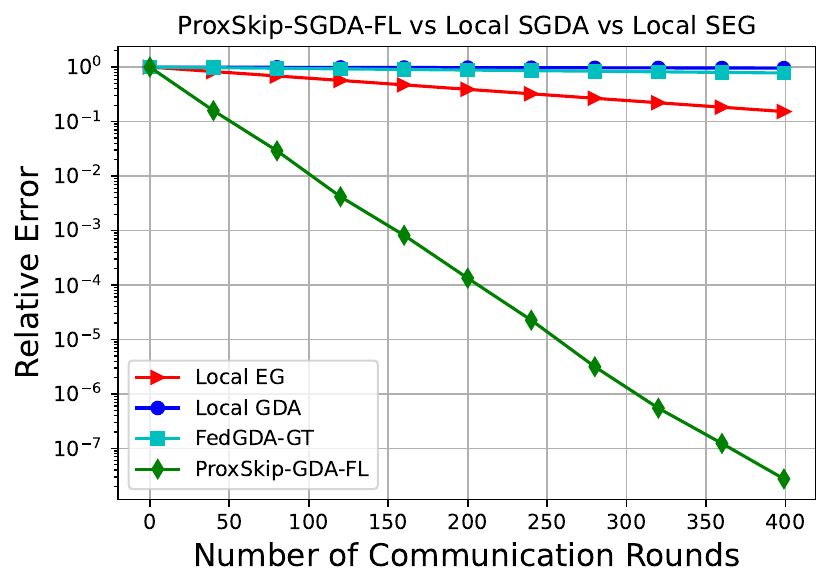}
        \caption{Deterministic Case}
        \label{fig:deterministicXtheoreticalXDatasetII}
    \end{subfigure}
    \hspace{1em}
    \begin{subfigure}[b]{0.45\textwidth}
        \centering
        \includegraphics[width=\textwidth]{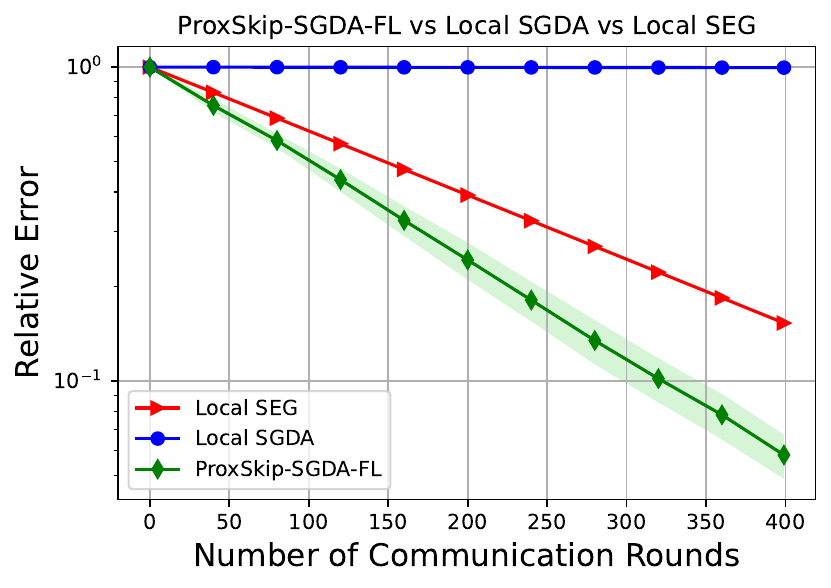}
        \caption{Stochastic Case}
        \label{fig:stochasticXtheoreticalXDatasetII}
    \end{subfigure}
    \caption{Comparison of algorithms on the Robust Least Square~\eqref{robustleastsquare}}
    \label{fig: Comparison of ProxSkip-VIP-FL vs Local SGDA vs Local SEG on Heterogeneous Data in the stochastic setting.}
\end{figure}

In the second experiment, we consider the robust least square (RLS) problem~\cite{el1997robust, yang2020global} with the coefficient matrix ${\bf A} \in \R^{r \times s}$ and noisy vector $y_0 \in \R^r$. We assume $y_0$ is corrupted by a bounded perturbation $\delta$ (i.e.,$\|\delta\| \leq \delta_0$). RLS minimizes the
worst case residual and can be formulated as follows: 
$$
\min_{\beta} \max_{\delta: \|\delta\| \leq \delta_0} \|{\bf A}\beta - y\|^2\quad\text{where}\quad\delta = y_0 - y.
$$
In this chapter, we consider the following penalized version of the RLS problem: 
\begin{eqnarray}
\label{robustleastsquare}
    \min_{\beta \in \R^s} \max_{y \in \R^r} \|{\bf A}\beta - y\|^2 - \lambda \|y - y_0\|^2
\end{eqnarray}
This objective function is strongly-convex-strongly-concave when $\lambda > 1$~\cite{thekumparampil2022lifted}. We run our experiment on the ``California Housing" dataset from scikit-learn package~\cite{pedregosa2011scikit}, using $\lambda = 50$ in \eqref{robustleastsquare}. Please see Appendix~\ref{apdx:numerical_experiments} for further detail on the setting.

In Figure~\ref{fig: Comparison of ProxSkip-VIP-FL vs Local SGDA vs Local SEG on Heterogeneous Data in the stochastic setting.}, we show the trajectories of our proposed algorithms compared to Local EG, Local GDA and their stochastic variants. In both scenarios (deterministic and stochastic), our \algname{ProxSkip-GDA/SGDA-FL} algorithms outperform the other methods in terms of communication rounds.
\chap{Conclusion} 

In this thesis, we focused on the development of optimization algorithms to address min-max optimization problems, a fundamental challenge in modern machine learning and game theory.

In Chapter~\ref{chap:chap-2}, we analyzed single-call \algname{SEG} methods under weaker assumptions on stochastic estimates, thereby ensuring applications in a broader class of problems. 

In Chapter~\ref{chap:chap-3}, we proposed adaptive \algname{SEG} algorithms that incorporate a Polyak-type update step size and a line-search based extrapolation strategy. These design choices eliminate the need for manual step size tuning. We further demonstrated that our adaptive algorithm outperforms existing methods across various experiments.

In Chapter~\ref{chap:chap-4}, we extended the analysis of \algname{EG} algorithms to settings where the operator satisfies the $\alpha$-symmetric $(L_0, L_1)$-Lipschitz condition. This is a significant step forward, as recent studies suggest that such relaxed conditions more accurately model the behavior of modern neural networks, in contrast to the classical $L$-Lipschitz assumption. Our work lays a theoretical foundation for developing extragradient-based algorithms tailored to modern applications.

Finally, in Chapter~\ref{chap:chap-5}, we proposed a modification of the distributed \algname{SGDA} algorithm by incorporating local updates, achieving optimal communication complexity in the distributed setting.

\noindent\textbf{Future Extensions.} Our work opens up several promising directions for future research. Below, we outline a few potential extensions:
\begin{enumerate}
    \item \textbf{Stochastic Algorithms for $\alpha$-Symmetric $(L_0, L_1)$-Lipschitz Operators:} 
    While recent advances have been made for deterministic algorithms under $\alpha$-symmetric $(L_0, L_1)$-Lipschitz conditions, the stochastic counterpart remains largely unexplored. In future work, we aim to develop stochastic algorithms with provable convergence guarantees for this broader class of operators.

    \item \textbf{Relaxed Conditions for Stochastic Estimates:}
    In Chapter~\ref{chap:chap-2}, we introduced the Expected Residual condition (Assumption~\ref{as:expected_residual}), which holds under Lipschitz continuity of operators $F_i$ in Equation~\eqref{eq:chap2_finitesum}. However, this condition may not be satisfied under $\alpha$-symmetric $(L_0, L_1)$-Lipschitz operators. A key future direction is to identify relaxed conditions that naturally hold for such operators and to develop a rigorous theoretical framework under these assumptions.

    \item \textbf{Solving Structured Non-Monotone Variational Inequality Problems (VIPs):}
    Many min-max problems arising in practice are non-monotone and fall outside the scope of existing theory based on the weak Minty assumption (which requires $\rho < \nicefrac{1}{L}$). We plan to explore more general frameworks or alternative regularity conditions that can capture a wider class of non-monotone min-max problems encountered in real-world applications.

    \item \textbf{\algname{EG} for Distributed Setting:}
    In Chapter~\ref{chap:chap-5}, we introduced the \algname{ProxSkip-VIP-FL} algorithm, a distributed method based on \algname{SGDA} with local steps that achieves optimal communication efficiency. While extragradient methods (EG) are known to offer faster convergence than gradient descent methods (\algname{GDA}) in centralized settings, it remains unclear whether distributed extragradient variants with local updates can offer similar advantages. We intend to investigate this direction, aiming to design a distributed \algname{EG} algorithm that outperforms existing methods such as \algname{ProxSkip-VIP-FL}.
\end{enumerate}

This concludes our study on designing algorithms for min-max optimization problems. The algorithms and theoretical tools developed in this thesis offer a solid foundation for future research in optimization and its applications in machine learning and beyond.




\BibTextSpacing                         
\fancyhead[L]{\nouppercase \leftmark}
\printbibliography[title={Bibliographic references},
    heading=bibintoc, notcategory=mypapers]
\clearpage                              

\MainTextSpacing                    
\fancyhead[L]{\appendixname\ \thechapter. \nouppercase \leftmark}

\appendix 
\makeatletter
\addtocontents{toc}{\protect\renewcommand\protect\cftchappresnum{\@chapapp\ }}
\makeatother
\renewcommand{\thechapter}{\Alph{chapter}}

\chapter{Appendices for Chapter \ref{chap:chap-2}} \label{chap:appendix-a}

\section{Further Related Work for Chapter \ref{chap:chap-2}}\label{section: Related Work}

The references necessary to motivate chapter \ref{chap:chap-2} and connect it to the most relevant literature are included in the appropriate sections of the main body of the paper. In this section, we present a broader view of the
literature, including more details on closely related work and more references to papers that are not directly related to our main results. 

\begin{itemize}[leftmargin=*]
\setlength{\itemsep}{0pt}
\item \textbf{Classes of Structured Non-monotone Operators.} 
With an increasing interest in improved computational speed, first-order methods are the primary choice for solving VIPs. However, computation of an approximate first-order locally optimal solution of a general non-monotone VIP is intractable \cite{daskalakis2021complexity,lee2021fast}. It motivates us to exploit the additional structures prevalent in large classes of non-monotone VIPs. Recently \cite{gorbunov2022stochastic, hsieh2019convergence} provide convergence guarantees of stochastic methods for solving quasi-strongly monotone VIPs, while \cite{hsieh2020explore} for problems satisfying error-bound conditions. \cite{diakonikolas2021efficient} defined the notion of a weak MVI~\eqref{eq: weak MVI} covering classes of non-monotone VIPs. 

\vspace{2mm}

\item \textbf{Assumptions on Operator Noise.} The standard analysis of stochastic methods for solving VIPs relies on bounded variance assumption. \cite{bohm2022solving, diakonikolas2021efficient, hsieh2019convergence, gidel2018variational} use bounded variance assumption (i.e. $\E \|F_i(x) - F(x)\|^2 \leq \sigma^2$ for all $x$) while \cite{nemirovski2009robust, abernethy2021last} assume bounded operators for their analysis. However, there are examples of simple quadratic games that do not satisfy these conditions. It has motivated researchers to look for alternative/relaxed assumptions on distributions. \cite{loizou2021stochastic} provides convergence of Stochastic Gradient Descent Ascent Method under Expected Cocoercivity. \cite{hsieh2020explore, mishchenko2020revisiting} considered alternative assumptions for analyzing Stochastic Extragradient Methods that do not imply boundedness of the variance. However, there is no analysis of single-call extragradient methods without bounded variance assumption.

\vspace{2mm}

\item \textbf{Weak Minty Variational Inequalities.}  Numerous contemporary studies look to identify first-order methods for efficiently solving min-max optimization problems. It varies from simple convex-concave to nontrivial nonconvex nonconcave objectives. Though there has been a significant development in the convex-concave setting, \cite{daskalakis2021complexity} demonstrates that even finding local solutions are intractable for general nonconvex nonconcave objectives. Therefore, researchers seek to identify the structure of objective functions for which it is possible to resolve the intractability issues. \cite{diakonikolas2021efficient} proposes the notion of non-monotonicity, which generalizes the existence of a Minty solution (i.e., $\rho = 0 $ in \eqref{eq: weak MVI}). This problem is known as weak Minty variational inequality in the literature. \cite{diakonikolas2021efficient, pethick2022escaping} provides convergence guarantees of the Extragradient Method for weak Minty variational inequality. They establish a convergence rate of $\cO(1/k)$ for the squared operator norm. \cite{lee2021fast} shows that it is possible to have an accelerated extragradient method even for non-monotone problems. Furthermore, \cite{bohm2022solving} provides a convergence guarantee for the \algname{SOG} with a complexity bound of $\cO(\varepsilon^{-2})$. However, all papers exploring stochastic extragradient methods for solving weak Minty variational inequality consider bounded variance assumption~\cite{bohm2022solving, diakonikolas2021efficient}. Moreover, all algorithms solving Weak Minty variational inequality require increasing batchsize. Recently, \cite{pethick2023solving} introduced \algname{BCSEG+} which can solve weak minty variational inequality without increasing batchsize. \algname{BCSEG+} involves three oracle calls per iteration and addition of a bias-corrected term in the extrapolation step.

\vspace{2mm}

\item \textbf{Arbitrary Sampling Paradigm.} As we mentioned in the main paper, the stochastic reformulation \eqref{Reformulation} of the original problem \eqref{eq: Variational Inequality Definition} allows us to analyze single-call extragradient methods under the arbitrary sampling paradigm. That is, provide  a unified analysis for \algname{SPEG} that captures multiple sampling strategies, including $\tau$-minibatch and importance samplings. An arbitrary sampling analysis of a stochastic optinmization method was first proposed in the context of the randomized coordinate descent method for solving strongly convex functions in \cite{richtarik2013optimal}. Since then, several other stochastic methods were studied in this regime, including accelerated coordinate descent algorithms \cite{qu2016coordinate, hanzely2019accelerated}, randomized iterative methods for solving consistent linear systems~\cite{richtarik2020stochastic, loizou2020momentum,loizou2020convergence}, randomized gossip algorithms \cite{loizou2016new, loizou2021revisiting}, stochastic gradient descent (\algname{SGD})~\cite{gower2019sgd,gower2021sgd}, and variance reduced methods~\cite{qian2019saga, horvath2019nonconvex, khaled2020unified}. The first analysis of stochastic algorithms under the arbitrary sampling paradigm for solving variational inequality problems was proposed in ~\cite{loizou2020stochastic,loizou2021stochastic}.  In \cite{loizou2020stochastic,loizou2021stochastic}, the
authors focus on algorithms like the stochastic Hamiltonian method, the stochastic gradient descent ascent,
and the stochastic consensus optimization. These ideas were later extended to the case of Stochastic Extragradient by \cite{gorbunov2022stochastic}. To the best of our knowledge, our work is the first that provides an analysis of single-call extragradient methods under the arbitrary sampling paradigm.

\vspace{2mm}

\item \textbf{Overparameterized Models and Interpolation.} For a function $f(x) \coloneqq \frac{1}{n} \sum_{i = 1}^n f_i(x)$ we say that interpolation condition holds if there exists $x^*$ such that $\min_{x} f_i(x) = f_i(x^*)$ for all $i \in [n]$ (or equivalently $\nabla f_i(x^*) = 0$ for smooth convex functions)~\cite{gower2021sgd}. The interpolation condition is satisfied when the underlying models are sufficiently overparameterized ~\cite{vaswani2019fast}. Some known examples include deep matrix factorization and classification using neural networks \cite{arora2019implicit,rolinek2018l4, vaswani2019fast}. The interpolated model structure enables \algname{SGD} and other optimization algorithms to have faster convergence \cite{gower2021sgd,pmlr-v130-loizou21a, d2021stochastic}. 
Inspired by this, one can extend the notion of the interpolation condition to operators. In this scenario, we say that the VIP \eqref{eq: Variational Inequality Definition} is interpolated if there exists solution $x^*$ of \eqref{eq: Variational Inequality Definition} such that $F_i(x^*) = 0$ for all $i \in [n]$. This concept has been explored for analyzing the stochastic extragradient method in \cite{vaswani2019painless, li2022convergence}. We highlight that our proposed theorems show fast convergence of \algname{SPEG} in this interpolated regime (when $\sigma_*^2=0$). To the best of our knowledge, our work is the first that proves such convergence for \algname{SPEG}. In Fig.~\ref{fig: Interpolated Model}, we experimentally verify the fast convergence for solving a strongly monotone interpolated problem.

\vspace{2mm}

\item \textbf{Deterministic Extragradient Methods.} The Extragradient method (\algname{EG})~\cite{korpelevich1976extragradient} and its single-call variant, Optimistic Gradient (\algname{OG}) \cite{popov1980modification}, were proposed to overcome the convergence issues of gradient descent-ascent method for solving monotone problems. Since their introduction, these methods have been revisited and explored in various ways. \cite{mokhtari2020unified} analyzed \algname{EG} and \algname{OG} as an approximation of the Proximal Point method to solve bilinear and strongly convex-strongly concave min-max problems. \cite{solodov1999hybrid} and \cite{ryu2019ode} provide the best-iterate convergence guarantees of \algname{EG} and \algname{OG} with a rate of $\cO(\nicefrac{1}{K})$ for solving monotone problems. However, providing a last-iterate convergence rate of \algname{EG} and \algname{OG} for monotone VIPs has been a long-lasting open problem that was only recently resolved. The works of \cite{golowich2020tight, gorbunov2022extragradient, cai2022tight} prove a last-iterate $\cO(\nicefrac{1}{K})$ convergence rate for these methods. Finally, in the deterministic setting, some recent works provide convergence analysis of \algname{EG} and \algname{OG} for solving weak MVI~\eqref{eq: weak MVI} \cite{diakonikolas2021efficient, pethick2022escaping, bohm2022solving, gorbunov2022convergence}.

\end{itemize}

\section{Technical Preliminaries}\label{section: tachnical preliminaries}
Throughout the chapter, we assume 
\begin{assumption}
Operator $F$ in \eqref{eq: Variational Inequality Definition} is L Lipschitz, i.e.,  $\forall x,y \in \mathbb{R}^d$ operator $F$ satisfies
\begin{equation}\label{eq: F lipschitz}
    \|F(x) - F(y)\| \leq L \|x - y\|.
\end{equation}
Operators $F_i: \mathbb{R}^d \to \mathbb{R}^d $  of problem \eqref{eq: Variational Inequality Definition} are $L_i$- Lipschitz, i.e., $\forall x,y \in \mathbb{R}^d$ operator $F_i$ satisfies
\begin{equation}\label{eq: F_i lipschitz}
    \|F_i(x) - F_i(y)\| \leq L_i \|x - y\|.
\end{equation}
\end{assumption}
In our proofs, we often use the following simple inequalities.

\begin{lemma}\label{Lemma: Young's Inequality} For all $a, b, a_1, a_2, \cdots a_n \in \mathbb{R}^d, n \geq 1, \alpha > 0$, we have the following inequalities:

\begin{eqnarray}
     \la a, b \ra &\leq& \|a\| \|b\|, \label{eq: Cauchy Schwarz Inequality} \\
     \la a,b \ra &\leq& \frac{1}{2\alpha}\|a\|^2 + \frac{\alpha}{2}\|b\|^2,\\
    \|a+b\|^2 &\leq& 2\|a\|^2 + 2\|b\|^2,  \label{eq: Young's Inequality}\\
    \|a\|^2 &\geq& \frac{1}{2}\|a+b\|^2 - \|b\|^2, \\
    \bigg\|\sum_{i = 1}^n a_i \bigg\|^2 &\leq& n \sum_{i = 1}^n \|a_i\|^2. \label{eq: n dimensional young's inequality}    
\end{eqnarray}

\end{lemma}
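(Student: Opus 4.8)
The plan is to derive the five inequalities in order, taking the Cauchy--Schwarz inequality as the single foundational fact and obtaining the remaining four as elementary consequences. For \eqref{eq: Cauchy Schwarz Inequality}, I would simply record the standard Cauchy--Schwarz inequality $\la a, b \ra \le |\la a, b \ra| \le \|a\|\,\|b\|$ in $\mathbb{R}^d$; if a self-contained argument is wanted, it follows from expanding the nonnegative quantity $\| \|b\|\, a - \|a\|\, b \|^2 \ge 0$ and rearranging.

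For the weighted Young inequality, I would complete the square. For any $\alpha > 0$ one has $0 \le \left\| \tfrac{1}{\sqrt{\alpha}}\, a - \sqrt{\alpha}\, b \right\|^2 = \tfrac{1}{\alpha}\|a\|^2 - 2\la a, b\ra + \alpha \|b\|^2$, and rearranging yields $\la a, b\ra \le \tfrac{1}{2\alpha}\|a\|^2 + \tfrac{\alpha}{2}\|b\|^2$. Setting $\alpha = 1$ gives the special case $2\la a, b\ra \le \|a\|^2 + \|b\|^2$, which I then substitute into the identity $\|a+b\|^2 = \|a\|^2 + 2\la a, b\ra + \|b\|^2$ to obtain \eqref{eq: Young's Inequality}, namely $\|a+b\|^2 \le 2\|a\|^2 + 2\|b\|^2$.

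The fourth inequality requires no new idea: it is a direct algebraic rearrangement of \eqref{eq: Young's Inequality}. Subtracting $2\|b\|^2$ from both sides of $\|a+b\|^2 \le 2\|a\|^2 + 2\|b\|^2$ and dividing by $2$ gives $\tfrac{1}{2}\|a+b\|^2 - \|b\|^2 \le \|a\|^2$, which is exactly the claim. Finally, for \eqref{eq: n dimensional young's inequality} I would chain the triangle inequality with the discrete Cauchy--Schwarz inequality, writing $\left\|\sum_{i=1}^n a_i\right\|^2 \le \left(\sum_{i=1}^n \|a_i\|\right)^2 \le n \sum_{i=1}^n \|a_i\|^2$, where the last step applies Cauchy--Schwarz to the vectors $(1,\dots,1)$ and $(\|a_1\|,\dots,\|a_n\|)$ (equivalently, Jensen's inequality for the convex map $t \mapsto t^2$).

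There is no substantive obstacle here, since every assertion is a textbook inequality; the only point demanding slight care is the final step, where one should route through the triangle inequality and the $n$-term Cauchy--Schwarz (or Jensen) bound rather than attempting to expand the $\binom{n}{2}$ cross terms $\la a_i, a_j\ra$ directly, which is more cumbersome and obscures the clean constant $n$.
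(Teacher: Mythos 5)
Your proof is correct in every step. The paper itself offers no proof of this lemma: it simply records these five inequalities as standard facts (noting only that \eqref{eq: Young's Inequality} is ``well known as Young's Inequality''), so there is no argument in the paper to compare against. Your derivations — Cauchy--Schwarz as the base fact, the completed square for the weighted Young inequality, expansion of $\|a+b\|^2$ for the third and fourth claims, and the triangle inequality followed by discrete Cauchy--Schwarz for the last — are exactly the textbook route one would supply if a proof were required.
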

Inequality~\eqref{eq: Young's Inequality} is well known as Young's Inequality. Now, we present a simple property of unbiased estimators.

\vspace{2mm}

\begin{lemma}\label{Lemma: variance of an unbiased estimator}
For an unbiased estimator $g$ of operator $F$ i.e. $\E[g(x)] = F(x)$ we have 
\begin{equation}\label{eq: variance of an unbiased estimator}
    \E \|g(x) - F(x)\|^2 = \E \|g(x)\|^2 - \|F(x)\|^2.
\end{equation}
\end{lemma}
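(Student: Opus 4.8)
The plan is to establish this identity by the standard bias--variance decomposition, expanding the squared norm and exploiting the fact that $F(x)$ is deterministic while $g(x)$ is an unbiased estimator of it. First I would expand the inner quantity using the elementary identity $\|a - b\|^2 = \|a\|^2 - 2\la a, b \ra + \|b\|^2$ with $a = g(x)$ and $b = F(x)$, giving
\begin{equation*}
    \|g(x) - F(x)\|^2 = \|g(x)\|^2 - 2 \la g(x), F(x) \ra + \|F(x)\|^2.
\end{equation*}

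Next I would apply the expectation operator to both sides and use its linearity. The term $\|F(x)\|^2$ is non-random (since $x$ is fixed and $F$ is deterministic), so it passes through the expectation unchanged. For the cross term, linearity and the fact that $F(x)$ is deterministic let me pull $F(x)$ out of the expectation in the inner product, yielding $\E \la g(x), F(x) \ra = \la \E[g(x)], F(x) \ra$. At this point the unbiasedness hypothesis $\E[g(x)] = F(x)$ is invoked: it turns this into $\la F(x), F(x) \ra = \|F(x)\|^2$.

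Finally I would collect the three resulting terms,
\begin{equation*}
    \E \|g(x) - F(x)\|^2 = \E \|g(x)\|^2 - 2\|F(x)\|^2 + \|F(x)\|^2,
\end{equation*}
and simplify the last two to obtain $\E \|g(x)\|^2 - \|F(x)\|^2$, which is exactly~\eqref{eq: variance of an unbiased estimator}. I do not anticipate a genuine obstacle here, as the result is a direct computation; the only point requiring a moment of care is justifying the interchange of expectation with the inner product, which is immediate because one of the two arguments, namely $F(x)$, is a fixed (deterministic) vector, so the step reduces to ordinary linearity of expectation rather than any measure-theoretic subtlety.
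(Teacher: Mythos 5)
Your proof is correct and is the standard bias--variance expansion; the paper states this lemma without proof precisely because this is the canonical one-line computation. Nothing further is needed.
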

Next, we present the following lemma from \cite{stich2019unified}, which plays a vital role in proving the convergence guarantee of Theorem \ref{Theorem: Total number of iteration knowledge}.
\begin{lemma}(Simplified Verison of Lemma 3 from \cite{stich2019unified})\label{Lemma: Stich lemma}
Let the non-negative sequence $\{r_k\}_{k \geq 0}$ satisfy the relation $r_{k+1} \leq (1 - a \gamma_k)r_k + c \gamma_k^2$ for all $k \geq 0$, parameters $a,c \geq 0$ and any non-negative sequence $\{\g_k\}_{k \geq 0}$ such that $\gamma_k \leq \frac{1}{h}$ for some $h \geq a, h >0$. Then for any $K \geq 0$ one can choose $\{\gamma_k\}_{k\geq 0}$ as follows:
\begin{equation*}
    \begin{split}
        \text{if $K \leq \frac{h}{a}$}, & \qquad \gamma_k = \frac{1}{h},\\
        \text{if $K > \frac{h}{a}$ and $k < k_0$}, & \qquad \gamma_k = \frac{1}{h},\\
        \text{if $K > \frac{h}{a}$ and $k \geq k_0$}, & \qquad \gamma_k = \frac{2}{a(\kappa + k - k_0)},
    \end{split}
\end{equation*}
where $\kappa = \frac{2h}{a}$ and $k_0 = \left[ \frac{K}{2} \right]$. For this choice of $\g_k$ the following inequality holds:
$$
r_K \leq \frac{32hr_0}{a} \exp{\Bigg(-\frac{aK}{2h}\Bigg)} + \frac{36c}{a^2K}.
$$
\end{lemma}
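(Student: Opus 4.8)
The plan is to split the argument according to the two regimes of the step-size schedule, $K \le h/a$ and $K > h/a$, and in the second regime to analyze the constant and decreasing phases separately before stitching them together. The engine throughout is the one-step contraction $r_{k+1} \le (1-a\gamma_k) r_k + c\gamma_k^2$; the entire proof is an exercise in unrolling this recursion with the prescribed $\gamma_k$ and carefully tracking constants.

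First I would dispose of the easy regime $K \le h/a$, where $\gamma_k \equiv 1/h$. Unrolling the recursion gives $r_K \le (1-a/h)^K r_0 + \frac{c}{h^2}\sum_{j=0}^{K-1}(1-a/h)^j$; bounding the geometric sum by $\frac{c}{h^2}\cdot\frac{h}{a} = \frac{c}{ah}$ and using $(1-a/h)^K \le \exp(-aK/h) \le \exp(-aK/(2h))$ yields $r_K \le \exp(-aK/(2h)) r_0 + \frac{c}{ah}$. Since $h \ge a$ forces $32h/a \ge 32 \ge 1$, the exponential term is absorbed into $\frac{32 h r_0}{a}\exp(-aK/(2h))$; and since $K \le h/a$ gives $\frac{1}{ah} \le \frac{1}{a^2 K}$, the additive term is absorbed into $\frac{36c}{a^2 K}$. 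The very same constant-step unrolling handles Phase~1 of the regime $K > h/a$: running $\gamma_k \equiv 1/h$ for $k = 0,\dots,k_0-1$ with $k_0 = \lceil K/2\rceil$ produces $r_{k_0} \le \exp(-ak_0/h) r_0 + \frac{c}{ah} \le \exp(-aK/(2h)) r_0 + \frac{c}{ah}$.

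The heart of the proof is Phase~2, the decreasing-step stage $\gamma_t = \frac{2}{a(\kappa+t)}$ with $\kappa = 2h/a$, for which I would use a weighted telescoping argument. Writing $s_t := r_{k_0+t}$ and choosing weights $w_t := (\kappa+t)^2$, the key algebraic fact is $w_{t+1}(1 - a\gamma_t) \le w_t$, which holds because $w_{t+1}(1-a\gamma_t) = (\kappa+t+1)^2\tfrac{\kappa+t-2}{\kappa+t} = (\kappa+t)^2 - 3 - \tfrac{2}{\kappa+t} < w_t$. Multiplying the recursion by $w_{t+1}$ and summing over $t=0,\dots,T-1$ (with $T = K-k_0$) telescopes the $s$-terms, giving $s_T \le \frac{\kappa^2}{(\kappa+T)^2} s_0 + \frac{c}{(\kappa+T)^2}\sum_{t=0}^{T-1} w_{t+1}\gamma_t^2$. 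The per-term estimate $w_{t+1}\gamma_t^2 = \frac{4(\kappa+t+1)^2}{a^2(\kappa+t)^2} \le \frac{9}{a^2}$, valid since $\kappa \ge 2$, controls the weighted sum by $\frac{9T}{a^2}$, so the Phase~2 error term is at most $\frac{9cT}{a^2(\kappa+T)^2} \le \frac{9c}{a^2(\kappa+T)}$.

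Finally I would combine the phases: substituting the Phase~1 bound for $s_0 = r_{k_0}$ into the Phase~2 estimate, the exponential part survives (dampened by $\kappa^2/(\kappa+T)^2 \le 1$ and re-inflated by the harmless factor $32h/a \ge 1$), while both the $\frac{c}{ah}$ carried over from Phase~1 and the $\frac{9c}{a^2(\kappa+T)}$ from Phase~2 are turned into $\mathcal{O}\!\left(c/(a^2 K)\right)$ terms via $\frac{\kappa^2}{(\kappa+T)^2}\cdot\frac{c}{ah} \le \frac{2c}{a^2(\kappa+T)}$ and $\kappa+T\ge T$. The main obstacle — and the only place demanding genuine care — is the constant bookkeeping: one must verify that $T = K - \lceil K/2\rceil = \lfloor K/2\rfloor \ge K/3$ (for $K\ge 2$, which holds since $K > h/a \ge 1$), so that every $1/(\kappa+T)$-type term becomes a $1/K$-type term, while simultaneously ensuring the weight inequality and the per-term bound hold for all $t$, which is precisely where the hypothesis $h \ge a$ (hence $\kappa \ge 2$) is used. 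Aggregating these inequalities with slightly loose constants yields the claimed bound $\frac{32 h r_0}{a}\exp(-aK/(2h)) + \frac{36c}{a^2 K}$.
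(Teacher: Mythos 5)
Your proof is correct. Note that the paper itself supplies no argument for this lemma — it is stated as a simplified version of Lemma 3 of Stich (2019) and used as a black box — so your write-up is a self-contained derivation of a result the thesis only cites; the route you take (constant-step unrolling for the first phase, then weighted telescoping with weights $w_t=(\kappa+t)^2$ and the key inequality $w_{t+1}(1-a\gamma_t)=(\kappa+t)^2-3-\tfrac{2}{\kappa+t}\le w_t$) is essentially the standard argument from the cited source. The constant bookkeeping checks out: $w_{t+1}\gamma_t^2\le 9/a^2$ uses $\kappa\ge 2$ (i.e.\ $h\ge a$), the case $K>h/a$ forces $K\ge 2$ so that $T=\lfloor K/2\rfloor\ge K/3$, and the resulting $\tfrac{2c}{a^2(\kappa+T)}+\tfrac{9c}{a^2(\kappa+T)}\le\tfrac{33c}{a^2K}$ sits comfortably inside the stated $\tfrac{36c}{a^2K}$, while the factor $32h/a\ge 1$ absorbs the exponential term in every case.
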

We use the next lemma to bound the trace of matrix products. 
\vspace{2mm}

\begin{lemma}
For positive semidefinite matrices $A,B \in \mathbb{R}^{d \times d}$ we have 
\begin{equation}\label{eq: trace inequality}
    \text{tr}(AB) \leq \lambda_{\max}(B) \text{tr}(A),
\end{equation}
where $\lambda_{\max}(B)$ denotes the maximum eigenvalue of $B$.
\end{lemma}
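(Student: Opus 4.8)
The plan is to exploit the spectral structure of the symmetric positive semidefinite matrix $B$. First I would diagonalize $B$ as $B = \sum_{i=1}^d \lambda_i u_i u_i^\top$, where $\lambda_1, \dots, \lambda_d \ge 0$ are its eigenvalues and $\{u_i\}_{i=1}^d$ is an orthonormal basis of eigenvectors. Substituting this into the trace and using linearity together with the cyclic property $\tr(A u_i u_i^\top) = u_i^\top A u_i$, I would obtain the exact identity $\tr(AB) = \sum_{i=1}^d \lambda_i\, u_i^\top A u_i$.

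The second step is to bound each summand. Since $A$ is positive semidefinite, every quadratic form $u_i^\top A u_i$ is nonnegative; this is the crucial sign condition that makes the inequality point in the correct direction. Replacing each eigenvalue $\lambda_i$ by its maximum $\lambda_{\max}(B)$ can then only increase the sum, giving $\tr(AB) \le \lambda_{\max}(B) \sum_{i=1}^d u_i^\top A u_i$. Finally, I would recognize the remaining sum as $\tr(A)$: since $\{u_i\}$ is orthonormal we have $\sum_i u_i u_i^\top = I$, so $\sum_i u_i^\top A u_i = \tr\!\big(A \sum_i u_i u_i^\top\big) = \tr(A)$, which completes the argument.

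An equivalent and slightly shorter route is to observe that $\lambda_{\max}(B) I - B \succeq 0$ and to invoke the elementary fact that the trace of a product of two positive semidefinite matrices is nonnegative. This fact follows by writing $A = R R^\top$ and noting $\tr(A M) = \tr(R^\top M R) \ge 0$ whenever $M \succeq 0$, since $R^\top M R$ is positive semidefinite and the trace of a positive semidefinite matrix is a sum of its nonnegative eigenvalues. Applying this with $M = \lambda_{\max}(B) I - B$ yields $\tr\!\big(A(\lambda_{\max}(B) I - B)\big) \ge 0$, i.e. $\lambda_{\max}(B)\tr(A) - \tr(AB) \ge 0$, which is precisely the claim.

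There is no genuine obstacle here, as this is a standard linear-algebra estimate. The only point requiring care is to keep the positive semidefiniteness of $A$ in play throughout, so that the sign of each quadratic form $u_i^\top A u_i$ (equivalently, the nonnegativity of $\tr(AM)$) is controlled; if one weakened the hypothesis to $A$ merely symmetric, the quadratic forms could be negative and the inequality could fail.
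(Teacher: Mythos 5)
Your proof is correct. The paper states this lemma without providing any proof of its own, so there is nothing to compare against; your spectral-decomposition argument (writing $B = \sum_i \lambda_i u_i u_i^\top$, using $u_i^\top A u_i \ge 0$ from the positive semidefiniteness of $A$, and recombining via $\sum_i u_i u_i^\top = I$) is the standard route and is complete, as is your alternative via $\tr\bigl(A(\lambda_{\max}(B)I - B)\bigr) \ge 0$.
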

Next lemma proves equivalence of \algname{SPEG} and \algname{SOG}:
\begin{proposition}[\textbf{Equivalence of \algname{SPEG} and \algname{SOG}}] \label{proposition: equivalence of SPEG and SOG}
Consider the iterates of \algname{SPEG} $\{x_k, \hat{x}_k\}_{k = 1}^{\infty}$ with constant step-sizes $\om_k = \om, \gamma_k = \gamma$ in \eqref{SPEG_UpdateRule}. Then $\hat{x}_k$ follows the iteration rule of \algname{SOG} i.e.
\begin{eqnarray}
     \hat{x}_{k+1} = \hat{x}_k - \om_k F_{v_k}(\hat{x}_k) - \gamma_k \large[F_{v_k}(\hat{x}_k) - F_{v_{k-1}}(x_{k-1})\large]
\end{eqnarray}
\end{proposition}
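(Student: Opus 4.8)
The plan is to prove the equivalence by direct substitution, eliminating the auxiliary variable $x_k$ from the two-line \algname{SPEG} recursion to obtain a single recursion in $\hat{x}_k$. First I would write down the two update rules from \eqref{SPEG_UpdateRule} with constant step sizes:
\begin{eqnarray*}
    \hat{x}_k & = & x_k - \gamma F_{v_{k-1}}(\hat{x}_{k-1}), \\
    x_{k+1} & = & x_k - \om F_{v_k}(\hat{x}_k).
\end{eqnarray*}
The goal is to express $\hat{x}_{k+1}$ purely in terms of $\hat{x}_k$, the fresh oracle evaluation $F_{v_k}(\hat{x}_k)$, and the stored past value $F_{v_{k-1}}(\hat{x}_{k-1})$.

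The key step is to apply the extrapolation update at index $k+1$ and then substitute. Writing the first equation at index $k+1$ gives $\hat{x}_{k+1} = x_{k+1} - \gamma F_{v_k}(\hat{x}_k)$. Substituting the update for $x_{k+1}$ from the second equation yields
\begin{eqnarray*}
    \hat{x}_{k+1} & = & x_k - \om F_{v_k}(\hat{x}_k) - \gamma F_{v_k}(\hat{x}_k).
\end{eqnarray*}
Now I would rewrite $x_k$ using the extrapolation equation at index $k$, namely $x_k = \hat{x}_k + \gamma F_{v_{k-1}}(\hat{x}_{k-1})$, and collect terms to reach
\begin{eqnarray*}
    \hat{x}_{k+1} & = & \hat{x}_k - \om F_{v_k}(\hat{x}_k) - \gamma \left[ F_{v_k}(\hat{x}_k) - F_{v_{k-1}}(\hat{x}_{k-1}) \right],
\end{eqnarray*}
which is exactly the \algname{SOG} recursion stated in the proposition.

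There is no serious obstacle here; the only point requiring minor care is the bookkeeping of the sampling subscripts, ensuring that the stored evaluation carried from iteration $k-1$ is $F_{v_{k-1}}(\hat{x}_{k-1})$ rather than an evaluation at $x_{k-1}$, so that the indices match the claimed \algname{SOG} form. I would also note that the statement implicitly identifies the \algname{SOG} iterate at step $k$ with the \algname{SPEG} look-ahead iterate $\hat{x}_k$, and that the equivalence holds for $k \geq 1$ given the initialization $\hat{x}_{-1} = x_0$, so the base case is consistent. Since the derivation is purely algebraic and uses only the defining update rules, the argument is complete once the substitution is carried out and the terms are regrouped.
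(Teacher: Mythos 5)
Your proof is correct and follows essentially the same route as the paper's: write $\hat{x}_{k+1} = x_{k+1} - \gamma F_{v_k}(\hat{x}_k)$, substitute the update for $x_{k+1}$, then eliminate $x_k$ via $x_k = \hat{x}_k + \gamma F_{v_{k-1}}(\hat{x}_{k-1})$ and regroup. Your remark that the stored term must be $F_{v_{k-1}}(\hat{x}_{k-1})$ rather than an evaluation at $x_{k-1}$ is well taken — the displayed equation in the proposition statement contains that typo, and both your derivation and the paper's proof yield the $\hat{x}_{k-1}$ version.
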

\begin{proof}
From the update rule of \algname{SPEG}~\eqref{SPEG_UpdateRule} we get
\begin{equation*}
    \begin{split}
        \hat{x}_{k+1} = \quad & x_{k + 1} - \gamma F_{v_k}(\hat{x}_k) \\
        = \quad & x_k - \om F_{v_k}(\hat{x}_k) - \gamma F_{v_k}(\hat{x}_k) \\
        = \quad & x_k - (\om + \gamma) F_{v_k} (\hat{x}_k) \\
        = \quad & \hat{x}_k + \gamma F_{v_{k - 1}}(\hat{x}_{k - 1}) - (\om + \gamma) F_{v_k} (\hat{x}_k) \\
        = \quad & \hat{x}_k  - \om  F_{v_k} (\hat{x}_k) - \gamma \Big(F_{v_k} (\hat{x}_k) - F_{v_{k - 1}}(\hat{x}_{k - 1}) \Big).
    \end{split}
\end{equation*}
This shows that \algname{SPEG} iterations are equivalent to \algname{SOG}, with $\hat{x}_k$ being the $k$-th iterate of \algname{SOG}.
\end{proof}

\section{Example:  A Problem where the Bounded Variance Condition not Hold}\label{sec:BoundedVarianceCounterExample}
Here, we provide a simple problem that does not satisfy the bounded variance assumption. Consider the linear regression problem
\begin{eqnarray*}
        \min_{x \in \mathbb{R}}f(x) := \frac{1}{2} (a_1x - b_1)^2 + \frac{1}{2} (a_2x - b_2)^2
\end{eqnarray*}
where $x \in \mathbb{R}$. Here $f_1(x) = (a_1x - b_1)^2$ and $f_2(x) = (a_2x - b_2)^2$. Now consider the estimator $g(x)$ of $\nabla f(x)$ under uniform sampling i.e. $g(x)$ takes the value $\nabla f_1(x)$ with probability $\frac{1}{2}$ and $\nabla f_2(x)$ with probability $\frac{1}{2}$. Then we have
\begin{eqnarray*}
        \Exp \|g(x) -  \nabla f(x)\|^2 & = & \frac{1}{2} \|\nabla f_1 (x) - \nabla f(x)\|^2 + \frac{1}{2} \|\nabla f_2(x) - \nabla f(x)\|^2 \\
        & = & \frac{1}{2} \cdot \frac{1}{4}\|\nabla f_1 (x) - \nabla f_2(x)\|^2 + \frac{1}{2} \cdot \frac{1}{4}\|\nabla f_2 (x) - \nabla f_1(x)\|^2 \\
        & = & \frac{1}{4} \|\nabla f_1(x) - \nabla f_2(x)\|^2 \\
        & = & \frac{1}{4} \left( 2(a_1x - b_1)a_1 - 2(a_2x - b_2)a_2 \right)^2 \\
        & = & \left( (a_1^2 - a_2^2) x - (a_1b_1 - a_2b_2) \right)^2
\end{eqnarray*}
Therefore, $\Exp \|g(x) -  \nabla f(x)\|^2$ is a quadratic function of $x$ with the coefficient of $x$ being positive. Hence, as $x \to \infty$, we have $\Exp \|g(x) -  \nabla f(x)\|^2 \to \infty$, which means that a constant can not bound the variance.

\newpage
\section{Proofs of Results on Expected Residual}\label{section: proofs of results on expected residual}
\subsection{Proof of Lemma \ref{Lemma: variance bound}}
\begin{proof}
Using Young's Inequality~\eqref{eq: Young's Inequality}, we get 
\begin{equation*}
    \begin{split}
        \E \|g(x) - F(x)\|^2 \overset{\eqref{eq: Young's Inequality}}{\leq} \quad & 2 \E \|g(x) - F(x) - g(x^*)\|^2 + 2 \E \|g(x^*)\|^2 \\
        \overset{\eqref{eq: ER Condition}}{\leq} \quad & \delta \|x -x^*\|^2 + 2 \E \|g(x^*)\|^2. \\
    \end{split}
\end{equation*}
Then breaking down the RHS, we obtain 
\begin{equation*}
    \begin{split}
         \E \|g(x)\|^2 - \|F(x)\|^2 \overset{\eqref{eq: variance of an unbiased estimator}}{\leq} \delta \|x -x^*\|^2 + 2 \E \|g(x^*)\|^2.
    \end{split}
\end{equation*}
Now we rearrange the terms and set $\sigma_*^2 = \E \|g(x^*)\|^2$ to complete the proof of this Lemma.
\end{proof}

\begin{proposition}\label{Proposition lipschitz implies ER}
 If $F_i$ are $L_i$-lipschitz then Expected Residual condition~\eqref{eq: ER Condition} holds. In that case
\begin{equation*}
    \begin{split}
        \delta =  \frac{2}{n} \sum_{i = 1}^n L_i^2 \E (v_i^2). 
    \end{split}
\end{equation*}
In addition, if $F$ is $\mu$-quasi strongly monotone \eqref{eq: Strong Monotonicity} then we have 
\begin{equation*}
    \begin{split}
       \delta = \frac{2}{n} \sum_{i = 1}^n L_i^2 \E (v_i^2) - 2\mu^2. 
    \end{split}
\end{equation*}
\end{proposition}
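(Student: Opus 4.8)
The plan is to exploit the exact bias--variance decomposition so that the $\mu$-quasi strongly monotone refinement drops out for free. First I would observe that since $\E_{\cD}[v_i]=1$ for every $i$, the difference $g(x)-g(x_*)=\frac{1}{n}\sum_{i=1}^n v_i\bigl(F_i(x)-F_i(x_*)\bigr)$ is an unbiased estimator of $F(x)-F(x_*)$. Applying the identity of Lemma~\ref{Lemma: variance of an unbiased estimator} to this estimator gives
\[
\E\left[\|(g(x)-g(x_*))-(F(x)-F(x_*))\|^2\right]=\E\|g(x)-g(x_*)\|^2-\|F(x)-F(x_*)\|^2.
\]
This identity is the crux of the argument: retaining the subtracted term $-\|F(x)-F(x_*)\|^2$ is exactly what will later yield the $-2\mu^2$ improvement, so I would deliberately \emph{not} center inside the norm (doing so would discard precisely that term).

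Next I would bound the first term on the right. Writing $g(x)-g(x_*)=\frac{1}{n}\sum_i v_i\bigl(F_i(x)-F_i(x_*)\bigr)$ and applying \eqref{eq: n dimensional young's inequality} gives $\|g(x)-g(x_*)\|^2\le\frac{1}{n}\sum_{i=1}^n v_i^2\|F_i(x)-F_i(x_*)\|^2$. Taking expectation and invoking the $L_i$-Lipschitzness \eqref{eq: F_i lipschitz} of each $F_i$ yields
\[
\E\|g(x)-g(x_*)\|^2\le\frac{1}{n}\sum_{i=1}^n \E(v_i^2)\,L_i^2\,\|x-x_*\|^2.
\]
The whole point of passing through \eqref{eq: n dimensional young's inequality} is to dispense with the cross-covariance terms $\E(v_iv_j)$ entirely, at the cost of a cruder constant than the exact minibatch value of Proposition~\ref{Prop_SufficientCondition}; this is what leaves the bound expressed solely through the second moments $\E(v_i^2)$. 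For the first claim I would simply drop the nonpositive term $-\|F(x)-F(x_*)\|^2$, which establishes \eqref{eq: ER Condition} with $\tfrac{\delta}{2}=\frac{1}{n}\sum_i L_i^2\E(v_i^2)$.

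For the second claim I would instead keep $-\|F(x)-F(x_*)\|^2$ and lower-bound it. Since $F(x_*)=0$, quasi-strong monotonicity \eqref{eq: Strong Monotonicity} together with Cauchy--Schwarz \eqref{eq: Cauchy Schwarz Inequality} gives $\mu\|x-x_*\|^2\le\langle F(x),x-x_*\rangle\le\|F(x)\|\,\|x-x_*\|$, hence $\|F(x)-F(x_*)\|^2=\|F(x)\|^2\ge\mu^2\|x-x_*\|^2$. Substituting into the decomposition produces
\[
\E\left[\|(g(x)-g(x_*))-(F(x)-F(x_*))\|^2\right]\le\left(\frac{1}{n}\sum_{i=1}^n L_i^2\E(v_i^2)-\mu^2\right)\|x-x_*\|^2,
\]
i.e.\ \eqref{eq: ER Condition} with $\delta=\frac{2}{n}\sum_i L_i^2\E(v_i^2)-2\mu^2$. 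The only genuine subtlety lies in the bookkeeping of the first step: one must keep the variance decomposition exact rather than bounding the centered deviation directly, since the direct route would throw away exactly the $\|F(x)-F(x_*)\|^2$ term needed for the monotone refinement; everything else is a routine application of the elementary inequalities already recorded in Lemma~\ref{Lemma: Young's Inequality}.
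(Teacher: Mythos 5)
Your proposal is correct and follows essentially the same route as the paper's proof: the exact variance decomposition of the centered deviation (Lemma~\ref{Lemma: variance of an unbiased estimator}), the bound on $\E\|g(x)-g(x_*)\|^2$ via inequality~\eqref{eq: n dimensional young's inequality} and the $L_i$-Lipschitzness, and finally the lower bound $\|F(x)\|\ge\mu\|x-x_*\|$ from quasi-strong monotonicity and Cauchy--Schwarz to obtain the $-2\mu^2$ refinement. No gaps.
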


\begin{proof}
Note that
\begin{eqnarray}
        \E\|(F_v(x) - F_v(x^*)) - (F(x) - F(x^*))\|^2 &=&  \E\|F_v(x) - F_v(x^*)\|^2 + \|F(x) - F(x^*)\|^2 \notag\\
        && \quad - 2 \E \la F_v(x) - F_v(x^*), F(x) - F(x^*)\ra \notag\\
        &=&  \E \|F_v(x) - F_v(x^*)\|^2 - \|F(x) - F(x^*)\|^2 \notag\\
        &=&  \E \|F_v(x) - F_v(x^*)\|^2  - \|F(x)\|^2  \notag\\
        &=& \E \bigg\| \frac{1}{n} \sum_{i = 1}^n v_i (F_i(x) - F_i(x^*)) \bigg\|^2  - \|F(x)\|^2 \notag\\
        &=& \frac{1}{n^2} \E \bigg \| \sum_{i = 1}^n v_i (F_i(x) - F_i(x^*)) \bigg\|^2  - \|F(x)\|^2 \notag\\
        &\overset{\eqref{eq: n dimensional young's inequality}}{\leq}& \frac{1}{n} \sum_{i = 1}^n \E(v_i^2) \|F_i(x) - F_i(x^*)\|^2  - \|F(x)\|^2 \notag\\
        &\overset{\eqref{eq: F_i lipschitz}}{\leq}& \frac{\|x - x^*\|^2}{n} \sum_{i = 1}^n \E(v_i^2) L_i^2  - \|F(x)\|^2 \label{eq: Expected Residual bound for lipschitz quasi strongly monotone F}.
\end{eqnarray}
The first part of the lemma follows by ignoring the positive term $\|F(x)\|^2$. For the second part we assume $F$ is $\mu$-quasi strongly monotone. Then we have
$$\mu \|x - x^*\|^2 \overset{\eqref{eq: Strong Monotonicity}}{\leq} \la F(x), x - x^*\ra \overset{\eqref{eq: Cauchy Schwarz Inequality}}{\leq} \|F(x)\| \|x - x^*\|.$$ 
Cancelling $\|x - x^*\|$ from both sides we get 
\begin{equation}\label{eq: lower bound on norm of quasi strongly monotone F}
\mu \|x - x^*\| \leq \|F(x)\|.    
\end{equation}
Therefore we have the following bound for $\mu$-quasi strongly monotone operator $F$:
\begin{equation*}
        \E \|(F_v(x) - F_v(x^*)) - (F(x) - F(x^*))\|^2
        \overset{\eqref{eq: Expected Residual bound for lipschitz quasi strongly monotone F}, \eqref{eq: lower bound on norm of quasi strongly monotone F}}{\leq} \Bigg( \frac{1}{n} \sum_{i = 1}^n \E(v_i^2) L_i^2 - \mu^2 \Bigg) \|x - x^*\|^2.
\end{equation*}
This proves the second part of the lemma. This lemma ensures that the Lipschitz property is sufficient to guarantee Expected Residual~\eqref{eq: ER Condition} condition. 
\end{proof}

\subsection{Proof of Proposition \ref{Prop_SufficientCondition}}
\begin{proof}
    Proposition \ref{Proposition lipschitz implies ER} implies that Lipschitzness of all operators $F_i$ is enough to ensure that \ref{eq: ER Condition} holds. For $\tau$- minibatch sampling, denote the matrix $\textbf{R} = \Big(F_1(x) - F_1(x^*), \cdots, F_n(x) - F_n(x^*)\Big) \in \mathbb{R}^{d \times n}$. Then we obtain the following bound:
\begin{eqnarray*}
\E \|F_v(x) - F_v(x^*) - (F(x) - F(x^*))\|^2 \hspace{-3mm} &=& \hspace{-3mm} \E \left\| \frac{1}{n} \sum_{i = 1}^n v_i (F_i(x) - F_i(x^*)) - (F_i(x) - F_i(x^*)) \right \|^2 \\
&=& \hspace{-3mm}\frac{1}{n^2} \E \bigg \| \sum_{i = 1}^n (v_i -1) (F_i(x) - F_i(x^*)) \bigg\|^2 \\
&=& \hspace{-3mm}\frac{1}{n^2} \E \big \| \textbf{R}(v - \mathbf{1}) \big\|^2 \\
&=& \hspace{-3mm} \frac{1}{n^2} \E (v - \mathbf{1})^{\intercal} \textbf{R}^{\intercal}\textbf{R} (v - \mathbf{1}) \\
&=& \hspace{-3mm} \frac{1}{n^2} \E \bigg( \text{tr} \bigg( \textbf{R}^{\intercal}\textbf{R} (v - \mathbf{1}) (v - \mathbf{1})^{\intercal} \bigg) \bigg) \\
&=& \hspace{-3mm}\frac{1}{n^2} \text{tr} \bigg( \textbf{R}^{\intercal}\textbf{R} \E \bigg((v - \mathbf{1}) (v - \mathbf{1})^{\intercal} \bigg) \bigg) \\
&=& \hspace{-3mm}\frac{1}{n^2} \text{tr} \bigg( \textbf{R}^{\intercal}\textbf{R} \textbf{Var}[v] \bigg) \bigg) \\
&\overset{\eqref{eq: trace inequality}}{\leq}& \hspace{-3mm} \frac{\lambda_{\max}\big(\textbf{Var}[v]\big)}{n^2} \text{tr}(\textbf{R}^{\intercal}\textbf{R}) \\
&=& \hspace{-3mm} \frac{\lambda_{\max}\big(\textbf{Var}[v]\big)}{n^2} \sum_{i = 1}^n \|F_i(x) - F_i(x^*)\|^2 \\
&\overset{\eqref{eq: F_i lipschitz}}{\leq}& \frac{\lambda_{\max}(\textbf{Var}[v]) \|x - x^*\|^2}{n^2}\sum_{i = 1}^n L_i^2.   
\end{eqnarray*}
From the proof details of Lemma F.3 in \cite{sebbouh2019towards} we have $\lambda_{\max}(\textbf{Var}[v]) = \frac{n(n - \tau)}{\tau (n - 1)}$ for $\tau$-minibatch sampling. Thus we obtain
\begin{equation*}
    \E \big\|F_v(x) - F_v(x^*) - (F(x) - F(x^*)) \big  
    \|^2 \leq \frac{2(n - \tau)}{n \tau (n - 1)} \sum_{i = 1}^n L_i^2 \|x - x^*\|^2.
\end{equation*}
Now we focus on the derivation of $\sigma_*^2 = \E\|F_v(x^*)\|^2$ for $\tau$-minibatch sampling. We expand $\E \|F_v(x^*)\|^2$ as follows:
\begin{eqnarray}
        \E \|F_v(x^*)\|^2 &=& \frac{1}{n^2} \E \bigg\|\sum_{i = 1}^n v_iF_i(x^*) \bigg\|^2 \notag \\
        &=& \frac{1}{n^2} \E \bigg\|\sum_{i \in S} \frac{1}{p_i}F_i(x^*) \bigg\|^2 \notag\\
        &=& \frac{1}{n^2} \E \bigg\|\sum_{i = 1}^ n \textbf{1}_{i \in S}\frac{1}{p_i}F_i(x^*) \bigg\|^2 \notag\\
        &=& \frac{1}{n^2}\E \bigg \langle \sum_{i = 1}^n \textbf{1}_{i \in S}\frac{1}{p_i} F_i(x^*), \sum_{j = 1}^n \textbf{1}_{j \in S}\frac{1}{p_j} F_j(x^*) \bigg \rangle \notag\\
        &=& \frac{1}{n^2} \sum_{i,j = 1}^n \frac{P_{ij}}{p_i p_j} \langle F_i(x^*), F_j(x^*) \rangle, \label{eq: expansion for sigma}
\end{eqnarray}
where $P_{ij} = P(i, j \in S)$ and $p_i = P(i \in S)$. For $\tau$-minibatch sampling, we obtain $P_{ij} = \frac{\tau(\tau - 1)}{n(n-1)}$ and $p_i = \frac{\tau}{n}$. Plugging in these values of $P_{ij}$ and $p_i$ in \eqref{eq: expansion for sigma} we get the closed-form expression of $\sigma_*^2$. This completes the proof of Proposition \ref{Prop_SufficientCondition}.
\end{proof}

\subsection{Proof of Proposition \ref{Proposition connecting assumptions}}
Here we enlist the assumptions made on operators. Suppose $g$ is an estimator of operator $F$.
\begin{equation*}
    \begin{split}
        & \textbf{1. Bounded Operator:} \quad  \E\|g(x)\|^2 \leq \sigma^2 \\
        & \textbf{2. Bounded Variance:} \quad  \E\|g(x) - F(x)\|^2 \leq \sigma^2 \\
        & \textbf{3. Growth Condition:} \quad  \E\|g(x)\|^2 \leq \alpha \|F(x)\|^2 + \beta \\
        & \textbf{4. Expected Co-coercivity:} \quad \E \|g(x) - g(x^*)\|^2 \leq l_F \la F(x), x-x^* \ra \\
        & \textbf{5. Expected Residual:} \quad \E \|(g(x) - g(x^*)) - (F(x) - F(x^*))\|^2 \leq \frac{\delta}{2} \|x-x^*\|^2 \\
        & \textbf{6. Bound from Lemma \ref{Lemma: variance bound}:} \quad\E\|g(x)\|^2 \leq \delta \|x - x^*\|^2 + \|F(x)\|^2 + 2\sigma_*^2 \\
        & \textbf{7. $F_i$ are Lipschitz:} \quad \|F_i(x) - F_i(y)\| \leq L_i\|x - y\|\quad \forall \; i=1,\ldots,n
    \end{split}
\end{equation*}

\begin{proof} Here we will prove Proposition \ref{Proposition connecting assumptions}
\begin{itemize}
    \item $1 \implies 2$. Note that $\E \|g(x)\|^2 \leq \sigma^2 \leq \|F(x)\|^2 + \sigma^2 \implies \E \|g(x) - F(x)\| \leq \sigma^2$.
    \item $2 \implies 3$. Here $\E\|g(x) - F(x)\|^2 \leq \sigma^2 \implies \E \|g(x)\|^2 \leq \|F(x)\|^2 + \sigma^2$ as $g$ is an unbiased for estimator of $F$. Then take $\alpha = 1$ and $\beta = \sigma^2$.
    \item $3 \implies 6$. Note that $\E \|g(x)\|^2 \leq \alpha \|F(x)\|^2 + \beta \leq \alpha L^2 \|x - x^*\|^2 + \beta$. The last inequality follows from lipschitz property of $F$ and $F(x^*) = 0$. Then choose $\delta = \alpha L^2$ and $\sigma_*^2 = \nicefrac{\beta}{2}$ to get the result.
    \item $4\implies 5$. Note that expected cocoercivity and $L$-Lipschitzness of $F$ imply $\E \|(g(x) - g(x^*)) - (F(x) - F(x^*))\|^2 = \E \|g(x) - g(x^*)\|^2 - \|F(x) - F(x^*)\|^2 \leq \E \|g(x) - g(x^*)\|^2 \leq l_F \la F(x), x-x^* \ra \overset{\eqref{Lemma: Young's Inequality}}{\leq} \frac{l_F}{2L} \|F(x)\|^2 + \frac{l_F L}{2} \|x -x^*\|^2 \leq l_F L \|x -x^*\|^2$.
    \item $7 \implies 5$. This follows from Proposition \ref{Proposition lipschitz implies ER}.
    \item $5 \implies 6$. This follows from Lemma \ref{Lemma: variance bound}
\end{itemize}
\end{proof}

\newpage
\section{Main Convergence Analysis Results}\label{section: main convergence analysis results}
First, we present some results followed by iterates of \algname{SPEG}. These will play a key role in proving the Theorems later in this section. Recall that iterates of \algname{SPEG} are
\begin{equation*}
    \begin{split}
        \hat{x}_k & = x_k - \gamma_k F_{v_{k - 1}}(\hat{x}_{k - 1}), \\
    x_{k + 1} & = x_k - \om_k F_{v_k}(\hat{x}_k).
    \end{split}
\end{equation*}

\begin{lemma}\label{Lemma: Breakdown} For \algname{SPEG} iterates with step-size $\omega_k = \gamma_k = \om$, we have
\begin{equation}\label{eq: Breakdown}
    \begin{split}
        \|x_{k+1} - x^*\|^2 = \|x_{k+1} - \hat{x}_k\|^2 + \|x_k - x^*\|^2 - \|\hat{x}_k - x_k\|^2 - 2 \om \la F_{v_k}(\hat{x}_k), \hat{x}_k - x^*\ra.
    \end{split}
\end{equation}
\end{lemma}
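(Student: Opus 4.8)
The plan is to establish \eqref{eq: Breakdown} as a purely algebraic identity that invokes \emph{only} the update rule $x_{k+1} = x_k - \om F_{v_k}(\hat{x}_k)$ from \eqref{SPEG_UpdateRule}; neither monotonicity nor the Lipschitz property of $F$ is needed, and in fact the extrapolation step does not enter at all. The essential observation is that the operator appears in the target identity solely through the term $-2\om\langle F_{v_k}(\hat{x}_k), \hat{x}_k - x^*\rangle$, while everything else is a relation among the points $x_{k+1}, \hat{x}_k, x_k, x^*$. The substitution $\om F_{v_k}(\hat{x}_k) = x_k - x_{k+1}$, read off directly from the update, is what lets us pass between the operator form and the geometric form of that term.

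First I would insert the auxiliary point $\hat{x}_k$ and expand using the elementary identity $\|a+b\|^2 = \|a\|^2 + \|b\|^2 + 2\langle a,b\rangle$, obtaining
$$\|x_{k+1} - x^*\|^2 = \|x_{k+1} - \hat{x}_k\|^2 + \|\hat{x}_k - x^*\|^2 + 2\langle x_{k+1} - \hat{x}_k, \hat{x}_k - x^*\rangle.$$
Next I would rewrite the cross term via $x_{k+1} - \hat{x}_k = (x_k - \hat{x}_k) - \om F_{v_k}(\hat{x}_k)$, which is immediate from the update rule, splitting it as $2\langle x_k - \hat{x}_k, \hat{x}_k - x^*\rangle - 2\om\langle F_{v_k}(\hat{x}_k), \hat{x}_k - x^*\rangle$. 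This already isolates the desired operator term with its factor $2\om$.

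Then I would absorb the leftover pieces $\|\hat{x}_k - x^*\|^2 + 2\langle x_k - \hat{x}_k, \hat{x}_k - x^*\rangle$ by a second application of the same expansion, now to $\|x_k - x^*\|^2 = \|(x_k - \hat{x}_k) + (\hat{x}_k - x^*)\|^2$, which yields $\|\hat{x}_k - x^*\|^2 + 2\langle x_k - \hat{x}_k, \hat{x}_k - x^*\rangle = \|x_k - x^*\|^2 - \|x_k - \hat{x}_k\|^2$. Substituting back and using $\|x_k - \hat{x}_k\|^2 = \|\hat{x}_k - x_k\|^2$ gives \eqref{eq: Breakdown} exactly.

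Since every step is an exact equality, there is no genuine obstacle here; the only thing to watch is the bookkeeping of cross terms so that the sign in front of $\|\hat{x}_k - x_k\|^2$ comes out negative. A tidy alternative that avoids sign-tracking altogether is to prove the three-point identity $\|u-p\|^2 = \|u-v\|^2 + \|w-p\|^2 - \|v-w\|^2 - 2\langle w-u, v-p\rangle$ by expanding both sides (all terms cancel except $\|u\|^2 + \|p\|^2 - 2\langle u,p\rangle$), and then specialize to $u=x_{k+1}$, $v=\hat{x}_k$, $w=x_k$, $p=x^*$, using $w - u = x_k - x_{k+1} = \om F_{v_k}(\hat{x}_k)$ to convert the final inner product into the stated operator form.
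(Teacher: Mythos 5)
Your proof is correct and follows essentially the same route as the paper's: both are pure algebraic expansions around the points $x_{k+1}, \hat{x}_k, x_k, x^*$ that use only the update step $x_{k+1} = x_k - \om F_{v_k}(\hat{x}_k)$ to convert the displacement into the operator term (the paper expands the three-term sum $\|(x_{k+1}-\hat{x}_k)+(\hat{x}_k-x_k)+(x_k-x^*)\|^2$ at once, whereas you perform two successive two-term expansions, which is only a bookkeeping difference). Your observation that neither monotonicity, Lipschitzness, nor the extrapolation step is needed is accurate, and the computation checks out.
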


\begin{proof}
We have
\begin{eqnarray*}
    \|x_{k+1} - x^*\|^2 &=& \|x_{k+1} - \hat{x}_k + \hat{x}_k - x_k + x_k - x^*\|^2 \\
    &=& \|x_{k+1} - \hat{x}_k\|^2 + \|\hat{x}_k - x_k\|^2 + \|x_k - x^*\|^2 +  2 \la \hat{x}_k - x_k, x_k - x^* \ra \\
    && \quad + 2 \la x_{k+1} - \hat{x}_k, \hat{x}_k - x_k \ra + 2 \la x_{k+1} - \hat{x}_k, x_k - x^* \ra \\
    &=& \|x_{k+1} - \hat{x}_k\|^2 + \|\hat{x}_k - x_k\|^2 + \|x_k - x^*\|^2 + 2 \la x_{k+1} - \hat{x}_k, \hat{x}_k - x^* \ra \\
    && \quad + 2 \la \hat{x}_k - x_k, x_k - x^* \ra\\
    &=& \|x_{k+1} - \hat{x}_k\|^2 + \|\hat{x}_k - x_k\|^2 + \|x_k - x^*\|^2 + 2 \la x_{k+1} - \hat{x}_k, \hat{x}_k - x^* \ra \\
    && \quad + 2 \la \hat{x}_k - x_k, x_k - \hat{x}_k + \hat{x}_k - x^* \ra \\
    &=&\|x_{k+1} - \hat{x}_k\|^2 + \|\hat{x}_k - x_k\|^2 + \|x_k - x^*\|^2 + 2 \la x_{k+1} - \hat{x}_k, \hat{x}_k - x^* \ra \\
    && \quad + 2 \la \hat{x}_k - x_k,  \hat{x}_k - x^* \ra - 2\|\hat{x}_k - x_k\|^2 \\
    &=&\|x_{k+1} - \hat{x}_k\|^2 - \|\hat{x}_k - x_k\|^2 + \|x_k - x^*\|^2 + 2 \la x_{k+1} - \hat{x}_k, \hat{x}_k - x^* \ra \\
    &&\quad + 2 \la \hat{x}_k - x_k,  \hat{x}_k - x^* \ra  \\
    &=& \|x_{k+1} - \hat{x}_k\|^2 - \|\hat{x}_k - x_k\|^2 + \|x_k - x^*\|^2 + 2 \la x_{k+1} - x_k, \hat{x}_k - x^* \ra \\
    &=& \|x_{k+1} - \hat{x}_k\|^2 - \|\hat{x}_k - x_k\|^2 + \|x_k - x^*\|^2 - 2 \om \la F_{v_k}(\hat{x}_k), \hat{x}_k - x^* \ra.
\end{eqnarray*}
\end{proof}

\begin{lemma}\label{Lemma: bound on difference of gradients}
Let $F$ be $L$-Lipschitz, and let \ref{eq: ER Condition} hold. Then \algname{SPEG} iterates satisfy
\begin{equation}\label{eq: bound on difference of gradients}
    \begin{split}
        \E_{\mathcal{D}} \|F_{v_k}(\hat{x}_k) - F_{v_{k-1}}(\hat{x}_{k-1})\|^2 \leq \quad & \delta \|\hat{x}_k - x^*\|^2 + 2\delta \|\hat{x}_{k-1} - x^*\|^2 + 2 L^2 \|\hat{x}_k - \hat{x}_{k-1}\|^2  + 6 \sigma_*^2.
    \end{split}
\end{equation}
\end{lemma}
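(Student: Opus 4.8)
The plan is to decompose the difference of the two stochastic operator evaluations around the exact operator $F$ and to exploit that the freshly drawn sample $v_k$ is independent of everything computed up to iteration $k-1$. Concretely, I would introduce the filtration $\mathcal{F}_{k-1} \eqdef \sigma(v_0, \dots, v_{k-1})$ and observe that both $\hat{x}_k$ and $\hat{x}_{k-1}$ are $\mathcal{F}_{k-1}$-measurable (each is built only from samples of index at most $k-1$, since $\hat{x}_k = x_k - \gamma_k F_{v_{k-1}}(\hat{x}_{k-1})$ and $x_k$ depends on $v_{k-1}$ and earlier), while $v_k \sim \cD$ is drawn independently of $\mathcal{F}_{k-1}$. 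I would then write
$$F_{v_k}(\hat{x}_k) - F_{v_{k-1}}(\hat{x}_{k-1}) = \underbrace{\left(F_{v_k}(\hat{x}_k) - F(\hat{x}_k)\right)}_{A} + \underbrace{\left(F(\hat{x}_k) - F(\hat{x}_{k-1})\right)}_{B} + \underbrace{\left(F(\hat{x}_{k-1}) - F_{v_{k-1}}(\hat{x}_{k-1})\right)}_{C}.$$

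The crucial observation is that $A$ has conditional mean zero: since $\hat{x}_k$ is $\mathcal{F}_{k-1}$-measurable and $\E_{\cD}[F_{v_k}(x)] = F(x)$ for every fixed $x$, we obtain $\E[A \mid \mathcal{F}_{k-1}] = 0$. Because $B$ and $C$ are both $\mathcal{F}_{k-1}$-measurable, the cross terms vanish: $\E\langle A, B + C\rangle = \E\langle \E[A\mid\mathcal{F}_{k-1}], B+C\rangle = 0$. Hence $\E\|A+B+C\|^2 = \E\|A\|^2 + \E\|B+C\|^2$, and a single application of Young's inequality \eqref{eq: Young's Inequality} gives $\E\|A+B+C\|^2 \le \E\|A\|^2 + 2\E\|B\|^2 + 2\E\|C\|^2$. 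This asymmetric splitting — keeping $A$ at coefficient one while $B$ and $C$ absorb the factor two — is precisely what produces the $\delta$ versus $2\delta$ asymmetry in the target bound, and it is why I avoid a blunt factor-three expansion.

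It then remains to bound the three pieces. For $A$ and $C$ I would use the variance estimate implied by \ref{eq: ER Condition}: combining Lemma~\ref{Lemma: variance bound} with Lemma~\ref{Lemma: variance of an unbiased estimator} yields $\E\|g(x) - F(x)\|^2 \le \delta\|x - x^*\|^2 + 2\sigma_*^2$ for the unbiased estimator $g = F_v$. Conditioning on $\mathcal{F}_{k-1}$ and applying this to $A$ at the point $\hat{x}_k$ gives $\E\|A\|^2 \le \delta\E\|\hat{x}_k - x^*\|^2 + 2\sigma_*^2$, and the same bound at $\hat{x}_{k-1}$ gives $\E\|C\|^2 \le \delta\E\|\hat{x}_{k-1} - x^*\|^2 + 2\sigma_*^2$. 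For $B$ I would simply invoke the $L$-Lipschitzness of $F$, so that $\E\|B\|^2 \le L^2\E\|\hat{x}_k - \hat{x}_{k-1}\|^2$. Summing $\E\|A\|^2 + 2\E\|B\|^2 + 2\E\|C\|^2$ reproduces $\delta\|\hat{x}_k - x^*\|^2 + 2\delta\|\hat{x}_{k-1} - x^*\|^2 + 2L^2\|\hat{x}_k - \hat{x}_{k-1}\|^2 + 6\sigma_*^2$, which is exactly the claimed inequality~\eqref{eq: bound on difference of gradients}.

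The main obstacle I anticipate is not the algebra but the probabilistic bookkeeping: one must set up the filtration carefully and justify both the measurability of $\hat{x}_k, \hat{x}_{k-1}$ with respect to $\mathcal{F}_{k-1}$ and the conditional unbiasedness of $F_{v_k}(\hat{x}_k)$. It is precisely the vanishing of $\E\langle A, B+C\rangle$ that keeps the coefficient of $\|\hat{x}_k - x^*\|^2$ equal to $\delta$ rather than $3\delta$; once this independence argument is made rigorous, the remainder is a routine substitution of the ER-based variance bound and the Lipschitz estimate.
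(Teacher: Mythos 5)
Your proposal is correct and follows essentially the same route as the paper: the paper also splits off the zero-mean term $F_{v_k}(\hat{x}_k) - F(\hat{x}_k)$ (so the cross term vanishes by the independence of $v_k$ and the unbiasedness of the estimator), applies Young's inequality only to the remaining two pieces to get the $1$-versus-$2$ coefficient asymmetry, and then invokes the ER-based variance bound of Lemma~\ref{Lemma: variance bound} together with Lemma~\ref{Lemma: variance of an unbiased estimator} and the $L$-Lipschitzness of $F$. The only cosmetic difference is that you group the decomposition as $A+(B+C)$ with an explicit filtration, whereas the paper works with the two-term split first and then applies Young's inequality to the second term; the algebra and the constants are identical.
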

\begin{proof}
\begin{eqnarray*}
    \E_{\mathcal{D}} \|F_{v_k}(\hat{x}_k) - F_{v_{k-1}}(\hat{x}_{k-1})\|^2 &=&\E_{\mathcal{D}} \|F_{v_k}(\hat{x}_k) - F(\hat{x}_{k})\|^2 + \E_{\mathcal{D}} \|F(\hat{x}_k) - F_{v_{k-1}}(\hat{x}_{k-1})\|^2  \\
    && \quad + 2 \E_{\mathcal{D}} \la F_{v_k}(\hat{x}_k) - F(\hat{x}_{k}), F(\hat{x}_k) - F_{v_{k-1}}(\hat{x}_{k-1}) \ra \\
    &=& \E_{v_{k}} \|F_{v_k}(\hat{x}_k) - F(\hat{x}_{k})\|^2 + \E_{\mathcal{D}} \|F(\hat{x}_k) - F_{v_{k-1}}(\hat{x}_{k-1})\|^2 \\
    &\overset{\eqref{eq: Young's Inequality}}{\leq}& \E_{\mathcal{D}} \|F_{v_k}(\hat{x}_k) - F(\hat{x}_{k})\|^2 + 2\E_{\mathcal{D}} \|F(\hat{x}_k) - F(\hat{x}_{k-1})\|^2 \\
    && \quad + 2\E_{\mathcal{D}} \|F(\hat{x}_{k-1}) - F_{v_{k-1}}(\hat{x}_{k-1})\|^2 \\
    &=& \E_{\mathcal{D}} \|F_{v_k}(\hat{x}_k)\|^2 - \|F(\hat{x}_{k})\|^2 + 2 \|F(\hat{x}_k) - F(\hat{x}_{k-1})\|^2\\
    && \quad + 2\E_{\mathcal{D}} \|F_{v_{k-1}}(\hat{x}_{k-1}) \|^2 - 2 \|F(\hat{x}_{k-1})\|^2 \\
    &\overset{\eqref{eq: variance bound}}{\leq}& \delta \|\hat{x}_k - x^*\|^2 + 2\delta \|\hat{x}_{k-1} - x^*\|^2 + 6\sigma_*^2\\
    && \quad + 2 \|F(\hat{x}_k) - F(\hat{x}_{k-1})\|^2 \\
    &\overset{\eqref{eq: F lipschitz}}{\leq}& \delta \|\hat{x}_k - x^*\|^2 + 2\delta \|\hat{x}_{k-1} - x^*\|^2 + 6\sigma_*^2\\
    && \quad + 2 L^2 \|\hat{x}_k - \hat{x}_{k-1}\|^2.
\end{eqnarray*}
\end{proof}

\begin{lemma}\label{Lemma: conditions}
For $\om \in \bigg[0,  \frac{1}{4L} \bigg]$ the following two conditions hold:
\begin{align}
        & 2 \om(\mu - \om \delta) + 8 \om^2 L^2 -1 \leq 0,  \label{eq: conditions 1}\\
        \text{and}\quad & 8 \om^2 (\delta + L^2) \leq 1 - \om \mu + 9 \om^2\delta.  \label{eq: conditions 2}
\end{align}
\end{lemma}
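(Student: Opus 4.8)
The plan is to verify the two inequalities directly by treating each as a quadratic in $\om$ and checking that the constraint $\om \in [0, \nicefrac{1}{4L}]$ forces the desired sign. I would substitute the worst-case value $\om = \nicefrac{1}{4L}$ where the inequalities are tightest, while noting the monotonic behavior of the relevant quadratic expressions on the interval. The key observation to exploit is that $\mu \le L$ always holds (since $\mu$-quasi-strong-monotonicity together with $L$-Lipschitzness of $F$ forces $\mu \le L$), and similarly one needs to relate $\delta$ to $L$. However, since Lemma~\ref{Lemma: conditions} as stated carries the hypotheses of the surrounding analysis implicitly, I would use the companion step-size bound $\om \le \nicefrac{\mu}{18\delta}$ from \eqref{eq:constant_stepsize} to control the $\delta$-terms; this means $\om\delta \le \nicefrac{\mu}{18}$, which is the genuinely useful inequality for bounding the mixed terms.

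First I would handle \eqref{eq: conditions 1}. Rewrite the left-hand side as $2\om\mu - 2\om^2\delta + 8\om^2 L^2 - 1$. The plan is to bound the positive contributions: since $\om \le \nicefrac{1}{4L}$ we have $8\om^2 L^2 \le \nicefrac{1}{2}$, and since $\om\delta \le \nicefrac{\mu}{18}$ the term $-2\om^2\delta \le 0$ only helps, so I would drop it. For the remaining term $2\om\mu$, using $\om \le \nicefrac{1}{4L} \le \nicefrac{1}{4\mu}$ gives $2\om\mu \le \nicefrac{1}{2}$. Summing, $2\om\mu + 8\om^2 L^2 - 1 \le \nicefrac{1}{2} + \nicefrac{1}{2} - 1 = 0$, which establishes \eqref{eq: conditions 1}.

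Next I would verify \eqref{eq: conditions 2}, namely $8\om^2(\delta + L^2) \le 1 - \om\mu + 9\om^2\delta$. Rearranging, this is equivalent to $8\om^2 L^2 - \om^2\delta + \om\mu \le 1$, i.e.\ $8\om^2 L^2 + \om\mu \le 1 + \om^2\delta$. The plan is again to bound the left-hand side using $8\om^2 L^2 \le \nicefrac{1}{2}$ and $\om\mu \le \nicefrac{1}{2}$ as above, giving a left-hand side at most $1$, while the right-hand side is at least $1$ since $\om^2\delta \ge 0$. This closes the second inequality. Throughout, I would state explicitly at the start of the proof that the interval constraint is to be read together with the standing hypothesis $\mu \le L$ and the bound $\om \le \nicefrac{\mu}{18\delta}$ used elsewhere in the constant-step-size analysis.

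The main obstacle I anticipate is a subtlety in the hypotheses: the statement as written only posits $\om \in [0, \nicefrac{1}{4L}]$, but \eqref{eq: conditions 1} cannot hold for all such $\om$ unless one also controls $\delta$ relative to $\mu$ and $L$ — otherwise the term $8\om^2 L^2$ together with a large $2\om\mu$ could exceed the budget if $\mu$ and $L$ were unrelated. The resolution is that in context $\mu \le L$ (forced by the assumptions), so that $\om \le \nicefrac{1}{4L}$ already implies $\om\mu \le \nicefrac14$, comfortably inside the required range; I would make this dependence explicit so the bookkeeping of constants is transparent and the inequalities close with room to spare.
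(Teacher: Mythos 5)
Your proof is correct and follows essentially the same route as the paper: drop the non-positive $-\om^2\delta$ contribution, bound $8\om^2L^2 \le \nicefrac{1}{2}$ and $2\om\mu \le \nicefrac{1}{2}$ via $\om \le \nicefrac{1}{4L}$ and $\mu \le L$, and observe that the second inequality rearranges to the same (in fact weaker) form. The only cosmetic difference is that you invoke the bound $\om \le \nicefrac{\mu}{18\delta}$, which turns out to be unnecessary — as your own argument shows, the $\delta$-terms have the favorable sign and can simply be discarded.
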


\begin{proof}
Note that for $\om \in \bigg[0,  \frac{1}{4L} \bigg]$, we have $$2 \om(\mu - \om \delta) + 8 \om^2 L^2 -1 \overset{\om^2\delta \geq 0}{\leq} 2 \om\mu  + 8 \om^2 L^2 -1 \overset{\om \leq \frac{1}{4L}}{\leq} \frac{\mu}{2L} + \frac{1}{2} - 1 \overset{\mu \leq L}{\leq} 0.$$
This proves the first condition. The second condition is equivalent to $ \om(\mu - \om \delta) + 8 \om^2 L^2 -1 \leq 0 $, which is again true using similar arguments.
\end{proof}

\subsection{Proof of Theorem \ref{Theorem: constant stepsize theorem}}
\begin{proof}
For $\om \in \bigg[0, \frac{\mu}{18 \delta}\bigg]$ we have $\om(\mu - 9\om \delta) \geq 0$ and $1 - \om(\mu - 9 \om \delta) \leq 1 - \frac{\om \mu}{2}$. Then we derive
\begin{eqnarray*}
    \E_{\mathcal{D}}[\|x_{k+1} - x^*\|^2 + \|x_{k+1} - \hat{x}_k\|^2] &\overset{\eqref{eq: Breakdown}}{=}&\|x_k - x^*\|^2 + 2\E_{\mathcal{D}} \|x_{k+1} - \hat{x}_k\|^2 - \|\hat{x}_k - x_k\|^2 \\
    && \quad - 2 \om \E_{\mathcal{D}} \la F_{v_{k}}(\hat{x}_k), \hat{x}_k - x^*\ra \\
    &=&\|x_k - x^*\|^2 + 2\E_{\mathcal{D}} \|x_{k+1} - \hat{x}_k\|^2 - \|\hat{x}_k - x_k\|^2\\
    && \quad - 2 \om \la F(\hat{x}_k), \hat{x}_k - x^*\ra \\
    &\overset{\eqref{eq: Strong Monotonicity}}{\leq}&\|x_k - x^*\|^2 + 2\E_{\mathcal{D}} \|x_{k+1} - \hat{x}_k\|^2 - \|\hat{x}_k - x_k\|^2\\
    && \quad - 2 \om \mu \| \hat{x}_k - x^* \|^2 \\
    &=&\|x_k - x^*\|^2 + 2\om^2 \E_{\mathcal{D}} \|F_{v_k}(\hat{x}_k) - F_{v_{k-1}}(\hat{x}_{k-1})\|^2 \\
    && \quad - \|\hat{x}_k - x_k\|^2 - 2 \om \mu \| \hat{x}_k - x^* \|^2 \\
    &\overset{\eqref{eq: bound on difference of gradients}}{\leq}&\|x_k - x^*\|^2 + 2\om^2 \bigg(\delta \|\hat{x}_k - x^*\|^2 \\
    &&\quad  + 2\delta \|\hat{x}_{k-1} - x^*\|^2 + 2 L^2 \|\hat{x}_k - \hat{x}_{k-1}\|^2  + 6\sigma_*^2 \bigg)  \\
    && \quad - \|\hat{x}_k - x_k\|^2 - 2 \om \mu \| \hat{x}_k - x^* \|^2 \\
    &=&\|x_k - x^*\|^2 - 2 \om (\mu - \om \delta)  \| \hat{x}_k - x^*\|^2 \\
    && \quad + 4 \om^2 \delta  \|\hat{x}_{k-1} - x^*\|^2 + 4 \om^2 L^2 \|\hat{x}_k - \hat{x}_{k-1}\|^2 \\
    && \quad - \|\hat{x}_k - x_k\|^2 + 12 \om^2 \sigma_*^2 \\
    &\overset{\eqref{eq: Young's Inequality}}{\leq}& \|x_k - x^*\|^2 - \om (\mu - \om \delta) \| x_k - x^*\|^2 \\
    && \quad + 2 \om (\mu - \om \delta) \| x_k - \hat{x}_k\|^2 + 4 \om^2 \delta \|\hat{x}_{k-1} - x^*\|^2 \\
    &&\quad + 4 \om^2 L^2 \|\hat{x}_k - \hat{x}_{k-1}\|^2 - \|\hat{x}_k - x_k\|^2\\
    &&\quad + 12 \om^2 \sigma_*^2 \\
    &\overset{\eqref{eq: Young's Inequality}}{\leq}& \|x_k - x^*\|^2 - \om (\mu - \om \delta) \| x_k - x^*\|^2 \\
    &&\quad + 2 \om (\mu - \om \delta) \| x_k - \hat{x}_k\|^2 + 8 \om^2 \delta \|\hat{x}_{k-1} - x_k\|^2 \\
    &&\quad + 8 \om^2 \delta \|x_k - x^*\|^2+ 8 \om^2 L^2 \|\hat{x}_k - x_k\|^2 \\
    &&\quad  + 8 \om^2 L^2 \|x_k - \hat{x}_{k-1}\|^2 - \|\hat{x}_k - x_k\|^2 + 12 \om^2 \sigma_*^2 \\
    &=& (1 - \om \mu + 9 \om^2 \delta ) \|x_k - x^*\|^2 \\
    &&\quad + (8\om^2 \delta + 8\om^2 L^2 ) \|x_k - \hat{x}_{k-1} \|^2 \\
    && \quad  + (2\om(\mu - \om\delta) +  8 \om^2L^2 -1) \|x_k - \hat{x}_k \|^2 \\
    && + 12\om^2 \sigma_*^2.    
\end{eqnarray*}
Then using \eqref{eq: conditions 1}, \eqref{eq: conditions 2} we have
\begin{eqnarray*}
    \E_{\mathcal{D}}[\|x_{k+1} - x^*\|^2 + \|x_{k+1} - \hat{x}_k\|^2] &\leq& (1 - \om \mu + 9 \om^2\delta) \bigg(\|x_k - x^*\|^2 + \|x_k - \hat{x}_{k-1}\|^2 \bigg) \\
    && \quad + 12 \om^2 \sigma_*^2.
\end{eqnarray*}
Then we take total expectation with respect to the algorithm to obtain the following recurrence:
\begin{equation}\label{eq: recurrece after total expectation}
    R_{k+1}^2 \leq (1 - \omega \mu + 9 \omega^2 \delta) R_k^2 + 12 \omega^2 \sigma_*^2.
\end{equation}
Using the inequality $ 1- \omega(\mu - 9 \omega \delta) \leq 1 - \frac{\omega \mu}{2}$, we have
\begin{equation}\label{eq: before recurrence}
    \begin{split}
    \E\bigg[\|x_{k+1} - x^*\|^2 + \|x_{k+1} - \hat{x}_k\|^2 \bigg] & \leq \bigg(1 - \frac{\om \mu}{2}\bigg) \E \bigg[\|x_k - x^*\|^2 + \|x_k - \hat{x}_{k-1}\|^2 \bigg] + 12 \om^2 \sigma_*^2.
    \end{split}
\end{equation}
The theorem follows by unrolling the above recurrence. In order to compute the iteration complexity of \algname{SPEG}, we consider any arbitrary $\varepsilon > 0$. Then we choose the step-size $\om$ such that $\frac{24 \om \sigma_*^2}{\mu} \leq \frac{\varepsilon}{2}$ i.e. $\om \leq \frac{\varepsilon \mu}{48 \sigma_*^2}$. Next we will choose the number of iterations $k$ such that $(1 - \frac{\om \mu}{2})^k R_0^2 \leq \frac{\varepsilon}{2}$. It is equivalent to choosing $k$ such that 
\begin{equation*}
    \log \bigg( \frac{2 R_0^2}{\varepsilon} \bigg) \leq k \log \bigg(  \frac{1}{1 - \frac{\om \mu}{2}}\bigg).
\end{equation*}
Now using the fact $\log \big( \frac{1}{\rho}\big) \geq 1 - \rho$ for $0 < \rho \leq 1$, we get $\log \Big( \frac{2 R_0^2}{\varepsilon} \Big) \leq  \frac{k\om \mu}{2}$, or equivalently $k \geq \frac{2}{\om \mu} \log \Big( \frac{2 R_0^2}{\varepsilon} \Big)$. Therefore, with step-size $\om = \min \left\{\frac{\mu}{18 \delta}, \frac{1}{4L}, \frac{\varepsilon \mu}{48 \sigma_*^2} \right\}$ we get the following lower bound on the number of iterations
\begin{equation*}
    k \geq \max \bigg\{\frac{8L}{\mu}, \frac{36 \delta}{\mu^2}, \frac{96\sigma_*^2}{\varepsilon \mu^2} \bigg\} \log \bigg( \frac{2 R_0^2}{\varepsilon} \bigg).
\end{equation*}
\end{proof}

\subsection{Proof of Theorem \ref{SPEG switching rule}}
\begin{proof}
For $\om \leq \min \big\{\frac{1}{4L}, \frac{\mu}{18 \delta} \big\}$, from Theorem \ref{Theorem: constant stepsize theorem} we obtain 
$$
R_{k+1}^2 \leq \bigg(1 - \frac{\om \mu}{2} \bigg)^{k+1} R_0^2 + \frac{24 \om \sigma_*^2}{\mu}. 
$$
Let the step-size $\om_k = \frac{2k+1}{(k+1)^2} \frac{2}{\mu}$ and $k^*$ be an integer that satisfies $\om_{k^*} \leq \Bar{\om}$. In particular this holds when $k^* \geq \left[ \frac{4}{\mu \Bar{\om}} - 1 \right]$. Note that $\om_k$ is decreasing in $k$ and consequently $\om_k \leq \Bar{\om}$ for all $k \geq k^*$. Therefore, from \eqref{eq: before recurrence} we derive 
$$
R_{k+1}^2 \leq \bigg(1 - \om_k\frac{\mu}{2} \bigg) R_{k}^2 + 12\om_k^2 \sigma_*^2
$$
for all $k \geq k^*$. Then we replace $\om_k$ with $\frac{2k+1}{(k+1)^2} \frac{2}{\mu}$ to obtain
\begin{eqnarray*}
    R_{k+1}^2 &\leq& \bigg(1 - \frac{2k +1}{(k+1)^2} \bigg) R_{k}^2 + 48 \sigma_*^2 \frac{(2k + 1)^2}{\mu^2(k+1)^4} \\
     &=&\frac{k^2}{(k+1)^2} R_{k}^2 + 48 \sigma_*^2 \frac{(2k + 1)^2}{\mu^2(k+1)^4}.
\end{eqnarray*}
Multiplying both sides by $(k+1)^2$ we get
\begin{eqnarray*}
    (k+1)^2 R_{k+1}^2 &\leq& k^2 R_k^2 + \frac{48 \sigma_*^2}{\mu^2} \bigg(\frac{2k+1}{k+1} \bigg)^2 \\
    &\leq& k^2 R_k^2 + \frac{192\sigma_*^2}{\mu^2},
\end{eqnarray*}
where in the last line follows from $\frac{2k + 1}{k + 1} < 2$. Rearranging and summing the last expression for $t = k^*,\cdots, k$ we obtain 
\begin{equation*}
    \begin{split}
        & \sum_{t = k^*}^k (t+1)^2 R_{t+1}^2 -  t^2 R_t^2 \leq \frac{192 \sigma_*^2}{\mu^2}(k - k^*).
    \end{split}
\end{equation*}
Using telescopic sum and dividing both sides by $(k+1)^2$ we obtain
\begin{equation}\label{eq: decreasing stepsize bound}
    R_{k+1}^2 \leq \bigg(\frac{k^*}{k+1}\bigg)^2 R_{k^*}^2+ \frac{192 \sigma_*^2 (k - k^*)}{\mu^2 (k+1)^2}.
\end{equation}
Suppose for $k \leq k^*$, we have $\om_k = \Bar{\om} = \min \Big\{\frac{1}{4L}, \frac{\mu}{18 \delta} \Big\}$ i.e. constant step-size. Then from \eqref{eq:SPEG_const_steps_neighborhood}, we obtain $R_{k^*}^2 \leq \Big(1 - \frac{\mu \Bar{\om}}{2}\Big)^{k^*} R_0^2 + \frac{24 \Bar{\om} \sigma_*^2}{\mu}$. This bound on $R_{k^*}^2$, combined with \eqref{eq: decreasing stepsize bound} yields
\begin{equation*}
    \begin{split}
        R_{k+1}^2 \leq \bigg(\frac{k^*}{k+1}\bigg)^2 \bigg(1 - \frac{\mu \Bar{\om}}{2}\bigg)^{k^*} R_0^2 + \bigg(\frac{k^*}{k+1}\bigg)^2 \frac{24 \Bar{\om} \sigma_*^2}{\mu} + \frac{192 \sigma_*^2 (k - k^*)}{\mu^2 (k+1)^2}.
    \end{split}
\end{equation*}
Now we want to choose $k^*$ which minimizes the expression $\big(\frac{k^*}{k+1}\big)^2 \frac{24\Bar{\om} \sigma_*^2}{\mu} + \frac{192 \sigma_*^2 (k - k^*)}{\mu^2 (k+1)^2}$. Note that, it is minimized at $\frac{4}{\mu \Bar{\om}}$, hence we choose $k^* = \left[ \frac{4}{\mu \Bar{\om}} \right]$. Therefore, using this value of $k^*$, we obtain
\begin{eqnarray*}
    R_{k+1}^2 &\leq&\bigg(\frac{k^*}{k+1}\bigg)^2 \bigg(1 - \frac{2}{k^*} \bigg)^{k^*} R_0^2 + \frac{24 \sigma_*^2}{\mu^2 (k+1)^2} (8k - 4k^*) \\
    &\leq&\bigg(\frac{k^*}{k+1}\bigg)^2 \bigg(1 - \frac{2}{k^*} \bigg)^{k^*} R_0^2 + \frac{192 k \sigma_*^2}{\mu^2 (k+1)^2}\\
    &\leq&\bigg(\frac{k^*}{k+1}\bigg)^2 \frac{R_0^2}{e^2} + \frac{192 \sigma_*^2}{\mu^2 (k+1)}.
\end{eqnarray*}
The last line follows from $\Big(1 - \frac{1}{x} \Big)^x \leq e^{-1}$ for all $x \geq 1$. This completes the proof.
\end{proof}

\subsection{Proof of Theorem \ref{Theorem: Total number of iteration knowledge}}
\begin{proof}
For $ 0 < \om_k \leq \big\{ \frac{1}{4L},\frac{\mu}{18 \delta} \big\}$ we obtain the following bound from Theorem \ref{Theorem: constant stepsize theorem}:
$$
R_k^2 \leq \bigg(1 - \frac{\mu \om_k}{2} \bigg) R_{k-1}^2 + 12 \om_k^2 \sigma_*^2.
$$
Then using Lemma \ref{Lemma: Stich lemma} with $a = \frac{\mu}{2}, h = \frac{1}{\Bar{\om}} $ and $c = 12 \sigma_*^2$ we complete the proof of this Theorem.
\end{proof}

\subsection{Proof of Theorem \ref{thm:weak_MVI}}
\vspace{5mm}
\begin{theorem}\label{thm:weak_MVI}
    Let $F$ be $L$-Lipschitz and satisfy Weak Minty condition with parameter $\rho < \nicefrac{1}{(2L)}$. Assume that inequality \eqref{eq: variance bound} holds (e.g., it holds whenever Assumption~\ref{as:expected_residual} holds, see Lemma~\ref{Lemma: variance bound}). Assume that $\gamma_k = \gamma$, $\omega_k = \omega$ and
    \begin{equation*}
        \max\left\{2\rho, \frac{1}{2L}\right\} < \gamma < \frac{1}{L},\quad 0 < \omega < \min\left\{\gamma - 2\rho, \frac{1}{4L} - \frac{\gamma}{4}\right\}, \quad \delta \leq \frac{(1-L\gamma)L^3\omega}{32}.
    \end{equation*}
    Then, for all $K \geq 2$ the iterates produced by \algname{SPEG} satisfy
    \begin{eqnarray}
        \min\limits_{0\leq k \leq K-1}\E\left[\|F(\hat x_k)\|^2\right] &\leq& \frac{(1 + 8\omega\gamma (\delta + L^2) - L\gamma)\left(1+\frac{48\omega\gamma \delta}{(1-L\gamma)^2}\right)^{K-1}\|x_0 - x^*\|^2}{\omega\gamma (1 - L(\gamma + 4\omega)) (K-1)}\notag \\
        &&\quad + \frac{8 \left(8 + \frac{(1-L\gamma)^2}{K-1}\left(1+\frac{48\omega\gamma \delta}{(1-L\gamma)^2}\right)^{K-1}\right) \sigma_*^2}{(1-L\gamma)^2(1-L(\gamma + 4\omega))}. \label{eq:SPEG_weak_MVI_result_appendix}
    \end{eqnarray}
\end{theorem}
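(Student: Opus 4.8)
The plan is to prove a one-step Lyapunov inequality and then telescope it with a slowly growing weight. Define the potential $\Phi_k \eqdef \|x_k - x_*\|^2 + \|x_k - \hx_{k-1}\|^2$, the same quantity $R_k^2$ (before taking expectation) used in the quasi-strongly monotone analysis, and note that since $\hx_{-1} = x_0$ we have $\Phi_0 = \|x_0 - x_*\|^2$. First I would expand $\|x_{k+1} - x_*\|^2$ around the extrapolated point $\hx_k$, generalizing Lemma~\ref{Lemma: Breakdown} to the case $\g \neq \om$; this produces the critical inner-product term $-2\om\la F_{v_k}(\hx_k), \hx_k - x_*\ra$ together with the gap terms $\|x_{k+1}-\hx_k\|^2$ and $-\|\hx_k - x_k\|^2$. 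Taking the conditional expectation $\E_k[\cdot]$ and using unbiasedness $\E_k[F_{v_k}(\hx_k)] = F(\hx_k)$ replaces the stochastic inner product by the deterministic one $\la F(\hx_k), \hx_k - x_*\ra$, to which the weak Minty inequality \eqref{eq: weak MVI} applies and yields $-2\om\la F(\hx_k), \hx_k - x_*\ra \le 2\om\rho\|F(\hx_k)\|^2$.

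Next I would control the remaining stochastic terms \emph{without} the bounded variance assumption. The gap term expands as $\|x_{k+1}-\hx_k\|^2 = \|\om F_{v_k}(\hx_k) - \g F_{v_{k-1}}(\hx_{k-1})\|^2$; I would bound it using Lemma~\ref{Lemma: bound on difference of gradients} for the cross difference and the consequence \eqref{eq: variance bound} of the expected residual condition, namely $\E_k\|F_{v_k}(\hx_k)\|^2 \le \delta\|\hx_k - x_*\|^2 + \|F(\hx_k)\|^2 + 2\sigma_*^2$, while $\|\hx_k - x_k\|^2 = \g^2\|F_{v_{k-1}}(\hx_{k-1})\|^2$ is tied back to $\|F(\hx_{k-1})\|^2$ through $L$-Lipschitzness \eqref{eq: F lipschitz} and the update rule \eqref{SPEG_UpdateRule}. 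Assembling these pieces and taking total expectation produces a recursion of the form
\[
\E[\Phi_{k+1}] \le (1 + a)\,\E[\Phi_k] - b\,\E\|F(\hx_k)\|^2 + c\,\sigma_*^2,
\]
with $a = \tfrac{48\om\g\delta}{(1-L\g)^2}$ and $b = \om\g\,(1 - L(\g + 4\om))$. Here the step-size conditions do the essential work: $\om < \g - 2\rho$ ensures the weak Minty penalty $2\om\rho\|F(\hx_k)\|^2$ is absorbed by the negative $\g$-term so that $b>0$, $\om < \tfrac{1}{4L} - \tfrac{\g}{4}$ guarantees $1 - L(\g+4\om) > 0$, $\g > \tfrac{1}{2L}$ keeps $1 - L\g$ bounded away from $1$, and $\delta \le \tfrac{(1-L\g)L^3\om}{32}$ keeps the growth factor $a$ small; the prefactor $(1 + 8\om\g(\delta + L^2) - L\g)$ in the statement arises from these same expansions of the gap terms.

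Finally, I would rearrange to $b\,\E\|F(\hx_k)\|^2 \le (1+a)\E[\Phi_k] - \E[\Phi_{k+1}] + c\sigma_*^2$, divide by $(1+a)^{k+1}$, and telescope the resulting geometric-weighted differences over $k = 0, \dots, K-2$. Since $\Phi_0 = \|x_0 - x_*\|^2$ and the $\sigma_*^2$ contributions form a geometric series summing to at most $1/a$, lower-bounding $(1+a)^{-(k+1)} \ge (1+a)^{-(K-1)}$ on the left converts the weighted sum into $\tfrac{b}{(1+a)^{K-1}}\sum_k \E\|F(\hx_k)\|^2$. Using $\min_{0\le k\le K-1}\E\|F(\hx_k)\|^2 \le \tfrac{1}{K-1}\sum_k \E\|F(\hx_k)\|^2$ and collecting constants gives exactly the claimed bound \eqref{eq:SPEG_weak_MVI_result_appendix}, with the $(1+a)^{K-1}$ factor in the numerator and the two $\sigma_*^2$ pieces, one of order $\mathcal{O}(1)$ and one of order $\mathcal{O}((1+a)^{K-1}/(K-1))$.

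The hard part will be the construction of the potential and the verification of the recursion with the \emph{precise} constants. Because the bounded variance assumption is dropped, the weaker bound \eqref{eq: variance bound} couples the noise $\delta\|\hx_k - x_*\|^2$ back into the potential itself, so we cannot obtain a clean contraction; instead the best possible is the growth factor $(1+a) > 1$, and the whole argument only works because $a$ decays with $\delta$. Tracking the interlocking inequalities so that $b$ stays strictly positive while $a$ stays controlled, and in particular verifying that $\delta \le \tfrac{(1-L\g)L^3\om}{32}$ is exactly what is needed, is the delicate bookkeeping. A secondary subtlety is that \algname{SPEG} reuses the stale evaluation $F_{v_{k-1}}(\hx_{k-1})$, so (unlike same-sample \algname{SEG}) it is not one step of a deterministic method for any operator; the $\|x_k - \hx_{k-1}\|^2$ term in $\Phi_k$ is precisely what carries this stale-gradient information across iterations and must be handled jointly with the current-iteration terms.
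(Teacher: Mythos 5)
Your outline captures the same overall strategy as the paper's proof: the algebraic decomposition you describe (expanding around $\hx_k$, generalizing Lemma~\ref{Lemma: Breakdown}) is identical, after polarization, to the paper's first display $\|x_{k+1}-x_*\|^2 = \|x_k-x_*\|^2 - 2\omega\la F_{v_k}(\hx_k),\hx_k-x_*\ra - \omega\gamma\|F_{v_{k-1}}(\hx_{k-1})\|^2 - \omega(\gamma-\omega)\|F_{v_k}(\hx_k)\|^2 + \omega\gamma\|F_{v_k}(\hx_k)-F_{v_{k-1}}(\hx_{k-1})\|^2$; the roles you assign to each step-size condition are the right ones; and the geometric weighting that produces the $\bigl(1+\tfrac{48\omega\gamma\delta}{(1-L\gamma)^2}\bigr)^{K-1}$ factor is exactly what the paper implements via non-increasing weights $w_{k-1}=(1+3C_3\omega\gamma\delta)w_k$, which is the same as your division by $(1+a)^{k+1}$.

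The one place where your plan diverges from what is actually proved, and where I think you are underestimating the difficulty, is the claimed one-step Lyapunov recursion $\E[\Phi_{k+1}]\le(1+a)\E[\Phi_k]-b\,\E\|F(\hx_k)\|^2+c\sigma_*^2$. The paper never establishes a per-iteration descent of any single potential. The reason is that bounding $\Delta_k=\E\|F_{v_k}(\hx_k)-F_{v_{k-1}}(\hx_{k-1})\|^2$ via Young's inequality and Lipschitzness forces you through $\|\hx_k-\hx_{k-1}\|^2=\|(\gamma+\omega)F_{v_{k-1}}(\hx_{k-1})-\gamma F_{v_{k-2}}(\hx_{k-2})\|^2$, and a second polarization there reintroduces $\Delta_{k-1}$: the difference terms obey a genuine two-step recursion. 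The paper resolves this only at the level of weighted sums, proving $\bigl(1-(1+\alpha)L^2\gamma(\gamma+\omega)\bigr)\sum_k w_k\Delta_k\le\ldots$ with the Young parameter $\alpha=\tfrac{1}{2L^2\gamma(\gamma+\omega)}-\tfrac12$ chosen precisely so that this coefficient equals $\tfrac12(1-L^2\gamma(\gamma+\omega))>0$, and then combines this aggregated inequality with the weighted sum of the distance inequalities. Your $\Phi_k$ does carry the needed one-step memory through $\|x_k-\hx_{k-1}\|^2$, so a single-potential version may well be salvageable, but it is not automatic that the constants close to give exactly $b=\omega\gamma(1-L(\gamma+4\omega))$ and $a=\tfrac{48\omega\gamma\delta}{(1-L\gamma)^2}$; also note that the negative operator-norm term the recursion actually yields is $-\omega\gamma\E\|F_{v_{k-1}}(\hx_{k-1})\|^2$ at the \emph{previous} index (the current-index term $-\omega(\gamma-2\rho-\omega)\E\|F_{v_k}(\hx_k)\|^2$ is merely dropped), so the telescoped sum runs over $k=0,\dots,K-2$ and the minimum is passed through Jensen's inequality $\|F(\hx_k)\|^2\le\E_{v_k}\|F_{v_k}(\hx_k)\|^2$. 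If you replace the per-step potential claim with the paper's two coupled weighted-sum inequalities, the rest of your argument goes through as you describe.
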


\begin{proof}
    The proof closely follows the proof of Lemma C.3 and Theorem C.4 from \cite{gorbunov2022convergence}. The update rule of \algname{SPEG} implies for $k \geq 1$
    \begin{eqnarray*}
        \|x_{k+1} - x^*\|^2 &=& \|x_k - x^*\|^2 - 2\omega \langle x_k - x^*, F_{v_k}(\hx_k) \rangle + \omega^2 \|F_{v_k}(\hx_k)\|^2\\
        &=& \|x_k - x^*\|^2 - 2\omega \langle \hx_k - x^*, F_{v_k}(\hx_k) \rangle - 2\omega\gamma \langle F_{v_{k-1}}(\hx_{k-1}), F_{v_k}(\hx_k) \rangle \\
        && \quad + \omega^2 \|F_{v_k}(\hx_k)\|^2\\
        &=& \|x_k - x^*\|^2 - 2\omega \langle \hx_k - x^*, F_{v_k}(\hx_k) \rangle - \omega\gamma\|F_{v_{k-1}}(\hx_{k-1})\|^2\\
        &&\quad - \omega(\gamma - \omega)\|F_{v_k}(\hx_k)\|^2 + \omega \gamma \|F_{v_k}(\hx_k) - F_{v_{k-1}}(\hx_{k-1})\|^2,
    \end{eqnarray*}
    where in the last step we apply $2\langle a, b \rangle = \|a\|^2 + \|b\|^2 - \|a - b\|^2$, which holds for all $a,b \in \R^d$. Taking the full expectation and using $\E[\E_{v_k}[\cdot]] = \E[\cdot]$ and Weak Minty condition, we derive
    \begin{eqnarray}
        \E\left[\|x_{k+1} - x^*\|^2\right] &\leq& \E\left[\|x_k - x^*\|^2\right] - 2\omega \E\left[\langle \hx_k - x^*, F(\hx_k) \rangle\right] - \omega\gamma\E\left[\|F_{v_{k-1}}(\hx_{k-1})\|^2\right] \notag\\
        &&\quad - \omega(\gamma - \omega)\E\left[\|F_{v_k}(\hx_k)\|^2\right] + \omega \gamma \E\left[\|F_{v_k}(\hx_k) - F_{v_{k-1}}(\hx_{k-1})\|^2\right] \notag\\
        &\overset{\eqref{eq: weak MVI}}{\leq}& \E\left[\|x_k - x^*\|^2\right] + 2\omega\rho \E\left[\|F(\hx_k)\|^2\right] - \omega\gamma\E\left[\|F_{v_{k-1}}(\hx_{k-1})\|^2\right] \notag\\
        &&\quad - \omega(\gamma - \omega)\E\left[\|F_{v_k}(\hx_k)\|^2\right] + \omega \gamma \E\left[\|F_{v_k}(\hx_k) - F_{v_{k-1}}(\hx_{k-1})\|^2\right] \notag\\
        &\leq& \E\left[\|x_k - x^*\|^2\right] - \omega\gamma\E\left[\|F_{v_{k-1}}(\hx_{k-1})\|^2\right] \notag \\
        &&\quad - \omega(\gamma - 2\rho - \omega)\E\left[\|F_{v_k}(\hx_k)\|^2\right] \notag \\
        &&\quad + \omega \gamma \E\left[\|F_{v_k}(\hx_k) - F_{v_{k-1}}(\hx_{k-1})\|^2\right]\notag\\
        &\leq& \E\left[\|x_k - x^*\|^2\right] - \omega\gamma\E\left[\|F_{v_{k-1}}(\hx_{k-1})\|^2\right] \notag\\
        &&\quad  + \omega \gamma \E\left[\|F_{v_k}(\hx_k) - F_{v_{k-1}}(\hx_{k-1})\|^2\right], \label{eq:MVI_technical_ineq_1}
    \end{eqnarray}
    where we apply Jensen's inequality $\|F(\hx_k)\|^2 = \|\E_{v_k}[F_{v_k}(\hx_k)]\|^2 \leq \E_{v_k}[\|F_{v_k}(\hx_k)\|^2]$ and $\gamma > 2\rho + \omega$. For $k = 0$ we have $x_1 = x_0 - \omega F_{v_0}(\hx_0) = x_0 - \omega F_{v_0}(x_0)$ and
    \begin{eqnarray}
        \E\left[\|x_1 - x^*\|^2\right] &=& \|x_0 - x^*\|^2  - 2\omega \E\left[ \langle x_0 - x^*, F_{v_0}(x_0)  \rangle\right] + \omega^2 \E\left[\|F_{v_0}(x_0)\|^2\right] \notag\\
        &=& \|x_0 - x^*\|^2  - 2\omega  \langle x_0 - x^*, F(x_0)  \rangle+ \omega^2 \E\left[\|F_{v_0}(x_0)\|^2\right]. \notag
    \end{eqnarray}
    Applying Weak Minty condition, we get
    \begin{eqnarray}
        \E\left[\|x_1 - x^*\|^2\right] &=& \|x_0 - x^*\|^2  + 2\omega\rho  \|F(x_0)\|^2 + \omega^2 \E\left[\|F_{v_0}(x_0)\|^2\right] \notag\\
        &\leq& \|x_0 - x^*\|^2 + \omega(\omega + 2\rho)\E\left[\|F_{v_0}(x_0)\|^2\right]. \label{eq:MVI_technical_ineq_1_1}
    \end{eqnarray}
    
    \newpage 
    The next step of our proof is in estimating the last term from \eqref{eq:MVI_technical_ineq_1}. Using Young's inequality $\|a+b\|^2  \leq (1 + \alpha)\|a\|^2 + (1 + \alpha^{-1})\|b\|^2$, which holds for any $a,b \in \R^d$, $\alpha > 0$, we get for all $k \geq 2$
    \begin{eqnarray*}
        \E\left[\|F_{v_k}(\hx_k) - F_{v_{k-1}}(\hx_{k-1})\|^2\right] \hspace{-3mm} &\leq& \hspace{-3mm}(1+\alpha)\E\left[\|F(\hx_k) - F(\hx_{k-1})\|^2\right]\\
        && \hspace{-3mm}+ (1+\alpha^{-1})\E\left[\|F_{v_k}(\hx_k) - F(\hx_k) \right. \\
        && \hspace{-3mm}\quad \left. - (F_{v_{k-1}}(\hx_{k-1}) - F(\hx_{k-1}))\|^2\right]\\
        &\leq& \hspace{-3mm}(1+\alpha)L^2\E\left[\|\hx_k - \hx_{k-1}\|^2\right]\\
        &&\hspace{-3mm} \quad + 2(1+\alpha^{-1})\E\left[\|F_{v_k}(\hx_k) - F(\hx_k)\|^2 \right. \\
        && \hspace{-3mm} \quad \left. + \|F_{v_{k-1}}(\hx_{k-1}) - F(\hx_{k-1})\|^2\right]\\
        &\overset{\eqref{eq: variance bound}}{\leq}& \hspace{-3mm} (1+\alpha)L^2\E\left[\|\hx_k - x_k + x_k - x_{k-1} + x_{k-1} - \hx_{k-1}\|^2\right]\\
        &&\hspace{-3mm} \quad + 2(1+\alpha^{-1})\delta\E\left[\|\hx_k - x^*\|^2 + \|\hx_{k-1} - x^*\|^2\right] \\
        && \hspace{-3mm} \quad + 8(1+\alpha^{-1})\sigma_*^2\\
        &\leq& \hspace{-3mm}  (1+\alpha)L^2\E\left[\|(\gamma + \omega)F_{v_{k-1}}(\hx_{k-1}) - \gamma F_{v_{k-2}}(\hx_{k-2})\|^2\right]\\
        && \hspace{-3mm}\quad + 4(1+\alpha^{-1})\delta\E\left[\|x_k - x^*\|^2 + \|x_{k-1} - x^*\|^2\right]\\
        &&\hspace{-3mm} \quad + 4(1+\alpha^{-1})\delta\gamma^2\E\left[\|F_{v_{k-1}}(\hx_{k-1})\|^2 + \|F_{v_{k-2}}(\hx_{k-2})\|^2\right]\\
        &&\hspace{-3mm} \quad + 8(1+\alpha^{-1})\sigma_*^2\\
        &=&\hspace{-3mm} (1+\alpha)L^2(\gamma + \omega)^2\E\left[\|F_{v_{k-1}}(\hx_{k-1})\|^2\right] \\
        && \hspace{-3mm}\quad + (1+\alpha)L^2\gamma^2\E\left[\|F_{v_{k-2}}(\hx_{k-2})\|^2\right]\\
        &&\hspace{-3mm}\quad - 2(1+\alpha)L^2\gamma(\gamma+\omega)\E\left[\langle F_{v_{k-1}}(\hx_{k-1}), F_{v_{k-2}}(\hx_{k-2}) \rangle\right]\\
        &&\hspace{-3mm}\quad + 4(1+\alpha^{-1})\delta\E\left[\|x_k - x^*\|^2 + \|x_{k-1} - x^*\|^2\right]\\
        &&\hspace{-3mm}\quad + 4(1+\alpha^{-1})\delta\gamma^2\E\left[\|F_{v_{k-1}}(\hx_{k-1})\|^2 + \|F_{v_{k-2}}(\hx_{k-2})\|^2\right] \\
        &&\hspace{-3mm} \quad + 8(1+\alpha^{-1})\sigma_*^2\\
        &=&\hspace{-3mm} (1+\alpha)L^2(\gamma + \omega)^2\E\left[\|F_{v_{k-1}}(\hx_{k-1})\|^2\right] \\
        &&\hspace{-3mm} \quad + (1+\alpha)L^2\gamma^2\E\left[\|F_{v_{k-2}}(\hx_{k-2})\|^2\right]\\
        &&\hspace{-3mm}\quad - (1+\alpha)L^2\gamma(\gamma+\omega)\E\left[\| F_{v_{k-1}}(\hx_{k-1})\|^2 \right]\\
        &&\hspace{-3mm}\quad - (1+\alpha)L^2\gamma(\gamma+\omega)\E\left[\| \|F_{v_{k-2}}(\hx_{k-2})\|^2\right]\\
        &&\hspace{-4mm}\quad + (1+\alpha)L^2\gamma(\gamma+\omega)\E\left[\|F_{v_{k-1}}(\hx_{k-1}) - F_{v_{k-2}}(\hx_{k-2})\|^2\right]\\
        &&\hspace{-4mm}\quad + 4(1+\alpha^{-1})\delta\E\left[\|x_k - x^*\|^2 + \|x_{k-1} - x^*\|^2\right]\\
        &&\hspace{-4mm}\quad + 4(1+\alpha^{-1})\delta\gamma^2\E\left[\|F_{v_{k-1}}(\hx_{k-1})\|^2 + \|F_{v_{k-2}}(\hx_{k-2})\|^2\right] \\
        && \hspace{-4mm} \quad + 8(1+\alpha^{-1})\sigma_*^2\\
        &=& \hspace{-3mm} (1+\alpha)L^2\omega(\gamma + \omega)\E\left[\|F_{v_{k-1}}(\hx_{k-1})\|^2\right]\\
        && \hspace{-4mm}\quad - (1+\alpha)L^2\gamma\omega\E\left[\|F_{v_{k-2}}(\hx_{k-2})\|^2\right]\\
        && \hspace{-4mm}\quad + (1+\alpha)L^2\gamma(\gamma+\omega)\E\left[\|F_{v_{k-1}}(\hx_{k-1}) - F_{v_{k-2}}(\hx_{k-2})\|^2\right]\\
        && \hspace{-4mm}\quad + 4(1+\alpha^{-1})\delta\E\left[\|x_k - x^*\|^2 + \|x_{k-1} - x^*\|^2\right]\\
        && \hspace{-4mm}\quad + 4(1+\alpha^{-1})\delta\gamma^2\E\left[\|F_{v_{k-1}}(\hx_{k-1})\|^2 + \|F_{v_{k-2}}(\hx_{k-2})\|^2\right]\\
        && \hspace{-4mm} \quad + 8(1+\alpha^{-1})\sigma_*^2.
    \end{eqnarray*}
    Since $\hx_0 = x_0$ and $\hx_1 = x_1 - \gamma F_{v_0}(x_0) = x_0 - (\gamma + \omega)F_{v_0}(x_0)$, for $k = 1$ we have
    \begin{eqnarray*}
        \E\left[\|F_{v_1}(\hx_1) - F_{v_{0}}(\hx_{0})\|^2\right] &=& \E\left[\|F_{v_1}(\hx_1) - F_{v_{0}}(x_{0})\|^2\right]\\
        &\leq& (1+\alpha)\E\left[\|F(\hx_1) - F(x_{0})\|^2\right]\\
        &&\quad + (1+\alpha^{-1})\E\left[\|F_{v_1}(\hx_1) - F(\hx_1) - (F_{v_{0}}(x_{0}) - F(x_{0}))\|^2\right]\\
        &\leq& (1+\alpha)L^2\E\left[\|\hx_1 - x_0\|^2\right]\\
        && \hspace{-4mm} + 2(1+\alpha^{-1})\E\left[\|F_{v_1}(\hx_1) - F(\hx_1)\|^2 + \|F_{v_{0}}(x_{0}) - F(x_{0})\|^2\right]
    \end{eqnarray*}
    Then using \eqref{eq: variance bound} we get,
    \begin{eqnarray*}
        \E\left[\|F_{v_1}(\hx_1) - F_{v_{0}}(\hx_{0})\|^2\right]
        &\overset{\eqref{eq: variance bound}}{\leq}& (1+\alpha)L^2(\gamma+\omega)^2\E\left[\|F_{v_0}(x_0)\|^2\right]\\
        &&\quad + 2(1+\alpha^{-1})\delta \E\left[\|\hx_1 - x^*\|^2 + \|x_0 - x^*\|^2\right] \\
        && + 8(1+\alpha)\sigma_*^2\\
        &\leq& \left((1+\alpha)L^2 + 4(1+\alpha^{-1})\delta\right)(\gamma+\omega)^2\E\left[\|F_{v_0}(x_0)\|^2\right]\\
        &&\quad + 6(1+\alpha^{-1})\delta \|x_0 - x^*\|^2 + 8(1+\alpha)\sigma_*^2.
    \end{eqnarray*}
    Let $\{w_k\}_{k=0}^{K-1}$ be a non-increasing sequence of positive numbers that will be specified later and $W_K = \sum_{k=0}^{K-1} w_k$. Summing up the above two inequalities with weights $\{w_k\}_{k=1}^{K-1}$, we derive
    \begin{eqnarray*}
        && \sum\limits_{k=1}^{K-1}w_k \E\left[\|F_{v_k}(\hx_k) - F_{v_{k-1}}(\hx_{k-1})\|^2\right] \\
        \leq && \hspace{-6.5mm} (1+\alpha)L^2\sum\limits_{k=1}^{K-3}\left(\omega(\gamma + \omega)w_{k+1} \E\left[\|F_{v_{k}}(\hx_{k})\|^2\right] \right. \\
        && \hspace{-6.5mm} \left. -\gamma\omega w_{k+2}\E\left[\|F_{v_{k}}(\hx_{k})\|^2\right]\right)\\
        && \hspace{-6.5mm} + (1+\alpha)L^2\omega(\gamma + \omega)w_{K-1}\E\left[\|F_{v_{K-2}}(\hx_{K-2})\|^2\right]\\
        && \hspace{-6.5mm} - (1+\alpha)L^2 \gamma\omega w_2 \E\left[\|F_{v_{0}}(x_{0})\|^2\right]\\
        && \hspace{-6.5mm} + (1+\alpha)L^2\gamma(\gamma+\omega)\sum\limits_{k=1}^{K-2}w_{k+1}\E\left[\|F_{v_k}(\hx_k) \right. \\
        && \hspace{-6.5mm} \left. - F_{v_{k-1}}(\hx_{k-1})\|^2\right] + 4(1+\alpha^{-1})\delta\sum\limits_{k=2}^{K-1}w_k\E\left[\|x_k - x^*\|^2 \right] \\
        && \hspace{-6.5mm} + w_k\E \left[ \|x_{k-1} - x^*\|^2\right] \\
        && \hspace{-6.5mm} + 4(1+\alpha^{-1})\delta\gamma^2\sum\limits_{k=1}^{K-2}w_{k+1}\E\left[\|F_{v_{k}}(\hx_{k})\|^2 \right. \\
        && \hspace{-6.5mm} \left. + \|F_{v_{k-1}}(\hx_{k-1})\|^2\right] + 8(1+\alpha^{-1})(W_K - w_0 - w_1)\sigma_*^2\\
        && \hspace{-6.5mm} + \left((1+\alpha)L^2 + 4(1+\alpha^{-1})\delta\right)(\gamma+\omega)^2w_1\E\left[\|F_{v_0}(x_0)\|^2\right]\\
        && \hspace{-6.5mm} + 6(1+\alpha^{-1})\delta w_1 \|x_0 - x^*\|^2 + 8(1+\alpha)w_1\sigma_*^2.
    \end{eqnarray*}
    Next, we rearrange the terms using $w_k \geq w_{k+1}$ and new notation $$\Delta_k = \E\left[\|F_{v_k}(\hx_k) - F_{v_{k-1}}(\hx_{k-1})\|^2\right]$$ to get
    \begin{eqnarray*}
        && \left(1 - (1+\alpha)L^2\gamma(\gamma+\omega)\right)\sum\limits_{k=1}^{K-1}w_k\Delta_k \\
        &\leq& \hspace{-3.5mm} \sum\limits_{k=1}^{K-2}(1+\alpha)L^2\omega(\gamma+\omega) w_k\E\left[\|F_{v_{k}}(\hx_{k})\|^2\right]\\
        && \hspace{-3mm} + 8(1+\alpha^{-1})\delta\gamma^2 w_k\E\left[\|F_{v_{k}}(\hx_{k})\|^2\right]\\
        && \hspace{-3mm}+ \left((1+\alpha)L^2 + 8(1+\alpha^{-1})\delta\right)(\gamma+\omega)^2w_0\E\left[\|F_{v_0}(x_0)\|^2\right]\\
        && \hspace{-3mm}+ 12(1+\alpha^{-1})\delta\sum\limits_{k=1}^{K-1}w_k\E\left[\|x_k - x^*\|^2\right] \\
        && \hspace{-3mm} + 8(1+\alpha^{-1})(W_K - w_0)\sigma_*^2.
    \end{eqnarray*}
    To simplify the above inequality we choose $\alpha = \frac{1}{2L^2\gamma(\gamma+\omega)} - \frac{1}{2}$, which is positive due to $\gamma < \nicefrac{1}{L}$ and $\gamma+\omega < \nicefrac{1}{L}$. In this case, we have
    \begin{eqnarray*}
        (1+\alpha)L^2\gamma(\gamma + \omega) &=& \frac{1}{2}L^2\gamma(\gamma + \omega) + \frac{1}{2},\\
        (1+\alpha)L^2(\gamma + \omega)^2 &=& \frac{1}{2}L^2(\gamma + \omega)^2 + \frac{\gamma+\omega}{2\gamma} \leq \frac{3}{2},\\
        (1+\alpha)L^2\omega (\gamma + \omega) &=& \frac{1}{2}L^2\omega(\gamma + \omega) + \frac{\omega}{2\gamma} = \frac{L\omega}{2}\left(L(\gamma + \omega) + \frac{1}{\gamma L}\right) \leq \frac{3 L\omega}{2},\\
        1 + \alpha^{-1} &=& 1 + \frac{2L^2\gamma (\gamma + \omega)}{1 - L^2\gamma (\gamma + \omega)} = \frac{1 + L^2\gamma (\gamma + \omega)}{1 - L^2\gamma (\gamma + \omega)} \leq \frac{2}{1 - L^2\gamma (\gamma + \omega)},
    \end{eqnarray*}
    where we also use $\nicefrac{1}{2L} < \gamma < \nicefrac{1}{L}$ and $\gamma + \omega < \nicefrac{1}{L}$. Using these relations, we can continue our derivation as follows:
    \begin{eqnarray*}
        && \frac{1}{2}\left(1 - L^2\gamma(\gamma+\omega)\right)\sum\limits_{k=1}^{K-1}w_k\Delta_k \\
        &\leq& \sum\limits_{k=1}^{K-2}\left(\frac{3L\omega}{2} + \frac{16}{1 - L^2\gamma (\gamma + \omega)}\delta\gamma^2\right) w_k\E\left[\|F_{v_{k}}(\hx_{k})\|^2\right]\\
        &&\quad + \left(\frac{3}{2} + \frac{16}{1 - L^2\gamma (\gamma + \omega)}\delta(\gamma+\omega)^2\right)w_0\E\left[\|F_{v_0}(x_0)\|^2\right]\\
        &&\quad + \frac{24}{1 - L^2\gamma (\gamma + \omega)}\delta\sum\limits_{k=1}^{K-1}w_k\E\left[\|x_k - x^*\|^2\right]\\
        && \quad + \frac{16}{1 - L^2\gamma (\gamma + \omega)}(W_K - w_0)\sigma_*^2.
    \end{eqnarray*}
    Dividing both sides by $\frac{1}{2}\left(1 - L^2\gamma(\gamma+\omega)\right)$, we derive
    \begin{eqnarray}
        \sum\limits_{k=1}^{K-1}w_k\Delta_k &\leq& \sum\limits_{k=1}^{K-2}\left(\frac{3L\omega}{1 - L^2\gamma (\gamma + \omega)} + \frac{32}{(1 - L^2\gamma (\gamma + \omega))^2}\delta\gamma^2\right) w_k\E\left[\|F_{v_{k}}(\hx_{k})\|^2\right]\notag\\
        &&\quad + \left(\frac{3}{1 - L^2\gamma (\gamma + \omega)} + \frac{32}{(1 - L^2\gamma (\gamma + \omega))^2}\delta(\gamma+\omega)^2\right)w_0\E\left[\|F_{v_0}(x_0)\|^2\right]\notag\\
        &&\quad + \frac{48}{(1 - L^2\gamma (\gamma + \omega))^2}\delta\sum\limits_{k=1}^{K-1}w_k\E\left[\|x_k - x^*\|^2\right] \notag\\
        && \quad + \frac{32}{(1 - L^2\gamma (\gamma + \omega))^2}(W_K - w_0)\sigma_*^2 \notag\\
        &=& \sum\limits_{k=1}^{K-2}C_1 w_k\E\left[\|F_{v_{k}}(\hx_{k})\|^2\right] + C_2 w_0\E\left[\|F_{v_0}(x_0)\|^2\right]\notag\\
        &&\quad + 3C_3\delta\sum\limits_{k=1}^{K-1}w_k\E\left[\|x_k - x^*\|^2\right] + 2C_3W_K\sigma_*^2, \label{eq:MVI_technical_ineq_2}
    \end{eqnarray}
    where $C_1 = \frac{3L\omega}{1 - L^2\gamma (\gamma + \omega)} + \frac{32}{(1 - L^2\gamma (\gamma + \omega))^2}\delta\gamma^2$, $C_2 = \frac{3}{1 - L^2\gamma (\gamma + \omega)} + \frac{32}{(1 - L^2\gamma (\gamma + \omega))^2}\delta(\gamma+\omega)^2$, and $C_3 = \frac{16}{(1 - L^2\gamma (\gamma + \omega))^2}$. Summing up inequalities \eqref{eq:MVI_technical_ineq_1} for $k = 1,\ldots, K-1$ with weights $w_1,\ldots, w_{K-1}$ and \eqref{eq:MVI_technical_ineq_1_1} with weight $w_0$, we get
    \begin{eqnarray*}
        \sum\limits_{k=0}^{K-1}w_k\E\left[\|x_{k+1} - x^*\|^2\right] &\leq& \sum\limits_{k=0}^{K-1}w_k\E\left[\|x_{k} - x^*\|^2\right] - \omega\gamma \sum\limits_{k=1}^{K-1}w_k \E\left[\|F_{v_{k-1}}(\hx_{k-1})\|^2\right]\\
        &&\quad  + \omega\gamma \sum\limits_{k=1}^{K-1}w_k\Delta_k  + \omega (\omega + 2\rho) w_0\E\left[\|F_{v_0}(x_0)\|^2\right].
    \end{eqnarray*}
    Since $w_k \geq w_{k+1}$, we can continue the derivation as follows:
    \begin{eqnarray*}
        \sum\limits_{k=0}^{K-1}w_k\E\left[\|x_{k+1} - x^*\|^2\right] &\leq& \sum\limits_{k=0}^{K-1}w_k\E\left[\|x_{k} - x^*\|^2\right] - \omega\gamma \sum\limits_{k=0}^{K-2}w_{k} \E\left[\|F_{v_{k}}(\hx_{k})\|^2\right]\\
        &&\quad  + \omega\gamma \sum\limits_{k=1}^{K-1}w_k\Delta_k  + \omega (\omega + 2\rho) w_0\E\left[\|F_{v_0}(x_0)\|^2\right]\\
        &\overset{\eqref{eq:MVI_technical_ineq_2}}{\leq}& \sum\limits_{k=0}^{K-1}(1+ 3C_3\omega\gamma \delta)w_k\E\left[\|x_{k} - x^*\|^2\right]\\
        && \quad - \omega\gamma (1 - C_1)\sum\limits_{k=0}^{K-2}w_{k} \E\left[\|F_{v_{k}}(\hx_{k})\|^2\right]\\
        &&\quad + 2\omega\gamma C_2 w_0 \E\left[\|F_{v_{0}}(\hx_{0})\|^2\right] + 2\omega\gamma C_3 W_K \sigma_*^2.
    \end{eqnarray*}
    Now we need to specify the weights $w_{-1}, w_0, w_1, \ldots, w_{K-1}$. Let $w_{K-2} = 1$ and $w_{k-1} = (1+ 3C_3\omega\gamma \delta)w_k$. Then, rearranging the terms, dividing both sides by $\omega\gamma (1 - C_1)W_{K-1}$, we get
    \begin{eqnarray*}
        \min\limits_{0\leq k \leq K-1}\E\left[\|F(\hat x_k)\|^2\right] &\leq& \min\limits_{0\leq k \leq K-1}\E\left[\|F_{v_k}(\hat x_k)\|^2\right]\\
        &\leq& \sum\limits_{k=0}^{K-2}\frac{w_k}{W_{K-1}}\E\left[\|F_{v_{k}}(\hx_{k})\|^2\right]\\
        &\leq& \frac{1}{\omega\gamma (1 - C_1) W_{K-1}}\sum\limits_{k=0}^{K-1}\left(w_{k-1}\E\left[\|x_{k} - x^*\|^2\right] \right.\\
        && \quad \left. - w_k\E\left[\|x_{k+1} - x^*\|^2\right]\right) + \frac{2C_2 w_0 \E\left[\|F_{v_{0}}(\hx_{0})\|^2\right]}{(1-C_1)W_{K-1}} \\
        && \quad + \frac{2C_3 W_K \sigma_*^2}{(1-C_1)W_{K-1}}\\
        &\leq&  \frac{w_{-1}\|x_0 - x^*\|^2}{\omega\gamma (1 - C_1) W_{K-1}} + \frac{2C_2 w_0 \E\left[\|F_{v_{0}}(\hx_{0})\|^2\right]}{(1-C_1)W_{K-1}} + \frac{2C_3 W_K \sigma_*^2}{(1-C_1)W_{K-1}}.
    \end{eqnarray*}
    It remains to simplify the right-hand side of the above inequality. First, we notice that $W_{K-1} = \sum_{k=0}^{K-2} w_k \geq (K-1)w_{K-2} = K-1$ since $w_{k} \geq w_{k+1}$. Moreover, $w_{-1} = (1 + 3C_3 \omega\gamma\delta)^{K-1}$. Next,
    \begin{eqnarray*}
        C_1 &=& \frac{3L\omega}{1 - L^2\gamma (\gamma + \omega)} + \frac{32}{(1 - L^2\gamma (\gamma + \omega))^2}\delta\gamma^2\\
        &\leq& \frac{3L\omega}{1 - L\gamma} + \frac{32}{(1 - L\gamma)^2} \cdot \frac{(1-L\gamma)L^3\omega}{32} \cdot \gamma^2 \leq \frac{4L\omega}{1 - L\gamma},\\
        C_2 &=& \frac{3}{1 - L^2\gamma (\gamma + \omega)} + \frac{32}{(1 - L^2\gamma (\gamma + \omega))^2}\delta(\gamma+\omega)^2\\
        &\leq& \frac{3}{1 - L\gamma} + \frac{32}{(1 - L\gamma)^2}\cdot \frac{(1-L\gamma)L^3\omega}{32} \cdot (\gamma+\omega)^2 \leq \frac{4}{1 - L\gamma},\\
        C_3 &=& \frac{16}{(1 - L^2\gamma (\gamma + \omega))^2} \leq \frac{16}{(1 - L\gamma)^2},
    \end{eqnarray*}
    where we use $\delta \leq \nicefrac{(1-L\gamma)L^3\omega}{16}$ and $\gamma + \omega < \nicefrac{1}{L}$. Using these inequalities, we simplify the bound as follows:
    \begin{eqnarray}
        \min\limits_{0\leq k \leq K-1}\E\left[\|F(\hat x_k)\|^2\right] &\leq& \frac{(1 - L\gamma)(1+3C_3\omega\gamma \delta)^{K-1}\|x_0 - x^*\|^2}{\omega\gamma (1 - L(\gamma + 4\omega)) (K-1)} \notag \\
        && \quad + \frac{8 (1+3C_3\omega\gamma \delta)^{K-2} \E\left[\|F_{v_{0}}(\hx_{0})\|^2\right]}{(1 - L(\gamma + 4\omega))(K-1)} \notag \\
        &&\quad + \frac{32 \sigma_*^2}{(1-L\gamma)(1-L(\gamma + 4\omega))} \notag\\
        &\leq& \frac{(1 - L\gamma)\left(1+\frac{48\omega\gamma \delta}{(1-L\gamma)^2}\right)^{K-1}\|x_0 - x^*\|^2}{\omega\gamma (1 - L(\gamma + 4\omega)) (K-1)} \notag \\
        && \quad + \frac{8 \left(1+\frac{48\omega\gamma \delta}{(1-L\gamma)^2}\right)^{K-2} \E\left[\|F_{v_{0}}(\hx_{0})\|^2\right]}{(1 - L(\gamma + 4\omega))(K-1)} \notag\\
        &&\quad + \frac{32 \sigma_*^2}{(1-L\gamma)(1-L(\gamma + 4\omega))} \label{eq:MVI_technical_ineq_3}
    \end{eqnarray}
    where we use $W_{K} = W_{K-1} + w_{K-1} \leq W_{K-1} + w_{K-2} \leq 2W_{K-1}$. Finally, we use \eqref{eq: variance bound} to upper-bound $\E\left[\|F_{v_{0}}(\hx_{0})\|^2\right]$:
    \begin{eqnarray*}
        \E\left[\|F_{v_{0}}(\hx_{0})\|^2\right] &=& \E\left[\|F_{v_{0}}(x_{0})\|^2\right] \overset{\eqref{eq: variance bound}}{\leq} \delta \|x_0 - x^*\|^2 + \|F(x_0)\|^2 + 2\sigma_*^2\\
        &\leq& (\delta + L^2)\|x_0 - x^*\|^2 + 2\sigma_*^2.
    \end{eqnarray*}
    Plugging this inequality in \eqref{eq:MVI_technical_ineq_3}, we derive
    \begin{eqnarray*}
        \min\limits_{0\leq k \leq K-1}\E\left[\|F(\hat x_k)\|^2\right] &\leq& \frac{(1 + 8\omega\gamma (\delta + L^2) - L\gamma)\left(1+\frac{48\omega\gamma \delta}{(1-L\gamma)^2}\right)^{K-1}\|x_0 - x^*\|^2}{\omega\gamma (1 - L(\gamma + 4\omega)) (K-1)}\\
        &&\quad + \frac{4 \left(8 + \frac{1-L\gamma}{K-1}\left(1+\frac{48\omega\gamma \delta}{(1-L\gamma)^2}\right)^{K-1}\right)\sigma_*^2}{(1-L\gamma)(1-L(\gamma + 4\omega))},
    \end{eqnarray*}
    which concludes the proof.
\end{proof}

\subsection{Proof of Theorem \ref{cor:weak_MVI_convergence}}
\label{AppendixE5}

\begin{theorem}\label{cor:weak_MVI_convergence_appendix}
    Let $F$ be $L$-Lipschitz and satisfy Weak Minty condition with parameter $\rho < \nicefrac{1}{(2L)}$. Assume that inequality \eqref{eq: variance bound} holds (e.g., it holds whenever Assumption~\ref{as:expected_residual} holds, see Lemma~\ref{Lemma: variance bound}). Assume that $\gamma_k = \gamma$, $\omega_k = \omega$ and
    \begin{equation*}
        \max\left\{2\rho, \frac{1}{2L}\right\} < \gamma < \frac{1}{L},\quad 0 < \omega < \min\left\{\gamma - 2\rho, \frac{1}{4L} - \frac{\gamma}{4}\right\}.
    \end{equation*}
    Then, for all $K \geq 2$ the iterates produced by mini-batched \algname{SPEG} with batch-size 
    \begin{equation}
        \tau \geq \max\left\{1, \frac{32\delta}{(1-L\gamma)L^3\omega}, \frac{48\omega\gamma \delta(K-1)}{(1 - L\gamma)^2}, \frac{2\omega\gamma\sigma_*^2(K-1)}{(1-L\gamma)\|x_0 - x^*\|^2}\right\} \label{eq:SPEG_weak_MVI_batchsize_appendix}
    \end{equation}
    satisfy
    \begin{eqnarray}
        \min\limits_{0\leq k \leq K-1}\E\left[\|F(\hat x_k)\|^2\right] \leq \frac{48\|x_0 - x^*\|^2}{\omega\gamma (1 - L(\gamma + 4\omega)) (K-1)}. \label{eq:SPEG_weak_MVI_result_corollary_appendix}
    \end{eqnarray}
\end{theorem}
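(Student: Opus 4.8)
The plan is to derive this corollary directly from Theorem~\ref{thm:weak_MVI}, whose conclusion \eqref{eq:SPEG_weak_MVI_result_appendix} already provides the needed bound in terms of the expected-residual parameter $\delta$ and the optimum noise $\sigma_*^2$. The key observation is that under $\tau$-minibatch sampling these two quantities scale inversely with the batch size: by Proposition~\ref{Prop_SufficientCondition} both $\delta$ and $\sigma_*^2$ carry the factor $\frac{1}{\tau}\cdot\frac{n-\tau}{n-1} \leq \frac{1}{\tau}$, so enlarging $\tau$ simultaneously shrinks them. Thus I would read the $\delta$ and $\sigma_*^2$ in the batch-size requirement \eqref{eq:SPEG_weak_MVI_batchsize_appendix} as the single-sample ($\tau=1$) values and use the bounds $\delta_\tau \le \delta/\tau$ and $\sigma_{*,\tau}^2 \le \sigma_*^2/\tau$ for the batched iterates.

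With this in hand, the three nontrivial lower bounds on $\tau$ in \eqref{eq:SPEG_weak_MVI_batchsize_appendix} play three distinct roles. First, $\tau \ge \frac{32\delta}{(1-L\gamma)L^3\omega}$ guarantees $\delta_\tau \le \frac{(1-L\gamma)L^3\omega}{32}$, which is exactly the admissibility condition on $\delta$ needed to invoke Theorem~\ref{thm:weak_MVI}. Second, $\tau \ge \frac{48\omega\gamma\delta(K-1)}{(1-L\gamma)^2}$ yields $\frac{48\omega\gamma\delta_\tau}{(1-L\gamma)^2} \le \frac{1}{K-1}$, so the troublesome exponential prefactor in \eqref{eq:SPEG_weak_MVI_result_appendix} is tamed via $\left(1+\frac{1}{K-1}\right)^{K-1}\le e$. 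Third, $\tau \ge \frac{2\omega\gamma\sigma_*^2(K-1)}{(1-L\gamma)\|x_0-x_*\|^2}$ forces $\sigma_{*,\tau}^2 \le \frac{(1-L\gamma)\|x_0-x_*\|^2}{2\omega\gamma(K-1)}$, which lets me absorb the entire $\sigma_*^2$-contribution of \eqref{eq:SPEG_weak_MVI_result_appendix} into a constant multiple of the leading term $\frac{\|x_0-x_*\|^2}{\omega\gamma(1-L(\gamma+4\omega))(K-1)}$.

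Finally I would collect the constants. Using the step-size constraints $\frac{1}{2L}<\gamma<\frac1L$ and $0<\omega<\frac{1}{4L}-\frac{\gamma}{4}$, one bounds the benign factor $1+8\omega\gamma(\delta_\tau+L^2)-L\gamma$ by an absolute constant (since $8\omega\gamma L^2 < 2$ and $\delta_\tau$ is small), and the exponential is at most $e$, so the first term of \eqref{eq:SPEG_weak_MVI_result_appendix} contributes a bounded multiple of $\frac{\|x_0-x_*\|^2}{\omega\gamma(1-L(\gamma+4\omega))(K-1)}$; the $\sigma_*^2$ term contributes another bounded multiple of the same quantity by the third batch condition. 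Summing and rounding the numerical constant up to $48$ yields \eqref{eq:SPEG_weak_MVI_result_corollary_appendix}. I expect the main obstacle to be the bookkeeping of constants: one must verify that every contribution genuinely collapses into the single target term with the stated coefficient $48$, and in particular that the admissibility condition and the exponential bound remain compatible with the step-size window so that $1-L(\gamma+4\omega)$ stays bounded away from zero throughout.
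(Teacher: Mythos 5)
Your proposal is correct and follows essentially the same route as the paper's proof: establish that the mini-batched estimator satisfies \eqref{eq: variance bound} with parameters $\delta/\tau$ and $\sigma_*^2/\tau$, invoke Theorem~\ref{thm:weak_MVI} with these scaled parameters, and use the three batch-size conditions exactly as you describe — to meet the admissibility requirement on $\delta$, to bound the exponential prefactor by $e$, and to absorb the $\sigma_*^2$ term into the leading $\mathcal{O}(1/(K-1))$ term before collecting constants into the coefficient $48$.
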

\begin{proof}
    Mini-batched \algname{SPEG} uses estimator
    \begin{eqnarray*}
        F_{v_k}(\hx_k) = \frac{1}{\tau}\sum\limits_{i=1}^{\tau} F_{v_{k,i}}(\hx_k),
    \end{eqnarray*}
    where $F_{v_{k,1}}(\hx_k),\ldots, F_{v_{k,\tau}}(\hx_k)$ are independent samples satisfying \eqref{eq: variance bound} with parameters $\delta$ and $\sigma_*^2$. Using variance decomposition and independence of $F_{v_{k,1}}(\hx_k),\ldots, F_{v_{k,\tau}}(\hx_k)$, we get
    \begin{eqnarray*}
        \E_{v_k}\left[\|F_{v_k}(\hx_k)\|^2\right] &=& \E_{v_k}\left[\|F_{v_k}(\hx_k) - F(\hx_k)\|^2\right] + \|F(\hx_k)\|^2\\
        &=& \E_{v_k}\left[\left\|\frac{1}{\tau}\sum\limits_{i=1}^b (F_{v_{k,i}}(\hx_k) - F(\hx_k))\right\|^2\right] + \|F(\hx_k)\|^2\\
        &=& \frac{1}{\tau^2}\sum\limits_{i=1}^{\tau}\E_{v_{k}}\left[\|F_{v_{k,i}}(\hx_k) - F(\hx_k)\|^2\right] + \|F(\hx_k)\|^2\\
        &\overset{\eqref{eq: variance bound}}{\leq}& \frac{\delta}{\tau}\|\hx_k - x^*\|^2 + \|F(\hx_k)\|^2 + \frac{2\sigma_*^2}{\tau}. 
    \end{eqnarray*}
    That is, mini-batched estimator $F_{v_k}(\hx_k)$ satisfies \eqref{eq: variance bound} with parameters $\nicefrac{\delta}{\tau}$ and $\nicefrac{\sigma_*^2}{\tau}$. Therefore, Theorem~\ref{thm:weak_MVI} implies
    \begin{eqnarray}
        \min\limits_{0\leq k \leq K-1}\E\left[\|F(\hat x_k)\|^2\right] &\leq& \frac{(1 + 4\omega\gamma \left(\frac{\delta}{\tau} + L^2\right) - L\gamma)\left(1+\frac{48\omega\gamma \delta}{(1-L\gamma)^2\tau}\right)^{K-1}\|x_0 - x^*\|^2}{\omega\gamma (1 - L(\gamma + 4\omega)) (K-1)} \notag\\
        &&\quad + \frac{8 \left(8 + \frac{1-L\gamma}{K-1}\left(1+\frac{48\omega\gamma \delta}{(1-L\gamma)^2\tau}\right)^{K-1}\right) \sigma_*^2}{(1-L\gamma)(1-L(\gamma + 4\omega))\tau}. \label{eq:MVI_technical_ineq_4}
    \end{eqnarray}
    Since $\tau$ satisfies \eqref{eq:SPEG_weak_MVI_batchsize_appendix} and $\gamma \leq \nicefrac{1}{L}$, $\omega \leq \nicefrac{1}{4L}$, we have
    \begin{eqnarray*}
        4\omega\gamma\left(\frac{\delta}{\tau} + L^2\right) &\leq& \frac{1}{4L^2}\left(\delta \cdot \frac{(1-L\gamma)L^3\omega}{16\delta} + L^2\right) \leq 1,\\
        \left(1+\frac{48\omega\gamma \delta}{(1-L\gamma)^2\tau}\right)^{K-1} &\leq& \left(1+\frac{48\omega\gamma \delta}{(1-L\gamma)^2} \cdot \frac{(1 - L\gamma)^2}{48\omega\gamma \delta(K-1)}\right)^{K-1} \\
        && = \left(1 + \frac{1}{K-1}\right)^{K-1} \leq \exp(1) < 3.
    \end{eqnarray*}
    Using this, we can simplify \eqref{eq:MVI_technical_ineq_4} as follows:
    \begin{eqnarray*}
        \min\limits_{0\leq k \leq K-1}\E\left[\|F(\hat x_k)\|^2\right] &\leq& \frac{6\|x_0 - x^*\|^2}{\omega\gamma (1 - L(\gamma + 4\omega)) (K-1)} + \frac{88 \sigma_*^2}{(1-L\gamma)(1-L(\gamma + 4\omega))\tau}\\
        &\overset{\eqref{eq:SPEG_weak_MVI_batchsize_appendix}}{\leq}& \frac{6\|x_0 - x^*\|^2}{\omega\gamma (1 - L(\gamma + 4\omega)) (K-1)} \\
        && \quad + \frac{88 \sigma_*^2}{(1-L\gamma)(1-L(\gamma + 4\omega))} \cdot \frac{(1-L\gamma)\|x_0 - x^*\|^2}{2\omega\gamma\sigma_*^2}\\
        &=& \frac{48\|x_0 - x^*\|^2}{\omega\gamma (1 - L(\gamma + 4\omega)) (K-1)}.
    \end{eqnarray*}
    This concludes the proof.
\end{proof}

\paragraph{On Oracle Complexity of Theorem \ref{cor:weak_MVI_convergence}.} Let us now express the result of Theorem~\ref{cor:weak_MVI_convergence} via oracle complexity.

Oracle complexity captures the computational requirements required to solve a specific optimization problem. That is, given a prespecified accuracy $\varepsilon > 0$, it measures the number of oracle calls needed to solve the problem to this $\varepsilon$ accuracy. In our setting,  an oracle call indicates the computation of one operator, $F_i$ (for some $i \in [n]$). Therefore, in Theorem \ref{cor:weak_MVI_convergence}, where a mini-batch of size $\tau$ is required in each iteration of the update rule, we have $\tau$ many oracle calls per iteration. In that scenario, the total number of oracle calls required to obtain specific accuracy $\varepsilon > 0$ is given by $K \tau$ (multiplication of $K$ iterations with $\tau$ oracle calls).

Note that according to Theorem \ref{cor:weak_MVI_convergence} to achieve an $\varepsilon$ accuracy, we need $K \geq \frac{C \left\| x_0 - x^* \right\|^2}{\epsilon}$ iterations. This follows trivially by
\begin{equation}
\label{naosdnao}
    \min\limits_{0\leq k \leq K-1}\E\left[\|F(\hat x_k)\|^2\right] \overset{\text{Theorem}~\ref{cor:weak_MVI_convergence}}{\leq} \frac{C\|x_0 - x^*\|^2}{K-1} \leq \varepsilon.
\end{equation}
 Therefore, using $K \geq \frac{C \left\| x_0 - x^* \right\|^2}{\epsilon}$ in combination with the lower bound on $\tau$ from \eqref{eq:SPEG_weak_MVI_batchsize}, the total number of oracle calls to satisfy \eqref{naosdnao} is given by:  
\begin{eqnarray*}
   K\tau \geq \max \left\lbrace \frac{C|| x_0 - x^{\ast} ||^2}{\epsilon}, \frac{32 C \delta || x_0 - x^{\ast} ||^2}{(1 - L \gamma) L^3 \omega \epsilon}, \frac{48 C^2 \omega \gamma \delta || x_0 - x^{\ast}||^4}{(1 - \gamma L)^2 \epsilon^2}, \frac{2 C^2 \omega \gamma \sigma_\ast^2 || x_0 - x^{\ast}||^2}{(1 - L \gamma) \epsilon^2}\right\rbrace. 
\end{eqnarray*}

\newpage
\section{Further Results on Arbitrary Sampling}\label{section: further results on arbitrary sampling}
\subsection{Proof of Proposition \ref{Prop_SingleElement}}
Expanding the left hand side of Expected Residual~\eqref{eq: ER Condition} condition we have
\begin{eqnarray}
    \E \|(F_v(x) - F_v(x^*)) - (F(x) - F(x^*))\|^2 
   &\overset{\eqref{eq: variance of an unbiased estimator}}{=} & \E\|(F_v(x) - F_v(x^*)) \|^2 - \|F(x) - F(x^*)\|^2 \notag \\
   &\leq& \E \|F_v(x) - F_v(x^*)\|^2 . \label{eq: bound on expected residual}
\end{eqnarray}
For any $x$ and $y$ with $v_i = \frac{1}{p_i}$ we obtain  
\begin{eqnarray*}
\|F_v(x) - F_v(y)\|^2 &=& \frac{1}{n^2} \bigg\| \sum_{i \in S} \frac{1}{p_i} (F_i(x) - F_i(y))\bigg\|^2 \\
&=& \sum_{i,j \in S} \bigg \langle \frac{1}{n p_i}(F_i(x) - F_i(y)), \frac{1}{n p_j}(F_j(x) - F_j(y))\bigg \rangle.
\end{eqnarray*}
Then taking expectation on both sides we get
\begin{eqnarray*}
    \E \|F_v(x) - F_v(y)\|^2 &=& \sum_{C} p_C \sum_{i,j \in C} \bigg\langle \frac{1}{n p_i}(F_i(x) - F_i(y)), \frac{1}{n p_j}(F_j(x) - F_j(y)) \bigg\rangle \\
    &=& \sum_{i,j = 1}^n \sum_{C: i,j \in C} p_C \bigg \langle \frac{1}{n p_i}(F_i(x) - F_i(y)), \frac{1}{n p_j}(F_j(x) - F_j(y)) \bigg \rangle \\
    &=& \sum_{i,j = 1}^n \frac{P_{ij}}{p_i p_j} \bigg \langle \frac{1}{n}(F_i(x) - F_i(y)), \frac{1}{n}(F_j(x) - F_j(y)) \bigg \rangle.
\end{eqnarray*}
Now we consider the case, where the ratio $\frac{P_{ij}}{p_i p_j} = c_2$ i.e. constant for $i \neq j$ and $P_{ii} = p_i$. Then from the above computations we derive
\begin{eqnarray*}
    \E \|F_v(x) - F_v(y)\|^2 &=& \sum_{i \neq j}^n c_2 \bigg \langle \frac{1}{n} (F_i(x) - F_i(y)), \frac{1}{n} (F_i(x) - F_i(y)) \bigg \rangle \\
    && \quad + \sum_{i = 1}^n \frac{1}{n^2 p_i} \|F_i(x) - F_i(y)\|^2 \\
    &=& \sum_{i,j = 1}^n c_2 \bigg \langle \frac{1}{n} (F_i(x) - F_i(y)), \frac{1}{n} (F_i(x) - F_i(y)) \bigg \rangle\\
    && \quad + \sum_{i = 1}^n \frac{1 - p_i c_2}{n^2 p_i} \|F_i(x) - F_i(y)\|^2\\
    &\overset{\eqref{eq: F_i lipschitz}}{\leq}&  c_2 \|F(x) - F(y)\|^2 + \sum_{i = 1}^n \frac{1 - p_i c_2}{n^2 p_i} L_i^2\|x - y\|^2 \\
    &\overset{\eqref{eq: F lipschitz}}{\leq}& \bigg(c_2 L^2 + \frac{1}{n^2} \sum_{i = 1}^n \frac{1 - p_i c_2}{p_i} L_i^2 \bigg) \|x - y\|^2.
\end{eqnarray*}
Thus replacing $y = x^*$ and combining with \eqref{eq: bound on expected residual} we get the following bound on the Expected Residual:
\begin{equation}\label{eq: single element sampling bound on ER 2}
    \E \|(F_v(x) - F_v(x^*)) - (F(x) - F(x^*))\|^2 \leq \bigg(c_2 L^2 + \frac{1}{n^2} \sum_{i = 1}^n  \frac{1 - p_i c_2}{p_i} L_i^2 \bigg) \|x - x^*\|^2.
\end{equation}
For single-element sampling $c_2 = 0$ (as probability of two points appearing in same sample is zero for single element sampling i.e. $P_{ij} = 0$). Then we obtain
$$
\delta \leq  \frac{2}{n^2} \sum_{i = 1}^n \frac{L_i^2}{p_i}
$$
from \eqref{eq: single element sampling bound on ER 2}. This completes the derivation of $\delta$ for single element sampling. To compute $\sigma_*^2$ for single element sampling, we replace
\begin{equation*}
    P_{ij} = \begin{cases}
    p_i & \text{if } i = j \\
    0 & \text{otherwise}
    \end{cases}
\end{equation*}
in \eqref{eq: expansion for sigma} to get
\begin{equation*}
    \sigma_*^2 = \frac{1}{n^2} \sum_{i=1}^n \frac{1}{p_i} \|F_i(x^*)\|^2.
\end{equation*}

\newpage
\section{Numerical Experiments}
\label{Appendix_AddExperiments}
In Appendix \ref{subsec:FurtherDetails}, we add more details on the experiments discussed in the main paper. Furthermore, in Appendix \ref{subsec:ComparisonConstantvsSwitching}, we run more experiments to evaluate the performance of \algname{SPEG} on quasi-strongly monotone and weak MVI problems.

\subsection{More Details on the Numerical Experiments of Section~\ref{Numerical Experiments}}\label{subsec:FurtherDetails}

\paragraph{On Constant vs Switching Stepsize Rule.} We run the experiments on two synthetic datasets. In Fig.~\ref{fig: Synthetic Dataset 1} of the main paper, we take $\mu_A = \mu_C = 0.6$. Here we include one more plot with a similar flavor but in a different setting. For Fig.~\ref{fig: Synthetic Dataset 2}, we generate the data such that eigenvalues of $A_1, B_1, C_1$ are generated uniformly from the interval $[0.1, 10]$. In the new plot, similar to the main paper, we can see the benefit of switching the step-size rule of Theorem \ref{SPEG switching rule}.

\begin{figure}[h]
\centering
    \includegraphics[width=.48\textwidth]{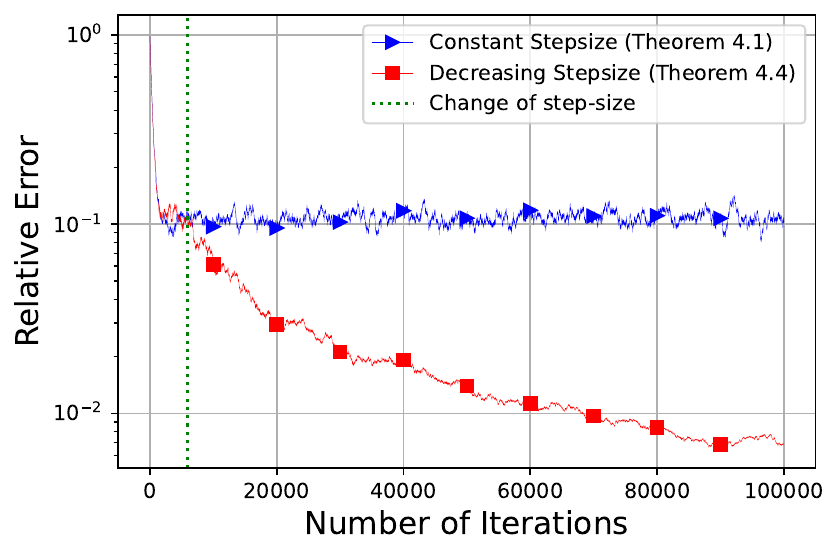}
\caption{Comparison of the constant step-size rule~\eqref{eq:constant_stepsize} with the switching step-sizes~\eqref{eq:stepsize_switching_1} on the strongly monotone quadratic game.}\label{fig: Synthetic Dataset 2}
\end{figure}

\paragraph{On Weak Minty VIPs.}
In this experiment, we generate $\xi_i, \zeta_i$ such that $\frac{1}{n}\sum_{i = 1}^n \xi_i = \sqrt{63}$ and $\frac{1}{n} \sum_{i = 1}^n \zeta_i = - 1$. This choice of $\xi_i, \zeta_i$ ensures that $L = 8$ and $\rho = \nicefrac{1}{32}$ for the min-max problem we considered in Section \ref{sec: Experiment on WMVI}. In Fig. \ref{fig:performance of SPEG on WMVI_b}, we again implement the \algname{SPEG} on \eqref{asoxasl} with batchsize = $0.15 \times n$ (different batchsize compare to the plot of the main paper). 
 \begin{figure}[h]
     \centering
    \includegraphics[width=.48\textwidth]{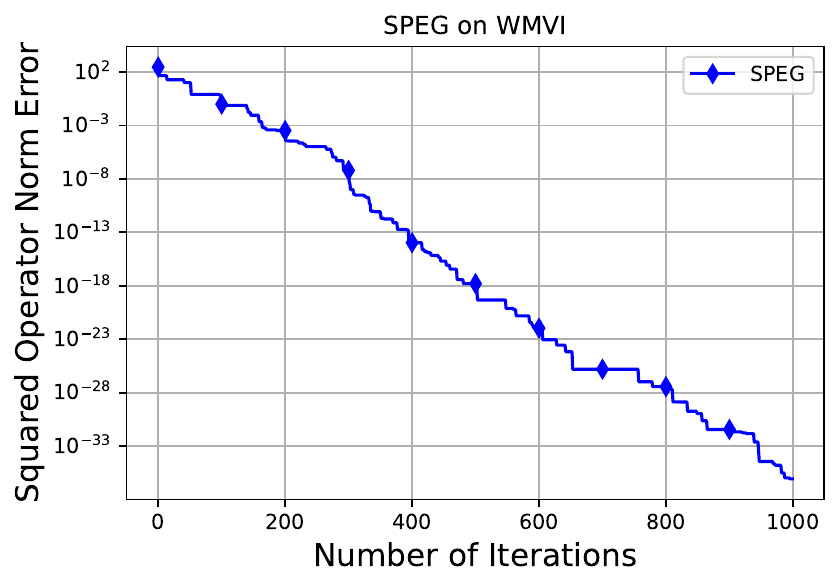}
    \caption{Trajectory of \algname{SPEG} for solving weak MVI using a batchsize = $0.15 \times n$.}
    \label{fig:performance of SPEG on WMVI_b}
\end{figure}

\subsection{Additional Experiments}\label{subsec:ComparisonConstantvsSwitching}
In this subsection, we include more experiments to evaluate the performance of \algname{SPEG} on quasi-strongly monotone and weak MVI problems. 
First, we run the experiment comparing constant and switching step-size rules on a different setup than the one we included in the main paper to analyze the performance of \algname{SPEG} under different condition numbers. Then, we implement \algname{SPEG} on the weak MVI of \eqref{asoxasl}. To evaluate the performance in this experiment, we plot $\nicefrac{\|F(\hat{x}_k)\|^2}{\|F(x_0)\|^2}$ on the $y$-axis.
\subsubsection{Strongly Monotone Quadratic Game:} In this experiment, we compare the proposed constant step-size~\eqref{eq:constant_stepsize} and the switching step-size rule~\eqref{eq:stepsize_switching_1}. We implement our algorithm on operator $F: \R^{4} \to \R^{4}$ given by 
\begin{equation*}
   F(x) \eqdef \frac{1}{3} \left(M_1(x - x_1^*) + M_2 (x - x_2^*) + M_3 (x - x_3^*)\right), 
\end{equation*}
where $M_1$, $M_2$ and $M_3$ are the diagonal matrices, 
\begin{eqnarray*}
    M_1 = \begin{pmatrix}
        \Delta & & &\\
        & 1 & & \\
        & & 1 & \\
        & & & 1
    \end{pmatrix}, \quad M_2 = \begin{pmatrix}
        1 & & &\\
        & \Delta & & \\
        & & 1 & \\
        & & & 1
    \end{pmatrix}, \quad M_3 = \begin{pmatrix}
        1 & & &\\
        & 1 & & \\
        & & \Delta & \\
        & & & 1
    \end{pmatrix}
\end{eqnarray*}
and
\begin{eqnarray*}
    x_1^* = \begin{pmatrix}
    \Delta \\
    0 \\
    0 \\
    \Delta
    \end{pmatrix}, \quad x_2^* = \begin{pmatrix}
    0 \\
    \Delta \\
    0 \\
    0
    \end{pmatrix}, \quad x_3^* = \begin{pmatrix}
    0 \\
    0 \\
    \Delta \\
    0
    \end{pmatrix}. 
\end{eqnarray*}
This choice of $M_i$ and $x_i^*$ ensures that the Lipschitz constant of operator $F$ is $\frac{\Delta + 2}{3}$ while quasi-strong monotonicity parameter~\eqref{eq: Strong Monotonicity} is $\mu = 1$. Hence the condition number of $F$ is given by $\frac{\Delta + 2}{3}$. This allows us to vary the condition number of operator $F$ by changing the value of $\Delta$. For Fig. \ref{fig:condition_no_1} we take $\Delta = 3$ (condition number = $1.67$) while for Fig. \ref{fig:condition_no_10} we choose $\Delta = 10$ (condition number = $10.67$). The vertical dotted line in plots of Fig.~\ref{fig:constant_vs_switch_exp_2} marks the transition point from constant to switching step-size rule as predicted by our theoretical result in Theorem~\ref{SPEG switching rule}.

\begin{figure}[h]
\centering
\begin{subfigure}[b]{.48\textwidth}
    \centering
    \includegraphics[width=\textwidth]{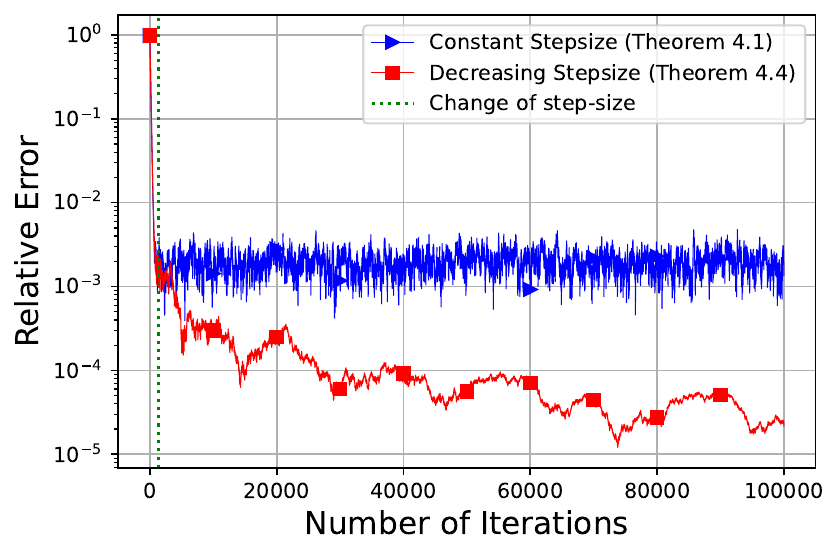}
    \caption{Condition Number $\frac{L}{\mu} = 1.67$.}\label{fig:condition_no_1}
\end{subfigure}
\begin{subfigure}[b]{0.48\textwidth}
    \centering
    \includegraphics[width=\textwidth]{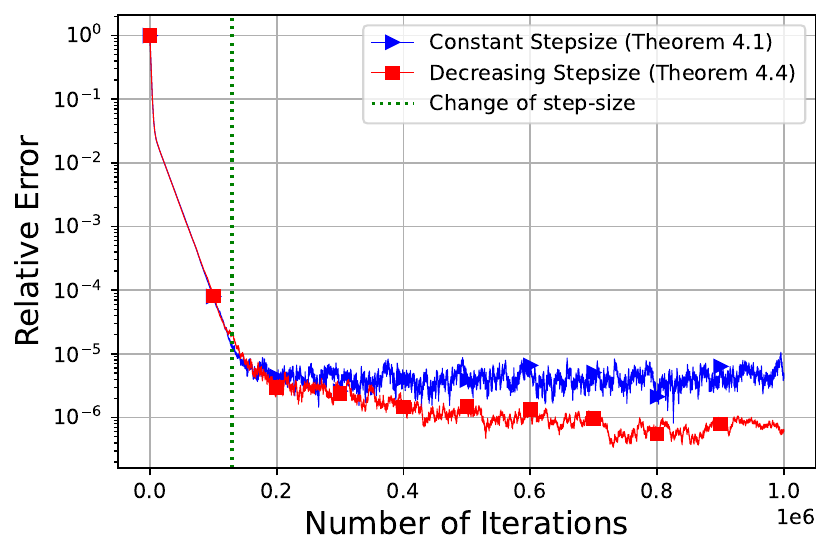}
    \caption{Condition Number $\frac{L}{\mu} = 10.67$.}\label{fig:condition_no_10}
\end{subfigure}
    \caption{\emph{Illustration of switching rule~\eqref{eq:stepsize_switching_1} in Theorem \ref{SPEG switching rule}. The dotted line marks the transition from phase 1 (where we use constant step-size) to phase 2 (where we use decreasing step-size).}}
\label{fig:constant_vs_switch_exp_2}
\end{figure}

\subsubsection{Weak Minty VIPs Continued}
\label{subsec:WeakMVI}
In this experiment, we reevaluate the performance of \algname{SPEG} on weak MVI example of \eqref{asoxasl}. That is, we generate the data in exactly the same way as the ones in section \ref{sec: Experiment on WMVI} with $n = 100$. In Fig. \ref{fig:batchsize=0.1} and \ref{fig:batchsize=0.15}, we implement \algname{SPEG} with batchsize $10$ and $15$, respectively (we note that in this setting the full-gradient evaluation requires a batchsize of $100$). For these plots, we use the relative operator norm on the $y$-axis, i.e. $\nicefrac{\|F(\hat{x}_k)\|^2}{\|F(x_0)\|^2}$, where $x_0$ denotes the starting point of \algname{SPEG}. As expected, the plots illustrate that \algname{SPEG} performs better as we increase the batchsize. From Fig.~\ref{fig:SPEG_weak_MVI_operator_norm} it is clear that with batchsize 15 \algname{SPEG} reaches an accuracy close to $10^{-10}$ while when we use a batchsize of $10$ for the same number of iterations we are only able to converge to an accuracy of $10^{-4}$. 
\begin{figure}[H]
\centering
\begin{subfigure}[b]{.48\textwidth}
    \centering
    \includegraphics[width=\textwidth]{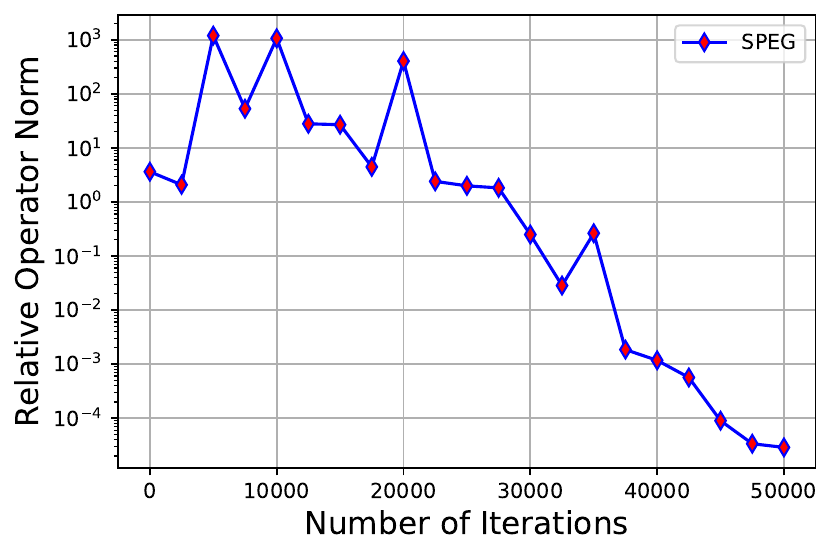}
    \caption{Batchsize = $0.1 \times n.$}\label{fig:batchsize=0.1}
\end{subfigure}
\begin{subfigure}[b]{0.48\textwidth}
    \centering
    \includegraphics[width=\textwidth]{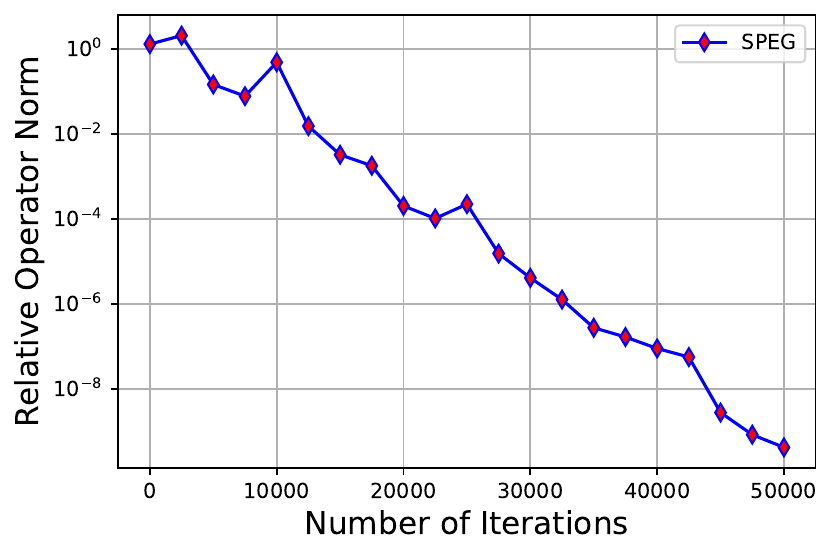}
    \caption{Batchsize = $0.15 \times n.$}\label{fig:batchsize=0.15}
\end{subfigure}
    \caption{\emph{Performance of \algname{SPEG} for solving weak MVI with different batchsizes. In plot (a) we use a batchsize of $10$ while in plot (b) we use $15$.}}
\label{fig:SPEG_weak_MVI_operator_norm}
\end{figure}
\chapter{Appendices for Chapter \ref{chap:chap-3}} \label{chap:appendix-b}

\section{Further Related Work for Chapter \ref{chap:chap-3}}\label{appendix:related_work}

In this section, we present a broader view of the literature, including more details on closely related work and more references to papers that are not directly related to our main results.

\paragraph{Extragradient Methods} The \algname{EG} method is a widely utilized algorithm for solving variational inequality problems (VIPs). It can be regarded as an approximation of the Proximal Point method~\cite{mokhtari2020unified} and is effective in solving specific classes of structured non-monotone VIPs, such as \textit{weak Minty} VIPs~\cite{diakonikolas2021efficient} and \textit{negatively comonotone} VIPs~\cite{gorbunov2023convergence}. For deterministic VIPs, \algname{EG} achieves a convergence rate of $\mathcal{O}\left( \nicefrac{1}{K}\right)$~\cite{gorbunov2022last}, where $K$ is the total number of iterations, for monotone problems, and exhibits a linear rate for strongly monotone problems~\cite{mokhtari2020unified}. In contrast, the stochastic extragradient method converges to a neighbourhood of the solution at a rate of $\mathcal{O}\left( \nicefrac{1}{K}\right)$~\cite{gorbunov2022stochastic}.

Moreover, recent advancements have introduced single-call \algname{EG} methods~\cite{popov1980modification}, which match the convergence guarantees of the traditional \algname{EG} method in both deterministic and stochastic settings while utilizing only a single oracle call per iteration~\cite{gorbunov2023convergence, hsieh2019convergence, choudhury2024single}. Researchers have also developed an enhanced version of \hyperref[eq:EG]{\algname{EG}} with anchoring, which achieves the optimal convergence rate of $\mathcal{O}(\nicefrac{1}{K^2})$ for monotone problems \cite{yoon2021accelerated}. 

\paragraph{Polyak Step Size for Minimization}\cite{polyak1987introduction} initially introduced the Polyak step size to address non-convex minimization problems. This choice of stepsize enables the algorithm to adapt to the curvature and local smoothness of the problem. Recent advancements have extended the application of the Polyak step size to stochastic settings, particularly for solving finite-sum minimization problems of the form:
\begin{eqnarray}
\min_x f(x) \eqdef \frac{1}{n} \sum_{i = 1}^n f_i(x).
\end{eqnarray}
\cite{loizou2021stochastic} proposed the stochastic Polyak step size (\algname{SPS}) to solve the problem in \eqref{eq:min}. This step size selection can significantly accelerate the training process of overparameterized models~\cite{loizou2021stochastic}. However, \algname{SPS} requires knowledge of the optimal values $f_i^*$, typically known only in specific scenarios. Additionally, \algname{SPS} can ensure convergence only to a neighbourhood of the solution for smooth convex problems. To address these limitations, \cite{orvieto2022dynamics} introduced \algname{DecSPS}, which employs a decreasing step size with a lower bound on $f_i^*$ to ensure exact convergence. In a parallel line of research, \cite{gower2021stochastic} reinterpreted \algname{SPS} from a subsampled Newton-Raphson perspective and designed new algorithms that leverage this insight.

\paragraph{Projection Interpretation of Extragradient Method} \cite{solodov1996modified} introduced the concept of interpreting the update step of the \hyperref[eq:EG]{\algname{EG}} method as a projection onto a hyperplane. This perspective has inspired recent advancements in the field. For instance, \cite{pethick2023escaping} utilized this approach in the algorithm known as \algname{AdaptiveEG+} to address \textit{weak Minty} VIPs. Additionally, they proposed \algname{CurvatureEG+}, which employs a backtracking line-search to determine the extrapolation stepsize $\gamma$. Building on this idea, \cite{fan2023weaker} developed a method featuring multiple extrapolations, enabling the solution of a broader class of \textit{weak minty} VIPs. Chapter \ref{chap:chap-3} focuses on \algname{AdaptiveEG+} and explores its connection to the Polyak step size literature.

\paragraph{Adaptive Gradient Descent Ascent Methods} Beyond \algname{EG}, stochastic adaptive step sizes have also been investigated for \algname{GDA} algorithms. For instance, \cite{yang2022nest} introduced \algname{NeAda}, a two-loop algorithm where the variable $w_2$ undergoes multiple updates until a stopping criterion is met before updating $w_1$ (here, $w_1, w_2$ are variables from \eqref{eq: MinMax}). However, this nested approach has several drawbacks: in the stochastic setting, the number of inner loop iterations increases with the number of outer loop iterations. Additionally, achieving a near-optimal convergence rate requires a large batch size, which is often impractical in real-world applications. To address these limitations, \cite{li2022tiada} proposed \algname{TiAda}, a single-loop algorithm employing AdaGrad-type step sizes. However, their convergence guarantees rely on second-order Lipschitz continuity assumptions, which our proof technique does not require. Instead of a nested loop, our algorithm utilizes a simple line-search strategy that terminates after a fixed number of iterations (Lemma \ref{lemma:LS-step-size-lower-bound}).

\section{Projection Based Interpretation of Polyak Step Size}\label{appendix:motivation}
In this section, we explore the projection-based interpretation of the Polyak step size for minimization and demonstrate how it can be used to design Polyak steps for the \algname{EG} method.

\subsection{Polyak Step Size as Projection onto Hyperplane} Interestingly, the Polyak step size can also be derived from a projection perspective~\cite{gower2021stochastic}. To see this, consider the goal of solving the nonlinear equation $f(x) = f(x_*)$ (note that it is equivalent to solving $\min_{x \in \R^d} f(x)$). Using a first-order Taylor expansion of $f$ around $x_k$, we approximate this equation by the linear equation:
\begin{eqnarray}\label{eq:hyperplane1}
    f(x_*) \approx f(x_k) + \la \nabla f(x_k), x - x_k \ra.
\end{eqnarray}
This linear equation represents a hyperplane in the variable $x$. Therefore projecting a point $x'$ onto this hyperplane~\eqref{eq:hyperplane1} yields the following expression:
\begin{eqnarray*}
    x' - \frac{\la \nabla f(x_k), x' - x_k \ra + (f(x_k) - f(x_*))}{\| \nabla f(x_k)\|^2} \nabla f(x_k).
\end{eqnarray*}
In particular, projecting $x_k$ onto this hyperplane gives us
\begin{eqnarray*}
    x_k - \frac{f(x_k) - f(x_*)}{\| \nabla f(x_k)\|^2} \nabla f(x_k).
\end{eqnarray*}
Therefore, the Polyak step size for \algname{GD} can also be interpreted as a projection algorithm. This projection interpretation provides additional insight into the geometric motivation behind the Polyak step size.

\begin{figure}[H]
    \centering
    \includegraphics[width=0.6\linewidth]{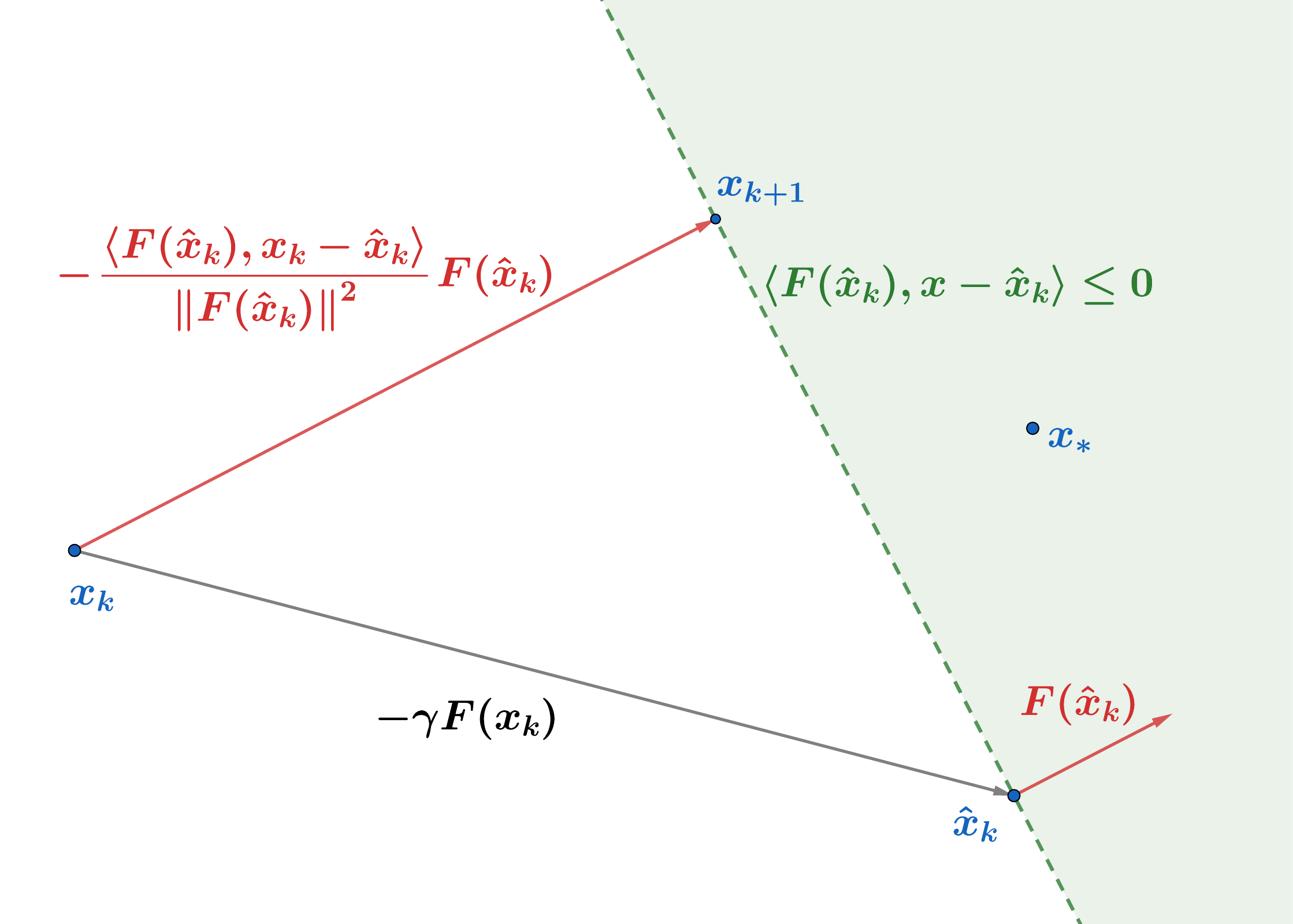}
    \caption{\small Geometric interpretation of using \hyperref[eq:EG]{\algname{EG}} with $\alpha_k = \frac{\la F(\hx_k), x_k - \hx_k \ra}{\| F(\hx_k)\|^2}$ and $\gamma_k = \gamma > 0$. The algorithm iteratively moves from the current point $x_k$ to a point $\hx_k$ via a gradient step and then corrects this move by projecting onto the hyperplane $\la F(\hx_k), x - \hx_k\ra = 0$, leading to the next point $x_{k+1}$.}
    \label{fig:polyakEG_geometric}
\end{figure}

\subsection{Geometric Interpretation of Polyak Step Size for Extragradient}
\cite{solodov1996modified} also developed similar step sizes, where they interpreted the update step as a projection onto a hyperplane. Given the extrapolation iterate $\hx_k$, a halfspace can be constructed as follows
\begin{eqnarray*}
    \mathcal{D} \left(\hx_k \right) \eqdef \left\{ x \in \R^d \mid \la F(\hx_k), \hx_k - x \ra \geq 0 \right\}.
\end{eqnarray*}
It is important to note that the following holds from monotonicity of $F$:
\begin{equation}\label{eq:x_star_in_halfspace}
    \la F(\hx_k), \hx_k - x_* \ra  \geq \la F(x_*), \hx_k - x_* \ra  = 0.
\end{equation} 
Here, the second equality follows the definition of $x_*$. Consequently, we have $x_* \in \mathcal{D} \left( \hx_k \right)$ for all $k$. Since $\mathcal{D} \left( \hx_k \right)$ contains the solution $x_*$, it is possible to move closer to $x_*$ by projecting $x_k$ onto $\mathcal{D} \left( \hx_k \right)$ i.e.
\begin{eqnarray*}
    x_{k+1} & = & \Pi_{\mathcal{D} \left( \hx_k \right)} \left[ x_k \right].
\end{eqnarray*}
As $\mathcal{D} \left(\hx_k\right)$ is a halfspace, the projection of $x_k$ onto $\mathcal{D} \left( \hx_k\right)$ is given by $\Pi_{\mathcal{D} \left( \hx_k \right)} \left[ x_k \right] = x_k - \frac{\la F(\hx_k), x_k - \hx_k \ra}{\|F(\hx_k)\|^2} F(\hx_k)$. Therefore, the above update iteration simplifies to
\begin{eqnarray*}
    x_{k+1} & = & x_k - \frac{\la F(\hx_k), x_k - \hx_k \ra}{\|F(\hx_k)\|^2} F(\hx_k),
\end{eqnarray*}
which is the same as what we propose in \hyperref[alg:PolyakEG]{\algname{PolyakEG}}.

\paragraph{Geometric Interpretation of \hyperref[alg:PolyakEG]{\algname{PolyakEG}}.} In Figure \ref{fig:polyakEG_geometric}, we provide a detailed geometric interpretation of the \hyperref[alg:PolyakEG]{\algname{PolyakEG}} method. Starting from the current iterate $x_k$, we compute the vector $-\gamma F(x_k)$ and use it to obtain the intermediate point $\hx_k = x_k - \gamma F(x_k)$. At this point, we construct a hyperplane defined by the equation $\la F(\hx_k), x - \hx_k \ra = 0$. This hyperplane is perpendicular to the vector $F(\hx_k)$ and passes through the point $\hx_k$.\\
\newline
It is important to note that the optimal point $x_*$ lies within the half-space $\la F(\hx_k), x - \hx_k \ra \leq 0$ (follows from \eqref{eq:x_star_in_halfspace}), which is depicted as the green-shaded region in Figure \ref{fig:polyakEG_geometric}. The next iterate, $x_{k+1}$, is then selected as the point on the hyperplane $\la F(\hx_k), x - \hx_k \ra = 0$ such that the vector $x_{k+1} - x_k$ is perpendicular to this hyperplane. Note that
\begin{eqnarray*}
    \la F(\hx_k), x_{k+1} - \hx_k \ra & = & \quad \la F(\hx_k), x_k - \hx_k \ra - \alpha_k \la F(\hx_k), F(\hx_k) \ra \\
    & = & \quad \la F(\hx_k), x_k - \hx_k \ra - \alpha_k \| F(\hx_k) \|^2 \\
    & = & \quad 0.
\end{eqnarray*}
The last line follows from the expression of $\alpha_k = \frac{\la F(\hx_k), x_k - \hx_k\ra}{\|F(\hx_k)\|^2}$.

\newpage

\section{Technical Lemmas}\label{sec:technical_lemma}
In this section, we present some technical lemmas, which will be used repeatedly to prove the main results of this chapter in subsequent sections.

\begin{lemma}[Young's Inequality]
\label{lemma:young-ineq}
    For any $a, b \in \R^d$ and $\omega > 0$ we have
    \begin{eqnarray}\label{eq:young1}
        \| a + b \|^2 & \leq & (1 + \omega) \|a\|^2 + (1 + \omega^{-1}) \|b\|^2.
    \end{eqnarray}
    In particular, for $\omega = 1$ we have
    \begin{eqnarray}\label{eq:young2}
        \| a + b \|^2 & \leq & 2 \|a\|^2 + 2 \|b\|^2.
    \end{eqnarray}
\end{lemma}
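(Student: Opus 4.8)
The plan is to prove this directly by expanding the squared norm and controlling the cross term with a weighted arithmetic–geometric mean estimate. First I would write out
\begin{equation*}
    \|a + b\|^2 = \|a\|^2 + 2\langle a, b\rangle + \|b\|^2,
\end{equation*}
which holds by bilinearity of the inner product. The entire task then reduces to bounding the single cross term $2\langle a, b\rangle$ from above by $\omega\|a\|^2 + \omega^{-1}\|b\|^2$; substituting this bound back and grouping the $\|a\|^2$ and $\|b\|^2$ coefficients yields exactly $(1+\omega)\|a\|^2 + (1+\omega^{-1})\|b\|^2$.

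To establish the cross-term bound, I would first apply the Cauchy–Schwarz inequality to get $\langle a, b\rangle \le \|a\|\,\|b\|$, and then apply a weighted AM–GM step. The cleanest way is to start from the nonnegativity of a perfect square: for any $\omega > 0$,
\begin{equation*}
    0 \le \left(\sqrt{\omega}\,\|a\| - \tfrac{1}{\sqrt{\omega}}\,\|b\|\right)^2 = \omega\|a\|^2 - 2\|a\|\,\|b\| + \omega^{-1}\|b\|^2,
\end{equation*}
which rearranges to $2\|a\|\,\|b\| \le \omega\|a\|^2 + \omega^{-1}\|b\|^2$. Chaining this with Cauchy–Schwarz gives $2\langle a, b\rangle \le 2\|a\|\,\|b\| \le \omega\|a\|^2 + \omega^{-1}\|b\|^2$, which is precisely what is needed.

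For the special case $\omega = 1$ in \eqref{eq:young2}, I would simply substitute $\omega = 1$ into the general bound \eqref{eq:young1}, since $(1+\omega) = (1+\omega^{-1}) = 2$ there; no separate argument is required. There is no genuine obstacle in this proof — it is an elementary identity combined with Cauchy–Schwarz and completing the square. The only point deserving care is the bookkeeping of the weight: one must introduce the free parameter $\omega$ symmetrically as $\sqrt{\omega}$ and $1/\sqrt{\omega}$ in the perfect-square trick so that the resulting coefficients match the stated $(1+\omega)$ and $(1+\omega^{-1})$ form exactly, rather than some rescaled version.
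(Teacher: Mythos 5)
Your proof is correct and is exactly the standard argument: expand $\|a+b\|^2$, bound the cross term via Cauchy--Schwarz together with the weighted AM--GM estimate $2\|a\|\,\|b\| \le \omega\|a\|^2 + \omega^{-1}\|b\|^2$, and specialize to $\omega=1$. The paper states this lemma without proof as a standard technical fact, and your argument is precisely the one it implicitly relies on.
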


\begin{lemma}
\label{lemma:variant-young-ineq}
    For any $a, b \in \R^d$ and $\omega > 0$ we have
    \begin{eqnarray}\label{eq:young4}
        -\| a\|^2 & \leq &  -\frac{1}{1 +  \omega}\|a + b\|^2 + \frac{1}{\omega} \|b\|^2.
    \end{eqnarray}
    In particular for $\omega = 1$ we get
    \begin{eqnarray}\label{eq:young3}
        -\| a\|^2 & \leq &  -\frac{1}{2}\|a + b\|^2 + \|b\|^2.
    \end{eqnarray}
\end{lemma}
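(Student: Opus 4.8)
The plan is to obtain this statement as a direct algebraic consequence of Young's inequality (Lemma~\ref{lemma:young-ineq}), since \eqref{eq:young4} is nothing more than \eqref{eq:young1} rearranged. First I would apply \eqref{eq:young1} to the same pair $a, b$ and the same parameter $\omega > 0$, which gives $\|a + b\|^2 \le (1 + \omega)\|a\|^2 + (1 + \omega^{-1})\|b\|^2$. Since $1 + \omega > 0$, I can divide both sides by $1 + \omega$ without reversing the inequality, obtaining $\tfrac{1}{1+\omega}\|a+b\|^2 \le \|a\|^2 + \tfrac{1 + \omega^{-1}}{1 + \omega}\|b\|^2$.

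The one point requiring care is the coefficient of $\|b\|^2$: I would verify the identity $\tfrac{1 + \omega^{-1}}{1 + \omega} = \tfrac{(\omega + 1)/\omega}{1 + \omega} = \tfrac{1}{\omega}$, which collapses the bound to $\tfrac{1}{1+\omega}\|a+b\|^2 \le \|a\|^2 + \tfrac{1}{\omega}\|b\|^2$. Moving $\|a\|^2$ to the left and $\tfrac{1}{1+\omega}\|a+b\|^2$ to the right then yields exactly \eqref{eq:young4}. The special case \eqref{eq:young3} follows immediately by setting $\omega = 1$, so that $\tfrac{1}{1+\omega} = \tfrac{1}{2}$ and $\tfrac{1}{\omega} = 1$.

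If one prefers a self-contained argument not invoking \eqref{eq:young1}, an equivalent route is to form the difference $D = \|a\|^2 + \tfrac{1}{\omega}\|b\|^2 - \tfrac{1}{1+\omega}\|a+b\|^2$, expand $\|a+b\|^2 = \|a\|^2 + 2\langle a, b\rangle + \|b\|^2$, and collect terms to get $D = \tfrac{1}{1+\omega}\bigl(\omega\|a\|^2 - 2\langle a, b\rangle + \tfrac{1}{\omega}\|b\|^2\bigr)$; the bracketed quantity is the perfect square $\|\sqrt{\omega}\,a - \tfrac{1}{\sqrt{\omega}}\,b\|^2 \ge 0$, so $D \ge 0$, which is precisely \eqref{eq:young4}. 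There is no substantive obstacle here: the statement is a one-line rearrangement, and the only things to watch are the direction of the inequality after dividing by the positive quantity $1 + \omega$ and the elementary coefficient identity verified above.
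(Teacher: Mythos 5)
Your proof is correct: the coefficient identity $\tfrac{1+\omega^{-1}}{1+\omega}=\tfrac{1}{\omega}$ checks out, and both the derivation from \eqref{eq:young1} and the self-contained perfect-square argument are valid. The paper states this lemma without proof as a standard rearrangement of Young's inequality, which is exactly the route you take, so there is nothing to add.
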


\begin{lemma}[Cauchy-Schwarz Inequality]
    For any $a, b \in \R^d$ we have
    \begin{equation}\label{eq:CS1}
        \la a, b\ra \leq \|a\| \|b\|.
    \end{equation}
\end{lemma}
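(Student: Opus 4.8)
The plan is to prove this via the nonnegativity of a suitable squared norm, which is the most elementary route and uses nothing beyond the bilinearity of the inner product already implicit in the definition of $\|\cdot\|$. First I would dispose of the degenerate case $b = 0$: then $\la a, b\ra = 0$ and $\|b\| = 0$, so both sides equal zero and the inequality holds trivially. This lets me divide by $\|b\|^2$ freely in the main case.

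For $b \neq 0$, I would introduce the real-valued function $p(t) \eqdef \|a - t b\|^2$ for $t \in \R$ and expand it by bilinearity as $p(t) = \|a\|^2 - 2t \la a, b\ra + t^2 \|b\|^2$. Since $p(t) = \|a - tb\|^2 \ge 0$ for every real $t$, this is a nonnegative quadratic in $t$ with positive leading coefficient $\|b\|^2$. I would then extract the bound by substituting the minimizer $t_* = \la a, b\ra / \|b\|^2$, which yields $0 \le p(t_*) = \|a\|^2 - \la a, b\ra^2 / \|b\|^2$; rearranging gives $\la a, b\ra^2 \le \|a\|^2 \|b\|^2$. (An equivalent route is to observe that a globally nonnegative quadratic must have nonpositive discriminant, $4\la a,b\ra^2 - 4\|a\|^2\|b\|^2 \le 0$, giving the same conclusion.)

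Taking square roots produces $|\la a, b\ra| \le \|a\| \|b\|$, and since $\la a, b\ra \le |\la a, b\ra|$, the stated inequality $\la a, b\ra \le \|a\| \|b\|$ follows at once. There is no substantive obstacle here, as the result is foundational and the argument is only a few lines; the one point needing minor care is the degenerate case $\|b\| = 0$, where the division by $\|b\|^2$ in the main argument breaks down, so I handle it separately at the very start to keep the reasoning clean.
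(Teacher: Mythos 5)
Your proof is correct and complete: the quadratic argument $p(t) = \|a - tb\|^2 \ge 0$ with the minimizer $t_* = \la a,b\ra/\|b\|^2$ (or equivalently the discriminant condition) is the standard and airtight route, and you correctly isolate the degenerate case $b = 0$ before dividing by $\|b\|^2$. The paper states this lemma without proof, as it is a foundational fact used only as a tool, so there is no alternative argument in the paper to compare against; your write-up would serve as a perfectly valid proof if one were to be included.
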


\begin{lemma}
\label{lemma:CS2}
    For any $a, b \in \R^d$ and $\lambda > 0$ we have
    \begin{eqnarray}\label{eq:CS2}
        \la a, b\ra & \leq & \frac{\lambda}{2} \|a\|^2 + \frac{1}{2 \lambda} \|b\|^2.
    \end{eqnarray}
\end{lemma}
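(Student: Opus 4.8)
The plan is to recognize \eqref{eq:CS2} as the weighted form of Young's inequality for an inner product, and to derive it from the single fact that the squared norm of any vector is nonnegative. Since $\lambda > 0$, the quantity $\sqrt{\lambda}$ is well defined, and rescaling $a$ by $\sqrt{\lambda}$ and $b$ by $1/\sqrt{\lambda}$ leaves the inner product unchanged because the two scalar factors multiply to $1$. This suggests introducing the auxiliary vector $\sqrt{\lambda}\, a - \frac{1}{\sqrt{\lambda}}\, b$ and exploiting $\norm{\sqrt{\lambda}\, a - \frac{1}{\sqrt{\lambda}}\, b}^2 \ge 0$.

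The key step is the expansion
\begin{equation*}
    0 \le \norm{\sqrt{\lambda}\, a - \tfrac{1}{\sqrt{\lambda}}\, b}^2 = \lambda \|a\|^2 - 2 \la a, b\ra + \frac{1}{\lambda} \|b\|^2,
\end{equation*}
where the cross term simplifies to $-2\la a, b\ra$ by bilinearity together with the cancellation $\sqrt{\lambda}\cdot\frac{1}{\sqrt{\lambda}} = 1$. Rearranging this inequality yields $2\la a, b\ra \le \lambda \|a\|^2 + \frac{1}{\lambda}\|b\|^2$, and dividing by $2$ gives exactly \eqref{eq:CS2}.

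Alternatively, I could first apply the Cauchy--Schwarz inequality \eqref{eq:CS1} to bound $\la a, b\ra \le \|a\|\,\|b\|$, and then apply the scalar arithmetic--geometric mean inequality $uv \le \frac{1}{2}(u^2 + v^2)$ to the product of $u = \sqrt{\lambda}\,\|a\|$ and $v = \frac{1}{\sqrt{\lambda}}\,\|b\|$. Both routes are entirely elementary. There is no genuine obstacle here; the only point requiring care is that $\lambda > 0$ must be assumed, so that the rescaling (equivalently, the division by $\lambda$) is valid, which is precisely the hypothesis of the lemma.
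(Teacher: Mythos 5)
Your proof is correct. The paper states this lemma as a standard fact without providing a proof, so there is nothing to compare against; your first argument (expanding $\bigl\|\sqrt{\lambda}\,a - \tfrac{1}{\sqrt{\lambda}}\,b\bigr\|^2 \ge 0$ and rearranging) is the canonical derivation of this weighted Young inequality, and your observation that $\lambda>0$ is exactly what makes the rescaling legitimate is the only point of care.
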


\begin{lemma}
    For any $a, b \in \R^n$ we have
    \begin{eqnarray}\label{eq:expand_innerprod}
        \la a, b\ra & = & \frac{1}{2}\|a\|^2 + \frac{1}{2}\|b\|^2 - \frac{1}{2}\|a - b\|^2.
    \end{eqnarray}
\end{lemma}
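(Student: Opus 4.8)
The plan is to prove this as a direct algebraic consequence of the definition of the squared norm in terms of the inner product, together with the bilinearity and symmetry of $\la \cdot, \cdot \ra$. The identity is the standard polarization relation, so no analytic machinery or earlier results from the excerpt are needed; the entire argument is a one-line expansion followed by a rearrangement.

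Concretely, the first step I would take is to expand the right-most term by writing $\|a - b\|^2 = \la a - b, a - b\ra$ and distributing over the inner product. Using bilinearity this gives $\la a, a\ra - \la a, b\ra - \la b, a\ra + \la b, b\ra$, and invoking symmetry $\la a, b\ra = \la b, a\ra$ together with $\la a, a\ra = \|a\|^2$ collapses this to
\begin{equation*}
    \|a - b\|^2 = \|a\|^2 - 2\la a, b\ra + \|b\|^2.
\end{equation*}
The second and final step is to solve this equation for $\la a, b\ra$, which yields $\la a, b\ra = \tfrac{1}{2}\left(\|a\|^2 + \|b\|^2 - \|a - b\|^2\right)$, exactly the claimed identity~\eqref{eq:expand_innerprod}.

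There is essentially no obstacle here: the only point requiring any care is to ensure that symmetry of the inner product is used to merge the two cross terms $\la a, b\ra$ and $\la b, a\ra$ into a single factor of $2\la a, b\ra$, which is immediate for the real inner product on $\R^n$. I would present the two displayed lines above as the complete proof, with a closing remark that the identity holds for any real inner product space and specializes to the Euclidean case used throughout the chapter.
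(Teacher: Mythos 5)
Your proof is correct: expanding $\|a-b\|^2 = \|a\|^2 - 2\la a,b\ra + \|b\|^2$ via bilinearity and symmetry and solving for $\la a,b\ra$ is exactly the standard polarization argument. The paper states this lemma without proof among its technical preliminaries, so there is no alternative approach to compare against; your two-line derivation is precisely what is intended.
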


\begin{lemma}\label{lemma:sum_1_by_sqrt_k}
    For $K \in \mathbb{N}$ we have
    \begin{equation}\label{eq:sum_1_by_sqrt_k}
        \sum_{k = 0}^K \frac{1}{\sqrt{k+1}} \leq 2 \sqrt{K+1}.
    \end{equation}
\end{lemma}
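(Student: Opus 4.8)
The plan is to reindex the sum and bound it by a telescoping sum, which is the cleanest route and avoids any integral-convergence subtleties near zero. Setting $j = k+1$, the left-hand side becomes $\sum_{j=1}^{K+1} \frac{1}{\sqrt{j}}$, and the goal is to show this does not exceed $2\sqrt{K+1}$. The key observation I would establish first is the pointwise inequality
\begin{equation*}
    \frac{1}{\sqrt{j}} \le 2\left(\sqrt{j} - \sqrt{j-1}\right) \qquad \text{for all } j \ge 1,
\end{equation*}
which follows by rationalizing: $\sqrt{j} - \sqrt{j-1} = \frac{1}{\sqrt{j} + \sqrt{j-1}} \ge \frac{1}{2\sqrt{j}}$, since $\sqrt{j-1} \le \sqrt{j}$.

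Granting this bound, I would sum over $j = 1, \dots, K+1$ and telescope:
\begin{equation*}
    \sum_{j=1}^{K+1} \frac{1}{\sqrt{j}} \le 2 \sum_{j=1}^{K+1} \left(\sqrt{j} - \sqrt{j-1}\right) = 2\left(\sqrt{K+1} - \sqrt{0}\right) = 2\sqrt{K+1},
\end{equation*}
which is exactly the claimed inequality \eqref{eq:sum_1_by_sqrt_k}. An equivalent alternative would be an integral comparison: since $x \mapsto x^{-1/2}$ is decreasing on $(0,\infty)$, one has $\frac{1}{\sqrt{j}} \le \int_{j-1}^{j} x^{-1/2}\, dx$, and summing gives $\sum_{j=1}^{K+1} \frac{1}{\sqrt{j}} \le \int_0^{K+1} x^{-1/2}\, dx = 2\sqrt{K+1}$; here one only needs to note that the improper integral at $0$ converges.

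There is no genuine obstacle in this lemma, as both arguments are elementary and short. The only point requiring a moment's care is the rationalization step that produces the telescoping bound (or, in the integral version, the justification of the decreasing-monotonicity comparison and the convergence of $\int_0^{\cdot} x^{-1/2}\,dx$). I would favor the telescoping argument, since it is purely algebraic and sidesteps any discussion of improper integrals, making it the most robust to present within the technical-lemmas appendix.
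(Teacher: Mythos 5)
Your proposal is correct. Your primary argument — rationalizing to get $\frac{1}{\sqrt{j}} \le 2(\sqrt{j}-\sqrt{j-1})$ and then telescoping — differs only in how the key pointwise bound is justified: the paper instead writes $\frac{1}{\sqrt{k+1}} = \frac{1}{\sqrt{k+1}}\int_k^{k+1}dz \le \int_k^{k+1}\frac{dz}{\sqrt{z}}$ using monotonicity of $z\mapsto z^{-1/2}$, and then evaluates the integral to reach exactly the same telescoping sum $2\sum_k(\sqrt{k+1}-\sqrt{k})$. So the two arguments coincide after the first line; your rationalization route is the purely algebraic version of the paper's integral comparison, and the alternative you sketch at the end is essentially the paper's proof verbatim. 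Your preference for the algebraic version is reasonable — it avoids any mention of improper integrals (which, incidentally, the paper's formulation also avoids, since each integral $\int_k^{k+1} z^{-1/2}\,dz$ is proper for $k\ge 0$ except the first, which is still finite) — but there is no substantive difference in content or difficulty between the two.
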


\begin{proof}
    For any $k \leq z \leq k+1$ we have $\frac{1}{\sqrt{k+1}} \leq \frac{1}{\sqrt{z}}$. Then 
    \begin{eqnarray*}
        \sum_{k = 0}^K \frac{1}{\sqrt{k+1}} & = & \sum_{k = 0}^K \frac{1}{\sqrt{k+1}} \int_{k}^{k+1} dz \notag\\
        & \leq & \sum_{k = 0}^K \int_{k}^{k+1} \frac{1}{\sqrt{x}} dz \notag\\
        & = & 2 \sum_{k = 0}^K  \left(\sqrt{k+1} - \sqrt{k} \right) \notag\\
        & = & 2 \sqrt{K+1}. 
\end{eqnarray*}
\end{proof}

\begin{lemma}[\cite{orvieto2022dynamics}]\label{lemma:orvieto}
    Let $z_{k+1} = A_k z_k + \varepsilon_k$ with $A_k = \left( 1- \frac{a}{\sqrt{k+1}}\right)$ and $\varepsilon_k = \frac{b}{\sqrt{k+1}}$. If $z_0 > 0$, $0 < a \leq 1$, $b > 0$ then 
    \begin{eqnarray*}
        z_{k+1} \leq \max \left\{ z_0, \frac{b}{a} \right\} \qquad \forall \quad k \geq 0.
    \end{eqnarray*}
\end{lemma}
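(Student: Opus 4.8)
The plan is to prove by induction on $k$ that $z_k \le M$, where $M \eqdef \max\{z_0, b/a\}$; the asserted bound $z_{k+1} \le M$ is then just a relabeling. The base case $z_0 \le M$ holds by the very definition of $M$, so the whole argument hinges on a clean inductive step.

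The key preliminary observation, which I would isolate first, is that the multiplier $A_k$ is \emph{nonnegative}. Since $0 < a \le 1$ and $\sqrt{k+1} \ge 1$ for every $k \ge 0$, we have $a/\sqrt{k+1} \le 1$, and therefore $A_k = 1 - a/\sqrt{k+1} \in [0,1)$. This nonnegativity is precisely what allows the induction hypothesis $z_k \le M$ to be propagated forward: assuming $z_k \le M$ and using $A_k \ge 0$ together with $\varepsilon_k = b/\sqrt{k+1} > 0$, I would estimate
\begin{equation*}
z_{k+1} = A_k z_k + \varepsilon_k \le A_k M + \varepsilon_k = \left(1 - \frac{a}{\sqrt{k+1}}\right) M + \frac{b}{\sqrt{k+1}}.
\end{equation*}
It then remains only to check that the right-hand side does not exceed $M$. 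Rearranging, the inequality $A_k M + \varepsilon_k \le M$ is equivalent to $\tfrac{1}{\sqrt{k+1}}\,(b - aM) \le 0$, i.e.\ to $aM \ge b$. Because $M \ge b/a$ by construction, this holds for all $k$, closing the induction.

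The only step that genuinely requires the hypotheses is the sign of $A_k$: without $a \le 1$ the factor $A_k$ could be negative for small $k$, which would break the monotone estimate $A_k z_k \le A_k M$ and invalidate the argument. Everything beyond that is a one-line algebraic verification, so I do not expect any real obstacle; the proof is essentially a fixed-point/contraction-toward-$b/a$ style induction, and the main care is simply to record the range of $A_k$ before invoking monotonicity.
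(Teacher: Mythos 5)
Your proof is correct. The paper does not actually supply its own proof of this lemma --- it is imported verbatim from \cite{orvieto2022dynamics} --- and your argument is the standard one for such a recursion: induct on the invariant $z_k \le M$ with $M = \max\{z_0, b/a\}$, using $A_k \ge 0$ (which needs exactly $a \le 1$ and $\sqrt{k+1}\ge 1$) to propagate the bound, and the algebraic identity $A_k M + \varepsilon_k = M + \frac{b - aM}{\sqrt{k+1}} \le M$ to close the step. There is no gap; your isolation of the nonnegativity of $A_k$ as the one place the hypothesis $a \le 1$ is used is exactly the right emphasis.
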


\newpage

\section{Missing Proofs}\label{sec:missing_proof}

\subsection{Deterministic Setting}
In this section, we present the missing proofs from Section \ref{sec:conv_ana1}.

\subsubsection{Proof of Lemma \ref{lemma:alpha-lower-bound}}
\begin{lemma}
\vspace{1mm}
If \eqref{eqn:gamma-critical-condition} is satisfied with $A \in (0,1]$, then we have 
\begin{align*}
    \textstyle
    \inprod{F(\hat{x}_k)}{F(x_k)} \ge \frac{1}{2} \left( \sqnorm{F(\hat{x}_k)} + (1 - A^2) \sqnorm{F(x_k)} \right)
\end{align*}
which implies $\alpha_k \ge \frac{\gamma_k}{1+A}$.
\end{lemma}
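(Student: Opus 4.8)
The plan is to prove the inner-product bound first and then extract the step-size lower bound from it by a reverse-triangle-inequality argument. For the first inequality, I would start from the polarization identity
\[
\norm{F(\hat{x}_k) - F(x_k)}^2 = \sqnorm{F(\hat{x}_k)} - 2\inprod{F(\hat{x}_k)}{F(x_k)} + \sqnorm{F(x_k)}.
\]
Squaring the critical condition~\eqref{eqn:gamma-critical-condition} gives $\norm{F(\hat{x}_k) - F(x_k)}^2 \le A^2 \sqnorm{F(x_k)}$, and substituting this into the identity and rearranging immediately yields
\[
2\inprod{F(\hat{x}_k)}{F(x_k)} \ge \sqnorm{F(\hat{x}_k)} + (1-A^2)\sqnorm{F(x_k)},
\]
which is the claimed inner-product bound after dividing by $2$. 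This part is a direct one-line computation with no genuine obstacle.

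For the implication $\omega_k \ge \nicefrac{\gamma_k}{(1+A)}$, I would first recall that $x_k - \hat{x}_k = \gamma_k F(x_k)$ by the extrapolation step, so that the Polyak update step size can be written as $\omega_k = \frac{\gamma_k \inprod{F(\hat{x}_k)}{F(x_k)}}{\sqnorm{F(\hat{x}_k)}}$. The key auxiliary estimate I would then establish is a lower bound on $\norm{F(x_k)}$ in terms of $\norm{F(\hat{x}_k)}$: by the triangle inequality together with~\eqref{eqn:gamma-critical-condition},
\[
\norm{F(\hat{x}_k)} \le \norm{F(x_k)} + \norm{F(\hat{x}_k) - F(x_k)} \le (1+A)\norm{F(x_k)},
\]
so that $\sqnorm{F(x_k)} \ge \nicefrac{\sqnorm{F(\hat{x}_k)}}{(1+A)^2}$.

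Plugging this into the inner-product bound and simplifying the constant via the factorization $\frac{1-A^2}{(1+A)^2} = \frac{1-A}{1+A}$ gives $1 + \frac{1-A}{1+A} = \frac{2}{1+A}$, hence
\[
\inprod{F(\hat{x}_k)}{F(x_k)} \ge \frac{\sqnorm{F(\hat{x}_k)}}{2}\cdot \frac{2}{1+A} = \frac{\sqnorm{F(\hat{x}_k)}}{1+A}.
\]
Dividing by $\sqnorm{F(\hat{x}_k)}$ and multiplying by $\gamma_k$ then produces $\omega_k \ge \nicefrac{\gamma_k}{(1+A)}$, completing the argument. The only subtlety worth flagging is the need to keep $\norm{F(\hat{x}_k)} > 0$ so that $\omega_k$ is well defined; this is implicit since otherwise the update is trivial, and I would note it in passing. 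Overall the lemma is elementary, and the ``hard part'' is merely recognizing that the reverse triangle inequality is exactly what converts the inner-product bound into the desired multiplicative control on $\omega_k$.
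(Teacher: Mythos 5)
Your proposal is correct and follows essentially the same route as the paper's proof: the polarization identity combined with the squared critical condition gives the inner-product bound, and the triangle-inequality estimate $\norm{F(\hat{x}_k)} \le (1+A)\norm{F(x_k)}$ converts it into $\inprod{F(\hat{x}_k)}{F(x_k)} \ge \sqnorm{F(\hat{x}_k)}/(1+A)$ and hence the step-size bound. (Minor quibble: the step you label a ``reverse triangle inequality'' is just the ordinary triangle inequality applied to $F(\hat{x}_k) = F(x_k) + (F(\hat{x}_k) - F(x_k))$.)
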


\begin{proof}
From the critical condition \eqref{eqn:gamma-critical-condition}, we have
\begin{align*}
    \inprod{F(\hat{x}_k)}{F(x_k)} & \overset{\eqref{eq:expand_innerprod}}{=} \frac{1}{2} \left( \sqnorm{F(\hat{x}_k)} - \sqnorm{F(\hat{x}_k) - F(x_k)} + \sqnorm{F(x_k)} \right) \\
    & \overset{\eqref{eqn:gamma-critical-condition}}{\ge} \frac{1}{2} \left( \sqnorm{F(\hat{x}_k)} - A^2 \sqnorm{F(x_k)} +\sqnorm{F(x_k)} \right) \\
    & = \frac{1}{2} \left( \sqnorm{F(\hat{x}_k)} + (1 - A^2) \sqnorm{F(x_k)} \right) .
\end{align*}
This completes the proof of the first part. Then, using
\begin{eqnarray*}
    \norm{F(\hx_k)} & \le & \norm{F(x_k)} + \norm{F(\hx_k) - F(x_k)} \\
    & \overset{\eqref{eqn:gamma-critical-condition}}{\leq} & \norm{F(x_k)} + A\norm{F(x_k)} \\
    & = & (1+A) \norm{F(x_k)}
\end{eqnarray*}
we can further lower bound the inner product $\inprod{F(\hat{x}_k)}{F(x_k)}$ as
\begin{eqnarray*}
    \inprod{F(\hat{x}_k)}{F(x_k)} & \ge & \frac{1}{2} \left( \sqnorm{F(\hx_k)} + \frac{1-A^2}{(1+A)^2} \sqnorm{F(\hx_k)} \right) \\
    & = & \frac{1}{1+A} \sqnorm{F(\hx_k)} .
\end{eqnarray*}
This shows that
\begin{eqnarray*}
    \alpha_k & = & \frac{\inprod{F(\hx_k)}{x_k - \hx_k}}{\sqnorm{F(\hx_k)}} \\
    & = & \frac{\gamma_k \inprod{F(\hat{x}_k)}{F(x_k)}}{\sqnorm{F(\hx_k)}} \\
    & \ge & \frac{\gamma_k}{1+A} .
\end{eqnarray*}
\end{proof}

\subsubsection{Proof of Theorem \ref{theorem:deterministic-master-theorem}}
\begin{theorem}
\vspace{1mm}
Let $F$ is $L$-Lipschitz and (strongly) monotone. Suppose that we choose $\gamma_k$ so that \eqref{eqn:gamma-critical-condition} holds for all $k\ge 0$. Then, \hyperref[alg:PolyakEG]{\algname{PolyakEG}} satisfies, when $F$ is
\begin{align*}
    \hspace{-3mm} &\text{$\bullet$ monotone and $ A \in (0, 1]$,} \textstyle \min_{k=0,\dots,K} \gamma_k^2 \sqnorm{F(\hat{x}_k)} \le \frac{(1+A)^2 \sqnorm{x_0 - x_*}}{K+1}. \\
    \hspace{-1mm} &\text{If $A \in (0, 1)$, we additionally have,} \textstyle \min_{k=0,\dots,K} \gamma_k^2 \sqnorm{F(x_k)} \le \frac{(1+A) \sqnorm{x_0 - x_*}}{(1-A) (K+1)}.  \\
    \hspace{-3mm} &\text{$\bullet$ strongly monotone and $A \in (0, 1)$,} \textstyle \sqnorm{x_{k+1} - x_*} \le \prod_{j=0}^{k} \left( 1 - \frac{2(1-A) \gamma_j \mu}{(1+A)^2} \right) \sqnorm{x_0 - x_*}
\end{align*}
\end{theorem}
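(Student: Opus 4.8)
# Proof Proposal for Theorem \ref{theorem:deterministic-master-theorem}

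The plan is to work from a single one-step energy inequality that couples the standard \algname{EG} descent estimate with the Polyak choice of $\omega_k$ and the critical condition \eqref{eqn:gamma-critical-condition}. First I would expand $\sqnorm{x_{k+1}-x_*}$ exactly, just as in the sublinear \algname{EG} proof in the excerpt. Using $x_{k+1}=x_k-\omega_k F(\hat x_k)$,
\begin{equation*}
    \sqnorm{x_{k+1}-x_*} = \sqnorm{x_k-x_*} - 2\omega_k\inprod{F(\hat x_k)}{x_k-\hat x_k} - 2\omega_k\inprod{F(\hat x_k)}{\hat x_k - x_*} + \omega_k^2\sqnorm{F(\hat x_k)}.
\end{equation*}
The key algebraic simplification comes from the Polyak definition $\omega_k = \inprod{F(\hat x_k)}{x_k-\hat x_k}/\sqnorm{F(\hat x_k)}$: the two terms $-2\omega_k\inprod{F(\hat x_k)}{x_k-\hat x_k}+\omega_k^2\sqnorm{F(\hat x_k)}$ collapse to $-\omega_k\inprod{F(\hat x_k)}{x_k-\hat x_k} = -\omega_k^2\sqnorm{F(\hat x_k)}$. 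This is exactly the ``projection'' mechanism, and it is why the Polyak step extracts maximal per-step progress.

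For the cross term $-2\omega_k\inprod{F(\hat x_k)}{\hat x_k - x_*}$, I would use monotonicity of $F$ together with $F(x_*)=0$, so that $\inprod{F(\hat x_k)}{\hat x_k - x_*} = \inprod{F(\hat x_k)-F(x_*)}{\hat x_k-x_*} \ge \mu\sqnorm{\hat x_k - x_*}$ (with $\mu=0$ in the monotone case, $\mu>0$ in the strongly monotone case). Combining, I arrive at the master inequality
\begin{equation*}
    \sqnorm{x_{k+1}-x_*} \le \sqnorm{x_k-x_*} - \omega_k^2\sqnorm{F(\hat x_k)} - 2\mu\omega_k\sqnorm{\hat x_k - x_*}.
\end{equation*}
Now I invoke Lemma \ref{lemma:alpha-lower-bound}, which under \eqref{eqn:gamma-critical-condition} gives $\omega_k\ge\gamma_k/(1+A)$ and also (via $\norm{F(\hat x_k)}\le(1+A)\norm{F(x_k)}$) lets me trade $\sqnorm{F(\hat x_k)}$ for $\gamma_k^2\sqnorm{F(x_k)}$ when needed.

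For the monotone case ($\mu=0$), dropping the last term and telescoping $\sum_{k=0}^K \omega_k^2\sqnorm{F(\hat x_k)} \le \sqnorm{x_0-x_*}$ yields $\sum_k \frac{\gamma_k^2}{(1+A)^2}\sqnorm{F(\hat x_k)}\le\sqnorm{x_0-x_*}$, and bounding the sum below by $(K+1)$ times the minimum gives \eqref{eq:deterministic-master-theorem_eq1}. For \eqref{eq:deterministic-master-theorem_eq2} I would instead keep the estimate in terms of $\sqnorm{F(x_k)}$: from Lemma \ref{lemma:alpha-lower-bound}, $\omega_k^2\sqnorm{F(\hat x_k)} = \omega_k\inprod{F(\hat x_k)}{x_k-\hat x_k} = \omega_k\gamma_k\inprod{F(\hat x_k)}{F(x_k)} \ge \frac{\gamma_k^2}{1+A}\cdot\frac{1}{2}\bigl(\sqnorm{F(\hat x_k)}+(1-A^2)\sqnorm{F(x_k)}\bigr)$, which when $A<1$ produces a strictly positive multiple of $\gamma_k^2\sqnorm{F(x_k)}$ (the factor $(1-A)/(1+A)$ after simplification), and telescoping gives the claimed bound. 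For the strongly monotone case, I would keep the $-2\mu\omega_k\sqnorm{\hat x_k-x_*}$ term, lower-bound $\sqnorm{\hat x_k-x_*}$ in terms of $\sqnorm{x_k-x_*}$ using the critical condition to control $\sqnorm{x_k-\hat x_k}=\gamma_k^2\sqnorm{F(x_k)}$ against $\sqnorm{F(\hat x_k)}$, and convert the recursion into the contraction factor $\bigl(1-\tfrac{2(1-A)\gamma_k\mu}{(1+A)^2}\bigr)$, then multiply the per-step factors to obtain the product form \eqref{eq:deterministic-master-theorem_eq3}.

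The main obstacle I anticipate is the strongly monotone case: the descent term is naturally expressed in $\sqnorm{\hat x_k-x_*}$, but the target contraction is in $\sqnorm{x_k-x_*}$, so I must carefully relate the two. The cleanest route is to also retain part of the $-\omega_k^2\sqnorm{F(\hat x_k)}$ term and spend it, via Young's inequality \eqref{eq:young3} or \eqref{eq:young4} and the bound $\omega_k\ge\gamma_k/(1+A)$, to absorb the gap between $\sqnorm{\hat x_k-x_*}$ and $\sqnorm{x_k-x_*}$ without losing the factor $(1-A)/(1+A)^2$. Getting these constants to land exactly on $\tfrac{2(1-A)\gamma_j\mu}{(1+A)^2}$ is the delicate bookkeeping, but it is routine once the master inequality and Lemma \ref{lemma:alpha-lower-bound} are in hand.
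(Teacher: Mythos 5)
Your proposal is correct and follows essentially the same route as the paper: the exact expansion of $\sqnorm{x_{k+1}-x_*}$, the collapse of $-2\omega_k\inprod{F(\hx_k)}{x_k-\hx_k}+\omega_k^2\sqnorm{F(\hx_k)}$ via the Polyak definition, monotonicity for the cross term, Lemma \ref{lemma:alpha-lower-bound} for the telescoping bounds, and Young's inequality (with $\omega=\tfrac{2A}{1-A}$) to pass from $\sqnorm{\hx_k-x_*}$ to $\sqnorm{x_k-x_*}$ in the strongly monotone case. The only detail you leave implicit is that absorbing the leftover $\tfrac{2\mu\omega_k}{\omega}\gamma_k^2\sqnorm{F(x_k)}$ term requires applying strong monotonicity between $x_k$ and $\hx_k$ (giving $\gamma_k\mu\sqnorm{F(x_k)}\le\inprod{F(x_k)}{F(x_k)-F(\hx_k)}\le A\sqnorm{F(x_k)}$), which is exactly how the paper makes the constants land on $\tfrac{2(1-A)\gamma_j\mu}{(1+A)^2}$.
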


\begin{proof}
\begin{align*}
    \sqnorm{x_{k+1} - x_*} & = \sqnorm{x_k - \alpha_k F(\hat{x}_k) - x_*} \nonumber \\
    & = \sqnorm{x_k - x_*} - 2\alpha_k \inprod{F(\hat{x}_k)}{x_k - x_*} + \alpha_k^2 \sqnorm{F(\hat{x}_k)} \nonumber \\
    & = \sqnorm{x_k - x_*} - 2\alpha_k \inprod{F(\hat{x}_k)}{x_k - \hat{x}_k} - 2\alpha_k \inprod{F(\hat{x}_k)}{\hat{x}_k - x_*} \nonumber \\
    & + \alpha_k^2 \sqnorm{F(\hat{x}_k)} \nonumber \\
    & \le \sqnorm{x_k - x_*} - 2\alpha_k \inprod{F(\hat{x}_k)}{x_k - \hat{x}_k} - 2\alpha_k \mu \sqnorm{\hat{x}_k - x_*} + \alpha_k^2 \sqnorm{F(\hat{x}_k)} .
\end{align*}
We apply Lemma \ref{lemma:variant-young-ineq} with $a=\hx_k - x_*$ and $b=x_k - \hx_k$ to upper bound the last expression (we determine $\omega$ later), which gives
\begin{align}
    \sqnorm{x_{k+1} - x_*} & \le \sqnorm{x_k - x_*} - 2\alpha_k \inprod{F(\hat{x}_k)}{x_k - \hat{x}_k} - \frac{2\alpha_k \mu}{1+\omega} \sqnorm{x_k - x_*} \nonumber \\
    & + \frac{2\alpha_k \mu}{\omega} \sqnorm{x_k - \hat{x}_k} + \alpha_k^2 \sqnorm{F(\hat{x}_k)} \nonumber \\
    & = \left( 1 - \frac{2\alpha_k \mu}{1+\omega} \right) \sqnorm{x_k - x_*} + \frac{2\alpha_k \mu}{\omega} \sqnorm{x_k - \hat{x}_k} - 2\alpha_k \inprod{F(\hat{x}_k)}{x_k - \hat{x}_k}  \nonumber \\
    & + + \alpha_k^2 \sqnorm{F(\hat{x}_k)} \nonumber \\
    & = \left( 1 - \frac{2\alpha_k \mu}{1+\omega} \right) \sqnorm{x_k - x_*} + \frac{2\alpha_k \gamma_k^2 \mu}{\omega} \sqnorm{F(x_k)} - \alpha_k \inprod{F(\hat{x}_k)}{x_k - \hat{x}_k} \nonumber \\
    & = \left( 1 - \frac{2\alpha_k \mu}{1+\omega} \right) \sqnorm{x_k - x_*} + \frac{2\alpha_k \gamma_k^2 \mu}{\omega} \sqnorm{F(x_k)} - \alpha_k \gamma_k \inprod{F(\hat{x}_k)}{F(x_k)} \nonumber \\
    & = \left( 1 - \frac{2\alpha_k \mu}{1+\omega} \right) \sqnorm{x_k - x_*} - \alpha_k \gamma_k \left( \inprod{F(\hat{x}_k)}{F(x_k)} - \frac{2\gamma_k \mu}{\omega} \sqnorm{F(x_k)} \right) .
    \label{eqn:LS-proof-bound}
\end{align}
Here, for the third line, we use the definition of $\alpha_k$ to replace $\alpha_k^2 \sqnorm{F(\hx_k)} = \alpha_k \inprod{F(\hx_k)}{x_k - \hx_k}$.

\vspace{.2cm}
\noindent
\textbf{$\bullet$ $F$ is monotone.} 
From \eqref{eqn:LS-proof-bound}, using Lemma \ref{lemma:alpha-lower-bound} we have
\begin{align*}
    \sqnorm{x_{k+1} - x_*} & \le \sqnorm{x_k - x_*} - \alpha_k \gamma_k \inprod{F(\hat{x}_k)}{F(x_k)} \\
    & \le \sqnorm{x_k - x_*} - \frac{\alpha_k \gamma_k}{2} \left( \sqnorm{F(\hat{x}_k)} + (1 - A^2) \sqnorm{F(x_k)} \right) \\
    & \le \sqnorm{x_k - x_*} - \frac{\gamma_k^2}{2(1+A)} \left( \sqnorm{F(\hat{x}_k)} + (1 - A^2) \sqnorm{F(x_k)} \right) .
\end{align*}
Summing the above inequality for $k=0,1,\dots,K$ we obtain
\begin{align}
\label{eqn:monotone-bound-Fxk-Fhxk-combined}
    \sum_{k=0}^{K} \frac{\gamma_k^2}{2(1+A)}  \left( \sqnorm{F(\hat{x}_k)} + (1 - A^2) \sqnorm{F(x_k)} \right) & \le \sqnorm{x_0 - x_*} - \sqnorm{x_{K+1} - x_*} \nonumber \\
    & \le \sqnorm{x_0 - x_*} .
\end{align}
Now note that we have
\begin{align}
\label{eqn:Fhxk-upper-bound-with-Fxk}
    \norm{F(\hx_k)} & \le \norm{F(x_k)} + \norm{F(\hx_k) - F(x_k)} \nonumber \\
    & \le \norm{F(x_k)} + A\norm{F(x_k)} = (1+A) \norm{F(x_k)}
\end{align}
and
\begin{align}
\label{eqn:Fhxk-lower-bound-with-Fxk}
    \norm{F(\hx_k)} & \ge \norm{F(x_k)} - \norm{F(\hx_k) - F(x_k)} \nonumber \\
    & \ge \norm{F(x_k)} - A\norm{F(x_k)} = (1-A) \norm{F(x_k)} .
\end{align}
Therefore, further lower bounding the left hand side of \eqref{eqn:monotone-bound-Fxk-Fhxk-combined} in terms of $\norm{F(\hx_k)}$ using \eqref{eqn:Fhxk-upper-bound-with-Fxk} we have
\begin{align*}
    \sqnorm{x_0 - x_*} & \ge \sum_{k=0}^K \frac{\gamma_k^2}{2(1+A)} \left( \sqnorm{F(\hx_k)} + \frac{1-A^2}{(1+A)^2} \sqnorm{F(\hx_k)} \right) \\
    & = \sum_{k=0}^K \frac{\gamma_k^2}{(1+A)^2} \sqnorm{F(\hx_k)} \\
    & = \frac{(K+1)}{(1+A)^2} \min_{k=0,\dots,K} \gamma_k^2 \sqnorm{F(\hx_k)} .
\end{align*}
This proves the first part. On the other hand, lower bounding \eqref{eqn:monotone-bound-Fxk-Fhxk-combined} in terms of $\norm{F(x_k)}$ using \eqref{eqn:Fhxk-lower-bound-with-Fxk} we have
\begin{align*}
    \sqnorm{x_0 - x_*} & \ge \sum_{k=0}^K \frac{\gamma_k^2}{2(1+A)} \left( (1-A)^2 \sqnorm{F(x_k)} + (1-A^2) \sqnorm{F(x_k)} \right) \\
    & = \sum_{k=0}^K \frac{1-A}{1+A} \gamma_k^2 \sqnorm{F(x_k)} \\
    & = \frac{(K+1) (1-A)}{1+A} \min_{k=0,\dots,K} \gamma_k^2 \sqnorm{F(x_k)} .
\end{align*}
This proves the second part of the monotone setting. \\
\newline

\textbf{$\bullet$ $F$ is strongly monotone.}
Strong monotonicity of $F$ implies
\begin{align*}
    \inprod{x_k - \hat{x}_k}{F(x_k) - F(\hat{x}_k)} \ge \mu \sqnorm{x_k - \hat{x}_k} .
\end{align*}
Substituting $x_k - \hat{x}_k = \gamma_k F(x_k)$ in the above we get
\begin{align*}
    \inprod{F(x_k)}{F(x_k) - F(\hat{x}_k)} \ge \gamma_k \mu \sqnorm{F(x_k)} .
\end{align*}
We apply the above inequality to \eqref{eqn:LS-proof-bound}:
\begin{align*}
    \sqnorm{x_{k+1} - x_*} & \le \left( 1 - \frac{2\alpha_k \mu}{1+\omega} \right) \sqnorm{x_k - x_*} - \alpha_k \gamma_k \left( \inprod{F(\hat{x}_k)}{F(x_k)} - \frac{2}{\omega} \gamma_k \mu \sqnorm{F(x_k)} \right) \\
    & \le \left( 1 - \frac{2\alpha_k \mu}{1+\omega} \right) \sqnorm{x_k - x_*} \\
    & - \alpha_k \gamma_k \left( \inprod{F(\hat{x}_k)}{F(x_k)} - \frac{2}{\omega} \inprod{F(x_k)}{F(x_k) - F(\hat{x}_k)} \right) \\
    & = \left( 1 - \frac{2\alpha_k \mu}{1+\omega} \right) \sqnorm{x_k - x_*} \\
    & - \alpha_k \gamma_k \left( \left(1 + \frac{2}{\omega}\right) \inprod{F(\hat{x}_k)}{F(x_k)} - \frac{2}{\omega} \sqnorm{F(x_k)} \right) .
\end{align*}
Now we use Lemma \ref{lemma:alpha-lower-bound} to obtain
\begin{align*}
    \sqnorm{x_{k+1} - x_*} & \le \left( 1 - \frac{2\alpha_k \mu}{1+\omega} \right) \sqnorm{x_k - x_*} \\
    & - \alpha_k \gamma_k \left( \frac{1}{2} \left(1 + \frac{2}{\omega}\right) \left( \sqnorm{F(\hat{x}_k)} + (1 - A^2) \sqnorm{F(x_k)} \right) - \frac{2}{\omega} \sqnorm{F(x_k)} \right) \\
    & \le \left( 1 - \frac{2\alpha_k \mu}{1+\omega} \right) \sqnorm{x_k - x_*} \\
    & - \frac{\alpha_k \gamma_k}{2} \left(1 + \frac{2}{\omega}\right) \left( (1-A)^2 \sqnorm{F(x_k)} + (1 - A^2) \sqnorm{F(x_k)} \right) \\
    & + \frac{2 \alpha_k \gamma_k}{\omega} \sqnorm{F(x_k)} \\
    & \le \left( 1 - \frac{2\alpha_k \mu}{1+\omega} \right) \sqnorm{x_k - x_*} - \alpha_k \gamma_k \left( 1 - A - \frac{2A}{\omega} \right) \sqnorm{F(x_k)} .
\end{align*}
where the second inequality uses \eqref{eqn:Fhxk-lower-bound-with-Fxk}.
Now taking $\omega = \frac{2A}{1-A}$ in the last inequality gives
\begin{align*}
    \sqnorm{x_{k+1} - x_*} & \le \left( 1 - \frac{2(1-A)}{1+A} \alpha_k \mu \right) \sqnorm{x_k - x_*} \\
    & \le \left( 1 - \frac{2(1-A)}{(1+A)^2} \gamma_k \mu \right) \sqnorm{x_k - x_*} .
\end{align*}
where for the second line we use \eqref{lemma:alpha-lower-bound}.
Applying this recursively, we obtain the desired result.
\end{proof}

\subsubsection{Proof of Theorem \ref{lemma:LS-step-size-lower-bound}}

\begin{theorem}
\vspace{1mm}
    Suppose $F$ is $L$-Lipschitz. Then the total number of while loop calls in \hyperref[alg:PolyakEG_linesearch]{\hyperref[alg:PolyakEG_linesearch]{\algname{PolyakEG-LS}}} is at most $\left\lfloor \frac{\log (L\gamma_{-1} / A)}{\log 1/\beta} \right\rfloor + 1$, and $$\gamma_k \ge \min \left\{ \frac{\beta A}{L} , \gamma_{-1} \right\} \text{ for all } k\ge 0.$$
\end{theorem}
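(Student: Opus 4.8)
The plan is to extract from the $L$-Lipschitz assumption a single scalar inequality that governs the entire line search, namely
\begin{equation*}
    \| F(x_k) - F(\hx_k) \| \;\le\; L \| x_k - \hx_k\| \;=\; L \gamma \| F(x_k)\|,
\end{equation*}
valid for $\hx_k = x_k - \gamma F(x_k)$ and \emph{any} trial step $\gamma$. The immediate consequence is that whenever $\gamma \le \nicefrac{A}{L}$ the exit condition of the \textbf{while} loop, which is exactly the critical condition~\eqref{eqn:gamma-critical-condition}, is automatically satisfied, so the loop cannot shrink the step any further. This one observation drives both claims.

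For the count of \textbf{while}-loop calls, I would first note that the step size evolves as a single geometric product across all outer iterations: if $N$ denotes the total number of times the loop body executes over $k=0,\dots,K$, then since each execution multiplies the current step by $\beta$ while the assignment $\gamma_k = \gamma_{k-1}$ merely carries the value forward, the step equals $\gamma_{-1}\beta^{n}$ after $n$ total reductions. A reduction at this value can occur only if the loop condition holds, which by the displayed inequality forces $L\gamma_{-1}\beta^{n} > A$, equivalently $n < \nicefrac{\log(L\gamma_{-1}/A)}{\log(1/\beta)}$; writing $M$ for this threshold and noting $n$ is a nonnegative integer, reductions are possible only for $n \le \lceil M\rceil - 1$, which yields $N \le \lceil M\rceil \le \lfloor M\rfloor + 1$, precisely the claimed bound.

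For the lower bound on $\gamma_k$, I would induct on $k$ to show $\gamma_k \ge \min\{\nicefrac{\beta A}{L},\, \gamma_{-1}\}$. If the loop is never triggered at iteration $k$, then $\gamma_k = \gamma_{k-1}$ and the inductive hypothesis applies, with the base value $\gamma_{-1}$ trivial. If it is triggered, then just before the last $\beta$-multiplication the loop condition still held, so by the displayed inequality the pre-reduction step exceeded $\nicefrac{A}{L}$; multiplying by $\beta$ leaves $\gamma_k > \nicefrac{\beta A}{L}$. Either way the bound holds, closing the induction.

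The routine parts are the logarithmic algebra turning $L\gamma_{-1}\beta^{n} > A$ into the threshold $M$, and the reconciliation $\lceil M\rceil \le \lfloor M\rfloor + 1$. The main subtlety I expect is bookkeeping: one must treat the step size as a \emph{global} geometric sequence rather than restarting the count at each outer iteration, and must verify the off-by-one carefully so the bound matches $\lfloor \nicefrac{\log(L\gamma_{-1}/A)}{\log(1/\beta)}\rfloor + 1$ exactly, in particular confirming that the integer case $M \in \mathbb{Z}$ still obeys $N \le \lfloor M\rfloor + 1$.
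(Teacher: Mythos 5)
Your proposal is correct and follows essentially the same route as the paper's proof: both use the Lipschitz bound $\|F(x_k)-F(\hx_k)\|\le L\gamma\|F(x_k)\|$ to show the while-loop condition must fail once $\gamma\le \nicefrac{A}{L}$, both treat the step size as a single global geometric sequence to bound the total number of reductions by $\lfloor \log(L\gamma_{-1}/A)/\log(1/\beta)\rfloor+1$, and both obtain the lower bound $\nicefrac{\beta A}{L}$ by observing that the final reduction could only fire while the pre-reduction step still exceeded $\nicefrac{A}{L}$. The only cosmetic difference is that you phrase the lower bound as an induction on $k$ whereas the paper argues via the first iteration at which $\gamma_k$ drops below $\nicefrac{A}{L}$; the content is identical.
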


\begin{proof}
First, observe that the sequence $\{\gamma_k\}$ is monotonically decreasing for $k=-1,0,1,\dots$ (as it is modified only by multiplying the factor $\beta < 1$ to the previous values).
Assuming that $\gamma_{k_0} \le \frac{A}{L}$ is reached at some $k_0$, the while loop call does not occur for $k\ge k_0$ because with $\hat{x}_k = x_k - \gamma_{k_0} F(x_k)$ we have
\begin{align}
\label{eqn:while-loop-escape}
    \norm{F(x_k) - F(\hat{x}_k)} \le L\norm{x_k - \hat{x}_k} \le \gamma_{k_0} L \norm{F(x_k)} \le A \norm{F(x_k)} .
\end{align}
Therefore, if $\gamma_{-1} \le \frac{A}{L}$ then line search iterations do not occur at all, and there is nothing to prove.

Now assume without loss of generality that $\gamma_{-1} > \frac{A}{L}$.
The maximum number of while loop calls for which $\gamma_k > \frac{A}{L}$ is retained is the largest natural number $N$ with
\[
    \beta^N \gamma_{-1} > \frac{A}{L} \iff N \log \beta + \log \frac{L \gamma_{-1}}{A} > 0 \iff N < \frac{\log(L \gamma_{-1} / A)}{\log 1/\beta} .
\]
Therefore, if the while loop is called $\left\lfloor \frac{\log (L\gamma_{-1} / A)}{\log 1/\beta} \right\rfloor + 1$ times then we have $\gamma_k \le \frac{1}{L}$ and it does not change afterwards.
Otherwise, the while loop is called less than or equal to $\left\lfloor \frac{\log (L\gamma_{-1} / A)}{\log 1/\beta} \right\rfloor$ times in total.
In any case the statement regarding the total number of while loops is proved.

Now we prove that if $\gamma_{-1} \ge \frac{\beta A}{L}$, then $\gamma_k \ge \frac{\beta A}{L}$ for all $k\ge 0$ (which is equivalent to showing that $\gamma_k \ge \min\left\{ \frac{\beta A}{L}, \gamma_{-1} \right\}$).
Suppose that the set $I = \left\{ k = 0,1,\dots \,\middle|\, \gamma_k < \frac{A}{L} \right\}$ is not empty. (If this is not the case, we already have $\gamma_k \ge \frac{A}{L} > \frac{\beta A}{L}$ for all $k$.)
Let $k_0$ be the smallest element of $I$.
Then by definition, $\gamma_{k_0 - 1} \ge \frac{A}{L} > \gamma_{k_0}$ and therefore, $N \ge 1$ while loops must have run in the iteration $k_0$ (of the for loop).
Assume to the contrary that $\gamma_{k_0} < \frac{\beta A}{L}$.
This implies that the most recent while loop was called even if $\frac{\gamma_{k_0}}{\beta} < \frac{A}{L}$ was used as the extrapolation step size, which contradicts \eqref{eqn:while-loop-escape}.
Therefore, we must have $\gamma_{k_0} \ge \frac{\beta A}{L}$, and for all $k\ge k_0$ we have $\gamma_k = \gamma_{k_0}$, because $\gamma_{k_0} < \frac{A}{L}$, and therefore $\gamma_k$ stays constant afterwards.
This shows that $\gamma_k \ge \frac{\beta A}{L}$ for all $k \ge 0$.
\end{proof}

\subsection{Stochastic Setting}
In this section, we present the missing proofs from Section \ref{sec:stochastic}. For this, we first state two lemmas, which will be used to analyze both \algname{PolyakSEG} and \algname{DecPolyakSEG} later in this section.


\begin{lemma}
\label{lemma:DecPolyakSEGLS-gamma-lower-bound}
    Suppose each $F_i$ is $L$-Lipschitz and $\gamma_{-1} \ge \frac{A}{L}$.
    During the runtime of \hyperref[alg:decPolyakSEG_linesearch]{\algname{DecPolyakSEG-LS}} with $c_{-1} = 1$ and $c_k = \sqrt{k+1}$ for $k\ge 0$, the total number of while loop calls is at most $\left\lfloor \frac{\log (L\gamma_{-1} / A)}{\log 1/\beta} \right\rfloor + 1$, and 
    \begin{eqnarray}\label{eq:dec_LS-step-size-lower-bound}
        \gamma_k \ge \frac{\beta A}{L \sqrt{k+1}}
    \end{eqnarray} 
    for all $k\ge 0$.
\end{lemma}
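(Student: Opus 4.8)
The plan is to adapt the proof of Theorem~\ref{lemma:LS-step-size-lower-bound} (the deterministic \algname{PolyakEG-LS} line-search bound) to the decreasing step-size setting, tracking carefully how the extra shrinkage factor $\frac{c_{k-1}}{c_k}$ interacts with the line-search multiplications by $\beta$. The key structural observation is that in \algname{DecPolyakSEG-LS}, each iteration $k$ first sets $\gamma_k = \frac{c_{k-1}}{c_k}\gamma_{k-1}$ (a deterministic decrease) and then runs the while-loop, which multiplies by $\beta$ as needed. With $c_{-1} = 1$ and $c_k = \sqrt{k+1}$, the pre-line-search value is $\gamma_k = \frac{\sqrt{k}}{\sqrt{k+1}}\gamma_{k-1}$ for $k\ge 1$ (and $\gamma_0 = \frac{1}{\sqrt{1}}\gamma_{-1} = \gamma_{-1}$ after the $c_{-1}/c_0$ step, since $c_0 = 1$). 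Introducing the rescaled quantity $\widetilde{\gamma}_k := c_k \gamma_k = \sqrt{k+1}\,\gamma_k$ linearizes the problem: the deterministic decrease $\gamma_k \leftarrow \frac{c_{k-1}}{c_k}\gamma_{k-1}$ leaves $\widetilde{\gamma}$ unchanged, while each while-loop call multiplies $\widetilde{\gamma}_k$ by $\beta$. Thus the sequence $\{\widetilde{\gamma}_k\}$ behaves exactly like the (non-increasing) step-size sequence $\{\gamma_k\}$ in the deterministic proof of Theorem~\ref{lemma:LS-step-size-lower-bound}.

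With this reduction in hand, first I would establish the while-loop escape criterion exactly as in the deterministic case: since each $F_{\mathcal{S}_k}$ is an average of $L$-Lipschitz operators $F_i$, it is itself $L$-Lipschitz, so with $\hx_k = x_k - \gamma_k F_{\mathcal{S}_k}(x_k)$ we get $\norm{F_{\mathcal{S}_k}(x_k) - F_{\mathcal{S}_k}(\hx_k)} \le L\norm{x_k - \hx_k} = \gamma_k L\norm{F_{\mathcal{S}_k}(x_k)}$, which is at most $A\norm{F_{\mathcal{S}_k}(x_k)}$ whenever $\gamma_k \le \frac{A}{L}$, i.e. whenever $\widetilde{\gamma}_k \le \frac{A\sqrt{k+1}}{L}$. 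This shows the while-loop terminates. The crucial monotone-tracking step is then to prove that $\widetilde{\gamma}_k \ge \beta A / L$ for all $k$, which immediately yields $\gamma_k = \widetilde{\gamma}_k / \sqrt{k+1} \ge \frac{\beta A}{L\sqrt{k+1}}$, the desired bound~\eqref{eq:dec_LS-step-size-lower-bound}. For this I would argue that $\widetilde{\gamma}$ can only drop below $A/L$ via a while-loop call, and a single while-loop call reduces it by exactly $\beta$; since the line-search escape condition $\gamma_k \le \frac{A}{L}$ (i.e. $\widetilde{\gamma}_k \le \frac{A\sqrt{k+1}}{L}$) is certainly met once $\widetilde{\gamma}_k$ would fall below $A/L$, the most recent multiplication by $\beta$ cannot have started from a value below $A/L$, so $\widetilde{\gamma}_k \ge \beta A/L$. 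For the count of total while-loop calls, since every while-loop call multiplies $\widetilde{\gamma}$ by $\beta$ and $\widetilde{\gamma}$ starts at $\widetilde{\gamma}_0 = \gamma_{-1}$ (using $c_0 = 1$, $c_{-1}=1$) with $\gamma_{-1} \ge A/L$, the same counting argument as in Theorem~\ref{lemma:LS-step-size-lower-bound} gives at most $\left\lfloor \frac{\log(L\gamma_{-1}/A)}{\log 1/\beta} \right\rfloor + 1$ calls.

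The main obstacle I anticipate is handling the interleaving of the deterministic decrease and the line-search carefully, so that the claimed bound $\gamma_k \ge \frac{\beta A}{L\sqrt{k+1}}$ is valid \emph{uniformly in $k$} and not merely for the first index where $\widetilde{\gamma}$ drops. The subtlety is that $\widetilde{\gamma}_k$ stays constant under the deterministic $c_{k-1}/c_k$ shrinkage \emph{only} between while-loop calls; once $\widetilde{\gamma}$ falls below the escape threshold $A/L$, no further while-loop calls occur (because $\gamma_k < A/L$ trivially satisfies the escape inequality for all subsequent $k$), and so $\widetilde{\gamma}_k$ is frozen at its final post-line-search value, which we have shown is $\ge \beta A/L$. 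I would make this rigorous by letting $k_0$ be the first iteration (if any) at which the while-loop drives $\widetilde{\gamma}$ below $A/L$, showing $\widetilde{\gamma}_{k_0} \ge \beta A/L$ by the escape-contradiction argument above, and then noting $\widetilde{\gamma}_k = \widetilde{\gamma}_{k_0} \ge \beta A/L$ for all $k \ge k_0$ while $\widetilde{\gamma}_k \ge A/L \ge \beta A/L$ for $k < k_0$; this covers all cases and completes the proof.
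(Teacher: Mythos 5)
Your proof is correct and follows essentially the same route as the paper's: the same $L$-Lipschitz escape criterion for the while loop, the same counting of $\beta$-multiplications against the threshold $A/L$, and the same observation that any $\beta$-multiplication must start from a value of $\gamma_k$ exceeding $A/L$. The one presentational refinement is your rescaled variable $\widetilde{\gamma}_k = c_k\gamma_k$, which is invariant under the deterministic $c_{k-1}/c_k$ shrinkage and lets you establish the uniform invariant $\widetilde{\gamma}_k \ge \beta A/L$ directly, replacing the paper's two-case analysis at the first index $k_0$ where $\gamma_k$ drops below $A/L$ (crossing via the $c$-ratio versus via the line search) followed by the forward propagation $\gamma_k = \sqrt{(k_0+1)/(k+1)}\,\gamma_{k_0}$.
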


\begin{proof}
The sequence $\{\gamma_k\}$ is monotonically decreasing (as it is modified only by multiplying the factor $\beta < 1$ or $\frac{c_{k-1}}{c_{k}} < 1$ to the previous values).
As in the proof of Lemma \ref{lemma:LS-step-size-lower-bound}, once $\gamma_{k_0} \le \frac{A}{L}$ is reached at some $k_0$, the while loop call does not occur thereafter.
If $N = \left\lfloor \frac{\log (L\gamma_{-1} / A)}{\log 1/\beta} \right\rfloor + 1$ while loop calls occur, then we necessarily have $\gamma_k \le \beta^N \gamma_{-1} \le \frac{A}{L}$ and no more while loops are called.
Thus $\left\lfloor \frac{\log (L\gamma_{-1} / A)}{\log 1/\beta} \right\rfloor + 1$ is the upper bound on the total number of while loop calls.

Now, to prove that $\gamma_k \ge \frac{\beta A}{L\sqrt{k+1}}$, let $k_0$ be the smallest element of $$I = \left\{k=0,1,\dots,\middle| \gamma_k < \frac{A}{L} \right\}$$ (if $I$ is empty, there is nothing to prove).
Then $\gamma_{k_0 - 1} \ge \frac{A}{L} > \gamma_{k_0}$ by minimality of $k_0$ (even if $k_0 = 0$, we still have $\gamma_{-1} \ge \frac{A}{L}$ by assumption).
We have two possibilities:
\begin{enumerate}
    \item $\frac{c_{k_0-1}}{c_{k_0}} \gamma_{k_0 - 1} < \frac{A}{L}$: In this case, $\gamma_{k_0}$ has already crossed the $\frac{A}{L}$ border due to the multiplication of the factor $\frac{c_{k_0-1}}{c_{k_0}}$. 
    In this case the while loop call has not occurred at iteration $k_0$, and we have $\gamma_{k_0} = \frac{c_{k_0 - 1}}{c_{k_0}} \gamma_{k_0 - 1}$.
    If $k_0 = 0$ we have $\frac{c_{k_0 - 1}}{c_{k_0}} = 1 = \frac{1}{\sqrt{k_0 + 1}}$, and if $k_0 \ge 1$ we have $\frac{c_{k_0 - 1}}{c_{k_0}} = \sqrt{\frac{k_0}{k_0 + 1}} \ge \frac{1}{\sqrt{k_0 + 1}}$.
    In any case, $\gamma_{k_0} \ge \frac{1}{\sqrt{k_0 + 1}} \frac{A}{L} > \frac{1}{\sqrt{k_0 + 1}} \frac{\beta A}{L}$.
    
    \item $\frac{c_k}{c_{k+1}} \gamma_{k_0 - 1} \ge \frac{A}{L}$: In this case, $\gamma_{k_0}$ has crossed the $\frac{A}{L}$ border due to the while loop calls, and by the same argument as in Lemma \ref{lemma:LS-step-size-lower-bound}, we have $\gamma_{k_0} \ge \frac{\beta A}{L} > \frac{1}{\sqrt{k_0 + 1}} \frac{\beta A}{L}$.
\end{enumerate}

In any case we have $\gamma_{k_0} \ge \frac{\beta A}{L\sqrt{k_0 + 1}}$, and therefore, for every $k \ge k_0$ we have
\begin{align*}
    \gamma_k = \frac{c_{k-1}}{c_k} \dots \frac{c_{k_0}}{c_{k_0 + 1}} \gamma_{k_0} = \sqrt{\frac{k_0 + 1}{k + 1}} \gamma_{k_0} \ge \frac{1}{\sqrt{k+1}} \frac{\beta A}{L} .
\end{align*}

\end{proof}


\begin{lemma}
\label{lemma:PolyakSEG-LS-key-properties}
\vspace{1mm}
Suppose $F_i$ is $L$-Lipschitz.
Let $x_k$ be an iterate from either \hyperref[alg:PolyakSEG]{\algname{PolyakSEG}} or \hyperref[alg:decPolyakSEG]{\algname{DecPolyakSEG}}, and suppose that $\gamma_k$ satisfies the critical condition (Definition \ref{def:critical-condition-stochastic}). 
Then, for $k=0,1,\dots$,
\begin{enumerate}[label=(\alph*)]
    \item $(1-A) \norm{F_{\mathcal{S}_k}(x_k)} \le \norm{F_{\mathcal{S}_k}(\hx_k)} \le (1+A)\norm{F_{\mathcal{S}_k}(x_k)}$
    \item $\inprod{F_{\mathcal{S}_k}(\hx_k)}{F_{\mathcal{S}_k}(x_k)} 
    \ge \max \left\{ (1-A) \sqnorm{F_{\mathcal{S}_k}(x_k)} , \frac{\sqnorm{F_{\mathcal{S}_k}(\hx_k)}}{1+A} \right\}$
    \item $\frac{\gamma_k}{1+A} \le \alpha_k \le \frac{\gamma_k}{1-A}$
\end{enumerate}
holds almost surely.
\end{lemma}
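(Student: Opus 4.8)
The plan is to prove all three claims \emph{pathwise}, i.e.\ for each fixed realization of the minibatch $\mathcal{S}_k$. No expectation appears anywhere in the statement, so once the critical condition \eqref{eqn:gamma-critical-condition-stochastic} holds almost surely, each inequality holds for every sample path and the ``almost surely'' qualifier is automatic. This reduces the lemma to an exact transcription of the deterministic Lemma~\ref{lemma:alpha-lower-bound}, with $F$ replaced by the stochastic estimator $F_{\mathcal{S}_k}$ and the deterministic critical condition replaced by its stochastic version. To keep the bookkeeping light I would first fix the abbreviations $\hat g_k \eqdef F_{\mathcal{S}_k}(\hx_k)$ and $g_k \eqdef F_{\mathcal{S}_k}(x_k)$, so that \eqref{eqn:gamma-critical-condition-stochastic} reads $\norm{\hat g_k - g_k} \le A \norm{g_k}$ and the extrapolation identity reads $x_k - \hx_k = \gamma_k g_k$, and recall that $\alpha_k = \inprod{\hat g_k}{x_k - \hx_k} / \sqnorm{\hat g_k}$ is the (raw) Polyak step.

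For part (a) I would apply the triangle inequality and its reverse form to the decomposition $\hat g_k = g_k + (\hat g_k - g_k)$, bounding $\norm{\hat g_k - g_k} \le A\norm{g_k}$ via the critical condition; this gives the two-sided estimate $(1-A)\norm{g_k} \le \norm{\hat g_k} \le (1+A)\norm{g_k}$ directly. For part (b), the key step is the polarization identity \eqref{eq:expand_innerprod}, which after invoking the critical condition yields
\[
    \inprod{\hat g_k}{g_k} = \tfrac{1}{2}\bigl(\sqnorm{\hat g_k} + \sqnorm{g_k} - \sqnorm{\hat g_k - g_k}\bigr) \ge \tfrac{1}{2}\bigl(\sqnorm{\hat g_k} + (1-A^2)\sqnorm{g_k}\bigr).
\]
To recover the two arms of the maximum I would feed in the one-sided bounds from part (a): substituting $\sqnorm{\hat g_k} \ge (1-A)^2 \sqnorm{g_k}$ collapses the right-hand side to $(1-A)\sqnorm{g_k}$, while substituting $\sqnorm{g_k} \ge \sqnorm{\hat g_k}/(1+A)^2$ collapses it to $\sqnorm{\hat g_k}/(1+A)$.

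For part (c) I would expand $\alpha_k = \gamma_k \inprod{\hat g_k}{g_k}/\sqnorm{\hat g_k}$ using $x_k - \hx_k = \gamma_k g_k$. The lower bound $\alpha_k \ge \gamma_k/(1+A)$ is then immediate from the second arm of part (b). For the upper bound I would apply Cauchy--Schwarz \eqref{eq:CS1} to the numerator to get $\alpha_k \le \gamma_k \norm{g_k}/\norm{\hat g_k}$, and then use the lower bound $\norm{\hat g_k} \ge (1-A)\norm{g_k}$ from part (a) to conclude $\alpha_k \le \gamma_k/(1-A)$.

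The main point requiring care is not technical difficulty—the whole argument is a routine realization-wise computation mirroring Lemma~\ref{lemma:alpha-lower-bound}—but rather the correct \emph{plumbing} of the two one-sided bounds from (a) into the single polarization inequality so that \emph{both} arms of the maximum in (b) emerge, and the observation that (b) must be stated with a maximum (rather than a single bound) precisely so that part (c) can draw a lower bound on $\alpha_k$ from one arm and an upper bound from Cauchy--Schwarz plus (a). I would also flag that $\alpha_k$ here refers to the unclipped Polyak quantity, so the bound applies to the first argument of the $\min$ defining $\omega_k$ in \hyperref[alg:decPolyakSEG]{\algname{DecPolyakSEG}}, which is all that the later analysis needs.
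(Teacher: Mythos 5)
Your parts (a) and (b), and the upper bound in (c), match the paper's argument essentially line for line: triangle inequality plus the critical condition for (a), the polarization identity \eqref{eq:expand_innerprod} followed by the two one-sided bounds from (a) for the two arms of the maximum in (b), and Cauchy--Schwarz combined with (a) for $\alpha_k \le \gamma_k/(1-A)$ (the paper routes this through Lemma~\ref{lemma:CS2} with $\lambda = 1/(1-A)$ instead of plain Cauchy--Schwarz, an immaterial difference). The pathwise/realization-wise framing is also exactly right.

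The gap is in the lower bound of (c) for \algname{DecPolyakSEG}. You declare that $\alpha_k$ "refers to the unclipped Polyak quantity" and that this "is all that the later analysis needs." Both halves of that claim are wrong. In \algname{DecPolyakSEG} the step size is the clipped quantity $\alpha_k = \min\bigl\{ \inprod{F_{\mathcal{S}_k}(\hx_k)}{x_k - \hx_k}/\sqnorm{F_{\mathcal{S}_k}(\hx_k)},\, \alpha_{k-1} \bigr\}$, and the lemma is invoked in the proof of Theorem~\ref{theorem:DecPolyakSEGLS_monotone} precisely to lower-bound this clipped $\alpha_{K+1}$ by $\gamma_{K+1}/(1+A)$ (it is $1/\alpha_{K+1}$ for the actual step size that survives the telescoping). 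For the clipped quantity the bound is not immediate from (b): the $\min$ could in principle be realized by $\alpha_{k-1}$, which part (b) says nothing about. The paper closes this by induction: $\alpha_{-1} = \infty$ forces $\alpha_0$ to equal the raw quantity, and assuming $\alpha_{k-1} \ge \gamma_{k-1}/(1+A)$, the monotonicity $\gamma_k \le \gamma_{k-1}$ enforced by the algorithm gives $\alpha_{k-1} \ge \gamma_k/(1+A)$, while (b) gives the same bound for the raw term, so the minimum of the two is $\ge \gamma_k/(1+A)$. You need this induction; without it the \algname{DecPolyakSEG} half of part (c) is unproven and the downstream convergence proof does not go through.
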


\begin{proof}

By the critical condition \eqref{eqn:gamma-critical-condition-stochastic}, we obtain
\begin{align*}
    \norm{F_{\mathcal{S}_k}(\hx_k)} & \le \norm{F_{\mathcal{S}_k}(x_k)} + \norm{F_{\mathcal{S}_k}(\hx_k) - F_{\mathcal{S}_k}(x_k)} \\
    & \le \norm{F_{\mathcal{S}_k}(x_k)} + A\norm{F_{\mathcal{S}_k}(x_k)} \\
    & = (1+A) \norm{F_{\mathcal{S}_k}(x_k)}
\end{align*}
and
\begin{align*}
    \norm{F_{\mathcal{S}_k}(\hx_k)} & \ge \norm{F_{\mathcal{S}_k}(x_k)} - \norm{F_{\mathcal{S}_k}(\hx_k) - F_{\mathcal{S}_k}(x_k)} \\
    & \ge \norm{F_{\mathcal{S}_k}(x_k)} - A\norm{F_{\mathcal{S}_k}(x_k)} \\
    & \ge (1-A) \norm{F_{\mathcal{S}_k}(x_k)}
\end{align*}
which proves (a).
Next, observe that
\begin{align*}
    \inprod{F_{\mathcal{S}_k}(\hx_k)}{F_{\mathcal{S}_k}(x_k)} & = \frac{1}{2} \left( \sqnorm{F_{\mathcal{S}_k}(\hx_k)} - \sqnorm{F_{\mathcal{S}_k}(\hx_k) - F_{\mathcal{S}_k}(x_k)} + \sqnorm{F_{\mathcal{S}_k}(x_k)} \right) \\
    & \ge \frac{1}{2} \left( \sqnorm{F_{\mathcal{S}_k}(\hx_k)} - A^2 \sqnorm{F_{\mathcal{S}_k}(x_k)} +\sqnorm{F_{\mathcal{S}_k}(x_k)} \right) \\
    & = \frac{1}{2} \left( \sqnorm{F_{\mathcal{S}_k}(\hx_k)} + (1 - A^2) \sqnorm{F_{\mathcal{S}_k}(x_k)} \right) 
\end{align*}
where the inequality uses \eqref{eqn:gamma-critical-condition-stochastic}.
Applying the first inequality of (a) to the last expression, we obtain
\begin{align*}
    \inprod{F_{\mathcal{S}_k}(\hx_k)}{F_{\mathcal{S}_k}(x_k)} & \ge \frac{1}{2} \left( (1-A)^2 \sqnorm{F_{\mathcal{S}_k}(x_k)} + (1 - A^2) \sqnorm{F_{\mathcal{S}_k}(x_k)} \right) \\
    & = (1-A) \sqnorm{F_{\mathcal{S}_k}(x_k)} ,
\end{align*}
while applying the second inequality of (a) gives
\begin{align*}
    \inprod{F_{\mathcal{S}_k}(\hx_k)}{F_{\mathcal{S}_k}(x_k)} & \ge \frac{1}{2} \left( \sqnorm{F_{\mathcal{S}_k}(\hx_k)} + (1 - A^2) \frac{1}{(1+A)^2} \sqnorm{F_{\mathcal{S}_k}(\hx_k)} \right) \\
    & = \frac{\sqnorm{F_{\mathcal{S}_k}(\hx_k)}}{1+A} .
\end{align*}
This proves (b).
Finally, for \algname{PolyakSEG} we immediately obtain
\begin{align*}
    \alpha_k = \frac{\inprod{F_{\mathcal{S}_k}(\hx_k)}{x_k - \hx_k}}{\sqnorm{F_{\mathcal{S}_k}(\hx_k)}} = \frac{\gamma_k \inprod{F_{\mathcal{S}_k}(\hx_k)}{F_{\mathcal{S}_k}(x_k)}}{\sqnorm{F_{\mathcal{S}_k}(\hx_k)}} \ge \frac{\gamma_k}{1+A} .
\end{align*}
For the case of \algname{DecPolyakSEG}, we have $\gamma_k \le \gamma_{k-1}$ and $\alpha_k \le \alpha_{k-1}$ for all $k=0,1,\dots$ by construction.
As $\alpha_{-1} = \infty$, 
\begin{align*}
    \alpha_0 = \min \left\{ \frac{\inprod{F(\hx_0, \xi_0)}{x_0 - \hx_0}}{\sqnorm{F(\hx_0, \xi_0)}} , \alpha_{-1} \right\} = \frac{\inprod{F(\hx_0, \xi_0)}{x_0 - \hx_0}}{\sqnorm{F(\hx_0, \xi_0)}} \ge \frac{\gamma_0}{1+A} .
\end{align*}
Now we use induction on $k=1,2,\dots$: assuming that $\alpha_{k-1} \ge \frac{\gamma_{k-1}}{1+A}$, we have $\alpha_{k-1} \ge \frac{\gamma_{k}}{1+A}$ (since $\gamma_{k} \le \gamma_{k-1}$). 
We further have $\frac{\inprod{F_{\mathcal{S}_k}(\hx_k)}{x_k - \hx_k}}{\sqnorm{F_{\mathcal{S}_k}(\hx_k)}} \ge \frac{\gamma_k}{1+A}$ by (b), which implies
\begin{align*}
    \alpha_k = \min \left\{ \alpha_{k-1}, \frac{\inprod{F_{\mathcal{S}_k}(\hx_k)}{x_k - \hx_k}}{\sqnorm{F_{\mathcal{S}_k}(\hx_k)}} \right\} \ge \frac{\gamma_k}{1+A} ,
\end{align*}
completing the induction. This proves the first inequality in (c).
Finally, for the second inequality in (c), observe that for both \algname{PolyakSEG} and \algname{DecPolyakSEG},
\begin{align*}
    \alpha_k & \le \frac{\inprod{F_{\mathcal{S}_k}(\hx_k)}{x_k - \hx_k}}{\sqnorm{F_{\mathcal{S}_k}(\hx_k)}} \\
    & = \frac{\gamma_k \inprod{F_{\mathcal{S}_k}(\hx_k)}{F(x_k,\xi_k)}}{\sqnorm{F_{\mathcal{S}_k}(\hx_k)}} \\
    & \le \frac{\gamma_k}{2} \frac{\frac{1}{1-A} \sqnorm{F_{\mathcal{S}_k}(\hx_k)} + (1-A) \sqnorm{F(x_k,\xi_k)}}{\sqnorm{F_{\mathcal{S}_k}(\hx_k)}} \\
    & \le \frac{\gamma_k}{2} \frac{\frac{1}{1-A} \sqnorm{F_{\mathcal{S}_k}(\hx_k)} + \frac{1}{1-A} \sqnorm{F_{\mathcal{S}_k}(\hx_k)}}{\sqnorm{F_{\mathcal{S}_k}(\hx_k)}} \\
    & = \frac{\gamma_k}{1-A}
\end{align*}
where the third line uses Lemma \ref{lemma:CS2} with $\lambda = \frac{1}{1-A}$, and the fourth line uses (a).
\end{proof}

\subsection{Analysis of \hyperref[alg:PolyakSEG]{\algname{PolyakSEG}}}

Given the interpolation condition, the stochastic analysis is essentially the repetition of the proof of Theorem \ref{theorem:deterministic-master-theorem} except for minimal additional arguments regarding the expectations.
For the sake of completeness, however, we detail the crucial steps as below.

\subsubsection{Proof of Theorem \ref{theorem:PolyakSEG-convergence}}
\begin{theorem}
\vspace{1mm}
    Let $F_i$ are $L$-Lipschitz, (strongly) monotone
    and there exists an interpolating solution $x_*$ for which $F_i(x_*) = 0$ for all $i \in [n]$ almost surely.
    Then, provided that $\gamma_k$ satisfy the critical condition ~\eqref{eqn:gamma-critical-condition-stochastic} with $0 < A < 1$, \algname{PolyakSEG} satisfies 
    \newline
    $\bullet$ When $F_i$ are monotone~\eqref{eq:monotone}, $\min_{0 \leq k \leq K} \Exp{\gamma_k^2 \|F(x_k)\|^2} \leq \frac{(1+A) \|x_0 - x_*\|^2}{(1-A)(K+1)}.$ \\
    $\bullet$ When  $F_i$ are strongly monotone~\eqref{eq:strong_monotone}, $$\Exp{\|x_{k+1} - x_*\|^2} \le \Exp{ \left(1 - \frac{2 (1-A) \gamma_k \mu}{(1+A)^2} \right)\sqnorm{x_k - x_*}}.$$
\end{theorem}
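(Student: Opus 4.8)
The plan is to exploit the interpolation hypothesis to reduce each stochastic iteration to a \emph{deterministic} \algname{PolyakEG} step for the realized operator $F_{\mathcal{S}_k}$, so that the argument of Theorem~\ref{theorem:deterministic-master-theorem} applies pathwise. The crucial observation is that interpolation forces $F_{\mathcal{S}_k}(x_*) = \frac{1}{B}\sum_{i\in\mathcal{S}_k} F_i(x_*) = 0$ almost surely, while $F_{\mathcal{S}_k}$ inherits $L$-Lipschitzness and (strong) monotonicity from the summands $F_i$ as a convex combination. Thus $x_*$ is a root of every realized operator $F_{\mathcal{S}_k}$, which is exactly what removes the variance term that would otherwise obstruct the analysis.

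First I would expand $\sqnorm{x_{k+1}-x_*}$ using the update $x_{k+1} = x_k - \omega_k F_{\mathcal{S}_k}(\hx_k)$, split $x_k - x_* = (x_k - \hx_k) + (\hx_k - x_*)$, and use $\omega_k = \la F_{\mathcal{S}_k}(\hx_k), x_k - \hx_k\ra / \sqnorm{F_{\mathcal{S}_k}(\hx_k)}$ to collapse the quadratic term via $\omega_k^2\sqnorm{F_{\mathcal{S}_k}(\hx_k)} = \omega_k\la F_{\mathcal{S}_k}(\hx_k), x_k - \hx_k\ra$. This yields the pathwise identity
\[
\sqnorm{x_{k+1}-x_*} = \sqnorm{x_k - x_*} - \omega_k\la F_{\mathcal{S}_k}(\hx_k), x_k - \hx_k\ra - 2\omega_k\la F_{\mathcal{S}_k}(\hx_k), \hx_k - x_*\ra.
\]
Since $F_{\mathcal{S}_k}(x_*) = 0$, (strong) monotonicity of $F_{\mathcal{S}_k}$ gives $\la F_{\mathcal{S}_k}(\hx_k), \hx_k - x_*\ra \geq 0$ (resp.\ $\geq \mu\sqnorm{\hx_k - x_*}$), playing precisely the role that (strong) monotonicity of $F$ plays in \eqref{eqn:LS-proof-bound}.

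For the monotone case I would drop the last nonnegative term, substitute $x_k - \hx_k = \gamma_k F_{\mathcal{S}_k}(x_k)$, and invoke the bounds $\omega_k \geq \gamma_k/(1+A)$ and $\la F_{\mathcal{S}_k}(\hx_k), F_{\mathcal{S}_k}(x_k)\ra \geq (1-A)\sqnorm{F_{\mathcal{S}_k}(x_k)}$ from Lemma~\ref{lemma:PolyakSEG-LS-key-properties}(b),(c) to obtain the pathwise descent $\sqnorm{x_{k+1}-x_*} \leq \sqnorm{x_k - x_*} - \tfrac{1-A}{1+A}\gamma_k^2\sqnorm{F_{\mathcal{S}_k}(x_k)}$. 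Taking the conditional expectation $\Expk{\cdot}$, applying Jensen's inequality $\Expk{\sqnorm{F_{\mathcal{S}_k}(x_k)}} \geq \sqnorm{F(x_k)}$, then total expectation, telescoping over $k = 0,\dots,K$, and bounding the minimum by the average yields the claimed $\mathcal{O}(\nicefrac{1}{(K+1)})$ rate. For the strongly monotone case I would instead follow \eqref{eqn:LS-proof-bound} verbatim with $F$ replaced by $F_{\mathcal{S}_k}$: apply Lemma~\ref{lemma:variant-young-ineq} with $a = \hx_k - x_*$, $b = x_k - \hx_k$ and parameter $\omega = 2A/(1-A)$, use strong monotonicity of $F_{\mathcal{S}_k}$ between $x_k$ and $\hx_k$ (which gives $\la F_{\mathcal{S}_k}(x_k), F_{\mathcal{S}_k}(x_k) - F_{\mathcal{S}_k}(\hx_k)\ra \geq \gamma_k\mu\sqnorm{F_{\mathcal{S}_k}(x_k)}$) to control the cross term, and invoke Lemma~\ref{lemma:PolyakSEG-LS-key-properties} to reach the pathwise contraction $\sqnorm{x_{k+1}-x_*} \leq \bigl(1 - \tfrac{2(1-A)\gamma_k\mu}{(1+A)^2}\bigr)\sqnorm{x_k - x_*}$, which upon taking expectation gives the stated inequality.

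The main obstacle is not the algebra, which is a faithful copy of the deterministic proof, but the stochastic bookkeeping: ensuring $\gamma_k$ is $\mathcal{F}_k$-measurable so it can be pulled out of $\Expk{\cdot}$ in the monotone case (immediate for a prescribed sequence $\{\gamma_k\}$, and requiring slightly more care for the line-search variant \algname{PolyakSEG-LS}), and verifying that the critical condition \eqref{eqn:gamma-critical-condition-stochastic} together with the norm equivalences of Lemma~\ref{lemma:PolyakSEG-LS-key-properties} hold for almost every realization of $\mathcal{S}_k$, so that each pathwise inequality is valid before expectations are taken.
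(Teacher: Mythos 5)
Your proposal is correct and follows essentially the same route as the paper: the interpolation hypothesis makes $x_*$ a root of every realized $F_{\mathcal{S}_k}$, so each iteration is treated as a deterministic \algname{PolyakEG} step for $F_{\mathcal{S}_k}$, with Lemma~\ref{lemma:PolyakSEG-LS-key-properties} supplying the bounds $\omega_k \ge \gamma_k/(1+A)$ and $\la F_{\mathcal{S}_k}(\hx_k), F_{\mathcal{S}_k}(x_k)\ra \ge (1-A)\sqnorm{F_{\mathcal{S}_k}(x_k)}$, followed by Jensen's inequality and telescoping in the monotone case and the variant-Young inequality with $\omega = 2A/(1-A)$ plus strong monotonicity between $x_k$ and $\hx_k$ in the strongly monotone case. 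The measurability caveats you raise are exactly the (routine) points the paper handles implicitly.
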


\begin{proof}
\textbf{$\bullet$ $F_i$ are monotone.}
\begin{align}
    \sqnorm{x_{k+1} - x_*} & = \sqnorm{x_k - \alpha_k F_{\mathcal{S}_k}(\hx_k) - x_*} \nonumber \\
    & = \sqnorm{x_k - x_*} - 2\alpha_k \inprod{F_{\mathcal{S}_k}(\hx_k)}{x_k - x_*} + \alpha_k^2 \sqnorm{F_{\mathcal{S}_k}(\hx_k)} \nonumber \\
    & = \sqnorm{x_k - x_*} - 2\alpha_k \inprod{F_{\mathcal{S}_k}(\hx_k)}{x_k - \hat{x}_k} - 2\alpha_k \inprod{F_{\mathcal{S}_k}(\hx_k)}{\hat{x}_k - x_*} \nonumber \\
    & + \alpha_k^2 \sqnorm{F_{\mathcal{S}_k}(\hx_k)} \nonumber \\
    & = \sqnorm{x_k - x_*} - 2\alpha_k \inprod{F_{\mathcal{S}_k}(\hx_k)}{x_k - \hat{x}_k} \nonumber \\
    & - 2\alpha_k \inprod{F_{\mathcal{S}_k}(\hx_k) - F_{\mathcal{S}_k}(x_*)}{\hx_k - x_*}  + \alpha_k^2 \sqnorm{F_{\mathcal{S}_k}(\hx_k)} \nonumber \\
    & \le \sqnorm{x_k - x_*} - 2\alpha_k \inprod{F_{\mathcal{S}_k}(\hx_k)}{x_k - \hat{x}_k} + \alpha_k^2 \sqnorm{F_{\mathcal{S}_k}(\hx_k)} \nonumber \\
    & = \sqnorm{x_k - x_*} - \alpha_k \inprod{F_{\mathcal{S}_k}(\hx_k)}{x_k - \hat{x}_k} \label{eqn:PolyakSEG-LS-monotone-first-bound}
\end{align}
where the second last inequality follows from monotonicity of $F_{\mathcal{S}_k}$ and the last equality from the definition of $\alpha_k$.
Now by Lemma \ref{lemma:PolyakSEG-LS-key-properties} we have
\begin{align*}
    \alpha_k \inprod{F_{\mathcal{S}_k}(\hx_k)}{x_k - \hat{x}_k} & = \alpha_k \gamma_k \inprod{F_{\mathcal{S}_k}(\hx_k)}{F_{\mathcal{S}_k}(x_k)} \\
    & \ge \alpha_k \gamma_k (1-A) \sqnorm{F_{\mathcal{S}_k}(x_k)} \\
    & \ge \frac{(1-A) \gamma_k^2}{1+A} \sqnorm{F_{\mathcal{S}_k}(x_k)} .
\end{align*}
Plugging this back into \eqref{eqn:PolyakSEG-LS-monotone-first-bound} and taking expectation (conditioned on $x_k$), we obtain
\begin{align*}
    \mathbb{E}_k \left[ \sqnorm{x_{k+1} - x_*} \right] & \le \sqnorm{x_k - x_*} - \frac{1-A}{1+A} \gamma_k^2 \Expk{\sqnorm{F_{\mathcal{S}_k}(x_k)}} \\
    & \le \sqnorm{x_k - x_*} - \frac{1-A}{1+A} \gamma_k^2 \sqnorm{\Expk{F_{\mathcal{S}_k}(x_k)}} \\
    & \le \sqnorm{x_k - x_*} - \frac{1-A}{1+A} \gamma_k^2 \sqnorm{F(x_k)}
\end{align*}
Here, the third line follows from Jensen's Inequality. Now we rearrange the terms and take total expectation to get
\begin{eqnarray*}
    \frac{1-A}{1+A} \, \Exp{\gamma_k^2 \|F(x_k)\|^2} & \leq & \Exp{\| x_{k} - x_* \|^2} - \Exp{\| x_{k+1}- x_*\|^2}
\end{eqnarray*}
Summing this up for $k=0,\dots,K-1$ and lower bounding the left hand side with the minimum over $K+1$ iterates, we conclude
\begin{eqnarray*}
    \min_{k = 0,\dots,K} \Exp{\gamma_k^2 \|F(x_k)\|^2} & \leq & \frac{(1+A) \|x_0 - x_*\|^2}{(1 - A) (K+1)}.
\end{eqnarray*}

\textbf{$\bullet$ $F_i$ are strongly monotone}
\begin{align}
    \sqnorm{x_{k+1} - x_*} & = \sqnorm{x_k - \alpha_k F_{\mathcal{S}_k}(\hx_k) - x_*} \nonumber \\
    & = \sqnorm{x_k - x_*} - 2\alpha_k \inprod{F_{\mathcal{S}_k}(\hx_k)}{x_k - x_*} + \alpha_k^2 \sqnorm{F_{\mathcal{S}_k}(\hx_k)} \nonumber \\
    & = \sqnorm{x_k - x_*} - 2\alpha_k \inprod{F_{\mathcal{S}_k}(\hx_k)}{x_k - \hat{x}_k} - 2\alpha_k \inprod{F_{\mathcal{S}_k}(\hx_k)}{\hat{x}_k - x_*} \nonumber \\
    & + \alpha_k^2 \sqnorm{F_{\mathcal{S}_k}(\hx_k)} \nonumber \\
    & = \sqnorm{x_k - x_*} - 2\alpha_k \inprod{F_{\mathcal{S}_k}(\hx_k)}{x_k - \hat{x}_k}  \nonumber \\
    & - 2\alpha_k \inprod{F_{\mathcal{S}_k}(\hx_k) - F_{\mathcal{S}_k}(x_*)}{\hx_k - x_*} + \alpha_k^2 \sqnorm{F_{\mathcal{S}_k}(\hx_k)} \nonumber \\
    & \le \sqnorm{x_k - x_*} - 2\alpha_k \inprod{F_{\mathcal{S}_k}(\hx_k)}{x_k - \hat{x}_k} \nonumber \\ 
    & - 2\alpha_k \mu \sqnorm{\hx_k - x_*} + \alpha_k^2 \sqnorm{F_{\mathcal{S}_k}(\hx_k)} \nonumber \\
    & = \sqnorm{x_k - x_*} - 2\alpha_k \mu \sqnorm{\hx_k - x_*} - \alpha_k \inprod{F_{\mathcal{S}_k}(\hx_k)}{x_k - \hat{x}_k} . \label{eqn:PolyakSEG-LS-strongly-monotone-first-bound}
\end{align}
As in the monotone case we can bound (using Lemma \ref{lemma:PolyakSEG-LS-key-properties}):
\begin{align}
\label{eqn:PolyakSEG-LS-strongly-monotone-second-bound}
    \alpha_k \inprod{F_{\mathcal{S}_k}(\hx_k)}{x_k - \hat{x}_k} \ge \alpha_k \gamma_k (1-A) \sqnorm{F_{\mathcal{S}_k}(x_k)}
\end{align}
and by Lemma \ref{lemma:variant-young-ineq} with $a=\hx_k - x_*$, $b=x_k - \hx_k$ and $\omega = \frac{2A}{1-A}$,
\begin{align}
    2\alpha_k \mu \sqnorm{\hx_k - x_*} & \ge 2\alpha_k \mu \left(\frac{1-A}{1+A}\right) \sqnorm{x_k - x_*} - \alpha_k \mu \left(\frac{1-A}{A}\right) \sqnorm{x_k - \hx_k} \nonumber \\
    & \ge \frac{2(1-A) \gamma_k \mu}{(1+A)^2} \sqnorm{x_k - x_*} - \left(\frac{1-A}{A}\right) \alpha_k \gamma_k^2 \mu \sqnorm{F_{\mathcal{S}_k}(x_k)} \nonumber \\
    & \ge \frac{2(1-A) \gamma_k \mu}{(1+A)^2} \sqnorm{x_k - x_*} - (1-A) \alpha_k \gamma_k \sqnorm{F_{\mathcal{S}_k}(x_k)} \label{eqn:PolyakSEG-LS-strongly-monotone-third-bound}
\end{align}
where the last inequality follows by combining strong monotonicity of $F_{\mathcal{S}_k}$ with Lemma \ref{lemma:PolyakSEG-LS-key-properties}:
\begin{align*}
    & \inprod{x_k - \hx_k}{F_{\mathcal{S}_k}(x_k) - F_{\mathcal{S}_k}(\hx_k)} \ge \mu \sqnorm{x_k - \hx_k} \\
    & \implies \gamma_k \mu \sqnorm{F_{\mathcal{S}_k}(x_k)} \le \inprod{F_{\mathcal{S}_k}(x_k)}{F_{\mathcal{S}_k}(x_k) - F_{\mathcal{S}_k}(\hx_k)} \le A \sqnorm{F_{\mathcal{S}_k}(x_k)} .
\end{align*}
Plugging \eqref{eqn:PolyakSEG-LS-strongly-monotone-second-bound} and \eqref{eqn:PolyakSEG-LS-strongly-monotone-third-bound} into \eqref{eqn:PolyakSEG-LS-strongly-monotone-first-bound} and taking (total) expectations, we obtain
\begin{align*}
    \Exp{\sqnorm{x_{k+1} - x_*}} \le \Exp{ \left(1 - \frac{2 (1-A) \gamma_k \mu}{(1+A)^2} \right)\sqnorm{x_k - x_*}} .
\end{align*}
\end{proof}

\paragraph{Discussion on Theorem \ref{theorem:PolyakSEG-convergence}}
If one can guarantee $\gamma_k \ge \underline{\gamma}$ for all $k=0,1,\dots$ for some fixed $\underline{\gamma} > 0$ (which is the case if $\gamma_k$ is either constant or found using the line search scheme), one can replace $\gamma_k$'s with $\underline{\gamma}$ and pull them out of the expectations.
This will imply
\begin{align*}
    \min_{k=0,\dots,K} \Exp{\|F(x_k)\|^2} \le \frac{(1+A) \sqnorm{x_0 - x_*}}{\underline{\gamma} (1-A) (K+1)}
\end{align*}
in the $\mu = 0$ case, and
\begin{align*}
    \Exp{\|x_{k+1} - x_*\|^2} \le \left( 1 - \frac{2(1-A) \underline{\gamma} \mu}{(1+A)^2} \right)^{k+1} \sqnorm{x_0 - x_*}
\end{align*}
in the $\mu > 0$ case (by unrolling the recursion).

\subsection{Analysis of \hyperref[alg:decPolyakSEG]{\algname{DecPolyakSEG}}}
In this subsection, we will present the results related to \algname{DecPolyakSEG}.
Before diving into the proof of Theorem \ref{theorem:DecPolyakSEGLS_monotone}, we will state following lemma which will be used for its analysis.

\begin{lemma}
\label{lemma:DecPolyakSEG-LS-gamma-upper-bound}
    For any positive sequence $\{c_k\}_{k=0}^{\infty}$, the step sizes of \hyperref[alg:decPolyakSEG]{\algname{DecPolyakSEG}} satisfy
    \begin{align}
    \label{eq:dec_gamma}
        \gamma_k \le \frac{c_0}{c_{k}} \gamma_{0}.
    \end{align}
\end{lemma}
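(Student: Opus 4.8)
The plan is to exploit the single defining constraint imposed by Algorithm~\ref{alg:decPolyakSEG}, namely the monotonicity requirement (the green-highlighted line) that the extrapolation step sizes satisfy the one-step bound $\gamma_k \le \frac{c_{k-1}}{c_k}\gamma_{k-1}$ for every $k \ge 1$. The global bound \eqref{eq:dec_gamma} then follows by a straightforward telescoping, which I would phrase cleanly as an induction on $k$.

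First I would unfold the recursion. Starting from $\gamma_k \le \frac{c_{k-1}}{c_k}\gamma_{k-1}$ and repeatedly substituting the analogous one-step bounds for $\gamma_{k-1}, \gamma_{k-2}, \ldots, \gamma_1$, I obtain
\[
    \gamma_k \le \frac{c_{k-1}}{c_k}\gamma_{k-1} \le \frac{c_{k-1}}{c_k}\cdot\frac{c_{k-2}}{c_{k-1}}\gamma_{k-2} \le \cdots \le \frac{c_{k-1}}{c_k}\cdot\frac{c_{k-2}}{c_{k-1}}\cdots\frac{c_0}{c_1}\,\gamma_0.
\]
All intermediate factors $c_1,\ldots,c_{k-1}$ cancel in the product, leaving exactly $\frac{c_0}{c_k}\gamma_0$, which is the claim.

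To make this rigorous I would present it as an induction. The base case $k=0$ is the trivial identity $\gamma_0 \le \frac{c_0}{c_0}\gamma_0$, and the inductive step combines the hypothesis $\gamma_{k-1}\le \frac{c_0}{c_{k-1}}\gamma_0$ with the algorithm's one-step bound $\gamma_k\le\frac{c_{k-1}}{c_k}\gamma_{k-1}$ to yield $\gamma_k\le\frac{c_{k-1}}{c_k}\cdot\frac{c_0}{c_{k-1}}\gamma_0 = \frac{c_0}{c_k}\gamma_0$. There is essentially no obstacle here; the only points worth recording are that the argument invokes \emph{only} the step-size monotonicity constraint and is therefore agnostic to how $\gamma_k$ is actually chosen (constant schedule or line search), and that positivity of the sequence $\{c_k\}$ guarantees the ratios $c_{k-1}/c_k$ are well-defined and that no sign reversals occur when chaining the inequalities.
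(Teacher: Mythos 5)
Your proof is correct and follows essentially the same route as the paper, which simply applies the one-step constraint $c_k\gamma_k \le c_{k-1}\gamma_{k-1}$ recursively to get $c_k\gamma_k \le c_0\gamma_0$; your telescoping/induction is just a more explicit write-up of that same cancellation. No gaps.
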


\begin{proof}
Applying the inequality $c_k \gamma_k \le c_{k-1} \gamma_{k-1}$ recursively, we obtain $c_k \gamma_k \le c_0 \gamma_0$, or equivalently $\gamma_k \le \frac{c_0}{c_k} \gamma_k$.
\end{proof}

\subsubsection{Proof of Theorem \ref{theorem:DecPolyakSEGLS_monotone}}\label{sec:DecPolyakSEGLS_proof}

\begin{theorem}
    Let $F_i$ are $L$-Lipschitz, monotone, and Assumption~\ref{assumption:bounded-iterates} holds. Then $\overline{x}_K = \frac{1}{K+1} \sum_{k=0}^K x_k$ generated by \algname{DecPolyakEG} with $c_k = \sqrt{k+1}$ satisfies
    \begin{align*}
    \textstyle
        \Exp{\inprod{F(u)}{\overline{x}_K - u}}
        \le \frac{1}{K+1} \Exp{\frac{D^2}{\gamma_{K+1}}} + \frac{\sigma^2 \gamma_0}{(1-A)(K+1)} \sum_{k=0}^K \frac{1}{\sqrt{k+1}} 
    \end{align*}
    \vspace{-1.5mm}
    for any $u \in \mathcal{C}$. Here, $\sigma^2 \eqdef 2L^2 \max_{u_1, u_2 \in \mathcal{C}} \|u_1 - u_2\|^2 + 2 \Exp{\|F_{\mathcal{S}}(x_*)\|^2}$.
\end{theorem}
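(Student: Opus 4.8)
<br>

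The theorem to prove is Theorem \ref{theorem:DecPolyakSEGLS_monotone}, which establishes a convergence rate for the averaged iterate $\overline{x}_K$ of \algname{DecPolyakSEG} in the monotone setting, measured via the gap function $\inprod{F(u)}{\overline{x}_K - u}$. Let me plan the proof.

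=== PROOF PROPOSAL ===

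\textbf{Overview of the approach.} The plan is to establish a per-iteration descent inequality on $\sqnorm{x_{k+1} - u}$ for an arbitrary fixed reference point $u \in \mathcal{C}$, rearrange it to extract the gap term $\inprod{F(u)}{x_k - u}$ (rather than $\inprod{F(x_k)}{x_k - u}$, which is the usual quantity in deterministic analysis), sum over $k=0,\dots,K$ telescoping the distance terms, and finally invoke monotonicity together with the linearity of the average to convert $\frac{1}{K+1}\sum_k \inprod{F(u)}{x_k - u}$ into $\inprod{F(u)}{\overline{x}_K - u}$. This is the standard ``gap-function'' route for monotone VIPs, but adapted to handle the adaptive, random step size $\omega_k$.

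\textbf{Key steps in order.} First I would expand $\sqnorm{x_{k+1} - u} = \sqnorm{x_k - \omega_k F_{\mathcal{S}_k}(\hx_k) - u}^2$ and split the cross term as in Lemma~\ref{Lemma: Breakdown}, writing $x_k - u = (x_k - \hx_k) + (\hx_k - u)$ so that the inner product $\inprod{F_{\mathcal{S}_k}(\hx_k)}{x_k - u}$ decomposes. The term $\inprod{F_{\mathcal{S}_k}(\hx_k)}{x_k - \hx_k}$ is nonnegative (it equals $\omega_k \sqnorm{F_{\mathcal{S}_k}(\hx_k)}$ by the Polyak choice) and combines with the $\omega_k^2 \sqnorm{F_{\mathcal{S}_k}(\hx_k)}$ term to give cancellation. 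For the remaining inner product $\inprod{F_{\mathcal{S}_k}(\hx_k)}{\hx_k - u}$, I would add and subtract $F_{\mathcal{S}_k}(u)$ and $F(u)$, using the critical condition and the control $\tfrac{\gamma_k}{2} \le \omega_k \le \tfrac{\gamma_k}{1-A}$ from Lemma~\ref{lemma:PolyakSEG-LS-key-properties}(c). The crucial point, flagged in the main text's ``Intuition of Decreasing Step Sizes,'' is that $\Expk{\omega_k \inprod{F_{\mathcal{S}_k}(u)}{\hx_k - u}} \neq 0$ because $\omega_k$ is random; I would therefore bound the fluctuation $\omega_k \inprod{F_{\mathcal{S}_k}(u) - F(u)}{\hx_k - u}$ using Cauchy--Schwarz \eqref{eq:CS2}, the boundedness of iterates (Assumption~\ref{assumption:bounded-iterates}), and the definition $\sigma^2 \eqdef 2L^2 \max_{u_1,u_2 \in \mathcal{C}}\|u_1 - u_2\|^2 + 2\Exp{\|F_{\mathcal{S}}(x_*)\|^2}$, which precisely captures $\Expk{\|F_{\mathcal{S}_k}(u) - F(u)\|^2}$ over the compact set. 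After taking conditional expectation, the descent inequality should read schematically
\begin{equation*}
    \Expk{\sqnorm{x_{k+1} - u}} \le \sqnorm{x_k - u} - 2\omega_k \inprod{F(u)}{x_k - u} + \frac{\omega_k \gamma_k \sigma^2}{1-A},
\end{equation*}
where I have used monotonicity to pass from $\inprod{F(\hx_k)}{\hx_k - u}$ to $\inprod{F(u)}{\hx_k - u}$ and then to $\inprod{F(u)}{x_k - u}$ (absorbing the $\inprod{F(u)}{x_k - \hx_k}$ discrepancy into the error term via Cauchy--Schwarz and the step-size bounds).

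\textbf{Summation and finishing.} Next I would divide by $2\omega_k$, use $\omega_k \le \gamma_k/(1-A)$ and $\omega_k \ge \gamma_k/2$ to replace $1/\omega_k$ by a multiple of $1/\gamma_k$, and handle the telescoping: since the coefficient $1/\omega_k$ is increasing (step sizes decrease), the telescope does not collapse cleanly, so I would instead keep $\inprod{F(u)}{x_k-u}$ summed with weights and bound $\sum_k \tfrac{1}{\omega_k}\big(\sqnorm{x_k-u} - \Expk{\sqnorm{x_{k+1}-u}}\big)$ using Abel summation together with the diameter bound $D^2 \eqdef \max_{u_1,u_2\in\mathcal{C}}\|u_1-u_2\|^2$ and the lower bound $\gamma_k = \Omega(1/\sqrt{k})$ that follows from $c_k = \sqrt{k+1}$ and Lemma~\ref{lemma:DecPolyakSEG-LS-gamma-lower-bound}. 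For the error term, the choice $c_k = \sqrt{k+1}$ and Lemma~\ref{lemma:DecPolyakSEG-LS-gamma-upper-bound} give $\gamma_k \le \gamma_0/\sqrt{k+1}$, so $\sum_k \omega_k \gamma_k \sigma^2 \lesssim \sigma^2 \gamma_0^2 \sum_k 1/(k+1)$; using $\omega_k \le \gamma_k/(1-A)$ and dividing by the sum of weights yields the stated $\frac{\sigma^2\gamma_0}{(1-A)(K+1)}\sum_{k=0}^K \tfrac{1}{\sqrt{k+1}}$ form. Finally, Jensen/linearity converts $\frac{1}{K+1}\sum_k \inprod{F(u)}{x_k - u} = \inprod{F(u)}{\overline{x}_K - u}$.

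\textbf{Main obstacle.} The hard part will be controlling the non-vanishing stochastic term $\Expk{\omega_k \inprod{F_{\mathcal{S}_k}(u)}{\hx_k - u}}$ cleanly. Because $\omega_k$ is correlated with $F_{\mathcal{S}_k}$, one cannot simply use unbiasedness of the estimator; the whole reason for introducing the decreasing, bounded step-size range $\tfrac{\gamma_k}{2}\le \omega_k \le \tfrac{\gamma_k}{1-A}$ is to trade this correlation for a deterministic bound on $\omega_k$ that can be pulled outside the expectation while only paying an $\mathcal{O}(\gamma_k^2)$ error. Getting the constants to line up so that the residual error scales like $\omega_k\gamma_k\sigma^2$ (and not worse), and ensuring the weighted telescoping produces exactly $\Exp{D^2/\gamma_{K+1}}$ rather than a looser bound, is the delicate bookkeeping at the center of the argument.
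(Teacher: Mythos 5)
Your overall skeleton matches the paper's: expand $\sqnorm{x_{k+1}-u}$, use $\omega_k \le \inprod{F_{\mathcal{S}_k}(\hx_k)}{x_k-\hx_k}/\sqnorm{F_{\mathcal{S}_k}(\hx_k)}$ to absorb the quadratic term, drop $\inprod{F_{\mathcal{S}_k}(\hx_k)-F_{\mathcal{S}_k}(u)}{\hx_k-u}\ge 0$ by monotonicity, divide by the step size, and telescope $\sqnorm{x_k-u}/\omega_k$ using $\omega_{k+1}\le\omega_k$, the bounded iterates, and $\gamma_k\le\gamma_0/\sqrt{k+1}$. The telescoping you call ``Abel summation'' is exactly the paper's manipulation that produces $\Exp{D^2/\gamma_{K+1}}$, and the bound $\|F_{\mathcal{S}_k}(u)\|^2\le 2\|F_{\mathcal{S}_k}(x_*)\|^2+2L^2\Omega_{\mathcal{C}}^2$ gives the stated $\sigma^2$.

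The genuine gap is in your treatment of the correlation between $\omega_k$ and $F_{\mathcal{S}_k}$. You propose to bound the fluctuation $\omega_k\inprod{F_{\mathcal{S}_k}(u)-F(u)}{\hx_k-u}$ by Cauchy--Schwarz plus boundedness of the iterates. But any such bound costs at least order $\omega_k\cdot\sigma\cdot D$ per iteration (or worse, an $\|\hx_k-u\|^2=O(D^2)$ term if you use Young's inequality); after you divide the inequality by $2\omega_k$ to make the gap terms uniformly weighted, this becomes a \emph{constant} per-iteration error, so the averaged bound does not decay. Your schematic inequality also takes $\Expk{\cdot}$ before dividing by the random $\omega_k$, which is not well defined, and since $\inprod{F(u)}{x_k-u}$ has no definite sign you cannot simply replace $\omega_k$ by its deterministic bounds on that term. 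The paper avoids estimating the fluctuation altogether: it divides the pathwise inequality by $\omega_k$ \emph{first}, so the gap term becomes $-2\inprod{F_{\mathcal{S}_k}(u)}{x_k-u}$ with deterministic coefficient $1$; since $x_k$ (unlike $\hx_k$) is independent of $\mathcal{S}_k$, its total expectation is exactly $-2\inprod{F(u)}{\Exp{x_k}-u}$ with no error. The passage from $\hx_k$ to $x_k$ is done by substituting $\hx_k=x_k-\gamma_k F_{\mathcal{S}_k}(x_k)$, and the resulting cross term $2\omega_k\gamma_k\inprod{F_{\mathcal{S}_k}(u)}{F_{\mathcal{S}_k}(x_k)}$ is split by Young's inequality so that its $(1-A)\sqnorm{F_{\mathcal{S}_k}(x_k)}$ part cancels against the surplus $-\omega_k\gamma_k(1-A)\sqnorm{F_{\mathcal{S}_k}(x_k)}$ from Lemma~\ref{lemma:PolyakSEG-LS-key-properties}(b), leaving only $\frac{\omega_k\gamma_k}{1-A}\sqnorm{F_{\mathcal{S}_k}(u)}$. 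This cancellation is what makes the residual scale like $\gamma_k$ after division and is the missing ingredient in your plan.
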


\begin{proof}
    For any $u \in \reals^d$, \hyperref[alg:decPolyakSEG]{\algname{DecPolyakSEG}} satisfies
\begin{eqnarray*}
    \|x_{k+1} - u\|^2 & = & \| x_k - \alpha_k F_{\mathcal{S}_k}(\hx_k) - u\|^2 \\
    & = & \| x_k - u\|^2 + \alpha_k^2 \| F_{\mathcal{S}_k}(\hx_k)\|^2 - 2 \alpha_k \la F_{\mathcal{S}_k}(\hx_k), x_k - u \ra \\
    & \le & \| x_k - u\|^2 + \alpha_k \la F_{\mathcal{S}_k}(\hx_k), x_k - \hx_k \ra - 2 \alpha_k \la F_{\mathcal{S}_k}(\hx_k), x_k - u \ra \\
    & = & \| x_k - u\|^2 + \alpha_k \la F_{\mathcal{S}_k}(\hx_k), x_k - \hx_k \ra - 2 \alpha_k \la F_{\mathcal{S}_k}(\hx_k), x_k - \hx_k + \hx_k - u \ra \\
    & = & \| x_k - u\|^2 + \alpha_k \la F_{\mathcal{S}_k}(\hx_k), x_k - \hx_k \ra - 2 \alpha_k \la F_{\mathcal{S}_k}(\hx_k), x_k - \hx_k \ra \\
    && - 2 \alpha_k \la F_{\mathcal{S}_k}(\hx_k), \hx_k - u \ra \\
    & = & \| x_k - u\|^2 - \alpha_k \la F_{\mathcal{S}_k}(\hx_k), x_k - \hx_k \ra - 2 \alpha_k \la F_{\mathcal{S}_k}(\hx_k), \hx_k - u \ra \\
    & = & \| x_k - u\|^2 - \alpha_k \la F_{\mathcal{S}_k}(\hx_k), x_k - \hx_k \ra - 2 \alpha_k \la F_{\mathcal{S}_k}(\hx_k) - F_{\mathcal{S}_k}(u), \hx_k - u \ra \\
    && - 2 \alpha_k \la F_{\mathcal{S}_k}(u), \hx_k - u \ra \\
    & \leq & \| x_k - u\|^2 - \alpha_k \la F_{\mathcal{S}_k}(\hx_k), x_k - \hx_k \ra - 2 \alpha_k \la F_{\mathcal{S}_k}(u), \hx_k - u \ra .
\end{eqnarray*}
The third line holds because $\alpha_k \le \frac{\inprod{F(\hx_k, \xi_k}{x_k - \hx_k}}{\sqnorm{F_{\mathcal{S}_k}(\hx_k)}}$, and the last line follows from monotonicity of $F(\cdot, \xi)$. 
Now we use Lemma \ref{lemma:PolyakSEG-LS-key-properties}(b) to bound the second term and plug $\hx_k = x_k - \gamma_k F_{\mathcal{S}_k}(x_k)$ into the third term to proceed as:
\begin{eqnarray*}
    \|x_{k+1} - u\|^2 & \leq & \| x_k - u\|^2 - \alpha_k \gamma_k (1-A) \sqnorm{F_{\mathcal{S}_k}(x_k)} - 2 \alpha_k \la F_{\mathcal{S}_k}(u), x_k - u \ra \\
    && + 2 \alpha_k \gamma_k \la F_{\mathcal{S}_k}(u), F_{\mathcal{S}_k}(x_k)\ra \\
    & \le & \| x_k - u \|^2 - 2 \alpha_k \la F_{\mathcal{S}_k}(u), x_k - u \ra - \alpha_k \gamma_k (1-A) \sqnorm{F_{\mathcal{S}_k}(x_k)} \\
    &&  + \alpha_k \gamma_k \left( \frac{1}{1-A} \sqnorm{F_{\mathcal{S}_k}(u)} + (1-A) \sqnorm{F_{\mathcal{S}_k}(x_k)} \right) \\
    & = & \| x_k - u \|^2 - 2 \alpha_k \la F_{\mathcal{S}_k}(u), x_k - u \ra + \frac{\alpha_k \gamma_k}{1-A} \sqnorm{F_{\mathcal{S}_k}(u)}
\end{eqnarray*}
where second inequality uses Lemma \ref{lemma:CS2} with $\lambda = \frac{1}{1-A}$.

Now we divide the both sides by $\alpha_k$ to obtain
\begin{eqnarray}\label{eq:divide_by_alpha_k}
   \frac{\|x_{k+1} - u\|^2}{\alpha_k} & \leq & \frac{\| x_k - u\|^2}{\alpha_k} - 2 \la F_{\mathcal{S}_k}(u), x_k - u \ra + \frac{\gamma_k}{1-A} \|F_{\mathcal{S}_k}(u)\|^2.
\end{eqnarray}
Now let $u \in \mathcal{C}$.
By Lemma \ref{lemma:DecPolyakSEG-LS-gamma-upper-bound} with $c_k = \sqrt{k+1}$, we have $\gamma_k \leq \frac{\gamma_0}{\sqrt{k+1}}$. 
Applying this bound to \eqref{eq:divide_by_alpha_k} and using the bounded iterates assumption $\sqnorm{x_{k+1} - u} \le D^2$ with the fact that $\alpha_{k+1} \le \alpha_k$, we obtain
\begin{eqnarray}\label{eq:dec_alpha_k}
      2 \la F_{\mathcal{S}_k}(u), x_k - u \ra & \leq & \frac{\| x_k - u\|^2}{\alpha_k} - \frac{\|x_{k+1} - u\|^2}{\alpha_k} + \frac{\gamma_0}{(1-A) \sqrt{k+1}} \sqnorm{F_{\mathcal{S}_k}(u)} \notag \\
      & \leq & \frac{\| x_k - u\|^2}{\alpha_k} - \frac{\|x_{k+1} - u\|^2}{\alpha_{k+1}} + \left( \frac{1}{\alpha_{k+1}} - \frac{1}{\alpha_k} \right) D^2  \notag \\
      && + \frac{\gamma_0}{(1-A) \sqrt{k+1}} \sqnorm{F_{\mathcal{S}_k}(u)} .
\end{eqnarray}
Note that $\|F_{\mathcal{S}_k}(u)\|^2$ can be bounded as following:
\begin{align*}
    \|F_{\mathcal{S}_k}(u)\|^2 & \leq 2\sqnorm{F_{\mathcal{S}_k}(x_*)} + 2\sqnorm{F_{\mathcal{S}_k}(x_*) - F_{\mathcal{S}_k}(u)} \\
    & \le 2 \|F_{\mathcal{S}_k}(x_*)\|^2 + 2L^2 \|x_* - u\|^2 \\
    & \le 2 \|F_{\mathcal{S}_k}(x_*)\|^2 + 2L^2 \max_{u_1, u_2 \in \mathcal{C}}\|u_1 - u_2\|^2 \\
    & = 2 \|F_{\mathcal{S}_k}(x_*)\|^2 + 2L^2 \Omega^2_{\mathcal{C}} 
\end{align*}
where $\Omega_{\mathcal{C}}^2 \eqdef \max_{u_1, u_2} \|u_1 - u_2\|^2$. Now, we sum up \eqref{eq:dec_alpha_k} for $k=0,\dots,K$ and apply the bound on $\|F_{\mathcal{S}_k}(u)\|^2$ to get
\begin{align}
    & 2\sum_{k=0}^K \inprod{F_{\mathcal{S}_k}(u)}{x_k - u} \\
    & \le \frac{\sqnorm{x_0 - u}}{\alpha_0} - \frac{\sqnorm{x_{K+1} - u}}{\alpha_{K+1}} + \sum_{k=0}^K  \left( \frac{1}{\alpha_{k+1}} - \frac{1}{\alpha_k} \right) D^2 + \sum_{k=0}^K \frac{\gamma_0}{(1-A) \sqrt{k+1}} \sqnorm{F_{\mathcal{S}_k}(u)} \nonumber \\
    & \le \frac{D^2}{\alpha_0} + \sum_{k=0}^K  \left( \frac{1}{\alpha_{k+1}} - \frac{1}{\alpha_k} \right) D^2 + \sum_{k=0}^K \frac{\gamma_0}{(1-A) \sqrt{k+1}} \left( 2 \|F_{\mathcal{S}_k}(x_*)\|^2 + 2L^2 \Omega^2_{\mathcal{C}} \right) \nonumber \\
    & = \frac{D^2}{\alpha_{K+1}} + \sum_{k=0}^K \frac{\gamma_0}{(1-A) \sqrt{k+1}} \left( 2 \|F_{\mathcal{S}_k}(x_*)\|^2 + 2L^2 \Omega^2_{\mathcal{C}} \right) \nonumber \\
    & \le \frac{2D^2}{\gamma_{K+1}} + \sum_{k=0}^K \frac{\gamma_0}{(1-A) \sqrt{k+1}} \left( 2 \|F_{\mathcal{S}_k}(x_*)\|^2 + 2L^2 \Omega^2_{\mathcal{C}} \right)
    \label{eqn:DecPolyakSEG-proof-key-eq-before-exp}
\end{align}
where 
the last inequality uses $\alpha_{K+1} \ge \frac{\gamma_{K+1}}{1+A} \ge \frac{\gamma_{K+1}}{2}$ (Lemma \ref{lemma:PolyakSEG-LS-key-properties}(c)). 
Finally we take the total expectation and divide both sides by $2(K+1)$ to obtain
\begin{align*}
    \Exp{\inprod{F(u)}{\overline{x}_K - u}}
    \le \frac{1}{K+1} \Exp{\frac{D^2}{\gamma_{K+1}}} + \frac{\sigma^2 \gamma_0}{(1-A)(K+1)} \sum_{k=0}^K \frac{1}{\sqrt{k+1}} ,
\end{align*}
where $\sigma^2 \eqdef 2L^2 \max_{u_1, u_2 \in \mathcal{C}} \|u_1 - u_2\|^2 + 2 \Exp{\|F_{\mathcal{S}}(x_*)\|^2}$. 
This completes the proof.

\end{proof}

\newpage
\section{More on Numerical Experiments}\label{sec:more_numerical_exp}
In this section, we provide more details on the experiments conducted in the main paper. In the first subsection, we provide the details related to Figure \ref{fig:2Dcontour}, while in the second subsection, we add more details on the setup of Section \ref{sec:numerical_experiment}.

\subsection{Details of Figure \ref{fig:2Dcontour}}\label{sec:compare_polyakeg_eg}
In Figure \ref{fig:2Dcontour}, we illustrate the advantages of using \hyperref[alg:PolyakEG]{\algname{PolyakEG}} over the standard \hyperref[eq:EG]{\algname{EG}} method on a two-dimensional min-max problem. Specifically, we consider the problem:
\begin{eqnarray*}
    \min_{y \in \R} \max_{z \in \R} f(y, z) \eqdef \frac{1}{2} y^2 + \frac{5}{2} yz - 25z^2.
\end{eqnarray*}
which has an equilibrium at the point $(y,z)=(0,0)$, as determined by the conditions $\nabla_y f(0, 0) = \nabla_z f(0, 0) = 0$. Figure \ref{fig:2Dcontour} provides a visual representation of the function $f(y,z)$ in a two-dimensional space, where the colour gradient indicates the value of $f(y,z)$ at each point $(y,z)$. The contour plot in Figure \ref{fig:2Dcontour} highlights the behaviour of the function and serves as the background for comparing the performance of \hyperref[alg:PolyakEG]{\algname{PolyakEG}} and \hyperref[eq:EG]{\algname{EG}}.
To solve the min-max problem, we apply both \hyperref[alg:PolyakEG]{\algname{PolyakEG}} and \hyperref[eq:EG]{\algname{EG}} methods, using a step size $\gamma = \nicefrac{1}{L}$, where $L$ is the \textit{Lipschitz} constant of the gradients $\nabla_y f$ and $\nabla_z f$. In Figure \ref{fig:2Dcontour}, we plot the trajectories of these algorithms over $10$ iterations. The results clearly show that \hyperref[alg:PolyakEG]{\algname{PolyakEG}} converges to the optimal point $(0,0)$ faster than \hyperref[eq:EG]{\algname{EG}}. This observation underscores the efficiency of \hyperref[alg:PolyakEG]{\algname{PolyakEG}} and motivates a deeper investigation into its theoretical properties.

\begin{figure*}[hbt!]
\centering
    \begin{subfigure}[b]{0.3\textwidth}
        \centering
        \includegraphics[width=\linewidth]{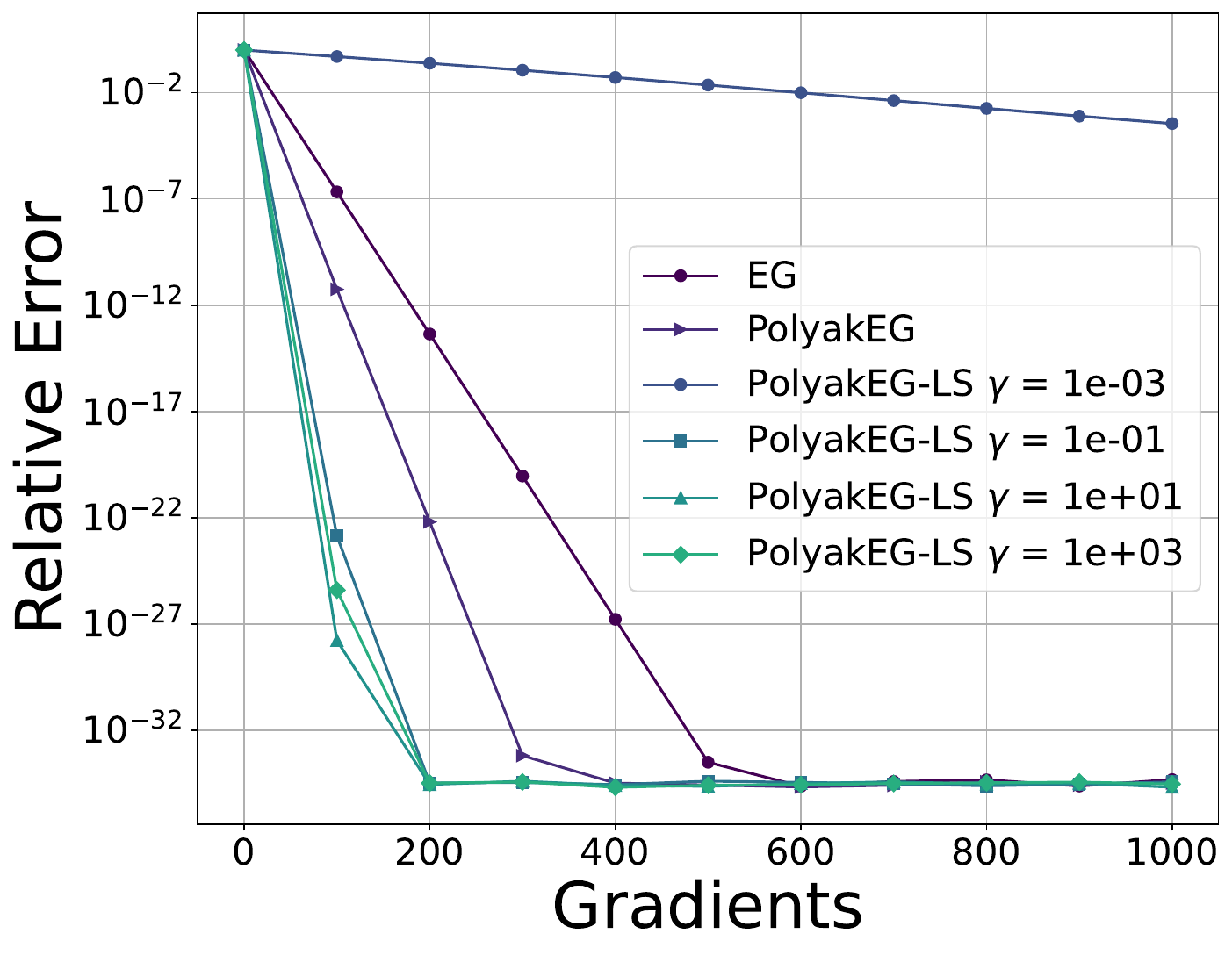}
        \caption{$\nicefrac{L}{\mu} \sim 10$}
        \label{fig:PolyakEG_theoretical_v2_gradients}
    \end{subfigure}
    
    \begin{subfigure}[b]{0.3\textwidth}
        \centering
        \includegraphics[width=\linewidth]{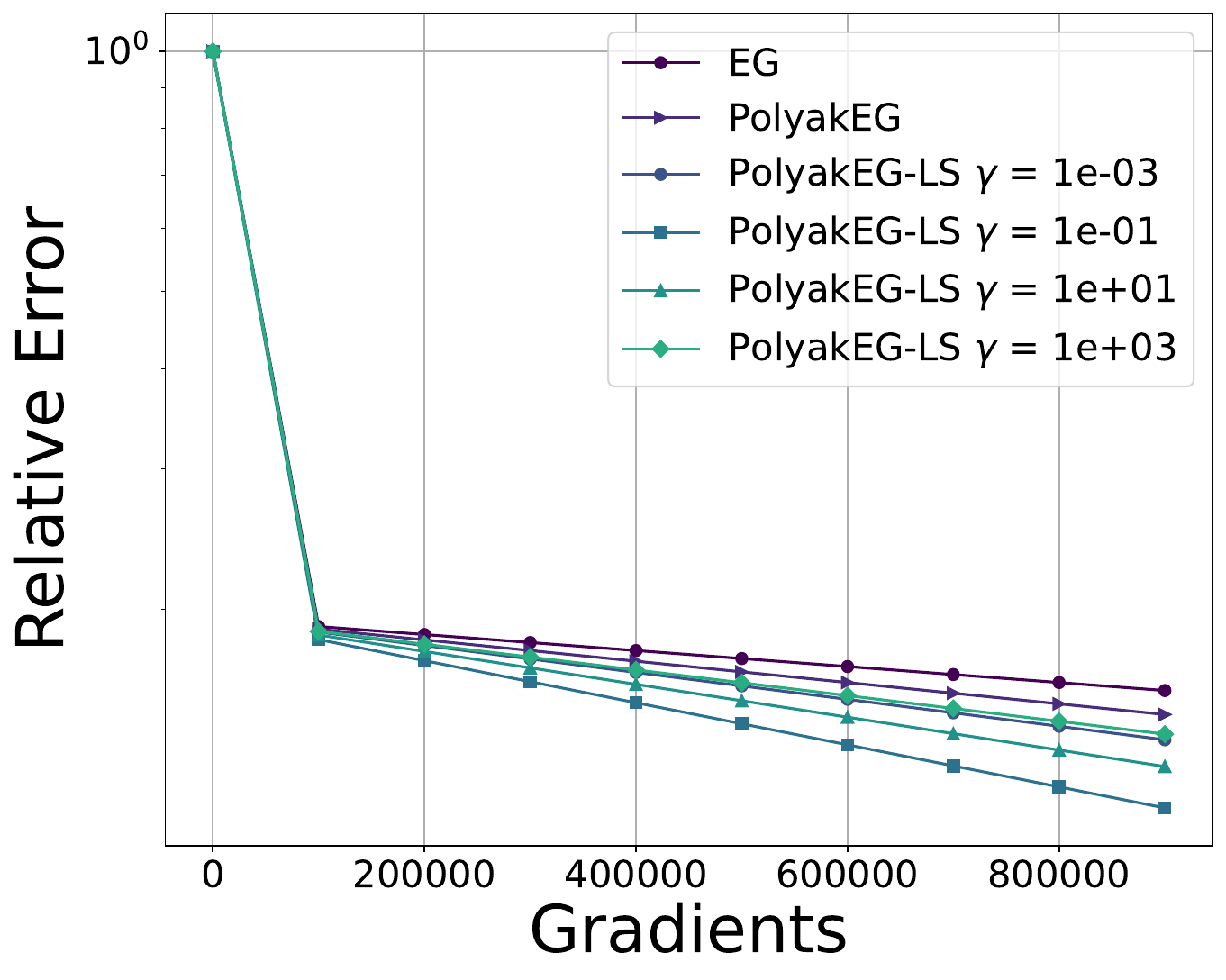}
        \caption{$\nicefrac{L}{\mu} \sim 10^8$}
        \label{fig:PolyakEG_theoretical_v3_gradients}
    \end{subfigure}

    \caption{\small In this experiment, we compare the performance of \hyperref[alg:PolyakEG]{\algname{PolyakEG}}, \hyperref[eq:EG]{\algname{EG}} (with their theoretical step sizes) and \hyperref[alg:PolyakEG_linesearch]{\algname{PolyakEG-LS}} for solving strongly monotone problems with different condition numbers $\nicefrac{L}{\mu}$.
    On the $x$-axis we have the number of gradient evaluations while on $y$-axis we plot the relative error $\nicefrac{\|x_k - x_*\|^2}{\|x_0 - x_*\|^2}$.}
\end{figure*}

\subsection{Details of Numerical Experiments}
In this section, we provide more details and explanation related to the numerical experiments in the main paper.

\paragraph{Comparison of \hyperref[eq:EG]{\algname{EG}}, \hyperref[alg:PolyakEG]{\algname{PolyakEG}} and \hyperref[alg:PolyakEG_linesearch]{\algname{PolyakEG-LS}}.}\label{subsec:compare_theoreticalstep_SM}
In Figure \ref{fig:PolyakEG_theoretical_gradients}, \ref{fig:PolyakEG_theoretical_v2_gradients} and \ref{fig:PolyakEG_theoretical_v3_gradients}, we compare the performance of the \hyperref[eq:EG]{\algname{EG}}, \hyperref[alg:PolyakEG]{\algname{PolyakEG}} and \hyperref[alg:PolyakEG_linesearch]{\algname{PolyakEG-LS}}. For this experiment, we focus on implementing our methods on the following strongly convex strongly concave min-max optimization problem $\min_{w_1 \in \mathbb{R}^{d_1}} \max_{w_2 \in \mathbb{R}^{d_2}} g(w_1, w_2) \eqdef \frac{1}{n} \sum_{i = 1}^n g_i(w_1, w_2),$~\cite{gorbunov2022stochastic} where 
\begin{equation}
    g_i(w_1, w_2) = \frac{1}{2}w_1^{\top}\mathbf{A}_i w_1 + w_1^{\top}\mathbf{B}_i w_2 - \frac{1}{2}w_2^{\top}\mathbf{C}_iw_2 + a_i^{\top}w_1 - c_i^{\top}w_2.
\end{equation}

For \hyperref[eq:EG]{\algname{EG}}, we use the theoretical step size $\gamma_k = \alpha_k = \nicefrac{1}{4L}$ from \cite{mokhtari2020unified} whereas for \hyperref[alg:PolyakEG]{\algname{PolyakEG}}, we use $\gamma_k = \nicefrac{1}{3L}$, as recommended by Theorem \ref{theorem:deterministic-master-theorem} for $\mu > 0$. For \hyperref[alg:PolyakEG_linesearch]{\algname{PolyakEG-LS}}, we start with different initialization of step size $\gamma_{-1} \in \left\{ 10^{-3}, 10^{-1}, 10, 10^{3}\right\}$. In all three figures, we have the number of gradient computations on $x$-axis while on $y$-axis we plot the relative error $\nicefrac{\|x_k - x_*\|^2}{\|x_0 - x_*\|^2}$. As evident from the figures, \hyperref[alg:PolyakEG]{\algname{PolyakEG}} performs better than \hyperref[eq:EG]{\algname{EG}} with theoretical step sizes. Moreover, we find for any initialization of $\gamma_{-1}$, \hyperref[alg:PolyakEG_linesearch]{\algname{PolyakEG-LS}} outperforms the other two algorithms. 
The superior performance of \hyperref[alg:PolyakEG_linesearch]{\algname{PolyakEG-LS}} is a consequence of the adaptive line-search procedure, which selects a step size $\gamma_k$ that satisfies the critical condition \eqref{eqn:gamma-critical-condition}. Therefore, the step size $\gamma_k$ can be much more aggressive than the constant step size $\nicefrac{1}{3L}$ of \algname{PolyakEG}, making \algname{PolyakEG-LS} faster. 
 
\paragraph{Comparison of \algname{PolyakSEG-LS} and \algname{SEG-LS}.} In this experiment, we compare the performance of \algname{PolyakSEG-LS} with \algname{SEG} from \cite{vaswani2019painless} for different line-search strategies on an interpolated model. For this, we consider the strongly convex- strongly concave quadratic min-max problem of \eqref{eq:quad_minmax} such that $\nabla_{w_1} g(w_{1*}, w_{2*}) = \nabla_{w_2} g(w_{1*}, w_{2*}) = 0$ (this ensures the model is interpolated). For \algname{SEG}, we use the iteration from \cite{vaswani2019painless}:
\begin{eqnarray*}
\textstyle
    \hat{x}_k & = & x_k - \gamma_k F_{\mathcal{S}_k}(x_k) \\
    x_{k+1} & = & x_k - \gamma_k F_{\mathcal{S}_k}(\hx_k)
\end{eqnarray*}
where $\gamma_k$ are computed using line-search similar to \algname{PolyakSEG-LS}. Note that \cite{vaswani2019painless} uses the same step size for both extrapolation and update step in contrast to our \algname{PolyakSEG-LS}, which uses different step sizes. In Figure \ref{fig:PolyakSEGLS_interpolation_v1} and \ref{fig:PolyakSEGLS}, we have \algname{SEG-LS0} which restarts line-search in every iteration from $\gamma_{-1}$ while \algname{SEG-LS1} uses the $\gamma_{k-1}$ from previous iteration similar to \algname{PolyakSEG-LS}. We find that \algname{PolyakSEG-LS} enjoys linear convergence and outperforms both of these methods from \cite{vaswani2019painless} for different condition numbers $\nicefrac{L}{\mu} \sim \left\{ 10, 10^4, 10^8\right\}$. Figure \ref{fig:PolyakSEGLS_interpolation_v1} corresponds to $L/\mu \sim 10^4$.

\begin{figure*}[hbt!]
    \centering   
\begin{subfigure}[b]{0.3\textwidth}
        \centering
        \includegraphics[width=\linewidth]{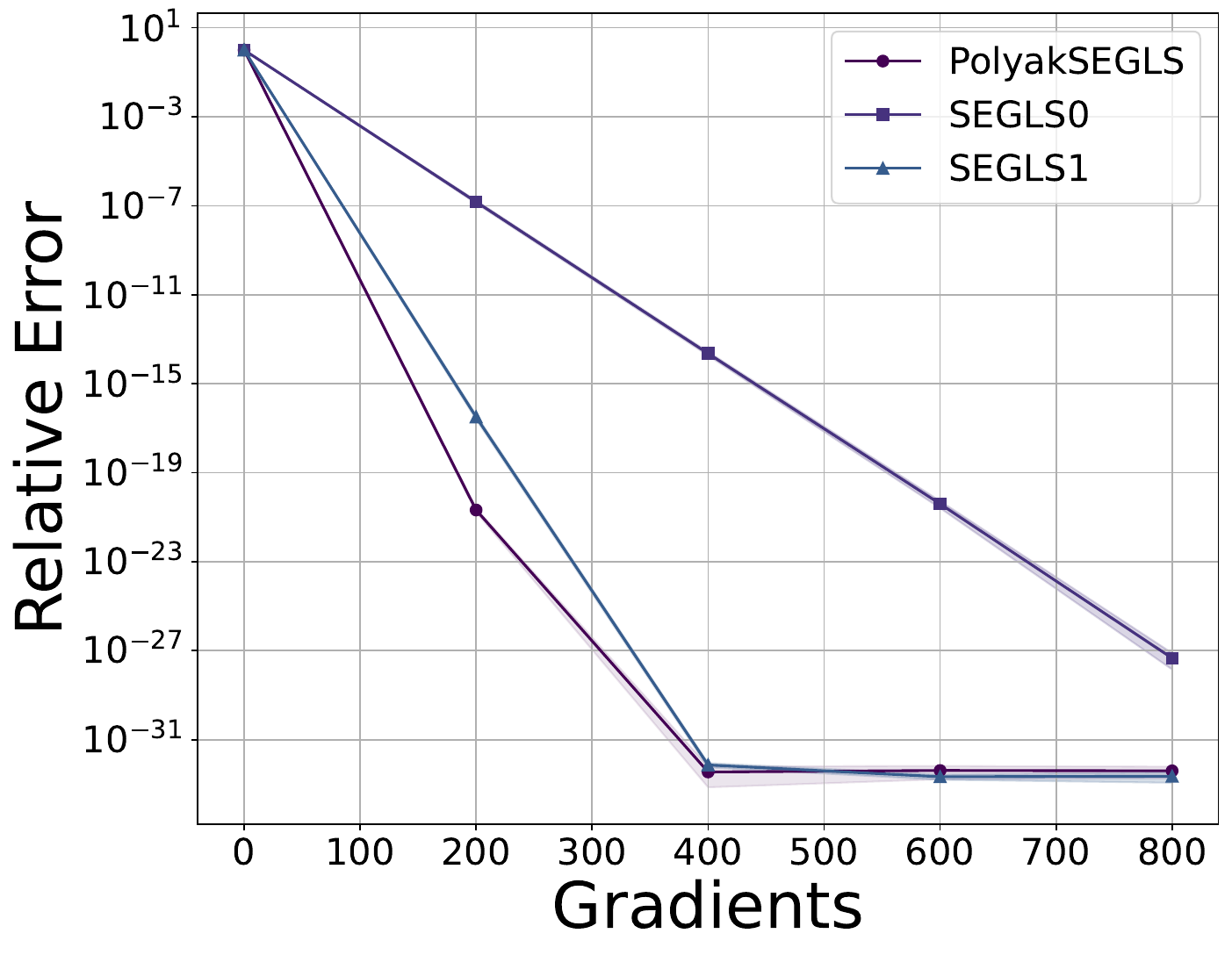}
        \caption{$\nicefrac{L}{\mu} \sim 10$}
        \label{fig:PolyakSEGLS_interpolation_v2}
    \end{subfigure}

    \begin{subfigure}[b]{0.3\textwidth}
        \centering
        \includegraphics[width=\linewidth]{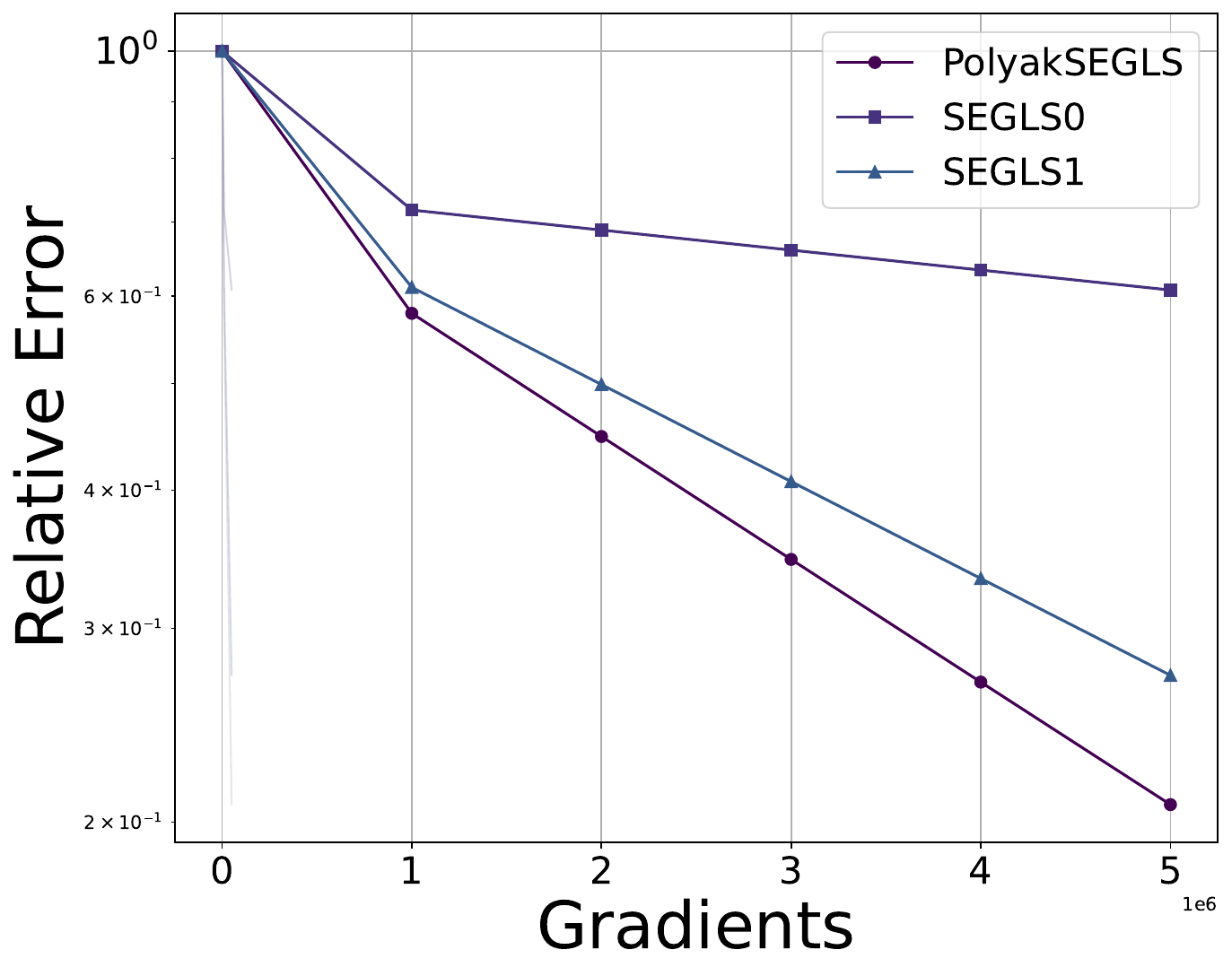}
        \caption{$\nicefrac{L}{\mu} \sim 10^8$}
        \label{fig:PolyakSEGLS_interpolation_v3}
    \end{subfigure}
    \caption{\small We compare the performance of \algname{PolyakSEG-LS} with \algname{SEG} for different line-search strategies. On the $y$-axis, we plot the relative error $\nicefrac{\|x_k - x_*\|^2}{\|x_0 - x_*\|^2}$, whereas on the $x$-axis we have the number of gradient computation. We find that \algname{PolyakSEGLS} outperforms \algname{SEG} for both of these line-search strategies.}\label{fig:PolyakSEGLS}
\end{figure*}

\paragraph{Robustness of \algname{DecPolyakSEG-LS}.} In this experiment, we compare the performance of different stochastic algorithms for varying step size initialization. Here, we focus on the Robust Least Square problem~\cite{el1997robust} with data matrix $\mathbf{A} \in \R^{r \times s}$ and a noisy vector $y_0 \in \R^r$ (with some bounded perturbation). Robust Least Square aims to minimize the worst-case residual and can be formulated as the following min-max optimization problem~\cite{yang2020global}:
\begin{eqnarray}\label{eq:RLS}
    \min_{v \in \R^s} \max_{y \in \R^r} \|\mathbf{A} v - y\|^2 - \lambda \|y - y_0\|^2.
\end{eqnarray}
For this experiment, we use the \textit{diabetes} dataset from \texttt{scikit-learn} for matrix $\mathbf{A}$, while for $y_0$, we generate $v_* \sim \mathcal{N}(0, I_s), \varepsilon \sim \mathcal{N}(0, I_r)$ and set $y_0 = \mathbf{A}v_* + \varepsilon$. Similar to \cite{ward2020adagrad}, we compare \hyperref[alg:decPolyakSEG_linesearch]{\algname{DecPolyakSEG-LS}} with \algname{SEG}, \algname{SDualExtra}, \algname{SOptDualAve} and \algname{AdaSEG} with different initialization of step size. For each of these algorithms, we start with a step size from the grid $\left\{10^{-3}, 10^{-2}, 10^{-1}, 1, 10, 10^{2} \right\}$ and plot $\nicefrac{\|F(\bar{x}_k)\|^2}{\|F(x_0)\|^2}$ after $10^4$ gradient computations in Figure \ref{fig:DecPolyakSEGLS_RL}. We find that \algname{AdaSEG} performs poorly for step sizes greater than $10^{-3}$. For step sizes smaller than $10^{-2}$, \algname{SEG}, \algname{SDualExtra} and \algname{SOptDualAve} perform very well. However, as we increase the step size, \algname{SEG} diverges while the performance of the other two degrades immensely. In contrast, we observe that the performance of \hyperref[alg:decPolyakSEG_linesearch]{\algname{DecPolyakSEG-LS}} remains consistent regardless of step size initialization. This experiment highlights that \textit{our proposed algorithm \hyperref[alg:decPolyakSEG_linesearch]{\algname{DecPolyakSEG-LS}} is robust to step size initialization.} 

\paragraph{Performance Comparison.} This experiment compares the performance of \\
\hyperref[alg:decPolyakSEG_linesearch]{\algname{DecPolyakSEG-LS}} with other stochastic algorithms to solve the Robust Least Square problem~\eqref{eq:RLS}. In Figure \ref{fig:DecPolyakSEGLS_RL_v3}, we plot the trajectories of \hyperref[alg:decPolyakSEG_linesearch]{\algname{DecPolyakSEG-LS}}, \algname{SEG} (Stochastic \algname{EG} with decreasing step size), Stochastic Dual Extrapolation (\algname{SDualExtra}), Stochastic Optimistic Dual Averaging (\algname{SOptDualAve})~\cite{antonakopoulos2021sifting} and Stochastic Adaptive Extragradient (\algname{AdaSEG})~\cite{antonakopoulos2020adaptive} with different initializations from $\left\{ 10^{-2}, 10^{-1}, 1\right\}$. On the $x$-axis, we have the number of gradient computations, while on the $y$-axis, we provide $\nicefrac{\|F(\bar{x}_k)\|^2}{\|F(x_0)\|^2}$. We observe that \hyperref[alg:decPolyakSEG_linesearch]{\algname{DecPolyakSEG-LS}} has outperformed others in solving this problem after $2,000$ iterations. 

\paragraph{Comparison on Matrix Game.} This experiment compares the performance of \hyperref[alg:decPolyakSEG_linesearch]{\algname{DecPolyakSEG-LS}} with other stochastic algorithms to solve the Policeman Burglar matrix game~\cite{nemirovski2013mini}:
\begin{eqnarray*}
    \min_{w_1 \in \Delta} \max_{w_2 \in \Delta} g(w_1, w_2) \eqdef \frac{1}{n} \sum_{i = 1}^n w_1^{\top} \mathbf{A}_i w_2
\end{eqnarray*}
where $\Delta = \left\{ w \in \R^d \mid \mathbf{1}^{\top}w = 1, w \geq 0 \right\}$. This matrix game is a constrained monotone problem, and for each of the algorithms, we project the iterates to $\Delta$ after every iteration. In Figure \ref{fig:DecPolyakSEGLS_matrix}, we plot the trajectories of \hyperref[alg:decPolyakSEG_linesearch]{\algname{DecPolyakSEG-LS}}, \algname{SEG} (Stochastic \algname{EG} with decreasing step size), Stochastic Dual Extrapolation (\algname{SDualExtra}), Stochastic Optimistic Dual Averaging (\algname{SOptDualAve})~\cite{antonakopoulos2021sifting} and Stochastic Adaptive Extragradient (\algname{AdaSEG})~\cite{antonakopoulos2020adaptive} with different initializations from $\left\{ 10^{-4}, 10^{-3}, 10^{-2}, 10^{-1}\right\}$. On the $x$-axis, we have the number of gradient computations, while on $y$-axis, we provide the duality gap. Note that, the duality gap at $(\hat{w}_1, \hat{w}_2)$ is given by $\max_{w_2 \in \Delta} g(\hat{w}_1, w_2) - \min_{w_1 \in \Delta} g(w_1, \hat{w}_2)$. We observe that \hyperref[alg:decPolyakSEG_linesearch]{\algname{DecPolyakSEG-LS}} outperforms all other algorithms for solving this problem.


\chapter{Appendices for Chapter \ref{chap:chap-4}} \label{chap:appendix-c}

\section{Technical Lemmas}\label{appendix:tech_lemma}
In this section, we present some technical lemmas, which will be used to prove the main results of the chapter \ref{chap:chap-4} in subsequent sections.

\begin{lemma}
    For $a, b \in \R^d$, we have
    \begin{equation}\label{eq:inner_prod}
        2 \la a, b\ra = \|a\|^2 + \|b\|^2 - \|a - b\|^2.
    \end{equation}
\end{lemma}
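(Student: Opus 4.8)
The plan is to prove this as the standard polarization identity, working directly from the bilinearity of the inner product on $\R^d$. The key observation is that the term $\|a-b\|^2$ on the right-hand side can be expanded in terms of $\|a\|^2$, $\|b\|^2$, and the cross term $\la a, b\ra$, after which everything cancels to leave exactly $2\la a, b\ra$.

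Concretely, I would first write $\|a-b\|^2 = \la a - b, a - b\ra$ and expand using bilinearity and symmetry of the inner product:
\begin{equation*}
    \|a-b\|^2 = \la a, a\ra - 2\la a, b\ra + \la b, b\ra = \|a\|^2 - 2\la a, b\ra + \|b\|^2.
\end{equation*}
Then I would substitute this expression into the right-hand side of \eqref{eq:inner_prod}, obtaining
\begin{equation*}
    \|a\|^2 + \|b\|^2 - \|a-b\|^2 = \|a\|^2 + \|b\|^2 - \left(\|a\|^2 - 2\la a, b\ra + \|b\|^2\right) = 2\la a, b\ra,
\end{equation*}
which is precisely the left-hand side, completing the argument.

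There is essentially no technical obstacle here: the entire proof rests on a single routine expansion of a squared norm, and no structural assumption on the operators (monotonicity, Lipschitz conditions, etc.) is needed. The only point requiring any care is correctly using the symmetry $\la a, b\ra = \la b, a\ra$ to collapse the two cross terms into $-2\la a, b\ra$; everything else is direct cancellation. I would present the identity in this self-contained two-line form rather than invoking any earlier machinery.
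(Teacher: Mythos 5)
Your proof is correct: expanding $\|a-b\|^2 = \|a\|^2 - 2\la a,b\ra + \|b\|^2$ by bilinearity and symmetry and substituting into the right-hand side is exactly the standard argument. The paper states this identity without proof as a routine fact, so your two-line derivation simply supplies the omitted (and only natural) justification.
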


\begin{lemma}
    For $a, b \in \R^d$, we have
    \begin{equation}\label{eq:inequality1}
        -\|a\|^2 \leq - \frac{1}{2} \|a + b\|^2 + \|b\|^2.  
    \end{equation}
\end{lemma}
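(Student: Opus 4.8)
The claim \eqref{eq:inequality1} is a routine consequence of Young's inequality, so the plan is to reduce it to that form and then discharge the resulting inequality with an elementary expansion. First I would rearrange the target: adding $\|a\|^2$ to both sides and subtracting $\|b\|^2$ shows that \eqref{eq:inequality1} is equivalent to
\begin{equation*}
    \tfrac{1}{2}\|a+b\|^2 \leq \|a\|^2 + \|b\|^2,
\end{equation*}
or after multiplying through by $2$, to the standard bound $\|a+b\|^2 \leq 2\|a\|^2 + 2\|b\|^2$. Since we only ever add quantities and multiply by the positive constant $2$, no inequality directions get flipped, which is the only bookkeeping point to watch.

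Next I would establish this reduced inequality. Expanding the square and invoking the polarization identity \eqref{eq:inner_prod} gives $\|a+b\|^2 = \|a\|^2 + 2\la a, b\ra + \|b\|^2$. It then remains to control the cross term, and here I would use $2\la a, b\ra \leq \|a\|^2 + \|b\|^2$, which itself follows immediately from the nonnegativity of $\|a-b\|^2$ together with \eqref{eq:inner_prod} applied to $a$ and $-b$ (equivalently, from expanding $\|a-b\|^2 \geq 0$). Substituting this bound yields $\|a+b\|^2 \leq 2\|a\|^2 + 2\|b\|^2$, which is exactly what is needed.

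Finally I would divide by $2$ and rearrange back to the original form, recovering $-\|a\|^2 \leq -\tfrac{1}{2}\|a+b\|^2 + \|b\|^2$. I do not anticipate any genuine obstacle here: the statement is a two-line manipulation whose only content is Young's inequality. If one prefers a fully self-contained derivation avoiding even the reformulation step, an alternative is to start from $\tfrac{1}{2}\|a-b\|^2 \geq 0$, expand it via \eqref{eq:inner_prod}, and rearrange directly into \eqref{eq:inequality1}; I would mention this as a remark but carry out the Young's-inequality route as the main argument for clarity.
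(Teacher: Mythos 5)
Your proof is correct. The paper states this lemma without proof, treating it as a standard variant of Young's inequality (it appears again as the $\omega=1$ case of Lemma \ref{lemma:variant-young-ineq} in the appendix for Chapter \ref{chap:chap-3}, also unproved), and your reduction to $\|a+b\|^2 \leq 2\|a\|^2 + 2\|b\|^2$ followed by expansion of the cross term via $\|a-b\|^2 \geq 0$ is exactly the intended elementary argument.
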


\begin{lemma}\label{lemma:reform_integration}\cite{chen2023generalized}
    Operator $F$ is $\alpha$-symmetric $(L_0, L_1)$-Lipschitz if and only if
    \begin{equation}\label{eq:reform_integration}
        \|F(x) - F(y)\| \leq \left( L_0 + L_1 \int_{0}^1 \left\| F(\theta x + (1 - \theta)y)\right\|^{\alpha} d\theta \right) \|x - y\| \qquad \forall x, y \in \R^d.
    \end{equation}
\end{lemma}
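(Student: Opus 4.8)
The plan is to prove the two implications separately, with the forward direction (the integral bound implies the $\max$ bound of Assumption~\ref{asdioa}) being essentially immediate, and the reverse direction carrying all the work. Throughout I fix $x,y\in\R^d$ and treat the quantities along the segment joining them.

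First I would dispatch the easy direction. Since the integrand is nonnegative and we integrate over an interval of length $1$, we have $\int_0^1 \|F(\theta x + (1-\theta)y)\|^\alpha\, d\theta \le \max_{\theta\in[0,1]} \|F(\theta x + (1-\theta)y)\|^\alpha$; that is, the average never exceeds the supremum. Hence the right-hand side of \eqref{eq:reform_integration} is bounded above by that of \eqref{eq:(L0,L1)-Lipschitz}, so \eqref{eq:reform_integration} immediately yields Assumption~\ref{asdioa}.

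For the reverse direction (assuming the $\max$ form \eqref{eq:(L0,L1)-Lipschitz}, derive the sharper integral form \eqref{eq:reform_integration}) I would use a subdivision/Riemann-sum argument. Set $u_i = x + \tfrac{i}{n}(y-x)$ for $i=0,\dots,n$, so that $u_0=x$, $u_n=y$, and $\|u_i - u_{i+1}\| = \tfrac{1}{n}\|x-y\|$. By the triangle inequality, $\|F(x)-F(y)\| \le \sum_{i=0}^{n-1}\|F(u_i)-F(u_{i+1})\|$. Applying \eqref{eq:(L0,L1)-Lipschitz} to each consecutive pair and noting that $\{\theta u_i + (1-\theta)u_{i+1} : \theta\in[0,1]\}$ is exactly the subsegment $[u_i,u_{i+1}]\subseteq[x,y]$, I would obtain
\begin{equation*}
    \|F(x)-F(y)\| \le \|x-y\|\left( L_0 + \frac{L_1}{n}\sum_{i=0}^{n-1} M_i^{(n)} \right), \qquad M_i^{(n)} \eqdef \sup_{w\in[u_i,u_{i+1}]} \|F(w)\|^\alpha .
\end{equation*}
Writing $g(s) \eqdef \|F((1-s)x + sy)\|^\alpha$ for $s\in[0,1]$, the sum $\frac{1}{n}\sum_{i=0}^{n-1} M_i^{(n)}$ is precisely the upper Darboux sum of $g$ over the uniform partition of $[0,1]$. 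Letting $n\to\infty$, the upper sums converge to $\int_0^1 g(s)\,ds$, and the substitution $\theta = 1-s$ rewrites this as $\int_0^1 \|F(\theta x + (1-\theta)y)\|^\alpha\, d\theta$, giving exactly \eqref{eq:reform_integration}. The sharpening from $\max$ to the integral is geometric in origin: over each shrinking subsegment the local maximum approaches the pointwise value, so the averaged sum of local maxima converges to the integral rather than to the (generally larger) global maximum.

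The main obstacle is justifying the convergence of the upper Darboux sums, which requires $g$ — equivalently $\|F(\cdot)\|^\alpha$ restricted to the segment $[x,y]$ — to be Riemann integrable, and this rests on continuity (local boundedness) of $F$ along segments. In the setting of Theorem~\ref{thm:equiv_formulation} the operator $F$ is differentiable, hence continuous, so $g$ is continuous and its upper sums converge; for a fully general operator I would first show that the condition \eqref{eq:(L0,L1)-Lipschitz} itself forces $\|F\|$ to stay bounded along short segments via a self-referential (Gr\"onwall-type) estimate, thereby yielding continuity. With integrability secured, the passage to the limit and the change of parametrization are routine.
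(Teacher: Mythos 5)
The paper does not actually prove this lemma: it is imported verbatim from \cite{chen2023generalized} as a technical lemma, so there is no in-paper argument to compare against. Your proof is correct and is, in substance, the standard argument used in that reference for the minimization analogue. The easy direction is exactly as you say: the average of a nonnegative function over $[0,1]$ is bounded by its supremum, so \eqref{eq:reform_integration} implies \eqref{eq:(L0,L1)-Lipschitz}. For the converse, the telescoping/subdivision step is sound: with $u_i = x + \tfrac{i}{n}(y-x)$ the set $\{\theta u_i + (1-\theta)u_{i+1}\}$ is precisely the subsegment corresponding to $s\in[\tfrac{i}{n},\tfrac{i+1}{n}]$ under $w=(1-s)x+sy$, so $\tfrac{1}{n}\sum_i M_i^{(n)}$ is the upper Darboux sum of $g(s)=\|F((1-s)x+sy)\|^{\alpha}$ on the uniform partition, and the limit is the claimed integral after the change of variable $\theta=1-s$.

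The only point worth tightening is the integrability of $g$, and here your proposed Gr\"onwall-type bootstrap is more machinery than is needed. The hypothesis \eqref{eq:(L0,L1)-Lipschitz} already presupposes that $M \eqdef \max_{\theta\in[0,1]}\|F(\theta x+(1-\theta)y)\|^{\alpha}$ is finite for the fixed pair $x,y$; applying the same hypothesis to any two points $u,v$ on the segment $[x,y]$ (whose connecting subsegment lies inside $[x,y]$) gives $\|F(u)-F(v)\|\le (L_0+L_1 M)\|u-v\|$ directly. Hence $F$ restricted to $[x,y]$ is Lipschitz, so $g$ is continuous on $[0,1]$ and Riemann integrable, and the upper Darboux sums converge to $\int_0^1 g$. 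No self-referential estimate or appeal to differentiability is required. With that substitution your argument is complete.
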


\begin{lemma}
    For a $2 \times 2$ symmetric matrix, the maximum eigenvalue is given by
    \begin{equation}\label{eq:eigen_2dmatrix}
        \lambda_{\max} \left( 
        \begin{bmatrix}
            a & b \\
            b & d
        \end{bmatrix}
        \right) = \frac{(a + d) + \sqrt{(a - d)^2 + 4b^2}}{2}
    \end{equation}
    where $a, b, d \in \R$.
\end{lemma}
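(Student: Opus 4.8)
The plan is to compute both eigenvalues directly from the characteristic polynomial and then select the larger root. First I would set $M = \begin{bmatrix} a & b \\ b & d \end{bmatrix}$ and write down the characteristic equation $\det(M - \lambda I) = 0$. Expanding the determinant gives $(a - \lambda)(d - \lambda) - b^2 = 0$, which rearranges into the quadratic
\begin{equation*}
    \lambda^2 - (a + d)\lambda + (ad - b^2) = 0.
\end{equation*}

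Next I would apply the quadratic formula to solve for $\lambda$, obtaining $\lambda = \frac{(a+d) \pm \sqrt{(a+d)^2 - 4(ad - b^2)}}{2}$. The key algebraic step is simplifying the discriminant: expanding yields $(a+d)^2 - 4(ad - b^2) = a^2 + 2ad + d^2 - 4ad + 4b^2 = (a-d)^2 + 4b^2$. Since this is a sum of squares it is nonnegative, so both roots are real, which is consistent with the spectral theorem for symmetric matrices. Substituting back gives the two eigenvalues $\lambda = \frac{(a+d) \pm \sqrt{(a-d)^2 + 4b^2}}{2}$.

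Finally, because $\sqrt{(a-d)^2 + 4b^2} \ge 0$, the root taken with the $+$ sign is never smaller than the root taken with the $-$ sign, so the maximum eigenvalue is exactly $\lambda_{\max} = \frac{(a+d) + \sqrt{(a-d)^2 + 4b^2}}{2}$, which is the claimed formula in \eqref{eq:eigen_2dmatrix}.

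I do not anticipate any genuine obstacle in this argument, as it is an elementary computation; the only point requiring care is the algebraic rewriting of the discriminant from $(a+d)^2 - 4(ad-b^2)$ into the manifestly nonnegative form $(a-d)^2 + 4b^2$, which both guarantees real eigenvalues and produces the expression appearing under the square root in the statement.
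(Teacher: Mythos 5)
Your proposal is correct and follows essentially the same route as the paper: form the characteristic polynomial $\lambda^2 - (a+d)\lambda + (ad - b^2) = 0$, solve it with the quadratic formula, rewrite the discriminant as $(a-d)^2 + 4b^2$, and take the larger root. The only difference is that you spell out the discriminant simplification explicitly, which the paper leaves implicit.
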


\begin{proof}
        Let $A$ be a symmetric $2 \times 2$ matrix given by
        \[
        A = \begin{bmatrix}
        a & b \\
        b & d
        \end{bmatrix}
        \]
        where $a, b, d \in \R$. Since $A$ is symmetric, it has real eigenvalues. The eigenvalues of $A$ are the roots of its characteristic polynomial:
        \[
        \det(A - \lambda I) = 
        \det\left(
        \begin{bmatrix}
        a - \lambda & b \\
        b & d - \lambda
        \end{bmatrix}
        \right)
        = (a - \lambda)(d - \lambda) - b^2.
        \]
        
        Expanding the determinant, we obtain the characteristic equation:
        \[
        \lambda^2 - (a + d)\lambda + (ad - b^2) = 0.
        \]
        
        This is a quadratic equation in $\lambda$, and its solutions are:
        \[
        \lambda = \frac{(a + d) \pm \sqrt{(a - d)^2 + 4b^2}}{2}.
        \]
        
        Thus, the maximum eigenvalue is the larger of the two roots:
        \[
        \lambda_{\max}(A) = \frac{(a + d) + \sqrt{(a - d)^2 + 4b^2}}{2}.
        \]
        
        This completes the proof.
\end{proof}

\begin{lemma}\label{lemma:jacobian_norm_quad}
    For the quadratic problem $\min_{w_1} \max_{w_2} \mathcal{L}(w_1, w_2) = \frac{1}{2} w_1^2 + w_1 w_2 - \frac{1}{2}w_2^2$, the Jacobian $\mathbf{J}(x)$ is given by
    \begin{eqnarray*}
        \mathbf{J}(x) = \begin{bmatrix}
            1 & 1 \\
            -1 & 1
        \end{bmatrix}.
    \end{eqnarray*}
    In this case we get $\| \mathbf{J}(x)\| = \sigma_{\max} \left(\mathbf{J}(x) \right) = \sqrt{2}$.
\end{lemma}

\begin{proof}
    Note that 
    \begin{eqnarray*}
        \mathbf{J}(x)^{\top} \mathbf{J}(x) = \begin{bmatrix}
            2 & 0 \\
            0 & 2
        \end{bmatrix} = 2 \mathbf{I}.
    \end{eqnarray*}
    which has maximum eigenvalue $\sqrt{2}$. Hence, $\|\mathbf{J}(x)\| = \sigma_{\max}(\mathbf{J}(x)) = \sqrt{\lambda_{\max}(\mathbf{J}(x)^{\top} \mathbf{J}(x))}= \sqrt{2}.$
\end{proof}

\newpage
\section{Further Examples of $\alpha$-Symmetric $(L_0, L_1)$-Lipschitz Operators}\label{appendix:examples}
In this section, we provide further examples of operators that satisfy the $\alpha$-symmetric $(L_0, L_1)$-Lipschitz assumption. We first start with the details of Example 2 from Section \ref{sec:examples}, then we provide two more examples for the min-max optimization problem and $N$-player game that satisfy the assumption.\\
\newline
\textbf{Example 2:}\label{subsec:example2} Here we consider the operator $F(x) = (u_1^2, u_2^2)$ for $x = (u_1, u_2)$. Then for $y = (v_1, v_2)$, we have
\begin{eqnarray*}
    \|F(x) - F(y)\| & = & \left\|\left(u_1^2 - v_1^2, u_2^2 - v_2^2 \right) \right\| \\
    & = & \left( \left( u_1^2 - v_1^2\right)^2 + \left(u_2^2 - v_2^2 \right)^2 \right)^{\nicefrac{1}{2}} \\
    & = & \left( \left( u_1 - v_1\right)^2 \left( u_1 + v_1\right)^2 + \left(u_2 - v_2 \right)^2 \left(u_2 + v_2 \right)^2 \right)^{\nicefrac{1}{2}} \\
    & \leq & \left( (u_1 - v_1)^4 + (u_2 - v_2)^4 \right)^{\nicefrac{1}{4}} \left( (u_1 + v_1)^4 + (u_2 + v_2)^4 \right)^{\nicefrac{1}{4}} \\
    & \leq & \left( (u_1 - v_1)^2 + (u_2 - v_2)^2 \right)^{\nicefrac{1}{2}} \left( (u_1 + v_1)^4 + (u_2 + v_2)^4 \right)^{\nicefrac{1}{4}} \\
    & = &  \left( (u_1 + v_1)^4 + (u_2 + v_2)^4 \right)^{\nicefrac{1}{4}} \|x - y\| \\
    & = & 2 \left(\left(\frac{u_1 + v_1}{2} \right)^4 + \left(\frac{u_2 + v_2}{2} \right)^4 \right)^{\nicefrac{1}{4}} \|x - y\| \\
    & = & 2 \left\| \left( \frac{u_1 + v_1}{2}\right)^2, \left( \frac{u_2 + v_2}{2}\right)^2 \right\|^{\nicefrac{1}{2}} \|x - y\| \\
    & = & 2 \left\| F \left(\frac{x+y}{2} \right)\right\|^{\nicefrac{1}{2}} \|x - y\| \\
    & \leq & 2 \max_{\theta \in [0, 1]} \left\|  F \left( \theta x + (1 - \theta) y \right)\right\|^{\nicefrac{1}{2}} \|x - y\|.
\end{eqnarray*}
Here, the first inequality follows from the Cauchy-Schwarz inequality. This completes the proof of $\frac{1}{2}$-symmetric-$(0, 2)$ Lipschitz property of $F$. Now, we consider the vectors $x = \alpha \mathbf{1}_2$ and $y = \mathbf{1}_2$ where $\mathbf{1}_2 = (1, 1)$. Then we have 
\begin{eqnarray*}
    \frac{\| F(x) - F(y)\|}{\| x - y\|} & = & \frac{\sqrt{(\alpha^2 - 1)^2 + (\alpha^2 - 1)^2}}{\sqrt{(\alpha - 1)^2 + (\alpha - 1)^2}} \\
    & = & \frac{\sqrt{2(\alpha^2 - 1)^2}}{\sqrt{2(\alpha - 1)^2}} \\
    & = & \sqrt{\frac{(\alpha - 1)^2 (\alpha + 1)^2}{(\alpha - 1)^2}} \\ 
    & = & \lvert \alpha + 1 \rvert.
\end{eqnarray*}
Therefore, 
\begin{eqnarray*}
    \lim_{\alpha \to \infty} \frac{\| F(x) - F(y)\|}{\| x - y\|} & = & \lim_{\alpha \to \infty} \lvert \alpha + 1 \rvert \\
    & = & \infty
\end{eqnarray*}

\textbf{Example 3:}
Min-max optimization problems can be studied using variational inequality formulations. Here we consider one such example, the bilinearly coupled min-max optimization~\cite{chambolle2016ergodic} problem
\begin{equation*}
    \min_{\|w_1 \| \leq R} \max_{\|w_2 \| \leq R} \mathcal{L}(w_1, w_2) \eqdef f(w_1) + w_1^{\top} \B w_2 - g(w_2)
\end{equation*}
for matrix $\B \in \R^{d \times d}$ and functions $f, g: \R^d \to \R$. The associated operator for this problem is given by $F(x) = H(x) + \M x$ where
\begin{eqnarray*}
\textstyle
    H(x) = \begin{bmatrix}
        \nabla f(w_1) \\
        \nabla g(w_2)
    \end{bmatrix} \quad \text{ and } \quad \M = \begin{bmatrix}
        0 & \B \\
        - \B^{\top} & 0
    \end{bmatrix}.
\end{eqnarray*}
If $f, g$ are individually $(L_0, L_1)$-smooth~\cite{zhang2019gradient}, we show that
\begin{eqnarray*}
\textstyle
    \|F(x) - F(y)\| & \leq & \left( 2 L_0 + (1 + 2 L_1 R) \| \M \|+ \sqrt{2} L_1 \|F(x)\| \right) \|x - y\|.
\end{eqnarray*}
Thus, $F$ is $1$-symmetric $ \left(2 L_0 + (1 + 2 L_1 R) \| \M \|, \sqrt{2} L_1 \right)$-Lipschitz. Consider the min-max problem
\begin{equation*}
    \min_{\|w_1\| \leq R} \max_{\|w_2\| \leq R} \mathcal{L}(w_1, w_2) \eqdef f(w_1) + w_1^{\top} \B w_2 - g(w_2).
\end{equation*}
where $f, g$ are $(L_0, L_1)$-smooth. Then for $x = (w_1, w_2)$ we have
\begin{eqnarray*}
    F(x) = \begin{pmatrix}
        \nabla f(w_1) \\
        \nabla g(w_2)
    \end{pmatrix} + \begin{pmatrix}
        0 & \B \\
        - \B^{\top} & 0
    \end{pmatrix} \begin{pmatrix}
        w_1 \\
        w_2
    \end{pmatrix} = H(x) + \M x
\end{eqnarray*}
where $\M$ is a matrix and $H$ is an operator. Now note that for $x = (w_1, w_2)$ and $y = (v_1, v_2)$ we get\
\newpage
\begin{align*}
    & \|H(x) - H(y)\| \\
    = & \left\| \begin{bmatrix}
        \nabla f(w_1) - \nabla f(v_1) \\
        \nabla g(w_2) - \nabla g(v_2)
    \end{bmatrix}\right\| \\
    = & \left( \| \nabla f(w_1) - \nabla f(v_1)\|^2 + \| g(w_2) - g(v_2)\|^2 \right)^{\nicefrac{1}{2}} \\
    \leq & \left( \left(L_0 + L_1 \| \nabla f(w_1)\| \right)^2 \| w_1 - v_1\|^2 + \left(L_0 + L_1 \| \nabla g(w_2)\| \right)^2\| w_2 - v_2 \|^2 \right)^{\nicefrac{1}{2}} \\
    \leq & \left( \left(L_0 + L_1 \| \nabla f(w_1)\| \right)^4 + \left(L_0 + L_1 \| \nabla g(w_2)\| \right)^4 \right)^{\nicefrac{1}{4}} \left( \|w_1 - v_1\|^4 + \|w_2 - v_2\|^4 \right)^{\nicefrac{1}{4}} \\
    \leq & \left( \left(L_0 + L_1 \| \nabla f(w_1)\| \right)^2 + \left(L_0 + L_1 \| \nabla g(w_2)\| \right)^2 \right)^{\nicefrac{1}{2}} \left( \|w_1 - v_1\|^2 + \|w_2 - v_2\|^2 \right)^{\nicefrac{1}{2}} \\
    \leq & \left( 4 L_0^2 + 2L_1^2 \left(\| \nabla f(w_1) \|^2 + \| \nabla g(w_2)\|^2 \right) \right)^{\nicefrac{1}{2}} \left( \|w_1 - v_1\|^2 + \|w_2 - v_2\|^2 \right)^{\nicefrac{1}{2}} \\
    = & \left( 4 L_0^2 + 2L_1^2 \|H(x)\|^2 \right)^{\nicefrac{1}{2}} \|x - y\| \\
    \leq & \left( 2 L_0 + \sqrt{2} L_1 \|H(x)\| \right) \|x - y\|
\end{align*}

Therefore, using the above inequality, we derive
\begin{eqnarray*}
    \|F(x) - F(y)\| & \leq & \|H(x) - H(y)\| + \|\M x - \M y\| \\
    & \leq & \left( 2 L_0 + \sqrt{2} L_1 \|H(x)\| \right) \|x - y\|+ \|\M\| \|x - y\| \\
    & \leq & \left( 2 L_0 + \|\M\| + \sqrt{2} L_1 \|H(x)\| \right) \|x - y\| 
\end{eqnarray*}
Now using $\|H(x)\| = \|H(x) + \M x - \M x\| \leq \|F(x)\| + \| \M \| \|x\| \leq \|F(x)\| + \sqrt{2}R \|\M \|$ we get
\begin{eqnarray*}
    \|F(x) - F(y)\| & \leq & \left( 2 L_0 + (1 + 2L_1 R)\|\M\| + \sqrt{2} L_1 \|F(x)\| \right) \|x - y\|.
\end{eqnarray*}

\textbf{Example 4:}
In this example, we consider an $N$-player game~\cite{balduzzi2018mechanics, loizou2021stochastic, yoon2025multiplayer}, where each player $i \in [N]$ selects an action $w_i \in \R^{d_i}$, and the joint action vector of all players is denoted as $x = (w_1, w_2, \cdots, w_N) \in \R^{d_1 + \cdots d_N}$. Each player $i$ aims to minimize their loss function $f_i$ for their action $w_i$. The objective is to find an equilibrium $x_* = (w_{1*}, w_{2*}, \cdots, w_{N*})$ such that  
\begin{eqnarray*}
    w_{i*} = \argmin_{w_i \in \R^{d_i}} f_i (w_i, w_{-i*}).
\end{eqnarray*}
Here we abuse the notation to denote $w_{-i} = (w_1, \cdots, w_{i - 1}, w_{i+1}, \cdots, w_N)$ and $f_i(w_i, w_{-i}) = f_i (w_1, \cdots, w_N)$. When the functions $f_i$ are convex, this equilibrium corresponds to solving $F(x_*) = 0$, where the operator $F$ is defined as  
\begin{eqnarray*}
    F(x) = \left( \nabla_1 f_1 (x), \nabla_2 f_2 (x), \dots, \nabla_N f_N(x) \right).
\end{eqnarray*}

In case each of these partial gradients $\nabla_i f_i$ are $(L_0, L_1)$-Lipschitz i.e.
\begin{eqnarray*}
    \| \nabla_i f_i(x) - \nabla_i f_i (y)\| & \leq & (L_0 + L_1 \| \nabla_i f_i (x)\|) \| x - y\|
\end{eqnarray*}
then we obtain
\begin{eqnarray*}
    \| F(x) - F(y) \|^2 & = & \sum_{i = 1}^N \| \nabla_i f_i(x) - \nabla_i f_i(y) \|^2 \\
    & \leq & \sum_{i = 1}^N \left( L_0 + L_1 \left\| \nabla_i f_i(x) \right\| \right)^2 \|x - y\|^2 \\
    & \leq & \|x - y\|^2 \sum_{i = 1}^N (2L_0^2 + 2 L_1^2 \| \nabla_i f_i (x)\|^2) \\
    & = & \|x - y\|^2 (2N L_0^2 + 2 L_1^2 \| F(x)\|^2) \\
    & \leq & \|x - y\|^2 ( \sqrt{2N}L_0 + \sqrt{2} L_1 \| F(x)\|)^2.
\end{eqnarray*}
This completes the proof. 

\newpage
\section{Convergence Analysis}\label{appendix:convergence_analysis}
In this section, we present the missing proofs from Section \ref{sec:convergence_analysis}. We start with the proof of Proposition \ref{prop:equiv_formulation} and then provide the results related to strongly monotone, monotone and weak Minty problems. 

\subsection{Proof of Proposition \ref{prop:equiv_formulation}}

\begin{proposition}
    Suppose $F$ is $\alpha$-symmetric $(L_0, L_1)$-Lipschitz operator. Then, for $\alpha = 1$
    \begin{equation*}
    \textstyle
        \| F(x) - F(y) \| \leq (L_0 + L_1 \| F(x)\|) \exp{(L_1 \|x - y\|)} \| x- y \|,
    \end{equation*}
    and for $\alpha \in (0, 1)$ we have
    \begin{equation*}
    \textstyle
        \| F(x) - F(y) \| \leq \left(K_0 + K_1 \|F(x)\|^{\alpha} + K_2 \|x - y\|^{\nicefrac{\alpha}{1 - \alpha}} \right) \|x - y\|
    \end{equation*}
    where $K_0 = L_0 (2^{\nicefrac{\alpha^2}{1 - \alpha}} + 1)$, $K_1 = L_1 \cdot 2^{\nicefrac{\alpha^2}{1 - \alpha}}$ and $K_2 = L_1^{\nicefrac{1}{1 - \alpha}} \cdot 2^{\nicefrac{\alpha^2}{1 - \alpha}} \cdot 3^{\alpha} (1 - \alpha)^{\nicefrac{\alpha}{1 - \alpha}}$.
\end{proposition}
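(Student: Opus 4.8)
The plan is to reduce the two-point statement to a one-dimensional Grönwall-type argument along the segment joining $x$ and $y$. Fix $x,y\in\R^d$, write $D=\|x-y\|$, parametrize the segment by $y_t = x + t(y-x)$ for $t\in[0,1]$ (so $y_0=x$, $y_1=y$), and set $h(t)=\|F(y_t)\|$, which is continuous since $F$ is. Applying the integral reformulation of Lemma~\ref{lemma:reform_integration} to the pair $(x,y_t)$ and changing variables in the resulting integral over the subsegment $[x,y_t]$ (which identifies $\int_0^1\|F(\theta x+(1-\theta)y_t)\|^\alpha\,d\theta$ with $\tfrac1t\int_0^t h(s)^\alpha\,ds$ and $\|y_t-x\|$ with $tD$), I obtain the scalar inequality
$$h(t) \le \|F(x)\| + L_0 D t + L_1 D \int_0^t h(s)^\alpha\,ds, \qquad t\in[0,1].$$
Once $h$ is controlled on $[0,1]$, the conclusion follows by feeding the bound back into Assumption~\ref{asdioa} in its $\max$ form, since the points $\theta x+(1-\theta)y$ are exactly the $y_s$, $s\in[0,1]$.

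For $\alpha=1$ the displayed inequality is linear. The key observation is that tracking $A(t) := L_0 + L_1 h(t)$ rather than $h$ makes the inhomogeneous $L_0$-terms cancel: substituting $h=(A-L_0)/L_1$ turns the inequality into $A(t)\le A(0)+L_1 D\int_0^t A(s)\,ds$, whence Grönwall gives $A(t)\le A(0)\,e^{L_1 D t}$, that is $L_0+L_1 h(t)\le (L_0+L_1\|F(x)\|)e^{L_1 D t}$. Since $L_0+L_1\max_{s}h(s)=\max_s A(s)\le A(0)e^{L_1 D}$, the $\max$ form of Assumption~\ref{asdioa} yields $\|F(x)-F(y)\|\le (L_0+L_1\|F(x)\|)e^{L_1 D}D$, which is exactly \eqref{eq:alpha=1}.

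For $\alpha\in(0,1)$ the integral inequality is nonlinear, so the plan is to invoke the Bihari--LaSalle (nonlinear Grönwall) inequality with the nondecreasing comparison function $w(u)=u^\alpha$. Bounding the affine term $\|F(x)\|+L_0 Dt$ by the constant $c:=\|F(x)\|+L_0 D$ on $[0,1]$ gives the pointwise estimate $h(t)\le\big(c^{\,1-\alpha}+(1-\alpha)L_1 D t\big)^{1/(1-\alpha)}$. Substituting this into the $\max$ form of Assumption~\ref{asdioa}, raising to the power $\alpha$, and then separating the three contributions with elementary power inequalities---subadditivity of $r\mapsto r^{1-\alpha}$ to split $c^{\,1-\alpha}\le\|F(x)\|^{1-\alpha}+(L_0 D)^{1-\alpha}$, followed by a bound of the form $(p+q+r)^{\alpha/(1-\alpha)}\le(\cdots)$---produces the stated coefficients $K_0,K_1,K_2$ and gives \eqref{eq:alpha(0,1)}.

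The main obstacle is the $\alpha\in(0,1)$ case: carrying the constants through the power inequalities so that they match $K_0,K_1,K_2$ exactly (in particular the $2^{\alpha^2/(1-\alpha)}$, $3^\alpha$, and $(1-\alpha)^{\alpha/(1-\alpha)}$ factors) requires choosing the splitting carefully, and the exponent $\alpha/(1-\alpha)$ crosses $1$ at $\alpha=\tfrac12$, so the power inequality used must remain valid for all admissible $\alpha$. A secondary technical point is rigor in the Grönwall step: since $h$ is only known to be continuous, I would use the integral (not differential) forms of both the linear and nonlinear Grönwall inequalities, which is precisely what the reformulation in Lemma~\ref{lemma:reform_integration} supplies.
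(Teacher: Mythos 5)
Your treatment of the $\alpha=1$ case is correct and is essentially the paper's own argument: the paper likewise integrates along the segment via the reformulation of Lemma~\ref{lemma:reform_integration}, sets $H(\theta)=L_0\theta+L_1\int_0^\theta\|F(x_u)\|\,du$, derives $H'\le aH+b$, and applies Gr\"onwall together with $e^z-1\le ze^z$. Your substitution $A(t)=L_0+L_1h(t)$ is simply a cleaner bookkeeping of the same cancellation and lands on the identical bound (the paper's version carries $\|F(y)\|$ in place of $\|F(x)\|$, which is immaterial by the symmetry of the assumption).

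For $\alpha\in(0,1)$ the paper gives no proof (it defers entirely to \cite{chen2023generalized}), and your sketch has a concrete gap precisely at the step you flag as the obstacle. With $D=\|x-y\|$ and $c=\|F(x)\|+L_0D$, the Bihari bound forces the expansion $c^{1-\alpha}\le\|F(x)\|^{1-\alpha}+(L_0D)^{1-\alpha}$, and after raising to the power $\alpha/(1-\alpha)$ the second summand contributes a term of order $L_1(L_0\|x-y\|)^{\alpha}\,\|x-y\|$. The power $\|x-y\|^{\alpha}$ fits none of the three slots $K_0$, $K_1\|F(x)\|^{\alpha}$, $K_2\|x-y\|^{\nicefrac{\alpha}{1-\alpha}}$; absorbing it via $D^{\alpha}\le 1+D^{\nicefrac{\alpha}{1-\alpha}}$ injects an additive $L_1L_0^{\alpha}$ into $K_0$, whereas the stated $K_0=L_0(2^{\nicefrac{\alpha^2}{1-\alpha}}+1)$ is independent of $L_1$. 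So your splitting cannot reproduce the claimed constants. The repair is to fold $L_0$ into the nonlinearity \emph{before} invoking Bihari: by concavity, $p^{\alpha}+q^{\alpha}\le 2^{1-\alpha}(p+q)^{\alpha}$, hence $L_0+L_1h^{\alpha}\le 2^{1-\alpha}L_1\tilde h^{\alpha}$ with $\tilde h = h+(L_0/L_1)^{\nicefrac{1}{\alpha}}$; running Bihari on $\tilde h$ makes the $L_0$-contribution re-emerge as $\bigl((L_0/L_1)^{\nicefrac{(1-\alpha)}{\alpha}}\bigr)^{\nicefrac{\alpha}{1-\alpha}}=L_0/L_1$, which cancels the $L_1$ prefactor and yields a $K_0$ proportional to $L_0$ alone, with $K_1\propto L_1$ and $K_2\propto L_1^{\nicefrac{1}{1-\alpha}}(1-\alpha)^{\nicefrac{\alpha}{1-\alpha}}$ as required. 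Even then, matching the exact factors $2^{\nicefrac{\alpha^2}{1-\alpha}}$ and $3^{\alpha}$ requires reproducing the particular chain of power inequalities in \cite{chen2023generalized}; you should either carry that out explicitly or state the inequality with your own (possibly larger) constants and note the discrepancy.
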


\begin{proof}
For proving this theorem, we follow the proof technique similar to \cite{chen2023generalized}. We start with $\alpha = 1$ case. Let $x, y \in \mathbb{R}^d$ and define $x_\theta := \theta x + (1 - \theta) y$. Since $F$ is $1$-symmetric $(L_0, L_1)$-Lipschitz , we have for all $\theta \in [0, 1]$,
$$
\| F(x_\theta) - F(y) \| \overset{\eqref{eq:reform_integration}}{\leq} \left(L_0 + L_1 \int_0^1 \| F(x_{\theta \tau}) \| d\tau \right) \| x_\theta - y \|.
$$
Note that
$$
x_{\theta \tau} = \tau x_\theta + (1 - \tau) y = \tau (\theta x + (1 - \theta) y) + (1 - \tau) y = \theta \tau x + (1 - \theta \tau) y.
$$
Let us define a function
$$
H(\theta) := L_0 \theta + L_1 \int_0^\theta \| F(x_u) \| du.
$$

Then, note that $H'(\theta) = L_0 + L_1 \| F(x_\theta) \|$. Moreover, we have
\begin{eqnarray*}
    \| F(x_\theta) - F(y) \| & \leq & \left(L_0 + L_1 \int_0^1 \| F(x_{\theta \tau}) \| d\tau \right) \| x_\theta - y \| \\
    & = & \left(L_0 + L_1 \int_0^1 \| F(x_{\theta \tau}) \| d\tau \right) \| \theta x + (1 - \theta) y - y \| \\
    & = & \left(L_0 + L_1 \int_0^1 \| F(x_{\theta \tau}) \| d\tau \right) \| \theta x - \theta y \| \\
    & = & \left(L_0 \theta + L_1 \int_0^1 \| F(x_{\theta \tau}) \| \theta d\tau \right) \| x -  y \| \\
    & = & \left(L_0 \theta + L_1 \int_0^1 \| F(\theta \tau x + (1 - \theta \tau) y) \| \theta d\tau \right) \| x -  y \| \\
    & = & \left(L_0 \theta + L_1 \int_0^{\theta} \| F(u x + (1 - u) y) \| du \right) \| x -  y \| \\
    & = & \left(L_0 \theta + L_1 \int_0^{\theta} \| F(x_u) \| du \right) \| x -  y \| \\
    & = & H(\theta) \| x -  y \|.
\end{eqnarray*}
Therefore we obtain
\begin{eqnarray*}
    H'(\theta) & = & L_0 + L_1 \| F(x_\theta) \|\\ 
    & \leq & L_0 + L_1 \left(\|F(x_\theta) - F(y)\| + \|F(y)\|\right)\\
    & \le & L_0 + L_1 \left(H(\theta) \|x - y\| + \|F(y)\|\right) \\
    & = & a H(\theta) + b,
\end{eqnarray*}
where  $a = L_1 \|x - y\|, b = L_0 + L_1 \|F(y)\|$.
Then we integrate both sides for $\theta \in [0, \theta']$ to get
$$
H(\theta') \le \frac{b}{a} (e^{a \theta'} - 1) .
$$
Here, we set $\theta' = 1$ to obtain
\begin{eqnarray*}
    H(1) &\le & \frac{b}{a} (e^{a} - 1) \\
    & = & \frac{L_0 + L_1 \|F(y)\|}{L_1 \|x - y\|} (e^{L_1 \|x - y\|} - 1).    
\end{eqnarray*}
Now, put this back into the original inequality
\begin{eqnarray*}
    \|F(x) - F(y)\| & \le & H(1) \|x - y\| \\
    & \le &  \left( L_0 + L_1 \|F(y)\| \right) \cdot \frac{e^{L_1 \|x - y\|} - 1}{L_1} \\
    & = & \left( \frac{L_0}{L_1} + \|F(y)\| \right) (e^{L_1 \|x - y\|} - 1).    
\end{eqnarray*}

Finally, using the inequality $e^z - 1 \le z e^z$ for $z \ge 0$, we get
\begin{eqnarray*}
    \|F(x) - F(y)\| & \le & \left( \frac{L_0}{L_1} + \|F(y)\| \right) L_1 \|x - y\| e^{L_1 \|x - y\|} \\
    & \leq & (L_0 + L_1 \|F(y)\|) e^{L_1 \|x - y\|} \|x - y\|
\end{eqnarray*}
This completes the proof for $\alpha = 1$. The proof for $\alpha \in (0, 1)$ follows similarly from \cite{chen2023generalized}. 
\end{proof}

\subsection{Convergence Guarantees for Strongly Monotone Operators}

\subsubsection{Proof of Theorem \ref{theorem:1symm_strongmonotone}}
\begin{theorem}
    Suppose $F$ is $\mu$-strongly monotone and $1$-symmetric $(L_0, L_1)$-Lipschitz operator. Then Extragradient method with step size $\gamma_k = \frac{\nu}{L_0 + L_1 \|F(x_k)\|}$ satisfy
    \begin{eqnarray*}
        \|x_{k+1} - x_*\|^2 & \leq & \left( 1 - \frac{\nu \mu}{L_0 \left( 1 + L_1 \exp{\left( L_1 \|x_0 - x_*\|\right)} \|x_0 - x_* \|\right)} \right)^{k+1} \|x_0 - x_*\|^2
    \end{eqnarray*}
    where $\nu > 0$ root of $ 1 - 2 \nu - \nu^2 \exp{2\nu} = 0$.
\end{theorem}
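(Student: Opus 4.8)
The plan is to run the standard one-step energy argument for \algname{EG}, with the $1$-symmetric $(L_0,L_1)$-Lipschitz property (via Proposition~\ref{prop:equiv_formulation}, $\alpha=1$ case) replacing the usual Lipschitz bound, and then convert the resulting per-step contraction — which depends on $\|F(x_k)\|$ — into a uniform linear rate using an a priori bound on the iterates. First I would expand $\|x_{k+1}-x_*\|^2$ from $x_{k+1}=x_k-\omega_k F(\hat x_k)$, split $x_k-x_*=(x_k-\hat x_k)+(\hat x_k-x_*)$, and apply strong monotonicity \eqref{eq:strong_monotone} together with $F(x_*)=0$ to get $\langle F(\hat x_k),\hat x_k-x_*\rangle\ge\mu\|\hat x_k-x_*\|^2$. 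Using $x_k-\hat x_k=\gamma_k F(x_k)$, $\gamma_k=\omega_k$, and the polarization identity \eqref{eq:inner_prod}, the cross term reduces to $\langle F(\hat x_k),F(x_k)\rangle$, leaving an energy inequality whose only adverse term is $\omega_k\gamma_k\|F(\hat x_k)-F(x_k)\|^2$.

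The key estimate is controlling $\|F(\hat x_k)-F(x_k)\|$. I would invoke Proposition~\ref{prop:equiv_formulation} with $x=x_k,\ y=\hat x_k$, giving $\|F(x_k)-F(\hat x_k)\|\le(L_0+L_1\|F(x_k)\|)\exp(L_1\|x_k-\hat x_k\|)\|x_k-\hat x_k\|$. The crucial observation is $\|x_k-\hat x_k\|=\gamma_k\|F(x_k)\|=\tfrac{\nu\|F(x_k)\|}{L_0+L_1\|F(x_k)\|}\le\nu/L_1$, so the exponential factor is at most $\exp(\nu)$; and since $(L_0+L_1\|F(x_k)\|)\gamma_k=\nu$ by definition of the step size, the estimate collapses to $\|F(\hat x_k)-F(x_k)\|\le\nu\exp(\nu)\|F(x_k)\|$. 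Squaring and substituting, the coefficient of $\|F(x_k)\|^2$ becomes $-\omega_k\gamma_k(1-\nu^2\exp(2\nu))$, and the defining relation $1-2\nu-\nu^2\exp(2\nu)=0$ yields exactly $1-\nu^2\exp(2\nu)=2\nu>0$, so this term is harmless.

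To obtain a rate in $\|x_k-x_*\|^2$ rather than merely $\|F(x_k)\|^2$ (which would give the wrong $\mu^2$ dependence), I would lower-bound $\|\hat x_k-x_*\|^2\ge\tfrac12\|x_k-x_*\|^2-\gamma_k^2\|F(x_k)\|^2$ via \eqref{eq:inequality1} applied to $\hat x_k-x_*=(x_k-x_*)-\gamma_k F(x_k)$. Combining the pieces gives $\|x_{k+1}-x_*\|^2\le(1-\omega_k\mu)\|x_k-x_*\|^2-2\omega_k\gamma_k(\nu-\mu\gamma_k)\|F(x_k)\|^2$, where $\nu-\mu\gamma_k\ge0$ because $\gamma_k\le\nu/L_0\le\nu/\mu$ (using $\mu\le L_0$, which follows by letting $x\to x_*$ in the two defining conditions). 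Dropping the nonpositive term leaves the per-step bound $\|x_{k+1}-x_*\|^2\le(1-\omega_k\mu)\|x_k-x_*\|^2$.

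The final step makes the rate uniform. Since $0\le 1-\omega_k\mu<1$ holds automatically (as $\omega_k\mu\le\nu\mu/L_0\le\nu<1$), the distances $\|x_k-x_*\|$ are non-increasing, so $\|x_k-x_*\|\le\|x_0-x_*\|$ for every $k$. Applying Proposition~\ref{prop:equiv_formulation} once more with $x=x_*,\ y=x_k$ and $F(x_*)=0$ gives $\|F(x_k)\|\le L_0\exp(L_1\|x_k-x_*\|)\|x_k-x_*\|\le L_0\exp(L_1\|x_0-x_*\|)\|x_0-x_*\|$, hence $L_0+L_1\|F(x_k)\|\le L_0(1+L_1\exp(L_1\|x_0-x_*\|)\|x_0-x_*\|)$ and $\omega_k$ is bounded below by $\nu/[L_0(1+L_1\exp(L_1\|x_0-x_*\|)\|x_0-x_*\|)]$; substituting and unrolling the recursion produces the claimed rate. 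The main obstacle I anticipate is the bookkeeping of the two distinct appearances of the exponential factor — the local one bounded by $\exp(\nu)$ through the step-size choice, versus the global $\exp(L_1\|x_0-x_*\|)$ arising from the a priori iterate bound — and ensuring the mild circularity between the contraction factor and the uniform bound on $\|F(x_k)\|$ is resolved correctly (here it is, since per-step contraction is established \emph{before} the uniform bound is invoked).
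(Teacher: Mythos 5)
Your proposal is correct and follows essentially the same route as the paper's proof: the one-step energy expansion with strong monotonicity and the bound \eqref{eq:inequality1}, the polarization identity, the $(L_0,L_1)$ estimate from Proposition~\ref{prop:equiv_formulation} collapsed to $\nu\exp(\nu)\|F(x_k)\|$ via the step-size choice, the per-step contraction $(1-\gamma_k\mu)$ establishing bounded iterates, and finally the uniform lower bound on $\gamma_k$ through $\|F(x_k)\|\le L_0\exp(L_1\|x_0-x_*\|)\|x_0-x_*\|$. The only cosmetic difference is that you use the defining relation $1-2\nu-\nu^2\exp(2\nu)=0$ to rewrite the residual coefficient as $2(\nu-\mu\gamma_k)$ and check its sign via $\mu\le L_0$, whereas the paper bounds $1-\tfrac{2\nu\mu}{L_0+L_1\|F(x_k)\|}-\nu^2\exp(2\nu)\ge 1-2\nu-\nu^2\exp(2\nu)$ directly; these are equivalent.
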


\begin{proof}
    From the update step of the Extragradient method, we obtain
    \begin{eqnarray*}
        \|x_{k+1} - x_*\|^2 & = & \| x_k - \gamma_k F(\hx_k) - x_* \|^2 \notag\\
        & = & \| x_k - x_* \|^2 - 2 \gamma_k \la F(\hx_k), x_k - x_* \ra + \gamma_k^2  \| F(\hx_k) \|^2 \notag\\
        & = & \| x_k - x_* \|^2 - 2 \gamma_k \la F(\hx_k), \hx_k - x_* \ra - 2 \gamma_k \la F(\hx_k), x_k - \hx_k \ra \notag \\
        && + \gamma_k^2  \| F(\hx_k) \|^2 \notag\\
        & \overset{\eqref{eq:strong_monotone}}{\leq} & \| x_k - x_* \|^2 - 2 \gamma_k \mu \| \hx_k - x_*\|^2  \notag \\
        && - 2 \gamma_k \la F(\hx_k), x_k - \hx_k \ra + \gamma_k^2  \| F(\hx_k) \|^2 \notag\\
        & \overset{\eqref{eq:inequality1}}{\leq} & \| x_k - x_* \|^2 - \gamma_k \mu \| x_k - x_*\|^2 + 2 \gamma_k \mu \|x_k - \hx_k\|^2  \notag \\ 
        && - 2 \gamma_k \la F(\hx_k), x_k - \hx_k \ra + \gamma_k^2  \| F(\hx_k) \|^2 \notag\\
        & = & (1 - \gamma_k \mu) \| x_k - x_* \|^2 + 2 \gamma_k \mu \|x_k - \hx_k\|^2 - 2 \gamma_k \la F(\hx_k), x_k - \hx_k \ra \notag \\
        && + \gamma_k^2  \| F(\hx_k) \|^2 \notag\\
        & = & (1 - \gamma_k \mu) \| x_k - x_* \|^2 + 2 \gamma_k^3 \mu \|F(x_k)\|^2 - 2 \gamma_k^2 \la F(\hx_k), F(x_k) \ra  \notag\\
        && + \gamma_k^2  \| F(\hx_k) \|^2 \notag \\
        & \overset{\eqref{eq:inner_prod}}{=} & (1 - \gamma_k \mu) \left\| x_k - x_* \right\|^2 + 2 \gamma_k^3 \mu \|F(x_k)\|^2 \notag \\
        && - \gamma_k^2 \left(\|F(\hx_k)\|^2 + \|F(x_k)\|^2 - \|F(x_k) - F(\hx_k)\|^2 \right) + \gamma_k^2 \|F(\hx_k)\|^2 \notag\\
        & = & (1 - \gamma_k \mu) \left\| x_k - x_* \right\|^2 -\gamma_k^2(1 - 2 \gamma_k \mu) \|F(x_k)\|^2 \notag \\
        && + \gamma_k^2 \|F(x_k) - F(\hx_k)\|^2 \notag\\
        & \overset{\eqref{eq:alpha=1}}{\leq} & (1 - \gamma_k \mu) \left\| x_k - x_* \right\|^2 -\gamma_k^2(1 - 2 \gamma_k \mu) \|F(x_k)\|^2 \notag \\
        && + \gamma_k^2 (L_0 + L_1 \|F(x_k)\|)^2 \exp{(2L_1 \|x_k - \hx_k\|)} \|x_k -\hx_k\|^2. \notag\\
    \end{eqnarray*}
    Thus we have 
    \begin{align}\label{eq:1symm_strongmonotone_eq}
        \|x_{k+1} - x_*\|^2 & \leq (1 - \gamma_k \mu) \|x_k - x_*\|^2 \notag \\
        & - \gamma_k^2 \left(1 - 2 \gamma_k \mu - \gamma_k^2 (L_0 + L_1 \|F(x_k)\|)^2 \exp{(2 \gamma_k L_1 \|F(x_k)\|)} \right) \|F(x_k)\|^2.
    \end{align}

    Now we set $\gamma_k = \frac{\nu}{L_0 + L_1 \|F(x_k)\|}$. Then we want to choose $\nu$ such that 
    \begin{eqnarray*}
        1 - \frac{2 \nu \mu}{L_0 + L_1 \|F(x_k)\|} - \nu^2 \exp{2 \nu} \geq 0.
    \end{eqnarray*}
    Note that $\mu \leq L_0 + L_1 \| F(x_k) \|$ for any $x_k$. Therefore, we get $1 - \frac{2 \nu \mu}{L_0 + L_1 \|F(x_k)\|} - \nu^2 \exp{2 \nu} \geq 1 - 2 \nu - \nu^2 \exp{2 \nu}$ and it is enough to choose $\nu$ such that 
    \begin{eqnarray*}
        1 - 2 \nu - \nu^2 \exp{2\nu} \geq 0.
    \end{eqnarray*}
    This inequality holds for any $\nu \leq 0.22$. Hence, for this choice of $\nu$ we get the following inequality from \eqref{eq:1symm_strongmonotone_eq}.
    \begin{eqnarray}\label{eq:1symm_strongmonotone_eq1}
        \|x_{k+1} - x_*\|^2 & \leq & (1 - \gamma_k \mu) \|x_k - x_*\|^2.
    \end{eqnarray}
    This proves that the distance of the iterates $x_k$ from $x_*$ are bounded by $\|x_0 - x_*\|$.
    Now note that using \eqref{eq:alpha=1} with $x = x_k, y = x_*$ with $\|x_k - x_*\| \leq \|x_0 - x_*\|$ we get
    \begin{eqnarray}\label{eq:1symm_strongmonotone_eq2}
        \|F(x_k)\| & \overset{\eqref{eq:alpha=1}}{\leq} & L_0 \exp{\left(L_1 \|x_k - x_*\| \right)} \|x_k - x_*\| \notag \\
        & \overset{\eqref{eq:1symm_strongmonotone_eq1}}{\leq} & L_0 \exp{\left(L_1 \|x_0 - x_*\| \right)} \|x_0 - x_*\|.
    \end{eqnarray}
    Therefore, we have the following lower bound on the step-size 
    \begin{eqnarray}\label{eq:1symm_strongmonotone_eq3}
        \gamma_k & = & \frac{\nu}{L_0 + L_1 \|F(x_k)\|} \notag \\
        & \overset{\eqref{eq:1symm_strongmonotone_eq2}}{\geq} & \frac{\nu}{L_0 \left( 1 + L_1 \exp{\left( L_1 \|x_0 - x_*\|\right)} \|x_0 - x_* \|\right)}.
    \end{eqnarray}
    Hence, we get 
    \begin{eqnarray*}
        \|x_{k+1} - x_*\|^2 & \overset{\eqref{eq:1symm_strongmonotone_eq1},\eqref{eq:1symm_strongmonotone_eq3}}{\leq} & \left( 1 - \frac{\nu \mu}{L_0 \left( 1 + L_1 \exp{\left( L_1 \|x_0 - x_*\|\right)} \|x_0 - x_* \|\right)} \right) \|x_k - x_*\|^2 \\
        & \leq & \left( 1 - \frac{\nu \mu}{L_0 \left( 1 + L_1 \exp{\left( L_1 \|x_0 - x_*\|\right)} \|x_0 - x_* \|\right)} \right)^{k+1} \|x_0 - x_*\|^2.
    \end{eqnarray*}
\end{proof}

\subsubsection{Proof of Corollary \ref{corollary:1symm_strongmonotone}}
\begin{corollary}
    Suppose $F$ is $\mu$-strongly monotone and $1$-symmetric $(L_0, L_1)$-Lipschitz operator. Then Extragradient with step size $\gamma_k = \frac{\nu}{L_0 + L_1 \|F(x_k)\|}$ satisfy $\|x_{K+1} - x_*\|^2  \leq  \varepsilon$ after
    \begin{equation*}
        K = \frac{2 L_0}{\nu \mu} \log \left( \frac{\|x_0 - x_*\|^2}{\varepsilon}\right) + \frac{1}{\gamma \mu} \log \left( \frac{2L_1 \|x_0 - x_*\|^2}{\gamma^2 L_0} \right)
    \end{equation*}
    many iterations, where $\nu > 0$ satisfy $1 - 4 \nu - 2
    \nu^2 \exp{2\nu} = 0$ and 
    \begin{equation*}
        \gamma \eqdef \frac{\nu}{L_0 \left( 1 + L_1 \exp{\left( L_1 \|x_0 - x_*\|\right)} \|x_0 - x_* \|\right)}.
    \end{equation*}
\end{corollary}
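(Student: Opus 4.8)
The plan is to refine the one-step bound already obtained inside the proof of Theorem~\ref{theorem:1symm_strongmonotone} and to split the run of \algname{EG} into two phases, so that the loose, exponentially large lower bound \eqref{eq:1symm_strongmonotone_eq3} on the step size is only paid during an initial ``burn-in'' phase (Term~II), after which the method enjoys the larger step $\bar\gamma \coloneqq \nicefrac{\nu}{2L_0}$ and the clean rate $1 - \nicefrac{\nu\mu}{2L_0}$ (Term~I). The starting point is inequality \eqref{eq:1symm_strongmonotone_eq}, which with $\gamma_k = \nicefrac{\nu}{(L_0 + L_1\|F(x_k)\|)}$ and $\gamma_k\mu \le \nu$ reduces to $\|x_{k+1}-x_*\|^2 \le (1-\gamma_k\mu)\|x_k-x_*\|^2 - \gamma_k^2 C_k \|F(x_k)\|^2$ with $C_k \ge 1 - 2\nu - \nu^2\exp(2\nu)$. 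The decisive role of the stricter root condition $1 - 4\nu - 2\nu^2\exp(2\nu)=0$ is that it forces $1 - 2\nu - \nu^2\exp(2\nu) = \tfrac12$, so that simultaneously (i) $\|x_{k+1}-x_*\|^2 \le (1-\gamma_k\mu)\|x_k-x_*\|^2$, whence $\{\|x_k-x_*\|\}$ is nonincreasing and bounded by $\|x_0-x_*\|$, and (ii) $\tfrac{\gamma_k^2}{2}\|F(x_k)\|^2 \le \|x_k-x_*\|^2 - \|x_{k+1}-x_*\|^2 \le \|x_k-x_*\|^2$, i.e. $\gamma_k\|F(x_k)\| \le \sqrt2\,\|x_k-x_*\|$.

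For the burn-in phase I would use the loose bound $\gamma_k \ge \gamma$ from \eqref{eq:1symm_strongmonotone_eq3} together with (i) to get $\|x_k-x_*\|^2 \le (1-\gamma\mu)^k\|x_0-x_*\|^2$. Feeding this into (ii) (again with $\gamma_k \ge \gamma$) yields $\|F(x_k)\| \le \nicefrac{\sqrt2\,\|x_k-x_*\|}{\gamma}$, an upper bound on the operator norm that, crucially, carries no $\exp(L_1\|x_0-x_*\|)$ factor. Consequently, as soon as $\|x_k-x_*\|^2$ drops below a threshold of order $\nicefrac{\gamma^2 L_0^2}{L_1^2}$, we obtain $\|F(x_k)\| \le \nicefrac{L_0}{L_1}$, which is exactly the statement $L_0 + L_1\|F(x_k)\| \le 2L_0$ needed to conclude $\gamma_k \ge \bar\gamma$; monotonicity of $\|x_k-x_*\|$ then guarantees this persists for all larger indices. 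Solving $(1-\gamma\mu)^{K'}\|x_0-x_*\|^2 \le \text{threshold}$ via the standard estimate $\log\tfrac{1}{1-\gamma\mu}\ge\gamma\mu$ produces precisely Term~II, namely $K' = \tfrac{1}{\gamma\mu}\log(\nicefrac{2L_1\|x_0-x_*\|^2}{\gamma^2 L_0})$.

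In the second phase, for every $k \ge K'$ we now have $\gamma_k \ge \bar\gamma$, so (i) upgrades to $\|x_{k+1}-x_*\|^2 \le (1-\bar\gamma\mu)\|x_k-x_*\|^2 = (1-\tfrac{\nu\mu}{2L_0})\|x_k-x_*\|^2$. Applying the iteration-counting lemma (the one converting $A_k \le \varrho^k A_0$ into $K \ge \tfrac{1}{1-\varrho}\log\tfrac{A_0}{\epsilon}$) with $\varrho = 1 - \tfrac{\nu\mu}{2L_0}$ and the warm start $\|x_{K'}-x_*\|^2 \le \|x_0-x_*\|^2$ shows that at most $\tfrac{2L_0}{\nu\mu}\log(\nicefrac{\|x_0-x_*\|^2}{\varepsilon})$ further iterations --- Term~I --- drive the squared distance below $\varepsilon$. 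Adding the two phase counts gives $K = \text{Term~I} + \text{Term~II}$ as claimed; note that Term~I is now free of the $\exp(L_1\|x_0-x_*\|)$ dependence present in Theorem~\ref{theorem:1symm_strongmonotone}, and Term~II is independent of $\varepsilon$.

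The main obstacle is establishing the exp-free bound on $\|F(x_k)\|$ and the resulting clean threshold: the naive route through \eqref{eq:1symm_strongmonotone_eq2} reintroduces $\exp(L_1\|x_0-x_*\|)$ inside the logarithm, which would defeat the whole purpose of the corollary, so one must instead extract operator-norm control from the negative $\|F(x_k)\|^2$ term of the descent --- which is exactly what the strengthened bound $C_k \ge \tfrac12$, bought by the stricter choice of $\nu$, provides. Beyond this, the remaining care is in verifying persistence (that $\gamma_k \ge \bar\gamma$ continues to hold for all $k \ge K'$, a direct consequence of the monotone decrease of $\|x_k-x_*\|$) and in bookkeeping the numerical constants so that the threshold collapses to the stated logarithmic argument $\nicefrac{2L_1\|x_0-x_*\|^2}{\gamma^2 L_0}$.
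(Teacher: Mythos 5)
Your proposal is correct and follows essentially the same route as the paper's proof: the stricter root condition for $\nu$ buys the retained $-\tfrac{\gamma_k^2}{2}\|F(x_k)\|^2$ term in \eqref{eq:1symm_strongmonoton_noexp_eq1}, which combined with the loose lower bound $\gamma_k\ge\gamma$ forces $\|F(x_k)\|$ below $L_0/L_1$ after Term~II iterations, after which $\gamma_k\ge\nu/(2L_0)$ yields Term~I. The only (harmless) divergence is in the threshold bookkeeping: you correctly require $\|F(x_k)\|\le L_0/L_1$ (the paper's own text states $\|F(x_k)\|^2\le L_0/L_1$, which is what its stated logarithm argument reflects but is not literally what is needed to conclude $\gamma_k\ge\nu/(2L_0)$), so your Term~II differs from the printed one only inside the logarithm.
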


\begin{proof}
    We set $\gamma_k = \frac{\nu}{L_0 + L_1 \|F(x_k)\|}$ and we choose $\nu \in (0, 1)$ such that $1 - 4 \nu - 2
    \nu^2 \exp{2\nu} = 0$. Then we have
    \begin{eqnarray}\label{eq:1symm_strongmonotone_cor_eq1}
        & 1 - 2 \gamma_k \mu - \gamma_k^2 (L_0 + L_1 \|F(x_k)\|)^2 \exp{(2 \gamma_k L_1 \|F(x_k)\|)} \\
        & \geq 1 - \frac{2 \nu \mu}{L_0 + L_1 \|F(x_k)\|} - \nu^2 \exp{2 \nu} \notag \\
        & \geq 1 - 2 \nu - \nu^2 \exp{2 \nu} \notag \\
        & = \frac{1}{2}. 
    \end{eqnarray}
    Therefore, from \eqref{eq:1symm_strongmonotone_eq} and \eqref{eq:1symm_strongmonotone_cor_eq1} we get 
    \begin{eqnarray}\label{eq:1symm_strongmonoton_noexp_eq1}
        \|x_{k+1} - x_*\|^2 & \leq & (1 - \gamma_k \mu) \|x_k - x_*\|^2 - \frac{\gamma_k^2}{2} \|F(x_k)\|^2.
    \end{eqnarray}
    In \eqref{eq:1symm_strongmonotone_eq3}, we found that a lower bound of $\gamma_k$ is 
    \begin{eqnarray*}
        \gamma_k \geq \gamma \eqdef \frac{\nu}{L_0 \left( 1 + L_1 \exp{\left( L_1 \|x_0 - x_*\|\right)} \|x_0 - x_* \|\right)}.
    \end{eqnarray*}
    Then from \eqref{eq:1symm_strongmonoton_noexp_eq1} we get
    \begin{equation}\label{eq:1symm_strongmonoton_noexp_eq2}
        \frac{\gamma^2}{2} \|F(x_k)\|^2 \leq (1 - \gamma \mu)\|x_k - x_*\|^2 \leq (1 - \gamma \mu)^{k+1} \|x_0 - x_*\|^2
    \end{equation}
    This can be rearranged to write
    \begin{equation*}
        \|F(x_k)\|^2 \leq \frac{2(1 - \gamma \mu)^{k+1}}{\gamma^2} \|x_0 - x_*\|^2.
    \end{equation*}
    This implies $\|F(x_k)\|^2 \leq \frac{L_0}{L_1}$ after 
    \begin{equation}
        K' = \frac{1}{\gamma \mu} \log \left( \frac{2L_1 \|x_0 - x_*\|^2}{\gamma^2 L_0} \right)
    \end{equation}
    many iterations. Hence for $k \geq K'$ we have
    \begin{eqnarray*}
        \gamma_k & = & \frac{\nu}{L_0 + L_1 \|F(x_k)\|} \\
        & \geq & \frac{\nu}{2 L_0}.
    \end{eqnarray*}
    In the last inequality we used $ \|F(x_k)\|^2 \leq \frac{L_0}{L_1}$ for $k \geq K'$. Therefore for $k \geq K'$ we obtain
    \begin{eqnarray*}
        \|x_{k+1} - x_*\|^2 & \leq & \left(1 - \frac{\nu \mu}{2 L_0} \right) \|x_k - x_*\|^2 \\
        & \leq & \left(1 - \frac{\nu \mu}{2 L_0} \right)^{k+1 - K'} \|x_{K'} - x_*\|^2 \\
        & \leq & \left(1 - \frac{\nu \mu}{2 L_0} \right)^{k+1 - K'} \|x_0 - x_*\|^2
    \end{eqnarray*}
    Thus we conclude that, $\|x_{K+1} - x_*\|^2 \leq \varepsilon$ after atmost 
    \begin{equation*}
        K = \frac{2 L_0}{\nu \mu} \log \left( \frac{\|x_0 - x_*\|^2}{\varepsilon}\right) + K'
    \end{equation*}
    many iterations.
\end{proof}

\subsubsection{Proof of Theorem \ref{theorem:1symm_strongmonotone_alhpha01}}
\begin{theorem}
    Suppose $F$ is $\mu$-strongly monotone and $\alpha$-symmetric $(L_0, L_1)$-Lipschitz operator with $\alpha \in (0, 1)$. Then Extragradient method with $$\gamma_k = \frac{\nu}{2K_0 + \left( 2 K_1 + 2^{1 - \alpha} K_2^{1 - \alpha} \right) \| F(x_k)\|^{\alpha}}$$ satisfy
    \begin{eqnarray*}
    \textstyle
        \|x_{k+1} - x_*\|^2 \leq \left( 1 - \frac{\nu \mu}{2 K_0 + (2 K_1 + 2^{1 - \alpha} K_2^{1 - \alpha})(K_0 + K_2 \|x_0 - x_*\|^{\nicefrac{\alpha}{1 - \alpha}})^{\alpha} \|x_0 - x_*\|^{\alpha}}\right)^{k+1} \|x_0 - x_*\|^2.
    \end{eqnarray*}
    where $\nu \in (0, 1)$ is a constant such that $1 - \nu - \nu^2 \geq 0$.
\end{theorem}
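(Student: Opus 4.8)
The plan is to mirror the structure of the proof of Theorem~\ref{theorem:1symm_strongmonotone} (the $\alpha=1$ case), replacing the exponential Lipschitz bound \eqref{eq:alpha=1} by the polynomial bound \eqref{eq:alpha(0,1)} valid for $\alpha \in (0,1)$. First I would run the identical chain of manipulations from that proof: expand $\|x_{k+1}-x_*\|^2$ via the \algname{EG} update, invoke strong monotonicity \eqref{eq:strong_monotone} on $\la F(\hx_k), \hx_k - x_* \ra$, apply \eqref{eq:inequality1}, and expand the inner product through \eqref{eq:inner_prod}. None of these steps uses the value of $\alpha$, so they carry over verbatim and yield
\begin{equation*}
\|x_{k+1}-x_*\|^2 \leq (1-\gamma_k\mu)\|x_k-x_*\|^2 - \gamma_k^2(1-2\gamma_k\mu)\|F(x_k)\|^2 + \gamma_k^2\|F(x_k)-F(\hx_k)\|^2.
\end{equation*}

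Next I would control the last term using \eqref{eq:alpha(0,1)} with $x=x_k$, $y=\hx_k$, and $\|x_k-\hx_k\|=\gamma_k\|F(x_k)\|$, producing the factor $\bigl(K_0 + K_1\|F(x_k)\|^\alpha + K_2(\gamma_k\|F(x_k)\|)^{\nicefrac{\alpha}{1-\alpha}}\bigr)$. Substituting and factoring out $\gamma_k^2\|F(x_k)\|^2$, the goal reduces to showing that
\begin{equation*}
1 - 2\gamma_k\mu - \gamma_k^2\Bigl(K_0 + K_1\|F(x_k)\|^\alpha + K_2(\gamma_k\|F(x_k)\|)^{\nicefrac{\alpha}{1-\alpha}}\Bigr)^2 \geq 0,
\end{equation*}
so that the $\|F(x_k)\|^2$ term can be dropped and a pure contraction remains.

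Establishing this nonnegativity is the heart of the argument and the main obstacle, and it is exactly where the step size is engineered. Writing $D_k = 2K_0 + (2K_1 + 2^{1-\alpha}K_2^{1-\alpha})\|F(x_k)\|^\alpha$ so that $\gamma_k = \nu/D_k$, I would bound the product $\gamma_k\bigl(K_0 + K_1\|F(x_k)\|^\alpha + K_2(\gamma_k\|F(x_k)\|)^{\nicefrac{\alpha}{1-\alpha}}\bigr)$ piecewise. The first two summands give $\gamma_k(K_0+K_1\|F(x_k)\|^\alpha)\leq \nu/2$ since $D_k \geq 2(K_0+K_1\|F(x_k)\|^\alpha)$. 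For the third, the key identity is $\gamma_k K_2(\gamma_k\|F(x_k)\|)^{\nicefrac{\alpha}{1-\alpha}} = K_2\gamma_k^{\nicefrac{1}{1-\alpha}}\|F(x_k)\|^{\nicefrac{\alpha}{1-\alpha}}$; using $D_k \geq 2^{1-\alpha}K_2^{1-\alpha}\|F(x_k)\|^\alpha$, hence $\gamma_k^{\nicefrac{1}{1-\alpha}} \leq \nu^{\nicefrac{1}{1-\alpha}}/\bigl(2K_2\|F(x_k)\|^{\nicefrac{\alpha}{1-\alpha}}\bigr)$, this term collapses to $\tfrac{1}{2}\nu^{\nicefrac{1}{1-\alpha}} \leq \nu/2$ because $\nu\in(0,1)$ and $\nicefrac{1}{1-\alpha}>1$. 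Thus the whole product is at most $\nu$, giving $\gamma_k^2(\cdots)^2 \leq \nu^2$. Combined with $2\gamma_k\mu \leq \nu$—which holds since strong monotonicity together with \eqref{eq:alpha(0,1)} (sending $y\to x$, so the $K_2$ term vanishes) yields $\mu \leq K_0 + K_1\|F(x_k)\|^\alpha \leq D_k/2$—the bracket is bounded below by $1-\nu-\nu^2 \geq 0$, as guaranteed by the choice of $\nu$.

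Finally I would convert the resulting one-step contraction $\|x_{k+1}-x_*\|^2 \leq (1-\gamma_k\mu)\|x_k-x_*\|^2$ into the stated rate. This contraction shows the iterates remain in the ball of radius $\|x_0-x_*\|$, so applying \eqref{eq:alpha(0,1)} with $x=x_*$, $y=x_k$ and $F(x_*)=0$ gives $\|F(x_k)\| \leq (K_0 + K_2\|x_0-x_*\|^{\nicefrac{\alpha}{1-\alpha}})\|x_0-x_*\|$. This upper-bounds $D_k$, hence lower-bounds $\gamma_k\mu$ by precisely the quantity $\varrho$ in the statement, and unrolling the recursion yields $\|x_{k+1}-x_*\|^2 \leq (1-\varrho)^{k+1}\|x_0-x_*\|^2$. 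The only subtlety is that the lower bound on $\gamma_k$ depends on boundedness of the iterates, which must be carried along inductively as the contraction is repeatedly applied.
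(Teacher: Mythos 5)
Your proposal is correct and follows essentially the same route as the paper's proof: the same expansion and drop of the $\|F(x_k)\|^2$ term, the same split of the step-size product into $\gamma_k(K_0+K_1\|F(x_k)\|^\alpha)\le\nu/2$ and $K_2\gamma_k^{\nicefrac{1}{1-\alpha}}\|F(x_k)\|^{\nicefrac{\alpha}{1-\alpha}}\le\nu^{\nicefrac{1}{1-\alpha}}/2$, the same reduction to $1-\nu-\nu^2\ge0$ via $\mu\le K_0$ (the paper asserts this directly where you derive the slightly weaker but sufficient $\mu\le K_0+K_1\|F(x_k)\|^\alpha$), and the same final lower bound on $\gamma_k$ from the boundedness of the iterates. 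Your closing remark about carrying the boundedness inductively is a fair point that the paper leaves implicit, but it introduces no gap.
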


\begin{proof} 
    Using the update steps of the Extragradient method, we have
    \begin{eqnarray*}
        \|x_{k+1} - x_*\|^2 & = & \| x_k - \gamma_k F(\hx_k) - x_* \|^2 \\
        & = & \| x_k - x_* \|^2 - 2 \gamma_k \la F(\hx_k), x_k - x_* \ra + \gamma_k^2  \| F(\hx_k) \|^2 \\
        & = & \| x_k - x_* \|^2 - 2 \gamma_k \la F(\hx_k), \hx_k - x_* \ra - 2 \gamma_k \la F(\hx_k), x_k - \hx_k \ra \\
        && + \gamma_k^2  \| F(\hx_k) \|^2 \\
        & \overset{\eqref{eq:strong_monotone}}{\leq} & \| x_k - x_* \|^2 - 2 \gamma_k \mu \| \hx_k - x_*\|^2 - 2 \gamma_k \la F(\hx_k), x_k - \hx_k \ra \\
        && + \gamma_k^2  \| F(\hx_k) \|^2 \\
        & \overset{\eqref{eq:inequality1}}{\leq} & \| x_k - x_* \|^2 - \gamma_k \mu \| x_k - x_*\|^2 + 2 \gamma_k \mu \|x_k - \hx_k\|^2 \\
        && - 2 \gamma_k \la F(\hx_k), x_k - \hx_k \ra  + \gamma_k^2  \| F(\hx_k) \|^2 \\
        & = & (1 - \gamma_k \mu) \| x_k - x_* \|^2 + 2 \gamma_k \mu \|x_k - \hx_k\|^2 - 2 \gamma_k \la F(\hx_k), x_k - \hx_k \ra \\
        && + \gamma_k^2  \| F(\hx_k) \|^2 \\
        & = & (1 - \gamma_k \mu) \| x_k - x_* \|^2 + 2 \gamma_k^3 \mu \|F(x_k)\|^2 - 2 \gamma_k^2 \la F(\hx_k), F(x_k) \ra \\
        && + \gamma_k^2  \| F(\hx_k) \|^2 \\
        & \overset{\eqref{eq:inner_prod}}{=} & (1 - \gamma_k \mu) \left\| x_k - x_* \right\|^2 + 2 \gamma_k^3 \mu \|F(x_k)\|^2 \\
        &&- \gamma_k^2 \left(\|F(\hx_k)\|^2 + \|F(x_k)\|^2 - \|F(x_k) - F(\hx_k)\|^2 \right) + \gamma_k^2 \|F(\hx_k)\|^2 \\
        & = & (1 - \gamma_k \mu) \left\| x_k - x_* \right\|^2 -\gamma_k^2(1 - 2 \gamma_k \mu) \|F(x_k)\|^2 \\
        && - \gamma_k^2 \|F(x_k) - F(\hx_k)\|^2 \\
        & \overset{\eqref{eq:alpha(0,1)}}{\leq} & (1 - \gamma_k \mu) \left\| x_k - x_* \right\|^2 -\gamma_k^2(1 - 2 \gamma_k \mu) \|F(x_k)\|^2 \\
        && - \gamma_k^2 \left(K_0 + K_1 \|F(x_k)\|^{\alpha} + K_2 \| x_k - \hx_k \|^{\nicefrac{\alpha}{1 - \alpha}} \right)^2 \|x_k - \hx_k\|^2.
    \end{eqnarray*}
    Thus we have 
    \begin{align*}
    \textstyle
        & \|x_{k+1} - x_*\|^2 \\
        & \leq (1 - \gamma_k \mu) \|x_k - x_*\|^2 \\
        & - \gamma_k^2 \left(1 - 2 \gamma_k \mu - \gamma_k^2 \left(K_0 + K_1 \|F(x_k)\|^{\alpha} + \gamma_k^{\nicefrac{\alpha}{1 - \alpha}} K_2 \| F(x_k) \|^{\nicefrac{\alpha}{1 - \alpha}} \right)^2 \right) \|F(x_k)\|^2.
    \end{align*}
    Here we will choose $\gamma_k > 0$ such that
    \begin{equation*}
        1 - 2 \gamma_k \mu  - \gamma_k^2 \left(K_0 + K_1 \|F(x_k)\|^{\alpha} + \gamma_k^{\nicefrac{\alpha}{1 - \alpha}} K_2 \| F(x_k) \|^{\nicefrac{\alpha}{1 - \alpha}} \right)^2 \geq 0
    \end{equation*} 
    Let us choose $\gamma_k = \frac{\nu}{2K_0 + \left( 2 K_1 + 2^{1 - \alpha} K_2^{1 - \alpha} \right) \| F(x_k)\|^{\alpha}}$ for some $\nu \in (0, 1)$. Then we observe that
    \begin{eqnarray*}
        && \gamma_k \left( K_0 + K_1 \| F(x_k)\|^{\alpha} \right) + \gamma_k^{\nicefrac{1}{1 - \alpha}} K_2 \| F(x_k)\|^{\nicefrac{\alpha}{1 - \alpha}} \\
        & \leq & \frac{\nu \left( K_0 + K_1 \| F(x_k)\|^{\alpha} \right)}{2K_0 + \left( 2 K_1 + 2^{1 - \alpha} K_2^{1 - \alpha} \right) \| F(x_k)\|^{\alpha}} \\
        && + \frac{\nu^{\nicefrac{1}{1 - \alpha}} K_2 \|F(x_k)\|^{\nicefrac{\alpha}{1 - \alpha}}}{\left( 2K_0 + \left( 2K_1 + 2^{1 - \alpha} K_2^{1 - \alpha} \right) \| F(x_k)\|^{\alpha}\right)^{\nicefrac{1}{1 - \alpha}}} \\
        & \leq & \frac{\nu}{2} + \frac{\nu^{\nicefrac{1}{1 - \alpha}}}{2} \\
        & \leq & \nu.        
    \end{eqnarray*}
    The last inequality follows from $\nu \in (0, 1)$. Therefore it is enough to choose $\nu \in (0, 1)$ such that 
    \begin{equation*}
        1 - \frac{2 \nu \mu}{2K_0 + \left( 2 K_1 + 2^{1 - \alpha} K_2^{1 - \alpha} \right) \| F(x_k)\|^{\alpha}} - \nu^2 \geq 0
    \end{equation*}
    However, note that, we always have $\mu \leq K_0$, thus it is enough to choose $\nu \in (0, 1)$ such that 
    \begin{equation*}
        1 - \nu - \nu^2 \geq 0.
    \end{equation*}
    Hence, for this choice of $\nu$ we get
    \begin{eqnarray*}
        \|x_{k+1} - x_*\|^2 & \leq & (1 - \gamma_k \mu) \|x_0 - x_*\|^2.
    \end{eqnarray*}
    Here we lower bound the step size $\gamma_k$ with 
    \begin{eqnarray*}
        \gamma_k \geq \frac{\nu}{2 K_0 + (2 K_1 + 2^{1 - \alpha} K_2^{1 - \alpha})(K_0 + K_2 \|x_0 - x_*\|^{\nicefrac{\alpha}{1 - \alpha}})^{\alpha} \|x_0 - x_*\|^{\alpha}}. 
    \end{eqnarray*}
    Hence we obtain
    \begin{eqnarray*}
    \textstyle
        \|x_{k+1} - x_*\|^2 & \leq \left( 1 - \frac{\nu \mu}{2 K_0 + (2 K_1 + 2^{1 - \alpha} K_2^{1 - \alpha})(K_0 + K_2 \|x_0 - x_*\|^{\nicefrac{\alpha}{1 - \alpha}})^{\alpha} \|x_0 - x_*\|^{\alpha}}\right) \|x_k - x_*\|^2 \\
        & \leq \left( 1 - \frac{\nu \mu}{2 K_0 + (2K_1 + 2^{1 - \alpha} K_2^{1 - \alpha})(K_0 + K_2 \|x_0 - x_*\|^{\nicefrac{\alpha}{1 - \alpha}})^{\alpha} \|x_0 - x_*\|^{\alpha}}\right)^{k+1} \|x_0 - x_*\|^2.
    \end{eqnarray*}
    This completes the proof of the theorem.
\end{proof}

\newpage
\subsection{Convergence Guarantees for Monotone Operators}\label{subsec:monotone_proofs}

\subsubsection{Proof of Theorem \ref{theorem:1symm_monotone}}

\begin{theorem}
    Suppose $F$ is monotone and $1$-symmetric $(L_0, L_1)$-Lipschitz operator. Then \algname{EG} with step size $\gamma_k = \omega_k = \frac{\nu}{L_0 + L_1 \|F(x_k)\|}$ satisfy 
    \begin{equation*}
        \min_{0 \leq k \leq K} \|F(x_k)\|^2 \leq \frac{2L_0^2 \left( 1 + L_1 \exp{\left( L_1 \|x_0 - x_*\|\right) \|x_0 - x_*\|}\right)^2 \|x_0 - x_*\|^2}{\nu^2(K+1)}.
    \end{equation*}
    where $\nu \exp{\nu} = \nicefrac{1}{\sqrt{2}}$.
\end{theorem}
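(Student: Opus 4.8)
The plan is to mirror the structure of the strongly monotone analysis (Theorem~\ref{theorem:1symm_strongmonotone}), but replace the strong-monotonicity descent with a telescoping argument over the squared operator norm. First I would expand $\|x_{k+1}-x_*\|^2$ along the \algname{EG} update \eqref{eq:EG}, splitting the inner product as $\langle F(\hx_k), x_k - x_*\rangle = \langle F(\hx_k), \hx_k - x_*\rangle + \langle F(\hx_k), x_k - \hx_k\rangle$. Monotonicity \eqref{eq:monotone} (with $F(x_*)=0$) kills $\langle F(\hx_k), \hx_k - x_*\rangle \ge 0$, and since $\gamma_k = \omega_k$ we have $x_k - \hx_k = \gamma_k F(x_k)$, so the cross term becomes $\gamma_k^2\langle F(\hx_k), F(x_k)\rangle$. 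Applying the polarization identity \eqref{eq:inner_prod} then yields the clean bound
\[
\|x_{k+1}-x_*\|^2 \le \|x_k - x_*\|^2 - \gamma_k^2 \|F(x_k)\|^2 + \gamma_k^2 \|F(x_k) - F(\hx_k)\|^2.
\]

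The key step is to control the last term. Using Proposition~\ref{prop:equiv_formulation} (equation \eqref{eq:alpha=1}) with the pair $(x_k, \hx_k)$, together with $\|x_k - \hx_k\| = \gamma_k \|F(x_k)\|$, gives $\|F(x_k) - F(\hx_k)\| \le (L_0 + L_1\|F(x_k)\|)\exp(L_1\gamma_k\|F(x_k)\|)\gamma_k\|F(x_k)\|$. The step size $\gamma_k = \nicefrac{\nu}{(L_0 + L_1\|F(x_k)\|)}$ is engineered precisely so that $\gamma_k(L_0 + L_1\|F(x_k)\|) = \nu$ and $\gamma_k L_1\|F(x_k)\| \le \nu$; hence $\|F(x_k)-F(\hx_k)\| \le \nu \exp(\nu)\|F(x_k)\|$. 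Choosing $\nu$ via $\nu\exp(\nu) = \nicefrac{1}{\sqrt 2}$ makes $\nu^2\exp(2\nu) = \tfrac12$, collapsing the recurrence to the descent inequality $\|x_{k+1}-x_*\|^2 \le \|x_k - x_*\|^2 - \tfrac{\gamma_k^2}{2}\|F(x_k)\|^2$. In particular this shows $\|x_k - x_*\| \le \|x_0 - x_*\|$ for all $k$, so the iterates stay in a ball of radius $\|x_0 - x_*\|$.

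Finally I would convert the $\gamma_k$-weighted descent into a fixed-step telescope. Invoking \eqref{eq:alpha=1} once more, this time with $x = x_*$ so that $\|F(x)\|=0$, gives $\|F(x_k)\| \le L_0\exp(L_1\|x_k - x_*\|)\|x_k - x_*\| \le L_0\exp(L_1\|x_0 - x_*\|)\|x_0 - x_*\|$ by the boundedness just established. This yields the uniform lower bound $\gamma_k \ge \gamma := \nicefrac{\nu}{L_0(1 + L_1\exp(L_1\|x_0-x_*\|)\|x_0-x_*\|)}$. Summing the descent inequality from $k=0$ to $K$, telescoping, and dropping $\|x_{K+1}-x_*\|^2 \ge 0$ gives $\sum_{k=0}^K \|F(x_k)\|^2 \le \nicefrac{2\|x_0-x_*\|^2}{\gamma^2}$; bounding the minimum by the average over $K+1$ terms then produces the claimed rate. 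The main obstacle is the second step: I expect the delicate point to be verifying that the particular algebraic form of $\gamma_k$ simultaneously neutralizes the $(L_0+L_1\|F(x_k)\|)$ prefactor and keeps the exponential argument bounded by $\nu$, so that the single transcendental condition $\nu\exp(\nu)=\nicefrac{1}{\sqrt2}$ suffices to force the descent coefficient to exactly $\tfrac12$. The exponential factor $\exp(L_1\|x_0-x_*\|)$ in the final rate enters precisely through this step-size lower bound, and---as noted before Theorem~\ref{theorem:1symm_monotone_noexp}---it is the loose estimate that a more refined argument later removes.
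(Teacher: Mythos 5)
Your proposal is correct and follows essentially the same route as the paper's proof: the monotonicity-plus-polarization expansion, the bound $\|F(x_k)-F(\hx_k)\|\le \nu e^{\nu}\|F(x_k)\|$ forced by the choice of $\gamma_k$ and the condition $\nu e^{\nu}=\nicefrac{1}{\sqrt{2}}$, the resulting descent inequality $\|x_{k+1}-x_*\|^2\le\|x_k-x_*\|^2-\tfrac{\gamma_k^2}{2}\|F(x_k)\|^2$, the uniform step-size lower bound via $\|F(x_k)\|\le L_0 e^{L_1\|x_0-x_*\|}\|x_0-x_*\|$, and the telescoping/averaging finish. Your closing remark about the exponential factor entering through the step-size lower bound, and being removable by the refinement of Theorem~\ref{theorem:1symm_monotone_noexp}, matches the paper's own discussion.
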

\begin{proof}
    From the update rule of the Extragradient method, we have
    \begin{eqnarray*}
        && \left\| x_{k+1} - x_* \right\|^2 \notag \\
        & = & \| x_k - \gamma_k F(\hx_k) - x_* \|^2 \notag \\
        & = & \| x_k - x_* \|^2 - 2 \gamma_k \la F(\hx_k), x_k - x_* \ra + \gamma_k^2  \| F(\hx_k) \|^2 \notag \\
        & = & \| x_k - x_* \|^2 - 2 \gamma_k \la F(\hx_k), \hx_k - x_* \ra - 2 \gamma_k \la F(\hx_k), x_k - \hx_k \ra \\
        && + \gamma_k^2  \| F(\hx_k) \|^2 \notag \\
        & \overset{\eqref{eq:monotone}}{\leq} & \| x_k - x_* \|^2 - 2 \gamma_k \la F(\hx_k), x_k - \hx_k \ra + \gamma_k^2  \| F(\hx_k) \|^2 \\
        & = & \| x_k - x_* \|^2 - 2 \gamma_k^2 \la F(\hx_k), F(x_k) \ra + \gamma_k^2  \| F(\hx_k) \|^2 \\
        & \overset{\eqref{eq:inner_prod}}{=} & \left\| x_k - x_* \right\|^2 - \gamma_k^2 \|F(\hx_k)\|^2 - \gamma_k^2 \|F(x_k)\|^2 + \gamma_k^2 \| F(x_k) - F(\hx_k)\|^2 \\
        && + \gamma_k^2 \| F(\hx_k)\|^2 \\
        & = & \left\| x_k - x_* \right\|^2 - \gamma_k^2 \|F(x_k)\|^2 + \gamma_k^2 \| F(x_k) - F(\hx_k)\|^2 \\
        & \overset{\eqref{eq:alpha=1}}{\leq} & \left\| x_k - x_* \right\|^2 - \gamma_k^2 \| F(x_k)\|^2 \\
        && + \gamma_k^2 (L_0 + L_1 \|F(x_k)\|)^2 \exp{\left( 2 L_1 \|x_k - \hx_k \| \right) } \|x_k - \hx_k\|^2 \\
        & = & \left\| x_k - x_* \right\|^2 \\
        && - \gamma_k^2 \left(1 - \gamma_k^2 (L_0 + L_1 \|F(x_k)\|)^2 \exp{\left( 2 \gamma_k L_1 \|F(x_k) \| \right)} \right) \| F(x_k)\|^2 \\
        & \leq & \left\| x_k - x_* \right\|^2 \\
        && - \gamma_k^2 \left(1 - \gamma_k^2 (L_0 + L_1 \|F(x_k)\|)^2 \exp{\left( 2 \gamma_k (L_0 + L_1 \|F(x_k) \|) \right)} \right) \| F(x_k)\|^2.
    \end{eqnarray*}
     Here we use $\gamma_k = \frac{\nu}{L_0 + L_1 \|F(x_k)\|}$ for some $\nu \in (0, 1)$ to get
    \begin{eqnarray*}
         \left\| x_{k+1} - x_* \right\|^2 & \leq & \left\| x_k - x_* \right\|^2 - \gamma_k^2 \left(1 - \nu^2  \exp{\left( 2 \nu \right)} \right) \| F(x_k)\|^2
    \end{eqnarray*}
    Then we choose $\nu$ such that $\nu \exp{\nu} = \nicefrac{1}{\sqrt{2}}$ to obtain
    \begin{eqnarray}\label{eq:1symm_monotone_eq1}
        \|x_{k+1} - x_*\|^2 & \leq & \|x_k - x_*\|^2 - \frac{\gamma_k^2}{2} \|F(x_k)\|^2.
    \end{eqnarray}
    In particular the distance of the iterates $x_k$ from $x_*$ are bounded i.e. $\|x_{k+1} - x_*\| \leq \|x_k - x_*\| \leq \|x_0 - x_*\|$. Therefore, using \eqref{eq:alpha=1} with $y = x_*$ and $x = x_k$, we get
    \begin{eqnarray*}
        \|F(x_k)\| & \leq & L_0 \exp{ \left( L_1 \|x_k - x_*\|\right)} \|x_k - x_*\| \\
        & \leq & L_0 \exp{\left(L_1\|x_0 - x_*\| \right)} \|x_0 - x_*\|.
    \end{eqnarray*}
    Then we have the lower bound on step-size given as follows
    \begin{eqnarray}\label{eq:1symm_monotone_eq2}
        \gamma_k = \frac{\nu}{L_0 + L_1 \|F(x_k)\|} \geq \frac{\nu}{L_0 \left( 1 + L_1 \exp{\left(L_1 \|x_0 - x_*\| \right) \|x_0 - x_*\|}\right)}.
    \end{eqnarray}
    Rearranging \eqref{eq:1symm_monotone_eq1} we have
    \begin{eqnarray*}
        \frac{\gamma_k^2}{2} \|F(x_k)\|^2 & \leq & \|x_{k} - x_*\|^2 - \|x_{k+1} - x_*\|^2.
    \end{eqnarray*}
    Summing up the above inequality for $k = 0, 1, \cdots, K$ and dividing by $K+1$ we get
    \begin{equation}\label{eq:1symm_monotone_eq3}
        \frac{1}{K+1} \sum_{k = 0}^K \frac{\gamma_k^2}{2}\|F(x_k)\|^2 \leq \frac{\|x_0 - x_*\|^2 - \|x_{K+1} - x_*\|^2}{K+1} \leq \frac{\|x_0 - x_*\|^2}{K+1}.
    \end{equation}
    Here we will use the lower bound on step-size $\gamma_k$ given in \eqref{eq:1symm_monotone_eq2} to get
    \begin{eqnarray*}
        \frac{1}{K+1} \sum_{k = 0}^K \|F(x_k)\|^2 & \leq & \frac{2L_0^2 \left( 1 + L_1 \exp{\left( L_1 \|x_0 - x_*\|\right) \|x_0 - x_*\|}\right)^2 \|x_0 - x_*\|^2}{\nu^2(K+1)}.
    \end{eqnarray*}
    Finally, note that $\min_{0 \leq k \leq K} \|F(x_k)\|^2 \leq \frac{1}{K+1} \sum_{k = 0}^K \|F(x_k)\|^2$. Therefore we have
    \begin{eqnarray*}
        \min_{0 \leq k \leq K} \|F(x_k)\|^2 & \leq &  \frac{2L_0^2 \left( 1 + L_1 \exp{\left( L_1 \|x_0 - x_*\|\right) \|x_0 - x_*\|}\right)^2 \|x_0 - x_*\|^2}{\nu^2(K+1)}.
    \end{eqnarray*}
    This completes the proof of the Theorem.
\end{proof}

\newpage
\subsubsection{Proof of Theorem \ref{theorem:1symm_monotone_noexp}}
\begin{theorem}
    Suppose $F$ is monotone and $1$-symmetric $(L_0, L_1)$-Lipschitz operator. Then Extragradient method with step size $\gamma_k = \frac{\nu}{L_0 + L_1 \|F(x_k)\|}$ satisfy 
    \begin{eqnarray*}
        \min_{0 \leq k \leq K} \| F(x_k)\| & \leq & \frac{\sqrt{2} L_0 \|x_0 - x_*\|}{\nu \sqrt{K+1} - \sqrt{2} L_1 \|x_0 - x_*\|}
    \end{eqnarray*}
    where $\nu \exp{\nu} = \nicefrac{1}{\sqrt{2}}$ and $K+1 \geq \frac{2L_1^2 \|x_0 - x_*\|^2}{\nu^2}$.
\end{theorem}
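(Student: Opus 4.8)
The plan is to reuse the two inequalities already established while proving Theorem~\ref{theorem:1symm_monotone}, namely the per-iteration contraction \eqref{eq:1symm_monotone_eq1} and its summed form \eqref{eq:1symm_monotone_eq3}, and then to \emph{avoid} the crude step-size lower bound that introduced the $\exp(L_1\|x_0-x_*\|)$ factor in the earlier result. First I would recall that summing \eqref{eq:1symm_monotone_eq1} over $k=0,\dots,K$ gives $\sum_{k=0}^K \frac{\gamma_k^2}{2}\|F(x_k)\|^2 \le \|x_0-x_*\|^2$, which is exactly \eqref{eq:1symm_monotone_eq3} before dividing by $K+1$. This is the only analytic input needed from the descent recursion.

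The key step is to substitute the explicit step size $\gamma_k = \frac{\nu}{L_0+L_1\|F(x_k)\|}$ directly into the summand, giving $\frac{\gamma_k^2}{2}\|F(x_k)\|^2 = \frac{\nu^2}{2}\left(\frac{\|F(x_k)\|}{L_0+L_1\|F(x_k)\|}\right)^2$, and to observe that the scalar map $t\mapsto \frac{t}{L_0+L_1 t}$ is increasing on $[0,\infty)$. Writing $m_K \eqdef \min_{0\le k\le K}\|F(x_k)\|$, monotonicity gives $\frac{\|F(x_k)\|}{L_0+L_1\|F(x_k)\|}\ge \frac{m_K}{L_0+L_1 m_K}$ for every $k$, so each summand is at least $\frac{\nu^2}{2}\frac{m_K^2}{(L_0+L_1 m_K)^2}$. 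Summing over the $K+1$ indices and combining with the bound above produces the single scalar inequality
\begin{equation*}
    \frac{\nu^2 (K+1)}{2}\,\frac{m_K^2}{(L_0+L_1 m_K)^2} \le \|x_0-x_*\|^2 .
\end{equation*}

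From here the argument is purely algebraic: taking square roots (all quantities are nonnegative) and clearing the denominator yields $m_K\bigl(\nu\sqrt{K+1}-\sqrt{2}L_1\|x_0-x_*\|\bigr)\le \sqrt{2}L_0\|x_0-x_*\|$, and dividing by the bracketed factor gives the claimed rate. The one genuine subtlety---the step I expect to demand the most care---is that this division is legitimate only when $\nu\sqrt{K+1}-\sqrt{2}L_1\|x_0-x_*\|>0$; this is precisely where the hypothesis $K+1\ge \frac{2L_1^2\|x_0-x_*\|^2}{\nu^2}$ enters, guaranteeing positivity of the denominator (and hence a meaningful, positive bound). The conceptual insight, as opposed to the exponential-bound proof of Theorem~\ref{theorem:1symm_monotone}, is to keep the product $\gamma_k^2\|F(x_k)\|^2$ intact and exploit the monotonicity of $t/(L_0+L_1 t)$ evaluated at the best iterate, rather than bounding $\gamma_k$ and $\|F(x_k)\|$ separately.
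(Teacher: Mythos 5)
Your proposal is correct and follows essentially the same route as the paper's proof: both start from the summed contraction $\sum_{k=0}^K \tfrac{\gamma_k^2}{2}\|F(x_k)\|^2 \le \|x_0-x_*\|^2$, keep the product $\gamma_k^2\|F(x_k)\|^2 = \nu^2\bigl(\tfrac{\|F(x_k)\|}{L_0+L_1\|F(x_k)\|}\bigr)^2$ intact, take square roots, and rearrange, invoking $K+1\ge \tfrac{2L_1^2\|x_0-x_*\|^2}{\nu^2}$ exactly where the denominator must be positive. The only cosmetic difference is that the paper extracts an index $k_0$ minimizing the ratio and bounds $\min_k\|F(x_k)\|\le\|F(x_{k_0})\|$, whereas you lower-bound every summand via the monotonicity of $t\mapsto t/(L_0+L_1t)$ evaluated at the best iterate; both yield the identical scalar inequality.
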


\begin{proof}
    From \eqref{eq:1symm_monotone_eq3}, we know steps of Extragradient method satisfy
    \begin{eqnarray*}
        \frac{1}{K+1} \sum_{k = 0}^K \frac{\gamma_k^2}{2} \|F(x_k)\|^2 & \leq & \frac{\|x_0 - x_*\|^2}{K+1}.
    \end{eqnarray*}
    Taking the minimum on the left-hand side we have
    \begin{eqnarray*}
        \min_{0 \leq k \leq K} \gamma_k^2 \|F(x_k)\|^2 & \leq & \frac{2\|x_0 - x_*\|^2}{K+1},
    \end{eqnarray*}
    or equivalently, 
    \begin{eqnarray*}
        \min_{0 \leq k \leq K} \frac{\nu^2 \|F(x_k)\|^2}{(L_0 + L_1 \|F(x_k)\|)^2} & \leq & \frac{2\|x_0 - x_*\|^2}{K+1}.
    \end{eqnarray*}
    Taking the square root on both sides we have
    \begin{eqnarray*}
        \min_{0 \leq k \leq K} \frac{\nu \|F(x_k)\|}{L_0 + L_1 \|F(x_k)\|} & \leq & \frac{\sqrt{2}\|x_0 - x_*\|}{\sqrt{K+1}}.
    \end{eqnarray*}
    Therefore, for some $0 \leq k_0 \leq K$ we have
    \begin{eqnarray*}
        \frac{\|F(x_{k_0})\|}{L_0 + L_1 \|F(x_{k_0})\|} & \leq & \frac{\sqrt{2}\|x_0 - x_*\|}{\nu \sqrt{K+1}}.
    \end{eqnarray*}
    Therefore, rearranging these terms, we get
    \begin{eqnarray*}
       \left( \nu \sqrt{K+1} - \sqrt{2}L_1 \|x_0 - x_*\| \right) \| F(x_{k_0})\| & \leq & \sqrt{2} L_0 \|x_0 - x_*\|.
    \end{eqnarray*}
    When we have $K+1 \geq \frac{2L_1^2 \|x_0 - x_*\|^2}{\nu^2}$ then we can rearrange the terms to obtain
    \begin{eqnarray*}
        \| F(x_{k_0})\| & \leq & \frac{\sqrt{2} L_0 \|x_0 - x_*\|}{\left( \nu \sqrt{K+1} - \sqrt{2}L_1 \|x_0 - x_*\| \right)}.
    \end{eqnarray*}
    for some $0 \leq k_0 \leq K$. Hence, we complete the proof of the theorem
    \begin{eqnarray*}
        \min_{0 \leq k \leq K} \| F(x_k)\| & \leq & \frac{\sqrt{2} L_0 \|x_0 - x_*\|}{ \nu \sqrt{K+1} - \sqrt{2} L_1 \|x_0 - x_*\|}.
    \end{eqnarray*}
\end{proof}

\subsubsection{Proof of Theorem \ref{theorem:alpha01}}
\begin{theorem}
    Suppose $F$ is monotone and $\alpha$-symmetric $(L_0, L_1)$-Lipschitz operator with $\alpha \in (0, 1)$. Then Extragradient method with step size $$\gamma_k = \frac{1}{2 \sqrt{2} K_0 + \left(2 \sqrt{2}K_1 + 2^{\nicefrac{3 (1 - \alpha)}{2}} K_2^{1 - \alpha} \right) \|F(x_k)\|^{\alpha}}$$ satisfy 
    \begin{eqnarray*}
    \textstyle
        \min_{0 \leq k \leq K} \|F(x_k)\|^2 \leq \frac{16 \left( K_0 + \left(K_1 + 2^{\nicefrac{-3}{2}} K_2^{1 - \alpha} \right) (K_0 + K_2 \|x_0 - x_*\|^{\nicefrac{\alpha}{1 - \alpha}})^{\alpha} \|x_0 - x_*\|^{\alpha}\right)^2 \|x_0 - x_*\|^2}{K+1}.
    \end{eqnarray*}
\end{theorem}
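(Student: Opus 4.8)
The plan is to closely follow the structure of the $\alpha = 1$ monotone case (Theorem \ref{theorem:1symm_monotone}), but using the simplified Lipschitz bound \eqref{eq:alpha(0,1)} from Proposition \ref{prop:equiv_formulation} in place of \eqref{eq:alpha=1}. First, I would expand $\|x_{k+1} - x_*\|^2$ using the \algname{EG} update rule with $\gamma_k = \omega_k$, exactly as in the proof of Theorem \ref{theorem:1symm_monotone}. Applying monotonicity \eqref{eq:monotone} to kill the term $-2\gamma_k \langle F(\hx_k), \hx_k - x_*\rangle$, then using the identity \eqref{eq:inner_prod} to rewrite $\langle F(\hx_k), F(x_k)\rangle$, the cross terms collapse and I obtain
\begin{equation*}
    \|x_{k+1} - x_*\|^2 \leq \|x_k - x_*\|^2 - \gamma_k^2 \|F(x_k)\|^2 + \gamma_k^2 \|F(x_k) - F(\hx_k)\|^2.
\end{equation*}
At this point I would invoke \eqref{eq:alpha(0,1)} to bound $\|F(x_k) - F(\hx_k)\|$ by $\left(K_0 + K_1 \|F(x_k)\|^{\alpha} + K_2 \|x_k - \hx_k\|^{\nicefrac{\alpha}{1-\alpha}}\right)\|x_k - \hx_k\|$, and substitute $\|x_k - \hx_k\| = \gamma_k \|F(x_k)\|$.

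The key technical step is to choose the step size $\gamma_k$ so that the coefficient multiplying $\|F(x_k)\|^2$ stays bounded away from zero — specifically, I want to guarantee
\begin{equation*}
    1 - \gamma_k^2 \left(K_0 + K_1 \|F(x_k)\|^{\alpha} + \gamma_k^{\nicefrac{\alpha}{1-\alpha}} K_2 \|F(x_k)\|^{\nicefrac{\alpha}{1-\alpha}}\right)^2 \geq \tfrac{1}{2},
\end{equation*}
i.e. $\gamma_k\left(K_0 + K_1\|F(x_k)\|^{\alpha}\right) + \gamma_k^{\nicefrac{1}{1-\alpha}} K_2 \|F(x_k)\|^{\nicefrac{\alpha}{1-\alpha}} \leq \tfrac{1}{\sqrt{2}}$. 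The stated choice $\gamma_k = \left(2\sqrt{2}K_0 + (2\sqrt{2}K_1 + 2^{\nicefrac{3(1-\alpha)}{2}}K_2^{1-\alpha})\|F(x_k)\|^{\alpha}\right)^{-1}$ is engineered precisely so each of the two summands is at most $\tfrac{1}{2\sqrt{2}}$; I would verify this by bounding each term separately, exactly mirroring the algebra in the strongly monotone $\alpha \in (0,1)$ proof (Theorem \ref{theorem:1symm_strongmonotone_alhpha01}), where an analogous splitting was carried out. This yields the clean descent inequality $\|x_{k+1} - x_*\|^2 \leq \|x_k - x_*\|^2 - \tfrac{\gamma_k^2}{2}\|F(x_k)\|^2$, which in particular shows $\|x_k - x_*\| \leq \|x_0 - x_*\|$ for all $k$.

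With the iterates bounded, I would telescope: summing the descent inequality over $k = 0, \dots, K$ gives $\sum_{k=0}^K \gamma_k^2 \|F(x_k)\|^2 \leq 2\|x_0 - x_*\|^2$. The final step is to produce a uniform lower bound on $\gamma_k$. Since $F$ is $\alpha$-symmetric $(L_0,L_1)$-Lipschitz and $F(x_*) = 0$, I can bound $\|F(x_k)\|$ in terms of $\|x_k - x_*\| \leq \|x_0 - x_*\|$ using \eqref{eq:alpha(0,1)} with $y = x_*$, which (after controlling the $\|x_k - x_*\|^{\nicefrac{\alpha}{1-\alpha}}$ term by $\|x_0 - x_*\|^{\nicefrac{\alpha}{1-\alpha}}$) gives $\|F(x_k)\| \leq (K_0 + K_2\|x_0 - x_*\|^{\nicefrac{\alpha}{1-\alpha}})\|x_0 - x_*\|$. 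Plugging this into the denominator of $\gamma_k$ yields a constant lower bound, and substituting into the telescoped sum and passing to the minimum produces the claimed $\cO(1/(K+1))$ rate with the stated constant.

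The main obstacle I anticipate is the bookkeeping in the step-size verification: the exponent $\nicefrac{1}{1-\alpha}$ appearing on $\gamma_k$ interacts awkwardly with the $2^{1-\alpha}$ and $K_2^{1-\alpha}$ factors, so confirming that the second summand $\gamma_k^{\nicefrac{1}{1-\alpha}} K_2 \|F(x_k)\|^{\nicefrac{\alpha}{1-\alpha}}$ is genuinely bounded by $\tfrac{1}{2\sqrt{2}}$ requires carefully raising the $\gamma_k$ denominator to the $\nicefrac{1}{1-\alpha}$ power and extracting the $2^{\nicefrac{3(1-\alpha)}{2}}K_2^{1-\alpha}$ term as the dominant contribution. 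This is the same subtlety handled in Theorem \ref{theorem:1symm_strongmonotone_alhpha01}, so I would reuse that computation verbatim rather than rederive it.
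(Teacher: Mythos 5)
Your proposal matches the paper's own proof essentially step for step: the same expansion of $\|x_{k+1}-x_*\|^2$, the same use of monotonicity and the inner-product identity, the same application of the bound \eqref{eq:alpha(0,1)}, the same splitting of the step-size condition into two summands each bounded by $\tfrac{1}{2\sqrt{2}}$, and the same telescoping plus lower bound on $\gamma_k$ via $\|F(x_k)\|^{\alpha} \leq (K_0 + K_2\|x_0-x_*\|^{\nicefrac{\alpha}{1-\alpha}})^{\alpha}\|x_0-x_*\|^{\alpha}$. No gaps; this is the paper's argument.
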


\begin{proof}
     Here operator $F$ is $\alpha$-symmetric $(L_0, L_1)$-Lipschitz i.e. it satisfies \eqref{eq:alpha(0,1)}. For the update steps of the Extragradient method, we have
    \begin{eqnarray}\label{eq:alpha01}
        && \left\| x_{k+1} - x_* \right\|^2 \\
        & = & \| x_k - \gamma_k F(\hx_k) - x_* \|^2 \notag \\
        & = & \| x_k - x_* \|^2 - 2 \gamma_k \la F(\hx_k), x_k - x_* \ra + \gamma_k^2  \| F(\hx_k) \|^2 \notag \\
        & = & \| x_k - x_* \|^2 - 2 \gamma_k \la F(\hx_k), \hx_k - x_* \ra - 2 \gamma_k \la F(\hx_k), x_k - \hx_k \ra \notag \\
        && + \gamma_k^2  \| F(\hx_k) \|^2 \notag \\
        & \overset{\eqref{eq:monotone}}{\leq} & \| x_k - x_* \|^2 - 2 \gamma_k \la F(\hx_k), x_k - \hx_k \ra + \gamma_k^2  \| F(\hx_k) \|^2 \notag \\
        & = & \| x_k - x_* \|^2 - 2 \gamma_k^2 \la F(\hx_k), F(x_k) \ra + \gamma_k^2  \| F(\hx_k) \|^2 \notag \\
        & \overset{\eqref{eq:inner_prod}}{=} & \left\| x_k - x_* \right\|^2 - \gamma_k^2 \|F(\hx_k)\|^2 - \gamma_k^2 \|F(x_k)\|^2 + \gamma_k^2 \| F(x_k) - F(\hx_k)\|^2 \notag \\
        && + \gamma_k^2 \| F(\hx_k)\|^2 \notag \\
        & = & \left\| x_k - x_* \right\|^2 - \gamma_k^2 \|F(x_k)\|^2 + \gamma_k^2 \| F(x_k) - F(\hx_k)\|^2 \notag \\
        & \overset{\eqref{eq:alpha(0,1)}}{\leq} & \left\| x_k - x_* \right\|^2 - \gamma_k^2 \| F(x_k)\|^2 \notag \\
        && + \gamma_k^2 \left( K_0 + K_1 \|F(x_k)\|^{\alpha} + K_2 \| x_k - \hx_k \|^{\nicefrac{\alpha}{1 - \alpha}}\right)^2 \|x_k - \hx_k \|^2 \notag\\
        & = & \left\| x_k - x_* \right\|^2 \\
        && - \gamma_k^2 \left(1 - \gamma_k^2 \left( K_0 + K_1 \|F(x_k)\|^{\alpha} +  \gamma_k^{\nicefrac{\alpha}{1 - \alpha}} K_2 \| F(x_k) \|^{\nicefrac{\alpha}{1 - \alpha}}\right)^2 \right) \|F(x_k)\|^2 \notag.
    \end{eqnarray}
    Here we want to choose $\gamma_k$ such that
    \begin{eqnarray*}
        \gamma_k \left( K_0 + K_1 \|F(x_k)\|^{\alpha} \right) + \gamma_k^{\nicefrac{1}{1 - \alpha}} K_2 \| F(x_k) \|^{\nicefrac{\alpha}{1 - \alpha}} & \leq & \frac{1}{\sqrt{2}}.
    \end{eqnarray*}
    For this, it is enough to make sure
    \begin{eqnarray*}
        \gamma_k \left( K_0 + K_1 \| F(x_k)\|^{\alpha} \right) \leq \frac{1}{2 \sqrt{2}} & \text{ and } & \gamma_k^{\nicefrac{1}{1 - \alpha}} K_2 \| F(x_k)\|^{\nicefrac{\alpha}{1 - \alpha}} \leq \frac{1}{2 \sqrt{2}}.
    \end{eqnarray*}
    Therefore, we choose $\gamma_k = \frac{1}{2 \sqrt{2} (K_0 + K_1 \| F(x_k)\|^{\alpha}) + 2^{\nicefrac{3 (1 - \alpha)}{2}} K_2^{1 - \alpha} \|F(x_k)\|^{\alpha}}$ and we get the following from \eqref{eq:alpha01}
    \begin{eqnarray}\label{eq:alpha01_eq3}
        \| x_{k+1} - x_*\|^2 & \leq & \| x_k - x_*\|^2 - \frac{\gamma_k^2}{2} \| F(x_k) \|^2.
    \end{eqnarray}
    Rearranging this inequality, we have
    \begin{eqnarray*}
        \frac{\gamma_k^2}{2} \| F(x_k)\|^2 & \leq & \|x_k - x_*\|^2 - \|x_{k+1} -  x_*\|^2.
    \end{eqnarray*}
    Then we sum up this inequality for $k = 0, 1, \cdots K$ to get
    \begin{eqnarray}\label{eq:alpha01_eq2}
        \frac{1}{K+1}\sum_{k = 0}^K \gamma_k^2 \| F(x_k)\|^2 & \leq & \frac{2\|x_0 - x_*\|^2}{K+1}.
    \end{eqnarray}
    For this step size, we also have $\|x_k - x_0\|^2 \leq \|x_0 - x_*\|^2$ from \eqref{eq:alpha01_eq3}. Now note that from \eqref{eq:alpha(0,1)} we obtain the following bound with $x = x_k$ and $y = x_*$
    \begin{eqnarray*}
        \|F(x_k)\|^{\alpha} & \leq & (K_0 + K_2 \|x_k - x_*\|^{\nicefrac{\alpha}{1 - \alpha}})^{\alpha} \|x_k - x_*\|^{\alpha} \\
        & \overset{\eqref{eq:alpha01_eq3}}{\leq} & (K_0 + K_2 \|x_0 - x_*\|^{\nicefrac{\alpha}{1 - \alpha}})^{\alpha} \|x_0 - x_*\|^{\alpha}.
    \end{eqnarray*}
    We use this to lower bound the step size $\gamma_k$ as follows 
    \begin{eqnarray*}
        \gamma_k & = & \frac{1}{2 \sqrt{2} (K_0 + K_1 \| F(x_k)\|^{\alpha}) + 2^{\nicefrac{3 (1 - \alpha)}{2}} K_2^{1 - \alpha} \|F(x_k)\|^{\alpha}} \\ 
        & \geq & \frac{1}{2 \sqrt{2} K_0 + 2 \sqrt{2} (K_1 + 2^{\nicefrac{-3}{2}} K_2^{1 - \alpha}) (K_0 + K_2 \|x_0 - x_*\|^{\nicefrac{\alpha}{1 - \alpha}})^{\alpha} \|x_0 - x_*\|^{\alpha}}.
    \end{eqnarray*}
    Therefore from \eqref{eq:alpha01_eq2} we obtain
    \begin{equation*}
    \textstyle
        \min_{0 \leq k \leq K} \|F(x_k)\|^2 \leq \frac{16 \left( K_0 + (K_1 + 2^{\nicefrac{-3}{2}} K_2^{1 - \alpha}) (K_0 + K_2 \|x_0 - x_*\|^{\nicefrac{\alpha}{1 - \alpha}})^{\alpha} \|x_0 - x_*\|^{\alpha}\right)^2 \|x_0 - x_*\|^2}{K+1}.
    \end{equation*}
\end{proof}


\newpage
\subsection{Local Convergence Guarantees for Weak Minty Operator}
\subsubsection{Proof of Theorem \ref{theorem:weak_minty_alpha1}}

\begin{theorem}
    Suppose $F$ is weak Minty and $1$-symmetric $(L_0, L_1)$-Lipschitz assumption. Moreover we assume 
    \begin{equation}
        \Delta_1 \eqdef \frac{\nu}{L_0 \left( 1 + L_1 \|x_0 - x_*\| e^{L_1 \|x_0 - x_*\|}\right)} - 4 \rho > 0.
    \end{equation} Then \algname{EG} with step size $\gamma_k = \frac{\nu}{L_0 + L_1 \|F(x_k)\|}$ and $\omega_k = \nicefrac{\gamma_k}{2}$ satisfies
    \begin{eqnarray}
        \min_{0 \leq k \leq K} \|F(\hx_k)\|^2 & \leq & \frac{4 L_0 \left( 1 + L_1 \exp{\left( L_1 \|x_0 - x_*\|\right) \|x_0 - x_*\|}\right) \|x_0 - x_*\|^2}{\nu \Delta_1 (K+1)} 
    \end{eqnarray}
    where $\nu \exp{\nu} = 1$.
\end{theorem}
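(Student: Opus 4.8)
The plan is to mimic the monotone analysis (Theorem~\ref{theorem:1symm_monotone}) but replace monotonicity with the weak Minty inequality~\eqref{eq: weak MVI} at the right moment. First I would expand $\|x_{k+1} - x_*\|^2$ using the update $x_{k+1} = x_k - \omega_k F(\hx_k)$ and split $x_k - x_* = (x_k - \hx_k) + (\hx_k - x_*)$ to isolate the term $\langle F(\hx_k), \hx_k - x_*\rangle$. Here I apply weak Minty, $\langle F(\hx_k), \hx_k - x_*\rangle \geq -\rho\|F(\hx_k)\|^2$, which contributes a $+2\omega_k \rho \|F(\hx_k)\|^2$ term. The remaining inner product $\langle F(\hx_k), x_k - \hx_k\rangle = \gamma_k \langle F(\hx_k), F(x_k)\rangle$ is expanded via~\eqref{eq:inner_prod}, producing the combination $-\omega_k\gamma_k\|F(\hx_k)\|^2 - \omega_k\gamma_k\|F(x_k)\|^2 + \omega_k\gamma_k\|F(x_k) - F(\hx_k)\|^2$.

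The crucial simplification is bounding $\|F(x_k) - F(\hx_k)\|^2$. Since $x_k - \hx_k = \gamma_k F(x_k)$ and $\gamma_k(L_0 + L_1\|F(x_k)\|) = \nu$, Proposition~\ref{prop:equiv_formulation} (equation~\eqref{eq:alpha=1}) gives $\|F(x_k) - F(\hx_k)\|^2 \leq (L_0 + L_1\|F(x_k)\|)^2 \exp(2\nu)\gamma_k^2\|F(x_k)\|^2 = \nu^2 e^{2\nu}\|F(x_k)\|^2$. The choice $\nu e^\nu = 1$ forces $\nu^2 e^{2\nu} = 1$, so $\|F(x_k) - F(\hx_k)\|^2 \leq \|F(x_k)\|^2$, and the two $\|F(x_k)\|^2$ terms cancel exactly. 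Substituting $\omega_k = \gamma_k/2$ then collapses everything to
\begin{equation*}
    \|x_{k+1} - x_*\|^2 \leq \|x_k - x_*\|^2 - \frac{\gamma_k}{4}(\gamma_k - 4\rho)\|F(\hx_k)\|^2.
\end{equation*}

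The main obstacle is the circular dependence between the iterate boundedness and the step-size lower bound: I need $\|x_k - x_*\| \leq \|x_0 - x_*\|$ to control $\|F(x_k)\|$ via~\eqref{eq:alpha=1} (hence to lower bound $\gamma_k$), yet I need $\gamma_k - 4\rho > 0$ to obtain the descent that yields boundedness. I would resolve this by induction on $k$. Assuming $\|x_j - x_*\| \leq \|x_0 - x_*\|$ for $j \leq k$, equation~\eqref{eq:alpha=1} gives $\|F(x_k)\| \leq L_0 e^{L_1\|x_0-x_*\|}\|x_0-x_*\|$, hence $\gamma_k \geq \gamma_{\min} := \nicefrac{\nu}{L_0(1 + L_1 e^{L_1\|x_0-x_*\|}\|x_0-x_*\|)}$, so $\gamma_k - 4\rho \geq \Delta_1 > 0$ by hypothesis; the displayed inequality then shows $\|x_{k+1} - x_*\| \leq \|x_k - x_*\| \leq \|x_0 - x_*\|$, closing the induction.

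Finally, with $\gamma_k \geq \gamma_{\min}$ and $\gamma_k - 4\rho \geq \Delta_1$ in force for all $k$, I rearrange the descent to $\frac{\gamma_{\min}\Delta_1}{4}\|F(\hx_k)\|^2 \leq \|x_k - x_*\|^2 - \|x_{k+1} - x_*\|^2$, sum over $k = 0,\dots,K$, telescope, and drop the nonnegative terminal term to get $\frac{\gamma_{\min}\Delta_1}{4}\sum_{k=0}^K \|F(\hx_k)\|^2 \leq \|x_0 - x_*\|^2$. Lower bounding the sum by $(K+1)\min_k\|F(\hx_k)\|^2$ and substituting $\nicefrac{1}{\gamma_{\min}} = \nicefrac{L_0(1 + L_1 e^{L_1\|x_0-x_*\|}\|x_0-x_*\|)}{\nu}$ yields exactly the claimed rate. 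I expect the algebraic bookkeeping in the expansion and the careful tracking of the $\exp(L_1\|x_0-x_*\|)$ factor to be routine once the $\nu^2 e^{2\nu} = 1$ cancellation and the inductive boundedness argument are in place.
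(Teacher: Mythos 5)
Your proposal is correct and follows essentially the same route as the paper's proof: the same expansion of $\|x_{k+1}-x_*\|^2$, the same application of the weak Minty inequality and the inner-product identity, the same use of~\eqref{eq:alpha=1} with the $\nu^2 e^{2\nu}=1$ cancellation to reach the descent inequality $\|x_{k+1}-x_*\|^2 \leq \|x_k-x_*\|^2 - \frac{\gamma_k}{4}(\gamma_k-4\rho)\|F(\hx_k)\|^2$, and the same telescoping against the lower bound on $\gamma_k$. The only difference is that you make explicit the induction needed to break the circularity between iterate boundedness and the step-size lower bound, a point the paper handles only implicitly by deferring to the monotone case; this is a welcome touch of rigor but not a different argument.
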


\begin{proof}
    From the update rule of the Extragradient method, we have
    \begin{eqnarray*}
        &&\left\| x_{k+1} - x_* \right\|^2 \\
        & = & \left\| x_k - \frac{\gamma_k}{2} F(\hx_k) - x_* \right\|^2 \notag \\
        & = & \| x_k - x_* \|^2 - \gamma_k \la F(\hx_k), x_k - x_* \ra + \frac{\gamma_k^2}{4}  \| F(\hx_k) \|^2 \notag \\
        & = & \| x_k - x_* \|^2 - \gamma_k \la F(\hx_k), \hx_k - x_* \ra - \gamma_k \la F(\hx_k), x_k - \hx_k \ra + \frac{\gamma_k^2}{4} \| F(\hx_k) \|^2 \notag \\
        & \overset{\eqref{eq: weak MVI}}{\leq} & \| x_k - x_* \|^2 + \gamma_k \rho \| F(\hx_k) \|^2  - \gamma_k \la F(\hx_k), x_k - \hx_k \ra + \frac{\gamma_k^2}{4} \| F(\hx_k) \|^2 \\
        & = & \| x_k - x_* \|^2 + \gamma_k \rho \| F(\hx_k) \|^2 - \gamma_k^2 \la F(\hx_k), F(x_k) \ra \\
        && + \frac{\gamma_k^2}{4}  \| F(\hx_k) \|^2 \\
        & \overset{\eqref{eq:inner_prod}}{=} & \left\| x_k - x_* \right\|^2 + \gamma_k \rho \| F(\hx_k) \|^2 - \frac{\gamma_k^2}{2} \|F(\hx_k)\|^2 - \frac{\gamma_k^2}{2} \|F(x_k)\|^2  \\
        && + \frac{\gamma_k^2}{2} \| F(x_k) - F(\hx_k)\|^2 + \frac{\gamma_k^2}{4} \| F(\hx_k)\|^2 \\
        & = & \left\| x_k - x_* \right\|^2 + \gamma_k \rho \| F(\hx_k) \|^2 - \frac{\gamma_k^2}{4} \|F(\hx_k)\|^2 - \frac{\gamma_k^2}{2} \|F(x_k)\|^2 \\
        && + \frac{\gamma_k^2}{2} \| F(x_k) - F(\hx_k)\|^2  \\
        & \overset{\eqref{eq:alpha=1}}{\leq} & \left\| x_k - x_* \right\|^2 + \gamma_k \rho \| F(\hx_k) \|^2 - \frac{\gamma_k^2}{4} \|F(\hx_k)\|^2  - \frac{\gamma_k^2}{2} \| F(x_k)\|^2 \\
        && + \frac{\gamma_k^2}{2} (L_0 + L_1 \|F(x_k)\|)^2 \exp{\left( 2 L_1 \|x_k - \hx_k \| \right) } \|x_k - \hx_k\|^2 \\
        & = & \left\| x_k - x_* \right\|^2 - \frac{\gamma_k}{4} \left( \gamma_k - 4 \rho \right) \| F(\hx_k) \|^2 \\
        && - \frac{\gamma_k^2}{2} \left(1 - \gamma_k^2 (L_0 + L_1 \|F(x_k)\|)^2 \exp{\left( 2 \gamma_k L_1 \|F(x_k) \| \right)} \right) \| F(x_k)\|^2 \\
        & \leq & \left\| x_k - x_* \right\|^2 - \frac{\gamma_k}{4} \left( \gamma_k - 4 \rho \right) \| F(\hx_k) \|^2 \\ 
        && - \frac{\gamma_k^2}{2} \left(1 - \gamma_k^2 (L_0 + L_1 \|F(x_k)\|)^2 \exp{\left( 2 \gamma_k (L_0 + L_1 \|F(x_k) \|) \right)} \right) \| F(x_k)\|^2.
    \end{eqnarray*}
    Similar to the proof of Theorem \ref{theorem:1symm_monotone}, we have
    \begin{eqnarray*}
        \|x_{k+1} - x_*\|^2 & \leq & \|x_k - x_*\|^2 - \frac{\gamma_k}{4}(\gamma_k - 4 \rho) \| F(\hx_k)\|^2
    \end{eqnarray*}
    for $\gamma_k = \frac{\nu}{L_0 + L_1 \| F(x_k)\|}$ and $\nu \exp{\nu} = 1$. Again similar to Theorem \ref{theorem:1symm_monotone}, step size $\gamma_k$ is lower bounded with 
    \begin{equation*}
       \gamma_k = \frac{\nu}{L_0 + L_1 \|F(x_k)\|} \geq \frac{\nu}{L_0 \left( 1 + L_1 \exp{\left(L_1 \|x_0 - x_*\| \right) \|x_0 - x_*\|}\right)}.
    \end{equation*}
    Hence from \eqref{eq:restriction_rho} we get
    \begin{eqnarray*}
        \frac{\gamma_k \Delta_1}{4} \|F(\hx_k)\|^2 & \leq & \|x_k - x_*\|^2 - \|x_{k+1} - x_*\|^2
    \end{eqnarray*}
    Then we sum up this inequality for $k = 0, 1, \cdots, K$ to get
    \begin{eqnarray*}
        \frac{1}{K+1}\sum_{k = 0}^K \frac{\gamma_k \Delta_1}{4} \| F(\hx_k)\|^2 & \leq & \frac{\|x_0 - x_*\|^2}{K+1}.
    \end{eqnarray*}
    Therefore, we get
    \begin{equation*}
        \min_{0 \leq k \leq K} \| F(\hx_k)\|^2 \leq \frac{4 \|x_0 - x_*\|^2}{\gamma \Delta_1 (K+1)}.
    \end{equation*}
\end{proof}

\subsubsection{Proof of Theorem \ref{theorem:weak_minty_alpha01}}
\begin{theorem}
    Suppose $F$ is weak Minty and $\alpha$-symmetric $(L_0, L_1)$-Lipschitz with $\alpha \in (0, 1)$. Moreover we assume 
    \begin{equation*}
        \Delta_{\alpha} \eqdef \frac{1}{2 \sqrt{2} K_0 + 2 \sqrt{2} (K_1 + 2^{\nicefrac{-3}{2}} K_2^{1 - \alpha}) (K_0 + K_2 \|x_0 - x_*\|^{\nicefrac{\alpha}{1 - \alpha}})^{\alpha} \|x_0 - x_*\|^{\alpha}} - 4 \rho > 0.
    \end{equation*} 
    Then \algname{EG} with step size $\gamma_k = \frac{1}{2 \sqrt{2} K_0 + \left(2 \sqrt{2}K_1 + 2^{\nicefrac{3 (1 - \alpha)}{2}} K_2^{1 - \alpha} \right) \|F(x_k)\|^{\alpha}}$ and $\omega_k = \frac{\gamma_k}{2}$ satisfy 
    \begin{eqnarray*}
    \textstyle
        \min_{0 \leq k \leq K} \|F(\hx_k)\|^2 \leq \frac{4 \left( K_0 + \left(K_1 + 2^{\nicefrac{-3}{2}} K_2^{1 - \alpha} \right) (K_0 + K_2 \|x_0 - x_*\|^{\nicefrac{\alpha}{1 - \alpha}})^{\alpha} \|x_0 - x_*\|^{\alpha}\right) \|x_0 - x_*\|^2}{\Delta_{\alpha}(K+1)}.
    \end{eqnarray*}
\end{theorem}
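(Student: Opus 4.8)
The plan is to mirror the structure of the $\alpha=1$ weak Minty proof (Theorem~\ref{theorem:weak_minty_alpha1}) but substitute the sublinear Lipschitz-type bound \eqref{eq:alpha(0,1)} from Proposition~\ref{prop:equiv_formulation} in place of the exponential bound \eqref{eq:alpha=1}, and reuse the step-size machinery already developed for the monotone case $\alpha\in(0,1)$ in Theorem~\ref{theorem:alpha01}. First I would expand $\|x_{k+1}-x_*\|^2$ using the \algname{EG} update with $\omega_k=\gamma_k/2$, exactly as in Theorem~\ref{theorem:weak_minty_alpha1}. Applying the weak Minty condition \eqref{eq: weak MVI} to the cross term $-\gamma_k\langle F(\hx_k),\hx_k-x_*\rangle$ introduces the $+\gamma_k\rho\|F(\hx_k)\|^2$ term, and using the extrapolation step $x_k-\hx_k=\gamma_k F(x_k)$ together with the identity \eqref{eq:inner_prod} decomposes the remaining inner products. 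This yields
\begin{eqnarray*}
\|x_{k+1}-x_*\|^2 &\leq& \|x_k-x_*\|^2 - \frac{\gamma_k}{4}(\gamma_k-4\rho)\|F(\hx_k)\|^2 \\
&& -\frac{\gamma_k^2}{2}\left(1-\gamma_k^2\left(K_0+K_1\|F(x_k)\|^{\alpha}+\gamma_k^{\nicefrac{\alpha}{1-\alpha}}K_2\|F(x_k)\|^{\nicefrac{\alpha}{1-\alpha}}\right)^2\right)\|F(x_k)\|^2,
\end{eqnarray*}
where the last line comes from bounding $\|F(x_k)-F(\hx_k)\|^2$ via \eqref{eq:alpha(0,1)} with $y=\hx_k$.

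The next step is to kill the final bracketed term by the step-size choice. This is precisely where I would borrow the argument from Theorem~\ref{theorem:alpha01}: choosing $\gamma_k=\frac{1}{2\sqrt{2}K_0+(2\sqrt{2}K_1+2^{\nicefrac{3(1-\alpha)}{2}}K_2^{1-\alpha})\|F(x_k)\|^{\alpha}}$ guarantees $\gamma_k(K_0+K_1\|F(x_k)\|^{\alpha})+\gamma_k^{\nicefrac{1}{1-\alpha}}K_2\|F(x_k)\|^{\nicefrac{\alpha}{1-\alpha}}\le\frac{1}{\sqrt2}$, so the bracket is nonnegative and the $\|F(x_k)\|^2$ term can be dropped. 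This leaves the clean recursion $\|x_{k+1}-x_*\|^2\le\|x_k-x_*\|^2-\frac{\gamma_k}{4}(\gamma_k-4\rho)\|F(\hx_k)\|^2$. Crucially, once I establish that the iterates satisfy $\|x_k-x_*\|\le\|x_0-x_*\|$, I obtain from \eqref{eq:alpha(0,1)} with $x=x_k,y=x_*$ the bound $\|F(x_k)\|^{\alpha}\le(K_0+K_2\|x_0-x_*\|^{\nicefrac{\alpha}{1-\alpha}})^{\alpha}\|x_0-x_*\|^{\alpha}$, which gives the uniform lower bound $\gamma_k\ge\frac{1}{2\sqrt2 K_0+2\sqrt2(K_1+2^{-3/2}K_2^{1-\alpha})(K_0+K_2\|x_0-x_*\|^{\nicefrac{\alpha}{1-\alpha}})^{\alpha}\|x_0-x_*\|^{\alpha}}$, i.e.\ $\gamma_k\ge\Delta_\alpha+4\rho$.

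The restriction \eqref{eq:restriction_rho_alpha}, namely $\Delta_\alpha>0$, then ensures $\gamma_k-4\rho\ge\Delta_\alpha>0$, so the coefficient $\frac{\gamma_k}{4}(\gamma_k-4\rho)$ is bounded below by $\frac{\gamma_k\Delta_\alpha}{4}$ and the $\|F(\hx_k)\|^2$ terms telescope favorably. Summing the recursion over $k=0,\dots,K$, using $\|x_{K+1}-x_*\|^2\ge0$, lower-bounding $\gamma_k$ by its uniform bound, and passing to the minimum over $k$ delivers the stated $\mathcal{O}(1/(K+1))$ rate. The main obstacle I anticipate is the self-referential nature of the descent-monotonicity argument: establishing $\|x_k-x_*\|\le\|x_0-x_*\|$ requires the recursion to already be a contraction, which in turn requires $\gamma_k-4\rho>0$, which itself depends on the boundedness of $\|F(x_k)\|$ via \eqref{eq:alpha(0,1)}. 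I would resolve this by an induction on $k$: assuming $\|x_j-x_*\|\le\|x_0-x_*\|$ for all $j\le k$ gives the step-size lower bound at iteration $k$, hence $\gamma_k-4\rho\ge\Delta_\alpha>0$, hence $\|x_{k+1}-x_*\|^2\le\|x_k-x_*\|^2\le\|x_0-x_*\|^2$, closing the induction. This careful bootstrapping—rather than any single hard inequality—is the delicate part, since the local (weak Minty) nature of the result means the whole argument only works inside the ball where $\Delta_\alpha$ stays positive.
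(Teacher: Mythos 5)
Your proposal is correct and follows essentially the same route as the paper's own proof: expand $\|x_{k+1}-x_*\|^2$, apply weak Minty to the cross term, use the identity \eqref{eq:inner_prod} and the bound \eqref{eq:alpha(0,1)} on $\|F(x_k)-F(\hx_k)\|^2$, choose $\gamma_k$ so the $\|F(x_k)\|^2$ coefficient is nonpositive, and then telescope using the uniform lower bound on $\gamma_k$ implied by $\|x_k-x_*\|\le\|x_0-x_*\|$. Your explicit induction to close the bootstrapping loop (boundedness of iterates $\Rightarrow$ step-size lower bound $\Rightarrow$ contraction $\Rightarrow$ boundedness at the next step) is in fact more careful than the paper, which simply asserts the iterate boundedness from the recursion without addressing the circularity.
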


\begin{proof}
    From the update rule of the Extragradient method, we have
    \begin{eqnarray*}
        && \left\| x_{k+1} - x_* \right\|^2 \\
        & = & \left\| x_k - \frac{\gamma_k}{2} F(\hx_k) - x_* \right\|^2 \notag \\
        & = & \| x_k - x_* \|^2 - \gamma_k \la F(\hx_k), x_k - x_* \ra + \frac{\gamma_k^2}{4}  \| F(\hx_k) \|^2 \notag \\
        & = & \| x_k - x_* \|^2 - \gamma_k \la F(\hx_k), \hx_k - x_* \ra - \gamma_k \la F(\hx_k), x_k - \hx_k \ra \\
        && + \frac{\gamma_k^2}{4} \| F(\hx_k) \|^2 \notag \\
        & \overset{\eqref{eq: weak MVI}}{\leq} & \| x_k - x_* \|^2 + \gamma_k \rho \| F(\hx_k) \|^2  - \gamma_k \la F(\hx_k), x_k - \hx_k \ra \\
        && + \frac{\gamma_k^2}{4} \| F(\hx_k) \|^2 \\
        & = & \| x_k - x_* \|^2 + \gamma_k \rho \| F(\hx_k) \|^2 - \gamma_k^2 \la F(\hx_k), F(x_k) \ra + \frac{\gamma_k^2}{4}  \| F(\hx_k) \|^2 \\
        & \overset{\eqref{eq:inner_prod}}{=} & \left\| x_k - x_* \right\|^2 + \gamma_k \rho \| F(\hx_k) \|^2 - \frac{\gamma_k^2}{2} \|F(\hx_k)\|^2 - \frac{\gamma_k^2}{2} \|F(x_k)\|^2  \\
        && + \frac{\gamma_k^2}{2} \| F(x_k) - F(\hx_k)\|^2 + \frac{\gamma_k^2}{4} \| F(\hx_k)\|^2 \\
        & = & \left\| x_k - x_* \right\|^2 + \gamma_k \rho \| F(\hx_k) \|^2 - \frac{\gamma_k^2}{4} \|F(\hx_k)\|^2 - \frac{\gamma_k^2}{2} \|F(x_k)\|^2 \\
        && + \frac{\gamma_k^2}{2} \| F(x_k) - F(\hx_k)\|^2  \\
        & \overset{\eqref{eq:alpha=1}}{\leq} & \left\| x_k - x_* \right\|^2 + \gamma_k \rho \| F(\hx_k) \|^2 - \frac{\gamma_k^2}{4} \|F(\hx_k)\|^2  - \frac{\gamma_k^2}{2} \| F(x_k)\|^2 \\
        && + \frac{\gamma_k^2}{2} \left( K_0 + K_1 \|F(x_k)\|^{\alpha} +  K_2 \| x_k - \hx_k \|^{\nicefrac{\alpha}{1 - \alpha}}\right)^2 \|x_k - \hx_k\|^2 \\
        & = & \left\| x_k - x_* \right\|^2 - \frac{\gamma_k}{4} \left( \gamma_k - 4 \rho \right) \| F(\hx_k) \|^2 \\
        && - \frac{\gamma_k^2}{2} \left(1 - \gamma_k^2 \left( K_0 + K_1 \|F(x_k)\|^{\alpha} +  \gamma_k^{\nicefrac{\alpha}{1 - \alpha}} K_2 \| F(x_k) \|^{\nicefrac{\alpha}{1 - \alpha}}\right)^2 \right) \| F(x_k)\|^2 .
    \end{eqnarray*}
     Here we choose $\gamma_k = \frac{1}{2 (K_0 + K_1 \| F(x_k)\|^{\alpha}) + 2^{\nicefrac{3 (1 - \alpha)}{2}} K_2^{1 - \alpha} \|F(x_k)\|^{\alpha}}$ and we get the following from \eqref{eq:alpha01}
    \begin{eqnarray}\label{eq:alpha01_eq3}
        \| x_{k+1} - x_*\|^2 & \leq & \| x_k - x_*\|^2 - \frac{\gamma_k}{4} \left( \gamma_k - 4 \rho \right) \| F(\hx_k) \|^2.
    \end{eqnarray}
    Rearranging this inequality, we have
    \begin{eqnarray*}
       \frac{\gamma_k}{4} \left( \gamma_k - 4 \rho \right) \| F(\hx_k) \|^2 & \leq & \|x_k - x_*\|^2 - \|x_{k+1} -  x_*\|^2.
    \end{eqnarray*}
    Then we sum up this inequality for $k = 0, 1, \cdots K$ to get
    \begin{eqnarray}\label{eq:alpha01_eq2}
        \frac{1}{K+1}\sum_{k = 0}^K \frac{\gamma_k}{4} \left( \gamma_k - 4 \rho \right) \| F(\hx_k) \|^2 & \leq & \frac{2\|x_0 - x_*\|^2}{K+1}.
    \end{eqnarray}
    For this step size, we also have $\|x_k - x_*\|^2 \leq \|x_0 - x_*\|^2$ from \eqref{eq:alpha01_eq3}. Now note that from \eqref{eq:alpha(0,1)} we obtain the following bound with $x = x_k$ and $y = x_*$
    \begin{eqnarray*}
        \|F(x_k)\|^{\alpha} & \leq & (K_0 + K_2 \|x_k - x_*\|^{\nicefrac{\alpha}{1 - \alpha}})^{\alpha} \|x_k - x_*\|^{\alpha} \\
        & \overset{\eqref{eq:alpha01_eq3}}{\leq} & (K_0 + K_2 \|x_0 - x_*\|^{\nicefrac{\alpha}{1 - \alpha}})^{\alpha} \|x_0 - x_*\|^{\alpha}.
    \end{eqnarray*}
    We use this to lower bound the step size $\gamma_k$ as follows 
    \begin{eqnarray*}
        \gamma_k & = & \frac{1}{2 \sqrt{2} (K_0 + K_1 \| F(x_k)\|^{\alpha}) + 2^{\nicefrac{3 (1 - \alpha)}{2}} K_2^{1 - \alpha} \|F(x_k)\|^{\alpha}} \\ 
        & \geq & \frac{1}{2 \sqrt{2} K_0 + 2 \sqrt{2} (K_1 + 2^{\nicefrac{-3}{2}} K_2^{1 - \alpha}) (K_0 + K_2 \|x_0 - x_*\|^{\nicefrac{\alpha}{1 - \alpha}})^{\alpha} \|x_0 - x_*\|^{\alpha}}.
    \end{eqnarray*}
    Therefore from \eqref{eq:alpha01_eq2} we obtain
    \begin{eqnarray*}
        \min_{0 \leq k \leq K} \|F(x_k)\|^2 \leq \frac{4 \|x_0 - x_*\|^2}{\gamma \Delta (K+1)}.
    \end{eqnarray*}
\end{proof}

\newpage
\section{Equivalent Formulation of $\alpha$-Symmetric $(L_0, L_1)$-Lipschitz Assumption}\label{appendix:equiv_formulation}
In this section, we consider the min-max optimization problem $\min_{w_1} \max_{w_2} \mathcal{L}(w_1, w_2)$ and provide an equivalent formulation of $\alpha$-symmetric $(L_0, L_1)$-Lipschitz operator. Next, we provide an example where we use this formulation to compute the constants $\alpha, L_0, L_1$.
\subsection{Proof of Theorem \ref{thm:equiv_formulation}}
\begin{theorem}
    Suppose $F$ is the operator for the problem
    \begin{equation*}
        \min_{w_1} \max_{w_2} \mathcal{L}(w_1, w_2).
    \end{equation*}
    Then $F$ satisfies $\alpha$-symmetric $(L_0, L_1)$-Lipschitz assumption if and only if
    \begin{equation*}
        \|\mathbf{J}(x)\| = \sup_{\|u\| = 1}\|\mathbf{J}(x) u\| \leq L_0 + L_1 \|F(x)\|^{\alpha}
    \end{equation*}
    where 
    \begin{equation*}
        \mathbf{J}(x) = \begin{bmatrix}
        \nabla^2_{w_1 w_1} \mathcal{L}(w_1, w_2) & \nabla^2_{w_2 w_1} \mathcal{L}(w_1, w_2) \\
        -\nabla^2_{w_1 w_2} \mathcal{L}(w_1, w_2) & -\nabla^2_{w_2 w_2} \mathcal{L}(w_1, w_2)
        \end{bmatrix}.
    \end{equation*}
    Here $\| \mathbf{J}(x)\| = \sigma_{\max}(\mathbf{J}(x))$ i.e. maximum singular value of $\mathbf{J}(x)$.
\end{theorem}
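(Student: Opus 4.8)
The plan is to first recognize that the matrix $\mathbf{J}(x)$ defined in \eqref{eq:jacobian} is precisely the Jacobian (total derivative) $DF(x)$ of the operator $F$: differentiating the two blocks $\nabla_{w_1}\mathcal{L}$ and $-\nabla_{w_2}\mathcal{L}$ with respect to $(w_1^\top, w_2^\top)^\top$ reproduces exactly the block structure displayed in \eqref{eq:jacobian}. Once this identification is made, the specific min-max structure plays no further role, and the statement reduces to the classical equivalence between a pointwise bound on the derivative and a mean-value-type Lipschitz estimate. I would prove the two implications separately.

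For the direction \eqref{eq:equiv_formulation} $\Rightarrow$ \eqref{eq:(L0,L1)-Lipschitz}, I would fix $x, y$ and integrate along the segment $z(\theta) = \theta x + (1-\theta)y$. Since $F$ is differentiable, the fundamental theorem of calculus gives
$$F(x) - F(y) = \int_0^1 \mathbf{J}(z(\theta))\,(x - y)\, d\theta,$$
so that, taking norms and using submultiplicativity of the operator norm together with \eqref{eq:equiv_formulation},
$$\|F(x) - F(y)\| \le \left( L_0 + L_1 \int_0^1 \|F(z(\theta))\|^{\alpha}\, d\theta \right)\|x - y\|.$$
By Lemma \ref{lemma:reform_integration}, this integral form \eqref{eq:reform_integration} is exactly equivalent to the $\alpha$-symmetric $(L_0, L_1)$-Lipschitz condition \eqref{eq:(L0,L1)-Lipschitz}, so this direction is complete; alternatively one may bound the integral over $[0,1]$ by its maximum to recover \eqref{eq:(L0,L1)-Lipschitz} directly.

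For the converse \eqref{eq:(L0,L1)-Lipschitz} $\Rightarrow$ \eqref{eq:equiv_formulation}, I would recover the derivative bound by a directional-limit argument. Fix $x$ and a unit vector $u$, and apply \eqref{eq:(L0,L1)-Lipschitz} with $y = x + t u$ for $t > 0$:
$$\frac{\|F(x + tu) - F(x)\|}{t} \le L_0 + L_1 \max_{\theta \in [0,1]} \|F(x + \theta t u)\|^{\alpha}.$$
Letting $t \to 0^+$, the left-hand side converges to $\|\mathbf{J}(x) u\|$ by differentiability of $F$ and continuity of the norm, while the right-hand side converges to $L_0 + L_1 \|F(x)\|^{\alpha}$. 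Taking the supremum over all unit vectors $u$ then yields $\|\mathbf{J}(x)\| = \sup_{\|u\|=1}\|\mathbf{J}(x)u\| \le L_0 + L_1 \|F(x)\|^{\alpha}$, which is \eqref{eq:equiv_formulation}.

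The main obstacle is the limit on the right-hand side of the converse: one must justify that $\max_{\theta \in [0,1]} \|F(x + \theta t u)\| \to \|F(x)\|$ as $t \to 0^+$. This follows because the segment $\{x + \theta t u : \theta \in [0,1]\}$ collapses to the single point $x$, and $F$, being differentiable, is continuous (hence uniformly continuous on a compact neighborhood of $x$); raising to the power $\alpha \in (0,1]$ preserves the limit by continuity of $r \mapsto r^{\alpha}$. Apart from this care with the maximum, both directions are routine applications of the fundamental theorem of calculus and the characterization of the operator norm as $\sup_{\|u\|=1}\|\mathbf{J}(x)u\|$.
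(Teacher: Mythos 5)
Your proof is correct and follows essentially the same two-part strategy as the paper: the fundamental theorem of calculus along the segment combined with Lemma \ref{lemma:reform_integration} for one direction, and a directional difference-quotient limit with $y = x + tu$ followed by a supremum over unit vectors for the other. The only minor difference is that for the derivative-bound direction the paper passes through the integral-average form of the condition and invokes L'H\^opital's rule together with the Leibniz integral rule, whereas you take the limit of the maximum over the shrinking segment directly via continuity of $F$; both are valid and yield the same bound.
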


\begin{proof}
    Following \eqref{eq:reform_integration}, we have the equivalent characterization of $F$ given by
    \begin{equation*}
        \|F(y) - F(x)\| \leq \left( L_0 + L_1 \int_{0}^1 \left\| F(\theta y + (1 - \theta)x)\right\|^{\alpha} d\theta \right) \|y - x\| \qquad \forall x, y \in \R^d.
    \end{equation*}
    As this inequality holds for any $x, y \in \R^d$, we choose $y = x + \theta' u$ where $\|u\| = 1$ and $\theta' \in (0, 1)$. Then we get
    \begin{equation*}
        \|F(x + \theta' u) - F(x)\| \leq \left( L_0 + L_1 \int_{0}^1 \left\| F(x + \theta' \theta u) \right\|^{\alpha} d\theta \right) \|\theta' u\| \qquad \forall x \in \R^d.
    \end{equation*}
    The right-hand side of this inequality can be rewritten as 
    \begin{eqnarray*}
        \left( L_0 + L_1 \int_{0}^1 \left\| F(x + \theta' \theta u) \right\|^{\alpha} d\theta \right) \|\theta' u\| & = & \theta' \left( L_0 + L_1 \int_{0}^1 \left\| F(x + \theta' \theta u) \right\|^{\alpha} d\theta \right) \\
        & = & L_0 \theta' + L_1 \int_{0}^1 \left\| F(x + \theta' \theta u) \right\|^{\alpha} \theta' d\theta \\
        & = & L_0 \theta' + L_1 \int_{0}^{\theta'} \left\| F(x + \varphi u) \right\|^{\alpha} d\varphi.
    \end{eqnarray*}
    In the last line, we used the change of variable with $\varphi = \theta' \theta$. Therefore, we get
    \begin{equation*}
        \frac{\|F(x + \theta' u) - F(x)\|}{\theta'} \leq L_0 + \frac{L_1}{\theta'} \int_{0}^{\theta'} \left\| F(x + \varphi u) \right\|^{\alpha} d\varphi.
    \end{equation*}
    Then we take $\theta' \to 0$ and use L'Hôpital's rule and Leibniz Integral rule to obtain
    \begin{equation*}
        \lim_{\theta' \to 0} \frac{\|F(x + \theta' u) - F(x)\|}{\theta'} \leq L_0 + L_1 \left\| F(x) \right\|^{\alpha}.
    \end{equation*}
    Moreover, note that the left-hand side is given by $\|\mathbf{J}(x)u\|$ where
    \begin{equation*}
        \mathbf{J}(x) = \begin{bmatrix}
        \nabla^2_{w_1 w_1} \mathcal{L}(w_1, w_2) & \nabla^2_{w_2 w_1} \mathcal{L}(w_1, w_2) \\
        -\nabla^2_{w_1 w_2} \mathcal{L}(w_1, w_2) & -\nabla^2_{w_2 w_2} \mathcal{L}(w_1, w_2)
        \end{bmatrix}.
    \end{equation*}
    Therefore, for any $\|u\| = 1$ we have 
    \begin{equation*}
        \|\mathbf{J}(x) u\| \leq L_0 + L_1 \|F(x)\|^{\alpha}.
    \end{equation*}
    Hence we get 
    \begin{equation*}
        \|\mathbf{J}(x)\| = \sup_{\|u\| = 1}\|\mathbf{J}(x) u\| \leq L_0 + L_1 \|F(x)\|^{\alpha}.
    \end{equation*}
    Now we want to show the other way, i.e. suppose we have $\|\mathbf{J}(x)\| \leq L_0 + L_1 \|F(x)\|^{\alpha}$. For this we define, 
    \begin{equation*}
        q(\theta) \eqdef F(\theta x + (1 - \theta) y).      
    \end{equation*}
    Then $q(1) = F(x)$ and $q(0) = F(y)$ and we have
    \begin{eqnarray*}
        \|F(x) - F(y)\| & = & \|q(1) - q(0)\|\\
        & = & \left\| \int_0^1 \frac{dq(\theta)}{d\theta} d\theta \right\| \\
        & = & \left\| \int_0^1 \frac{dF(\theta x + (1 - \theta) y)}{d\theta} d\theta \right\| \\
        & = & \left\| \int_0^1 \mathbf{J}(\theta x + (1 - \theta) y) (x - y) d\theta \right\| \\
        & \leq  &  \int_0^1 \left\|\mathbf{J}(\theta x + (1 - \theta) y)\right\| \left\|x - y\right\| d\theta \\
        & = &  \left(\int_0^1 \left\|\mathbf{J}(\theta x + (1 - \theta) y)\right\|d\theta \right) \left\|x - y\right\| \\
        & \leq &  \left(\int_0^1L_0 + L_1 \|F(\theta x + (1 - \theta) y)\|^{\alpha} d\theta \right) \left\|x - y\right\| \\
        & = &  \left(L_0 + L_1 \int_0^1 \|F(\theta x + (1 - \theta) y)\|^{\alpha} d\theta \right) \left\|x - y\right\|.
    \end{eqnarray*}
    Then, using Lemma \ref{lemma:reform_integration}, we have the result. 
\end{proof}

\subsection{Computation of $\alpha, L_0, L_1$ for $\mathcal{L}(w_1, w_2)$.}\label{sec:compute_L0L1}

We now revisit the min-max problem defined in~\eqref{eq:min_max_cubic}. Note that, the operator corresponding to this problem is given by 
\begin{equation*}
    F(x) = \begin{bmatrix}
        w_1^2 + w_2 \\
        w_2^2 - w_1
    \end{bmatrix}
\end{equation*}
Then the norm of operator is $\| F(x)\| = \sqrt{\left(w_1^2 + w_2 \right)^2 + \left(w_2^2 - w_1 \right)^2}$. Moreover, the Jacobian matrix is given by
\begin{equation*}
    \mathbf{J}(x) = \begin{bmatrix}
        2 w_1 & 1 \\
        -1 & 2 w_2
    \end{bmatrix}.
\end{equation*}
Then the maximum singular value at any point $x$ is given by
\begin{eqnarray}
    \|\mathbf{J}(x)\| & = & \lambda_{\max} \left( \mathbf{J}(x)^{\top} \mathbf{J}(x) \right) \notag \\
    & = & \lambda_{\max} \left( \begin{bmatrix}
        4 w_1^2 + 1 & 2(w_1 - w_2) \\
        2(w_1 - w_2) & 4 w_2^2 + 1
    \end{bmatrix} \right)  \notag \\
    & \overset{\eqref{eq:eigen_2dmatrix}}{=} & \sqrt{2 (w_1^2 + w_2^2) + 1 + 2 \sqrt{(w_1 - w_2)^2 + (w_1^2 - w_2^2)^2}}
\end{eqnarray}

\begin{figure}
    \centering
     \includegraphics[width=0.6\linewidth]{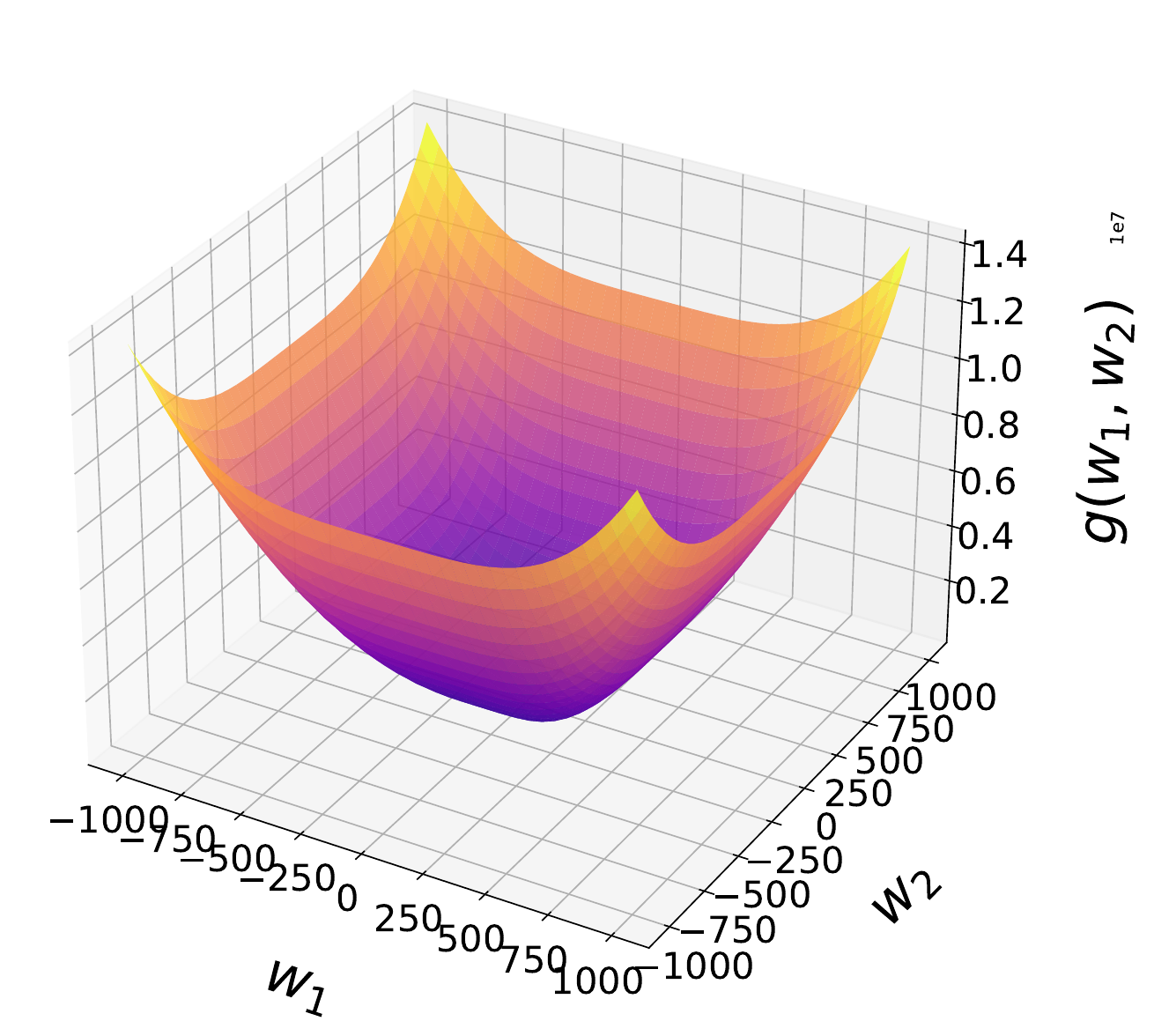}
     \caption{Plot of $g(w_1, w_2)$~\eqref{eq:compute_L0L1}. Here, the $z$-axis is in $10^7$ scale.}\label{fig:compute_L0L1}
\end{figure}

To validate whether the operator $F$ satisfies the condition~\eqref{eq:(L0,L1)-Lipschitz}, we examine whether the following function is non-negative:
\begin{equation}\label{eq:compute_L0L1}
\textstyle
    g(w_1, w_2) = L_0 + L_1 \|F(x)\| - \| \mathbf{J}(x)\|.
\end{equation}
In Figure~\ref{fig:compute_L0L1}, we plot $g(w_1, w_2)$ using $L_0 = 10$ and $L_1 = 10$. We observe that $g(w_1, w_2)$ has no real solution and remains positive for all $w_1, w_2 \in \mathbb{R}$, confirming that the function~\eqref{eq:min_max_cubic} satisfies~\eqref{eq:equiv_formulation} with $(\alpha, L_0, L_1) = (1, 10, 10)$. Thus, the corresponding operator $F$ is $1$-symmetric $(10, 10)$-Lipschitz.

\newpage
\section{Additional Details on Numerical Experiments}\label{appendix:num_exp}

In this section, we provide additional details on the second experiment related to the monotone problem. First, we show that $$\mathcal{L}(w_1, w_2) = \frac{1}{3} \left( w_1^{\top} \A w_1 \right)^{\nicefrac{3}{2}} + w_1^{\top} \B w_2 - \frac{1}{3} \left(w_2^{\top} \C w_2 \right)^{\nicefrac{3}{2}}$$ is convex-concave, and then we find the equilibrium point of $\mathcal{L}$.


\paragraph{Convex-Concave $\mathcal{L}(w_1, w_2)$.} Note that for $\mathcal{L}(w_1, w_2)$ in \eqref{eq:min_max_cubic_Rd} we have
$\nabla_{w_1} \mathcal{L}(w_1, w_2) = \left( w_1^{\top} \A w_1\right)^{\nicefrac{1}{2}} \A w_1 + \B w_2$ and $\nabla_{w_2} \mathcal{L}(w_1, w_2) = \B^{\top} w_1 - \left( w_2^{\top} \C w_2\right)^{\nicefrac{1}{2}} \C w_2$. Then the second-order derivatives are given by
\begin{equation*}
    \nabla_{w_1 w_1}^2 \mathcal{L} (w_1, w_2) = \| \A^{\nicefrac{1}{2}} w_1\| \A + \frac{\A w_1 w_1^{\top} \A^{\top}}{\| \A^{\nicefrac{1}{2}} w_1\|}
\end{equation*}
Here, $\A$ is positive definite and $\A w_1 w_1^{\top} \A^{\top}$ is a positive semidefinite matrix. Hence, $\nabla_{w_1 w_1}^2 \mathcal{L} (w_1, w_2)$ is a positive definite matrix as well and $\mathcal{L}(\cdot, w_2)$ is convex for any $w_2$. Similarly, we show that 
\begin{equation*}
    - \nabla_{w_2 w_2}^2 \mathcal{L} (w_1, w_2) = \| \C^{\nicefrac{1}{2}} w_2\| \C + \frac{\C w_2 w_2^{\top} \C^{\top}}{\| \C^{\nicefrac{1}{2}} w_2\|}
\end{equation*}
and $- \nabla_{w_2 w_2}^2 \mathcal{L} (w_1, w_2)$ is positive definite. Therefore, $\mathcal{L}(w_1, \cdot)$ is concave for any $w_1$. This proves that $\mathcal{L}(w_1, w_2)$ is convex with respect to $w_1$ and concave with respect to $w_2$. Thus, we conclude that the corresponding operator $F$ is monotone. 

\paragraph{Equilibrium of $\mathcal{L}(w_1, w_2)$.} To find the equilibrium points, we solve the set of equations given by $\nabla_{w_1} \mathcal{L}(w_1, w_2) = 0$ and $\nabla_{w_2} \mathcal{L}(w_1, w_2) = 0$, i.e., solve for 
\begin{eqnarray*}
    \left( w_1^{\top} \A w_1\right)^{\nicefrac{1}{2}} \A w_1 + \B w_2 & = & 0, \\
    \B^{\top} w_1 - \left( w_2^{\top} \C w_2\right)^{\nicefrac{1}{2}} \C w_2 & = & 0.
\end{eqnarray*}
Now multiplying the first equation with $w_1^{\top}$ and second one with $w_2^{\top}$, we have 
\begin{eqnarray*}
    \left( w_1^{\top} \A w_1\right)^{\nicefrac{1}{2}} w_1^{\top}\A w_1 + w_1^{\top} \B w_2 & = & 0 \\
    w_2^{\top}\B^{\top} w_1 - \left( w_2^{\top} \C w_2\right)^{\nicefrac{1}{2}} w_2^{\top}\C w_2 & = & 0
\end{eqnarray*}
Combining these two equations, we get
\begin{equation*}
    \left( w_1^{\top} \A w_1\right)^{\nicefrac{1}{2}} w_1^{\top}\A w_1 + \left( w_2^{\top} \C w_2\right)^{\nicefrac{1}{2}} w_2^{\top}\C w_2 = 0
\end{equation*}
which can be equivalently written as 
\begin{equation*}
    \left\| \A^{\nicefrac{1}{2}} w_1 \right\|^3 + \left\| \C^{\nicefrac{1}{2}} w_2 \right\|^3 = 0
\end{equation*}
which implies $w_1 = w_2 = 0$ as both $\A, \C$ are positive definite matrices (hence $\A^{\nicefrac{1}{2}}, \C^{\nicefrac{1}{2}}$ are invertible).

\chapter{Appendices for Chapter \ref{chap:chap-5}} \label{chap:appendix-d}

\section{Further Related Work for Chapter \ref{chap:chap-5}}
\label{apdx:related_work}
The references necessary to motivate chapter \ref{chap:chap-5} and connect it to the most relevant literature are included
in the appropriate sections of the main body of the paper. In this section, we present a broader view of
the literature, including more details on closely related work and more references to papers that are not
directly related to our main results.

\paragraph{Federated Learning.} One of the most popular characteristics of communication-efficient FL algorithms is local updates, where each client/node of the network takes multiple steps of the chosen optimization algorithm locally between the communication rounds. Many algorithms with local updates (e.g., FedAvg, Local GD, Local SGD, etc.) have been proposed and extensively analyzed in FL literature under various settings, including convex and nonconvex problems \cite{mcmahan2017communication,stich2018local, assran2019stochastic, kairouz2021advances,wang2021field,karimireddy2020scaffold,woodworth2020local, koloskova2020unified}.
In local FL algorithms, a big source of variance in the convergence guarantees stems from \emph{inter-client} differences (heterogeneous data). On that end, inspired by the popular variance reduction methods from optimization literature (e.g., SVRG~\cite{johnson2013accelerating} or SAGA~\cite{defazio2014:saga}), researchers start investigating variance reduction mechanisms to handle the variance related to heterogeneous environments (e.g. FedSVRG~\cite{konevcny2016federated}), resulting in the popular SCAFFOLD algorithm\cite{karimireddy2020scaffold}.
Gradient tracking~\cite{di2016next,nedic2017achieving} is another class of methods not affected by data-heterogeneity, but its provable communication complexity scales linearly in the condition number~\cite{koloskova2021improved,alghunaim2023enhanced}, even when combined with local steps \cite{liu2023decentralized}.
Finally, \cite{mishchenko2022proxskip} proposed a new algorithm (ProxSkip) for federated minimization problems that guarantees acceleration of communication complexity under heterogeneous data. As we mentioned in Section~\ref{MainCont}, the framework of our paper includes the algorithm and convergence results of~\cite{mishchenko2022proxskip} as a special case, and it could be seen as an extension of these ideas in the distributed variational inequality setting. 

\paragraph{Minimax Optimization and Variational Inequality Problems.} 
Minimax optimization, and more generally, variational inequality problems (VIPs)~\cite{hartman1966some,facchinei2003finite} appear in various research areas, including but not limited to online learning~\cite{cesa2006prediction}, game theory~\cite{von1947theory}, machine learning~\cite{goodfellow2014generative} and social and economic practices~\cite{facchinei2003finite,parise2019variational}. The gradient descent-ascent (GDA) method is one of the most popular algorithms for solving minimax problems. However, GDA fails to converge for convex-concave minimax problems \cite{mescheder2018training} or bilinear games \cite{gidel2019negative}. To avoid these issues,  \cite{korpelevich1976extragradient} introduced the extragradient method (EG), while \cite{popov1980modification} proposed the optimistic gradient method (OG). Recently, there has been a surge of developments for improving the extra gradient methods for better convergence guarantees. For example, \cite{mokhtari2019convergence,mokhtari2020unified} studied optimistic and extragradient methods as an approximation of the proximal point method to solve bilinear games, while \cite{hsieh2020explore} focused on the stochastic setting and proposed the use of a larger extrapolation step size compared to update step size to prove convergence under error-bound conditions. 

Beyond the convex-concave setting, most machine learning applications expressed as min-max optimization problems involve nonconvex-nonconcave objective functions. Recently, \cite{daskalakis2021complexity} showed that finding local solutions is intractable in the nonconvex-nonconcave regime, which motivates researchers to look for additional structures on problems that can be exploited to prove the convergence of the algorithms. For example, \cite{loizou2021stochastic} provided convergence guarantees of stochastic GDA under expected co-coercivity,
while \cite{yang2020global} studied the convergence stochastic alternating GDA under the PL-PL condition.
\cite{lin2020gradient} showed that solving nonconvex-(strongly)-concave minimax problems using GDA by appropriately maintaining the primal and dual stepsize ratio is possible, and several follow-up works improved the computation complexities~\cite{yang2020catalyst,zhang2021complexity,yang2022faster,zhang2022SAPD}. 
Later \cite{diakonikolas2021efficient} introduced the notion of the weak minty variational inequality (MVI), which captures a large class of nonconvex-nonconcave minimax problems. \cite{gorbunov2022convergence} provided tight convergence guarantees for solving weak MVI using deterministic extragradient and optimistic gradient methods. However, deterministic algorithms can be computationally expensive, encouraging researchers to look for stochastic algorithms. On this end, \cite{diakonikolas2021efficient,pethick2023escaping,bohm2022solving} analyze stochastic extragradient and optimistic gradient methods for solving weak MVI with increasing batch sizes. Recently, \cite{pethick2023solving} used a bias-corrected variant of the extragradient method to solve weak MVI without increasing batch sizes. Lately, variance reduction methods has also proposed for solving min-max optimization problems and VIPs~\cite {alacaoglu2022stochastic,cai2022stochastic}. As we mentioned in the main paper, in this chapter, the proposed convergence guarantees hold for a class of non-monotone problems, i.e., the class of is $\mu$-quasi-strongly monotone and $\ell$-star-cocoercive operators (see Assumption~\ref{assume:main}). 


\paragraph{Minimax Federated Learning.}
Going beyond the more classical centralized setting, recent works study the two-player federated minimax problems \cite{deng2021local,beznosikov2020distributed,sun2022communication,hou2021efficient,sharma2022federated,tarzanagh2022fednest,huang2022adaptive,yang2022sagda}. 
For example, \cite{deng2021local} studied the convergence guarantees of Local SGDA, which is an extension of Local SGD in the minimax setting, under various assumptions; \cite{sharma2022federated} studied furthered and improved the complexity results of Local SGDA in the nonconvex case; \cite{beznosikov2020distributed} studied the convergence rate of Local SGD algorithm with an extra step (we call it Local Stochastic Extragradient or Local SEG), under the (strongly)-convex–(strongly)-concave setting.
Multi-player games, which, as we mentioned in the main paper, can be formulated as a special case of the VIP, are well studied in the context of game theory \cite{rosen1965existence,cai2011minmax,yi2017distributed,chen2023global}. In this chapter, by studying the more general distributed VIPs, our proposed federated learning algorithms can also solve multi-player games. 

Finally, as we mentioned in the main paper, the algorithmic design of our methods is inspired by the proposed algorithm, ProxSkip of~\cite{mishchenko2022proxskip}, for solving composite minimization problems. However, we should highlight that since ~\cite{mishchenko2022proxskip} focuses on solving optimization problems, the function suboptimality $[f(x^k)-f(x_*)]$ is available (a concept that cannot be useful in the VIP setting. Thus, the difference in the analysis between the two papers begins at a deeper conceptual level. In addition, this chapter provides a unified algorithm framework under a general estimator setting (Assumption~\ref{assume:stochastic}) in the VIP regime that captures variance-reduced GDA and the convergence guarantees from \cite{beznosikov2022stochastic} as a special case.  Compared to existing works on federated minimax optimization~\cite{beznosikov2020distributed,deng2021local}, our analysis provides improved communication complexities and avoids the restrictive (uniform) bounded heterogeneity/variance assumptions (see discussion in Section~\ref{asdas}).

\newpage

\section{Further Pseudocodes}\label{apdx:pseudocodes}
Here we present the pseudocodes of the algorithms that due to space limitation did not fit into the main paper. 

We present the \algname{ProxSkip-L-SVRGDA} method in Algorithm~\ref{alg:Stoc-ProxSkip-L-SVRGDA}, which can be seen as a ProxSkip generalization of the L\nobreakdash-SVRGDA algorithm proposed in \cite{beznosikov2022stochastic}. For more details, please refer to Section~\ref{sec:centralized_special_case}. 
Note that the unbiased estimator in this method has the form:
$g_k=F_{j_k}(x_k)-F_{j_k}(w_k)+F(w_k).$

\begin{algorithm}[h]
    \caption{\algname{ProxSkip-L-SVRGDA}}
    \label{alg:Stoc-ProxSkip-L-SVRGDA}
    \begin{algorithmic}[1]
        \REQUIRE Initial point $x_0, h_0$, parameters $\gamma$, probabilities $p,q\in[0,1]$,
        number of iterations $K$
        \STATE Set $w_0=x_0$, compute $F(w_0)$
        \FORALL{$k = 0,1,..., K$}
            \STATE Construct $\colorword{g_k}{red}=F_{j_k}(x_k)-F_{j_k}(\colorword{w_k}{blue})+F(\colorword{w_k}{blue})$, $j_k\in[n]$
            \STATE Update $\colorword{w_{k+1}}{blue}=
            \begin{cases}
                x_k & \text{w.p. } q\\
                \colorword{w_k}{blue} & \text{w.p. } 1-q
            \end{cases}$
            \STATE $\widehat{x}_{k+1}=x_k-\gamma (\colorword{g_k}{red}-h_k)$
            \STATE Flip a coin $\theta_k$, $\theta_k=1$ w.p. $p$, otherwise $0$
            \IF{$\theta_k=1$}
            \STATE $x_{k+1}=\prox_{\frac{\gamma}{p} R}\autopar{\widehat{x}_{k+1}-\frac{\gamma}{p} h_k}$
            \ELSE
            \STATE $x_{k+1}=\widehat{x}_{k+1}$
            \ENDIF
            \STATE $h_{k+1}=h_k+\frac{p}{\gamma}(x_{k+1}-\widehat{x}_{k+1})$ 
        \ENDFOR
        \ENSURE $x_k$
    \end{algorithmic}
\end{algorithm}

Moreover, we present the \algname{ProxSkip-L-SVRGDA-FL} algorithm below in Algorithm~\ref{alg:ProxSkip-L-SVRGDA-FL}. Similar to the relationship between \algname{ProxSkip-SGDA} and \algname{ProxSkip-SGDA-FL}, here this algorithm is the implementation of the \algname{ProxSkip-L-SVRGDA} method mentioned above in the FL regime. For more details, please check Section~\ref{asdas} of the main paper.

\begin{algorithm}[h]
    \caption{\algname{ProxSkip-L-SVRGDA-FL}}
    \label{alg:ProxSkip-L-SVRGDA-FL}
    \begin{algorithmic}[1]
        \REQUIRE Initial points $x_{1,0},\cdots,x_{n,0}\in\mathbb{R}^{d'}$, $\gamma, p\in\mathbb{R}$, initial control variates $h_{1,0}, \cdots, h_{n,0}=0\in\mathbb{R}^{d'}$,
        iteration number $K$
        \STATE Set $\colorword{w_{i,0}}{blue}=x_{i,0}$ for every worker $i\in[n]$
        \FORALL{$k = 0,1,..., K$}
            \STATE \textbf{Server:} 
            Flip two coins $\theta_k$ and $\zeta_k$, where $\theta_k = 1$ w.p. $p$ and $\zeta_k=1$ w.p. $q$, otherwise 0.
            Send $\theta_t$ and $\zeta_k$ to all workers
            \FOR{{\bf each workers $i\in\automedpar{n}$ in parallel}} 
                \STATE $\colorword{g_{i,k}}{red}=F_{i, j_k}(x_{i,k})-F_{i, j_k}(\colorword{w_{i,k}}{blue})+F_i(\colorword{w_{i,k}}{blue})$, where $j_k\sim\text{Unif}([m_i])$
                \STATE Update $\colorword{w_{i, k+1}}{blue}=
                \begin{cases}
                    x_{i,k} & \text{if } \zeta_k=1\\
                    \colorword{w_{i,k}}{blue} & \text{otherwise}
                \end{cases}$
                \STATE $\widehat{x}_{i, k+1}=x_{i, k}-\gamma (\colorword{g_{i,k}}{red}-h_{i,k})$             
                \IF{$\theta_k=1$}
                \STATE Worker: $x_{i, k+1}'=\widehat{x}_{i, k+1}-\frac{\gamma}{p} h_{i,k}$, sends $x_{i, k+1}'$ to the server
                \STATE Server: computes $x_{i, k+1}=\frac{1}{n}\sum_{i=1}^n x_{i, k+1}'$, sends $x_{i, k+1}$ to workers
                \COMMENT{Communication}
                \ELSE
                \STATE $x_{i, k+1}=\widehat{x}_{i, k+1}$
                \COMMENT{Otherwise skip the communication step}
                \ENDIF
                \STATE $h_{i, k+1} = h_{i, k}+\frac{p}{\gamma}(x_{i, k+1}-\widehat{x}_{i, k+1})$ 
            \ENDFOR
        \ENDFOR
        \ENSURE $x_K$
    \end{algorithmic}
\end{algorithm}

\newpage

\section{Useful Lemmas}\label{apdx:useful_lemmas}
\begin{lemma}[Young's Inequality]
    \begin{eqnarray}
        \|a + b\|^2 \leq 2\|a\|^2 + 2 \|b\|^2 \label{eq:YoungsInequality} 
    \end{eqnarray}
\end{lemma}
\label{apdx:useful_lemma}
\begin{lemma}
    For any optimal solution $x_*$ of \eqref{eq:objective_FL_reformulation} and any $\alpha>0$, we have
    \begin{equation}
        x_*=\prox_{\alpha R}\autopar{x_*-\alpha F(x_*)}.
    \end{equation}
\end{lemma}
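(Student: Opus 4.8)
The plan is to exploit the first-order optimality characterization of the proximal operator together with the defining inequality of the regularized VIP~\eqref{eq:objective_FL_reformulation}. First I would fix $\alpha > 0$, abbreviate $y \eqdef x_* - \alpha F(x_*)$, and recall that by definition $\prox_{\alpha R}(y) = \argmin_{v}\autobigpar{R(v) + \frac{1}{2\alpha}\autonorm{v - y}^2}$. Since Assumption~\ref{assume:main} guarantees that $R$ is a proper lower semicontinuous convex function, the objective $v \mapsto R(v) + \frac{1}{2\alpha}\autonorm{v-y}^2$ is strongly convex, so it admits a \emph{unique} minimizer, and a point $v$ is that minimizer if and only if the first-order (subdifferential) optimality condition holds:
\begin{equation*}
    0 \in \partial R(v) + \frac{1}{\alpha}(v - y), \quad\text{equivalently}\quad \frac{1}{\alpha}(y - v) \in \partial R(v).
\end{equation*}

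Next I would translate the VIP condition into a statement about $\partial R(x_*)$. The defining inequality of~\eqref{eq:objective_FL_reformulation}, namely $\autoprod{F(x_*), x - x_*} + R(x) - R(x_*) \geq 0$ for all $x$, can be rearranged as
\begin{equation*}
    R(x) \geq R(x_*) + \autoprod{-F(x_*), x - x_*} \qquad \forall x \in \mathbb{R}^d,
\end{equation*}
which is precisely the statement that $-F(x_*) \in \partial R(x_*)$ by the definition of the convex subdifferential. This is the key observation that links the two characterizations.

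Finally I would verify that $v = x_*$ satisfies the prox optimality condition. With $y = x_* - \alpha F(x_*)$ we compute $\frac{1}{\alpha}(y - x_*) = \frac{1}{\alpha}\autopar{x_* - \alpha F(x_*) - x_*} = -F(x_*)$, and the previous step established $-F(x_*) \in \partial R(x_*)$. Hence $\frac{1}{\alpha}(y - x_*) \in \partial R(x_*)$, so $x_*$ meets the necessary and sufficient optimality condition for the prox minimization; by uniqueness of the minimizer we conclude $x_* = \prox_{\alpha R}(x_* - \alpha F(x_*))$.

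I do not expect a serious obstacle here, as the argument is a clean application of convex subdifferential calculus; the only point requiring care is the rigorous justification that the first-order condition is both necessary and sufficient, which follows from convexity of $R$ plus strong convexity of the quadratic term, and the appeal to the convexity of $R$ supplied by Assumption~\ref{assume:main}. If one wanted to avoid subdifferentials entirely, an alternative route would be to argue directly from the variational characterization of the projection-type proximal step, but the subgradient approach is the most transparent and keeps the equivalence with~\eqref{eq:objective_FL_reformulation} explicit.
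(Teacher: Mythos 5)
Your proof is correct: the paper states this lemma without proof (it is listed among the ``Useful Lemmas'' as a standard fact), and your argument via the subdifferential characterization of the prox --- combining $-F(x_*)\in\partial R(x_*)$ from the VIP inequality with the optimality condition $\frac{1}{\alpha}(y-v)\in\partial R(v)$ and uniqueness of the strongly convex minimizer --- is exactly the standard derivation one would supply. The only implicit point worth a remark is that $x_*\in\operatorname{dom}R$ (so that $\partial R(x_*)$ is nonempty), which is automatic for any solution of \eqref{eq:objective_FL_reformulation} and holds trivially in the consensus reformulation where $R(x_*)=0$.
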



\begin{lemma}[Firm Nonexpansivity of the Proximal Operator \cite{beck2017first}]
    \label{lm:firm_nonexpansive_proximal}
    Let $f$ be a proper closed and convex function, then for any $x, y\in\mathbb{R}^d$ we have
    \begin{equation}\label{eq:firm_nonexpansive_proximal_eq1}
        \autoprod{x-y, \prox_{f}(x)-\prox_{f}(y)}\geq\autonorm{\prox_{f}(x)-\prox_{f}(y)}^2,
    \end{equation}
    or equivalently,
    \begin{equation}\label{eq:firm_nonexpansive_proximal_eq2}
        \autonorm{\autopar{x-\prox_{f}(x)}-\autopar{y-\prox_{f}(y)}}^2+\autonorm{\prox_{f}(x)-\prox_{f}(y)}^2\leq
        \autonorm{x-y}^2.
    \end{equation}
\end{lemma}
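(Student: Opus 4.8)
The plan is to reduce the firm nonexpansivity of $\prox_f$ to the monotonicity of the subdifferential $\partial f$, which is the essential structural fact about convex functions that drives the whole argument. Writing $u = \prox_f(x)$ and $w = \prox_f(y)$, the starting point is the first-order optimality (Fermat) condition for the strongly convex minimization problem defining the proximal map. Since $u$ minimizes $v \mapsto f(v) + \frac{1}{2}\|v - x\|^2$, we have $0 \in \partial f(u) + (u - x)$, i.e., $x - u \in \partial f(u)$; symmetrically $y - w \in \partial f(w)$.

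First I would establish (or recall) that $\partial f$ is a monotone operator: given $p \in \partial f(u)$ and $q \in \partial f(w)$, adding the two subgradient inequalities $f(w) \ge f(u) + \la p, w - u\ra$ and $f(u) \ge f(w) + \la q, u - w\ra$ yields $\la p - q,\, u - w\ra \ge 0$. Applying this with $p = x - u$ and $q = y - w$ gives $\la (x - u) - (y - w),\, u - w\ra \ge 0$. Expanding the inner product and rearranging produces exactly $\la x - y,\, u - w\ra \ge \|u - w\|^2$, which is \eqref{eq:firm_nonexpansive_proximal_eq1} after substituting back $u = \prox_f(x)$ and $w = \prox_f(y)$.

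To obtain the equivalent form \eqref{eq:firm_nonexpansive_proximal_eq2}, I would set $a = x - \prox_f(x)$, $b = y - \prox_f(y)$, $c = \prox_f(x)$, $d = \prox_f(y)$, so that $x - y = (a-b) + (c-d)$. Expanding $\|x-y\|^2 = \|a-b\|^2 + 2\la a-b,\, c-d\ra + \|c-d\|^2$ shows that both \eqref{eq:firm_nonexpansive_proximal_eq1} and \eqref{eq:firm_nonexpansive_proximal_eq2} are each equivalent to the single cross-term inequality $\la a-b,\, c-d\ra \ge 0$, and hence equivalent to one another. The one point that genuinely needs care — rather than a hard obstacle — is the well-definedness of $\prox_f$ and the legitimacy of the optimality condition: for a proper closed convex $f$ the objective $f(v) + \frac{1}{2}\|v-x\|^2$ is proper, lower semicontinuous, and $1$-strongly convex, so it admits a unique minimizer, and the subdifferential sum rule justifies the inclusion $x - \prox_f(x) \in \partial f(\prox_f(x))$. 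Everything after that is the short monotonicity-plus-algebra computation sketched above.
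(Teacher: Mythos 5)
Your proof is correct. Note that the paper does not actually prove this lemma --- it is stated in the appendix as a known result and cited directly to \cite{beck2017first} --- so there is no in-paper argument to compare against. Your route (optimality condition $x - \prox_f(x) \in \partial f(\prox_f(x))$, monotonicity of the subdifferential, then the algebraic identity showing both displayed inequalities reduce to the same nonnegative cross term) is precisely the standard textbook derivation, and each step, including the well-definedness of the prox via strong convexity and the use of the subdifferential sum rule, is justified correctly.
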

\newpage
\section{Proofs of Lemmas \ref{thm:property_local_estimator} and \ref{lm:ABC_SVRG}}\label{apdx:further_lemmas}
As we mentioned in the main paper, the Lemmas \ref{thm:property_local_estimator} and \ref{lm:ABC_SVRG} have been proved in~\cite{beznosikov2022stochastic}. We include the proofs of these results using our notation for completeness. 


\begin{proof}[Proof of Lemma \ref{thm:property_local_estimator}]
    Note that
    \begin{equation*}
        \begin{split}
            \EE\autonorm{g_k-F(x_*)}^2
            \overset{\eqref{eq:YoungsInequality}}{\leq}\ &
            2\EE\autonorm{g_k-g(x_*)}^2+2\EE\autonorm{g(x_*)-F(x_*)}^2\\
            \leq\ &
            2L_g\autoprod{F(x_k)-F(x_*),x_k-x_*}+2\sigma_*^2,
        \end{split}
    \end{equation*}
    where the second inequality uses Assumption~\ref{assume:ECC}. The statement of Lemma \ref{thm:property_local_estimator} is obtained by comparing the above inequality with the expression of Assumption~\ref{assume:stochastic}.
\end{proof}

\begin{proof}[Proof of Lemma \ref{lm:ABC_SVRG}]
    For the property of $g_k$, it is easy to find that the unbiasedness holds. Then note that
    \begin{eqnarray*}
            &&\mathbb{E}\autonorm{g_k-F(x_*)}^2\\
            &=&
            \mathbb{E}\autonorm{F_{j_k}(x_k)-F_{j_k}(w_k)+F(w_{j_k})-F(x_*)}^2\\
            &=&
            \frac{1}{n}\sum_{i=1}^n\autonorm{F_i(x_k)-F_{j_k}(w_k)+F(w_{j_k})-F(x_*)}^2\\
            &\overset{\eqref{eq:YoungsInequality}}{\leq} &
            \frac{2}{n}\sum_{i=1}^n\autonorm{F_i(x_k)-F_i(x_*)}^2+\autonorm{F_i(x_*)-F_i(w_k)-\autopar{F(x_*)-F(w_k)}}^2\\
            &\leq &
            \frac{2}{n}\sum_{i=1}^n\autonorm{F_i(x_k)-F_i(x_*)}^2+\autonorm{F_i(x_*)-F_i(w_k)}^2\\
            &\leq &
            2\widehat{\ell}\autoprod{F(x_k)-F(x_*), x_k-x_*}+\frac{2}{n}\sum_{i=1}^n\autonorm{F_i(x_*)-F_i(w_k)}^2\\
            &= &
            2\widehat{\ell}\autoprod{F(x_k)-F(x_*), x_k-x_*}+2\sigma_k^2,
    \end{eqnarray*}
    here the second inequality comes from the fact that $\text{Var}(X)\leq\EE (X^2)$, and the third inequality comes from Assumption~\ref{assume:average_star_coco}. Then for the second term above, we have
    \begin{equation*}
        \begin{split}
            \mathbb{E}\automedpar{\sigma_{k+1}^2}
            =\ &
            \frac{1}{n}\sum_{i=1}^n\autonorm{F_i(w_{k+1})-F_i(x_*)}^2\\
            =\ &
            \frac{1}{n}\sum_{i=1}^n\autopar{q\autonorm{F_i(x_k)-F_i(x_*)}^2+(1-q)\autonorm{F_i(w_k)-F_i(x_*)}^2}\\
            \leq\ &
            q\widehat{\ell}\autoprod{F(x_k)-F(x_*), x_k-x_*}+\frac{1-q}{n}\sum_{i=1}^n\autonorm{F_i(w_k)-F_i(x_*)}^2\\
            =\ &
            q\widehat{\ell}\autoprod{F(x_k)-F(x_*), x_k-x_*}+(1-q)\sigma_k^2,
        \end{split}
    \end{equation*}
    here the second equality applies the definition of $w_{k+1}$, and the first inequality is implied by Assumption~\ref{assume:ECC}. 
    The statement of Lemma \ref{lm:ABC_SVRG} is obtained by comparing the above inequalities with the expressions of Assumption~\ref{assume:stochastic}.
\end{proof}

\newpage
\section{Proofs of Main Convergence Results}\label{apdx:convergence_results}
\subsection{Proof of Theorem \ref{thm:convergence_Stoc-ProxSkip-VIP}}
\label{apdx:theorem_Stoc_ProxSkip_VIP}
Here the proof originates from that of ProxSkip~\cite{mishchenko2022proxskip}, while we extend it to the variational inequality setting, and combines it with the more general setting on the stochastic oracle.

\begin{proof}
    Note that following Algorithm \ref{alg:Stoc-ProxSkip-VIP}, we have with probability $p$:
    \begin{equation*}
        \begin{cases}
            x_{k+1}=\prox_{\frac{\gamma}{p} R}\autopar{\widehat{x}_{k+1}-\frac{\gamma}{p} h_k}\\
            h_{k+1}=h_k+\frac{p}{\gamma}\autopar{\prox_{\frac{\gamma}{p} R}\autopar{\widehat{x}_{k+1}-\frac{\gamma}{p} h_k}-\widehat{x}_{k+1}}
        \end{cases}
    \end{equation*}
    and with probability $1-p$:
    \begin{equation*}
        \begin{cases}
            x_{k+1}=\widehat{x}_{k+1}\\
            h_{k+1}=h_k,
        \end{cases}
    \end{equation*}
    and set
    \begin{equation*}
        V_k\triangleq\autonorm{x_k-x_*}^2+\autopar{\frac{\gamma}{p}}^2\autonorm{h_k- F(x_*)}^2+M\gamma^2\sigma_k^2,
    \end{equation*}
    For simplicity, we denote $P(x_k)\triangleq\prox_{\frac{\gamma}{p} R}\autopar{\widehat{x}_{k+1}-\frac{\gamma}{p} h_k}$, so we have
    \begin{eqnarray*}
        \mathbb{E}\automedpar{V_{k+1}} &=&
            p\autopar{\autonorm{P(x_k)-x_*}^2+\autopar{\frac{\gamma}{p}}^2\autonorm{h_k+\frac{p}{\gamma}\autopar{P(x_k)-\widehat{x}_{k+1}}- F(x_*)}^2}\\
            &&\qquad
            + (1-p)\autopar{\autonorm{\widehat{x}_{k+1}-x_*}^2+\autopar{\frac{\gamma}{p}}^2\autonorm{h_k- F(x_*)}^2}+M\gamma^2\sigma_{k+1}^2\\
            &=&
            p\autopar{\autonorm{P(x_k)-x_*}^2+\autonorm{P(x_k)-(\widehat{x}_{k+1}-\frac{\gamma}{p}h_k)- \frac{\gamma}{p}F(x_*)}^2}\\
            &&\qquad + (1-p)\autopar{\autonorm{\widehat{x}_{k+1}-x_*}^2+\autopar{\frac{\gamma}{p}}^2\autonorm{h_k- F(x_*)}^2}+M\gamma^2\sigma_{k+1}^2
    \end{eqnarray*}
    next note that $x_*=\prox_{\frac{\gamma}{p} R}\autopar{x_*-\frac{\gamma}{p} F(x_*)}$, we have
    \begin{eqnarray*}
        &&\autonorm{P(x_k)-(\widehat{x}_{k+1}-\frac{\gamma}{p}h_k)- \frac{\gamma}{p}F(x_*)}^2\\
        &= &\autonorm{P(x_k)-(\widehat{x}_{k+1}-\frac{\gamma}{p}h_k)- \automedpar{\prox_{\frac{\gamma}{p} R}\autopar{x_*-\frac{\gamma}{p} F(x_*)}-(x_*-\frac{\gamma}{p}F(x_*))}}^2
    \end{eqnarray*}

    so by Lemma \ref{lm:firm_nonexpansive_proximal}, we have
    \begin{eqnarray*}
        \mathbb{E}\automedpar{V_{t+1}} &
        \overset{\eqref{eq:firm_nonexpansive_proximal_eq2}}{\leq} & p\autonorm{\widehat{x}_{k+1}-\frac{\gamma}{p}h_k-x_*+\frac{\gamma}{p} F(x_*)}^2 \\
        && + (1-p)\autopar{\autonorm{\widehat{x}_{k+1}-x_*}^2+\autopar{\frac{\gamma}{p}}^2\autonorm{h_k- F(x_*)}^2} +M\gamma^2\sigma_{t+1}^2\\
        &=& \autonorm{\widehat{x}_{k+1}-x_*}^2+\autopar{\frac{\gamma}{p}}^2\autonorm{h_k- F(x_*)}^2-2\frac{\gamma}{p}p\autoprod{\widehat{x}_{k+1}-x_*, h_k-F(x_*)} \\
        && +M\gamma^2\sigma_{t+1}^2,
    \end{eqnarray*}
    let 
    \begin{equation*}
        w_k\triangleq x_k-\gamma g(x_k),\quad
        w^*\triangleq x_*-\gamma F(x_*),
    \end{equation*}
    so we have
    \begin{eqnarray*}
        &&\autonorm{\widehat{x}_{k+1}-x_*}^2-2\frac{\gamma}{p}p\autoprod{\widehat{x}_{k+1}-x_*, h_k-F(x_*)}\\
        &=& \autonorm{w_k-w^*+\gamma\autopar{h_k- F(x_*)}}^2-2\gamma\autoprod{w_k-w^*+\gamma\autopar{h_k- F(x_*)}, h_k-F(x_*)}\\
        &=& \autonorm{w_k-w^*}^2-\gamma^2\autonorm{h_k- F(x_*)}^2\\
        &=& \autonorm{w_k-w^*}^2-p^2\autopar{\frac{\gamma}{p}}^2\autonorm{h_k- F(x_*)}^2,
    \end{eqnarray*}
    so we have
    \begin{equation}
        \label{eq:V-t+1_intermediate}
        \mathbb{E}\automedpar{V_{t+1}}
        \leq
        \autonorm{w_k-w^*}^2+\autopar{1-p^2}\autopar{\frac{\gamma}{p}}^2\autonorm{h_k- F(x_*)}^2+M\gamma^2\sigma_{t+1}^2.
    \end{equation}
    Then by the standard analysis on GDA, we have
    \begin{eqnarray*}
        \autonorm{w_k-w^*}^2 
        &=& 
        \autonorm{x_k-x_*-\gamma \autopar{g(x_k)-F(x_*)}}^2\\
        &=&
        \autonorm{x_k-x_*}^2-2\gamma\autoprod{g(x_k)-F(x_*), x_k-x_*}+\gamma^2 \autonorm{g(x_k)-F(x_*)}^2,
    \end{eqnarray*}
    take the expectation, by Assumption~\ref{assume:stochastic}, we have
    \begin{equation*}
        \begin{split}
            \mathbb{E}\automedpar{\autonorm{w_k-w^*}^2}
            =\ &
            \autonorm{x_k-x_*}^2-2\gamma\autoprod{F(x_k)-F(x_*), x_k-x_*}+\gamma^2 \mathbb{E}\automedpar{\autonorm{g(x_k)-F(x_*)}^2}\\
            \leq\ &
            \autonorm{x_k-x_*}^2-2\gamma\autopar{1-\gamma A}\autoprod{F(x)-F(x_*), x-x_*}+\gamma^2\autopar{B\sigma_k^2+D_1},
        \end{split}
    \end{equation*}
    substitute the above result into \eqref{eq:V-t+1_intermediate}, we have
    \begin{eqnarray*}
    \mathbb{E}\automedpar{V_{t+1}} & \overset{\eqref{eq:V-t+1_intermediate}}{\leq} &
            \mathbb{E}\automedpar{\autonorm{w_k-w^*}^2+\autopar{1-p^2}\autopar{\frac{\gamma}{p}}^2\autonorm{h_k- F(x_*)}^2+M\gamma^2\sigma_{t+1}^2}\\
            &\leq& \mathbb{E}\Big[\autonorm{x_k-x_*}^2-2\gamma\autopar{1-\gamma A}\autoprod{F(x)-F(x_*), x-x_*}+\gamma^2\autopar{B\sigma_k^2+D_1}\\
            && +\autopar{1-p^2}\autopar{\frac{\gamma}{p}}^2\autonorm{h_k- F(x_*)}^2 +M\gamma^2\sigma_{t+1}^2 \Big]\\
            &\leq &
            \mathbb{E}\Big[\autonorm{x_k-x_*}^2-2\gamma\autopar{1-\gamma (A+MC)}\autoprod{F(x)-F(x_*), x-x_*}\\
            && +M\gamma^2(1-\rho)\sigma_k^2 +\gamma^2\autopar{B\sigma_k^2+D_1+MD_2} \\
            && +\autopar{1-p^2}\autopar{\frac{\gamma}{p}}^2\autonorm{h_k- F(x_*)}^2\Big]\\
            &\leq &
            \mathbb{E}\Big[\autopar{1-2\gamma\mu\autopar{1-\gamma (A+MC)}}\autonorm{x_k-x_*}^2 \\
            && +\autopar{1-p^2}\autopar{\frac{\gamma}{p}}^2\autonorm{h_k- F(x_*)}^2 +M\gamma^2\autopar{1-\rho+\frac{B}{M}}\sigma_k^2 \\
            && +\gamma^2\autopar{D_1+MD_2}\Big],
    \end{eqnarray*}
    here the third inequality comes from Assumption~\ref{assume:stochastic} on $\sigma_{t+1}^2$, and the fourth inequality comes from quasi-strong monotonicity in Assumption~\ref{assume:main}.
    Recall that $\gamma\leq\frac{1}{2(A+MC)}$, we have
    \begin{eqnarray*}
        \mathbb{E}\automedpar{V_{t+1}} &\leq& \mathbb{E}\automedpar{\autopar{1-\gamma\mu}\autonorm{x_k-x_*}^2+\autopar{1-p^2}\autopar{\frac{\gamma}{p}}^2\autonorm{h_k- F(x_*)}^2}\\
        && + \mathbb{E}\automedpar{\autopar{1-\rho+\frac{B}{M}}M\gamma^2\sigma_k^2+\gamma^2\autopar{D_1+MD_2}}\\
        &\leq& \mathbb{E}\automedpar{\autopar{1-\min\autobigpar{\gamma\mu, p^2, \rho-\frac{B}{M} }}V_k} \\
        && +\gamma^2\autopar{D_1+MD_2},
    \end{eqnarray*}
    by taking the full expectation and telescoping, we have
    \begin{eqnarray*}
        \mathbb{E}\automedpar{V_K}&\leq&
        \autopar{1-\min\autobigpar{\gamma\mu, p^2, \rho-\frac{B}{M} }}\mathbb{E}\automedpar{V_{T-1}}+\gamma^2\autopar{D_1+MD_2}\\
        &=&
        \autopar{1-\min\autobigpar{\gamma\mu, p^2, \rho-\frac{B}{M} }}^TV_0 \\
        && +\gamma^2\autopar{D_1+MD_2}\sum_{i=0}^{T-1}\autopar{1-\min\autobigpar{\gamma\mu, p^2, \rho-\frac{B}{M} }}^i\\
        &\leq& \autopar{1-\min\autobigpar{\gamma\mu, p^2, \rho-\frac{B}{M} }}^TV_0+\frac{\gamma^2\autopar{D_1+MD_2}}{\min\autobigpar{\gamma\mu, p^2, \rho-\frac{B}{M} }},
    \end{eqnarray*}
    the last inequality comes from the computation of a geometric series. which concludes the proof. Note that here we implicitly require that $\gamma\leq\frac{1}{\mu}$ and $M>\frac{B}{\rho}$. 
\end{proof}

\subsection{Proof of Corollary \ref{cor:Stoc-ProxSkip-VIP-Complexity}}
\label{apdx:cor_ProxSkip_VIP_complexity}

\begin{proof}
    With the above setting, we know that
    \begin{equation*}
        \min\autobigpar{\gamma\mu, p^2, \rho-\frac{B}{M} }
        =
        \gamma\mu,
    \end{equation*}
    and
    \begin{equation*}
        \mathbb{E}\automedpar{V_K}
        \leq
        \autopar{1-\gamma\mu}^TV_0+\frac{\gamma\autopar{D_1+\frac{2B}{\rho} D_2}}{\mu},
    \end{equation*}
    so it is easy to see that by setting
    \begin{equation*}
        T\geq\frac{1}{\gamma\mu}\ln\autopar{\frac{2V_0}{\epsilon}},
        \quad
        \gamma\leq\frac{\mu\epsilon}{2\autopar{D_1+\frac{2B}{\rho} D_2}},
    \end{equation*}
    we have
    \begin{equation*}
        \mathbb{E}\automedpar{V_K}
        \leq
        \epsilon,
    \end{equation*}
    which induces the iteration complexity to be
    \begin{equation*}
        T\geq
        \max\autobigpar{1, \frac{2(A+2BC/\rho)}{\mu}, \frac{2}{\rho}, \frac{2\autopar{D_1+\frac{2B}{\rho} D_2}}{\mu^2\epsilon}}\ln\autopar{\frac{2V_0}{\epsilon}},
    \end{equation*}
    and the corresponding number of calls to the proximal oracle is
    \begin{equation*}
        pT
        \geq
        \sqrt{\max\autobigpar{1, \frac{2(A+2BC/\rho)}{\mu}, \frac{2}{\rho}, \frac{2\autopar{D_1+\frac{2B}{\rho} D_2}}{\mu^2\epsilon}}}\ln\autopar{\frac{2V_0}{\epsilon}},
    \end{equation*}
    which concludes the proof.
\end{proof}

\subsection{Properties of Operators in VIPs}
\label{apdx:thm_FL_Operator_Check}

To make sure that the consensus form Problem~\eqref{eq:objective_FL_reformulation} fits with Assumption~\ref{assume:main}, and the corresponding operator estimators satisfies Assumption~\ref{assume:stochastic}, we provide the following results.

\begin{proposition}
    \label{prop:tranform_equiv_sol}
    If Problem \eqref{eq:FedVIP} attains an unique solution $z^*\in\mathbb{R}^{d'}$, then Problem \eqref{eq:objective_FL_reformulation} attains an unique solution $x_*\triangleq(z^*, z^*, \cdots, z^*)\in\mathbb{R}^d$, and vice versa.
\end{proposition}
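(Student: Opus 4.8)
The plan is to exploit the fact that the regularizer $R$ in \eqref{eq:mpFL_VIP_form} is the indicator function of the consensus subspace $\mathcal{C} \triangleq \{x = (x_1,\ldots,x_n) \in \mathbb{R}^d : x_1 = \cdots = x_n\}$, so that the regularized VIP \eqref{eq:objective_FL_reformulation} is simply the original VIP \eqref{eq:FedVIP} restricted to $\mathcal{C}$ and rescaled by $n$. First I would argue that any solution $x_*$ of \eqref{eq:objective_FL_reformulation} must be feasible, i.e. $x_* \in \mathcal{C}$: if $x_* \notin \mathcal{C}$ then $R(x_*) = +\infty$, and the defining inequality $\langle F(x_*), x - x_*\rangle + R(x) - R(x_*) \geq 0$ cannot hold for any feasible test point $x \in \mathcal{C}$ (for which $R(x) = 0$), contradicting that $x_*$ is a solution. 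Hence I may write $x_* = (v, v, \ldots, v)$ for some $v \in \mathbb{R}^{d'}$.

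Next I would simplify the variational inequality. For test points $x \notin \mathcal{C}$ the inequality holds trivially because $R(x) = +\infty$; for $x = (u,\ldots,u) \in \mathcal{C}$ we have $R(x) = R(x_*) = 0$, so the condition collapses to $\langle F(x_*), x - x_*\rangle \geq 0$ for all $u \in \mathbb{R}^{d'}$. Using the block structure $F(x_*) = (H_1(v), \ldots, H_n(v))$ and $x - x_* = (u - v, \ldots, u - v)$, a direct computation gives
\begin{equation*}
    \langle F(x_*), x - x_*\rangle = \sum_{i=1}^n \langle H_i(v), u - v\rangle = \Big\langle \sum_{i=1}^n H_i(v),\, u - v\Big\rangle = n\langle F(v), u - v\rangle,
\end{equation*}
where on the right $F(v) = \tfrac{1}{n}\sum_i H_i(v)$ is the operator of \eqref{eq:FedVIP}. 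Thus the consensus VIP at $x_* = (v,\ldots,v)$ holds if and only if $\langle F(v), u - v\rangle \geq 0$ for all $u \in \mathbb{R}^{d'}$, which is precisely \eqref{eq:FedVIP} at $z_* = v$; since both problems are unconstrained, each reduces to the root condition $F(v) = 0$.

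This establishes a bijection between the solution sets: $x_* = (v, \ldots, v)$ solves \eqref{eq:objective_FL_reformulation} if and only if $v$ solves \eqref{eq:FedVIP}. Existence and uniqueness then transfer in both directions --- if \eqref{eq:FedVIP} has the unique solution $z^*$, the embedding $v \mapsto (v,\ldots,v)$ yields the unique solution $x_* = (z^*,\ldots,z^*)$ of \eqref{eq:objective_FL_reformulation}, and conversely any solution of the consensus problem projects back to a solution of \eqref{eq:FedVIP}. I would carry out the two implications symmetrically from this correspondence.

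The main obstacle --- really the only subtle point --- is handling the extended-real-valued regularizer rigorously: justifying that solutions of \eqref{eq:objective_FL_reformulation} are necessarily feasible, and that test points outside $\mathcal{C}$ contribute nothing. An equivalent and perhaps cleaner route that bypasses these case distinctions is to rewrite the solution condition as the monotone inclusion $0 \in F(x_*) + N_{\mathcal{C}}(x_*)$ and use that the normal cone to the subspace $\mathcal{C}$ equals its orthogonal complement $\mathcal{C}^\perp = \{y : \sum_i y_i = 0\}$; the inclusion then reads $\sum_i H_i(v) = 0$, i.e. $F(v) = 0$, recovering the same equivalence. Everything else is routine linear algebra.
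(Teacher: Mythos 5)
Your proposal is correct and follows essentially the same route as the paper's own proof: restrict to the consensus set via the indicator regularizer, observe that non-consensus test points contribute nothing, and reduce the block-structured inner product to $n\langle F(v), u-v\rangle \geq 0$, i.e.\ the original VIP. If anything, your write-up is slightly more careful than the paper's (you justify feasibility of $x_*$ explicitly and make the $n$-rescaling and the bijection between solution sets precise, where the paper leaves the converse direction and uniqueness as ``easy to see''), and the normal-cone reformulation you mention is a clean equivalent phrasing, but the underlying argument is the same.
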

\begin{proof}
    Note that each $f_i$ is $\mu$-quasi-strongly monotone and $z^*$ is the unique solution to Problem \eqref{eq:FedVIP}.
    So for the operator $F$ in the reformulation \eqref{eq:objective_FL_reformulation}, first we check the point $x_*=(z^*, z^*, \cdots, z^*)$, note that for any $x=(x_1, x_2, \cdots, x_n)\in\mathbb{R}^d$, we have
    \begin{eqnarray*}
         \autoprod{F(x_*), x-x_*}+R(x)-R(x_*)&=&
            \autoprod{\sum_{i=1}^n F_i(x_*), x-x_*}+R(x)\\
            &=&\sum_{i=1}^n\autoprod{F_i(x_*), x-x_*}+R(x)\\
            &=&\sum_{i=1}^n\autoprod{f_i(z^*), x_i-z^*}+R(x),
    \end{eqnarray*}
    here the first equation incurs the definition of $R$, the third equation is due to the definition that $F_i$.
    Then for any $x\in\mathbb{R}^d$, if $\exists~x_i\neq x_j$ for some $i, j\in[n]$, we have $R(x)=+\infty$, so the RHS above is always positive. Then if $x_1=x_2=\cdots=x_n=x'\in\mathbb{R}^{d'}$, we have
    \begin{equation*}
        \sum_{i=1}^n\autoprod{f_i(z^*), x_i-z^*}+R(x)
        =
        \sum_{i=1}^n\autoprod{f_i(z^*), x'-z^*}
        =
        \autoprod{\sum_{i=1}^nf_i(z^*), x'-z^*}
        \geq
        0,
    \end{equation*}
    where the last inequality comes from the fact that $z^*$ is the solution to Problem \eqref{eq:FedVIP}. So $x_*$ is the solution to Problem \eqref{eq:objective_FL_reformulation}. It is easy to show its uniqueness by contradiction and the uniqueness of $z^*$, which we do not detail here.

    On the opposite side, first it is easy to see that the solution to \eqref{eq:objective_FL_reformulation} must come with the form $x_*=(z^*, z^*, \cdots, z^*)$, then we have for any $x=(x_1, x_2, \cdots, x_n)\in\mathbb{R}^d$
    \begin{equation*}
        \autoprod{F(x_*), x-x_*}+R(x)-R(x_*)
        =
        \sum_{i=1}^n\autoprod{f_i(z^*), x_i-z^*}+R(x),
    \end{equation*}
    then we select $x=(x', x', \cdots, x')$ for any $x'\in\mathbb{R}^{d'}$, so we have
    \begin{equation*}
        \autoprod{F(x_*), x-x_*}+R(x)-R(x_*)
        =
        \sum_{i=1}^n\autoprod{f_i(z^*), x'-z^*}
        =
        \autoprod{F(z^*), x'-z^*}\geq 0,
    \end{equation*}
    which corresponds to the solution of \eqref{eq:FedVIP}, and concludes the proof.
    \end{proof}

    \begin{proposition}
        \label{prop:FL_Operator_Check}
        With Assumption \ref{assume:additional_FL}, the Problem \eqref{eq:objective_FL_reformulation} attains an unique solution $x_*\triangleq(z^*, z^*, \cdots, z^*)\in\mathbb{R}^d$, the operator $F$ is $\mu$-quasi-strongly monotone, and the operator $g(x)\triangleq(g_1(x_1), g_2(x_2), \cdots, g_n(x_n))$ is an unbiased estimator of $F$, and it satisfies $L_g$-expected cocoercivity defined in Assumption \ref{assume:stochastic}.
    \end{proposition}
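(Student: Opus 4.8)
The plan is to exploit the block-separable structure of the consensus reformulation \eqref{eq:objective_FL_reformulation}--\eqref{eq:mpFL_VIP_form}, under which the $i$-th coordinate block of $F(x)$ equals $H_i(x_i)$, so that every property to be verified decomposes into a sum of per-client contributions and can be read off directly from Assumption~\ref{assume:additional_FL}. There are three claims to establish: (i) the reformulated problem has the unique consensus solution $x_* = (z^*, \ldots, z^*)$; (ii) $F$ is $\mu$-quasi-strongly monotone in the sense of Assumption~\ref{assume:main}; and (iii) the block-stacked estimator $g(x) = (g_1(x_1), \ldots, g_n(x_n))$ is unbiased for $F$ and satisfies the $L_g$-expected cocoercivity of Assumption~\ref{assume:ECC}. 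Throughout I would use that the Euclidean norm on $\mathbb{R}^d = \mathbb{R}^{nd'}$ splits as $\autonorm{x - x_*}^2 = \sum_{i=1}^n \autonorm{x_i - z^*}^2$. For claim (i) I would simply invoke Proposition~\ref{prop:tranform_equiv_sol}: since Assumption~\ref{assume:additional_FL} guarantees a unique solution $z^*$ to \eqref{eq:FedVIP}, the solution equivalence established there immediately yields that $x_* = (z^*, \ldots, z^*)$ is the unique solution to \eqref{eq:objective_FL_reformulation}, with no extra work beyond citing that result.

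For claim (ii), because $F(x) - F(x_*)$ has $i$-th block $H_i(x_i) - H_i(z^*)$, the inner product decomposes as $\autoprod{F(x) - F(x_*), x - x_*} = \sum_{i=1}^n \autoprod{H_i(x_i) - H_i(z^*), x_i - z^*}$. Applying the $\mu$-quasi-strong monotonicity of each $H_i$ around $z^*$ from Assumption~\ref{assume:additional_FL} block-by-block gives the lower bound $\sum_{i=1}^n \mu \autonorm{x_i - z^*}^2 = \mu \autonorm{x - x_*}^2$, which is exactly the required quasi-strong monotonicity of $F$.

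For claim (iii), unbiasedness is immediate from block-wise linearity of expectation together with the hypothesis $\mathbb{E}[g_i(x_i)] = H_i(x_i)$, so that $\mathbb{E}[g(x)] = F(x)$. For expected cocoercivity I would write $\mathbb{E}\autonorm{g(x) - g(x_*)}^2 = \sum_{i=1}^n \mathbb{E}\autonorm{g_i(x_i) - g_i(z^*)}^2$, bound each summand by $L_g \autoprod{H_i(x_i) - H_i(z^*), x_i - z^*}$ using the per-client expected cocoercivity of Assumption~\ref{assume:additional_FL}, and recombine the sum into $L_g \autoprod{F(x) - F(x_*), x - x_*}$ via the same block decomposition as in claim (ii). The constant $L_g$ is preserved because it is identical across clients.

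There is no serious analytic obstacle; the argument is essentially bookkeeping that the reformulation preserves separability. The points requiring care are correctly tracking that $F_i$ embeds $H_i(x_i)$ into the $i$-th block (so $F(x)$ is a block-stacking rather than an overlapping sum of full-dimensional vectors), and ensuring the expectation in claim (iii) decomposes cleanly, which holds because each client's randomness enters only through its own block while the squared norm is additive across blocks. I would also remark that, since expected cocoercivity of $g$ implies star-cocoercivity of $F$, the full Assumption~\ref{assume:main} (including $\ell$-star-cocoercivity) is recovered, so the reformulation genuinely fits the framework needed for Theorem~\ref{thm:convergence_Stoc-ProxSkip-VIP} and Corollary~\ref{cor:Stoc-ProxSkip-VIP-Complexity}.
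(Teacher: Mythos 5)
Your proposal is correct and follows essentially the same route as the paper's own proof: uniqueness is delegated to Proposition~\ref{prop:tranform_equiv_sol}, and both quasi-strong monotonicity and expected cocoercivity are obtained by decomposing the inner product and the squared norm block-by-block, applying the per-client bounds from Assumption~\ref{assume:additional_FL}, and recombining. The only difference is your closing remark that expected cocoercivity of $g$ also yields star-cocoercivity of $F$, which the paper does not state explicitly here but which is consistent with its framework.
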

    \begin{proof}
    The uniqueness result comes from the above proposition. Then note that
    \begin{eqnarray*}
        \autoprod{F(x)-F(x_*), x-x_*} &= &
            \autoprod{\sum_{i=1}^n\autopar{F_i(x)-F_i(x_*)}, x-x_*} \\
            &=&\sum_{i=1}^n\autoprod{F_i(x)-F_i(x_*), x-x_*}\\
            &=& \sum_{i=1}^n\autoprod{f_i(x_i)-f_i(z^*), x_i-z^*}\\
            &\geq&\sum_{i=1}^n\mu\autonorm{x_i-z^*}^2 \\
            &=& \mu\autonorm{x-x_*}^2,
    \end{eqnarray*}
    which verifies the second statement. For the last statement, following the definition of the estimator $g$, 
    \begin{equation*}
        \mathbb{E}g(x_*)
        =
        \EE
        \begin{pmatrix}
            g_1(z^*)\\
            g_2(z^*)\\
            \vdots\\
            g_n(z^*)
        \end{pmatrix}
        =
        \begin{pmatrix}
            f_1(z^*)\\
            f_2(z^*)\\
            \vdots\\
            f_n(z^*)
        \end{pmatrix}
        =
        \sum_{i=1}^n F_i(x_*)=F(x_*),
    \end{equation*}
    which implies that it is an unbiased estimator of $F$.
    Then note that for any $x\in\mathbb{R}^d$,
    \begin{eqnarray*}
        \mathbb{E}\autonorm{g(x)-g(x_*)}^2 &=& \mathbb{E}\sum_{i=1}^n\autonorm{\autopar{g_i(x_i)-g_i(z^*)}}^2 \\
        &=& \sum_{i=1}^n\mathbb{E}\autonorm{g_i(x_i)-g_i(z^*)}^2\\
        &\leq& 
        \sum_{i=1}^nL_g \autoprod{f_i(x_i)-f_i(z^*), x_i-z^*}\\
        &=& 
        L_g\sum_{i=1}^n \autoprod{F_i(x)-F_i(x_*), x-x_*}\\
        &=& 
        L_g\autoprod{F(x)-F(x_*), x-x_*},
    \end{eqnarray*}
    here the inequality comes from the expected cocoercivity of each $g_i$ in Assumption~\ref{assume:additional_FL}, which concludes the proof.
\end{proof}

\subsection{Properties of Operators in Finite-Sum VIPs}
\label{apdx:FL_operator_check}
\begin{proposition}
    \label{prop:FL_Operator_Check_SVRG}
    With Assumption \ref{assume:additional_SVRG_FL}, the problem
    defined above
    attains an unique solution $x_*\triangleq(z^*, z^*, \cdots, z^*)\in\mathbb{R}^d$, the operator $F$ is $\frac{\mu}{n}$-quasi-strongly monotone, and the operator $g_k\triangleq(g_{1,t}, g_{2,t}, \cdots, g_{n,t})$ satisfies Assumption \ref{assume:stochastic} with 
    \begin{equation*}
        A=\widehat{\ell},\ B=2,\ C=\frac{q\widehat{\ell}}{2},\ \rho=q,\ D_1=D_2=0,\ \sigma_k^2=\sum_{i=1}^n\frac{1}{m_i}\sum_{j=1}^{m_i}\autonorm{F_{i,j}(z^*)-F_{i,j}(w_{i,k})}^2.
    \end{equation*}
\end{proposition}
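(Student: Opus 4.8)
The plan is to establish the three claims — uniqueness of the solution, quasi-strong monotonicity of $F$, and that the L-SVRGDA-FL estimator $g_k$ fits Assumption~\ref{assume:stochastic} with the stated constants — by combining the block-separable argument already used for \algname{ProxSkip-SGDA-FL} in Proposition~\ref{prop:FL_Operator_Check} with the loopless-SVRG variance computation of the centralized Lemma~\ref{lm:ABC_SVRG}. Uniqueness is immediate: since Assumption~\ref{assume:additional_SVRG_FL} grants a unique solution $z^*$ of \eqref{eq:FedVIP}, Proposition~\ref{prop:tranform_equiv_sol} transfers this to a unique $x_* = (z^*,\dots,z^*)$ of the consensus reformulation. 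For quasi-strong monotonicity I would expand $\langle F(x)-F(x_*), x-x_*\rangle$ using $F_i(x_i)=(0,\dots,H_i(x_i),\dots,0)$ to split it as $\sum_{i=1}^n \langle H_i(x_i)-H_i(z^*), x_i - z^*\rangle$, apply the per-client $\mu$-quasi-strong monotonicity from Assumption~\ref{assume:additional_SVRG_FL}, and recollect $\sum_i \|x_i - z^*\|^2 = \|x-x_*\|^2$; the resulting constant ($\mu/n$) is dictated by the averaging normalization of $F$ in this finite-sum reformulation, and tracking that normalization carefully is the one bookkeeping point here.

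The core of the proof is the verification of Assumption~\ref{assume:stochastic}, which I would do blockwise. First, unbiasedness of each $g_{i,k} = F_{i,j_k}(x_{i,k}) - F_{i,j_k}(w_{i,k}) + F_i(w_{i,k})$ over the uniform index $j_k$ gives $\mathbb{E}[g_{i,k}] = H_i(x_{i,k})$, hence $\mathbb{E}[g_k] = F(x_k)$. Because $g_k$ and $F(x_*)$ are both block-structured, $\|g_k - F(x_*)\|^2 = \sum_{i=1}^n \|g_{i,k} - H_i(z^*)\|^2$, which lets me invoke exactly the single-machine estimate from Lemma~\ref{lm:ABC_SVRG} on each block: splitting via Young's inequality~\eqref{eq:YoungsInequality}, bounding the variance-reduction residual using $\mathrm{Var}(X)\le \mathbb{E}[X^2]$, and applying the averaged star-cocoercivity of Assumption~\ref{assume:additional_SVRG_FL} to the first piece. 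Summing these blockwise bounds yields $\mathbb{E}\|g_k - F(x_*)\|^2 \le 2\widehat{\ell}\,\langle F(x_k)-F(x_*), x_k-x_*\rangle + 2\sigma_k^2$, identifying $A = \widehat{\ell}$, $B=2$, $D_1=0$, and $\sigma_k^2 = \sum_{i}\frac{1}{m_i}\sum_j \|F_{i,j}(z^*)-F_{i,j}(w_{i,k})\|^2$.

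Finally, I would handle the recursion for $\sigma_{k+1}^2$ driven by the reset coin $\zeta_k$ of Algorithm~\ref{alg:ProxSkip-L-SVRGDA-FL}: since every client updates $w_{i,k+1}=x_{i,k}$ with probability $q$ and keeps $w_{i,k}$ otherwise, taking expectation over $\zeta_k$ gives $\mathbb{E}[\sigma_{k+1}^2] = q\sum_i\frac{1}{m_i}\sum_j\|F_{i,j}(x_{i,k})-F_{i,j}(z^*)\|^2 + (1-q)\sigma_k^2$, and a second application of Assumption~\ref{assume:additional_SVRG_FL} converts the fresh term into $q\widehat{\ell}\,\langle F(x_k)-F(x_*), x_k-x_*\rangle$, yielding $C = q\widehat{\ell}/2$, $\rho = q$, $D_2 = 0$. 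The main obstacle I anticipate is not any single inequality but the careful separation of the two independent randomness sources — the per-client minibatch index $j_k$ used for $g_{i,k}$ and the shared Bernoulli coin $\zeta_k$ used for the control-variate reset — together with confirming that the distributed (summed) quantities collapse exactly onto the per-machine expressions of Lemma~\ref{lm:ABC_SVRG} under the finite-sum normalization, so that the claimed constants come out precisely rather than up to an $n$-dependent factor.
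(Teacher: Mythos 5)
Your proposal is correct and follows essentially the same route as the paper's proof: uniqueness via the consensus-reformulation equivalence, blockwise expansion of $\|g_k-F(x_*)\|^2$ with Young's inequality, the bound $\mathrm{Var}(X)\le\mathbb{E}[X^2]$, and Assumption~\ref{assume:additional_SVRG_FL}, followed by the coin-flip recursion for $\sigma_{k+1}^2$. The only difference is cosmetic — you explicitly flag the $\mu/n$ normalization and the separation of the two randomness sources, which the paper glosses over by deferring to the argument of Proposition~\ref{prop:FL_Operator_Check}.
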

\begin{proof}
    Here the proof is similar to that of Lemma \ref{lm:ABC_SVRG}. The conclusions on the solution $x_*$ and the quasi-strong monotonicity follow the same argument in the proof of Proposition \ref{prop:FL_Operator_Check}. For the property of $g_k$, it is easy to find that the unbiasedness holds. Then note that
    \begin{eqnarray*}
            &&\mathbb{E}\autonorm{g_k-F(x_*)}^2\\
            &=& \sum_{i=1}^n\mathbb{E}\autonorm{g_{i,k}-F_i(x_*)}^2 \\
            &=&
            \sum_{i=1}^n\mathbb{E}\autonorm{F_{i,j_k}(x_{i,k})-F_{i,j_k}(w_{i,k})+F_i(w_{i,j_k})-F_i(z^*)}^2\\
            &=&
            \sum_{i=1}^n\autopar{\frac{1}{m_i}\sum_{j=1}^{m_i}\autonorm{F_{i,j}(x_{i,k})-F_{i,j}(w_{i,k})+F_i(w_{i,k})-F_i(z^*)}^2}\\
            &\overset{\eqref{eq:YoungsInequality}}{\leq} &
            \sum_{i=1}^n\autopar{\frac{2}{m_i}\sum_{j=1}^{m_i}\autonorm{F_{i,j}(x_{i,k})-F_{i,j}(z^*)}^2+\autonorm{F_{i,j}(z^*)-F_{i,j}(w_{i,k})-F_i(z^*)+F_i(w_{i,k})}^2}\\
            &\leq &
            2\sum_{i=1}^n\autopar{\frac{1}{m_i}\sum_{j=1}^{m_i}\autonorm{F_{i,j}(x_{i,k})-F_{i,j}(z^*)}^2+\autonorm{F_{i,j}(z^*)-F_{i,j}(w_{i,k})}^2}\\
            &\leq &
            2\widehat{\ell}\sum_{i=1}^n\autoprod{f_i(x_{i,k})-f_i(z^*), x_{i,k}-z^*}+2\sum_{i=1}^n\frac{1}{m_i}\sum_{j=1}^{m_i}\autonorm{F_{i,j}(z^*)-F_{i,j}(w_{i,k})}^2\\
            &= &
            2\widehat{\ell}\sum_{i=1}^n\autoprod{F_i(x_{i,k})-F_i(z^*), x_k-x_*}+2\sum_{i=1}^n\frac{1}{m_i}\sum_{j=1}^{m_i}\autonorm{F_{i,j}(z^*)-F_{i,j}(w_{i,k})}^2\\
            &= &
            2\widehat{\ell}\autoprod{F(x_k)-F(x_*), x_k-x_*}+2\sigma_k^2,
    \end{eqnarray*}
    the second inequality comes from the fact that $\text{Var}(X)\leq\EE(X^2)$, and the third inequality is implied by Assumption~\ref{assume:additional_SVRG_FL}.
    Then for the second term above, we have
    \begin{equation*}
        \begin{split}
            \mathbb{E}\automedpar{\sigma_{t+1}^2}
            =\ &
            \sum_{i=1}^n\frac{1}{m_i}\sum_{j=1}^{m_i}\mathbb{E}\autonorm{F_{i,j}(w_{i,k+1})-F_{i,j}(z^*)}^2\\
            =\ &
            \sum_{i=1}^n\frac{1}{m_i}\sum_{j=1}^{m_i}\autopar{q\autonorm{F_{i,j}(x_{i,k})-F_{i,j}(z^*)}^2+(1-q)\autonorm{F_{i,j}(w_{i,k})-F_{i,j}(z^*)}^2}\\
            \leq\ &
            \sum_{i=1}^nq\widehat{\ell}\autoprod{f_i(x_{i,k})-f_i(z^*), x_{i,k}-z^*}+(1-q)\sum_{i=1}^n\frac{1}{m_i}\sum_{j=1}^{m_i}\autonorm{F_{i,j}(w_{i,k})-F_{i,j}(z^*)}^2\\
            =\ &
            q\widehat{\ell}\autoprod{F(x_k)-F(x_*), x_k-x_*}+(1-q)\sigma_k^2,
        \end{split}
    \end{equation*}
    here the second equality comes from the definition of $w_{i, k+1}$, and the inequality comes from Assumption~\ref{assume:additional_SVRG_FL}. So we conclude the proof.
\end{proof}


\subsection{Further Comparison of Communication Complexities}
In Table~\ref{table:comparison_v2}, following the discussion (``Comparison with Literature") in Section~\ref{asdas}, we compare our convergence results to closely related works on stochastic local training methods. The Table shows the improvement in terms of communication and iteration complexities of our proposed \algname{ProxSkip-VIP-FL} algorithms over methods like Local SGDA~ \cite{deng2021local}, Local SEG~\cite{beznosikov2020distributed} and FedAvg-S~\cite{hou2021efficient}.

\begin{center}
\begin{table}[htbp]

    \centering
    \small
    \renewcommand{\arraystretch}{2.25}
    \caption{
    Comparison of federated learning algorithms for solving VIPs with strongly monotone and Lipschitz operator. Comparison is in terms of both iteration and communication complexities. }
    \begin{threeparttable}[b]
    \centering
    \resizebox{0.95\textwidth}{!}{
        \begin{tabular}{c |
            >{\centering}p{0.08\textwidth}|
            >{\centering}p{0.38\textwidth}|
            >{\centering\arraybackslash}p{0.37\textwidth}}
            \hline \hline
            \textbf{Algorithm}
            & \small \textbf{Setup}\tnote{1}
            & \textbf{\# Communication}\tnote{2}
            & \textbf{\# Iteration}\\
            \hline \hline
           
            \makecell[c]{
                \textbf{Local SGDA}
                \\
                \cite{deng2021local}
            }
            & SM, LS
            & $\mathcal{O}\autopar{\sqrt{\frac{\kappa^2\sigma_*^2}{\mu\epsilon}}}$
            & $\mathcal{O}\autopar{\frac{\kappa^2\sigma_*^2}{\mu n\epsilon}}$
            \\
            \hline
            \makecell[c]{
                \textbf{Local SEG}
                \vspace{0.25em}
                \\
                \cite{beznosikov2020distributed}
            }            
            & SM, LS
            & $\scriptsize \mathcal{O}\autopar{\max\autopar{\kappa\ln\frac{1}{\epsilon}, \frac{p\Delta^2}{\mu^2n\epsilon}, \frac{\kappa \xi}{\mu\sqrt{\epsilon}}, \frac{\sqrt{p}\kappa \Delta}{\mu\sqrt{\epsilon}}}}$
            & $\mathcal{O}\autopar{\max\autopar{\frac{\kappa}{p}\ln\frac{1}{\epsilon}, \frac{\Delta^2}{\mu^2n\epsilon}, \frac{\kappa \xi}{p\mu\sqrt{\epsilon}}, \frac{\kappa \Delta}{\mu\sqrt{p\epsilon}}}}$
            \\
            \hline
            \makecell[c]{
                \textbf{FedAvg-S}
                \vspace{0.25em}
                \\
                \cite{hou2021efficient}
            }            
            & SM, LS
            & $\Tilde{\mathcal{O}}\autopar{\frac{p\Delta^2}{n\mu^2\epsilon}+\frac{\sqrt{p}\kappa \Delta}{\mu\sqrt{\epsilon}}+\frac{\kappa \xi}{\mu\sqrt{\epsilon}}}$
            & $\Tilde{\mathcal{O}}\autopar{\frac{\Delta^2}{n\mu^2\epsilon}+\frac{\kappa \Delta}{\mu\sqrt{p\epsilon}}+\frac{\kappa \xi}{p\mu\sqrt{\epsilon}}}$
            \\
            \hline
            \makecell[c]{
                \textbf{\algname{ProxSkip-VIP-FL}}
                \vspace{0.25em}
                \\
                (This chapter)
            }            
            & SM, LS\tnote{3}
            & $\mathcal{O}\autopar{\sqrt{\max\autobigpar{\kappa^2, \frac{\sigma_*^2}{\mu^2\epsilon}}}\ln\frac{1}{\eps}}$
            & $\mathcal{O}\autopar{\max\autobigpar{\kappa^2, \frac{\sigma_*^2}{\mu^2\epsilon}}\ln\frac{1}{\eps}}$
            \\
            \hline
            \makecell[c]{
                \textbf{\scriptsize \algname{ProxSkip-L-SVRGDA-FL}}
                \vspace{0.25em}
                \\
                (This chapter)\tnote{4}
            }            
            & SM, LS
            & $\mathcal{O}\autopar{\kappa\ln\frac{1}{\eps}}$
            & $\mathcal{O}\autopar{\kappa^2\ln\frac{1}{\eps}}$
            \\
            \hline \hline
        \end{tabular}
        }
        \begin{tablenotes}
            \footnotesize
            \item[1] 
            SM: strongly monotone, LS: (Lipschitz) smooth. $\kappa\triangleq L/\mu$, $L$ and $\mu$ are the modulus of SM and LS.\\ 
            $\sigma_*^2<+\infty$ is an upper bound of the variance of the stochastic operator at $x_*$. $\Delta$ is an (uniform) \\
            upper bound of the variance of the stochastic operator. 
            $\xi^2$ represents the bounded heterogeneity, i.e., \\
            $g_i(x;\xi_i)$ is an unbiased estimator of $f_i(x)$ for any $i\in\autobigpar{1,\dots,n}$, and \\
            $\xi_i^2(x)\triangleq\sup_{x\in\mathbb{R}^d}\autonorm{f_i(x)-F(x)}^2\leq \xi^2\leq +\infty$.
            \item[2]  $p$ is the probability of synchronization, we can take $p=\mathcal{O}(\sqrt{\epsilon})$, which recovers $\mathcal{O}(1/\sqrt{\epsilon})$  \\
            communication complexity dependence on $\epsilon$ in our result. $\Tilde{\mathcal{O}}(\cdot)$ hides the logarithmic terms.
            \item[3] Our algorithm works for quasi-strongly monotone and star-cocoercive operators, which is more  \\
            general than the SM and LS setting, note that an $L$-LS and $\mu$-SM operator can be shown to be \\
            ($\kappa L$)-star-cocoercive~\cite{loizou2021stochastic}.
            \item[4] When we further consider the finite-sum form problems, we can turn to this algorithm.
        \end{tablenotes}
    \end{threeparttable}
    \label{table:comparison_v2}
\end{table}
\end{center}

\newpage
\section{Details on Numerical Experiments}
\label{apdx:numerical_experiments}
In experiments, we examine the performance of \algname{ProxSkip-VIP-FL} and \algname{ProxSkip-L-SVRGDA-FL}. We compare \algname{ProxSkip-VIP-FL} and \algname{ProxSkip-L-SVRGDA-FL} algorithm with \algname{Local SGDA}~\cite{deng2021local} and \algname{Local SEG}~\cite{beznosikov2020distributed}, and the parameters are chosen according to corresponding theoretical convergence guarantees. Given any function $f(x_1, x_2)$, the $\ell$ co-coercivity parameter of the operator 
\begin{eqnarray*}
    \begin{pmatrix}
        \nabla_{x_1} f(x_1, x_2)\\
        - \nabla_{x_2} f(x_1, x_2)
    \end{pmatrix}
\end{eqnarray*} is given by $\frac{1}{\ell} = \min_{\lambda \in \text{Sp}(J)} \mathcal{R} \left(\frac{1}{\lambda} \right)$ \cite{loizou2021stochastic}. Here, $\text{Sp}$ denotes spectrum of the Jacobian matrix 
\begin{eqnarray*}
    J = \begin{pmatrix}
        \nabla^2_{x_1, x_1} f & \nabla^2_{x_1, x_2} f \\
        -\nabla^2_{x_1, x_2} f & -\nabla^2_{x_2, x_2} f
    \end{pmatrix}.
\end{eqnarray*}
In our experiment, for the min-max problem of the form 
\begin{eqnarray}\label{eq:finite_sum}
    \min_{x_1} \max_{x_2}\frac{1}{n} \sum_{i = 1}^n \frac{1}{m_i} \sum_{j = 1}^{m_i} f_{ij}(x_1, x_2),
\end{eqnarray}
we use stepsizes according to the following Table.

\begin{center}
\begin{threeparttable}[H]
    \renewcommand{\arraystretch}{.8}
    \resizebox{0.95\textwidth}{!}{
    \begin{tabular}{c|c|c}
        \hline \hline
        \textbf{Algorithm}
        & \textbf{Step size $\gamma$}
        & \textbf{Value of $p$}
        \\
        \hline \hline
        \makecell[c]{
            \textbf{\algname{ProxSkip-VIP-FL}}
            \\
            (deterministic)
        }            
        & $\qquad\gamma = \frac{1}{2 \max_{i \in [n]} \ell_i}\qquad$ 
        & $\qquad p = \sqrt{\gamma \mu}\qquad$\\
        \hline
        \makecell[c]{
            \textbf{\algname{ProxSkip-VIP-FL}}
            \\
            (stochastic)
        }            
        & $\gamma = \frac{1}{2 \max_{i, j} \ell_{ij}}$ & $p = \sqrt{\gamma \mu}$\\
        \hline
        \makecell[c]{
            \textbf{$\qquad$ProxSkip-L-SVRGDA-FL$\qquad$}
            \\
            (finite-sum)
        }    
        & $\gamma = \frac{1}{6 \max_{i,j} \ell_{ij}}$ & $p = \sqrt{\gamma \mu}$
        \\
        \hline \hline
   \end{tabular}
   }
   \caption{Parameter settings for each algorithm. Here $\ell_i$ is the co-coercivity parameter corresponding to $\frac{1}{m_i} \sum_{j = 1}^{m_i} f_{ij}(x_1, x_2)$ and $\ell_{ij}$ is the co-coercivity parameter corresponding to $f_{ij}$.}
\label{table:stepsize}
\end{threeparttable}   
\end{center}

The parameters in Table \ref{table:stepsize} are selected based on our theoretical convergence guarantees, presented in the main paper.  In particular, for \algname{ProxSkip-VIP-FL}, we use the stepsizes suggested in Theorem \ref{thm:complexity_FL_ProxSkip}, which follows the setting of Corollary~\ref{thm:convergence_ProxSkip-SGDA}. Note that the full-batch estimator (deterministic setting) of \eqref{eq:finite_sum} satisfies Assumption \ref{assume:additional_FL} when $L_g = \max_{i \in [n]} \ell_i$. Similarly, the stochastic estimators of \eqref{eq:finite_sum} satisfies Assumption \ref{assume:additional_FL} with $L_g = \max_{i,j} \ell_{ij}$. For the variance reduced method, \algname{ProxSkip-L-SVRGDA-FL}, we use the stepsizes as suggested in Theorem~\ref{dnaoao} (which follows the setting in Corollary~\ref{cor:ProxSkip-L-SVRGDA-FL_complexity}) with $\hat{\ell} = \max_{i,j} \ell_{ij}$ for \eqref{eq:finite_sum}. For all methods, the probability of making the proximal update (communication in the FL regime) equals $p = \sqrt{\gamma \mu}$. For the 
\algname{ProxSkip-L-SVRGDA-FL}, following Corollary~\ref{cor:ProxSkip-L-SVRGDA-FL_complexity} we set $q=2\gamma\mu$.

\subsection{Details on Robust least squares}
The objective function of Robust Least Square is given by
\begin{eqnarray*}
    G(\beta , y) = \|{\bf A}\beta - y\|^2 - \lambda \|y - y_0\|^2,
\end{eqnarray*}
for $\lambda > 0$. Note that 
$$
\nabla_{\beta}^2 G(\beta, y) = 2 \textbf{A}^{\top}\textbf{A},
\quad
\nabla^2_y G(\beta, y) = 2 (\lambda - 1)\textbf{I}.
$$
Therefore, for $\lambda > 1$, the objective function $G$ is strongly monotone with parameter $\min \{ 2 \lambda_{\min}(\textbf{A}^{\top}\textbf{A}), 2(\lambda - 1)\}$. Moreover, $G$ can be written as a finite sum problem, similar to \eqref{quadraticgame}, by decomposing the rows of matrix $\textbf{A}$ i.e. 
$$
G(\beta, y) = \sum_{i = 1}^r (A_i^{\top} \beta - y_i )^2 - \lambda (y_i - y_{0i})^2.
$$ 
Here $\mathbf{A}_i^{\top}$ denotes the $i$th row of the matrix $\mathbf{A}$. Now we divide the $r$ rows among $n$ nodes where each node will have $m = \nicefrac{r}{n}$ rows. Then we use the canonical basis vectors $e_i$ (vector with $i$-th entry $1$ and $0$ for other coordinates) to rewrite the above problem as follow
\begin{eqnarray*}
    G(\beta, y) & = & \sum_{i = 1}^n \sum_{j = 1}^m  \left(A_{(i-1)m + j}^{\top}\beta - y_{(i-1)m + j} \right)^2 - \lambda \left(y_{(i-1)m + j} - y_{0((i-1)m + j)} \right)^2 \\
    & = & \sum_{i = 1}^n \sum_{j = 1}^m \beta^{\top} A_{(i-1)m + j} A_{(i-1)m + j}^{\top} \beta - 2 y_{(i-1)m + j} A_{(i-1)m + j}^{\top} \beta + y_{(i-1)m + j}^2\\
    && - \lambda y_{(i-1)m + j}^2  - \lambda y_{0((i-1)m + j)}^2 + 2 \lambda y_{0((i-1)m + j)} y_{((i-1)m + j)}\\
    & = & \sum_{i = 1}^n \sum_{j = 1}^m \beta^{\top} \left( A_{(i-1)m + j} A_{(i-1)m + j}^{\top} \right) \beta  - y^{\top} \left(2 e_{(i-1)m + j} A_{(i-1)m + j}^{\top}\right)\beta \\
    && - y^{\top} \left((\lambda - 1)e_{(i-1)m + j}e_{(i-1)m + j}^{\top} \right) y + \left(2 \lambda y_{0((i-1)m + j)} e_{((i-1)m + j)}^{\top} \right) y\\
    && - \lambda y_{0((i-1)m + j)}^2 \\
    & = & \frac{nm}{n}\sum_{i = 1}^n \frac{1}{m} \sum_{j = 1}^m \beta^{\top} \left( A_{(i-1)m + j} A_{(i-1)m + j}^{\top} \right) \beta  - y^{\top} \left(2 e_{(i-1)m + j} A_{(i-1)m + j}^{\top}\right)\beta \\
    && - y^{\top} \left((\lambda - 1)e_{(i-1)m + j}e_{(i-1)m + j}^{\top} \right) y + \left(2 \lambda y_{0((i-1)m + j)} e_{((i-1)m + j)}^{\top} \right) y \\
    && - \lambda y_{0((i-1)m + j)}^2 .
\end{eqnarray*}
Therefore $G$ is equivalent to \eqref{quadraticgame} with $n$ nodes, $m_i = m = \nicefrac{r}{n}, x_1 = \beta, x_2 = y$ and 
\begin{eqnarray*}
    f_{ij} (x_1, x_2) &=& x_1^{\top} \left( A_{(i-1)m + j} A_{(i-1)m + j}^{\top} \right) x_1  - x_2^{\top} \left(2 e_{(i-1)m + j} A_{(i-1)m + j}^{\top}\right)x_1 \\
    && - x_2^{\top} \left((\lambda - 1)e_{(i-1)m + j}e_{(i-1)m + j}^{\top} \right) x_2 + \left(2 \lambda y_{0((i-1)m + j)} e_{((i-1)m + j)}^{\top} \right) x_2\\
    && - \lambda y_{0((i-1)m + j)}^2.
\end{eqnarray*}
In Figure~\ref{fig: Comparison of ProxSkip-VIP-FL vs Local SGDA vs Local SEG on Heterogeneous Data in the stochastic setting.}, we run our experiment on the "California Housing" dataset from scikit-learn package \cite{pedregosa2011scikit}. This data consists of $8$ attributes of $200$ houses in the California region where the target variable $y_0$ is the price of the house. To implement the algorithms, we divide the data matrix $\textbf{A}$ among $20$ nodes, each node having an equal number of rows of $\textbf{A}$. Similar to the last example, here we also choose our ProxSkip-VIP-FL algorithm, Local SGDA, and Local SEG for comparison in the experiment, also we use $\lambda = 50$, and the theoretical stepsize choice is similar to the previous experiment.

In Figure~\ref{fig:RLSxSynthetic}, we reevaluate the performance of ProxSkip on the Robust Least Square problem with synthetic data. For generating the synthetic dataset, we set $r = 200$ and $s = 20$. Then we sample $\textbf{A} \sim \mathcal{N}(0, 1), \beta_0 \sim \mathcal{N}(0, 0.1), \epsilon \sim \mathcal{N}(0, 0.01)$ and set $y_0 = \mathbf{A}\beta_0 + \epsilon$. In both deterministic (Figure~\ref{fig:RLSxDeterministicxSynthetic}) and stochastic (Figure~\ref{fig:RLSxStochasticxSynthetic}) setting, ProxSkip outperforms Local GDA and Local EG.  

\begin{figure}[H]
\centering
\begin{subfigure}[b]{0.45\textwidth}
    \centering
    \includegraphics[width=\textwidth]{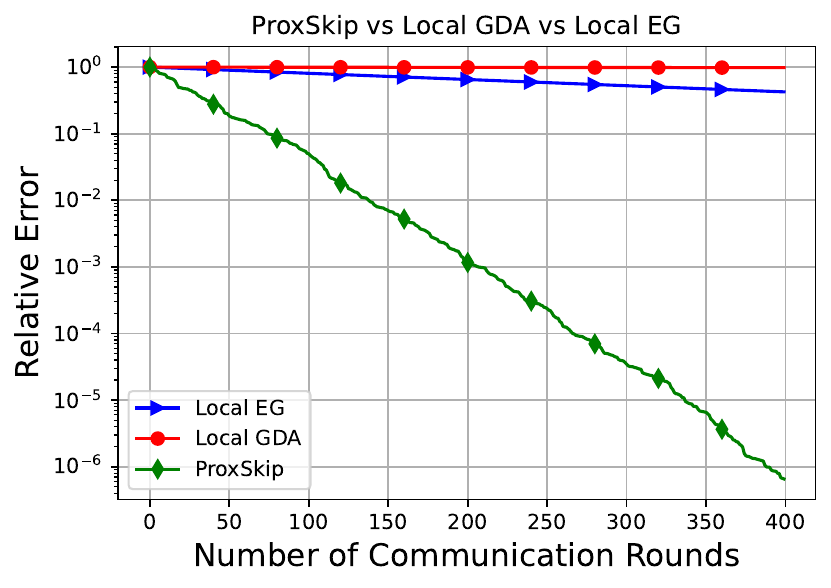}
    \caption{Deterministic Algorithms}
    \label{fig:RLSxDeterministicxSynthetic}
\end{subfigure}
\hspace{1em}
\begin{subfigure}[b]{0.45\textwidth}
    \centering
    \includegraphics[width=\textwidth]{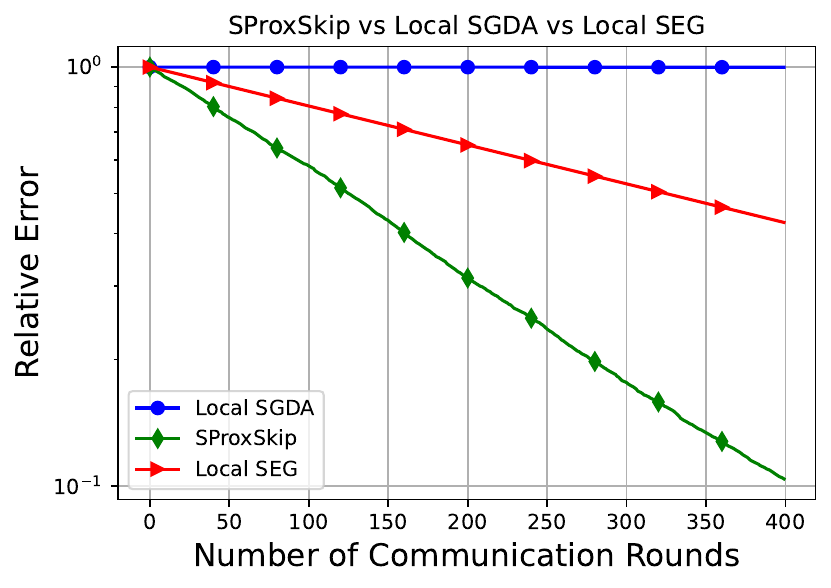}
    \caption{Stochastic Algorithms}
    \label{fig:RLSxStochasticxSynthetic}
\end{subfigure}
\caption{Comparison of algorithms on the Robust Least Square~\eqref{robustleastsquare} using synthetic dataset.}
\label{fig:RLSxSynthetic}
\vspace{-1em}
\end{figure}

\newpage
\section{Additional Experiments}
\label{apdx:more_experiment}

Following the experiments presented in the main paper, we further evaluate the performance of the proposed methods in different settings (problems and stepsize selections). 

\subsection{Fine-Tuned Stepsize}
In Figure~\ref{fig:TunedSteps}, we compare the performance of ProxSkip against that of Local GDA and Local EG on the strongly monotone quadratic game~\eqref{quadraticgame} with heterogeneous data using tuned stepsizes. For tuning the stepsizes, we did a grid search on the set of $\frac{1}{rL}$ where 
$r \in \{1, 2, 4, 8, 16, 64, 128, 256, 512, 1024, 2048\}$ and $L$ is the Lipschitz constant of $F$. ProxSkip outperforms the other two methods in the deterministic setting while it has a comparable performance in the stochastic setting. 

\begin{figure}[H]
\centering
\begin{subfigure}[b]{0.45\textwidth}
    \centering
    \includegraphics[width=\textwidth]{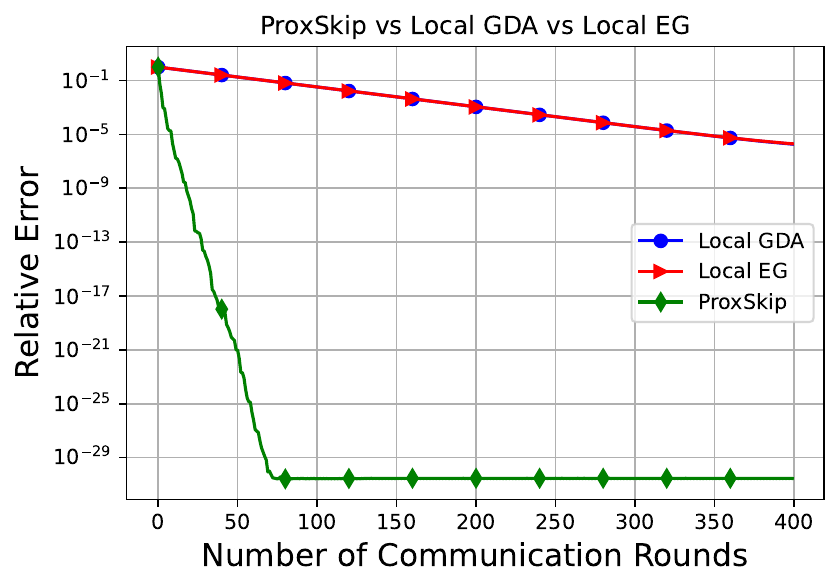}
    \caption{Deterministic Setting.}
\end{subfigure}
\hspace{1em}
\begin{subfigure}[b]{0.45\textwidth}
    \centering
    \includegraphics[width=\textwidth]{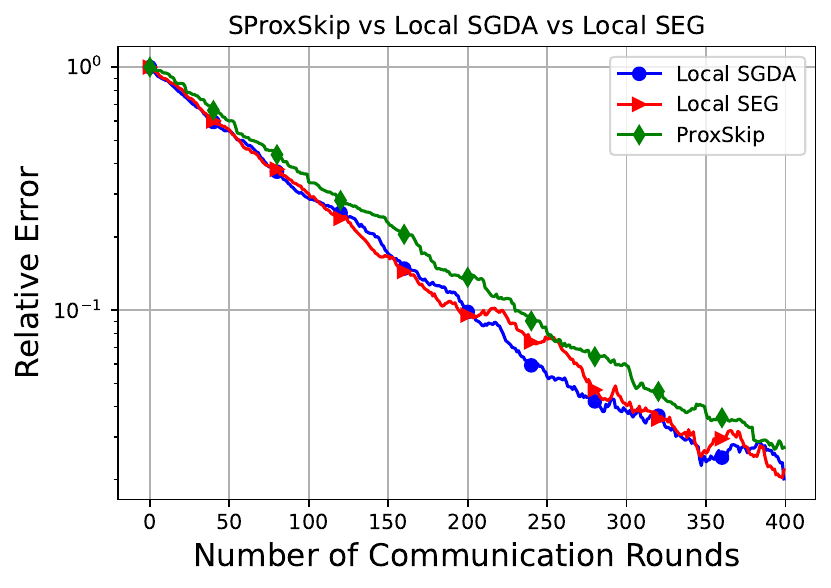}
    \caption{Stochastic Setting.}
\end{subfigure}
\caption{\emph{Comparison of \algname{ProxSkip-VIP-FL} vs \algname{Local SGDA} vs \algname{Local SEG} on Heterogeneous Data with tuned stepsizes.}}
\label{fig:TunedSteps}
\end{figure}

\subsection{\algname{ProxSkip-VIP-FL} vs. \algname{ProxSkip-L-SVRGDA-FL}}
In Figure \ref{fig: Comparison of SProxSkip vs ProxSkip-L-SVRGDA}, we compare the stochastic version of ProxSkip-VIP-FL with \algname{ProxSkip-L-SVRGDA-FL}. In Figure~\ref{fig: Comparison of SProxSkip vs ProxSkip-L-SVRGDA tuned}, we implement the methods with tuned stepsizes while in Figure~\ref{fig: Comparison of SProxSkip vs ProxSkip-L-SVRGDA theoretical} we use the theoretical stepsizes. For the theoretical stepsizes of \algname{ProxSkip-L-SVRGDA-FL}, we use the stepsizes from Corollary \ref{cor:ProxSkip-L-SVRGDA-FL_complexity}. We observe that \algname{ProxSkip-L-SVRGDA-FL} performs better than ProxSkip-VIP-FL with both tuned and theoretical stepsizes.
\begin{figure}[H]
\centering
\begin{subfigure}[b]{0.45\textwidth}
    \centering
    \includegraphics[width=\textwidth]{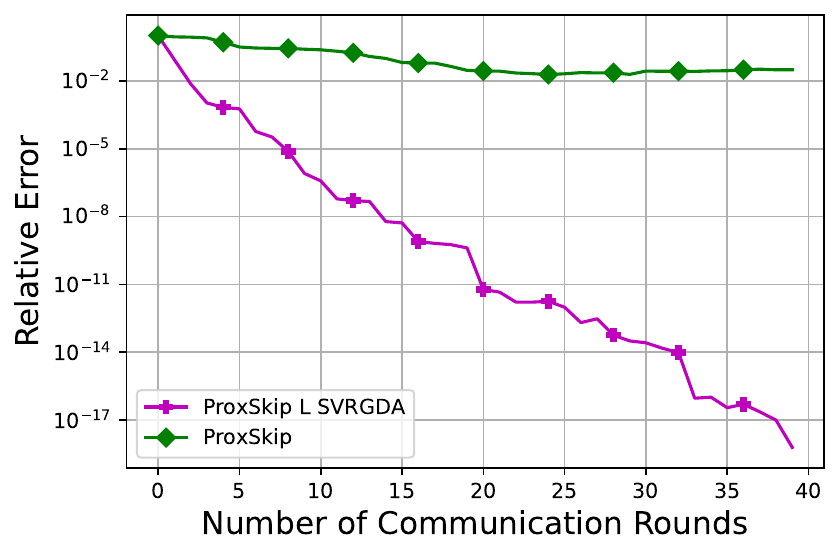
}
    \caption{Tuned stepsize}
    \label{fig: Comparison of SProxSkip vs ProxSkip-L-SVRGDA tuned}
\end{subfigure}
\hspace{1em}
\begin{subfigure}[b]{0.45\textwidth}
    \centering
    \includegraphics[width=\textwidth]{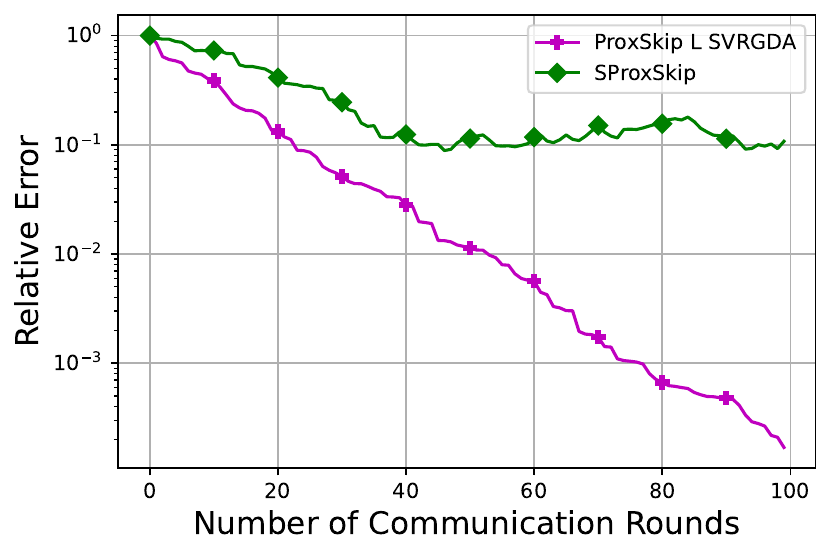}
    \caption{Theoretical stepsize}
    \label{fig: Comparison of SProxSkip vs ProxSkip-L-SVRGDA theoretical}
\end{subfigure}
\caption{\emph{Comparison of \algname{ProxSkip-VIP-FL} and \algname{ProxSkip-L-SVRGDA-FL} using the tuned and theoretical stepsizes.}}
\label{fig: Comparison of SProxSkip vs ProxSkip-L-SVRGDA}
\end{figure}
\subsection{Low vs High Heterogeneity}
\label{Performance on Heterogeneous vs Homogeneous Data with Tuned Stepsize}
We conduct a numerical experiment on a toy example to verify the efficiency of our proposed algorithm. Following the setting in \cite{tarzanagh2022fednest}, we consider the minimax objective function 
$$\min_{x_1 \in \mathbb{R}^d} \max_{x_2 \in \mathbb{R}^d} \frac{1}{n} \sum_{i = 1}^n  f_{i}(x_1, x_2)$$ 
where $f_{i}$ are given by
\begin{equation*}
    f_i(x_1, x_2)=-\automedpar{\frac{1}{2}\autonorm{x_2}^2-b_i^\top x_2+x_2^\top A_i x_1}+\frac{\lambda}{2}\autonorm{x_1}^2
\end{equation*}
Here we set the number of clients $n=100$, $d_1=d_2=20$ and $\lambda=0.1$. We generate $b_i\sim\mathcal{N}(0,s_i^2I_{d_2})$ and $A_i = t_i I_{d_1 \times d_2}$. For Figure~\ref{fig: Comparison of ProxSkip vs Local GDA vs Local EG on Homogeneous vs Heterogeneous Data p1}, we set $s_i = 10$ and $t_i = 1$ for all $i$ while in Figure \ref{fig: Comparison of ProxSkip vs Local GDA vs Local EG on Homogeneous vs Heterogeneous Data p2}, we generate $s_i \sim \text{Unif}(0,20)$ and $t_i \sim \text{Unif}(0, 1)$. 

We implement Local GDA, Local EG, and ProxSkip-VIP-FL with tuned stepsizes (we use grid search to tune stepsizes Appendix \ref{apdx:numerical_experiments}). In Figure \ref{fig: Comparison of ProxSkip vs Local GDA vs Local EG on Homogeneous vs Heterogeneous Data p3}, we observe that Local EG performs better than ProxSkip-VIP-FL in homogeneous data. However, in Figure \ref{fig: Comparison of ProxSkip vs Local GDA vs Local EG on Homogeneous vs Heterogeneous Data p4}, the performance of Local GDA \cite{deng2021local} and Local EG deteriorates for heterogeneous data, and ProxSkip-VIP-FL outperforms both of them in this case. To get stochastic estimates, we add Gaussian noise \cite{beznosikov2022decentralized} (details in Appendix \ref{apdx:numerical_experiments}). We repeat this experiment in the stochastic settings in  Figure \ref{fig: Comparison of ProxSkip vs Local GDA vs Local EG on Homogeneous vs Heterogeneous Data p3} and \ref{fig: Comparison of ProxSkip vs Local GDA vs Local EG on Homogeneous vs Heterogeneous Data p4}. We observe that ProxSkip-VIP-FL has a comparable performance with Local SGDA \cite{deng2021local} and Local SEG in homogeneous data. However, ProxSkip-VIP-FL is faster on heterogeneous data.

\begin{figure}[htbp]
\centering
\begin{subfigure}[b]{0.45\linewidth}
    \centering
    \includegraphics[width=\textwidth]{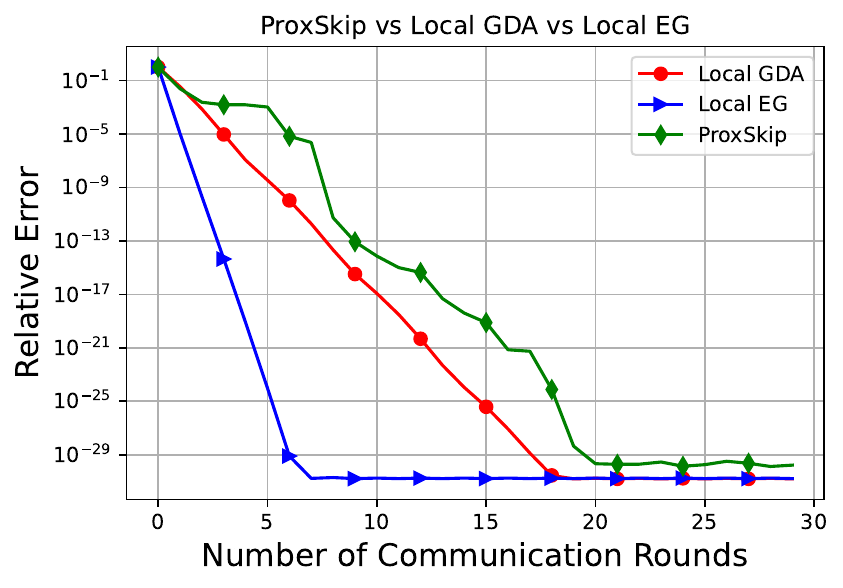
}
    \caption{constant $s_i,t_i$}
    \label{fig: Comparison of ProxSkip vs Local GDA vs Local EG on Homogeneous vs Heterogeneous Data p1}
\end{subfigure}
\hspace{1em}
\begin{subfigure}[b]{0.45\textwidth}
    \centering
    \includegraphics[width=\textwidth]{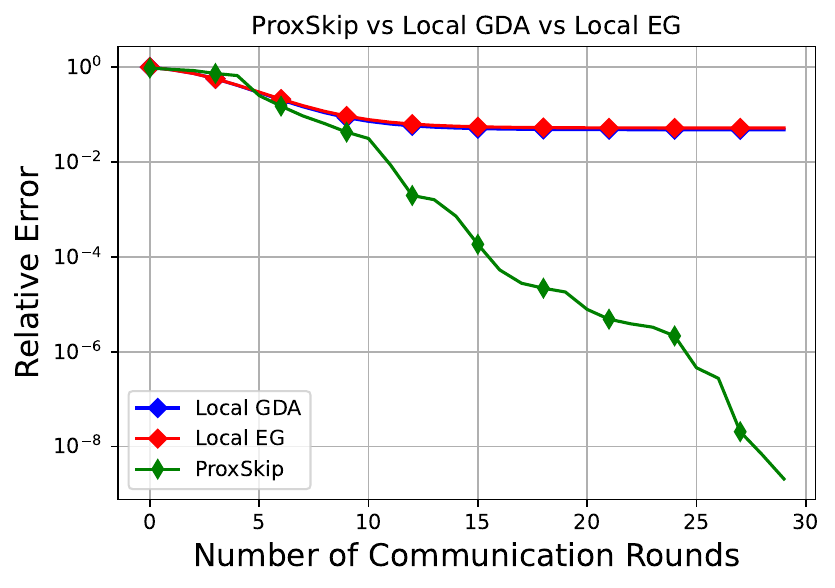}
    \caption{uniformly generated $s_i,t_i$}
    \label{fig: Comparison of ProxSkip vs Local GDA vs Local EG on Homogeneous vs Heterogeneous Data p2}
\end{subfigure}
\begin{subfigure}[b]{0.45\linewidth}
    \centering
    \includegraphics[width=\textwidth]{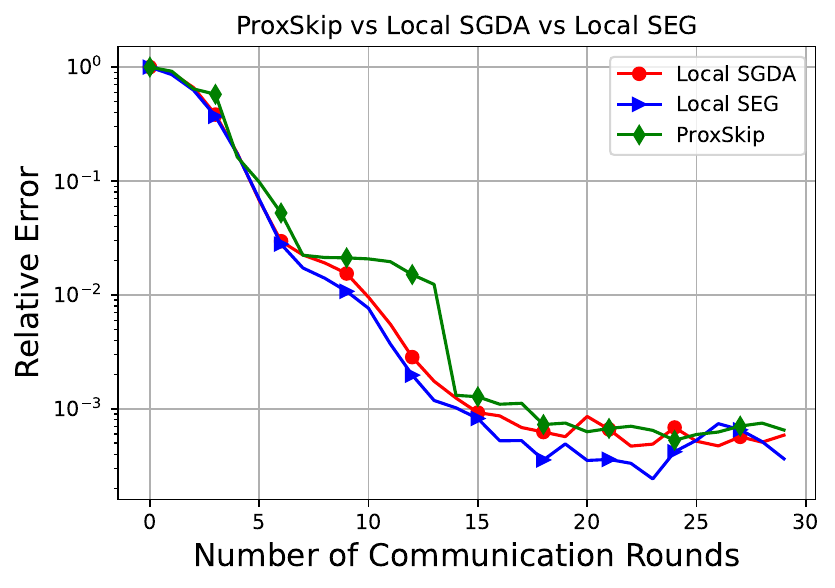
}
    \caption{constant $s_i, t_i$}
    \label{fig: Comparison of ProxSkip vs Local GDA vs Local EG on Homogeneous vs Heterogeneous Data p3}
\end{subfigure}
\hspace{1em}
\begin{subfigure}[b]{0.45\textwidth}
    \centering
    \includegraphics[width=\textwidth]{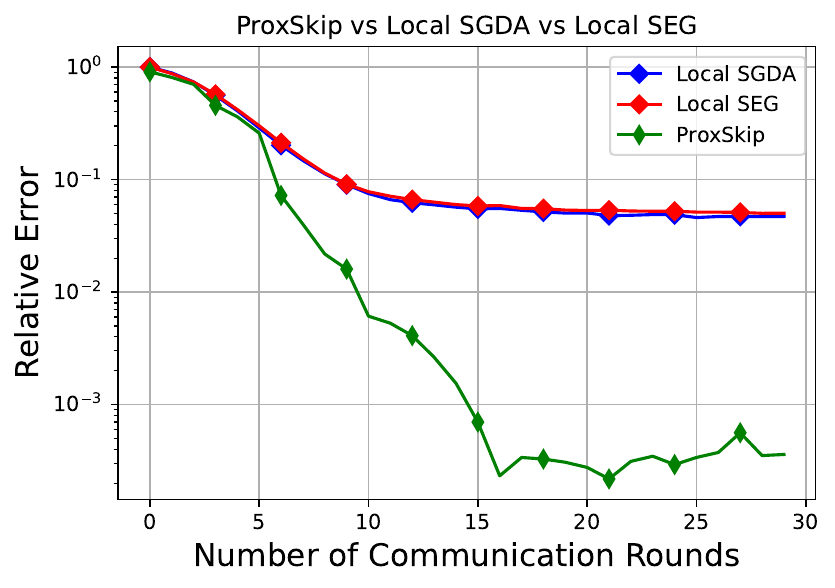}
    \caption{uniformly generated $s_i, t_i$}
    \label{fig: Comparison of ProxSkip vs Local GDA vs Local EG on Homogeneous vs Heterogeneous Data p4}
\end{subfigure}
\caption{\emph{Comparison of \algname{ProxSkip-VIP-FL} vs \algname{Local GDA} vs \algname{Local EG} on Homogeneous vs Heterogeneous Data. In (a) and (b), we run the deterministic algorithms, while in (c) and (d), we run the stochastic versions. For (a) and (c), we set $s_i = 10, t_i = 1$ for all $i\in [n]$ and for (b) and (d), we generate $s_i,t_i$ uniformly from $s_i \sim \text{Unif}(0,20), t_i \sim \text{Unif}(0,1)$.}}
\label{fig: Comparison of ProxSkip vs Local GDA vs Local EG on Homogeneous vs Heterogeneous Data}
\end{figure}

\subsection{Performance on Data with Varying Heterogeneity}
In this experiment, we consider the operator $F$ given by 
$$
F(x) \coloneqq \frac{1}{2} F_1(x) + \frac{1}{2} F_2(x)
$$ 
where 
$$
F_1(x) \coloneqq M(x - x_1^*),\quad F_2(x) \coloneqq M(x - x_2^*)
$$ 
with $M \in \mathbb{R}^{2 \times 2}$ and $x_1^*,x_2^* \in \mathbb{R}^2$. For this experiment we choose 
$$
M \coloneqq I_2,\quad
x_1^* = (\delta, 0),\quad
x_2^* = (0, \delta).
$$ 
Note that, in this case, $x_* = \frac{1}{2}(x_1^* + x_2^*)$. Then the quantity $\max_{i \in [2]} \|F_i(x_*) - F(x_*)\|^2$, which quantifies the amount of heterogeneity in the model, is equal to $\frac{\delta^2}{2}$. Therefore, increasing the value of $\delta$ increases the amount of heterogeneity in the data across the clients. 

We compare the performances of ProxSkip-VIP-FL, Local GDA, and Local EG when $\delta = 0$ and $\delta = 10^6$ in Figure \ref{fig: Comparison of ProxSkip-VIP-FL, Local GDA and Local EG with theoretical stepsizes with different delta}. In either case, ProxSkip-VIP-FL outperforms the other two methods with the theoretical stepsizes.

\begin{figure}[htbp]
\centering
\begin{subfigure}[b]{0.45\textwidth}
    \centering
    \includegraphics[width=\textwidth]{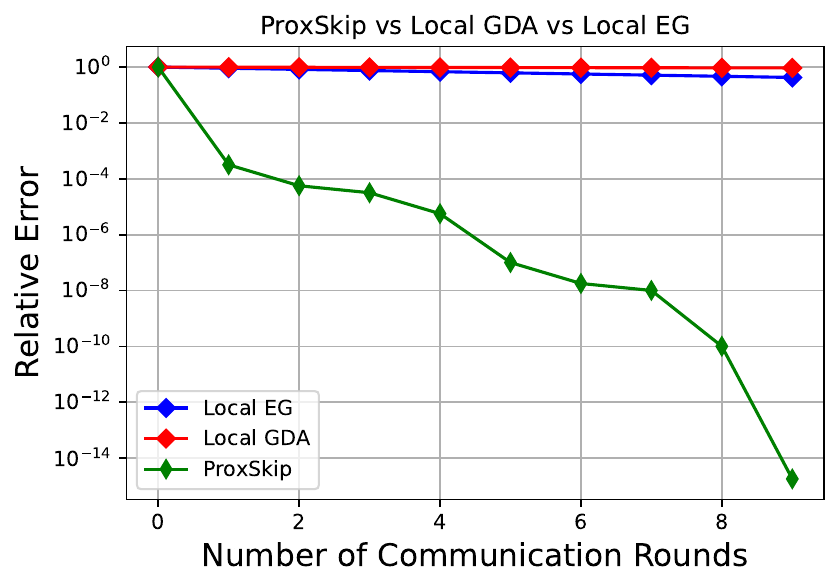}
    \caption{$\delta = 0$}
\end{subfigure}
\hspace{1em}
\begin{subfigure}[b]{0.45\textwidth}
    \centering
    \includegraphics[width=\textwidth]{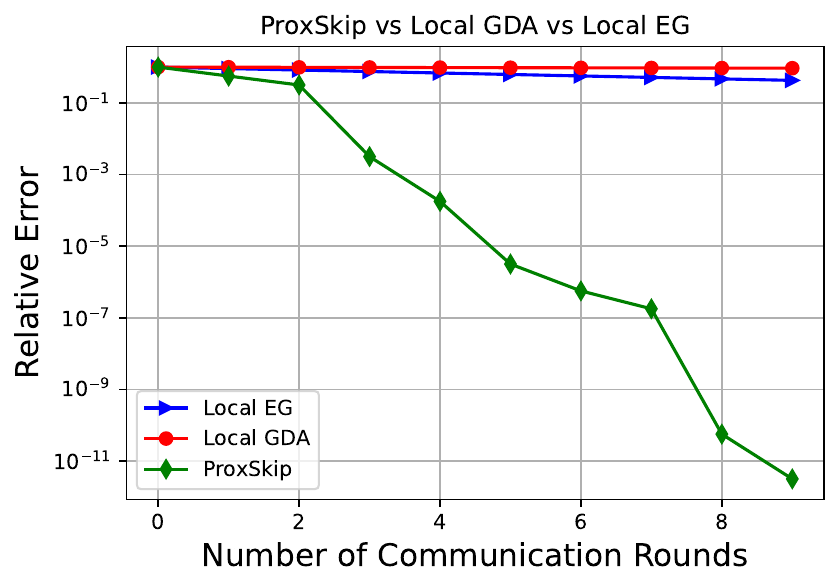}
    \caption{$\delta = 10^6$}
\end{subfigure}
\caption{\emph{Comparison of \algname{ProxSkip-VIP-FL}, \algname{Local GDA} and \algname{Local EG} with theoretical stepsizes for $\delta = 0$ (left) and $\delta = 10^6$ (right).}}
\label{fig: Comparison of ProxSkip-VIP-FL, Local GDA and Local EG with theoretical stepsizes with different delta}
\end{figure}

\subsection{Extra Experiment: Policemen Burglar Problem}
In this experiment, we compare the perforamnce of \algname{ProxSkip-GDA-FL}, \algname{Local GDA} and \algname{Local EG} on a Policemen Burglar Problem~\cite{nemirovski2013mini} (a particular example of matrix game) of the form:
\begin{eqnarray*}
    \min_{x_1 \in \Delta} \max_{x_2 \in \Delta} f(x_1, x_2) = \frac{1}{n} \sum_{i = 1}^n  x_1^{\top}A_ix_2
\end{eqnarray*}
where $\Delta = \left\{ x \in \R^d \mid \mathbf{1}^{\top}x = 1, x \geq 0 \right\}$ is a $(d-1)$ dimensional standard simplex. We generate the $(r, s)$-th element of the matrix $A_i$ as follow
\begin{eqnarray*}
    A_i(r,s) = w_r \left(1 - \exp \left\{- 0.8 |r - s| \right\} \right) \qquad \forall i \in [n] 
\end{eqnarray*}
where $w_r = |w_r'|$ with $w_r' \sim \mathcal{N}(0, 1)$. This matrix game is a constrained monotone problem and we use duality gap to measure the performance of the algorithms. Note that the duality gap for the problem $\min_{x \in \Delta} \max_{y \in \Delta} f(x, y)$ at $(\hat{x}, \hat{y})$ is defined as $\max_{y \in \Delta} f(\hat{x}, y) - \min_{x \in \Delta} f(x, \hat{y})$. 

\begin{figure}[H]
\centering
\includegraphics[width=0.5\textwidth]{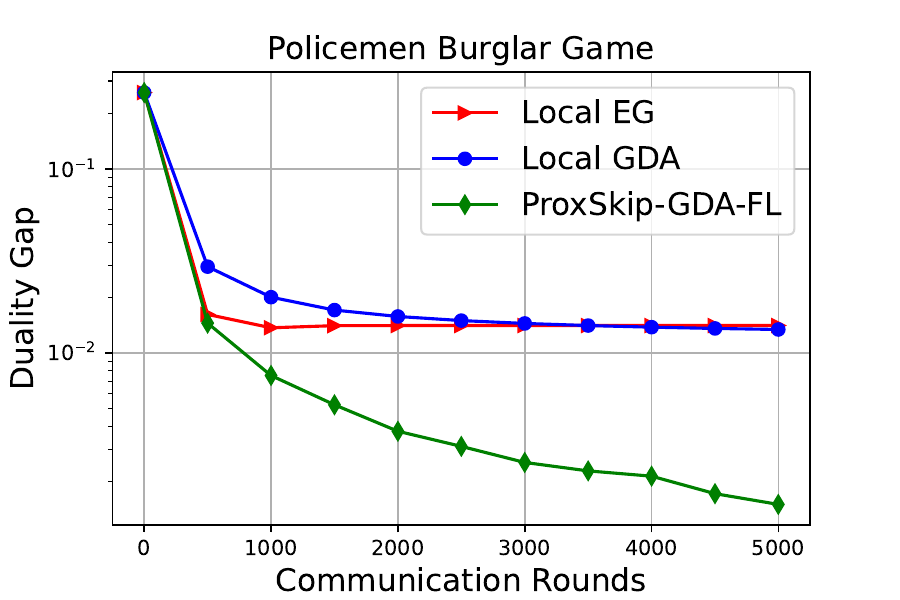}
\caption{Comparison of \algname{ProxSkip-GDA-FL}, \algname{Local EG} and \algname{Local GDA} on Policemen Burglar Problem after $5000$ communication rounds.}\label{fig:matrixgame}
\end{figure}

In Figure \ref{fig:matrixgame}, we plot the duality gap (on the $y$-axis) with respect to the moving average of the iterates, i.e. $\frac{1}{K+1} \sum_{k = 0}^K x_k$ (here $x_k$ is the output after $k$ many communication rounds). As we can observe in Figure \ref{fig:matrixgame}, our proposed algorithm \algname{ProxSkip-GDA-FL} clearly outperforms \algname{Local EG} and \algname{Local GDA} in this experiment.



\end{document}
